\numberwithin{equation}{section}
\newcommand{\assign}{:=}
\newcommand{\mathd}{\mathrm{d}}
\newcommand{\mathi}{\mathrm{i}}
\newcommand{\nin}{\not\in}
\newcommand{\tmem}[1]{{\em #1\/}}
\newcommand{\tmname}[1]{\textsc{#1}}
\newcommand{\tmop}[1]{\ensuremath{\operatorname{#1}}}
\newcommand{\tmscript}[1]{\text{\scriptsize{$#1$}}}
\newcommand{\tmstrong}[1]{\textbf{#1}}
\newcommand{\tmtextbf}[1]{{\bfseries{#1}}}
\newcommand{\tmtextit}[1]{{\itshape{#1}}}
\newcommand{\um}{-}
\newcommand{\upl}{+}
\newenvironment{enumeratenumeric}{\begin{enumerate}[1.] }{\end{enumerate}}
\newtheorem{corollary}{Corollary}[section]
\newtheorem{lemma}[corollary]{Lemma}
\newtheorem{proposition}[corollary]{Proposition}
\newtheorem{theorem}[corollary]{Theorem}
\newtheorem{conjecture}[corollary]{Conjecture}
\begin{document}

\title{On large deviations of additive functions}
\author{Maksym Radziwill}
\address{McGill University}
\curraddr{Stanford University}
\begin{abstract}

We prove that if two additive functions (from a certain class) take large values with roughly the same probability then they must be identical. The Kac-Kubilius model suggests that the distribution of values of a given additive function can be modeled by a sum of random variables. We show that the model is accurate (in a large deviation sense) when one is looking at values of the additive function around its mean, but fails, by ``a constant multiple", for large values of the additive function. We believe that this phenomenon arises, because the model breaks down for the values of the additive function on the ``large" primes.

In the second part of the paper, we are motivated by a question of Elliott, to understand how much the distribution of values of the additive function on primes determines, and is determined by, the distribution of values of the additive function on all of the integers. For example, our main theorem implies that a positive, strongly additive function is roughly Poisson distributed on the integers if and only if it is $1+o(1)$ or $o(1)$ on almost all primes.

\end{abstract}

\subjclass[2000]{Primary: 11N64 Secondary: 11N60, 11K65, 60F10}

\thanks{The author would like to acknowledge financial support from Universit\'e de Montr\'eal (summer 2008), McGill University (summer 2009) and NSERC}

\maketitle

\tableofcontents

\section{Introduction}

Let $g$ be a strongly additive function. According to Mark Kac, 
the distribution of the $g (n)$'s (with $n \leqslant x$ and
$x$ large) can be predicted by studying the random variable
\begin{equation}
  \sum_{p \leqslant x} g (p) X_p
\end{equation}
In $(1.1)$ the $X_p$'s are independent random variables with 
$\mathbbm{P}(X_p = 1) = 1 / p$ and $\mathbbm{P}(X_p = 0) = 1 - 1 / p$.
Thus, we expect the $g(n)$'s to cluster around the mean $\mu(g;x)$ of $(1.1)$
and within $O(\sigma(g;x))$. Here $\mu(g;x)$ and $\sigma^2(g;x)$ are 
respectively the mean and the variance of $(1.1)$, so that
\begin{eqnarray*}
  \mu (g ; x) = \sum_{p \leqslant x} \frac{g (p)}{p} & \tmop{and} & \sigma^2
  (g ; x) = \sum_{p \leqslant x} \frac{g (p)^2}{p} \cdot \left( 1 -
  \frac{1}{p} \right)
\end{eqnarray*}


The Erd\"os-Kac theorem (see {\cite{3}}, theorem 12.2), states that
\[ \mathcal{D}_g (x ; \Delta) \assign \frac{1}{x} \cdot \# \left\{ n \leqslant
   x : \frac{g (n) - \mu (g ; x)}{\sigma (g ; x)} \geqslant \Delta \right\}
   \sim \int_{\Delta}^{\infty} e^{- u^2 / 2} \cdot \frac{\mathd u}{\sqrt[]{2
   \pi}} \]
for $\Delta = O (1)$ and any reasonable additive function $g$. However it is
known (see {\cite{14}}) that this ``normal approximation'' fails, already when
$\Delta = \sigma (g ; x)^{1 / 3}$. In this range $\mathcal{D}_g (x ;
\Delta)$ is asymptotic to a constant $c < 1$ times the normal distribution. 
For larger values of $\Delta$ there is an ugly asymptotic (see \cite{14} 
or \cite{11})
comparing $\mathcal{D}_g(x;\Delta)$ to the normal law. Our first contribution
is the observation that the ugly asymptotic can be recast in a more natural
form. Namely, we have
\begin{equation}
  \frac{1}{x} \cdot \# \left\{ n \leqslant x : \frac{g (n) - \mu (g ;
  x)}{\sigma (g ; x)} \geqslant \Delta \right\} \sim \mathbbm{P} \left(
  \sum_{p \leqslant x} \left[ g (p) - \frac{1}{p} \right] X_p \geqslant \Delta
  \sigma (g ; x) \right)
\end{equation}
uniformly in $1 \leqslant \Delta \leqslant o (\sigma (g ; x))$, for instance
for strongly additive functions $g$ such that $0 \leqslant g (p) \leqslant O
(1)$ and $\sigma (g ; x) \rightarrow \infty$. Since $(1.2)$ is a natural 
extension of the Erd\"os-Kac, it is desirable to know if $(1.2)$ holds
for all additive functions for which the Erd\"os-Kac does (in the version of
 \cite{3}, theorem 12.2). 
We leave this open. Instead, we ask
in which range $(1.2)$ is no longer true, and by how much does $(1.2)$ fail
in that range? 
To answer this question we confine our attention to the class $\mathcal{C}$, 
defined below.

\

{\noindent}\tmtextbf{Definition. }\tmtextit{An additive function $g$ belongs
to $\mathcal{C}$ if and only if
\begin{itemize}
  \item The function $g$ is strongly additive\footnote{That is $g(p^k)=g(p)$ for all primes $p$ and integers  $k\geq 1$, and $g(mn)=g(m)+g(n)$ whenever $(m,n)=1$.} and strictly positive.
  \item Given any $A > 0$, we have for all $t \geqslant 0$ and $x \geqslant
  2$,
  \begin{equation}
    \frac{1}{\pi (x)} \sum_{\tmscript{\begin{array}{c}
      p \leqslant x\\
      g (p) \geqslant t
    \end{array}}} 1 = O_A \left( e^{- At} \right)
  \end{equation}
  \item There is a distribution function $\Psi (g ; t)$ with non-zero second
  moment, such that for all $k > 0$
  \begin{equation}
    \frac{1}{\pi (x)} \sum_{\tmscript{\begin{array}{c}
      p \leqslant x\\
      g (p) \leqslant t
    \end{array}}} 1 - \Psi (g ; t) = O_k \left( \frac{1}{\log^k x} \right)
    \text{ uniformly in $t \in \mathbbm{R}$}
  \end{equation}
\end{itemize}}{\hspace*{\fill}}{\medskip}

Additive functions belonging to $\mathcal{C}$ are particularly well-behaved.
Nonetheless, even for a $g \in \mathcal{C}$, the asymptotic $(1.2)$ doesn't
hold in the wider range $\Delta \asymp \sigma (g ; x)$. Indeed, we
prove that for any fixed $\delta > 0$, uniformly in $1 \leqslant \Delta
\leqslant \delta \sigma (g ; x)$,
\begin{equation}
  \frac{1}{x} \# \left\{ n \leqslant x : \frac{g (n) - \mu (g ; x)}{\sigma (g
  ; x)} \geqslant \Delta \right\} \sim \mathcal{A} \left( g ;
  \frac{\Delta}{\sigma} \right) \mathbbm{P} \left( \sum_{p \leqslant x} g (p)
  \left[ X_p - \frac{1}{p} \right] \geqslant \Delta \sigma \right)
\end{equation}
where $\mathcal{A} \left(g;z) \right.$ is an analytic function depending
only on $\Psi (g ; \cdot)$ and $\sigma$ stands for $\sigma (g ; x)$. Most
importantly $0 <\mathcal{A}(g ; x) \leqslant \mathcal{A}(g ; 0) = 1$ for
positive $x$, and $\mathcal{A}(g ; x)$ is a strictly decreasing function,
decaying to $0$ as $x \rightarrow \infty$. For example when $g (n) = \omega
(n)$, where $\omega (n)$ is the number of prime factors of $n$, we have
$\mathcal{A}(g ; z) = e^{- \gamma z} / \Gamma (1 + z)$ (where $\gamma$ is
the Euler-Mascheroni constant). The appearance of the
factor $\mathcal{A}(g ; \cdot)$, is largely due to the large prime factors,
and we state a precise conjecture explaining the phenomena, in the next
section. In order to prove $(1.5)$ we simply establish asymptotics for the
left and right hand side of $(1.5)$ and then compare them.


Our main result is a ``structure theorem'', classifying additive functions
in $\mathcal{C}$ in terms of the distribution of their large values.


\begin{theorem}(The ``structure theorem'') 
  Let $f, g \in \mathcal{C}$. Suppose that $\sigma (f
; x) \sim \sigma (g ; x)$ and let $\sigma \assign \sigma (x)$ denote a
  function such that $\sigma (f ; x) \sim \sigma (x) \sim \sigma (g ; x)$. The
  asymptotic 
  \begin{equation}
    \frac{1}{x} \cdot \# \left\{ n \leqslant x : \frac{f (n) - \mu (f ;
    x)}{\sigma (f ; x)} \geqslant \Delta \right\} \sim \frac{1}{x} \cdot \#
    \left\{ n \leqslant x : \frac{g (n) - \mu (g ; x)}{\sigma (g ; x)}
    \geqslant \Delta \right\}
  \end{equation}
  holds uniformly, in the range,
  \begin{enumerate}
    \item $1 \leqslant \Delta \leqslant o (\sigma^{1 / 3})$ -- always (the
    distribution is normal)

    \item $1 \leqslant \Delta \leqslant o (\sigma^{\alpha})$ with an $\alpha
    \in (1 / 3 ; 1)$ if and only if
    \begin{eqnarray*}
      \int_{\mathbbm{R}} t^k \mathd \Psi (f ; t) & = & \int_{\mathbbm{R}} t^k
      \mathd \Psi (g ; t)
    \end{eqnarray*}
    for all $k = 3, 4, \ldots, \varrho (\alpha)$ where $\varrho (\alpha)
    \assign \left\lceil (1 + \alpha) / (1 - \alpha) \right\rceil$

    \item $1 \leqslant \Delta \leqslant o (\sigma)$ if and only if $\Psi (f ;
    t) = \Psi (g ; t)$

    \item $1 \leqslant \Delta \leqslant c \sigma$ with some fixed $c > 0$, if
    and only if $f = g$
  \end{enumerate}
\end{theorem}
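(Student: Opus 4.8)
The plan is to derive everything from the asymptotic $(1.5)$, applied to $f$ and to $g$ separately. Write $z\assign\Delta/\sigma$ and, for $h\in\{f,g\}$, $S_h\assign\sum_{p\leqslant x}h(p)\,[X_p-1/p]$. Then $(1.5)$ turns the identity $(1.6)$ into the single relation
\[
  \mathcal{A}(f;z)\,\mathbbm{P}(S_f\geqslant\Delta\sigma)\sim\mathcal{A}(g;z)\,\mathbbm{P}(S_g\geqslant\Delta\sigma),
\]
to be proved, or refuted, uniformly in the indicated range. The one analytic fact I would record at the outset is a formula for the cumulants of $S_h$: since the $r$-th cumulant of $X_p-1/p$ equals $1/p+O(1/p^2)$ for every $r\geqslant 2$, partial summation against hypothesis $(1.4)$ gives $\kappa_r(S_h)\sim(\log\log x)\,m_r(h)$, where $m_r(h)\assign\int_{\mathbbm{R}}t^r\,\mathd\Psi(h;t)$. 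In particular $\sigma^2\sim m_2(h)\log\log x$, so $m_2(f)=m_2(g)$ is already forced by $\sigma(f;x)\sim\sigma(g;x)$, and the normalized cumulants obey $\gamma_r(S_h)\asymp\sigma^{2-r}$.

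In the ranges $(1)$--$(3)$ one has $z=\Delta/\sigma\to0$, whence $\mathcal{A}(f;z)/\mathcal{A}(g;z)\to\mathcal{A}(f;0)/\mathcal{A}(g;0)=1$, and it remains to compare $\mathbbm{P}(S_f\geqslant\Delta\sigma)$ with $\mathbbm{P}(S_g\geqslant\Delta\sigma)$. For this I would invoke the classical Cram\'er moderate-deviation expansion, $\mathbbm{P}(S_h\geqslant\Delta\sigma)/(1-\Phi(\Delta))=\exp\big(\sum_{k\geqslant1}c_k(h)\,\Delta^{k+2}\big)(1+o(1))$, where the Cram\'er coefficient $c_k(h)$ is a universal polynomial in $\gamma_3(S_h),\dots,\gamma_{k+2}(S_h)$ in which $\gamma_{k+2}$ enters linearly with a nonzero coefficient; by the cumulant formula $c_k(h)\asymp\sigma^{-k}$ and $c_k(h)$ depends, to leading order, only on $m_3(h),\dots,m_{k+2}(h)$. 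Since $c_k(h)\Delta^{k+2}\asymp\Delta^{k+2}\sigma^{-k}$, the $k$-th term is $o(1)$ throughout $\Delta=o(\sigma^{\alpha})$ exactly when $k\geqslant 2\alpha/(1-\alpha)$; hence in that range only $c_1,\dots,c_{\varrho(\alpha)-2}$ can survive, and these involve precisely the moments $m_3,\dots,m_{\varrho(\alpha)}$ (using $\lceil y\rceil+2=\varrho(\alpha)$ for $y=2\alpha/(1-\alpha)$). Matching these moments makes the two Cram\'er sums agree term by term, giving the ``if'' direction of $(2)$; conversely, if $k_0\leqslant\varrho(\alpha)$ is minimal with $m_{k_0}(f)\neq m_{k_0}(g)$, then $c_{k_0-2}(f)\neq c_{k_0-2}(g)$ while all lower coefficients agree, and taking $\Delta=\Delta(x)\to\infty$ with $\Delta=o(\sigma^{\alpha})$ and $\Delta^{k_0}/\sigma^{k_0-2}\to\infty$ (possible since $k_0\leqslant\varrho(\alpha)$ forces $(k_0-2)/k_0<\alpha$) drives the ratio of the two probabilities to $0$ or $\infty$, so $(1.6)$ fails. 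Range $(1)$ is the sub-case in which even the first term $c_1(h)\Delta^3\asymp\Delta^3/\sigma$ is $o(1)$, i.e.\ the law is normal with no moment condition imposed. Range $(3)$ is the limit $\alpha\to1$: $(1.6)$ holds throughout $\Delta=o(\sigma)$ if and only if $m_r(f)=m_r(g)$ for every $r\geqslant 2$; and since $(1.3)$ makes the moment problem for $\Psi(h;\cdot)$ determinate and these measures live on $[0,\infty)$, passing to the auxiliary measures $t^2\,\mathd\Psi(h;t)$ shows this is equivalent to $\Psi(f;\cdot)=\Psi(g;\cdot)$. The point needing care here is that the Cram\'er expansion must be controlled uniformly all the way up to $\Delta=o(\sigma)$, and for that one uses the exponential-tail hypothesis $(1.3)$ to bound the tail of the series.

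For range $(4)$ one first notes that, since every $\Delta=o(\sigma)$ satisfies $\Delta\leqslant c\sigma$ for large $x$, hypothesis $(1.6)$ already forces $\Psi(f;\cdot)=\Psi(g;\cdot)$ by part $(3)$; hence $\mathcal{A}(f;\cdot)\equiv\mathcal{A}(g;\cdot)$ and the problem reduces to $\mathbbm{P}(S_f\geqslant\Delta\sigma)\sim\mathbbm{P}(S_g\geqslant\Delta\sigma)$ uniformly for $1\leqslant\Delta\leqslant c\sigma$. Now $z$ no longer tends to $0$: the saddle point $\theta^{\ast}=\theta^{\ast}(\Delta)$, defined by $K_h'(\theta^{\ast})=\Delta\sigma$ with $K_h(\theta)=\log\mathbbm{E}e^{\theta S_h}$, ranges over a fixed interval $(0,\theta_0]$, and a Bahadur--Rao-type sharp large-deviation asymptotic gives $\mathbbm{P}(S_h\geqslant\Delta\sigma)\sim e^{K_h(\theta^{\ast})-\theta^{\ast}\Delta\sigma}\big/\big(\theta^{\ast}\sqrt{2\pi K_h''(\theta^{\ast})}\big)$. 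Because $\Psi(f;\cdot)=\Psi(g;\cdot)$ the leading part of $K_h$ --- namely $(\log\log x)\,L(\theta)$ with $L(\theta)=\int_{\mathbbm{R}}(e^{\theta t}-1-\theta t)\,\mathd\Psi(f;t)$ --- is common to $f$ and $g$, so $\theta^{\ast}$ and $K_h''(\theta^{\ast})$ agree to leading order for $h\in\{f,g\}$, and a short expansion of $\theta\mapsto K_h(\theta)-\theta\Delta\sigma$ about the common saddle shows the ratio of the two probabilities tends to $\exp\big(\lim_{x\to\infty}(K_f(\theta^{\ast})-K_g(\theta^{\ast}))\big)$. Written out prime by prime, $(1.6)$ in range $(4)$ therefore amounts to
\[
  \sum_{p}\Big[\log\frac{p-1+e^{\theta f(p)}}{p-1+e^{\theta g(p)}}-\frac{\theta\,(f(p)-g(p))}{p}\Big]=0\qquad\text{for all }\theta\in(0,\theta_0),
\]
the series converging since the divergent $(\log\log x)$-parts cancel by $\Psi(f;\cdot)=\Psi(g;\cdot)$. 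The left-hand side is holomorphic in $\theta$, hence vanishes on its whole domain of holomorphy; the $p$-th summand has logarithmic branch points (in the complex $\theta$-plane) exactly where $e^{\theta f(p)}=1-p$ or $e^{\theta g(p)}=1-p$, and the positions of these branch points encode $f(p)$ and $g(p)$. Comparing the singularity data of the individual summands --- and ruling out the conspiratorial cancellations between distinct Euler factors that are a priori possible --- forces each summand to vanish identically, i.e.\ $f(p)=g(p)$ for every prime $p$, so $f=g$; the converse is trivial.

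The serious number-theoretic work --- the uniform asymptotics of Selberg--Delange / saddle-point type for both sides --- is already packaged inside $(1.5)$, which I am taking as given. Granting that, I expect the main obstacle to be the closing step of range $(4)$: extracting the rigid pointwise conclusion $f(p)=g(p)$ for all $p$ from the identical vanishing of a single analytic function, which requires a careful analysis of when the branch-point singularities of the various Euler factors can and cannot cancel. A secondary difficulty, as noted, is securing the uniformity of the Cram\'er expansion all the way to $\Delta=o(\sigma)$ in range $(3)$, where the exponential-moment bound $(1.3)$ is indispensable.
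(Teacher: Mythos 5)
Your overall strategy tracks the paper's closely: both proofs reduce $(1.6)$ to matching the Taylor coefficients in $\Delta/\sigma$ of the exponent in a saddle-point asymptotic, identify those coefficients with moments of $\Psi$, and then for range $(4)$ pass to a rigidity statement for a certain entire function built out of Euler factors. Your packaging via the Cram\'er series and the cumulants of $S_h$ is a legitimate reorganization of what the paper does with $A(f;\omega(f;z))$ and Lagrange inversion (the paper's Lemmas 6.1--6.3); the index bookkeeping $\varrho(\alpha)=\lceil(1+\alpha)/(1-\alpha)\rceil$ comes out the same, and your converse argument (choose $\Delta$ so that the first mismatched coefficient dominates) is essentially Lemma 6.2. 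For range $(3)$ your moment-determinacy argument is a fine substitute for the paper's more hands-on computation with $\hat\Psi(f;it)-\hat\Psi(g;it)$.

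There are, however, two genuine gaps in your treatment of range $(4)$, and they are not secondary. First, the Bahadur--Rao asymptotic you write down for $\mathbbm{P}(S_h\geqslant\Delta\sigma)$ is only valid when $S_h$ is non-lattice. If $\Psi(f;\cdot)$ is concentrated on $\alpha\mathbbm{Z}$ --- the case $f=\omega$ is the prototype --- then $\sum_{p\leqslant x}f(p)X_p$ is (essentially) lattice-valued, the tail probability has a genuinely oscillating prefactor, and the ratio $\mathcal{D}_f(x;c\sigma)/S_f(x;c\sigma)$ does not converge as $x\to\infty$. The paper's Theorem 2.8(4) introduces the extra factor $\mathcal{P}_{\mathfrak{h}}(\xi_f(x;\Delta);v)$ precisely to capture this oscillation, and closing part $(4)$ in the lattice case requires an averaging step over a window $\lambda\sigma_\Psi\leqslant\Delta\leqslant\lambda\sigma_\Psi+1/\sigma_\Psi$ (Lemmas 6.7 and 6.8) to kill the oscillatory factor and recover the clean identity $L(f;\kappa)e^{-\kappa c(f)}=L(g;\kappa)e^{-\kappa c(g)}$. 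This is not a refinement of your argument; your argument as stated simply does not produce a usable asymptotic in this case.

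Second, and as you half-acknowledge, the final step --- from the coincidence of the singular sets $\mathcal{Z}(L(f;z))=\mathcal{Z}(L(g;z))$ to the pointwise conclusion $f(p)=g(p)$ --- is not a routine ``rule out cancellations'' matter but the actual content of Lemma 6.5. (The cancellation worry is in fact a red herring: two Euler factors can never share a zero, since $(\log(p-1)+i\pi)/f(p)=(\log(q-1)+i\pi)/f(q)$ forces $f(p)=f(q)$ and then $p=q$.) The real work is reconstructing $f$ from the zero set as a set: the paper first identifies $f(2)=g(2)$ from the zeros of real part zero, and then for odd $p$ runs a short multiplicative argument on $p-1=m^r$ to show that the zero $(\log(p-1)+i\pi)/g(p)$ in $\mathcal{Z}(L(f;z))$ can only be matched by the one with $q=p$ and $\ell=k$. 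Without this argument, ``forces each summand to vanish identically'' is an assertion, not a proof. Your proposal gets the geometry exactly right but stops one essential step short, and the step it skips is the part of Theorem 1.1 that is genuinely arithmetical rather than analytic.
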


{\noindent}\tmtextbf{Example. }Let $0 < \alpha, \beta < 1$ be two algebraic
irrationals. Let $f, g$ be two additive functions defined by 
letting $f (p^k) =\{\alpha p\}$ and $g (p^k) =\{\beta p\}$ at all
primes powers $p^k$. By Vinogradov's theorem (on the uniform distribution of
$\{\alpha p\}$, see {\cite{23}}, ch. 11), both $f, g \in \mathcal{C}$ and in 
fact $\Psi (f ; t) = t = \Psi (g ; t)$ for $0 \leqslant t \leqslant 1$. 
Thus by Theorem 1.1, $f, g$ are
similarly distributed (i.e $(1.6)$ holds) when $\Delta$ is in the range $1
\leqslant \Delta \leqslant o \left( \sigma \right)$ but not when $\Delta
\asymp \sigma$ unless $f = g$, that is $\alpha = \beta$.{\hspace*{\fill}}{\medskip}

The theorem highlights a certain ``discrete'' behaviour of additive functions
belonging to $\mathcal{C}$ : For example, if $(1.6)$ holds uniformly in $1
\leqslant \Delta \leqslant o (\sigma^{1 / 3 + \varepsilon})$ with any fixed
$\varepsilon > 0$, then $(1.6)$ holds uniformly in $1 \leqslant \Delta
\leqslant o (\sigma^{1 / 2})$. In fact, given any $\alpha \in (1 / 3 ; 1)$,
suppose that $(1.6)$ holds uniformly in $1 \leqslant \Delta \leqslant o
(\sigma^{\alpha})$, then for any $\delta > 0$ relation $(1.6)$ holds in
$\left. 1 \leqslant \Delta \leqslant o (\sigma^{\alpha + \delta} \right)$ as
long as $\varrho (\alpha + \delta) = \varrho (\alpha)$.

\
For a $f \in \mathcal{C}$ we have $\sigma^2(f;x) \sim c \log\log x$ for
some constant $c > 0$. Thus given $f,g \in \mathcal{C}$ we can always find
a constant $c > 0$ such that $\sigma(f;x) \sim \sigma(c \cdot g;x)$.
Keeping this observation in mind and applying Theorem 1.1, we obtain the following corollary.

\

{\noindent}\tmtextbf{Corollary. }\tmtextit{Let $f,g \in \mathcal{C}$. Suppose
that $(1.6)$ holds uniformly in $1 \leqslant \Delta \ll \sigma \asymp 
(\log\log x)^{1/2}$, then there is a constant $c > 0$ such that 
$f = c \cdot g$.}
{\hspace*{\fill}}{\medskip}


(We mention another consequence of theorem 1.1 at the end of the introduction).


In the second part of the paper we focus on strongly additive $f$ 
such that $0 \leqslant f(p) \leqslant O(1)$ and 
$\sigma(f;x) \rightarrow \infty$. We investigate the relationship
 between the asymptotic behaviour of


\begin{equation}
  \mathcal{D}_f \left( x ; \Delta \right) \text{ } \assign \text{\! }
  \frac{1}{x} \cdot \# \left\{ n \leqslant x : \frac{f (n) - \mu (f ;
  x)}{\sigma (f ; x)} \geqslant \Delta \right\}
\end{equation}
in the range $1 \leqslant \Delta \ll_{\varepsilon} \sigma (f ; x)^{1 -
\varepsilon}$ and the convergence properties of
\begin{equation}
  \text{\!} \frac{1}{\sigma^2(f;x)} \sum_{\tmscript{\begin{array}{c}
    p \leqslant x\\
    f(p) \leqslant t
  \end{array}}} \frac{f (p)^2}{p} \cdot \left( 1 - \frac{1}{p} \right)
\end{equation}
We prove roughly the following : If $(1.8)$ converges to a distribution
function $\Psi (\cdot)$ sufficiently fast, then $\mathcal{D}_f (x ; \Delta)$
behaves asymptotically like a sum of $\sigma^2 (f ; x)$ independent and
identically distributed random variables $X_1, X_2, \ldots$ with distribution
determined by
\begin{eqnarray}
  \mathbbm{E} \left[ e^{\tmop{it} X_1} \right] & = & \exp \left(
  \int_{\mathbbm{R}} \frac{e^{iut} - iut - 1}{u^2} d \Psi (u) \right) .
\end{eqnarray}
Note that the above forces $\mathbbm{E} \left[ X_1 \right] = 0$ and
$\tmop{Var} (X_1) = 1$. We do need to be more precise here since it is certainly possible that  $\sigma^2 (f ; x)$ is not an integer: So, 
when we write ``a sum of $\sigma^2 (f ; x)$ i.i.d random variables'', we really mean a
Levy process at time $t = \sigma^2 (f ; x)$, with initial distribution
determined by $(1.9)$.\footnote{Levy processes are defined below; for now we can think of it as a natural way to make continuous a count that is naturally discrete.}


In the converse direction we prove that if $(1.7)$ behaves asymptotically
like a sum of $\sigma^2 (f ; x)$ i.i.d random variables
(distributed according to $(1.9)$ plus $\Psi(\alpha) - \Psi(0) = 1$
for some $\alpha > 0$) then
$(1.8)$ converges almost everywhere to the distribution function $\Psi (t)$. 

The original motivation for studying this question was to characterize
additive functions with a ``Poisson distribution'' on the integers. Namely, we
wanted to show that any strongly additive functions whose values on the integers are
``Poisson distributed'' must be $1 + o (1)$ or $o (1)$ on most primes. Here is
an example of what was achieved in this direction (the example is a particular
case of the theorems discussed previously): For convenience denote by
\[ \mathbbm{P} \tmop{oisson} \left( x ; \Delta \right) = \sum_{k \geqslant x +
   \Delta \sqrt[]{x}} e^{- x} \cdot \frac{x^k}{k!} \]
the tails of a Poisson distribution with parameter $x \geqslant 0$. As a
consequence {\footnote{Hal\'asz originally proved that $\mathcal{D}_f(x;\Delta) \sim \mathbbm{P}\tmop{oisson}(\mu(f;x);\Delta)$. This does not contradict our statement $(1.10)$ because 
when $f(p) \in \{0;1\}$ we have $\mu(f;x) = \sigma^2(f;x)+O(1)$ and in
particular $\mathbbm{P}\tmop{oisson}(\mu(f;x);\Delta) \sim \mathbbm{P}
\tmop{oisson}(\sigma^2(f;x);\Delta)$ in the range $1 \leqslant \Delta
\leqslant o(\sigma(f;x))$.}} of a well-known result of Hal\'asz {\cite{8}} if $f$ is strongly
additive, $f (p) \in \{0, 1\}$ and $\sigma (f ; x) \rightarrow \infty$, then
\begin{equation}
  \mathcal{D}_f (x ; \Delta) \sim \mathbbm{P} \tmop{oisson} \left( \sigma^2 (f
  ; x) ; \Delta \right)
\end{equation}
uniformly in $1 \leqslant \Delta \leqslant o (\sigma (f ; x))$. Conversely,
suppose that $f$ is strongly additive $0 \leqslant f (p) \leqslant O (1)$,
$\sigma (f ; x) \rightarrow \infty$ and that $(1.10)$ holds uniformly in $1
\leqslant \Delta \leqslant o (\sigma (f ; x))$. Then
\[ \frac{1}{\sigma^2 (f ; x)} \sum_{\tmscript{\begin{array}{c}
     p \leqslant x\\
     f (p) \leqslant t
   \end{array}}} \frac{f (p)^2}{p} \cdot \left( 1 - \frac{1}{p} \right)
   \longrightarrow \delta (t) \assign \left\{ \begin{array}{l}
     1 \text{ if } t \geqslant 1\\
     0 \text{ otherwise}
   \end{array} \right. \]
at all $t \in \mathbbm{R}$, with the possible exception of $t = 1$. Therefore,
for most primes $p$ we either have $f (p) = 1 + o (1)$ or $f (p) = o (1)$,
thus complementing Hal\'asz's result.

\

Let us mention, without giving a proof, one consequence of the above result.
Suppose that $f \geqslant 0$ is bounded 
on the primes, $\sigma(f;x) \longrightarrow \infty$ and 
$\sigma^2(f;x) = c \log\log x + O(1)$ for some constant $c > 0$ 
(the last assumption certainly holds if $f \in \mathcal{C}$). 
The following holds: 
If $\mathcal{D}_f(x;\Delta) \sim \mathbb{P}\tmop{oisson}(\log\log x;\Delta)$
uniformly in $1 \leqslant \Delta \leqslant o(\sigma(f;x))$ 
then $f(p) = \sqrt{c} + o(1)$ for all but $o(\pi(x))$ 
primes $\leqslant x$ \footnote{
If instead we assume $\sigma^2(f;x) = c \mu(f;x) + O(1)$ and 
$\mathcal{D}_{f}(x;\Delta) \sim \mathbb{P}\tmop{oisson}(\mu(f;x);\Delta)$
then necessarily $c = 1$ and $f(p) = 1 + o(1)$ or $f(p) = o(1)$ for 
most primes $p$.}. For a $f \in \mathcal{C}$ a more
precise result follows from theorem 1.1 (and the fact that
$\mathcal{D}_{\omega}(x;\Delta) \sim \mathbb{P}\tmop{oisson}(
\log\log x;\Delta)$ uniformly in $1 \leqslant \Delta \leqslant
o(\sigma(f;x))$ where $\omega(n)$ is the number of prime factors of $n$).



\section{Precise Statement of results.}

Building on earlier work by Kubilius ({\cite{12}}, p. 160) and Maciulis {\cite{14}} we establish Theorem 2.1.

\begin{theorem} 
  If $f \in \mathcal{C}$ and $\sigma = \sigma (f
  ; x)$  then 
  \begin{equation}
    \mathcal{D}_f (x ; \Delta) \text{\,} \sim \text{\! } \mathbbm{P} \left(
    \sum_{p \leqslant x} f (p) \left[ X_p - \frac{1}{p} \right] \geqslant
    \Delta \sigma \right)
  \end{equation}
  uniformly in $1 \leqslant \Delta
  \leqslant o \left( \sigma \right)$.
\end{theorem}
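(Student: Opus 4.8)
The plan is to prove (2.1) by deriving, for each side separately, a Cram\'er-type saddle-point asymptotic and then checking that the two expressions coincide up to a factor $1+o(1)$ throughout the range $1\leqslant\Delta\leqslant o(\sigma)$. Introduce the Laplace transforms
\[
  M(\lambda)=\frac1x\sum_{n\leqslant x}e^{\lambda(f(n)-\mu)},\qquad
  \widetilde M(\lambda)=\mathbbm{E}\Bigl[\exp\Bigl(\lambda\sum_{p\leqslant x}f(p)\bigl(X_p-\tfrac1p\bigr)\Bigr)\Bigr]
  =\prod_{p\leqslant x}\Bigl(1+\tfrac{e^{\lambda f(p)}-1}{p}\Bigr)e^{-\lambda f(p)/p},
\]
and let $\lambda_\Delta$ be the saddle point, i.e. the solution of $(\log\widetilde M)'(\lambda)=\Delta\sigma$. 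Since $(\log\widetilde M)'(\lambda)=\sigma^2\lambda+O(\sigma^2\lambda^2)$ one has $\lambda_\Delta=\Delta/\sigma+O(\Delta^2/\sigma^3)=o(1)$; the fact that the saddle point tends to $0$ is precisely what makes $\Delta=o(\sigma)$ a \emph{moderate} deviation, keeping the whole analysis on the ``central'' part of the two transforms, and degenerating the ``large prime'' distortion that produces the factor $\mathcal{A}(g;\cdot)$ of (1.5) to its value $\mathcal{A}(g;0)=1$.

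\textbf{The right-hand side.} For the sum $S=\sum_{p\leqslant x}f(p)(X_p-\tfrac1p)$ of independent, mean-zero random variables I would apply the classical effective moderate-deviation theorem: tilt the law by $\lambda_\Delta$ and apply a local limit theorem to the tilted (still independent) sum, obtaining
\[
  \mathbbm{P}\bigl(S\geqslant\Delta\sigma\bigr)\sim
  \frac{1}{\lambda_\Delta\sqrt{2\pi\,(\log\widetilde M)''(\lambda_\Delta)}}\;
  \exp\bigl((\log\widetilde M)(\lambda_\Delta)-\lambda_\Delta\Delta\sigma\bigr).
\]
Hypothesis (1.3) on $\mathcal{C}$ --- the exponential decay of the frequency of large $f(p)$ --- guarantees that $\sum_{p\leqslant x}e^{\lambda f(p)}/p$ is finite and smoothly behaved for $\lambda=o(1)$, that every cumulant of $S$ is $O(\sigma^2)$, and that the tilted summands are uniformly concentrated; hence the above holds uniformly in $1\leqslant\Delta\leqslant o(\sigma)$.

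\textbf{The left-hand side.} Since $n\mapsto e^{\lambda f(n)}$ is multiplicative (and, as $f$ is strongly additive, depends only on the squarefree kernel of $n$), a quantitative mean-value theorem for multiplicative functions --- obtained either through the Kubilius fundamental lemma, truncating the prime factors of $n$ to those $\leqslant x^{1/u}$ with $u\to\infty$ slowly (for which the probabilistic model is essentially exact) and estimating the effect of the remaining primes crudely (their contribution to the variance being of smaller order), or through a direct Selberg--Delange analysis of $\sum_{n\leqslant x}e^{\lambda f(n)}n^{-s}$ near $s=1$ --- should yield, uniformly for $\lambda$ in a shrinking complex neighbourhood of $0$,
\[
  M(\lambda)=\bigl(C_f(\lambda)+o(1)\bigr)\,\widetilde M(\lambda),
\]
where $C_f$ is analytic near $0$ with $C_f(0)=1$, so $C_f(\lambda_\Delta)=1+o(1)$. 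A Perron-type contour shift, with the contour of integration passing through $\lambda_\Delta$, then converts this into the same saddle-point formula for $\mathcal{D}_f(x;\Delta)$ as above, but with $\widetilde M$ replaced by $M$. Because $\log M=\log\widetilde M+\log C_f(\lambda)+o(1)$ holds in a full complex neighbourhood with $C_f$ analytic, differentiation (legitimate by Cauchy's estimates) gives $(\log M)^{(j)}(\lambda_\Delta)=(1+o(1))(\log\widetilde M)^{(j)}(\lambda_\Delta)$ for $j=1,2$, the derivatives of $\log\widetilde M$ being of size $\asymp\sigma\Delta$ and $\asymp\sigma^2$ and so dominating the $O(1)$ contributions of $\log C_f$; hence the two saddle points differ only by $O(1/\sigma^2)$ and the two saddle-point expressions agree up to $1+o(1)$, which is (2.1). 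Equivalently, this verifies that the Cram\'er series obtained by Kubilius and Maciulis for $\mathcal{D}_f(x;\Delta)$ matches, term by term, the Cram\'er series of the independent model --- a coincidence made transparent by the near-identity of the two Laplace transforms.

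\textbf{The main obstacle.} The crux is the mean-value estimate $M(\lambda)=(C_f(\lambda)+o(1))\widetilde M(\lambda)$ with the stated uniformity: one needs it for complex $\lambda$ near the positive real axis (to run the Perron inversion and the local limit theorem for the exponentially tilted count), with an error $o(1)$ that is uniform as $\lambda\to0$ and as $\Delta$ traverses $1\leqslant\Delta\leqslant o(\sigma)$. This is exactly where both structural hypotheses defining $\mathcal{C}$ are indispensable: (1.3) keeps $|e^{\lambda f(p)}|$ under control on a density-one set of primes when $\lambda=o(1)$, while the power-of-$\log$ rate of convergence (1.4) of the empirical distribution of the $f(p)$ is what forces the Selberg--Delange argument (equivalently, the passage from small to large prime factors in the Kubilius--Maciulis framework) to produce the singular-series factor $C_f(\lambda)$ with a power-saving, hence uniform, error term. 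Once this estimate is in hand, the saddle-point asymptotics on the two sides and their comparison are routine.
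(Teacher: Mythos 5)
Your proposal is essentially the paper's own approach. The paper derives the mean-value asymptotic $\frac1x\sum_{n\leqslant x}e^{sf(n)} = \frac{L(f;s)}{\Gamma(\hat\Psi(f;s))}(\log x)^{\hat\Psi(f;s)-1}+O(\cdots)$ by a Levin--Fainleb iteration (Proposition 4.1), observes that the moment generating function of the independent model is $L(f;s)e^{\gamma(\hat\Psi(f;s)-1)}(\log x)^{\hat\Psi(f;s)-1}+O(\cdots)$ (Lemma 7.3), and feeds both into general saddle-point/tilting propositions (4.9, 4.10, 4.17) to compare the two tails; the ratio of the two analytic prefactors is exactly your $C_f$, which is the paper's $e^{-\gamma(\hat\Psi(f;s)-1)}/\Gamma(\hat\Psi(f;s))$, equal to $1+o(1)$ at $s=v_f(x;\Delta)=o(1)$. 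The one step you treat as routine --- the ``classical effective moderate-deviation theorem'' for the tilted sum --- is where the paper actually invests most of its effort, since the summands are independent but not i.i.d.\ and may be lattice-valued; the paper splits into the non-lattice case (Esseen smoothing, Proposition 4.10) and the lattice case (decomposition $f=\mathfrak g+\mathfrak h$ and a periodized saddle-point integral, Proposition 4.17), and in the range $\Delta=o(\sigma)$ both specializations collapse to the same answer. Your sketch is sound, but that lattice/non-lattice dichotomy is a genuine complication you would still have to confront, not a black box.
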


{\noindent}\tmtextbf{Remark. } One can prove $(2.1)$ for many other classes of 
$f$ (for example when $f \geqslant 0$ is a strongly additive function such that
 $\sigma(f;x) \rightarrow \infty$ and $0 \leqslant f(p) \leqslant O(1)$). 
We believe that $(2.1)$ holds in very broad generality, perhaps even for any 
$f$ satisfying the Erd\"os-Kac theorem (in the Kubilius-Shapiro version, see 
\cite{3}, theorem 12.2), though one may have to introduce some natural 
restrictions.
{\hspace*{\fill}}{\medskip}

As announced in the introduction,  the asymptotic relation $(2.1)$ fails when 
$\Delta \asymp \sigma (f ; x)$. This phenomenon is described in the next
Theorem.

\begin{theorem}
  Let $f \in \mathcal{C}$. Let $\sigma = \sigma (f ; x)$. For fixed $\delta >
  0$, uniformly in $1 \leqslant \Delta \leqslant \delta \sigma$,
  \[ \mathcal{D}_f (x ; \Delta) \sim \mathcal{A} \left( f ;
     \frac{\Delta}{\sigma} \right) \cdot \mathbbm{P} \left( \sum_{p \leqslant
     x} f (p) \cdot \left[ X_p - \frac{1}{p} \right] \geqslant \Delta \sigma
     \right) \]
  The function $\mathcal{A}(f ; x)$ is analytic (in a neighborhood of
  $\mathbb{R}^{+} \cup \{0\}$), strictly decreasing, and
  decays to $0$ as $x \rightarrow \infty$. Further $\mathcal{A}(f ; 0) = 1$.
  All those properties are the consequence of an explicit formula for
  $\mathcal{A}(f ; z)$ that we now describe. Denote by
  \[ \hat{\Psi} \left( f ; z \right) = \int_{\mathbbm{R}} e^{zt} d \Psi (f ;
     t) \]
  the Laplace transform of the distribution function $\Psi (f ; t)$. Let
  $\omega (z) = \omega (f ; z)$ be defined implicitly by $\hat{\Psi}' (f ;
  \omega (z)) = \hat{\Psi}' (f ; 0) + z \hat{\Psi}'' (f ; 0)$. The function
  $\omega (f ; z)$ thus defined is well-defined in a neighborhood of
  $\mathbbm{R}^{+} \cup \{0\}$ and analytic there. We have 
  \[ \mathcal{A} (f ; z) = \frac{e^{- \gamma ( \hat{\Psi} (f ; \omega (z)) - 1)}}{\Gamma
     ( \hat{\Psi} (f ; \omega (z))} \]
\end{theorem}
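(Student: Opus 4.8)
The plan is to obtain a sharp asymptotic for $\mathcal{D}_f(x;\Delta)$ directly and then divide by the (known) asymptotic for the probabilistic tail $\mathbbm{P}(\sum_p f(p)[X_p - 1/p] \geq \Delta\sigma)$; the quotient will turn out to be the claimed $\mathcal{A}(f;\Delta/\sigma)(1+o(1))$. For the left-hand side, I would set up a Perron-type / saddle-point analysis. Writing $F(s,x) = \prod_{p\leq x}(1 + (e^{sf(p)}-1)/p)$ for the relevant Dirichlet-type generating object (or, more precisely, estimating $\frac1x\sum_{n\leq x} e^{s f(n)}$ via a Kubilius-model / Selberg–Delange style argument, valid because $f\in\mathcal{C}$ and condition $(1.3)$ gives good control on $e^{sf(p)}$ for $s$ in a neighborhood of the real saddle point), one localizes the count $\{n\leq x: f(n)\geq \mu + \Delta\sigma\}$ by a contour shift to the saddle $s = s(\Delta)$ solving $(\log F)'(s) = \mu + \Delta\sigma$. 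Because $\Delta \leq \delta\sigma$ with $\delta$ small, the saddle $s(\Delta)$ stays in a fixed small neighborhood of $0$, so everything is analytic and uniform there. The key analytic input is that
\[
  \log\Big(\frac1x\sum_{n\leq x} e^{s f(n)}\Big) = \sum_{p\leq x}\frac{e^{sf(p)}-1}{p} + \big(\text{correction}\big) + o(1),
\]
and that the correction term — coming from the second-order Euler-product factor $\prod_p (1 + (e^{sf(p)}-1)/p)(1-1/p)^{?}$ versus $\prod_p \exp((e^{sf(p)}-1)/p)$ together with the mean-value constant — is precisely what produces a factor of the shape $e^{-\gamma(\hat\Psi(f;\cdot)-1)}/\Gamma(\hat\Psi(f;\cdot))$. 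Here one uses $(1.4)$: the empirical distribution of $g(p)=f(p)$ over $p\leq x$ converges to $\Psi(f;\cdot)$ fast enough (power of $\log x$) that sums like $\sum_{p\leq x}(e^{sf(p)}-1)/p$ can be replaced, up to $o(1)$, by $(\log\log x)\int(e^{st}-1)\,d\Psi(f;t) = (\log\log x)(\hat\Psi(f;s)-1)$, and the Mertens-type constant attached to the prime count is what contributes the $e^{-\gamma}$ and the Gamma factor via Mertens' theorem $\prod_{p\leq x}(1-1/p)^{-1}\sim e^\gamma\log x$.

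The right-hand side $\mathbbm{P}(\sum_{p\leq x} f(p)[X_p-1/p]\geq \Delta\sigma)$ is handled by the same saddle-point method applied to the sum of independent random variables — this is essentially the Bahadur–Rao / Cramér large-deviation estimate, and for the specific structure here it is exactly the content underlying Theorem 2.1 and its range extension; I would import the sharp (not just logarithmic) version. Its moment generating function is $\prod_{p\leq x}(1 + (e^{sf(p)}-1)/p)e^{-sf(p)/p}$, whose logarithm is $\sum_p (e^{sf(p)}-1-sf(p))/p$ plus a convergent correction $\sum_p O(f(p)^2/p^2)$, and the latter converges to a genuine constant rather than growing like $\log\log x$. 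Now comes the comparison: both tails are $(\text{same Gaussian-type prefactor})\cdot e^{-I(\Delta)(1+o(1))}\cdot (\text{constant})$, the rate functions $I(\Delta)$ agree to leading order (they come from the same leading term $(\log\log x)(\hat\Psi-1)$ after matching means via $\mu$), and the Gaussian prefactors from the second derivative at the saddle agree because $\sigma(f;x)^2$ is the common variance scale; everything cancels except the ratio of the two "correction constants", which is exactly
\[
  \frac{e^{-\gamma(\hat\Psi(f;\omega(z))-1)}}{\Gamma(\hat\Psi(f;\omega(z)))}\Big|_{z=\Delta/\sigma}.
\]
The implicit equation $\hat\Psi'(f;\omega(z)) = \hat\Psi'(f;0) + z\hat\Psi''(f;0)$ is just the translation of "the saddle point $s(\Delta)$ in terms of the normalized deviation $z=\Delta/\sigma$", using that the mean and variance of the model are $\hat\Psi'(f;0)\cdot(\text{scale})$ and $\hat\Psi''(f;0)\cdot(\text{scale})$; analyticity and strict monotonicity of $\omega$ follow from $\hat\Psi''(f;0)\neq 0$ (non-zero second moment) by the analytic implicit function theorem, and then $\mathcal{A}(f;0)=1$, strict decrease, and decay to $0$ follow from properties of $\Gamma$ near the relevant half-line together with $\hat\Psi(f;\omega(z))\to\infty$ as $z\to\infty$.

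The main obstacle, I expect, will be establishing the \emph{sharp} (leading-constant, not merely logarithmic-scale) asymptotic for $\mathcal{D}_f(x;\Delta)$ uniformly over the whole range $1\leq\Delta\leq\delta\sigma$, and in particular isolating the exact value of the mean-value constant $\lim_{x}\frac1{x}\sum_{n\leq x}e^{sf(n)}\big/\exp(\sum_{p\leq x}(e^{sf(p)}-1)/p)$ as an explicit function of $s$ (this is a Selberg–Delange / Landau-type constant, and pinning it down to $e^{-\gamma(\hat\Psi-1)}/\Gamma(\hat\Psi)$ via $(1.4)$ and Mertens is the crux). A secondary technical point is uniformity of the saddle-point expansion near $\Delta\asymp 1$, where the deviation is only of order $1$ and the Gaussian and large-deviation regimes overlap; here one checks directly that $\mathcal{A}(f;z)=1+O(z)$ so the correction factor is harmlessly $1+o(1)$, recovering consistency with Theorem 2.1. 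Everything else — the contour shifts, the Gaussian local expansions, matching of rate functions — is routine given the hypotheses defining $\mathcal{C}$.
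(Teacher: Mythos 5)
Your overall strategy is the one the paper uses: establish sharp asymptotics for $\mathcal{D}_f(x;\Delta)$ and for $\mathbbm{P}\bigl(\sum_{p\leq x}f(p)[X_p-1/p]\geq\Delta\sigma\bigr)$ separately, then compare. You also correctly locate the source of $\mathcal{A}$: the Levin--Fa\u{\i}nle\u{\i}b/Selberg--Delange constant in $\frac1x\sum_{n\leq x}e^{sf(n)}\sim \frac{L(f;s)}{\Gamma(\hat\Psi(f;s))}(\log x)^{\hat\Psi(f;s)-1}$ on one side, versus the Mertens constant $e^{\gamma(\hat\Psi(f;s)-1)}$ in the Euler product $\prod_{p\leq x}(1-1/p)(1+e^{sf(p)}/(p-1))$ on the other; the common factor $L(f;s)$ cancels and what survives is exactly $e^{-\gamma(\hat\Psi-1)}/\Gamma(\hat\Psi)$ evaluated at the saddle $v=\omega(f;\Delta/\sigma_\Psi)$. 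This is the content of Proposition 4.1 together with Lemmas 7.1/7.3, and you have it right.

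The genuine gap is that your argument silently assumes both tails have a clean Bahadur--Rao form \emph{(prefactor)}$\cdot e^{-I(\Delta)}\cdot$\emph{(constant)}, which is simply false when $\Psi(f;\cdot)$ is lattice-distributed --- and this includes the canonical example $f=\omega$. In the lattice case the saddle-point contour integral cannot be localized near the real axis by an Ess\'een-type bound (Lemma 4.13 fails: $|\hat\Psi(f;\mathi t)|=1$ at $t=2\pi$), and the asymptotic for each side acquires an oscillatory factor $\mathcal{P}_{\mathfrak{h}}(\xi_f(x;\Delta);v)$ depending on the fractional part of $\mu(f;x)+\Delta\sigma(f;x)$. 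This factor does \emph{not} tend to a limit, so neither $\mathcal{D}_f(x;\Delta)/S_f(x;\Delta)$ nor the probability divided by $S_f(x;\Delta)$ converges; the theorem survives only because the two oscillatory factors coincide and cancel in the ratio. Showing they coincide is nontrivial: the paper needs a general proposition (4.17) formulated for Bernoulli variables $Z_p$ that are \emph{not necessarily independent}, applied once with $Z_p=\mathbf{1}_{p\mid n}$ on $[1,x]$ and once with $Z_p=X_p$ independent, together with the decomposition $f=\mathfrak{g}+\mathfrak{h}$ into integer and non-integer parts, a sandwiching argument with a rounded-up additive function $\mathfrak{H}$, and analytic control of the series $\mathcal{P}_{\mathfrak{h}}$ (Section 4.5). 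None of this is ``routine,'' and your proposal gives no indication that you see the problem or how to handle it. A secondary, lesser omission: for the lower range $1\leq\Delta\ll(\log\log x)^{1/12}$ the saddle-point expansion degenerates, and the paper instead invokes a Hwang-type central/moderate-deviation lemma (Proposition 4.9); your remark that $\mathcal{A}(f;z)=1+O(z)$ is the right observation for consistency, but one still needs a separate estimate in that range rather than pushing the saddle point method through $\Delta\asymp1$.
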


{\noindent}\tmtextbf{Example. }In the case of $f$ being the number of prime
factors of $n$, we find that $\hat{\Psi} (f ; z) = e^z$ and that $\omega (f ;
z) = \log (1 + z)$. Therefore 
\[ \mathcal{A}(f ; z) = \frac{e^{- \gamma z}}{\Gamma \left( 1 + z \right)} \]
Thus $\mathcal{A}(f ; x)$ decays very fast !{\hspace*{\fill}}{\medskip}

The function $\mathcal{A}(f ; \Delta /\sigma)$ stays essentially constant
throughout the range $\Delta = c \sigma + o (\sigma)$ (where $\sigma = \sigma (f ; x)$), since $A (f ; \Delta /\sigma) = A (f ; c) + o (1)$ by analyticity. 
In this respect when $\Delta \sim c \sigma$ the
quantity $\mathcal{D}_f (x ; \Delta)$ differs asymptotically from its
probabilistic counterpart only by a constant. 


We believe that the appearance of the function
$\mathcal{A}(f ; z)$ is essentially due to the large prime factors. To back up
our claim, let us look at what happens when one ignores the large prime 
factors.  We denote by $f (n ; y)$ the truncated additive
function
\[ f (n ; y) = \sum_{\tmscript{\begin{array}{c}
     p|n\\
     p \leqslant y
   \end{array}}} f (p) \]

The following conjecture was suggested by Kevin Ford.

\begin{conjecture}
  Suppose that $u \assign \log x / \log y \longrightarrow \infty$ and $u \leqslant \tmop{loglog} x$. Then 
  \[ \frac{1}{x} \cdot \# \left\{ n \leqslant x : \frac{f (n ; y) - \mu (f ;
     y)}{\sigma (f ; y)} \geqslant \Delta \right\} \sim \mathbbm{P} \left(
     \sum_{p \leqslant y} f (p) \left[ X_p - \frac{1}{p} \right] \geqslant
     \Delta \sigma (f ; y) \right) \]
  uniformly in $1 \leqslant \Delta \leqslant c \sigma (f ; y)$ for any fixed
  $c > 0$.
\end{conjecture}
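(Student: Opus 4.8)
\medskip
\noindent\textbf{Towards a proof.} The plan is to run, for the truncated function $f(\cdot\,;y)$, the saddle-point machinery that underlies Theorems~2.1 and~2.2 (and the analysis of Kubilius and Maciulis behind them), exploiting the hypothesis that every prime factor now being counted satisfies $\log x/\log p\ge u\to\infty$: under this constraint the large-prime (Dickman-type) mechanism on primes in $(y,x]$ that manufactures the correction $\mathcal{A}(f;\cdot)$ in Theorem~2.2 is simply absent, and the answer should reduce to the bare Kac-Kubilius probability. Concretely, since $e^{sf(n;y)}$ depends only on the squarefree $y$-smooth kernel of $n$, write $e^{sf(n;y)}=\sum_{d}\prod_{p\mid d}(e^{sf(p)}-1)$, the sum over squarefree divisors $d$ of $n$ all of whose prime factors are $\le y$, and sum over $n\le x$; for $s\ge 0$ this yields
\[
\frac{1}{x}\sum_{n\le x}e^{sf(n;y)}=\prod_{p\le y}\Bigl(1+\frac{e^{sf(p)}-1}{p}\Bigr)-E(s),
\]
where $E(s)\ge 0$ is assembled from the $\{x/d\}$ error term and the Rankin tail $\sum_{d>x}$, each a $y$-smooth sum carrying the multiplicative weight $d\mapsto\prod_{p\mid d}(e^{sf(p)}-1)$. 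The main term is $\mathbbm{E}\,e^{s\sum_{p\le y}f(p)X_{p}}=e^{s\mu(f;y)}\,\mathbbm{E}\,e^{s\sum_{p\le y}f(p)[X_{p}-1/p]}$, and because $(1.3)$ makes the Laplace transform $\widehat{\Psi}(f;\cdot)$ entire, it has size $(\log y)^{C(s)+o(1)}$ with $C(s)=\int_{\mathbbm{R}}(e^{st}-1)\,d\Psi(f;t)$; replacing $s$ by $s+\mathrm{i}t$ gives the analogous estimate for the full complex transform, controlled by the same kind of $y$-smooth error.

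The second step converts this into the tail asymptotic by a Laplace-inversion / saddle-point argument of the kind used for Theorem~2.1. The relevant tilt is $s=s(\Delta)$, fixed by $\frac{d}{ds}\log\mathbbm{E}\,e^{s\sum_{p\le y}f(p)X_{p}}=\Delta\,\sigma(f;y)+\mu(f;y)$; in the notation of Theorem~2.2 one has $s(\Delta)=\omega\bigl(f;\Delta/\sigma(f;y)\bigr)+o(1)$, which by analyticity of $\widehat{\Psi}(f;\cdot)$ stays in a fixed bounded interval throughout $1\le\Delta\le c\,\sigma(f;y)$, so the moment generating function estimate of Step~1 is available at $s=s(\Delta)$. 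Shifting the contour through the saddle, using the local Gaussian behaviour of the complex transform near $t=0$ and its decay for $t$ bounded away from $0$, and integrating, one should reach $\mathcal{D}_{f}(x;\Delta)\sim\mathbbm{P}\bigl(\sum_{p\le y}f(p)[X_{p}-\tfrac1p]\ge\Delta\,\sigma(f;y)\bigr)$ --- with no factor $\mathcal{A}$, since the $\Gamma$-factor that Theorem~2.2 attaches to $\omega$ comes from a Dickman-type integral over the primes in $(y,x]$, which the truncation has removed.

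The point where I expect real resistance is the uniformity in $u$ in Step~1. Since $u\le\log\log x$ we have $\log\log y=\log\log x-\log u\sim\log\log x$, so $\sigma(f;y)\sim\sigma(f;x)$, and in the genuine large-deviation range $\Delta\asymp\sigma(f;y)$ --- exactly where Theorem~2.2's correction $\mathcal{A}$ is non-trivial --- the probability being computed is only $(\log y)^{-\Theta(1)}=(\log x)^{-\Theta(1)}$, so the error $E(s(\Delta)+\mathrm{i}t)$ must, after the Fourier integration, come out smaller than this. But that error is a twisted $y$-smooth sum, and the available tools --- Rankin's trick, or the fundamental lemma of sieve theory --- bound it only by something of the shape $e^{-cu}(\log y)^{O_{s}(1)}$ (respectively $\rho(u)(\log y)^{O_{s}(1)}$) times the main term, which genuinely beats $(\log x)^{-\Theta(1)}$ only once $u$ is of order $\log\log x$, not merely $u\to\infty$. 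Thus the outline above delivers the conjecture in the moderate range --- where $\Delta/\sigma(f;y)\to0$, hence $C(s(\Delta))\to0$ and the smooth-number losses become affordable for any $u$ growing fast enough (in particular it recovers the classical Kubilius form of the Erd\"os-Kac theorem for $f(\cdot\,;y)$ when $\Delta=O(1)$) --- but the full large-deviation range for every $u\to\infty$ appears to need either a substantially sharper mean value for $d\mapsto\prod_{p\mid d}(e^{sf(p)}-1)$ over $y$-smooth $d\le x$, or an argument that never reduces to such smooth-number sums. Closing this gap is, presumably, why the statement is recorded here only as a conjecture.
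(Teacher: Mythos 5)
The statement you are addressing is recorded in the paper as a conjecture (attributed to Kevin Ford), and the paper does not prove it; it proves only the special case $u=\log x/\log y\asymp\log\log x$ (Proposition 2.4). Your outline is candid about this, and your diagnosis of the obstruction is essentially correct: once $\Delta\asymp\sigma(f;y)$ the target probability is only of size $(\log y)^{-\Theta(1)}$, so any error term produced in your Levin--Fainleb-style mean-value analysis of the truncated function must be smaller than that, and the Rankin or fundamental-lemma bounds for $y$-smooth sums are not obviously up to the task for general $u\to\infty$.

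Where you diverge from the paper is in how the special case is actually handled. You attempt to rebuild the mean-value theorem for $f(\cdot;y)$ directly and then confront the smooth-number error head on. The paper instead invokes the Kubilius model in Barban--Vinogradov's sharpened form (Lemma 8.1), namely
\[
\frac{1}{x}\,\#\bigl\{n\le x : f(n;y)\ge t\bigr\}\;=\;\mathbbm{P}\Bigl(\textstyle\sum_{p\le y}f(p)X_p\ge t\Bigr)+O\bigl(u^{-u/8}\bigr)
\]
uniformly in $t$, and then uses Theorems~2.2 and~2.8 to show that the probability on the right, at $t=\xi_f(y;\Delta)$ and $1\le\Delta\le c\,\sigma(f;y)$, is $\gg(\log x)^{-\Theta(1)}(\log\log x)^{-1/2}$. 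Since $u^{-u/8}=\exp\bigl(-\tfrac{u}{8}\log u\bigr)$ is of size $(\log x)^{-c\log_3 x}$ at $u\asymp\log\log x$, the main term wins comfortably. Note that this error bound is substantially stronger than the $e^{-cu}$ or Dickman-type $\rho(u)$ you extract from Rankin's trick or the fundamental lemma, which at $u\asymp\log\log x$ is only about $(\log x)^{-c}$ and therefore competes with, rather than beats, the main term. So the black-box use of Barban--Vinogradov is not merely a shortcut: it is what lets the paper close even the range $u\asymp\log\log x$ cleanly. The regime $u\to\infty$ with $u=o(\log\log x)$ remains genuinely open, and your sense that it requires a sharper input on twisted $y$-smooth sums than current tools give is a fair account of why the statement stands as a conjecture rather than a theorem.
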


In support of the conjecture we have the following simple proposition (which 
we deduce from Kubilius's theorem, in Barban-Vinogradov's version, 
{\cite{21}}, lemma 3.2, p. 122).

\begin{proposition}
  Suppose that $u = \log x / \log y \asymp \tmop{loglog} x$. Then
  \[ \frac{1}{x} \cdot \# \left\{ n \leqslant x : \frac{f (n ; y) - \mu (f ;
     y)}{\sigma (f ; y)} \geqslant \Delta \right\} \sim \mathbbm{P} \left(
     \sum_{p \leqslant y} f (p) \left[ X_p - \frac{1}{p} \right] \geqslant
     \Delta \sigma (f ; y) \right) \]
  uniformly in $1 \leqslant \Delta \leqslant c \sigma (f ; y)$ for any fixed
  $c > 0$.
\end{proposition}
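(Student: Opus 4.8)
The plan is to read Proposition~2.4 off Kubilius's fundamental lemma in the Barban--Vinogradov form (\cite{21}, Lemma~3.2, p.~122), together with a soft large--deviation lower bound for the probabilistic model. The fundamental lemma compares the distribution of the truncated function $f(n;y)$ over $n\leqslant x$ with that of its Kac--Kubilius model: with $u=\log x/\log y$ (which here, being $\asymp\log\log x$, lies well inside the range where the lemma is valid) one has
\[
  \sup_{z\in\mathbbm{R}}\Bigl|\tfrac1x\#\{n\leqslant x:f(n;y)\leqslant z\}-\mathbbm{P}\Bigl(\textstyle\sum_{p\leqslant y}f(p)X_p\leqslant z\Bigr)\Bigr|\ \ll\ \mathcal{E}(x,y),
\]
where $\mathcal{E}(x,y)\ll\exp(-cu\log u)+x^{-c}$ for some $c>0$ (neither the exact exponent nor the secondary term matters below). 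Specializing $z=\mu(f;y)+\Delta\,\sigma(f;y)$, passing to complementary events (a uniform sup--bound also controls one--sided limits, so the closed event ``$\geqslant$'' is covered), and using $\sum_{p\leqslant y}f(p)/p=\mu(f;y)$, we obtain, uniformly in $\Delta\geqslant1$,
\[
  \tfrac1x\#\Bigl\{n\leqslant x:\tfrac{f(n;y)-\mu(f;y)}{\sigma(f;y)}\geqslant\Delta\Bigr\}\ =\ \mathbbm{P}\Bigl(\sum_{p\leqslant y}f(p)\Bigl[X_p-\tfrac1p\Bigr]\geqslant\Delta\,\sigma(f;y)\Bigr)+O(\mathcal{E}(x,y)).
\]

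Next I exploit the calibration $u\asymp\log\log x$. It forces $\exp(-cu\log u)=\exp\bigl(-\Theta(\log\log x\cdot\log\log\log x)\bigr)$, which is smaller than $(\log x)^{-A}$ for every fixed $A>0$; the secondary term $x^{-c}$ is negligible a fortiori, so $\mathcal{E}(x,y)\ll_A(\log x)^{-A}$ for all $A$. On the other hand the model probability on the right stays above a fixed negative power of $\log x$ over the whole range $1\leqslant\Delta\leqslant c\,\sigma(f;y)$: it is non--increasing in $\Delta$, so it suffices to bound it from below at $\Delta=c\,\sigma(f;y)$, where $\Delta\,\sigma(f;y)\asymp\sigma^2(f;y)$ and --- since $\mu(f;y)=\sum_{p\leqslant y}f(p)/p\asymp\sigma^2(f;y)$, both quantities being $\asymp\log\log y$ for $f\in\mathcal{C}$ (for the mean, use (1.3)--(1.4) and partial summation) --- we are looking at a genuine Cram\'er--scale large deviation of $\sum_{p\leqslant y}f(p)X_p$ on the scale $\sigma^2(f;y)$. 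A crude Cram\'er--type lower bound (say by exponential tilting), using that by (1.4) a positive proportion of the primes $p\leqslant y$ have $f(p)$ in a fixed compact subinterval of $(0,\infty)$ (this is where one uses that $\Psi(f;\cdot)$ has non--zero second moment), then gives
\[
  \mathbbm{P}\Bigl(\sum_{p\leqslant y}f(p)\Bigl[X_p-\tfrac1p\Bigr]\geqslant\Delta\,\sigma(f;y)\Bigr)\ \geqslant\ e^{-C'\sigma^2(f;y)}\ \gg_{c,f}\ (\log x)^{-C}
\]
uniformly for $1\leqslant\Delta\leqslant c\,\sigma(f;y)$, since $\sigma^2(f;y)\sim c_0\log\log y\sim c_0\log\log x$ (here $\log y\asymp\log x/\log\log x$, and $\sigma^2(f;x)\sim c_0\log\log x$ for $f\in\mathcal{C}$).

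Feeding both bounds --- $\mathcal{E}(x,y)\ll_A(\log x)^{-A}$ and the model probability $\gg_{c,f}(\log x)^{-C}$ --- into the identity of the first paragraph, with any fixed $A>C$, gives
\[
  \tfrac1x\#\Bigl\{n\leqslant x:\tfrac{f(n;y)-\mu(f;y)}{\sigma(f;y)}\geqslant\Delta\Bigr\}\ =\ \mathbbm{P}\Bigl(\sum_{p\leqslant y}f(p)\Bigl[X_p-\tfrac1p\Bigr]\geqslant\Delta\,\sigma(f;y)\Bigr)\bigl(1+O_A((\log x)^{C-A})\bigr),
\]
uniformly in $1\leqslant\Delta\leqslant c\,\sigma(f;y)$, and the error tends to $0$; this is Proposition~2.4. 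The argument is entirely soft except for one quantitative point, which is also where its content lies: Kubilius's error must decay faster than \emph{every} fixed power of $\log x$, and that is precisely what the hypothesis $u\asymp\log\log x$ buys. For $u\to\infty$ more slowly --- the regime of Conjecture~2.3 --- Kubilius's error is only of size $\exp(-o(\log\log x))$, too large to beat the lower bound $(\log x)^{-C}$ for the model probability, and this simple comparison collapses; that is why the Conjecture lies deeper. The one other thing to check with some care is the uniformity over $1\leqslant\Delta\leqslant c\,\sigma(f;y)$ of the Cram\'er--type lower bound, which holds by monotonicity in $\Delta$.
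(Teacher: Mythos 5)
Your proposal is correct and shares the skeleton of the paper's argument: both start from the Barban--Vinogradov form of Kubilius's fundamental lemma (Lemma~8.1, with error $O(u^{-u/8})$), both observe that $u\asymp\log\log x$ makes this error super-polylogarithmic in $\log x$, and both reduce the problem to showing the model probability $\mathbbm{P}\bigl(\sum_{p\leqslant y}f(p)[X_p-1/p]\geqslant\Delta\sigma(f;y)\bigr)$ stays above a \emph{fixed} negative power of $\log x$ across the whole range, via monotonicity in $\Delta$. Where you diverge is in how you secure that lower bound. The paper evaluates the probability \emph{asymptotically}, by appealing to Theorems~2.2 and~2.8 (i.e., to the machinery of Propositions~4.10 and~4.17 applied to the Kubilius model at $\Delta=c\sigma(f;y)$, with $w=\omega(f;c)+o(1)$ and the non-vanishing of $\mathcal{A}(f;\cdot)$, $\mathcal{P}_{\mathfrak{h}}$, and $L(f;\cdot)e^{-\cdot c(f)}$), obtaining the sharp rate $\gg(\log y)^{A(f;w)}(\log\log y)^{-1/2}$. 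You instead propose a self-contained, elementary Cram\'er/exponential-tilting lower bound $\gg e^{-C'\sigma^2(f;y)}$, using only that a positive proportion of primes have $f(p)$ in a compact subinterval of $(0,\infty)$ (from $(1.3)$--$(1.4)$ and the non-degeneracy of $\Psi(f;\cdot)$). Both are adequate, since all that matters is beating $u^{-u/8}$. The tradeoff: the paper's route is essentially free once its large-deviation theorems are in place (and delivers the correct exponent, which is illuminating), whereas your route avoids invoking the heavy machinery and makes the proposition genuinely stand-alone --- at the cost of some details you only sketch (the precise tilting computation, bounding the tilted variance, the change-of-measure cost), all of which are standard but would need to be written out. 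Your diagnosis of where the argument collapses for smaller $u$ --- and hence why Conjecture~2.3 lies deeper --- matches the paper's implicit reasoning.
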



Theorem 2.2 seems to suggest the inequality
\begin{eqnarray}
  \mathcal{D}_f (x ; \Delta) & \leqslant & \left( 1 + o (1) \right) \cdot
  \mathbbm{P} \left( \sum_{p \leqslant x} f (p) \left[ X_p - \frac{1}{p}
  \right] \geqslant \Delta \sigma (f ; x) \right)
\end{eqnarray}
might be true in general. This is possibly true for fixed strongly additive
$f \geqslant 0$, uniformly in $\Delta \geqslant 0$. It is certainly false if
we drop the condition $f \geqslant 0$ and allow both $f$ and $\Delta$ to vary
uniformly. Ruzsa's paper (see \cite{16}) contains a  weaker version
of $(2.2)$ which is however valid uniformly in $f$ and $\Delta$. There is 
also a discussion of the ``optimal'' inequality in Tenenbaum's book
(see \cite{18}, p. 315). 
\

We now turn to the following question : Given an additive function $f$ what
is the relationship between the distribution of $f$ on the primes and the
distribution of $f$ on the integers ? An early result in that direction is
Kubilius's theorem, stated below (see \cite{3}, p. 12).

{\noindent}\tmtextbf{Theorem. }\tmtextit{Let $f$ be an additive function.
Let $\sigma = \sigma(f;x)$. Suppose that for every fixed $t \in (0;1)$ 
we have $\sigma(f;x) - \sigma(f;x^t) = o(\sigma(f;x))$. The following
equivalence holds: There is a distribution
function $\Psi(\cdot)$ such that
\[ \frac{1}{\sigma^2} \sum_{\tmscript{\begin{array}{c}
     p \leqslant x\\
     f (p) \leqslant t \sigma
   \end{array}}} \frac{f (p)^2}{p} \cdot \left( 1 - \frac{1}{p} \right) \]
converges weakly to $\Psi (t)$ if and only if, there is a distribution
function $F$ with mean $0$ and variance $1$ such that $\mathcal{D}_f (x ; t)$
converges weakly to $1 - F (t)$. The relationship between $\Psi$ and $F$ is
determined by
\[ \int_{\mathbbm{R}} e^{iut} \tmop{dF} (t) = \exp \left( \int_{\mathbbm{R}}
   \frac{e^{iut} - iut - 1}{u^2} d \Psi (u) \right) .
\]}{\hspace*{\fill}}{\medskip}

The striking feature of Kubilius's theorem is that from the statistical
behaviour of $f (\cdot)$ on the integers one is able to deduce the statistical
behaviour of $f (\cdot)$ on the primes. The simplest case in which the theorem
is applicable, is when $f$ is equal to the number of prime factors of $n$. In
this case
\begin{eqnarray}
  \frac{1}{\sigma^2 (f ; x)} \sum_{\tmscript{\begin{array}{c}
    p \leqslant x\\
    f (p) \leqslant t \sigma
  \end{array}}} \frac{f (p)^2}{p} & \longrightarrow & \delta_{0} (t) \assign
  \left\{ \begin{array}{l}
    1 \text{ if } t > 0\\
    0 \text{ if } t \leqslant 0
  \end{array} \right.
\end{eqnarray}
because all the $f (p)$'s belong to a bounded range. As a consequence the
limit law $1 - F (t) = \lim \mathcal{D}_f (x ; t)$ is normal. This is of course
nothing else than a variation on the Erd\"os-Kac theorem. However
$(2.3)$ holds, in fact, for all $f \in \mathcal{C}$. Thus, all that Kubilius's
theorem is saying about additive function $f \in \mathcal{C}$ is that the
limit law $1 - F (t) = \lim \mathcal{D}_f (x ; t)$ is normal. In
what follows we will be interested in obtaining more detailed information
about the convergence of
\begin{equation}
  \frac{1}{\sigma^2(f;x)} \sum_{\tmscript{\begin{array}{c}
    p \leqslant x\\
    f (p) \leqslant t
  \end{array}}} \frac{f (p)^2}{p} \cdot \left( 1 - \frac{1}{p} \right)
  \longrightarrow \Psi (t) \text{ a.e}
\end{equation}
from assumptions on the large deviation behaviour of $\mathcal{D}_f (x ;
\Delta)$ when $1 \leqslant \Delta \ll_{\varepsilon} \sigma^{1 - \varepsilon}$
and vice-versa.\, Note the difference in scale between $(2.3)$ and $(2.4)$
(that is, $t \sigma$ is replaced by $t$). To state the next two results let us
define a {\sl Levy process} (or rather a diluted version of that notion: 
we don't need any assumptions on the underlying probability space - the name
{\sl Levy process in law} would seem appropriate but it is already taken). We
allow ourselves a little sloppiness in the definition (the sloppiness comes
from working with an uncountable set of mutually independent random variables,
without discussing the existence of such a family).

{\noindent}\tmtextbf{Definition. }\tmtextit{Let $\Psi$ be a distribution
function. Denote by $\left\{ \mathcal{Z}_{\Psi} (u) : u > 0 \right\}$ an
indexed family of mutually independent random variables, with distribution
determined by
\begin{eqnarray}
  \mathbbm{E} \left[ e^{it\mathcal{Z}_{\Psi} (u)} \right] & = & \exp \left( u
  \cdot \int_{\mathbbm{R}} \frac{e^{itx} - itx - 1}{x^2} d \Psi (x) \right)
\end{eqnarray}
Note that for each $u > 0$ the random variable $\mathcal{Z}_{\Psi} (u)$ has
mean 0 and variance $u$. }{\hspace*{\fill}}{\medskip}

It is clear that the distribution of $\mathcal{Z}_{\Psi} (u)$ is known once
the distribution of $\mathcal{Z}_{\Psi} (1)$ is. Also, when $n$ is a positive
integers we can write $\mathcal{Z}_{\Psi}(n) \overset{\tmop{law}}{=} X_1 +
\ldots + X_n$ with $X_1, X_2, \ldots$ independent and identically distributed
random variables, each being distributed in exactly the same way as
$\mathcal{Z}_{\Psi} (1)$. Thus $\mathcal{Z}_{\Psi} (x)$ is a rather natural
``continuous'' generalization of the notion of a ``sum of $n$ independent and
identically distributed random variables''. Finally, let us note that in the
special case when $\Psi (t)$ has a jump of size $1$ at $t = 1$, the random
variable $\mathcal{Z}_{\Psi} (x)$ is a centered Poisson random variable with
parameter $x$. 

The content of the next Theorem is that a nice distribution on the primes
implies a nice distribution on the integers. Following Elliott (see {\cite{4}},
p. 50) we consider this a Theorem in the ``primes to integers'' direction.

\begin{theorem}
  Let $f$ be a strongly additive function. Suppose that $\sigma^2 = \sigma^2
  (f ; x) \rightarrow \infty$ and that $0 \leqslant f (p) \leqslant O (1)$ for
  all primes $p$. If there is a distribution function $\Psi (t)$ such that
  \[ \frac{1}{\sigma^2_{}} \sum_{\tmscript{\begin{array}{c}
       p \leqslant x\\
       f (p) \leqslant t
     \end{array}}} \frac{f (p)^2}{p} \cdot \left( 1 - \frac{1}{p} \right) -
     \Psi (t) \ll \frac{1}{\sigma^2} \]
  uniformly in $t \in \mathbbm{R}$ as $x \longrightarrow \infty$, then
  
  \[ \mathcal{D}_f (x ; \Delta) \sim \mathbbm{P} \left( \mathcal{Z}_{\Psi}
     \left( \sigma^2 \right) \geqslant \Delta \sigma \right) \]
uniformly in $1 \leqslant \Delta \leqslant o (\sigma)$,  as $x \rightarrow \infty$.
\end{theorem}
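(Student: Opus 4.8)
The plan is to compare the two sides by passing through the probabilistic sum $\sum_{p\le x} f(p)[X_p - 1/p]$ of the Kac--Kubilius model, which by Theorem~2.1 already matches $\mathcal{D}_f(x;\Delta)$ uniformly in $1\le\Delta\le o(\sigma)$. So it suffices to show
\[
  \mathbbm{P}\!\left(\sum_{p\le x} f(p)\Big[X_p-\tfrac1p\Big]\ge \Delta\sigma\right)
  \ \sim\ \mathbbm{P}\!\left(\mathcal{Z}_\Psi(\sigma^2)\ge \Delta\sigma\right)
\]
in the same range. Both are large-deviation tail probabilities for a sum (resp.\ infinitely divisible law) of mean $0$ and variance $\sigma^2$, so the natural tool is a uniform tilting/saddle-point analysis: rescale by $\sigma$, set $Y = \sigma^{-1}\sum_p f(p)[X_p-1/p]$ and $W = \sigma^{-1}\mathcal{Z}_\Psi(\sigma^2)$, each with variance $1$, and compare their cumulant generating functions
\[
  \Lambda_Y(s) \;=\; \sum_{p\le x}\log\!\Big(1 + \tfrac1p\big(e^{sf(p)/\sigma}-1\big)\Big) - \tfrac{s}{\sigma}\sum_{p\le x}\tfrac{f(p)}{p},
  \qquad
  \Lambda_W(s) \;=\; \sigma^2\!\int_{\mathbbm{R}}\frac{e^{sx/\sigma}-\tfrac{sx}{\sigma}-1}{x^2}\,d\Psi(x).
\]
The point of the hypothesis $\frac1{\sigma^2}\sum_{p\le x,\,f(p)\le t}\frac{f(p)^2}{p}(1-1/p) = \Psi(t)+O(1/\sigma^2)$ uniformly is precisely that, after expanding $\log(1+z) = z - z^2/2 + O(z^3)$ and $e^{w}-w-1 = w^2/2 + O(w^3)$ and writing everything as a Stieltjes integral against the measure $\tfrac1p f(p)^2(1-1/p)$, the difference $\Lambda_Y(s) - \Lambda_W(s)$ is $o(1)$ uniformly for $s$ in the relevant saddle range $0\le s\le o(\sigma)$; here the cubic and higher error terms are controlled by the sub-exponential tail bound $(1.3)$ in the definition of $\mathcal{C}$, which guarantees $\sum_p f(p)^k/p \ll_k \sigma^2$ and hence that each extra power of $f(p)/\sigma$ costs a factor $o(1)$.

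Concretely I would proceed as follows. First, establish a uniform local limit / saddle-point asymptotic for $\mathbbm{P}(Y\ge \Delta)$ of the shape $e^{\Lambda_Y(s_\Delta) - s_\Delta\Delta}\cdot(1+o(1))/(s_\Delta\sqrt{2\pi\,\Lambda_Y''(s_\Delta)})$, where $s_\Delta$ is the saddle point solving $\Lambda_Y'(s) = \Delta$; this is the content of the Maciulis/Kubilius-type estimates cited before Theorem~2.1, and for $\Delta = o(\sigma)$ one checks $s_\Delta = \Delta(1+o(1))$ so that $s_\Delta = o(\sigma)$. Second, prove the identical asymptotic for $\mathbbm{P}(W\ge\Delta)$ — this is cleaner since $\mathcal{Z}_\Psi(\sigma^2)$ is genuinely infinitely divisible, so a direct Cramér/Esscher tilt (exponentially change measure by $e^{sW}$, apply the Berry--Esseen bound to the tilted law, whose third absolute moment is $O(1)$ because $\Psi$ has compact support inside $[0,O(1)]$) gives the saddle-point formula with the same main term and the same $\Lambda_W''$. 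Third, compare: show $\Lambda_Y(s_\Delta) - s_\Delta\Delta = \Lambda_W(\tilde s_\Delta) - \tilde s_\Delta\Delta + o(1)$ and $\Lambda_Y''(s_\Delta)/\Lambda_W''(\tilde s_\Delta)\to 1$, using that the two saddle points agree to within $o(1)$ (again because $\Lambda_Y' - \Lambda_W' = o(1)$ uniformly, by differentiating the estimate above), and that the prefactors $1/(s\sqrt{2\pi \Lambda''})$ are continuous and bounded away from problematic values throughout. Finally chain the three comparisons together with Theorem~2.1.

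The main obstacle I anticipate is making the saddle-point asymptotics genuinely \emph{uniform} in $\Delta$ over the whole range $1\le\Delta\le o(\sigma)$, rather than for each fixed $\Delta$: near the lower end $\Delta\asymp 1$ one is in the central-limit regime and the Gaussian prefactor must be handled via Berry--Esseen with an error that is uniform in the tilting parameter, while near the upper end $\Delta = o(\sigma)$ the tilted distributions of the individual summands $f(p)(X_p-1/p)$ are still close to their untilted versions only because $s_\Delta f(p)/\sigma = o(1)$ for \emph{most} $p$ — the large primes $p$ with $f(p)$ atypically large (which $(1.3)$ bounds but does not eliminate) need a separate truncation argument, splitting $\sum_p$ at some threshold $f(p) \le T$ with $T\to\infty$ slowly, handling the tail via $(1.3)$, and checking the threshold contributes only $o(1)$ to both $\Lambda_Y$ and $\Lambda_W$ uniformly. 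Once the uniform saddle-point machinery is in place for both the arithmetic sum and the Lévy process — and the hypothesis is exactly tuned so their log-Laplace transforms differ by $o(1)$ on the relevant strip — the comparison itself is routine.
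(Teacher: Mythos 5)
Your first step invokes Theorem~2.1 to replace $\mathcal{D}_f(x;\Delta)$ by the Kac--Kubilius model probability $\mathbbm{P}\bigl(\sum_{p\le x} f(p)[X_p-1/p]\ge\Delta\sigma\bigr)$, but Theorem~2.1 is stated and proved in the paper only for $f\in\mathcal{C}$. The $f$ in Theorem~2.5 is merely strongly additive with $0\le f(p)\le O(1)$ and $\sigma\to\infty$; the hypothesis here concerns a weighted count $\frac1{\sigma^2}\sum_{f(p)\le t}\frac{f(p)^2}{p}(1-1/p)$ converging to $\Psi$ at rate $O(1/\sigma^2)$, which does \emph{not} put $f$ in $\mathcal{C}$ (the definition of $\mathcal{C}$ requires the unweighted prime-count $(1/\pi(x))\#\{p\le x: f(p)\le t\}$ to converge at polynomial rate in $\log x$). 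The remark after Theorem~2.1 says the extension to this broader class is believable but does not prove it, so you are resting the whole reduction on an unestablished lemma. Relatedly, your appeal to assumption $(1.3)$ to control $\sum_p f(p)^k/p$ is also out of scope for this theorem: it is the boundedness $0\le f(p)\le O(1)$ that supplies those bounds here, not any $\mathcal{C}$-axiom.

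The paper sidesteps this entirely by comparing the arithmetic side directly to the L\'evy side, with no intermediary. Maciulis's saddle-point theorem (Lemma~10.1) already gives the uniform large-deviation asymptotic for $\mathcal{D}_f^\times(x;\Delta)$ for exactly the class of $f$ in Theorem~2.5, and Hwang's theorem (Lemma~10.2) gives the matching asymptotic for $\mathbbm{P}(\mathcal{Z}_\Psi(B^2)\ge\Delta B)$. One then shows the two saddle parameters $\eta_f(x;\Delta)$ and $\rho_\Psi(B;\Delta)$ differ by $o(1/B^2)$ (Lemma~10.3) and that the cumulant sums over primes can be rewritten, via the $O(1/\sigma^2)$ hypothesis and a Taylor expansion of the Stieltjes integrand, as the corresponding integrals against $\Psi$ with $o(1)$ error. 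Your ``compare $\Lambda_Y$ and $\Lambda_W$'' outline is the same idea, but note the quantitative point: the cumulants are of size $\asymp\Delta^2$, so ``the saddle points agree to $o(1)$'' is not a sufficiently sharp statement by itself --- the paper's $o(1/B^2)$ closeness combined with analyticity is what lets one replace $\eta$ by $\rho$ inside $B^2$-weighted expressions without spoiling the $o(1)$ error. If you replace your Theorem~2.1 detour by a direct citation of Maciulis's saddle-point asymptotic for $\mathcal{D}_f^\times$ and sharpen the saddle-point comparison to the $o(1/B^2)$ scale, your plan becomes essentially the paper's proof.
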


Theorem 2.5 is saying that assuming certain regularity conditions on the
primes, the distribution of an additive function on the integers mimics a sum
of $\sigma^2 (f ; x)$ random variables. In the converse
``integers to primes'' direction we have Theorem 2.6.

\begin{theorem}
  Let $f$ be a strongly additive function. Suppose that 
  $\sigma^2 = \sigma^2(f;x) \longrightarrow \infty$ and that
  $0 \leqslant f(p) \leqslant O(1)$ for all primes $p$. If we have
  \[ \mathcal{D}_f (x ; \Delta) \sim \mathbbm{P} \left( \mathcal{Z}_{\Psi}
     \left( \sigma^2 \right) \geqslant \Delta \sigma \right) \]
 uniformly in $1 \leqslant \Delta \ll_\varepsilon \sigma^{1 -
  \varepsilon}$, for some distribution function $\Psi$ of compact support on
$\mathbb{R}_{\geqslant 0}$ (that is $\Psi
  (\alpha) - \Psi (0) = 1$ for some $\alpha > 0$), then
  \[ \frac{1}{\sigma^2 (f ; x)} \sum_{\tmscript{\begin{array}{c}
       p \leqslant x\\
       f (p) \leqslant t
     \end{array}}} \frac{f (p)^2}{p} \cdot \left( 1 - \frac{1}{p} \right)_{}
  \]
  converges weakly to the distribution function $\Psi$ (i.e converges to 
  $\Psi(t)$ at all continuity points of $t$).
\end{theorem}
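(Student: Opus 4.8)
The plan is to turn the hypothesis into an exact identity of Cramér rate functions and then read off the moments of the empirical measure
\[ \Psi_x(t)\;:=\;\frac{1}{\sigma^2}\sum_{\substack{p\le x\\ f(p)\le t}}\frac{f(p)^2}{p}\Big(1-\frac1p\Big). \]
\textbf{Step 1 (reduction to probability).} Since $f$ is strongly additive, $f\ge0$, $f(p)=O(1)$ and $\sigma\to\infty$, the version of Theorem 2.1 noted in the Remark applies and gives $\mathcal D_f(x;\Delta)\sim\mathbbm{P}\big(\sum_{p\le x}f(p)[X_p-\tfrac1p]\ge\Delta\sigma\big)$ uniformly for $1\le\Delta\le o(\sigma)$. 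Combining this with the hypothesis (and $\sigma^{1-\varepsilon}=o(\sigma)$) yields, for every $\varepsilon>0$,
\[ \mathbbm{P}\Big(\sum_{p\le x}f(p)\big[X_p-\tfrac1p\big]\ge\Delta\sigma\Big)\;\sim\;\mathbbm{P}\big(\mathcal{Z}_{\Psi}(\sigma^2)\ge\Delta\sigma\big)\qquad\text{uniformly in }1\le\Delta\ll_\varepsilon\sigma^{1-\varepsilon}. \]

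\textbf{Step 2 (precise moderate deviations on both sides).} Put $S:=\sum_{p\le x}f(p)[X_p-\tfrac1p]$; its cumulant function is $\Lambda_x(\lambda)=\sum_{p\le x}\big(\log(1+p^{-1}(e^{\lambda f(p)}-1))-\lambda f(p)/p\big)$. Expanding the logarithm and using $f(p)=O(1)$ gives $\Lambda_x(\lambda)=\sigma^2K_x(\lambda)+R_x(\lambda)$ with $K_x(\lambda)=\int\frac{e^{\lambda t}-\lambda t-1}{t^2}\,d\Psi_x(t)$ and $R_x(\lambda)=O(\lambda^2)$ near $0$, uniformly in $x$ — crucially with no $O(1)$ term, because $R_x$, like $\sigma^2K_x$, vanishes to second order at $\lambda=0$. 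The cumulant function of $\mathcal{Z}_{\Psi}(\sigma^2)$ is $\sigma^2K(\lambda)$, $K(\lambda)=\int\frac{e^{\lambda t}-\lambda t-1}{t^2}\,d\Psi(t)$. Both $S$ and $\mathcal{Z}_{\Psi}(\sigma^2)$ are sums of $\asymp\sigma^2$ independent pieces with finite exponential moments, and in the range $\log\sigma\le\Delta\le\sigma^{1-\varepsilon}$ the saddle points solving $\Lambda_x'(\lambda^\ast)=\Delta\sigma$ and $\sigma^2K'(\mu^\ast)=\Delta\sigma$ satisfy $\lambda^\ast,\mu^\ast=(1+o(1))\Delta/\sigma\to0$. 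A uniform Cramér/Bahadur–Rao expansion then gives, uniformly in that range,
\[ \mathbbm{P}(S\ge\Delta\sigma)=\frac{1+o(1)}{\Delta\sqrt{2\pi}}\,e^{-\sigma^2K_x^\ast(\Delta/\sigma)},\qquad \mathbbm{P}\big(\mathcal{Z}_{\Psi}(\sigma^2)\ge\Delta\sigma\big)=\frac{1+o(1)}{\Delta\sqrt{2\pi}}\,e^{-\sigma^2K^\ast(\Delta/\sigma)}, \]
where $K_x^\ast,K^\ast$ are the Legendre transforms of $K_x,K$; here the $R_x$-correction is $O((\Delta/\sigma)^2)=o(1)$ and drops out, and the lattice case ($\Psi$ a point mass) is harmless since $\mu^\ast\to0$. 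For bounded $\Delta$ both tails converge by the central limit theorem and carry no information, which is why that sub-range is excluded. Feeding these into Step 1, the exponents must match:
\[ \sigma^2\big(K^\ast(\Delta/\sigma)-K_x^\ast(\Delta/\sigma)\big)\longrightarrow0\qquad\text{uniformly in }\log\sigma\le\Delta\le\sigma^{1-\varepsilon},\ \text{for every }\varepsilon>0. \]

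\textbf{Step 3 (extracting the moments, then concluding).} Write $g_x(u):=K_x^\ast(u)-K^\ast(u)=\sum_{k\ge3}(d_k^x-d_k)u^k$; because $\Psi_x,\Psi$ are carried by one fixed compact interval the coefficients $d_k^x-d_k$ are bounded uniformly in $x$ and $k$, $g_x$ is analytic in a fixed disc about $0$, and the quadratic terms cancel since $\int d\Psi_x=\int d\Psi=1$. Step 2 says $\sigma^2|g_x(u)|\to0$ uniformly for $u\in[\sigma^{-1}\log\sigma,\sigma^{-\varepsilon}]$, for every $\varepsilon>0$. Fix $k\ge3$ and take $k+1$ well-separated points $u_1,\dots,u_{k+1}\in[\sigma^{-\varepsilon},2\sigma^{-\varepsilon}]$ (so each $\Delta_i=u_i\sigma$ is admissible with exponent $\varepsilon/2$). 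By the standard formula for divided differences of powers, the order-$k$ divided difference is
\[ [g_x;u_1,\dots,u_{k+1}]=(d_k^x-d_k)+\sum_{j>k}(d_j^x-d_j)\,h_{j-k}(u_1,\dots,u_{k+1})=(d_k^x-d_k)+O_k(\sigma^{-\varepsilon}), \]
($h_m$ the complete homogeneous symmetric polynomial), while from $[g_x;u_1,\dots,u_{k+1}]=\sum_i g_x(u_i)\prod_{l\ne i}(u_i-u_l)^{-1}$ with weights $O_k(\sigma^{k\varepsilon})$ it is also $O_k(\sigma^{k\varepsilon})\cdot o(\sigma^{-2})$. Choosing $\varepsilon=1/k$ gives $d_k^x-d_k=O_k(\sigma^{-1/k})+o_k(\sigma^{-1})\to0$. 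Hence $d_k^x\to d_k$ for all $k\ge3$; since the $d_k$ are polynomial functions of the moments of $\Psi$ via the Legendre transform (with $d_3$ a multiple of $\int t\,d\Psi$, and so on) and $\int d\Psi_x=\int d\Psi$, an induction yields $\int t^j\,d\Psi_x(t)\to\int t^j\,d\Psi(t)$ for all $j\ge0$. As these are probability measures on a common compact interval, convergence of all moments gives $\Psi_x\Rightarrow\Psi$, i.e. $\Psi_x(t)\to\Psi(t)$ at every continuity point of $\Psi$, which is the assertion.

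\textbf{Main obstacle.} The crux is Step 2: one must establish the precise moderate-deviation asymptotics — a uniform Cramér expansion with the correct Gaussian prefactor — for $\sum_{p\le x}f(p)[X_p-1/p]$ (a sum of many \emph{non-identical} bounded variables) and for the interpolated Lévy process $\mathcal{Z}_{\Psi}(\sigma^2)$ (including the lattice sub-case), valid uniformly up to $\Delta\asymp\sigma^{1-\varepsilon}$. Only such \emph{exact} asymptotics — not a crude large-deviation principle — allow the hypothesis "$\sim$" to be converted into the vanishing of the \emph{difference} $\sigma^2(K^\ast-K_x^\ast)$ rather than an uninformative ratio; this is exactly the analytic machinery of Kubilius and Maciulis that already underlies Theorems 2.1--2.2.
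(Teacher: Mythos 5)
Your overall strategy coincides with the paper's: match the moderate--deviation asymptotics of $\mathcal{D}_f$ and of $\mathbbm{P}(\mathcal{Z}_\Psi(\sigma^2)\geq\cdot)$, conclude that the coefficients of the two rate functions agree in the limit, convert these coefficients to moments of $\Psi_x$ versus $\Psi$, and finish by the method of moments on a common compact interval. What you do differently is largely a matter of parametrisation: you phrase the rate-function comparison via Legendre transforms $K^\ast_x,K^\ast$ and extract the Taylor coefficients with finite divided differences at $k+1$ well-separated sample points, whereas the paper (Lemmas 9.2--9.3) works with the explicit coefficients $\lambda_f(x;k)$ and $\Lambda(\Psi;k)$ defined by Lagrange inversion and extracts them one at a time by choosing $\Delta=B^{1-2^{-(k+1)}}$ and running a careful induction on $k$ with an error bound $O(B^{-2^{-(k+1)}})$. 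Your divided-difference device is a legitimate and arguably cleaner alternative; both then recover the moments of $\Psi_x$ from the inverted Taylor coefficients by the same triangular induction, and both close with the same compact-support method-of-moments lemma (Lemma 9.4).

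The one place where your argument has a genuine gap is Step 1. You invoke "the version of Theorem 2.1 noted in the Remark" to replace $\mathcal{D}_f(x;\Delta)$ by $\mathbbm{P}\left(\sum_{p\leq x}f(p)[X_p-1/p]\geq\Delta\sigma\right)$. That Remark is an unproven aside: Theorem 2.1 itself is established only for $f\in\mathcal{C}$, and the claim that $(2.1)$ extends to arbitrary strongly additive $f\geq0$ with $f(p)\leq O(1)$ and $\sigma\to\infty$ is asserted but never proved in the paper. The paper's proof of Theorem 2.6 deliberately avoids this detour by quoting Maciulis's lemma (Lemma 9.2, together with the $\sigma\leftrightarrow B$ transfer in Lemma 9.5), which gives the uniform Cramér-type expansion for $\mathcal{D}_f^\times(x;\Delta)$ directly, for exactly the class of $f$ in the hypothesis. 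Your own "main obstacle" paragraph correctly identifies that establishing precise moderate-deviation asymptotics for $\mathcal{D}_f$ is where all the arithmetic lives, but as written your plan pushes that arithmetic into the unproven Remark rather than into a citation of Maciulis. The fix is simply to drop the passage through the Kubilius model and start Step 2 from Lemma 9.2 applied to $\mathcal{D}_f^\times$ and Hwang's Lemma 9.3 applied to $\mathbbm{P}(\mathcal{Z}_\Psi(B^2)\geq\cdot)$; the rest of your argument then goes through unchanged.
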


\noindent (This is not an ``integer to primes'' theorem in the
sense of {\cite{4}}, because of the $\sigma^2(f;x) \rightarrow \infty$ 
and $f (p) \leqslant O (1)$ assumption;
we believe both can be dropped without (too) much difficulty). The motivation for Theorem 2.6 and Theorem 2.5 comes from an open-ended question raised in Elliott's book {\cite{4}}: given the ``average'' behaviour of an additive function $f$ on the integers, how much can we say about its behaviour on the primes? (see p. 50 in {\cite{4}}).

By taking $\Psi (t)$ to have a jump of size $1$ at $t = 1$ in the previous
theorem, we obtain the following corollary.

\begin{corollary}
  Let
  \[ \mathbbm{P} \tmop{oisson} \left( x ; \Delta \right) = \sum_{k \geqslant x
     + \Delta \sqrt[]{x}} e^{- x} \cdot \frac{x^k}{k!} \]
  denote the tails of a Poisson distribution with parameter $x$. By a result
  of Hal\'asz {\cite{8}} for any strongly additive function $f$ such that 
  $f(p) \in \{0, 1\}$ and $\sigma^2 (f ; x) \rightarrow \infty$, we have
  $\mathcal{D}_f (x ; \Delta) \sim \mathbbm{P} \tmop{oisson} (\sigma^2 (f ; x)
  ; \Delta)$ uniformly in the range $1 \leqslant \Delta \leqslant o (\sigma (f
  ; x))$. Conversely, given a strongly additive function $f$ such that $0
  \leqslant f (p) \leqslant O (1)$ and $\sigma^2 (f ; x) \rightarrow \infty$,
  suppose that $\mathcal{D}_f (x ; \Delta) \sim \mathbbm{P} \tmop{oisson}
  (\sigma^2 (f ; x) ; \Delta)$ holds uniformly in the range $1 \leqslant
  \Delta \ll_{\varepsilon} \sigma (f ; x)^{1 - \varepsilon}$; then
  \[ \frac{1}{\sigma^2 (f ; x)} \sum_{\tmscript{\begin{array}{c}
       p \leqslant x\\
       f (p) \leqslant t
     \end{array}}} \frac{f (p)^2}{p} \cdot \left( 1 - \frac{1}{p} \right)
     \longrightarrow \delta (t) \assign \left\{ \begin{array}{l}
       1 \tmop{ if} t \geqslant 1\\
       0 \tmop{ else}
     \end{array} \right. \]
  at all $t \in \mathbbm{R}$, except possibly at $t = 1$. Thus for almost all
  primes $p$ we either have $f (p) = 1 + o (1)$ or $f (p) = o (1)$.
\end{corollary}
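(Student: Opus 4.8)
The plan is to recognize both halves of the Corollary as instances of Theorems~2.5 and~2.6, specialized to the distribution function $\Psi$ that places all of its mass at the single point $1$; that is, $\Psi(t)=0$ for $t<1$ and $\Psi(t)=1$ for $t\geqslant 1$, so that $\Psi$ is exactly the function $\delta$ of the Corollary. The only preliminary step is to identify $\mathcal{Z}_{\Psi}(u)$ for this $\Psi$: substituting the point mass at $1$ into the L\'evy exponent gives $\int_{\mathbbm{R}}(e^{itx}-itx-1)x^{-2}\,d\Psi(x)=e^{it}-it-1$, whence $\mathbbm{E}[e^{it\mathcal{Z}_{\Psi}(u)}]=e^{-iut}\exp(u(e^{it}-1))$, which is the characteristic function of $\mathrm{Poisson}(u)-u$ (this is precisely the special case recorded just after the definition of $\mathcal{Z}_{\Psi}$). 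Hence $\mathbbm{P}(\mathcal{Z}_{\Psi}(\sigma^2)\geqslant\Delta\sigma)=\mathbbm{P}(\mathrm{Poisson}(\sigma^2)\geqslant\sigma^2+\Delta\sigma)=\mathbbm{P}\tmop{oisson}(\sigma^2(f;x);\Delta)$, so the Poisson tails appearing in the Corollary are exactly $\mathbbm{P}(\mathcal{Z}_{\Psi}(\sigma^2)\geqslant\Delta\sigma)$ for this choice of $\Psi$.

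For the first (``Hal\'asz'') assertion one may quote Hal\'asz's theorem directly --- the discrepancy with his own normalization, which has $\mu(f;x)$ in place of $\sigma^2(f;x)$, disappears because $\mu(f;x)=\sigma^2(f;x)+O(1)$ when $f(p)\in\{0,1\}$ --- or, alternatively, deduce it from Theorem~2.5: when $f(p)\in\{0,1\}$ the quantity $\sigma^{-2}\sum_{p\leqslant x,\,f(p)\leqslant t}f(p)^2p^{-1}(1-p^{-1})$ equals $0$ for $t<1$ and $1$ for $t\geqslant 1$, hence coincides with $\Psi(t)$ identically, so that the hypothesis of Theorem~2.5 (that this difference be $\ll\sigma^{-2}$ uniformly in $t$) holds trivially; Theorem~2.5 then gives $\mathcal{D}_f(x;\Delta)\sim\mathbbm{P}(\mathcal{Z}_{\Psi}(\sigma^2)\geqslant\Delta\sigma)=\mathbbm{P}\tmop{oisson}(\sigma^2(f;x);\Delta)$ uniformly in $1\leqslant\Delta\leqslant o(\sigma)$.

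For the converse, the point mass $\Psi$ has compact support contained in $\mathbbm{R}_{\geqslant 0}$ and satisfies $\Psi(1)-\Psi(0)=1$, so the hypotheses of Theorem~2.6 are met once the assumed asymptotic $\mathcal{D}_f(x;\Delta)\sim\mathbbm{P}\tmop{oisson}(\sigma^2(f;x);\Delta)$ is rewritten, via the first paragraph, as $\mathcal{D}_f(x;\Delta)\sim\mathbbm{P}(\mathcal{Z}_{\Psi}(\sigma^2)\geqslant\Delta\sigma)$, valid uniformly in $1\leqslant\Delta\ll_{\varepsilon}\sigma^{1-\varepsilon}$. Theorem~2.6 then gives that $\sigma^{-2}\sum_{p\leqslant x,\,f(p)\leqslant t}f(p)^2p^{-1}(1-p^{-1})$ converges weakly to $\Psi$; since $\Psi=\delta$ has its only discontinuity at $t=1$, this is convergence to $\delta(t)$ at every $t\neq 1$, which is the displayed conclusion of the Corollary.

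It remains to pass to the statement about almost all primes. Fix $\varepsilon\in(0,1)$; combining the convergence just obtained at the continuity points $t=\varepsilon$, $t=1-\varepsilon$ and $t=1+\varepsilon$ (at which $\delta$ takes the values $0$, $0$ and $1$) with the fact that the total mass $\sigma^{-2}\sum_{p\leqslant x}f(p)^2p^{-1}(1-p^{-1})$ equals $1$, one obtains $\sum_{p\leqslant x,\ f(p)\in(\varepsilon,\,1-\varepsilon]\cup(1+\varepsilon,\,\infty)}f(p)^2p^{-1}(1-p^{-1})=o(\sigma^2(f;x))$. On that range of $p$ one has $f(p)^2\geqslant\varepsilon^2$, and since $f$ is bounded on the primes $\sigma^2(f;x)\ll\log\log x$, so $\sum_{p\leqslant x,\ f(p)\notin[0,\varepsilon]\cup(1-\varepsilon,1+\varepsilon]}p^{-1}=o(\log\log x)$; as $\sum_{p\leqslant x}p^{-1}\sim\log\log x$ and $\varepsilon$ may be let tend to $0$ slowly with $x$, this is exactly the assertion that $f(p)=1+o(1)$ or $f(p)=o(1)$ for almost all primes $p$. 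I do not expect any essential difficulty: apart from this last paragraph everything is a straight specialization of Theorems~2.5--2.6, and the one point that needs a little care is the elementary Chebyshev-type passage just described, from the $f(p)^2/p$-weighted distribution furnished by Theorem~2.6 to a statement about the density of primes.
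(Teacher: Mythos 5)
Your proof is correct and follows exactly the route the paper intends: the paper's only stated justification is "By taking $\Psi(t)$ to have a jump of size $1$ at $t=1$ in the previous theorem," and your proposal is a careful unpacking of that sentence. You correctly identify $\mathcal{Z}_{\Psi}(u)$ as centered Poisson (as the paper itself records after the definition of L\'evy process), verify that the hypothesis $\Psi(\alpha)-\Psi(0)=1$ of Theorem~2.6 holds with $\alpha=1$, and note that weak convergence to $\delta$ is exactly convergence at all $t\neq 1$. The two places where you add genuine content the paper leaves silent are both handled correctly: (i) the observation that for $f(p)\in\{0,1\}$ the weighted distribution agrees with $\Psi$ \emph{identically} (not merely up to $O(\sigma^{-2})$), so Theorem~2.5 applies and recovers Hal\'asz's asymptotic after the $\mu=\sigma^2+O(1)$ renormalization; and (ii) the Chebyshev step converting the $f(p)^2/p$-weighted $o(1)$ mass of $\{p: f(p)\in(\varepsilon,1-\varepsilon]\cup(1+\varepsilon,\infty)\}$ into $\sum_{p\ \mathrm{bad}}1/p=o(\log\log x)$, using $f(p)^2\geqslant\varepsilon^2$ on that set and $\sigma^2\ll\log\log x$. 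This is the correct reading of "almost all primes" here, since Theorem~2.6 only controls the $1/p$-weighted count; the footnote with the $o(\pi(x))$ bound in the introduction concerns the more restrictive setting $\sigma^2(f;x)=c\log\log x+O(1)$ and is not what the corollary asserts.
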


We now turn to the description of the technical Theorem 2.8 which we state in 
the introduction because of its central importance. A recurrent difficulty in 
the paper is that when we deal with the distribution of $f$ in the range
$\Delta \asymp \sigma$ we are forced to consider the following two cases 
separately:
\begin{enumeratenumeric}
  \item The values $f (p)$ do cluster on $\alpha \mathbbm{Z}$ for some $\alpha
  > 0$.

  \item The values $f (p)$ do not cluster on $\alpha \mathbbm{Z}$ for any
  $\alpha > 0$.
\end{enumeratenumeric}

{\noindent}\tmtextbf{Definition. }\tmtextit{Let $X$ be a random variable. We
say that $X$ is lattice distributed on $\alpha \mathbbm{Z}$ ($\alpha > 0$) if
$\mathbbm{P} \left( X \in \alpha \mathbbm{Z} \right) = 1$, and  
$\mathbbm{P} \left( X \in \beta \mathbbm{Z}) < 1
\right.$ for all $\beta > \alpha$. Analogously we say that a distribution
function is lattice distributed (resp. non-lattice distributed) when 
the underlying random variable is lattice (resp. non-lattice distributed).}
{\hspace*{\fill}}{\medskip}

In order to state Theorem 2.8 we introduce further notation. We denote by
\begin{equation}
  \hat{\Psi} \left( f ; z \right) \assign \int_{\mathbbm{R}} e^{zt} d \Psi (f
  ; t) = 1 + \sum_{k = 1}^{\infty} \int_{\mathbbm{R}} t^k d \Psi (f ; t) \cdot
  \frac{z^k}{k!} ,
\end{equation}
the two-sided Laplace transform of $\Psi (f ; t)$. By $(1.3)$ and $(1.4)$
we have $1 - \Psi(f;t) \ll_{A} e^{-At}$ and $\Psi(f;t) = 0$ for $t < 0$.
 Hence the Laplace transform 
$\hat{\Psi}$ is an entire function with Taylor expansion as in $(2.6)$.
Thus all moments of $\Psi (f ; t)$ exists, and in accordance with
$(2.6)$ the $k$-th moment of $\Psi (f ; \cdot)$ is$\int_{\mathbbm{R}} t^k d
\Psi (f ; t)$. We also define the function $\omega (f ; z)$ implicitly by,
\[ \hat{\Psi}' (f ; \omega (f ; z)) = \hat{\Psi}' (f ; 0) + z \cdot
   \hat{\Psi}'' (f ; 0) \]
Although this function does not appear in the statement of Theorem 2.8, it will
frequently be encountered in subsequent proofs.

\begin{theorem}
  Let $f \in \mathcal{C}$. We have,
  \begin{enumerate}
    \item Uniformly in $1 \leqslant \Delta \leqslant o \left( \sigma (f ;
    x)^{1 / 3} \right)$,
    \[ \mathcal{D}_f (x ; \Delta) \sim \frac{1}{\sqrt[]{2 \pi}}
       \int_{\Delta}^{\infty} e^{- u^2 / 2} \cdot \mathd u \]
    \item Given $\varepsilon > 0$, uniformly in $(\tmop{loglog}
    x)^{\varepsilon} \ll \Delta \leqslant o \left( \sigma (f ; x) \right)$,
    \[ \mathcal{D}_f (x ; \Delta) \sim S_f (x ; \Delta) \assign \frac{\left(
       \log x \right)^{\hat{\Psi} (f ; v) - 1 - v \hat{\Psi}' (f ; v)}}{v (2
       \pi \hat{\Psi}'' (f ; v) \tmop{loglog} x)^{1 / 2}} \]
    Here $v = v_f (x ; \Delta)$ is a parameter, defined as the unique positive
    solution to the equation
    \[ \hat{\Psi}' (f ; v) \tmop{loglog} x = 
	\hat{\Psi}' (f ; 0) \tmop{loglog} x + \Delta \cdot ( \hat{\Psi}''
       (f ; 0) \tmop{loglog} x)^{1 / 2} \]
    \item If $\Psi (f ; t)$ is not lattice distributed, then given $\delta,
    \varepsilon > 0$, uniformly in the range $(\tmop{loglog} x)^{\varepsilon}
    \ll \Delta \leqslant \delta \sigma (f ; x)$,
    \[ \mathcal{D}_f (x ; \Delta) \sim \frac{L (f ; v) e^{- vc (f)}}{\Gamma (
       \hat{\Psi} (f ; v))} \cdot S_f (x ; \Delta) \text{ }, \text{ } v = v_f
       (x ; \Delta) \]
    where $L (f ; z)$ is an (entire) function, defined by
    \begin{eqnarray*}
      L (f ; z) & = & \prod_p \left( 1 - \frac{1}{p} \right)^{\hat{\Psi} (f ;
      z)} \cdot \left( 1 + \frac{e^{zf (p)}}{p - 1} \right)
    \end{eqnarray*}
    and $c (f)$ is defined by $\mu (f ; x) = \hat{\Psi}' (f ; 0) \cdot
    \tmop{loglog} x + c (f) + o (1)$.

    \item If $\Psi (f ; t)$ is lattice distributed on $\mathbbm{Z}$, then
    given $\delta, \varepsilon > 0$, uniformly in the range $(\tmop{loglog}
    x)^{\varepsilon} \ll \Delta \leqslant \delta \sigma (f ; x)$,
    \[ \mathcal{D}_f (x ; \Delta) \sim \frac{L (\mathfrak{g}; v) e^{- vc
       (f)}}{\Gamma ( \hat{\Psi} (f ; v))} \cdot \mathcal{P}_{\mathfrak{h}}
       \left( \xi_f (x ; \Delta) ; v \right) \cdot S_f (x ; \Delta) \text{ , }
       v = v_f (x ; \Delta) \]
    where $\xi_f (x ; \Delta) = \mu (f ; x) + \Delta \sigma (f ; x)$,
    and $\mathfrak{g}, \mathfrak{h}$ are two additive functions defined by
    \begin{eqnarray*}
      \mathfrak{g}(p) = \left\{ \begin{array}{l}
       f (p) \tmop{ if } f (p) \in \mathbbm{Z}\\
       0 \tmop{ \ \ \ \ otherwise}
    \end{array} \right. & \tmop{and} & \mathfrak{h}(p) = \left\{
    \begin{array}{l}
       f (p) \tmop{ if } f (p) \nin \mathbbm{Z}\\
       0 \tmop{ \ \ \ \ otherwise}
    \end{array} \right.
    \end{eqnarray*}
    Finally, the function $\mathcal{P}_{\mathfrak{h}} (a ; v)$ is defined by,
    \[ \mathcal{P}_{\mathfrak{h}} \left( a ; v \right) = v \sum_{\ell \in
       \mathbbm{Z}} e^{v \left( \ell +\{a\} \right)} \cdot \mathbbm{P} \left(
       \sum_p \mathfrak{h}(p) X_p \geqslant \ell +\{a\} \right) \]
  \end{enumerate}
\end{theorem}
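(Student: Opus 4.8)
The plan is to treat $\mathcal{D}_f(x;\Delta)$ via a saddle-point analysis of an associated Dirichlet series and then match the resulting main term against the four regimes. First I would pass from the counting function $\mathcal{D}_f(x;\Delta)$ to the generating Dirichlet series $F(s,z)=\sum_{n\geq 1}e^{zf(n)}n^{-s}$, which by strong additivity factors as an Euler product $\prod_p\bigl(1+e^{zf(p)}/(p^s-1)\bigr)$. Comparing this with $\zeta(s)^{\hat\Psi(f;z)}$ — where the exponent $\hat\Psi(f;z)$ is exactly the ``average'' of $e^{zf(p)}$ over primes in the Laplace sense of $(2.6)$ — and using the hypotheses $(1.3)$, $(1.4)$ defining $\mathcal{C}$, one gets $F(s,z)=\zeta(s)^{\hat\Psi(f;z)}\cdot H(s,z)$ with $H$ holomorphic and well-behaved in a region $\mathrm{Re}\,s>1-c$; on the line $s=1$ this $H$ specializes to the product $L(f;z)$ in the statement (after extracting the relevant $(1-1/p)^{\hat\Psi(f;z)}$ factors) and contributes the constant $e^{-\gamma(\hat\Psi(f;z)-1)}/\Gamma(\hat\Psi(f;z))$ via the classical Selberg–Delange machinery, together with the normalization $c(f)$ coming from $\mu(f;x)=\hat\Psi'(f;0)\log\log x+c(f)+o(1)$.

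Next I would run Selberg–Delange / a Perron-type contour integral to extract $\sum_{n\leq x}e^{zf(n)}$, obtaining an asymptotic of shape $x(\log x)^{\hat\Psi(f;z)-1}\cdot L(f;z)e^{-z c(f)}/\Gamma(\hat\Psi(f;z))\cdot(1+o(1))$, uniformly for $z$ in a fixed complex neighbourhood of $\mathbb{R}^+\cup\{0\}$. Then the tail count $\mathcal{D}_f(x;\Delta)$ is recovered by an inverse Laplace / saddle-point step: writing $\mathcal{D}_f(x;\Delta)\approx\frac{1}{2\pi i}\int e^{-z(\mu+\Delta\sigma)}\bigl(\tfrac1x\sum_{n\leq x}e^{zf(n)}\bigr)\,\tfrac{dz}{z}$, the saddle point is at $z=v=v_f(x;\Delta)$, which is precisely the solution of $\hat\Psi'(f;v)\log\log x=\hat\Psi'(f;0)\log\log x+\Delta(\hat\Psi''(f;0)\log\log x)^{1/2}$ once one uses $\sigma^2(f;x)\sim\hat\Psi''(f;0)\log\log x$; a Laplace-method expansion around $v$, with the Gaussian width governed by the second derivative $\hat\Psi''(f;v)\log\log x$, yields the prefactor $1/\bigl(v(2\pi\hat\Psi''(f;v)\log\log x)^{1/2}\bigr)$ and the exponential $(\log x)^{\hat\Psi(f;v)-1-v\hat\Psi'(f;v)}$, i.e.\ exactly $S_f(x;\Delta)$. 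Part (2) is then this statement with the $H$-factor absorbed; part (3) is the same with the $L(f;v)e^{-vc(f)}/\Gamma(\hat\Psi(f;v))$ factor kept explicitly, the non-lattice hypothesis on $\Psi(f;\cdot)$ being what guarantees that the integrand has no other singularities on the contour (no periodicity obstruction), so the single saddle dominates. Part (1) is the degenerate regime $\Delta=o(\sigma^{1/3})$ where $v\to 0$, $\hat\Psi(f;v)\to 1$, $L(f;v)\to 1$, $\Gamma(\hat\Psi(f;v))\to 1$, so the formula collapses to the Gaussian tail $\frac{1}{\sqrt{2\pi}}\int_\Delta^\infty e^{-u^2/2}\,du$ after a standard Taylor expansion of $\hat\Psi$ to third order; here I would instead quote the Erd\H{o}s–Kac-type input of Maciulis/Kubilius cited in the text rather than rederive it.

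The lattice case (4) is where the argument genuinely branches: when $\Psi(f;t)$ is lattice on $\mathbb{Z}$, the factor $e^{zf(n)}$ is ``almost periodic in $\mathrm{Im}\,z$ with period $2\pi$'' on the integer part, so the contour integral picks up contributions from the whole family of saddles $v+2\pi i\ell$, $\ell\in\mathbb{Z}$. I would split $f=\mathfrak g+\mathfrak h$ into its integer-valued and non-integer-valued prime parts: the $\mathfrak g$-part produces the periodicity and hence the sum over $\ell$, while the $\mathfrak h$-part is genuinely non-lattice and is handled by the probabilistic quantity $\mathbb{P}(\sum_p\mathfrak h(p)X_p\geq \ell+\{a\})$ arising as the density/local factor at level $\ell+\{a\}$; summing the geometric-type weights $ve^{v(\ell+\{a\})}$ against these probabilities assembles the function $\mathcal{P}_{\mathfrak h}(\xi_f(x;\Delta);v)$, with $\xi_f(x;\Delta)=\mu(f;x)+\Delta\sigma(f;x)$ supplying the fractional-part shift $\{a\}$. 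The main obstacle I anticipate is precisely this lattice bookkeeping — controlling the sum over $\ell$ uniformly (it must converge, which is why $\mathcal C$'s exponential decay $(1.3)$ and hence $1-\Psi(f;t)\ll_A e^{-At}$ is essential), and verifying that replacing the discrete local count by $\mathbb{P}(\sum_p\mathfrak h(p)X_p\geq\cdot)$ is legitimate uniformly in $\Delta$ up to $\delta\sigma$; the non-lattice case (3) is the ``clean'' special case of this where only $\ell=0$ survives and $\mathcal{P}_{\mathfrak h}$ degenerates to the constant $1$. Throughout, the uniformity in $\Delta$ (and the transition point $\Delta\asymp(\log\log x)^\varepsilon$ between parts (1) and (2)–(4)) is the delicate part of the error analysis, and I would track it by keeping the Selberg–Delange error term as an explicit power-saving $(\log x)^{-c}$ uniformly in $z$ near the real segment.
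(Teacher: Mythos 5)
Your plan follows the same high-level blueprint as the paper — a mean-value estimate for $\frac{1}{x}\sum_{n\leq x}e^{zf(n)}$ followed by an inverse-Laplace / saddle-point step, with the lattice case handled via the split $f=\mathfrak g+\mathfrak h$ and a sum over integer-shifted saddles — but the very first step has a genuine gap. You assert that $(1.3)$ and $(1.4)$ give $F(s,z)=\zeta(s)^{\hat\Psi(f;z)}H(s,z)$ with $H$ holomorphic and well-behaved in a strip $\mathrm{Re}\,s>1-c$, so that Selberg--Delange applies with a power-saving error term $(\log x)^{-c}$ uniformly near the real axis. That analytic continuation does not hold for general $f\in\mathcal C$. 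Condition $(1.4)$ only gives polylogarithmic convergence $\sum_{p\leq x}\bigl(e^{zf(p)}-\hat\Psi(f;z)\bigr)=O_k(\pi(x)/\log^k x)$, and by partial summation $\sum_p\bigl(e^{zf(p)}-\hat\Psi(f;z)\bigr)p^{-\sigma}$ diverges for every $\sigma<1$; so $\log H(s,z)$ does not continue past the line $\mathrm{Re}\,s=1$. The classical Selberg--Delange machinery requires $H$ holomorphic with polynomial growth in a fixed open set to the left of $\mathrm{Re}\,s=1$, which you do not have. This is exactly why the paper proves Proposition 4.1 by the Levin--Fainleb method, which is a differential-inequality argument needing control of the Dirichlet series only at $s=1$, and why the author warns that a Tenenbaum-book argument would only cover "a more restrained class of additive functions". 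A Halász/Wirsing-type Tauberian theorem would give the leading asymptotic with a $o(1)$ error, but that is not enough: Propositions 4.10 and 4.17 need the explicit $(\log x)^{\hat\Psi(f;\kappa)-3/2}$ error, and they need it uniformly for $|\mathrm{Im}\,z|\leq\log\log x$, not merely in a fixed neighbourhood of the real segment; this wide imaginary range is what controls the tail of the inversion integral, and you have not said how you would recover it by another route.

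The remainder of your plan is correct in spirit but omits the quantitative input that makes the saddle-point analysis close. For part (3), saying the non-lattice hypothesis removes the "periodicity obstruction" is the right slogan, but to bound the contribution away from the saddle uniformly in $v\geq 0$ and over the whole range of $\Delta$ you need the Esséen-type estimates (Lemmas 4.11--4.15 of the paper) giving a rate of decay for $\int_c^{\lambda(x)}|\exp(\hat\Psi(f;it)-1)|^{\log\log x}\,dt/t$. For part (4), your picture of a sum over saddles $v+2\pi i\ell$ is morally right; the paper realizes it differently, by writing a kernel $\mathfrak K(s;\xi)$ as a sum over the accessible values $\omega\in\mathbbm N+\mathcal D_{\mathfrak h}(y)$ and then invoking Corollary 4.23 to pass to the limiting distribution $\sum_p\mathfrak h(p)X_p$ uniformly in the threshold; the uniformity of that truncation is not automatic and is where the corollary does real work. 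Finally, to deduce part (2) in the case where $\Psi(f;\cdot)$ is lattice on $\alpha\mathbbm Z$ with $\alpha\neq 1$, you need the rescaling $f\mapsto f/\alpha$ together with the invariance $S_{f/\alpha}(x;\Delta)=S_f(x;\Delta)$ (Lemmas 5.3--5.4), and your sketch does not address this branch.
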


{\noindent}\tmtextbf{Remark. }In part $(4)$ of Theorem 2.8 the assumption 
``$\Psi (f ; t)$ lattice distributed on $\mathbbm{Z}$'' entails no loss of 
generality. If $\Psi (f ; t)$ is lattice distributed on $\alpha \mathbbm{Z}$ 
then $\Psi (f / \alpha ; t) = \Psi (f ; \alpha t)$ is lattice distributed 
on $\mathbbm{Z}$ and $f / \alpha$ is an additive function. 
{\hspace*{\fill}}{\medskip}

Let us make a few remarks about the asymptotics in Theorem 2.8. In the range 
$1 \leqslant \Delta \leqslant o (\sigma)$ the parameter 
$v \assign v_f (x ; \Delta)$ is $o (1)$. In addition $v$ admits
an convergent expansion of the form $\sum_k a_k (\Delta / \sigma)^{k}$ and
so does the function $A(f;z) = \sum_k b_k z^k$. On composing the two we obtain 
$(\log x)^{A(f;v)} = exp(\log\log x \sum c_k (\Delta / \sigma)^{k})
\sim exp(\sigma^2 \sum c_k (\Delta / \sigma)^k)$ for some coefficients $c_k$.
Thus for $\Delta \leqslant \sigma^{1-\varepsilon}$ only the first $\ll 1/\varepsilon$ terms will dictate the asymptotic behaviour of $(\log x)^{A(f;v)}$.  
To complete the picture,
\[ \frac{e^{\Delta^2 / 2}}{\sqrt[]{2 \pi}} \int_{\Delta}^{\infty} e^{- u^2 /
   2} \tmop{du} \sim \frac{1}{\sqrt{2 \pi} \Delta} \text{ for } \Delta \rightarrow \infty
\]
On the other hand when $\Delta \sim c \sigma$ for some fixed constant $c > 0$,
all the $c_k$ have a non-trivial contribution and the the parameter 
$v = v_f (x ; \Delta) = \kappa + o (1)$ for some $\kappa > 0$ depending on $c$.
In the range $\Delta \sim c \sigma$ both $L (f;v)$ and $e^{- vc(f)}$ are 
essentially constant, while $S_f (x ; \Delta)$ is about the size of 
$(\log x)^{\hat{\Psi} (f ; \kappa) - 1 - \kappa \hat{\Psi}' (f ; \kappa)+o(1)}$. Hopefully, these few remarks give a good picture of
the asymptotic behaviour in part $(1)$, $(2)$ and $(3)$ of theorem 2.8. 
Regarding part $(4)$ of theorem 2.8, since the $f (p)$ are
concentrated on $\mathbbm{Z}$, we have $\mathfrak{g}(p) = f (p)$ for ``almost
all'' prime $p$. Throughout the range $\Delta \asymp \sigma$ we have
$L(\mathfrak{g};v) \asymp 1$ and $\mathcal{P}_{\mathfrak{h}} 
(\xi_f (x ; \Delta) ; v) \asymp 1$ but otherwise the latter expression is 
highly irregular. In fact, because 
$\mathcal{P}_{\mathfrak{h}} (\xi_f (x ; \Delta) ; v)$
involves $\{\xi_f (x ; \Delta)\}=\{\mu (f ; x) + \Delta \sigma (f ; x)\}$ the
ratio
\[ \mathcal{D}_f (x; c \sigma (f;x)) \cdot S_f (x;c \sigma (f;x))^{- 1} \]
does not tend to a limit as $x \rightarrow \infty$, when $c$ is fixed. This is
also discussed in Balazard, Nicolas, Pomerance, and Tenenbaum's paper {\cite{1}}. Let us note in passing that the probabilistic interpretation we give for
$\mathcal{P}_{\mathfrak{h}} (\xi_f (x ; \Delta) ; v)$ might be of interest in 
connection with some of the question raised in {\cite{22}}. Compared to previous results, namely those of the Lithuanian school, the novelty in Theorem 2.8 is the bigger range $1 \leqslant \Delta \leqslant \delta \sigma (f ; x)$, although it is quite possible that the result was known, or at least anticipated, by the experts in the field. \\

{\noindent}\tmtextbf{Plan of the paper. } Section 4-8 and Section 9-10 can be taught of as separate. In sections 4.3-4.5 we establish rather general large deviations results. The lemmas in section 4.1 will allow to specialize these to cases of arithmetical interest. In section 5, we deduce theorem 2.8 from the lemmas in section 4. In section 6 we establish theorem 1.1 by using theorem 2.8. Finally we prove theorem 2.2 in section 7. Theorems 2.6 and 2.7 are proven
respectively in section 10 and section 9.

Regarding sections 4-8, the core ideas are scattered throughout the proof of proposition 4.10, the proof of proposition 4.17 and the entire section 6. Sections 9 and 10 are essentially ``stand-alone'' and the techniques used there differ from the ones appearing before. {\hspace*{\fill}}{\medskip}

{\newpage}

\section{Notation}

We summarize in the table below some of the recurrent notation. We let $f \in
\mathcal{C}$.
\begin{eqnarray*}
  \mathcal{D}_f (x ; \Delta) & \assign & \frac{1}{x} \cdot \# \left\{ n
  \leqslant x : \frac{f (n) - \mu (f ; x)}{\sigma (f ; x)} \geqslant \Delta
  \right\}\\
  L (f ; z) & \assign & \prod_p \left( 1 - \frac{1}{p} \right)^{\hat{\Psi} (f
  ; z)} \cdot \left( 1 + \frac{e^{zf (p)}}{p - 1} \right)\\
  \omega (f ; z) & \assign & \text{defined implicitly by $\hat{\Psi}' (f ;
  \omega (f ; z)) = \hat{\Psi}' (f ; 0) + z \cdot \hat{\Psi}'' (f ; 0)$}\\
  \sigma_{\Psi}^2 (f ; x) & \assign & \hat{\Psi}'' (f ; 0) \cdot \tmop{loglog}
  x \text{ } = \text{ } \int_{\mathbbm{R}} t^2 \mathd \Psi (f ; t) \cdot
  \tmop{loglog} x\\
  c (f) & \assign & \mu (f ; x) - \hat{\Psi}' (f ; 0) \cdot \tmop{loglog} x +
  o (1)\\
  v_f (x ; \Delta) & \assign & \omega (f ; \Delta / \sigma_{\Psi} (f ; x))\\
  A (f ; z) & \assign & \hat{\Psi} (f ; z) - 1 - z \hat{\Psi}' (f ; z)\\
  \mathcal{E}(f ; z) & \assign & A (f ; \omega (f ; z))\\
  S_f (x ; \Delta) & \assign & \frac{\left( \log x \right)^{\hat{\Psi} (f ; v)
  - v \hat{\Psi}' (f ; v) - 1}}{v (2 \pi \hat{\Psi}'' (f ; v) \tmop{loglog}
  x)^{1 / 2}} \text{ with } v = v_f (x ; \Delta)\\
  \mathfrak{h}_f (n) & \assign & \text{strongly additive function such that
  $\mathfrak{h}_f (p) = \left\{ \begin{array}{l}
    f (p) \text{ if } f (p) \nin \mathbbm{Z}\\
    0 \text{ \ \ \ \ otherwise}
  \end{array} \right.$}\\
  S(\mathfrak{h}) & \assign & \{p: \mathfrak{h}(p) \neq  0\} 
  = \{p : f(p) \nin \mathbbm{Z} \} \\
  \mathfrak{g}_f (n) & \assign & f (n) -\mathfrak{h}_f (n)\\
  \{X_p \} & \assign & \text{Independent Bernoulli random variable with
  $\mathbbm{P} \left( X_p = 1) = 1 / p \right.$} .\\
  X (\mathfrak{h}_f) & \assign & \sum_{p \leqslant x} \mathfrak{h}_f (p) X_p\\
  \mathcal{P}_{\mathfrak{h}_f} \left( a ; v \right) & \assign & v \sum_{k \in
  \mathbbm{Z}} e^{v (k +\{a\})}_{} \cdot \mathbbm{P} \left( X (\mathfrak{h}_f)
  \geqslant k +\{a\} \right)\\
  \xi_f (x ; \Delta) & \assign & \mu (f ; x) + \Delta \sigma (f ; x)\\
  B^2 (f ; x) & \assign & \sum_{p \leqslant x} \frac{f (p)^2}{p}\\
  \mathcal{D}_f^{\times} \left( x ; \Delta \right) & \assign & \frac{1}{x}
  \cdot \# \left\{ n \leqslant x : \frac{f (n) - \mu (f ; x)}{B (f ; x)}
  \geqslant \Delta \right\}
\end{eqnarray*}

Sometimes we will write $\log_k x$ to mean the $k$ times iterated logarithm.
When the context is clear we will drop the subscript $f$ from $\mathfrak{h}_f$
and $\mathfrak{g}_f$. In the same vein we usually abbreviate $v_f (x ;
\Delta)$ by $v$, and sometimes $\sigma_{\Psi} (f ; x)$ by $\sigma_{\Psi}$, 
although this is always mentioned when done.

{\newpage}

\section{Preliminary lemmata}

In the first two subsections we collect background information. The main
technical tools are developed in the subsequent sections: indeed, the three
general large deviations theorems corresponding to Proposition 4.9,
Proposition 4.10 and Proposition 4.17, form the technical backbone of this
paper.

\subsection{A mean-value theorem}

The object of this section is to prove the following mean value theorem.

\begin{proposition}
  Let $f \in \mathcal{C}$. Given $C > 0$, uniformly in $- C \leqslant \kappa
  \assign \tmop{Re} s \leqslant C$,
  \[ \frac{1}{x} \sum_{n \leqslant x} e^{sf (n)} = \frac{L (f ; s)}{\Gamma (
     \hat{\Psi} (f ; s))} \cdot (\log x)^{\hat{\Psi} (f ; s) - 1} + O_{A, C}
     \left( \mathcal{E}_A(x;s) \cdot (\log x)^{\hat{\Psi} (f ;
     \kappa) - 2} \right) \]
  where $\mathcal{E}_A(x;s) = 1 + |\tmop{Im} s|^{1/A} + |\tmop{Im} s|/\log x$.
  In particular, given $C > 0$, we have, uniformly in 
  $- C \leqslant \kappa \assign \tmop{Re} s \leqslant C$ and 
  $| \tmop{Im} s| \leqslant \log x$,
  \[ \frac{1}{x} \sum_{n \leqslant x} e^{sf (n)} = \frac{L (f ; s)}{\Gamma (
     \hat{\Psi} (f ; s))} \cdot (\log x)^{\hat{\Psi} (f ; s) - 1} + O_C \left(
     (\log x)^{\hat{\Psi} (f ; \kappa) - 3 / 2} \right) \]
\end{proposition}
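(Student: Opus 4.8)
The proposition is a mean-value theorem of the classical Selberg–Delange type: estimate $\sum_{n\le x} e^{sf(n)}$ for $f\in\mathcal C$, uniformly for $s$ in a bounded vertical strip. Let me sketch the standard approach.

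The natural tool is a Dirichlet series / Perron's formula argument. Set $F(w,s) = \sum_{n\ge 1} e^{sf(n)} n^{-w}$. Since $f$ is strongly additive, this has an Euler product $F(w,s) = \prod_p (1 + e^{sf(p)}(p^w-1)^{-1})$ (using $g(p^k)=g(p)$ so the local factor is $1 + e^{sf(p)}\sum_{k\ge 1} p^{-kw} = 1 + e^{sf(p)}/(p^w-1)$). Near $w=1$, the dominant singularity comes from the primes, and we factor out $\zeta(w)^{\hat\Psi(f;s)}$: write $F(w,s) = \zeta(w)^{\hat\Psi(f;s)} G(w,s)$, where $G$ is the correction factor. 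The condition (1.3) (exponential decay of the tail of $g(p)$ on primes) ensures $\hat\Psi(f;s)$ is entire, and (1.4) (the quantitative convergence $\frac{1}{\pi(x)}\sum_{p\le x, g(p)\le t}1 - \Psi(f;t) \ll_k \log^{-k}x$) is what makes $G(w,s)$ analytic and bounded in a region slightly to the left of $\Re w = 1$ — this is the heart of the matter. Concretely, $\log F(w,s) = \sum_p e^{sf(p)}/(p^w-1) + (\text{holomorphic})$, and $\hat\Psi(f;s)\log\zeta(w) = \hat\Psi(f;s)\sum_p p^{-w} + (\text{holomorphic})$; the difference is $\sum_p (e^{sf(p)} - \hat\Psi(f;s)) p^{-w} + \ldots$, and by partial summation against the error in (1.4) this converges and is analytic in a half-plane $\Re w > 1 - c/\log(|\Im w|+2)$ — actually the quantitative hypothesis (1.4) with arbitrary $k$ gives something like analyticity in $\Re w > 1 - c$ for fixed $c>0$ once we restrict $|\Im w|$ suitably, or at least in a standard zero-free-region-shaped domain. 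At $w=1$ one identifies $G(1,s) = L(f;s)/\Gamma(\hat\Psi(f;s))$ after incorporating the $\Gamma$-factor that comes out of comparing $\zeta(w)^{s}$ with $(w-1)^{-s}$ near $w=1$ (the Selberg–Delange normalization: $\zeta(w)^{\alpha} = (w-1)^{-\alpha}(1 + O(w-1))^{\alpha}$ and the contour integral of $(w-1)^{-\alpha}x^w/w$ produces $(\log x)^{\alpha-1}/\Gamma(\alpha)$).

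Then I would apply the Selberg–Delange method: use Perron's formula to write $\frac1x\sum_{n\le x} e^{sf(n)} = \frac{1}{2\pi i}\int F(w,s) x^{w-1}\frac{dw}{w} + (\text{error})$, move the contour to a Hankel-type path around $w=1$ together with a zero-free-region boundary for $\zeta$, and pick up the main term $\frac{L(f;s)}{\Gamma(\hat\Psi(f;s))}(\log x)^{\hat\Psi(f;s)-1}$ from the branch-point singularity. The remaining contour contributions are bounded by $(\log x)^{\Re\hat\Psi(f;s)-2} = (\log x)^{\hat\Psi(f;\kappa)-2}$ times a polynomial factor in $|\Im s|$: the $|\Im s|^{1/A}$ term reflects that the correction series $\sum_p(e^{sf(p)}-\hat\Psi(f;s))p^{-w}$ is controlled using (1.3) and (1.4) and grows at worst like a small power of $|\Im s|$ (one balances $e^{-At}$ decay against the $e^{sf(p)}$ oscillation by splitting $g(p)\le T$ and $g(p)>T$ and optimizing $T\sim\frac1A\log|\Im s|$), and the $|\Im s|/\log x$ term is the usual loss from the truncated Perron formula / trivial bound on vertical segments far from the singularity. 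Taking $|\Im s|\le\log x$ collapses both extra factors into $O((\log x)^{1/A})$ worth of loss; choosing $A$ large (say $A=2$ suffices) absorbs this into $(\log x)^{\hat\Psi(f;\kappa)-2+1/2} = (\log x)^{\hat\Psi(f;\kappa)-3/2}$, which is the stated corollary.

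The main obstacle, and the step requiring genuine care, is establishing the analyticity and the quantitative (polynomial-in-$|\Im s|$) bounds for the correction factor $G(w,s)$ in a region of the form $\Re w \ge 1 - c/\log(|\Im w|+2)$ uniformly for $s$ in the strip — this is precisely where hypotheses (1.3) and (1.4) are used, and where the parameter $A$ and the error shape $\mathcal E_A(x;s) = 1 + |\Im s|^{1/A} + |\Im s|/\log x$ enter. Everything downstream is the routine Selberg–Delange contour-shifting machinery (as in Tenenbaum's book), but one must be attentive that all estimates hold uniformly in the two-dimensional parameter $s$, not just in $\Re s$. I would organize the write-up as: (i) Euler product and the factorization $F = \zeta^{\hat\Psi}G$; (ii) analytic continuation and bounds for $G$ from (1.3), (1.4); (iii) Perron + contour shift to extract the main term with error $\mathcal E_A(x;s)(\log x)^{\hat\Psi(f;\kappa)-2}$; (iv) specialize to $|\Im s|\le\log x$ and optimize $A$ to get the $(\log x)^{\hat\Psi(f;\kappa)-3/2}$ form.
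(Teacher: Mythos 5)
Your approach is the Selberg--Delange contour method, whereas the paper proves Proposition~4.1 by the \emph{elementary} method of Levin and Fainleib (the references \cite{5}, \cite{13}): one introduces the generalized von~Mangoldt function $\Lambda_f(z;n)$ defined by $e^{zf(n)}\log n = \sum_{d\mid n} e^{zf(d)}\Lambda_f(z;n/d)$, shows via Lemma~4.3 (which is where $(1.3)$, $(1.4)$ enter) that $\sum_{n\leqslant x,\,(n,P_k)=1}\Lambda_f(z;n)=\hat\Psi(f;z)\,x+O(\cdots)$, derives an integral/recurrence identity for $m_k(x;z)=\sum_{n\leqslant x,\,(n,P_k)=1}e^{zf(n)}/n$ by writing $\sum e^{zf(n)}\log n/n$ in two ways, and solves it; the constant $A_3(f;z)$ is then pinned down to equal $L(f;s)/\Gamma(\hat\Psi(f;s))$ by an Abelian limit as $s\to 1^+$ along the reals. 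This route never touches $\Re w<1$.

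This matters, because the gap in your outline is exactly the point you flag as ``the main obstacle.'' The hypotheses $(1.3)$ and $(1.4)$ give only a \emph{logarithmic} saving in the discrepancy: $\sum_{p\leqslant t}\bigl(e^{sf(p)}-\hat\Psi(f;s)\bigr)\ll_{A,C}\pi(t)(1+|\Im s|)(\log t)^{-2A}$. Partial summation against $p^{-w}$ then shows that the correction factor $G(w,s)=F(w,s)\zeta(w)^{-\hat\Psi(f;s)}$ converges and is well controlled on the line $\Re w=1$ (this is the content of Lemma~4.4), but for any fixed $\delta>0$ the series $\sum_p\bigl|e^{sf(p)}-\hat\Psi(f;s)\bigr|p^{-1+\delta}$ diverges --- a logarithmic saving cannot beat a power of $p$. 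So $G(w,s)$ has no analytic continuation into $\Re w<1$ for a general $f\in\mathcal C$ (think of $f(p)=\{\alpha p\}$: the individual terms $e^{sf(p)}-\hat\Psi(f;s)$ do not decay, only their running averages do), and the Hankel contour you invoke cannot be drawn. The paper itself notes this obstruction obliquely, remarking that the Tenenbaum lemma might apply ``but maybe only for a more restrained class of additive functions.'' If you want to run Selberg--Delange you would need to strengthen $(1.4)$ to a power-saving discrepancy (or impose $f(p)\in\{0,1\}$, etc.); with the class $\mathcal C$ as stated, the elementary Levin--Fainleib machinery, which needs only one-sided information at the boundary, is the correct tool.
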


{\noindent}\tmtextbf{Remark.} We will be mostly using the second formula.
{\hspace*{\fill}}{\medskip}

With more effort one can (probably) show that for $|\tmop{Im} s| \leqslant
\log x$ the error term is $(\log x)^{\tmop{Re} ( \hat{\Psi} (f ; s)) - 2}$, 
but this will not be needed. The lemma is proven by using the method 
of Levin and Fainleb (see {\cite{5}} for a survey article and {\cite{13}} 
for the paper we will follow). We include the proof only for completeness's 
sake. It is quite likely that a comparable result can be deduced directly 
from one of the lemma in Tenenbaum's book {\cite{18}} but maybe only for a 
more restrained class of additive functions.

First let us prove that $\hat{\Psi} (f ; z)$ is entire.

\begin{lemma}
  Let $f \in \mathcal{C}$. The function $\hat{\Psi} (f ; s)$ is entire. 
\end{lemma}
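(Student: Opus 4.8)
The plan is to show that the Taylor coefficients of $\hat\Psi(f;s)$ at $s=0$, namely the moments $m_k := \int_{\mathbbm{R}} t^k\, d\Psi(f;t)$, grow slowly enough that the series $\sum_k m_k z^k/k!$ has infinite radius of convergence. The key input is the tail bound coming from property $(1.3)$ of the class $\mathcal{C}$. First I would observe that since $(1.4)$ gives uniform convergence of the empirical distribution of $g(p)$ (here $g=f$) to $\Psi(f;t)$, and since $(1.3)$ holds for the empirical measures with constants independent of $x$, the limiting distribution function inherits the bound: for every $A>0$,
\[
1-\Psi(f;t) \;\ll_A\; e^{-At} \qquad (t\geqslant 0),
\]
and moreover $\Psi(f;t)=0$ for $t<0$ because $f$ is strictly positive (so all $g(p)\geqslant 0$, in fact bounded below, giving $\Psi$ support in $[0,\infty)$). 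This is essentially already asserted in the paragraph preceding Theorem 2.8, so I would just cite/re-derive it in one or two lines: fix $A$, apply $(1.4)$ with a large $k$ to control the discrepancy, let $x\to\infty$, and use $(1.3)$.

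Next I would bound the moments. Using $\Psi(f;\cdot)$ supported on $[0,\infty)$ and integrating by parts,
\[
m_k \;=\; \int_0^\infty t^k\, d\Psi(f;t) \;=\; k\int_0^\infty t^{k-1}\bigl(1-\Psi(f;t)\bigr)\, dt \;\ll_A\; k\int_0^\infty t^{k-1} e^{-At}\, dt \;=\; \frac{k!}{A^{k}}.
\]
Since $A>0$ is arbitrary, this says $\limsup_k (m_k/k!)^{1/k} \leqslant 1/A$ for every $A$, hence $m_k/k! \to 0$ geometrically at an arbitrarily fast rate; in particular $\sum_k (m_k/k!)|z|^k$ converges for all $z\in\mathbbm{C}$. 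Therefore the power series in $(2.6)$ defines an entire function, and it agrees with $\int_{\mathbbm{R}} e^{zt}\, d\Psi(f;t)$ on any disc where we may interchange sum and integral (justified by dominated convergence using the same moment bound), so $\hat\Psi(f;z)$ is entire with the stated Taylor expansion.

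The only genuinely delicate point is the first step: transferring the tail bound $(1.3)$ from the empirical measures to the limit $\Psi(f;\cdot)$. One must be slightly careful that $(1.4)$ controls $\Psi$ at a fixed $t$ while $(1.3)$ is a statement uniform in $t$ but about the prime-counting measure; combining them cleanly requires noting that for fixed $t$ the left side of $(1.4)$ tends to $\tfrac1{\pi(x)}\#\{p\leqslant x: g(p)>t\}$ up to the mass at the single point $t$, which is harmless, and then sending $x\to\infty$. This is routine but is where the hypotheses $(1.3)$ and $(1.4)$ both get used; everything after that is the elementary moment computation above.
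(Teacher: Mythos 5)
Your proof is correct and follows essentially the same route as the paper: derive the tail bound $1-\Psi(f;t)\ll_A e^{-At}$ from $(1.3)$ and $(1.4)$, note $\Psi(f;t)=0$ for $t<0$, bound the moments by $k!/A^k$ via integration by parts, and let $A\to\infty$ to get infinite radius of convergence. The paper handles the transfer of the tail bound to $\Psi$ as a one-line consequence of the hypotheses (exactly as you suggest), so the ``delicate point'' you flag is real but is dispatched in the same way.
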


\begin{proof}
  Since $f \in \mathcal{C}$, by assumption $(1.3)$ and $(1.4)$
  \[ 1 - \Psi (f ; t) \leqslant c (A) \cdot e^{- At} \]
  for every fixed $A > 0$ and $c (A)$ a constant depending on $A$. Since in
  addition we require $f$ to be positive, $\Psi (f ; t) = 0$ when $t < 0$. It
  follows that
  \begin{eqnarray*}
    \int_{\mathbbm{R}} t^k \mathd \Psi (f ; t) & = & \int_0^{\infty} t^k
    \mathd \Psi (f ; t)\\
    & = & k \int_0^{\infty} t^{k - 1} \cdot (1 - \Psi (f ; t)) \mathd t\\
    & \leqslant & c (A) \cdot k \int_0^{\infty} t^{k - 1} e^{- At} \mathd t
    \text{ } = \text{ } c (A) \cdot k \cdot k! \cdot A^{- k}
  \end{eqnarray*}
  Therefore the series
  \[ 1 + \sum_{k \geqslant 1} \int_{\mathbbm{R}} t^k \mathd \Psi (f ; t) \cdot
     \frac{s^k}{k!} \]
  converges absolutely in $|s| < A / 2$. This allows us to interchange
  summation and integration and we obtain
  \begin{eqnarray*}
    1 + \sum_{k \geqslant 1} \int_{\mathbbm{R}} t^k \mathd \Psi (f ; t) \cdot
    \frac{s^k}{k!} & = & \int_{\mathbbm{R}} e^{st} \mathd \Psi (f ; t) \text{
    } \assign \text{ } \hat{\Psi} (f ; s)
  \end{eqnarray*}
  for $|s| < A / 2$. Since the series on the left is absolutely convergent for
  $|s| < A / 2$ and sums to $\hat{\Psi} (f ; s)$ it follows that $\hat{\Psi}
  (f ; s)$ is analytic in $|s| < A / 2$. But $A$ is arbitrary, therefore
  $\hat{\Psi} (f ; s)$ is entire. 
\end{proof}


\begin{lemma}
  Let $f \in \mathcal{C}$. Given $A, C > 0$ we have, uniformly in $| \tmop{Re}
  s| \leqslant C$,
  \[ \sum_{p \leqslant x} e^{sf (p)} = \pi (x) \cdot \left[ \hat{\Psi} (f ; s)
     + O_{A, C} \left( \frac{1 + | \tmop{Im} s|}{(\log x)^{2 A}} \right)
     \right] \]
  In particular the estimate
  \[ \sum_{p \leqslant x} e^{sf (p)} = \pi (x) \cdot \left[ \hat{\Psi} (f ; s)
     + O_{A, C} \left( (\log x)^{- A} \right) \right] \]
  holds uniformly in $| \tmop{Re} s| \leqslant C$ and $| \tmop{Im} s|
  \leqslant (\log x)^A$ (hence also for $| \tmop{Im} s| \leqslant \tmop{loglog}
  x$).
\end{lemma}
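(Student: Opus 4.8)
The plan is to rewrite the sum as a Riemann--Stieltjes integral and compare it to $\hat{\Psi}(f;z)$ using only the defining hypotheses $(1.3)$ and $(1.4)$ of the class $\mathcal{C}$. Introduce the empirical distribution function $\Pi_x(t)\assign\pi(x)^{-1}\#\{p\leqslant x:f(p)\leqslant t\}$, so that
\[ \frac{1}{\pi(x)}\sum_{p\leqslant x}e^{sf(p)}=\int_{\mathbbm{R}}e^{st}\,\mathd\Pi_x(t),\qquad \hat{\Psi}(f;s)=\int_{\mathbbm{R}}e^{st}\,\mathd\Psi(f;t). \]
Because $f$ is strictly positive, $\Pi_x$ is supported on $(0,\infty)$, and applying $(1.4)$ at $t=0$ for all $x$ forces $\Psi(f;0)=0$, so $\Psi(f;\cdot)$ too is supported on $[0,\infty)$ with no atom at the origin. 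It therefore suffices to bound $\int_{0}^{\infty}e^{st}\,\mathd(\Pi_x-\Psi)(t)$ by $O_{A,C}\bigl((1+|\tmop{Im}s|)(\log x)^{-2A}\bigr)$, uniformly for $|\tmop{Re}s|\leqslant C$.

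First I would fix a cutoff $T=T(x)\asymp\log\log x$ and split the integral at $T$. On $(0,T]$ I integrate by parts against the $C^{1}$ weight $e^{st}$:
\[ \int_{0}^{T}e^{st}\,\mathd(\Pi_x-\Psi)(t)=e^{sT}\bigl(\Pi_x(T)-\Psi(f;T)\bigr)-s\int_{0}^{T}e^{st}\bigl(\Pi_x(t)-\Psi(f;t)\bigr)\,\mathd t, \]
the lower boundary term vanishing since $\Pi_x(0)=\Psi(f;0)=0$. By $(1.4)$, $\Pi_x(t)-\Psi(f;t)=O_k((\log x)^{-k})$ uniformly in $t$, and since $|e^{st}|\leqslant e^{Ct}\leqslant e^{CT}$ on $[0,T]$ and $|s|\leqslant C+|\tmop{Im}s|$, the right-hand side is $O_{k,C}\bigl((1+|\tmop{Im}s|)\,T\,e^{CT}(\log x)^{-k}\bigr)$. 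For the tail $\int_{T}^{\infty}$ I would bound the $\Pi_x$- and $\Psi$-contributions separately. From $(1.3)$, $\pi(x)^{-1}\#\{p\leqslant x:f(p)\geqslant t\}=O_B(e^{-Bt})$ for every $B>0$ uniformly in $x$, and the computation in the proof of Lemma~4.2 gives the matching bound $1-\Psi(f;t)=O_B(e^{-Bt})$; breaking $[T,\infty)$ into unit intervals and summing a geometric series (legitimate once $B>C$) yields
\[ \left|\int_{T}^{\infty}e^{st}\,\mathd\Pi_x(t)\right|+\left|\int_{T}^{\infty}e^{st}\,\mathd\Psi(f;t)\right|=O_B\bigl(e^{-(B-C)T}\bigr). \]

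It remains to choose the free parameters. Taking $B=2C$ and $T=(2A/C)\log\log x$ makes the tail term exactly $(\log x)^{-2A}$; then $T\,e^{CT}=(2A/C)(\log\log x)(\log x)^{2A}$, and since $(1.4)$ is available with $k$ arbitrarily large, choosing any $k>4A$ forces $(\log\log x)(\log x)^{4A-k}\to0$, so the integral term over $(0,T]$ and the boundary term at $T$ are both $O_{A,C}\bigl((1+|\tmop{Im}s|)(\log x)^{-2A}\bigr)$ as well. Summing the two ranges gives the first displayed estimate of the Proposition. The ``in particular'' clause is then immediate: if $|\tmop{Im}s|\leqslant(\log x)^{A}$ then $(1+|\tmop{Im}s|)(\log x)^{-2A}\ll_{A}(\log x)^{-A}$, and $\log\log x\leqslant(\log x)^{A}$ for $x$ large, so the case $|\tmop{Im}s|\leqslant\log\log x$ is covered a fortiori.

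The one genuine obstacle is this final balancing act: one must push $T$ out to size $\asymp\log\log x$ so that the exponential tail term decays like a power of $\log x$, while at the same time arranging that the ``cost'' $e^{CT}\asymp(\log x)^{O_{A,C}(1)}$ incurred on $(0,T]$ is absorbed by the power saving $(\log x)^{-k}$ coming from $(1.4)$. This works precisely because $(1.4)$ holds for \emph{every} $k>0$, so $k$ may be taken large in terms of $A$ and $C$ only after $T$ has been fixed. Everything else -- the atom bookkeeping at the endpoints, and the limiting passage $x\to\infty$ used to transfer the tail bound $(1.3)$ from $\Pi_x$ to $\Psi$ -- is routine, and is exactly the argument already carried out in the proof of Lemma~4.2.
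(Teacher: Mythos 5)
Your proposal is correct and follows essentially the same strategy as the paper's proof: split the Stieltjes integral at a cutoff $\asymp\log\log x$, control the tail via the exponential decay $(1.3)$, control the inner range via the polynomial discrepancy $(1.4)$ (with $k$ taken large enough after fixing $A,C$), and read off the $|s|$ factor from integration by parts. The only differences are cosmetic --- your cutoff $(2A/C)\log\log x$ versus the paper's $A\log\log x$, and you write out the integration-by-parts step that the paper leaves implicit.
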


\begin{proof}
  To simplify notation let $F (x ; t) = (1 / \pi (x)) \cdot \#\{p \leqslant x
  : f (p) \leqslant t\}$. Let $A > 0$ be an arbitrary, but fixed constant, and
  write $\xi \assign \tmop{loglog} x$. We have
  \begin{eqnarray}
    \sum_{p \leqslant x} e^{sf (p)} & = & \pi (x) \cdot \int_0^{\infty} e^{st}
    \mathd F (x ; t) \nonumber\\
    & = & \pi (x) \cdot \left[ \int_0^{A \xi} e^{st} \mathd F (x ; t) +
    \int_{A \xi}^{\infty} e^{st} \mathd F (x ; t) \right] 
  \end{eqnarray}
  Since $F (x ; t)$ is a distribution function the second integral is for
  $\tmop{Re} s \leqslant C$, bounded in modulus by $\int_{A \xi}^{\infty}
  e^{Ct} \mathd F (x ; t)$ which is $\ll (\log x)^{- 2 A}$ since $1 - F (x ;
  t) \ll_C e^{- (C + 2) t}$ by assumptions. We rewrite the first integral in
  $(4.1)$ as
  \begin{equation}
    \int_0^{A \xi} e^{st} \mathd F (x ; t) = \int_0^{A \xi} e^{st} \mathd \Psi
    (f ; t) + \int_0^{A \xi} e^{st} \mathd (F (x ; t) - \Psi (f ; t))
  \end{equation}
  Since $F (x ; t) - \Psi (f ; t) \ll_{C, A} (\log x)^{- A (C + 3)}$ (again by
  assumptions) the second integral in $(4.2)$ is bounded by $\ll (\log x)^{- 2
  A} + |s| \cdot (\log x)^{- 2 A}$ which is less than $\ll (1 + | \tmop{Im}
  s|) \cdot (\log x)^{- 2 A}$ because $| \tmop{Re} s| \leqslant C$. As for the
  first integral in $(4.2)$ we note that $1 - \Psi (f ; t) \ll e^{- (C + 2)
  t}$ hence $|\int_{A \xi}^{\infty} e^{st} \mathd \Psi (f ; t)|
  \leqslant \int_{A \xi}^{\infty} e^{Ct} \mathd \Psi(f;t) \ll (\log x)^{-2A}$
  which allows us to complete the tails. By $(4.2)$ and the above
  observations
  \[ \int_0^{A \xi} e^{st} \mathd F (x ; t) = \int_0^{\infty} e^{st} \mathd
     \Psi (f ; t) + O \left( \frac{1 + | \tmop{Im} s|}{(\log x)^{2 A}} \right)
     = \hat{\Psi} (f ; s) + O \left( \frac{1 + | \tmop{Im} s|}{(\log x)^{2 A}}
     \right) \]
  Plugging the above back into $(4.1)$ and recalling that the second integral
  in $(4.1)$ was bounded by $O ((\log x)^{- 2 A})$ we conclude that
  \[ \sum_{p \leqslant x} e^{sf (p)} = \pi (x) \cdot \left[ \hat{\Psi} (f ; s)
     + O_{A, C} \left( \frac{1 + | \tmop{Im} s|}{(\log x)^{2 A}} \right)
     \right] \]
  as desired.
\end{proof}


We now focus on $L (f ; z)$. We prove that $L (f ; z)$ is entire -- this is
used all over the place, but especially in the proof of the ``structure
theorem''.

\begin{lemma}
  Let $f \in \mathcal{C}$. The function $L (f ; z)$ is entire. Given
  $\kappa > 0$ there is an $x_0(\kappa)$ such that uniformly in $|\tmop{Re }
  z| \leqslant \kappa$, $|\tmop{Im } z| \leqslant \log \log x$ and
  $x \geqslant x_0(\kappa)$, 
  \[ \prod_{p \leqslant x} \left( 1 - \frac{1}{p} \right)^{\hat{\Psi} (f ; z)}
     \cdot \left( 1 + \frac{e^{zf (p)}}{p - 1} \right) = L (f ; z) \cdot
     \left( 1 + O \left( (\log x)^{- 1} \right) \right) \]
  Furthermore, uniformly in $| \tmop{Re} z| \leqslant \kappa$ we have $L (f ;
  z) = O_{\kappa, \varepsilon} (1 + | \tmop{Im} z|^{\varepsilon})$.
\end{lemma}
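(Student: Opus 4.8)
The plan is to show that the partial products $P_x(z) := \prod_{p \leqslant x} (1 - 1/p)^{\hat{\Psi}(f;z)}(1 + e^{zf(p)}/(p-1))$ converge to $L(f;z)$ uniformly on compact subsets of $\mathbb{C}$. Each $P_x$ is entire --- every factor is, since $(1-1/p)^{\hat{\Psi}(f;z)} = \exp(\hat{\Psi}(f;z)\log(1-1/p))$ is entire by Lemma 4.2 (here $1-1/p$ is a positive real, so there is no branch ambiguity) --- so the Weierstrass convergence theorem will give that $L(f;z)$ is entire, and exponentiating the same estimate will give the quantitative identity. The one arithmetic input I need is the tail bound $(1.3)$: applied with $A = 2\kappa$ it yields $f(p) \leqslant \tfrac{1}{2\kappa}\log p + O_{\kappa}(1)$, hence $e^{\kappa f(p)} \ll_{\kappa} p^{1/2}$, and so there is a $P_0 = P_0(\kappa)$ such that $|e^{zf(p)}/(p-1)| \leqslant 1/2$ whenever $p > P_0$ and $|\operatorname{Re} z| \leqslant \kappa$. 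For such $p$ one Taylor-expands $\log(1 + e^{zf(p)}/(p-1)) = e^{zf(p)}/(p-1) + O(e^{2\kappa f(p)}/p^2)$ and $\hat{\Psi}(f;z)\log(1-1/p) = -\hat{\Psi}(f;z)/p + O_{\kappa}(1/p^2)$, so the logarithm of the $p$-th factor of $P_x(z)$ equals $\bigl(e^{zf(p)} - \hat{\Psi}(f;z)\bigr)/p + O_{\kappa}\bigl((1 + e^{2\kappa f(p)})/p^2\bigr)$.

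The error terms are harmless. Lemma 4.3 with $s = \kappa$ and with $s = 2\kappa$ gives $\sum_{p \leqslant y} e^{\kappa f(p)} \ll_{\kappa} y/\log y$ and $\sum_{p \leqslant y} e^{2\kappa f(p)} \ll_{\kappa} y/\log y$, so partial summation yields $\sum_{p > x}(1 + e^{2\kappa f(p)})/p^2 \ll_{\kappa} 1/(x\log x)$. The main term $\sum_{p > x}(e^{zf(p)} - \hat{\Psi}(f;z))/p$ is \emph{not} absolutely convergent, but converges by cancellation: Lemma 4.3 gives $\sum_{p \leqslant y}(e^{zf(p)} - \hat{\Psi}(f;z)) \ll_{A} y(\log y)^{-A-1}$ uniformly for $|\operatorname{Re} z| \leqslant \kappa$ and $|\operatorname{Im} z| \leqslant (\log y)^{A}$, and partial summation converts this into $\sum_{p > x}(e^{zf(p)} - \hat{\Psi}(f;z))/p \ll_{A} (\log x)^{-A}$ in the same range. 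Taking $A = 1$ (so $|\operatorname{Im} z| \leqslant \log\log x$ suffices), the tail of $\log P_x(z)$ is $\ll_{\kappa} (\log x)^{-1}$, uniformly in $|\operatorname{Re} z| \leqslant \kappa$, $|\operatorname{Im} z| \leqslant \log\log x$ and $x \geqslant x_0(\kappa)$. This gives uniform convergence on compacta --- hence $L(f;z)$ is entire --- and, on exponentiating, the identity $P_x(z) = L(f;z)(1 + O((\log x)^{-1}))$.

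For the growth bound I factor $L(f;z) = L_1(z) L_2(z)$, with $L_1$ the finite product over $p \leqslant P_0(\kappa)$ and $L_2$ the product over $p > P_0(\kappa)$. On $|\operatorname{Re} z| \leqslant \kappa$ one has $|\hat{\Psi}(f;z)| \leqslant \hat{\Psi}(f;\kappa) =: M_{\kappa}$ (because $\Psi(f;\cdot)$ is supported on $[0,\infty)$, so $|\int e^{zt}\, d\Psi(f;t)| \leqslant \int e^{(\operatorname{Re} z)t}\, d\Psi(f;t) \leqslant \hat{\Psi}(f;\kappa)$); hence each $|(1-1/p)^{\hat{\Psi}(f;z)}| = (1-1/p)^{\operatorname{Re}\hat{\Psi}(f;z)}$ lies between two positive constants depending only on $\kappa$ and $p$, and $|1 + e^{zf(p)}/(p-1)| \ll_{\kappa,p} 1$, so $|L_1(z)| \ll_{\kappa} 1$ uniformly in $\operatorname{Im} z$. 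For $L_2$, the expansion above gives $\log L_2(z) = \Sigma(z) + O_{\kappa}(1)$, where $\Sigma(z) := \sum_{p > P_0}(e^{zf(p)} - \hat{\Psi}(f;z))/p$ and the $O_{\kappa}(1)$ is uniform in $\operatorname{Im} z$, so it remains to bound $\operatorname{Re}\Sigma(z)$ from above. Assume $|\operatorname{Im} z|$ is large (otherwise $z$ ranges over a fixed compact set, on which $L(f;z)$ is bounded by continuity), put $y_0 := \exp(|\operatorname{Im} z|^{1/A})$, and split $\Sigma(z) = \sum_{P_0 < p \leqslant y_0} + \sum_{p > y_0}$. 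In the first sum I drop all cancellation: using $\sum_{p \leqslant y_0} e^{\kappa f(p)}/p = M_{\kappa}\log\log y_0 + O_{\kappa}(1)$ (partial summation from Lemma 4.3 and Mertens) and $\sum_{P_0 < p \leqslant y_0} 1/p = \log\log y_0 + O(1)$, this piece is $\ll_{\kappa} \log\log y_0 = \tfrac{1}{A}\log|\operatorname{Im} z|$. In the second sum, since $|\operatorname{Im} z| = (\log y_0)^{A} \leqslant (\log p)^{A}$ for $p > y_0$, Lemma 4.3 applies and the cancellation partial summation gives $\ll_{\kappa,A} 1$. Therefore $\operatorname{Re}\Sigma(z) \leqslant \tfrac{C_{\kappa}}{A}\log|\operatorname{Im} z| + O_{\kappa,A}(1)$, so $|L_2(z)| \ll_{\kappa,A} |\operatorname{Im} z|^{C_{\kappa}/A}$; given $\varepsilon > 0$, choosing $A = A(\kappa,\varepsilon)$ with $C_{\kappa}/A \leqslant \varepsilon$ and combining with $|L_1(z)| \ll_{\kappa} 1$ and the trivial bound for small $|\operatorname{Im} z|$ yields $L(f;z) \ll_{\kappa,\varepsilon} 1 + |\operatorname{Im} z|^{\varepsilon}$.

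The genuinely delicate point is this last balancing act: the cancellation in $\sum_{p}(e^{zf(p)} - \hat{\Psi}(f;z))/p$ becomes quantitatively useful only once $p$ exceeds $\exp(|\operatorname{Im} z|^{1/A})$, and below that threshold the crude bound costs $\log\log y_0 = \tfrac{1}{A}\log|\operatorname{Im} z|$; the whole point is that $A$ may be taken arbitrarily large (at the price only of a worse implied constant in Lemma 4.3), and this is exactly what absorbs the loss into $|\operatorname{Im} z|^{\varepsilon}$. Everything else --- the Taylor expansions of the logarithms, the absolute convergence of the error series, and the passage from Lemma 4.3 to the various tail estimates --- is routine partial summation together with Mertens's theorem.
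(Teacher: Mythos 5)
Your proof is correct and follows essentially the same route as the paper: Taylor-expand the logarithm of each factor, isolate the conditionally-convergent main term $\sum_p (e^{zf(p)}-\hat{\Psi}(f;z))/p$, and bound its tail by partial summation against Lemma 4.3, while the absolutely-convergent error $\sum_p e^{2\kappa f(p)}/p^2$ is handled directly. The growth bound is also obtained the same way the paper does it --- splitting at $\exp(|\tmop{Im}\, z|^{1/A})$, paying $\tfrac{C_\kappa}{A}\log|\tmop{Im}\, z|$ on the crude range and $O_{\kappa,A}(1)$ via cancellation beyond it, then sending $A\to\infty$ to absorb the loss into $|\tmop{Im}\, z|^{\varepsilon}$.
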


\begin{proof}
  First let us prove that $L (f ; s)$ is entire. Let $\kappa$ be given, and
  $\mathcal{B}$ a disk of radius $\kappa$ around $0$. By assumption $(1.3)$,
  $f (p) = o (\log p)$. Therefore there is a $C \assign C (\kappa) > 2$ such
  that for all $p \geqslant C$ we have $e^{sf (p)} \leqslant p^{1 / 3}$ for
  $\tmop{Re} s \leqslant \kappa$. In particular none of the terms $(1 + e^{sf
  (p)} / p - 1 / p)$ vanish when $\tmop{Re} s \leqslant \kappa$ and $p > C$. To
  show that $L (f ; s)$ is entire, it's enough to show that the products
  \begin{equation}
    \prod_{C \leqslant p \leqslant x} \left( 1 - \frac{1}{p}
    \right)^{\hat{\Psi} (f ; s)} \cdot \left( 1 + \frac{e^{sf (p)}}{p - 1}
    \right)
  \end{equation}
  converge uniformly in $s \in \mathcal{B}$. Equivalently since none of the
  terms in $(4.3)$ vanish when $\tmop{Re} s \leqslant \kappa$ (hence in $s \in
  \mathcal{B}$), it's enough to show that the tails
  \begin{equation}
    \sum_{p > x} \left[ \hat{\Psi} (f ; s) \cdot \log \left( 1 - \frac{1}{p}
    \right)^{} + \log \left( 1 + \frac{e^{sf (p)}}{p - 1} \right) \right]
    \rightarrow 0
  \end{equation}
  uniformly in $s \in \mathcal{B}$ as $x \rightarrow \infty$. Since $|e^{sf
  (p)} | \leqslant p^{1 / 3}$ and $| \hat{\Psi} (f ; s) | \leqslant \hat{\Psi}
  (f ; \kappa)$ for $s \in \mathcal{B}$, it follows from a Taylor expansion
  that
  \begin{eqnarray}
    &  & \sum_{p > x} \left[ \hat{\Psi} (f ; s) \cdot \log \left( 1 -
    \frac{1}{p} \right) + \log \left( 1 + \frac{e^{sf (p)}}{p - 1} \right)
    \right]^{} \nonumber\\
    & = & \sum_{p > x} \left[ \frac{e^{sf (p)}}{p} - \frac{\hat{\Psi} (f ;
    s)}{p} \right] + O \left( \sum_{p > x} \frac{\hat{\Psi} (f ; \kappa)}{p^2}
    + \sum_{p > x} \frac{e^{\kappa f (p)}}{p^2} \right) 
  \end{eqnarray}
  By lemma $4.3$ and an integration by parts the error term in $(4.5)$ is
  $O_{\kappa} (1 / x)$. We can assume that $x \geqslant e^{e^{\kappa}}$. Let
  $F (s ; x) = (1 / x) \sum_{n \leqslant x} e^{sf (p)}$. We have
  \begin{eqnarray}
    &  & \sum_{p > x} \left[ \frac{e^{sf (p)}}{p} - \frac{\hat{\Psi} (f ;
    s)}{p} \right] \text{ } = \text{ } \int_x^{\infty} \frac{1}{t} \mathd
    \left[ F (s ; x) - \hat{\Psi} (f ; s) \pi (t) \right] \\
    & = & - \frac{F (s ; x) - \hat{\Psi} (f ; s) \pi (x)}{x} +
    \int_x^{\infty} \frac{1}{t^2} \cdot \left[ F (s ; t) - \hat{\Psi} (f ; s)
    \pi (t) \right] \mathd t \nonumber
  \end{eqnarray}
  Since $t \geqslant x \geqslant e^{e^{\kappa}}$ by lemma 4.3 we have $F (s ;
  t) - \hat{\Psi} (f ; s) \pi (t) = O_{\kappa, A} \left( t (\log t)^{- A}
  \right)$ uniformly in $| \tmop{Im} s| \leqslant \kappa$ and $| \tmop{Re} s|
  \leqslant \kappa$ (hence uniformly in $s \in \mathcal{B}$). It follows that
  $(4.6)$ is bounded by
  \[ (\log x)^{- A} + \int_x^{\infty} \frac{t (\log t)^{- A}}{t^2} \mathd t
     \ll (\log x)^{- A + 1} \]
  uniformly in $s \in \mathcal{B}$. Therefore $(4.4)$ holds uniformly in $s
  \in \mathcal{B}$, and thus
  \[ \sum_{p \geqslant C} \left[ \hat{\Psi} (f ; s) \cdot \log \left( 1 -
     \frac{1}{p} \right) + \log \left( 1 + \frac{e^{sf (p)}}{p - 1} \right)
     \right] \]
  is analytic in $\mathcal{B}$. Exponentiating and multiplying by a product
  over the primes $p \leqslant C \leqslant x$ (obviously analytic) we conclude
  that
  \begin{eqnarray*}
    L (f ; s) & \assign & \prod_p \left( 1 - \frac{1}{p} \right)^{\hat{\Psi}
    (f ; s)} \cdot \left( 1 + \frac{e^{sf (p)}}{p - 1} \right)
  \end{eqnarray*}
  is analytic in $\mathcal{B}$. Since $\mathcal{B}$ was a ball with an
  arbitrary radius, it follows that the function $L (f ; s)$ is entire. In
  fact we proved more. We established that the tails in $(4.4)$ are $\ll (\log
  x)^{- A + 1}$. Therefore, for all $x$ large enough (how large $x$ we have to
  choose depends only on how big $| \tmop{Re} s|$ we allow)
  \begin{equation}
    \prod_{p \leqslant x} \left( 1 - \frac{1}{p} \right)^{\hat{\Psi} (f ; s)}
    \cdot \left( 1 + \frac{e^{sf (p)}}{p - 1} \right) = L (f ; s) \cdot \left(
    1 + O_A \left( \left( \log x \right)^{- A + 1} \right) \right)
  \end{equation}
  uniformly in $s \in \mathcal{B}$. In fact in $(4.6)$, $F (s ; t) -
  \hat{\Psi} (f ; s) \pi (t) = O_A (t (\log t)^{- A})$ does hold uniformly in
  the range $| \tmop{Re} s| \leqslant \kappa$ and $| \tmop{Im} s| \leqslant
  \tmop{loglog} x$ for $t \geqslant x$, by lemma 4.3. Therefore the tails 
  $(4.4)$ are
  $O_{\kappa, A} ((\log x)^{- A + 1})$ uniformly in $| \tmop{Re} s| \leqslant
  \kappa$ and $| \tmop{Im} s| \leqslant \tmop{loglog} x$ and it follows that
  $(4.7)$ holds in that range. This gives the second claim of the lemma. Now
  it remains to prove that $L (f ; s) = O_{\kappa, \varepsilon} (1 + |
  \tmop{Im} s|^{\varepsilon})$ uniformly in $| \tmop{Re} s| \leqslant \kappa$.
  Let as usual $C \assign C (\kappa) > 0$ be chosen so that $(1 + e^{sf (p)}
  / (p - 1))$ does not vanish in the half-plane $\tmop{Re} s \leqslant \kappa$
  for $p > C$. We want to give a bound for $L (f ; s)$ that holds uniformly in
  $| \tmop{Re} s| \leqslant \kappa$ and $| \tmop{Im} s| \leqslant T$. Without
  loss of generality $T \geqslant 1$. Uniformly in $| \tmop{Re} s| \leqslant
  \kappa$
  \begin{eqnarray}
    &  & \sum_{p \geqslant C} \left[ \hat{\Psi} (f ; s) \cdot \log \left( 1 -
    \frac{1}{p} \right) + \log \left( 1 + \frac{e^{sf (p)}}{p - 1} \right)
    \right] \nonumber\\
    & = & \sum_{p \geqslant 3 / 2} \left[ \frac{e^{sf (p)}}{p} -
    \frac{\hat{\Psi} (f ; s)}{p} \right] + O_{\kappa} (1) \nonumber\\
    & = & \int_{3 / 2}^{\infty} \frac{F (s ; t) - \hat{\Psi} (f ; s) \pi
    (t)}{t} + O_{\kappa} (1) 
  \end{eqnarray}
  with $F (s ; t) \assign \sum_{n \leqslant t} e^{sf (p)}$ as usual.
  Note that by lemma 4.3, for any given $A > 0$ we have uniformly in $|
  \tmop{Re} s| \leqslant \kappa$ and $| \tmop{Im} s| \leqslant T$
  \begin{equation}
    F (s ; t) - \hat{\Psi} (f ; s) \pi (t) = O_{\kappa, A} \left(t (\log t)^{-
    A} \right) \text{ when } t \geqslant \exp \left( T^{1 / A} \right)
  \end{equation}
  We split the integral in $(4.8)$ into two parts. The part over $3 / 2
  \leqslant t \leqslant \exp (T^{1 / A})$ and the remaining part over $t
  \geqslant \exp (T^{1 / A})$. Note that $|F (s ; t) | \leqslant F (\kappa ;
  t)$. Furthermore by lemma 4.3, $F (\kappa ; t) \ll \hat{\Psi} (f ; \kappa)
  \pi (t)$. Using these observations the integral over the $3 / 2 \leqslant t
  \leqslant \exp (T^{1 / A})$ part is bounded by
  \begin{equation}
    \int_{3 / 2}^{e^{T^{1 / A}}} \frac{1}{t^2}  \left[ F (\kappa ; t) \upl
    \hat{\Psi} (f ; \kappa) \pi (t) \right] \mathd t \ll \hat{\Psi} (f ;
    \kappa) \sum_{p \leqslant e^{T^{1 / A}}} \frac{1}{p} \ll \frac{\hat{\Psi}
    (f ; \kappa)}{A} \log (1 + T)
  \end{equation}
  by making $A$ large enough we can make the
  integral above $\leqslant \varepsilon \log (1 + T)$ for any given
  $\varepsilon > 0$. The remaining integral over $t \geqslant \exp (T^{1 /
  A})$ is bounded using $(4.9)$. Indeed we find that
  \begin{equation}
    \int_{e^{T^{1 / A}}}^{\infty} \frac{1}{t^2} \cdot \left[ F (s ; t) -
    \hat{\Psi} (f ; s) \pi (t) \right] \mathd t \ll_{\kappa, A} \int_{e^{T^{1
    / A}}}^{\infty} \frac{t \cdot (\log t)^{- A}}{t^2} \mathd t 
    \ll_{\kappa, A} T^{- 1 + 1 / A}
  \end{equation}
  Of course we can assume that $A \geqslant 2$. By $(4.10)$ and $(4.11)$ we
  conclude that the integral in $(4.8)$ is $\leqslant \varepsilon \log (1 + T)
  + O_{\kappa} (1)$ uniformly in $| \tmop{Re} s| \leqslant \kappa$ and $|
  \tmop{Im} s| \leqslant T$. Exponentiating $(4.8)$ it follows that uniformly
  in $| \tmop{Re} s| \leqslant \kappa$ and $| \tmop{Im} s| \leqslant T$,
  \[ \prod_{C \leqslant p} \left( 1 - \frac{1}{p} \right)^{\hat{\Psi} (f ; s)}
     \cdot \left( 1 + \frac{e^{sf (p)}}{p - 1} \right) = O_{\kappa,
     \varepsilon} (1 + T^{\varepsilon}) \]
  Multiplying on both sides by $\prod_{p < C} (1 - 1 / p)^{\hat{\Psi} (f ; s)}
  \cdot (1 + e^{sf (p)} / (p - 1))$ does not change the bound. Thus $L (f ; s)
  = O_{\kappa, \varepsilon} (1 + T^{\varepsilon})$ uniformly in $| \tmop{Re}
  s| \leqslant \kappa, | \tmop{Im} s| \leqslant T$ in particular $L (f ; s) =
  O_{\kappa, \varepsilon} (1 + | \tmop{Im} s|^{\varepsilon})$ uniformly in $|
  \tmop{Re} s| \leqslant \kappa$. The claim follows.
\end{proof}

Finally we need an elementary lemma on sums of multiplicative functions.
The following lemma appears on page 308 of Tenenbaum's book {\cite{18}}.

\begin{lemma}
  Let $g \geqslant 0$ be a multiplicative function, such that for some $A$ and
  $B$,
  \begin{eqnarray*}
    \sum_{p \leqslant x} g (p) \log p & \leqslant & Ax\\
    \sum_p \sum_{v \geqslant 2} \frac{g (p^v)}{p^v} \cdot \log p^v_{} &
    \leqslant & B
  \end{eqnarray*}
  Then, for $x > 1$,
  \[ \sum_{n \leqslant x} g (n) \leqslant (A + B + 1) \cdot \frac{x}{\log x}
     \sum_{n \leqslant x} \frac{g (n)}{n} \]
\end{lemma}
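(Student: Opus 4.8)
The plan is to sandwich the weighted sum $\Sigma_1:=\sum_{n\leqslant x}g(n)\log n$ between two quantities built out of $S(x):=\sum_{n\leqslant x}g(n)$ and $T(x):=\sum_{n\leqslant x}g(n)/n$, and then solve for $S(x)$. The easy half is the lower bound: from $\log n=\log x-\log(x/n)$ and the elementary inequality $\log t\leqslant t-1$ one gets
\[
\Sigma_1=S(x)\log x-\sum_{n\leqslant x}g(n)\log(x/n)\ \geqslant\ S(x)\log x-\sum_{n\leqslant x}g(n)\left(\frac{x}{n}-1\right)=S(x)(1+\log x)-xT(x).
\]
So everything reduces to proving the upper bound $\Sigma_1\leqslant (A+B)\,xT(x)$.

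For that I would expand $\log n=\sum_{p^k\|n}\log p^k$ and use multiplicativity: in each term write $n=p^k m$ with $p\nmid m$, so that $g(n)=g(p^k)g(m)$, giving
\[
\Sigma_1=\sum_{\substack{p^k,\,m:\,p\nmid m\\ p^k m\leqslant x}}g(p^k)g(m)\log p^k\ \leqslant\ \sum_{\substack{p^k,\,m\\ p^k m\leqslant x}}g(p^k)g(m)\log p^k=\sum_{m\leqslant x}g(m)\sum_{p^k\leqslant x/m}g(p^k)\log p^k,
\]
where $g\geqslant 0$ is what lets us drop the coprimality condition $p\nmid m$. It then remains to check $\sum_{p^k\leqslant y}g(p^k)\log p^k\leqslant (A+B)y$ for every $y$: the primes contribute at most $Ay$ by the first hypothesis, while for $k\geqslant 2$ one writes $g(p^k)\log p^k=p^k\cdot\frac{g(p^k)\log p^k}{p^k}\leqslant y\cdot\frac{g(p^k)\log p^k}{p^k}$ (using $p^k\leqslant y$) and sums, obtaining at most $yB$ from the second hypothesis. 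Substituting $y=x/m$ gives $\Sigma_1\leqslant\sum_{m\leqslant x}g(m)(A+B)\frac{x}{m}=(A+B)xT(x)$.

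Combining the two estimates yields $S(x)(1+\log x)\leqslant(A+B+1)xT(x)$, and since $1+\log x\geqslant\log x>0$ for $x>1$ this is exactly the claimed bound $S(x)\leqslant(A+B+1)\frac{x}{\log x}T(x)$. I do not expect a genuine obstacle: the one idea needed is to realise that one should bound $\sum_{n\leqslant x}g(n)\log n$ \emph{both ways} — the lower bound is where $S(x)$ re-enters, on the favourable side — rather than trying to estimate $S(x)$ head-on, which runs into circular or $\log x$-lossy inequalities. Once that is seen, the upper bound is just a regrouping of $\sum_{n\leqslant x}g(n)\log n$ according to prime-power divisors, followed by the two arithmetic hypotheses; no induction on $x$ is required.
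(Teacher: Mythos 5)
Your proof is correct, and it is essentially the standard argument (the Levin--Fainleib / Halberstam--Richert method): bound $\sum_{n\leqslant x}g(n)\log n$ from above by decomposing $\log n$ over prime powers, and from below via $\log(x/n)\leqslant x/n-1$, then solve for $\sum_{n\leqslant x}g(n)$. The paper does not actually prove this lemma — it cites page 308 of Tenenbaum's book — and your proof is precisely the one given there, so there is no divergence to report.
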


\begin{corollary}
  Let $f \in \mathcal{C}$. Given $C > 0$, uniformly in $0 \leqslant \kappa
  \leqslant C$,
  \begin{eqnarray*}
    \sum_{n \leqslant x} e^{\kappa f (n)} & = & O_C \left( x \cdot (\log
    x)^{\hat{\Psi} (f ; \kappa) - 1} \right)\\
    \sum_{n \leqslant x} \frac{e^{\kappa f (n)}}{n} & = & O_C \left( (\log
    x)^{\hat{\Psi} (f ; \kappa)} \right)
  \end{eqnarray*}
\end{corollary}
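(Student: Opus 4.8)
The plan is to apply Tenenbaum's inequality (Lemma $4.5$) to the non-negative multiplicative function $g(n)\assign e^{\kappa f(n)}$. Since $f$ is strongly additive and strictly positive, $g$ is multiplicative and, decisively, $g(p^{v})=e^{\kappa f(p)}$ for every $v\geqslant1$. I would first establish the second assertion — the bound for $\sum_{n\leqslant x}e^{\kappa f(n)}/n$ — and then recover the first assertion by feeding it back into Lemma $4.5$. Uniformity in $\kappa\in[0,C]$ will be handled throughout by dominating quantities by their value at $\kappa=C$, using in particular that $\hat{\Psi}(f;\cdot)$ is nondecreasing on $[0,\infty)$ (its derivative is $\int t e^{\kappa t}\,\mathd\Psi(f;t)\geqslant0$), so that $1\leqslant\hat{\Psi}(f;\kappa)\leqslant\hat{\Psi}(f;C)$ there.

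For the second assertion, I would use that $g(p^{v})=e^{\kappa f(p)}$ to factor the Dirichlet sum into an Euler product,
\[
\sum_{n\leqslant x}\frac{e^{\kappa f(n)}}{n}\;\leqslant\;\prod_{p\leqslant x}\Bigl(1+e^{\kappa f(p)}\sum_{v\geqslant1}p^{-v}\Bigr)\;=\;\prod_{p\leqslant x}\Bigl(1+\frac{e^{\kappa f(p)}}{p-1}\Bigr),
\]
and then insert the factor $(1-1/p)^{\hat{\Psi}(f;\kappa)}(1-1/p)^{-\hat{\Psi}(f;\kappa)}$ into each term. By Lemma $4.4$ with the real argument $s=\kappa$, the product $\prod_{p\leqslant x}(1-1/p)^{\hat{\Psi}(f;\kappa)}(1+e^{\kappa f(p)}/(p-1))$ equals $L(f;\kappa)\bigl(1+O_{C}((\log x)^{-1})\bigr)$, and $L(f;\kappa)=O_{C}(1)$ since $L(f;\cdot)$ is entire, hence bounded on $[0,C]$; meanwhile Mertens' theorem yields $\prod_{p\leqslant x}(1-1/p)^{-\hat{\Psi}(f;\kappa)}\ll_{C}(\log x)^{\hat{\Psi}(f;\kappa)}$ uniformly in $\kappa\in[0,C]$. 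Multiplying the two gives $\sum_{n\leqslant x}e^{\kappa f(n)}/n=O_{C}\bigl((\log x)^{\hat{\Psi}(f;\kappa)}\bigr)$, which is the second assertion.

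It then remains to verify the two hypotheses of Lemma $4.5$ with constants $A=A(C)$ and $B=B(C)$; this verification is the step I expect to require the most care, even though it is routine. For $\sum_{p\leqslant x}g(p)\log p\leqslant Ax$: Lemma $4.3$ gives $\sum_{p\leqslant t}e^{\kappa f(p)}\ll_{C}\pi(t)$ uniformly in $\kappa\in[0,C]$, and a partial summation turns this into $\sum_{p\leqslant x}e^{\kappa f(p)}\log p\ll_{C}x$. For $\sum_{p}\sum_{v\geqslant2}p^{-v}g(p^{v})\log p^{v}\leqslant B$: assumption $(1.3)$ forces $f(p)=o(\log p)$, so as in the proof of Lemma $4.4$ there is $C_{0}=C_{0}(C)$ with $e^{\kappa f(p)}\leqslant e^{Cf(p)}\leqslant p^{1/3}$ for $p\geqslant C_{0}$; since $\sum_{v\geqslant2}v\,p^{-v}\ll p^{-2}$ and $g(p^{v})=e^{\kappa f(p)}$, the tail $\sum_{p\geqslant C_{0}}\sum_{v\geqslant2}p^{-v}g(p^{v})\log p^{v}$ is $\ll\sum_{p}p^{-5/3}\log p<\infty$, while the finitely many $p<C_{0}$ contribute $O_{C}(1)$. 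Lemma $4.5$ then delivers
\[
\sum_{n\leqslant x}e^{\kappa f(n)}\;\leqslant\;(A+B+1)\,\frac{x}{\log x}\sum_{n\leqslant x}\frac{e^{\kappa f(n)}}{n}\;\ll_{C}\;x\,(\log x)^{\hat{\Psi}(f;\kappa)-1}
\]
by the second assertion, which is the first assertion.
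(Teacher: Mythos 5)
Your proof is correct and follows essentially the same strategy as the paper: apply Lemma 4.5 with $g(n)=e^{\kappa f(n)}$ and control the resulting $\sum_{n\leqslant x}e^{\kappa f(n)}/n$ by the Euler product $\prod_{p\leqslant x}(1+e^{\kappa f(p)}/(p-1))$. The only deviations are cosmetic: you bound that Euler product via Lemma 4.4 and Mertens' theorem, whereas the paper computes $\sum_{p\leqslant x}e^{\kappa f(p)}/(p-1)=\hat{\Psi}(f;\kappa)\log\log x+O_C(1)$ directly from Lemma 4.3 and partial summation; and you prove the second assertion first and feed it into Lemma 4.5, whereas the paper derives the first assertion and then re-obtains the second by integrating $M(t)/t^2$ — a step your ordering makes unnecessary.
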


\begin{proof}
  In lemma 4.5 we choose $g (n) \assign e^{\kappa f (n)}$. By lemma 4.3 there
  is an $A \assign A (C)$ such that
  \[ \sum_{p \leqslant x} e^{\kappa f (p)} \cdot \log p \leqslant \sum_{p
     \leqslant x} e^{Cf (p)}_{} \cdot \log p \leqslant A (C) \cdot x \]
  for all $x > 1$. Also note that
  \begin{eqnarray*}
    \sum_p \sum_{v \geqslant 2} \frac{e^{\kappa f (p^v)}}{p^v} \cdot \log p^v
    & \ll & \sum_p e^{Cf (p)} \cdot \frac{\log p}{p^2}
  \end{eqnarray*}
  and by lemma 4.3 the above sum converges. Hence the second assumption of the
  lemma holds, for some $B \assign B (C)$ large enough. Thus by lemma 4.5,
  \begin{equation}
    \sum_{n \leqslant x} e^{\kappa f (n)}_{} = O_C \left( \frac{x}{\log x}
    \cdot \sum_{n \leqslant x} \frac{e^{\kappa f (n)}}{n} \right) = O \left(
    \frac{x}{\log x} \cdot \prod_{p \leqslant x} \left( 1 + \frac{e^{\kappa f
    (p)}}{p - 1} \right) \right)
  \end{equation}
  By lemma 4.3 and an integration by parts $\sum_{p \leqslant x} e^{\kappa f
  (p)} \cdot (p - 1)^{- 1} = \hat{\Psi} (f ; \kappa) \tmop{loglog} x + O_C
  (1)$. Therefore the product in $(4.12)$ is bounded by $(\log x)^{\hat{\Psi}
  (f ; \kappa)}$. Hence the mean-value $M (x) \assign \sum_{n \leqslant x}
  e^{\kappa f (n)} \ll x (\log x)^{\hat{\Psi} (f ; \kappa) - 1}$ and also
  \begin{eqnarray*}
    \sum_{n \leqslant x} \frac{e^{\kappa f (n)}}{n} & = & \frac{M (x)}{x} +
    \int_1^x \frac{M (t)}{t^2} \cdot \mathd t\\
    & \ll & (\log x)^{\hat{\Psi} (f ; \kappa) - 1} + \int_1^x (\log
    t)^{\hat{\Psi} (f ; \kappa) - 1} \cdot t^{- 1} \mathd t \text{ } \ll
    \text{ } (\log x)^{\hat{\Psi} (f ; \kappa)}
  \end{eqnarray*}
  as desired. \ 
\end{proof}

We are now ready to prove Proposition 4.1.


\begin{proof}[Proof of Proposition 4.1]
  Let $P_k$ denote the product of the first $k$ primes. The plan of our proof
  is the following. First we estimate
  \begin{equation}
    m_k (x ; z) \assign \sum_{\tmscript{\begin{array}{c}
      n \leqslant x\\
      (n, P_k) = 1
    \end{array}}} \frac{e^{zf (n)}}{n}
  \end{equation}
  uniformly in $| \tmop{Re} z| \leqslant C$ and with $k = k (C) > 0$ chosen
  suitably. Then we relate $(4.13)$ to the mean value $M_k (x ; z) \assign
  \sum_{n \leqslant x, (n, P_k) = 1} e^{zf (n)}$. By a simple convolution
  argument we subsequently obtain the desired asymptotic for $M (x ; z)
  \assign \sum_{n \leqslant x} e^{zf (n)}$. Denote by $\Lambda_f (z ; n)$ the
  ``generalized van Mangoldt function'' defined by
  \begin{equation}
    e^{zf (n)} \cdot \log n = \sum_{d|n} e^{zf (d)} \cdot \Lambda_f (z ; n /
    d)
  \end{equation}
  Looking at the Dirichlet series for $\Lambda_f (z ; n)$ we conclude that
  $\Lambda_f (z ; n)$ vanishes when $n$ is not a prime power. On the other
  hand when $n = p^{\alpha}$ is a prime power (see \cite{5}, lemma 1.1.2)
  \[ \Lambda_f (z ; p^{\alpha}) = \log p^{\alpha} \cdot \sum_{m \leqslant
     \alpha} \frac{\left( - 1 \right)^{m - 1}}{m} \cdot e^{zmf (p)} \cdot
     \left(\begin{array}{c}
       \alpha - 1\\
       m - 1
     \end{array}\right) \]
  Therefore
  \begin{eqnarray}
    \sum_{\tmscript{\begin{array}{c}
      n \leqslant x\\
      (n, P_k) = 1
    \end{array}}} \Lambda_f (z ; n) & = & \sum_{\tmscript{\begin{array}{c}
      p^{\alpha} \leqslant x\\
      p > k
    \end{array}}} \log p^{\alpha} \sum_{m \leqslant \alpha} \frac{\left( - 1
    \right)^{m - 1}}{m} \cdot e^{zmf (p)} \cdot \left(\begin{array}{c}
      \alpha - 1\\
      m - 1
    \end{array}\right) \nonumber\\
    & = & \sum_{m \leqslant \log x / \log k} \frac{\left( - 1 \right)^{m -
    1}}{m} \cdot \sum_{\tmscript{\begin{array}{c}
      p^{\alpha} \leqslant x\\
      \alpha \geqslant m\\
      p > k
    \end{array}}} \log p^{\alpha} \cdot e^{zmf (p)} \cdot
    \left(\begin{array}{c}
      \alpha - 1\\
      m - 1
    \end{array}\right) 
  \end{eqnarray}
  We split the above sum into two. The terms with $m = 1$ contribute
  \begin{equation}
    \sum_{\tmscript{\begin{array}{c}
      p^{\alpha} \leqslant x\\
      p > k
    \end{array}}} e^{zf (p)} \cdot \log p^{\alpha} = \hat{\Psi} (f ; z) \cdot
    x + O_{A, C} \left( x \cdot \frac{1 + | \tmop{Im} z|}{(\log x)^{3 A}}
    \right)
  \end{equation}
  by lemma 4.3 and an integration by parts (using the prime number theorem
  with a $O_B(x (\log x)^{-B})$ error term. The terms $m \geqslant 2$
  contribute
  \begin{equation}
    \sum_{2 \leqslant m \leqslant \log x / \log k} \frac{\left( - 1 \right)^{m
    - 1}}{m} \sum_{\tmscript{\begin{array}{c}
      p^{\alpha} \leqslant x\\
      \alpha \geqslant m\\
      p > k
    \end{array}}} \alpha \log p \cdot e^{zmf (p)} \cdot \left(\begin{array}{c}
      \alpha - 1\\
      m - 1
    \end{array}\right)
  \end{equation}
  Since $f (p) = o (\log p)$ (because of assumption $(1.3)$) we can choose 
  $k$ large enough so as to have $f(p) \leqslant (1 / 4 C) \log p$ 
  for all $p > k$. With this choice of $k
  \assign k (C)$, for $\tmop{Re} z \leqslant C$, the sum in $(4.17)$ is
  bounded in modulus by
  \begin{eqnarray*}
    & \ll & \sum_{2 \leqslant m \leqslant \log x} \frac{1}{m}
    \sum_{\tmscript{\begin{array}{c}
      p^{\alpha} \leqslant x\\
      \alpha \geqslant m\\
      p > k
    \end{array}}} (\log x)^2 \cdot \exp \left( Cm \cdot \frac{\log p}{4 C}
    \right) \cdot 2^{\alpha}\\
    & \ll & x^{\log 2 / \log k} \cdot \sum_{2 \leqslant m \leqslant \log x}
    \frac{1}{m} \cdot \left( \log x \right)^3 \cdot x^{1 / 4} \cdot x^{1 / m}
    \text{ } \ll \text{ } x^{3 / 4 + \log 2 / \log k} \cdot (\log x)^4
  \end{eqnarray*}
  To obtain the second bound we use $p \leqslant x^{1/m}$ to get 
  $exp(C m \log p / 4C) \leqslant x^{1/4}$ and then the bound 
  $\sum_{p^{\alpha} \leqslant x, \alpha \geqslant m} 1 \ll x^{1/m} \log x$.
  Making $k$ larger if necessary we see that the sum in $(4.17)$ is bounded by
  $x^{1 - \varepsilon}$ for some small but fixed $\varepsilon > 0$. Our bound
  for $(4.17)$ together with $(4.16)$ allows us to conclude that
  \begin{equation}
    \sum_{\tmscript{\begin{array}{c}
      n \leqslant x\\
      (n, P_k) = 1
    \end{array}}} \Lambda_f (z ; n) = \hat{\Psi} (f ; z) \cdot x + O_{A, C}
    \left( x \cdot \frac{1 + | \tmop{Im} z|}{(\log 2 x)^{3 A}} \right)
  \end{equation}
  Upon integrating by parts (and making $A$ larger if necessary) we obtain
  \begin{equation}
    \sum_{\tmscript{\begin{array}{c}
      n \leqslant x\\
      (n, P_k) = 1
    \end{array}}} \frac{\Lambda_f (z ; n)}{n} = \hat{\Psi} (f ; z) \cdot \log
    x + A_0 (f ; z) + O_{A, C} \left( \frac{1 + | \tmop{Im} z|}{(\log 2 x)^{3
    A}} \right)
  \end{equation}
  uniformly in $| \tmop{Re} z| \leqslant C$ where $A_0 (f ; z) \assign \int_{1
  }^{\infty} [G (z ; t) - \hat{\Psi} (f ; z) t] t^{- 2} \mathd t$ is 
  analytic in $|\tmop{Re} z| \leqslant C$ and where
  $G (z ; t) \assign \sum_{n \leqslant x, (n, P_k)} \Lambda_f (z ; n)$. Using
  equation $(4.16)$ and repeating the same proof as in lemma 4.4 we find that
  $A_0 (f ; z) = O_{A, C} (1 + | \tmop{Im} z|^{1 / A})$ uniformly in the range
  $| \tmop{Re} z| \leqslant C$. Following Levin and Fainleb we express
  $\sum_{n \leqslant x, (n, P_k) = 1} e^{zf (n)} \cdot \log n \cdot n^{- 1}$
  in two different ways. On the one hand, integrating by parts we get
  \begin{equation}
    \sum_{\tmscript{\begin{array}{c}
      n \leqslant x\\
      (n, P_k) = 1
    \end{array}}} \frac{e^{zf (n)} \cdot \log n}{n} = m_k (x ; z) \cdot \log x
    - \int_2^x \frac{m_k (u ; z)}{u} \mathd u
  \end{equation}
  where $m_k (x ; z) \assign \sum_{n \leqslant x, (n, P_k) = 1} e^{zf (n)}
  \cdot n^{- 1}$. On the other by $(4.14)$ and $(4.19)$,
  \begin{eqnarray}
    &  & \sum_{\tmscript{\begin{array}{c}
      n \leqslant x\\
      (n, P_k) = 1
    \end{array}}} \frac{e^{zf (n)} \cdot \log n}{n} =
    \sum_{\tmscript{\begin{array}{c}
      d \leqslant x\\
      (d, P_k) = 1
    \end{array}}} \frac{e^{zf (d)}}{d} \sum_{\tmscript{\begin{array}{c}
      n \leqslant x / d\\
      (n, P_k) = 1
    \end{array}}} \frac{\Lambda_f (z ; n)}{n} \nonumber\\
    & = & \sum_{\tmscript{\begin{array}{c}
      d \leqslant x\\
      (d, P_k) = 1
    \end{array}}} \frac{e^{zf (d)}}{d} \cdot \left[ \hat{\Psi} (f ; z) \cdot
    \left( \log x - \log d \right) + A_0 (f ; z) + O_{A, C} \left( \frac{1 + |
    \tmop{Im} z|}{(\log 2 x / d)^{3 A}} \right) \right] \nonumber\\
    & = & \hat{\Psi} (f ; z) \int_2^x \frac{m_k (u ; z)}{u} \mathd u + A_0 (f
    ; z) m_k (x ; z) + O_{A, C} \left( \frac{1 + | \tmop{Im} z|}{(\log x)^{2
    A}} \right) 
  \end{eqnarray}
  In the error term we bound $\sum e^{\kappa f (d)} \cdot d^{- 1/2} 
  \cdot d^{-1/2} \cdot (\log 2 x / d)^{- 3 A}$ by using Cauchy-Schwarz's 
  inequality and Corollary 4.6
  (also, we assume without loss of generality that $A$ is chosen sufficiently
  large, $A \geqslant 4 \hat{\Psi} (f ; 2C) + 4$ will do). \ Comparing
  $(4.20)$ with $(4.21)$ we conclude that
  \[ m_k (x ; z) \log x - (1 + \hat{\Psi} (f ; z)) \int_2^x \frac{m_k (u ;
     z)}{u} \mathd u = A_0 (f ; z) m_k (x ; z) + O \left( \frac{1 + |
     \tmop{Im} z|}{(\log x)^{2 A}} \right) \]
  uniformly in $| \tmop{Re} z| \leqslant C$. Recall that $A$ is taken large
  enough, $A \geqslant 4 \hat{\Psi} (f ; 2C) + 4$. Dividing by $x (\log
  x)^{\hat{\Psi} (f ; z) + 2}$ on both sides and integrating from $2$ to $x$
  we obtain
  \begin{eqnarray}
    &  & \int_2^x \frac{m_k (u ; z) \mathd u}{u (\log u)^{\hat{\Psi} (f ; z)
    + 1}} - \int_2^x \frac{1 + \hat{\Psi} (f ; z)}{u (\log u)^{\hat{\Psi} (f ;
    z) + 2}} \int_2^u \frac{m_k (v ; z)}{v} \mathd v \mathd u \\
    & = & A_0 (f ; z) \int_2^x \frac{m_k (x ; u) \mathd u}{u (\log
    u)^{\hat{\Psi} (f ; z) + 2}} + A_1 (f ; z) + O \left( \frac{1 + |
    \tmop{Im} z|}{(\log x)^{A + \hat{\Psi} (f ; C) + 1}} \right) \nonumber
  \end{eqnarray}
  with both $A_0 (f ; z)$ and $A_1 (f ; z)$ analytic in $|\tmop{Re} z|
  \leqslant C$. In fact by a proof
  similar to the one in lemma 4.4 we find that $A_1 (f ; z) \ll_{A, C} 1 + |
  \tmop{Im} z|^{1 / A}$. Upon interchanging integrals the second term in
  $(4.22)$ can be re-written as
  \begin{eqnarray*}
    &  & \left( 1 + \hat{\Psi} (f ; z) \right) \int_2^x \frac{m_k (v ; z)}{v}
    \int_v^x \frac{\mathd u \mathd v}{u (\log u)^{\hat{\Psi} (f ; z) + 2}}\\
    & = & \int_2^x \frac{m_k (v ; z) \mathd v}{v (\log v)^{\hat{\Psi} (f ; z)
    + 1}} - \int_2^x \frac{m_k (v ; z) \mathd v}{v (\log x)^{\hat{\Psi} (f ;
    z) + 1}}
  \end{eqnarray*}
  Therefore $(4.22)$ simplifies to
  \begin{eqnarray*}
    \int_2^x \frac{m_k (u ; z)}{u} \mathd u & = & A_0 (f ; z) \int_2^x
    \frac{m_k (u ; z)}{u (\log u)^{\hat{\Psi} (f ; z) + 2}} \cdot (\log
    x)^{\hat{\Psi} (f ; z) + 1}\\
    &  & + A_1 (f ; z) \cdot (\log x)^{\hat{\Psi} (f ; z) + 1} + O_{A, C}
    \left( \frac{1 + | \tmop{Im} z|}{(\log x)^A} \right)
  \end{eqnarray*}
  Plugging the above relation into the equation right above $(4.22)$
  yields
  \begin{eqnarray*}
    m_k (x ; z) \cdot \log x & = & (1 + \hat{\Psi} (f ; z)) A_0 (f ; z)
    \int_2^x \frac{m_k (u ; z) \mathd u}{u (\log u)^{\hat{\Psi} (f ; z) + 2}}
    \cdot (\log x)^{\hat{\Psi} (f ; z) + 1}\\
    &  & + (1 + \hat{\Psi} (f ; z)) A_1 (f ; z) \cdot \left( \log x
    \right)^{\hat{\Psi} (f ; z) + 1} + O \left( \frac{1 + | \tmop{Im}
    z|}{(\log x)^A} \right)\\
    &  & + A_0 (f ; z) \cdot m_k (x ; z) + O \left( \frac{1 + | \tmop{Im}
    z|}{(\log x)^{2 A}} \right)
  \end{eqnarray*}
  because $|\hat{\Psi}(f;z)| \leqslant \hat{\Psi}(f;C)$. 
  We could iterate to obtain an asymptotic expansion. We
  choose not to do so. Instead we note the bound $|m_k (x ; z) | \leqslant m_k
  (x ; \kappa) \ll (\log x)^{\hat{\Psi} (f ; \kappa)}$ ($\kappa
  \assign \tmop{Re} z$) coming from from Corollary
  4.6. Recall also that $A_0 (f ; z) \ll_C 1 + | \tmop{Im} z|^{1 / A}$ and
  that $\hat{\Psi} (f ; z) \ll_C 1$. With these two bounds at hand our
  previous equality becomes
  \begin{eqnarray*}
    m_k (x ; z) & = & (1 + \hat{\Psi} (f ; z)) A_1 (f ; z) \cdot (\log
    x)^{\hat{\Psi} (f ; z)} + O \left( \mathcal{E}_A(x;z) \cdot
    (\log x)^{\hat{\Psi} (f ; \kappa) - 1} \right)
  \end{eqnarray*}
  uniformly in $| \tmop{Re} z| \leqslant C$ and where $\mathcal{E}_A(z;x) =
  1 + |\tmop{Im} z|^{1/A} + |\tmop{Im} z| \cdot (\log x)^{-1}$. 
  We now evaluate $M_k(x;z)
  \assign \sum_{n \leqslant x, (n, P_k) = 1} e^{zf (n)}$. Using the definition
  of $\Lambda_f (z ; n)$, equation $(4.18)$, corollary 4.6, and the previous
  line, we get
  \begin{eqnarray*}
    W_k (x ; z) & = & \sum_{\tmscript{\begin{array}{c}
      n \leqslant x\\
      (n, P_k) = 1
    \end{array}}} e^{zf (n)} \cdot \log n \text{ } = \text{ }
    \sum_{\tmscript{\begin{array}{c}
      d \leqslant x\\
      (d, P_k) = 1
    \end{array}}} e^{zf (d)} \sum_{\tmscript{\begin{array}{c}
      n \leqslant x / d\\
      (n, P_k) = 1
    \end{array}}} \Lambda_f (z ; n)\\
    & = & \sum_{\tmscript{\begin{array}{c}
      d \leqslant x\\
      (d, P_k) = 1
    \end{array}}} e^{zf (d)} \cdot \left[ \hat{\Psi} (f ; z) (x / d) + O_{A,
    C} \left( \frac{x}{d} \cdot \frac{1 + | \tmop{Im} z|}{(\log 2 x / d)^{2
    A}} \right) \right]\\
    & = & \hat{\Psi} (f ; z) \cdot x m_k (x ; z) + O_{A, C} \left( x \cdot
    \frac{1 + | \tmop{Im} z|}{(\log x)^{A}} \right)\\
    & = & A_2 (f ; z) \cdot x (\log x)^{\hat{\Psi} (f ; z)} + O \left( 
    \mathcal{E}_A(x;z) \cdot x (\log x)^{\hat{\Psi} (f ; \kappa) - 1}
    \right)
  \end{eqnarray*}
  uniformly in $| \tmop{Re} z| \leqslant C$ and where $A_2 (f ; z) \assign (1
  + \hat{\Psi} (f ; z)) \hat{\Psi} (f ; z) A_1 (f ; z)$. In the second line
  above, we bound $\sum e^{\kappa f (d)} \cdot d^{- 1} \cdot (\log 2 x / d)^{-
  2 A}$ by applying Cauchy-Schwarz's inequality and using Corollary 4.6 
  (also recall that $A \geqslant 4\hat{\Psi}(f;2C)+4$).
  Integrating by parts our previous result we conclude that the mean value
  $M_k (x ; z)$ equals to
  \[ M_k (x ; z) \assign \int_2^x \frac{\mathd W_k (t ; z)}{\log t} =
     \frac{W_k (x ; z)}{\log x} + \int_2^x \frac{W_k (t ; z)}{t (\log t)^2}
     \mathd t \]
  Corollary 4.6 yields the bound $|W_k (t ; z) | \leqslant W_k (t ; \kappa) =
  O_C (t \cdot (\log t)^{\hat{\Psi} (f ; \kappa)})$ where as usual $\kappa
  \assign \tmop{Re} z$. It follows that the second integral in the above
  equation is bounded by $x \cdot (\log x)^{\hat{\Psi} (f ; \kappa) - 2}$. We
  conclude that
  \[ M_k (x ; z) = A_2 (f ; z) \cdot x (\log x)^{\hat{\Psi} (f ; z) - 1} +
     O_{A, C} \left( \mathcal{E}_A(x;z) \cdot x (\log x)^{\hat{\Psi}
     (f ; \kappa) - 2} \right) \]
  It remains to estimate $M (x ; z) = \sum_{n \leqslant x} e^{zf (n)}$. At
  this point recall that the function $A_1 (f ; z) = O_C (1 + | \tmop{Im}
  z|^{1 / A})$ and that $A_2 (f ; z) = \hat{\Psi} (f ; z) (1 + \hat{\Psi} (f ;
  z)) A_1 (f ; z)$ hence the same bound holds for $A_2 (f ; z)$. Let $g (z ;
  n)$ be a multiplicative function defined by $g (z ; p^{\ell}) = \exp (zf
  (p^{\ell}))$ when $p \leqslant k$ and $g (z ; p^{\ell}) = 0$ otherwise. We have
  \[ M (x ; z) = \sum_{d \leqslant x} g (z ; d)
     \sum_{\tmscript{\begin{array}{c}
       n \leqslant x / d\\
       (n, P_k) = 1
     \end{array}}} e^{zf (n)} = \sum_{d \leqslant x} g (z ; d) M_k (x / d ; z)
  \]
  Using our estimate for $M_k (x / d ; z)$ this simplifies to
  \[ M (x ; z) = A_3 (f ; z) \cdot x (\log x)^{\hat{\Psi} (f ; z) - 1} +
     O_{A, C} \left( \mathcal{E}_A(x;z) \cdot x (\log x)^{\hat{\Psi}
     (f ; \kappa) - 2} \right) \]
  where $A_3 (f ; z) = \prod_{p \leqslant k} (1 + e^{zf (p)} \cdot (p - 1)^{- 1}) A_2
  (f ; z)$ is analytic. It remains to show that $A_3 (f ; z) = L (f ; z) /
  \Gamma ( \hat{\Psi} (f ; z))$. Here, we use an abelian argument. Consider
  the two-variable function.
  \[ L_f (s ; z) \assign \prod_p \left( 1 - \frac{1}{p^s} \right)^{\hat{\Psi}
     (f ; z)} \cdot \left( 1 + \frac{e^{zf (p)}}{p^s - 1} \right) \]
  Mimicking the proof of lemma 4.4 it is not too hard to prove that $L_f (s ;
  \kappa)$ is uniformly bounded when $1 \leqslant s \leqslant 2$ and 
  $\kappa \in [0
  ; \delta]$ for some $\delta > 0$. In addition by {\cite{5}} (corollary to 
  lemma 1.1.7) for fixed $\kappa \geqslant 0$ the function
  $L_f(s;\kappa)$ is right continuous at $s = 1$, when $s$ is going through
  the reals. Thus $L_f(s;\kappa) \rightarrow L_f(1;\kappa) = L(f;\kappa)$ 
  for fixed $\kappa$
  and as $s \rightarrow 1^+$. Furthermore we have the factorization
  \begin{eqnarray}
    &  & L_f (s ; \kappa) \zeta (s)^{\hat{\Psi} (f ; \kappa)} = \sum_{n
    \geqslant 1} \frac{e^{\kappa f (n)}}{n^s} = s \int_1^{\infty} M (t ;
    \kappa) t^{- s - 1} \mathd t \\
    & = & A_3 (f ; \kappa) \cdot s \int_1^{\infty} (\log t)^{\hat{\Psi} (f ;
    \kappa) - 1} \cdot t^{- s} \mathd t + O_{\delta} \left( \int_1^{\infty}
    (\log t)^{\hat{\Psi} (f ; \kappa) - 2} \cdot t^{- s} \mathd t \right)
    \nonumber
  \end{eqnarray}
  By a change of variable $u \assign \log t$ the first integral becomes
  \[ \int_1^{\infty} (\log t)^{\hat{\Psi} (f ; \kappa) - 1} \cdot t^{- s}
     \mathd t = \int_0^{\infty} e^{- t (s - 1)} \cdot t^{\hat{\Psi} (f ;
     \kappa) - 1} \mathd t = \frac{\Gamma ( \hat{\Psi} (f ; \kappa))}{(s -
     1)^{\hat{\Psi} (f ; \kappa)}} \]
  Therefore $(4.23)$ can be re-written as
  \[ L_f (s ; \kappa) \cdot \zeta (s)^{\hat{\Psi} (f ; \kappa)} = A_3 (f ;
     \kappa) \Gamma ( \hat{\Psi} (f ; \kappa)) s (s - 1)^{- \hat{\Psi} (f ;
     \kappa)} + O ((s - 1)^{- \hat{\Psi} (f ; \kappa) + 1}) \]
  Choose $s = 1 + 1 / \log x$ and fix $\kappa$. By our earlier remark $L_f (s
  ; \kappa) = L (f ; \kappa) + o (1)$. Furthermore $\zeta (s) \sim 1 / (s -
  1)$. Therefore the previous equation turns into
  \[ L (f ; \kappa) - A_3 (f ; \kappa) \Gamma ( \hat{\Psi} (f ; \kappa)) = o
     (1) \]
  It follows that $A_3 (f ; \kappa) = L (f ; \kappa) / \Gamma ( \hat{\Psi} (f
  ; \kappa))$. Since both functions are analytic in $| \tmop{Re} z| \leqslant
  C$ and coincide on a compact interval we get $A_3 (f ; z) = L (f ; z) /
  \Gamma ( \hat{\Psi} (f ; z))$ for all $| \tmop{Re} z| \leqslant C$. It now
  follows that
  \[ \frac{1}{x} \sum_{n \leqslant x} e^{zf (n)} = \frac{L (f ; z)}{\Gamma (
     \hat{\Psi} (f ; z))} \cdot (\log x)^{\hat{\Psi} (f ; z) - 1} + O_{A, C}
     \left( \mathcal{E}_A(x;z) \cdot (\log
     x)^{\hat{\Psi} (f ; \kappa) - 2} \right) \]
  uniformly in $| \tmop{Re} z| \leqslant C$ which is the desired claim. 
\end{proof}


\subsection{Two simple estimates for $v_f (x ; \Delta)$}

In the next lemma we collect a few useful facts about $v_f (x ; \Delta)$.
First we prove that $v_f (x ; \Delta)$ is essentially $\Delta / \sigma_{\Psi}
(f ; x)$.

\begin{lemma}
  Let $f \in \mathcal{C}$. Given $\delta > 0$ uniformly in $1 \leqslant \Delta
  \leqslant \delta \sigma_{\Psi} (f ; x)$,
  \[ v_f (x ; \Delta) \text{ } \asymp_{\delta} \text{ } \Delta / \sigma_{\Psi}
     (f ; x) \]
  Furthermore $v_f (x ; \Delta) \sim \Delta / \sigma_{\Psi} (f ; x)$ in the $1
  \leqslant \Delta \leqslant o (\sigma_{\Psi} (f ; x))$ range. Finally the 
  function $\omega(f;z)$ is analytic in a neighborhood of $\mathbb{R}^{+} \cup
  \{0\}$ 
\end{lemma}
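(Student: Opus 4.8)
The plan is to read everything off the defining relation $\hat{\Psi}'(f;\omega(f;z)) = \hat{\Psi}'(f;0) + z\hat{\Psi}''(f;0)$, using that $\hat{\Psi}(f;\cdot)$ is entire (Lemma 4.2) together with the positivity built into the definition of $\mathcal{C}$. Since $\Psi(f;\cdot)$ is supported on $[0,\infty)$ (by $(1.3)$, $(1.4)$, and positivity of $f$) and has non-zero second moment, for every real $t$ one has $\hat{\Psi}'(f;t) = \int_0^{\infty} s e^{ts}\,\mathd\Psi(f;s) > 0$ and $\hat{\Psi}''(f;t) = \int_0^{\infty} s^2 e^{ts}\,\mathd\Psi(f;s) > 0$, so $t \mapsto \hat{\Psi}'(f;t)$ is real-analytic and strictly increasing on $\mathbb{R}$. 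By dominated convergence $\hat{\Psi}'(f;t) \to 0$ as $t \to -\infty$, while $\hat{\Psi}'(f;t) \to \infty$ as $t \to +\infty$ (because $\Psi(f;\cdot)$ places mass away from the origin); hence $\hat{\Psi}'(f;\cdot)$ maps $\mathbb{R}$ bijectively onto $(0,\infty)$, and $\hat{\Psi}'(f;0) > 0$.

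First I would dispose of the analyticity claim. Since $\hat{\Psi}''(f;\cdot)$ is continuous and non-vanishing on $\mathbb{R}$, it is non-vanishing on an open neighbourhood of $\mathbb{R}$ in $\mathbb{C}$; by the inverse function theorem each real point of $(0,\infty)$ has a local holomorphic inverse of $\hat{\Psi}'(f;\cdot)$, and these agree on overlaps (by the identity theorem, since they agree on the real line where $\hat{\Psi}'(f;\cdot)$ is a genuine bijection). They therefore glue to a single holomorphic function $\Phi$ on an open neighbourhood of $(0,\infty)$. The affine map $z \mapsto \hat{\Psi}'(f;0) + z\hat{\Psi}''(f;0)$ sends $[0,\infty)$ into $[\hat{\Psi}'(f;0),\infty) \subset (0,\infty)$, so $\omega(f;z) \assign \Phi(\hat{\Psi}'(f;0) + z\hat{\Psi}''(f;0))$ is well-defined, holomorphic on a neighbourhood of $\mathbb{R}^{+} \cup \{0\}$, and satisfies the defining relation; this proves the last assertion. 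Note $\omega(f;0) = \Phi(\hat{\Psi}'(f;0)) = 0$.

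For the two estimates I would differentiate the defining relation in $z$, obtaining $\hat{\Psi}''(f;\omega(f;z))\,\omega'(f;z) = \hat{\Psi}''(f;0)$, hence $\omega'(f;z) = \hat{\Psi}''(f;0)/\hat{\Psi}''(f;\omega(f;z)) > 0$ for $z \geqslant 0$; in particular $\omega(f;\cdot)$ is strictly increasing on $[0,\infty)$ with $\omega(f;0) = 0$ and $\omega'(f;0) = 1$. Thus $z \mapsto \omega(f;z)/z$ extends continuously to $[0,\delta]$ with value $1$ at the origin, and being continuous and strictly positive it satisfies $c_1(\delta) \leqslant \omega(f;z)/z \leqslant c_2(\delta)$ for $z \in (0,\delta]$, with positive constants $c_1(\delta), c_2(\delta)$. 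In the range $1 \leqslant \Delta \leqslant \delta\sigma_{\Psi}(f;x)$ the argument $z = \Delta/\sigma_{\Psi}(f;x)$ lies in $(0,\delta]$, giving $v_f(x;\Delta) = \omega(f;z) \asymp_{\delta} \Delta/\sigma_{\Psi}(f;x)$; and when moreover $\Delta = o(\sigma_{\Psi}(f;x))$ we have $z \to 0$, so $v_f(x;\Delta)/(\Delta/\sigma_{\Psi}(f;x)) = \omega(f;z)/z \to \omega'(f;0) = 1$. Since $\omega(f;\cdot)$ does not depend on $x$, all of this is uniform in $x$.

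The argument is essentially routine once the strict positivity of $\hat{\Psi}''$ on $\mathbb{R}$ is noted; the only point needing care is the \emph{global} nature of the analyticity statement --- one must check that the affine image of $[0,\infty)$ stays inside the interval $(0,\infty)$ on which $\hat{\Psi}'(f;\cdot)$ is invertible, so that $\omega(f;z)$ really is defined (not merely for small $z$) on all of $\mathbb{R}^{+} \cup \{0\}$.
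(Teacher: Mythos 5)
Your proof is correct and follows essentially the same approach as the paper: read everything off the defining relation $\hat{\Psi}'(f;\omega(f;z)) = \hat{\Psi}'(f;0) + z\hat{\Psi}''(f;0)$, use the non-vanishing of $\hat{\Psi}''(f;\cdot)$ to invert, and extract both estimates from the Taylor expansion of $\omega$ at $0$ together with continuity on $[0,\delta]$. The paper simply cites Lagrange inversion for analyticity of $\omega$ on a neighbourhood of $\mathbb{R}^{+}\cup\{0\}$ and then splits the range $[1,\delta\sigma_\Psi]$ into a small-$z$ piece (Taylor expansion) and a compact piece (continuity and positivity of $\omega$); you instead build the global inverse explicitly via the inverse function theorem plus gluing, and handle the whole range at once by observing that $\omega(f;z)/z$ extends continuously to $[0,\delta]$ with value $1$ at $0$. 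These are cosmetic differences; your version is a touch more explicit about the global nature of the analyticity claim, which the paper leaves to the phrase "Lagrange's inversion."
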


\begin{proof}
  Consider the function $\omega (f ; z)$ defined implicitly by
  \[ \hat{\Psi}' (f ; \omega (f ; z)) = \hat{\Psi}' (f ; 0) + z \cdot
     \hat{\Psi}'' (f ; 0) \]
  Note that by definition $v = v_f (x ; \Delta) \assign \omega (f ; \Delta /
  \sigma_{\Psi} (f ; x))$. Since $\hat{\Psi}'' (f ; x) \neq 0$ for all $x
  \geqslant 0$, by Lagrange's inversion the function $\omega (f ; z)$ is
  analytic in a neighborhood of $\mathbbm{R}^+ \cup \{0\}$. Therefore
  \begin{eqnarray}
    v_f (x ; \Delta) & = & \omega (f ; \Delta / \sigma_{\Psi} (f ; x)) =
    \Delta / \sigma_{\Psi} (f ; x) + O \left( (\Delta / \sigma_{\Psi} (f ;
    x)^2 \right) 
  \end{eqnarray}
  Therefore for $\Delta \leqslant c \sigma_{\Psi} (f ; x)$ and $c$ small
  enough $v_f (x ; \Delta) \asymp \Delta / \sigma_{\Psi} (f ; x)$. In the
  remaining range $c \leqslant \Delta / \sigma_{\Psi} (f ; x) \leqslant
  \delta$ it is clear that $v_f (x ; \Delta) \asymp 1 \asymp \Delta /
  \sigma_{\Psi} (f ; x)$: indeed, $v_f (x ; \Delta) = \omega (f ; \Delta /
  \sigma_{\Psi} (f ; x))$, the function $\omega (f ; x)$ is positive and
  continuous for $x \geqslant 0$ and $\Delta / \sigma_{\Psi} (f ; x)$ belongs
  to a bounded interval. Also, the second assertion of the lemma follows
  immediately from $(4.24)$. 
\end{proof}

\begin{lemma}
  Let $f \in \mathcal{C}$. As usual let $\xi_f (x ; \Delta) \assign \mu (f ;
  x) + \Delta \sigma (f ; x)$. For any given $\delta > 0$, we have uniformly
  in $1 \leqslant \Delta \leqslant \delta \sigma (f ; x)$,
  \begin{eqnarray*}
    \xi_f (x ; \Delta) & = & \hat{\Psi}' (f ; v_f (x ; \Delta)) \cdot
    \tmop{loglog} x + c (f) + O_{\delta} \left( \frac{1}{\sqrt[]{\tmop{loglog}
    x}} \right)
  \end{eqnarray*}
\end{lemma}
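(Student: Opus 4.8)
The plan is to unwind the definition of $v_f(x;\Delta)$ so that the asserted relation becomes an elementary statement about $\mu(f;x)$ and $\sigma(f;x)$, and then to establish that statement by partial summation from Lemma~4.3. Because $v_f(x;\Delta)\assign\omega(f;\Delta/\sigma_{\Psi}(f;x))$ and $\omega(f;z)$ is \emph{defined} by $\hat{\Psi}'(f;\omega(f;z))=\hat{\Psi}'(f;0)+z\hat{\Psi}''(f;0)$, substituting $z=\Delta/\sigma_{\Psi}(f;x)$ and using $\sigma_{\Psi}^2(f;x)=\hat{\Psi}''(f;0)\tmop{loglog} x$ gives the \emph{exact} identity
\[ \hat{\Psi}'(f;v_f(x;\Delta))\cdot\tmop{loglog} x \;=\; \hat{\Psi}'(f;0)\,\tmop{loglog} x \;+\; \Delta\,\sigma_{\Psi}(f;x). \]
Hence, recalling $\xi_f(x;\Delta)=\mu(f;x)+\Delta\sigma(f;x)$, the lemma is equivalent to
\[ \bigl(\mu(f;x)-\hat{\Psi}'(f;0)\tmop{loglog} x-c(f)\bigr)\;+\;\Delta\bigl(\sigma(f;x)-\sigma_{\Psi}(f;x)\bigr)\;=\;O_{\delta}\!\left(\tfrac{1}{\sqrt{\tmop{loglog} x}}\right) \]
uniformly in $1\leqslant\Delta\leqslant\delta\sigma(f;x)$. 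Since $\hat{\Psi}$ is entire (Lemma~4.2), $\hat{\Psi}'(f;0)$ and $\hat{\Psi}''(f;0)$ are finite, and $\hat{\Psi}''(f;0)=\int_{\mathbbm{R}}t^2\,\mathd\Psi(f;t)>0$ by the non-vanishing-second-moment clause in the definition of $\mathcal{C}$; hence $\sigma(f;x)\sim\sigma_{\Psi}(f;x)\asymp\sqrt{\tmop{loglog} x}$, the constraint $\Delta\leqslant\delta\sigma(f;x)$ forces $\Delta\ll_{\delta}\sqrt{\tmop{loglog} x}$, and by the preceding lemma $v_f(x;\Delta)\asymp_{\delta}\Delta/\sigma_{\Psi}(f;x)$ stays bounded on this range, so every quantity above is meaningful. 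Moreover $\Delta\bigl(\sigma-\sigma_{\Psi}\bigr)=\Delta\bigl(\sigma^2-\sigma_{\Psi}^2\bigr)/(\sigma+\sigma_{\Psi})\ll_{\delta}\bigl|\sigma^2(f;x)-\sigma_{\Psi}^2(f;x)\bigr|$, so it suffices to produce sufficiently precise asymptotics for $\mu(f;x)$ and for $\sigma^2(f;x)$.

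For this I would push Lemma~4.3 through a partial summation. Lemma~4.3 gives $\sum_{p\leqslant x}e^{sf(p)}=\pi(x)\bigl[\hat{\Psi}(f;s)+O_{A,C}((\log x)^{-A})\bigr]$ uniformly for $s$ in a complex neighbourhood of $0$; since the error term is an entire function of $s$, Cauchy's estimates give, on differentiating at $s=0$, $\sum_{p\leqslant x}f(p)^{j}=\pi(x)\bigl[\hat{\Psi}^{(j)}(f;0)+O_{A}((\log x)^{-A})\bigr]$ for $j=1,2$. Combining this with the prime number theorem and the bound $f(p)\ll\log p$ (a consequence of $(1.3)$, cf.\ the proof of Lemma~4.4, which also makes the tail $\sum_p f(p)^2/p^2$ convergent), partial summation over $p\leqslant x$ yields
\[ \sum_{p\leqslant x}\frac{f(p)^{j}}{p}\;=\;\hat{\Psi}^{(j)}(f;0)\,\tmop{loglog} x\;+\;C_{j}\;+\;O_{A}\bigl((\log x)^{-A}\bigr),\qquad j=1,2, \]
where $C_j$ is a constant absorbing Mertens' constant and the convergent contribution of $\sum_{p\leqslant u}f(p)^j-\hat{\Psi}^{(j)}(f;0)\,\tmop{li}(u)$ over all $u$. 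For $j=1$ this reads $\mu(f;x)=\hat{\Psi}'(f;0)\tmop{loglog} x+C_1+O_A((\log x)^{-A})$, which shows that the limit defining $c(f)$ exists (so $C_1=c(f)$) and gives the first bracket of the display with room to spare. For $j=2$, subtracting $\sum_{p}f(p)^2/p^2$ gives $\sigma^2(f;x)=\sigma_{\Psi}^2(f;x)+\bigl(C_2-\sum_{p}f(p)^2/p^2\bigr)+O_A((\log x)^{-A})$, hence $\sigma(f;x)-\sigma_{\Psi}(f;x)=\bigl(\sigma^2(f;x)-\sigma_{\Psi}^2(f;x)\bigr)/\bigl(\sigma(f;x)+\sigma_{\Psi}(f;x)\bigr)$; multiplying by $\Delta\leqslant\delta\sigma(f;x)$ accounts for the second bracket.

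The step I expect to demand the most care is the second bracket, i.e.\ controlling $\Delta\bigl(\sigma(f;x)-\sigma_{\Psi}(f;x)\bigr)$ uniformly throughout $1\leqslant\Delta\leqslant\delta\sigma(f;x)$ at the claimed precision: one must pin down the $O(1)$ constant in the expansion of $\sigma^2(f;x)$ carefully, since it is this term — not the (much smaller) $\mu$-part — that governs the size of the final error, and one must check that the various convergent pieces (the constants $C_j$, Mertens' constant, the series $\sum_p f(p)^2/p^2$) are finite and assemble correctly. The essential analytic input here is condition $(1.3)$ (the exponential sparsity of the primes with $f(p)\geqslant t$), not merely the power-saving $(1.4)$: it is $(1.3)$, via $f(p)\ll\log p$ with arbitrarily small implied constant, that guarantees the absolute convergence of all the auxiliary integrals and series and hence the uniformity of the error. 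By contrast the ``formal'' part — the exact identity for $\hat{\Psi}'(f;v_f(x;\Delta))\cdot\tmop{loglog} x$ and the reduction in the first paragraph — is immediate from the definitions together with Lemma~4.2 and the preceding lemma.
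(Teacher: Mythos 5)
Your proof follows the same route as the paper's: unwind the defining equation $\hat{\Psi}'(f;v_f(x;\Delta))=\hat{\Psi}'(f;0)+(\Delta/\sigma_{\Psi})\hat{\Psi}''(f;0)$ to get the exact identity $\hat{\Psi}'(f;v_f(x;\Delta))\cdot\tmop{loglog} x=\hat{\Psi}'(f;0)\tmop{loglog} x+\Delta\sigma_{\Psi}(f;x)$, reduce the lemma to elementary estimates on $\mu(f;x)$ and $\sigma^2(f;x)$, and derive those from Lemma~4.3 via differentiation (Cauchy) and partial summation. The decomposition and the inputs are the ones the paper uses.

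The place where your writeup does not close, however, is the ``second bracket.'' You correctly identify the required bound $\Delta\bigl(\sigma(f;x)-\sigma_{\Psi}(f;x)\bigr)=O_{\delta}(1/\sqrt{\tmop{loglog} x})$ and correctly compute $\Delta(\sigma-\sigma_{\Psi})=\frac{\Delta}{\sigma+\sigma_{\Psi}}\bigl(\sigma^2-\sigma_{\Psi}^2\bigr)\ll_{\delta}\bigl|\sigma^2-\sigma_{\Psi}^2\bigr|$, but then assert this ``accounts for'' the bracket. It does not: $\sigma^2(f;x)-\sigma_{\Psi}^2(f;x)$ tends to a \emph{nonzero} constant in general (for $f=\omega$ it is the Mertens constant minus $\sum_p p^{-2}\approx-0.19$), so your bound yields only $\Delta(\sigma-\sigma_{\Psi})=O_{\delta}(1)$, which is a factor $\sqrt{\tmop{loglog} x}$ short of the asserted error; for $\Delta\asymp\sigma$ the quantity $\Delta(\sigma-\sigma_{\Psi})$ genuinely stays bounded away from $0$. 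It is worth noting that the paper's own one-line derivation contains the same leap — it passes from $\sigma^2=\hat{\Psi}''(f;0)\tmop{loglog} x+O(1)$ to $\Delta\sigma_{\Psi}=\Delta\sigma+O(\Delta/\tmop{loglog} x)$, which would need $\sigma-\sigma_{\Psi}=O(1/\tmop{loglog} x)$ when in fact $\sigma-\sigma_{\Psi}\asymp1/\sqrt{\tmop{loglog} x}$. So you have reproduced both the method and its imprecision: this approach delivers the stated error term only for $\Delta=O(1)$, and over the full range $1\leqslant\Delta\leqslant\delta\sigma$ it delivers $O_{\delta}(1)$, not $O_{\delta}(1/\sqrt{\tmop{loglog} x})$.
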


\begin{proof}
  Integrating by parts the result of lemma 4.3 gives an estimate for the
  average $\sum_{p \leqslant x} e^{sf (p)} / p$. Differentiating using
  Cauchy's formula and setting $s = 0$ we find that
  \begin{eqnarray*}
    \mu (f ; x) & = & \hat{\Psi}' (f ; 0) \cdot \tmop{loglog} x + c (f) + O
    \left( \frac{1}{\sqrt[]{\log x}} \right)\\
    \sigma^2 (f ; x) & = & \hat{\Psi}'' (f ; 0) \cdot \tmop{loglog} x + O
    \left( 1) \right.
  \end{eqnarray*}
  By definition of $v_f (x ; \Delta)$ we have
  \begin{eqnarray*}
    \hat{\Psi}' (f ; v_f (x ; \Delta)) \cdot \tmop{loglog} x & = & \hat{\Psi}'
    (f ; 0) \cdot \tmop{loglog} x + \Delta ( \hat{\Psi}'' (f ; 0)
    \tmop{loglog} x)^{1 / 2}\\
    & = & \mu (f ; x) - c (f) + \Delta \sigma (f ; x) + O \left(
    \frac{\Delta}{\tmop{loglog} x} \right)\\
    & = & \mu (f ; x) - c (f) + \Delta \sigma (f ; x) + O_{\delta} \left(
    \frac{1}{\sqrt[]{\tmop{loglog} x}} \right)
  \end{eqnarray*}
  and the claim follows. 
\end{proof}

\subsection{Large deviations when $1 \leqslant \Delta = o ((\tmop{loglog}
x)^{1 / 6})$}

The following is a consequence of a result of Hwang {\cite{11}} (see the
statement of the main result in 1.1 and then Corollary 3).

\begin{proposition}
  Let $f \in \mathcal{C}$. Let $\Omega (f ; x)$ be a sequence of random
  variables, such that
  \[ \mathbbm{E} \left[ e^{s \Omega (f ; x)} \right] =\mathcal{A}(s) \cdot
     \left( \log x \right)^{\hat{\Psi} (f ; s) - 1} \cdot \left( 1 + o_{x
     \rightarrow \infty} (1) \right) \]
  uniformly in $|s| \leqslant \varepsilon$ for some $\varepsilon > 0$
  sufficiently small and with $\mathcal{A}(s)$ analytic and non-zero in a
  neighborhood of $s = 0$. Then, uniformly in $1 \leqslant \Delta \leqslant o
  \left( \sigma (f ; x)^{1 / 3} \right)$,
  \[ \mathbbm{P} \left( \frac{\Omega (f ; x) - \mu (f ; x)}{\sigma (f ; x)}
     \geqslant \Delta \right) \sim \int_{\Delta}^{\infty} e^{- u^2 / 2} \cdot
     \frac{\mathd u}{\sqrt[]{2 \pi}} \]
\end{proposition}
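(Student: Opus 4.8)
The plan is to recognize the hypothesis as a ``quasi-power'' expansion in Hwang's sense, to quote his moderate deviation theorem in the Gaussian range, and then to reconcile the centering and scaling produced by that theorem with the arithmetic quantities $\mu (f ; x)$ and $\sigma (f ; x)$. First I would take logarithms: since $\mathcal{A}(s)$ is analytic and non-vanishing near $s = 0$ and, by Lemma 4.2, $\hat{\Psi}(f;s)$ is entire, the hypothesis can be rewritten, with $\phi (s) \assign \hat{\Psi}(f;s) - 1$, $\psi (s) \assign \log \mathcal{A}(s)$ and $\lambda_x \assign \tmop{loglog} x$, as
\[ \log \mathbbm{E} \bigl[ e^{s \Omega (f ; x)} \bigr] = \phi (s) \cdot \lambda_x + \psi (s) + \eta (s ; x) \]
uniformly for $|s| \leqslant \varepsilon$. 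Because $\mathbbm{E}[e^{s \Omega (f;x)}]$ and $\mathcal{A}(s) (\log x)^{\hat{\Psi}(f;s) - 1}$ are both analytic and non-zero in $|s| < \varepsilon$, the remainder $\eta (\cdot ; x)$ is analytic on that disc with $\sup_{|s| \leqslant \varepsilon / 2} |\eta (s ; x)| \to 0$ as $x \to \infty$, so by Cauchy's estimates $\eta (0 ; x), \eta' (0 ; x), \eta'' (0 ; x) = o (1)$ as well. This is precisely the form of hypothesis in Hwang's theorem; the regularity conditions hold, since $\phi, \psi$ are analytic near $0$ and the variability condition is satisfied because $\phi'' (0) = \hat{\Psi}'' (f ; 0) = \int_{\mathbbm{R}} t^2 \mathd \Psi (f ; t) > 0$ (here we use that $\Psi (f ; \cdot)$ is supported on $[0, \infty)$ and has non-zero second moment by the definition of $\mathcal{C}$).

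Next I would invoke \cite{11} (the main theorem there, together with its Corollary~3). Writing $m (x) \assign \hat{\Psi}' (f ; 0) \cdot \tmop{loglog} x$ and $s (x) \assign (\hat{\Psi}'' (f ; 0) \cdot \tmop{loglog} x)^{1/2} = \sigma_{\Psi} (f ; x)$, it furnishes a Berry--Esseen estimate $\mathbbm{P}(\Omega (f ; x) \geqslant m (x) + y \, s (x)) = \int_y^{\infty} e^{- u^2 / 2} \mathd u / \sqrt{2 \pi} + o (1)$ uniformly in $y \in \mathbbm{R}$, together with the moderate deviation refinement
\[ \mathbbm{P} \bigl( \Omega (f ; x) \geqslant m (x) + y \, s (x) \bigr) \sim \int_y^{\infty} e^{- u^2 / 2} \cdot \frac{\mathd u}{\sqrt{2 \pi}} \]
uniformly for $1 \leqslant y = o (\lambda_x^{1/6}) = o (\sigma (f ; x)^{1/3})$; the merely qualitative error $\eta$ is harmless here, since in this range the relevant saddle point sits at $|s| \asymp y / \sigma_{\Psi} \to 0$, where $\eta$ contributes only a factor $1 + o (1)$. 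Combining the two statements, the last display holds uniformly for $c \leqslant y = o (\sigma (f ; x)^{1/3})$ for any fixed $c > 0$.

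It then remains to replace $(m (x), s (x))$ by $(\mu (f ; x), \sigma (f ; x))$. The event $\{ (\Omega (f ; x) - \mu (f ; x)) / \sigma (f ; x) \geqslant \Delta \}$ coincides with $\{ \Omega (f ; x) \geqslant m (x) + \Delta' \, s (x) \}$, where $\Delta' \assign (\mu (f ; x) - m (x)) / s (x) + \Delta \cdot \sigma (f ; x) / s (x)$. By Lemma 4.8 and the asymptotics recorded in its proof, $\mu (f ; x) - m (x) = c (f) + o (1) = O (1)$ and $\sigma^2 (f ; x) = \hat{\Psi}'' (f ; 0) \tmop{loglog} x + O (1) = s (x)^2 + O (1)$, so $\sigma (f ; x) / s (x) = 1 + O (\sigma (f ; x)^{- 2})$ and hence
\[ \Delta' = \Delta + O \!\left( \frac{1}{\sigma (f ; x)} \right) + O \!\left( \frac{\Delta}{\sigma (f ; x)^2} \right) . \]
Thus for $1 \leqslant \Delta = o (\sigma (f ; x)^{1/3})$ we still have $\Delta' = o (\sigma (f ; x)^{1/3})$ with $\Delta' \geqslant 1 / 2$ for $x$ large, so the previous paragraph applies with $y = \Delta'$; moreover $(\Delta')^2 - \Delta^2 = 2 \Delta (\Delta' - \Delta) + (\Delta' - \Delta)^2 = O (\Delta / \sigma (f ; x)) + O (\Delta^2 / \sigma (f ; x)^2) = o (1)$, which by the elementary estimate $\int_y^{\infty} e^{- u^2 / 2} \mathd u \asymp e^{- y^2 / 2} \min (1, 1 / y)$ gives $\int_{\Delta'}^{\infty} e^{- u^2 / 2} \mathd u \sim \int_{\Delta}^{\infty} e^{- u^2 / 2} \mathd u$. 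Assembling these facts yields $\mathbbm{P}((\Omega (f ; x) - \mu (f ; x)) / \sigma (f ; x) \geqslant \Delta) \sim \int_{\Delta}^{\infty} e^{- u^2 / 2} \mathd u / \sqrt{2 \pi}$ uniformly in $1 \leqslant \Delta = o (\sigma (f ; x)^{1/3})$, as claimed. The main obstacle, beyond quoting the right form of Hwang's theorem, is the careful bookkeeping of the two discrepancies between $(m (x), s (x))$ and $(\mu (f ; x), \sigma (f ; x))$, and the verification that a purely $o (1)$ (rather than quantitatively decaying) error in the quasi-power expansion is still admissible for the moderate deviation conclusion over the entire range $o (\sigma^{1/3})$.
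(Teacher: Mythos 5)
Your proposal takes essentially the same route as the paper: the paper's entire proof of Proposition 4.9 is the single sentence citing Hwang's quasi-power theorem (\cite{11}, main result and Corollary~3), and your argument is precisely that citation, recast in Hwang's $\phi, \psi, \lambda_x$ notation and then reconciled with the arithmetic centering $(\mu(f;x), \sigma(f;x))$ via Lemma 4.8. The bookkeeping for $\Delta'$ and the tail-ratio estimate $\int_{\Delta'}^\infty e^{-u^2/2}\,\mathd u \sim \int_{\Delta}^\infty e^{-u^2/2}\,\mathd u$ are correct and simply make explicit what the paper leaves implicit.
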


For all interesting $\Omega (f ; x)$ we will be able to determine asymptotics
for
\[ \mathbbm{P} \left( \frac{\Omega (f ; x) - \mu (f ; x)}{\sigma (f ; x)}
   \geqslant \Delta \right) \]
when $\Delta$ is in the range $(\tmop{loglog} x)^{\varepsilon} \ll \Delta
\leqslant c \sigma (f ; x)$. Hwang's lemma will be used to complement these
results -- that is, handle the (easy) range $1 \leqslant \Delta \leqslant o
((\tmop{loglog} x)^{1 / 6})$. Let us note that Maciulis {\cite{14}} proved a
result similar to proposition 4.9, but much earlier. The drawback of his
result is that it is harder to use because of the many parameters introduced
in the statement.

\subsection{Large deviations: $(\tmop{loglog} x)^{\varepsilon} \ll \Delta \ll
\sigma (f ; x)$ and $\Psi (f ; t)$ non-lattice}

The object of this section is to prove the following (general) lemma.

\begin{proposition}
  Let $f \in \mathcal{C}$. Suppose that $\Psi (f ; t)$ is not lattice
  distributed. Let $\Omega (f ; x)$ be a sequence of random variables such
  that, for any given $C > 0$, uniformly in $0 \leqslant \kappa \assign
  \tmop{Re} s \leqslant C$ and $| \tmop{Im} s| \leqslant \tmop{loglog} x$,
  \[ \mathbbm{E} \left[ e^{s \Omega (f ; x)} \right] =\mathcal{A}(s) \cdot
     \left( \log x \right)^{\hat{\Psi} (f ; s) - 1} + O_C \left( (\log
     x)^{\hat{\Psi} (f ; \kappa) - 3 / 2} \right) \]
  Here $\mathcal{A}(s)$ is analytic in $\tmop{Re} s \geqslant 0$ and 
  non-vanishing on $\mathbb{R}^{+} \cup \{0\}$. Assume that
  $\mathcal{A}(s) \ll_C (1 + | \tmop{Im} s|^{1 / 8})$ holds throughout $0
  \leqslant \tmop{Re} s \leqslant C$. Then, given $\delta, \varepsilon > 0$,
  uniformly in $(\tmop{loglog} x)^{\varepsilon} \ll \Delta \leqslant \delta
  \sigma (f ; x)$,
  \[ \mathbbm{P}(\Omega (f ; x) \geqslant \mu (f ; x) + \Delta \sigma (f ; x))
     \sim \mathcal{A}(v) \cdot \frac{\left( \log x \right)^{\hat{\Psi} (f ; v)
     - 1 - v \hat{\Psi}' (f ; v)}}{v (2 \pi \hat{\Psi}'' (f ; v) \tmop{loglog}
     x)^{1 / 2}} \cdot e^{- vc (f)} \]
  where $v \assign v_f (x ; \Delta)$ is the unique positive solution to the
  equation
  \[ \hat{\Psi}' (f ; v) \cdot \tmop{loglog} x = \hat{\Psi}' (f ; 0) \cdot
     \tmop{loglog} x + \Delta ( \hat{\Psi}'' (f ; 0) \tmop{loglog} x)^{1 / 2}
  \]
  and $c (f)$ as in the statement of Theorem 2.8 (or see section 3).
\end{proposition}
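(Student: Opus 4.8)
The plan is to prove Proposition 4.10 by a saddle-point/Tauberian argument applied to the moment generating function $\mathbb{E}[e^{s\Omega(f;x)}]$. Write $\xi := \mu(f;x) + \Delta\sigma(f;x) = \xi_f(x;\Delta)$ for the threshold, and recall from Lemma 4.8 that $v = v_f(x;\Delta) \asymp_\delta \Delta/\sigma_\Psi(f;x)$, so that $v$ stays in a fixed compact interval $[c_0, C_0] \subset (0,\infty)$ throughout the range $(\operatorname{loglog} x)^\varepsilon \ll \Delta \le \delta\sigma(f;x)$; in particular the hypothesis ``$\mathcal{A}(s)$ analytic in $\operatorname{Re} s \ge 0$, non-vanishing on $\mathbb{R}^+\cup\{0\}$, with $\mathcal{A}(s)\ll (1+|\operatorname{Im} s|^{1/8})$'' applies with $C = C_0 + 1$. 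The first step is the standard tilting identity: for any real $c > 0$,
\[
  \mathbb{P}(\Omega(f;x) \ge \xi) = \frac{1}{2\pi}\int_{-\infty}^{\infty} \mathbb{E}\!\left[e^{(c+it)\Omega(f;x)}\right] \cdot \widehat{H}(t)\, e^{-(c+it)\xi}\, dt,
\]
or more precisely one works with a smoothed version (e.g. convolve the indicator of $[\xi,\infty)$ with a narrow kernel, or use Perron-type truncation). We then choose the contour parameter $c = v$; by Lemma 4.9 (the definition of $v$ via $\hat\Psi'(f;v)\operatorname{loglog} x = \hat\Psi'(f;0)\operatorname{loglog} x + \Delta(\hat\Psi''(f;0)\operatorname{loglog} x)^{1/2}$) and Lemma 4.9's restatement, $v$ is exactly the saddle point: the phase $\hat\Psi(f;s)\operatorname{loglog} x - s\xi$ has vanishing derivative at $s = v$, up to the $O(1/\sqrt{\operatorname{loglog} x})$ discrepancy between $\xi$ and $\hat\Psi'(f;v)\operatorname{loglog} x + c(f)$ recorded in Lemma 4.9.

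The second step is to split the integral into the central range $|t| \le T_0$ (for a suitable $T_0$, say $T_0 = (\operatorname{loglog} x)^{-1/2+\eta}$ or a fixed small constant, to be optimized) and the tails $|t| > T_0$. On the central range, substitute the hypothesis $\mathbb{E}[e^{s\Omega}] = \mathcal{A}(s)(\log x)^{\hat\Psi(f;s)-1}(1+o(1))$ with $s = v+it$, Taylor-expand $\hat\Psi(f;v+it) = \hat\Psi(f;v) + it\,\hat\Psi'(f;v) - \tfrac{t^2}{2}\hat\Psi''(f;v) + O(|t|^3)$, and note $\mathcal{A}(v+it) = \mathcal{A}(v)(1+o(1))$ by continuity. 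The linear term $it\,\hat\Psi'(f;v)\operatorname{loglog} x$ cancels against the $e^{-it\xi}$ factor (again up to the Lemma 4.9 error, which contributes a harmless $e^{-itc(f)+o(1)}$, hence the $e^{-vc(f)}$ in the final formula after also accounting for the real shift $e^{-v\xi}$ combining with $(\log x)^{\hat\Psi(f;v)-1}$ to give $(\log x)^{\hat\Psi(f;v)-1-v\hat\Psi'(f;v)}$ times $e^{-vc(f)}$). What remains is a Gaussian integral $\int e^{-\tfrac{t^2}{2}\hat\Psi''(f;v)\operatorname{loglog} x}\,dt$ producing the factor $(2\pi\hat\Psi''(f;v)\operatorname{loglog} x)^{-1/2}$, and the extra $1/v$ comes from integrating the smoothing kernel / the $1/(c+it)$ factor in the Perron step evaluated near $t=0$. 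Assembling these gives precisely the claimed main term $\mathcal{A}(v)(\log x)^{\hat\Psi(f;v)-1-v\hat\Psi'(f;v)}\big/\big(v(2\pi\hat\Psi''(f;v)\operatorname{loglog} x)^{1/2}\big)\cdot e^{-vc(f)}$.

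The main obstacle — and the place where the non-lattice hypothesis is essential — is bounding the tail contribution $|t| > T_0$ and showing it is $o$ of the main term. For $T_0 < |t| \le \operatorname{loglog} x$ one uses the error-term form of the hypothesis, $\mathbb{E}[e^{s\Omega}] = \mathcal{A}(s)(\log x)^{\hat\Psi(f;s)-1} + O((\log x)^{\hat\Psi(f;\kappa)-3/2})$ with $\kappa = v$, together with the polynomial bound $\mathcal{A}(v+it)\ll 1+|t|^{1/8}$; the key point is that $\operatorname{Re}\hat\Psi(f;v+it) < \hat\Psi(f;v)$ strictly for $0 < |t| \le \operatorname{loglog} x$ when $\Psi(f;\cdot)$ is \emph{non-lattice}, because non-latticeness forces $|\int e^{(v+it)u}d\Psi(f;u)| < \int e^{vu}d\Psi(f;u) = \hat\Psi(f;v)$ for all real $t\ne 0$; one needs a quantitative version of this, namely $\operatorname{Re}\hat\Psi(f;v)-\operatorname{Re}\hat\Psi(f;v+it) \gg \min(t^2,1)$ uniformly for $v$ in the compact interval, which follows from $\hat\Psi''(f;v) > 0$ near $t=0$ and a compactness/continuity argument for bounded $t$ away from $0$. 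For $|t| > \operatorname{loglog} x$ the error-term regime no longer directly applies, so one falls back on the trivial bound coming from $|\mathbb{E}[e^{s\Omega}]| \le \mathbb{E}[e^{\kappa\Omega}]$ (or re-derives decay from the structure of $\Omega$); in practice this range is cut off by choosing the smoothing kernel $H$ so that $\widehat H(t)$ decays fast enough (e.g. $\widehat H$ supported in $|t|\le\operatorname{loglog} x$ or super-polynomially small beyond it) to kill it. Finally one reconciles the smoothed count with the sharp count $\mathbb{P}(\Omega \ge \xi)$ by a routine sandwiching argument, using that the main term varies slowly in $\xi$ (shifting $\xi$ by $O(1)$ changes $v$ by $O(1/\operatorname{loglog} x)$ and the answer by $(1+o(1))$), which is exactly where the lower bound $\Delta \gg (\operatorname{loglog} x)^\varepsilon$ is used to guarantee the main term dominates the smoothing error.
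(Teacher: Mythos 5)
Your overall skeleton — tilt at $s=v$, Perron-type smoothing, split at a neighborhood of $v$ on the line $\operatorname{Re} s = v$, Gaussian integral on the central range, kill the tails — is exactly the architecture of the paper's proof (which uses the Tenenbaum kernel $T/(z(z+T))$ from Lemma 4.16, with $\kappa = v$, $M = \operatorname{loglog} x$, $T = \sqrt{\lambda(x)}$). The central-range computation and the book-keeping that produces $\mathcal{A}(v)\,(\log x)^{A(f;v)}\,e^{-vc(f)}/\bigl(v(2\pi\hat\Psi''(f;v)\operatorname{loglog} x)^{1/2}\bigr)$ match.

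However, there is a genuine gap in the way you dispose of the tail $T_0 < |t| \le \operatorname{loglog} x$. You assert that non-latticeness gives a \emph{uniform} lower bound $\operatorname{Re}\hat\Psi(f;v) - \operatorname{Re}\hat\Psi(f;v+\mathrm{i}t) \gg \min(t^2,1)$ over that whole range ``by compactness/continuity for bounded $t$ away from $0$.'' Compactness does give you such a bound on any \emph{fixed} interval $T_0 \le |t| \le T_1$, but the range you must control grows like $\operatorname{loglog} x$ and is not compact, and for a general non-lattice distribution the characteristic function $\hat\Psi(f;\mathrm{i}t)$ need not stay uniformly bounded away from $1$ in modulus as $|t|\to\infty$. (Example: a two-point distribution on $\{1,\sqrt 2\}$ is non-lattice, but there are arbitrarily large $t$ for which both $t$ and $t\sqrt 2$ lie close to $2\pi\mathbb{Z}$, so $|\hat\Psi(f;\mathrm{i}t)|$ is close to $1$ infinitely often.) Without such a uniform gap, your pointwise bound on $(\log x)^{\operatorname{Re}(\hat\Psi(f;v+\mathrm{i}t)-\hat\Psi(f;v))}$ does not show the contribution from the medium range is $o$ of the main term.

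This is precisely where the paper invokes the two lemmas of Ess\'een that you do not use. Lemma 4.13 gives an $L^1$-type estimate: for a non-lattice distribution there exist $\lambda(x),\xi(x)\to\infty$ with $\int_c^{\lambda(x)} |\phi(t)|^{\operatorname{loglog} x}\,dt/t \ll 1/(\xi(x)\sqrt{\operatorname{loglog} x})$, which handles $c\le |t|\le \lambda(x)$ in an integrated (not pointwise) way. Lemma 4.15 gives the measure estimate $\operatorname{meas}\{u\in I : |\phi(u)|^2 \ge 1-\delta\}\ll\sqrt\delta$ on bounded intervals, which is what controls each unit interval in the range $\lambda(x)\le|t|\le\operatorname{loglog} x$ and produces the saving of $(\operatorname{loglog} x)^{-1/2}$. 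Your sketch replaces both with a claim that does not hold, and the ``choose $\widehat H$ supported on $|t|\le\operatorname{loglog} x$'' device does not help because the problematic range $O(1)\le|t|\le\operatorname{loglog} x$ is inside that support and still needs a genuine argument. To repair the proposal you would need to import the Ess\'een machinery (or an equivalent), which is the actual technical content of the non-lattice case.
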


It is possible to prove proposition 4.10 using the method of ``associated
distribution'' due to Cramer {\cite{2}}. \ The method presented here is more
concise, and avoids some of the redundancy inherent in Cramer's method.
One of the peculiarity of our method is that it seems to require an
asymptotic for $\mathbbm{E}[e^{s \Omega (f ; x)}]$ in the range $| \tmop{Re}
s| \leqslant C$ and $| \tmop{Im} s| \leqslant \psi (x)$ for some $\psi (x)
\rightarrow \infty$, whereas Cramer's methods needs only an assumption on the
range $|s| \leqslant C$, for $C$ big enough.

Our proof relies on the following six lemmata. The first lemma is
``well-known''. A proof can be found in Petrov's book {\cite{15}} (or in
Ess\'een's thesis {\cite{7}}, theorem 5, p. 26).

\begin{lemma}
  A distribution function $F (t)$ is not lattice distributed if and only if
  for all $t \neq 0$ the Fourier transform $\phi (t) = \int_{\mathbbm{R}}
  e^{\tmop{it} u} \mathd F (u)$ has modulus $< 1$.
\end{lemma}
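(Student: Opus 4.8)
This is the classical lattice/non-lattice dichotomy for characteristic functions, so the plan is simply to reproduce the standard argument, proving each implication by contraposition. The pivot is the elementary remark that $|\phi(t_0)| = 1$ for a fixed $t_0 \neq 0$ forces all the mass of $F$ onto an arithmetic progression of common difference $2\pi/|t_0|$.

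\emph{From lattice to a value of modulus one.} Suppose $F$ is lattice distributed on $\alpha\mathbbm{Z}$, and let $X$ have law $F$, so $\mathbbm{P}(X \in \alpha\mathbbm{Z}) = 1$. Then $e^{(2\pi i/\alpha)X} = 1$ almost surely, hence $\phi(2\pi/\alpha) = \mathbbm{E}[e^{(2\pi i/\alpha)X}] = 1$ with $2\pi/\alpha \neq 0$; this negates ``$|\phi(t)| < 1$ for all $t \neq 0$'', which is one direction.

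\emph{From a value of modulus one to lattice.} Conversely, suppose $|\phi(t_0)| = 1$ with $t_0 \neq 0$ and write $\phi(t_0) = e^{i\theta}$. Taking real parts in $\mathbbm{E}[e^{i(t_0 X - \theta)}] = 1$ yields
\[ \int_{\mathbbm{R}} \bigl( 1 - \cos(t_0 u - \theta) \bigr)\, \mathd F(u) = 0 . \]
The integrand is continuous, non-negative, and strictly positive on an open set, which must therefore be $F$-null; so $F$ is concentrated on $\{\, u : t_0 u - \theta \in 2\pi\mathbbm{Z} \,\} = \theta/t_0 + (2\pi/|t_0|)\mathbbm{Z}$, an arithmetic progression of step $2\pi/|t_0|$. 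To read off ``lattice on $\alpha\mathbbm{Z}$'' in the sense of the definition of a lattice-distributed random variable recalled above, one takes $2\pi/\alpha$ to be the least positive element of $G \assign \{\, t \in \mathbbm{R} : |\phi(t)| = 1 \,\}$; a one-line computation (if $e^{isX}$ and $e^{itX}$ are a.s.\ constant, so is $e^{i(s+t)X}$, and $|\phi(-t)| = |\phi(t)|$) shows $G$ is a closed subgroup of $\mathbbm{R}$, hence $G = c\mathbbm{Z}$ for some $c > 0$ unless $F$ is degenerate, and $\alpha = 2\pi/c$ is then the maximal span.

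The content-bearing step is the second one, and within it the only delicate point is the final passage from ``$F$ is carried by a coset of a lattice'' to ``$F$ is lattice distributed on $\alpha\mathbbm{Z}$'' in the precise sense used here — i.e.\ pinning down the maximal span through the classification of closed subgroups of $\mathbbm{R}$, while setting aside the degenerate point-mass case. The remaining ingredients, namely the vanishing-integrand argument and the easy direction, are entirely routine. As this is standard material, in the write-up I would in the end simply invoke Petrov \cite{15} or Esséen \cite{7} (Theorem 5, p.\ 26), as is done in the paper.
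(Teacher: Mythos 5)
Your argument is the classical one and, as you say, the paper itself simply cites Petrov \cite{15} and Ess\'een \cite{7} without giving a proof, so the overall approaches agree. But you mislocate the delicate point, and there is a real gap when the classical proof is matched to the definition of ``lattice distributed'' recalled in this paper. Your vanishing-integrand step shows that $|\phi(t_0)|=1$ forces $F$ onto the \emph{coset} $\theta/t_0 + (2\pi/t_0)\mathbbm{Z}$; the subgroup argument you sketch then identifies the maximal span, but it does nothing to remove the shift $\theta/t_0$, whereas the definition in the paper demands $\mathbbm{P}(X\in\alpha\mathbbm{Z})=1$ exactly, with no translate. That cannot be deduced in general: the two-point law giving mass $1/2$ to each of $a$ and $a+1$, with $a\notin\mathbbm{Q}$, has $|\phi(2\pi)|=1$ yet places no mass on $\beta\mathbbm{Z}$ for any $\beta>0$. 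So under the paper's literal definition the ``only if'' direction of the lemma is false; Petrov and Ess\'een prove the result for the standard shifted-lattice notion, where $F$ is permitted to live on $a+h\mathbbm{Z}$. In the paper's actual applications the relevant $\Psi(f;\cdot)$, when lattice, is supported on $\mathbbm{Z}_{\geqslant 0}$, so the shift is zero and nothing breaks downstream; but your write-up should either adopt the shifted-lattice definition throughout (matching the references and the lemma's actual truth set) or record an explicit hypothesis forcing the shift to vanish, rather than implying that the closed-subgroup classification handles it, since that argument only pins down the span.
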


Lemma 4.11 admits the following consequence.

\begin{lemma}
  Let $f \in \mathcal{C}$. Suppose that $\Psi (f ; t)$ is not lattice
  distributed. Then
  \[ \phi (t) = e^{\hat{\Psi} (f ; \tmop{it}) - 1} \]
  is the Fourier transform of a non-lattice distribution function.
  Furthermore, for any $w \geqslant 0$ and $t \in \mathbbm{R}$, we have
  \[ | \exp ( \hat{\Psi} (f ; w + \tmop{it}) - \hat{\Psi} (f ; w)) | \leqslant
     | \phi (t) | \]
\end{lemma}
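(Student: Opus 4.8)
The plan is to recognize $\phi$ as the characteristic function of a compound Poisson law and then route everything through Lemma 4.11. Recall that $\Psi(f;\cdot)$ is a probability distribution function supported on $[0,\infty)$: indeed $\Psi(f;t)=0$ for $t<0$ since $f$ is positive, and $\hat{\Psi}(f;0)=\int_{\mathbbm{R}}\mathd\Psi(f;t)=1$, while $\hat{\Psi}$ is entire by Lemma 4.2. Using $\hat{\Psi}(f;0)=1$ I would first rewrite
\[
\phi(t)=\exp\!\big(\hat{\Psi}(f;\mathi t)-1\big)=\exp\!\Big(\int_{\mathbbm{R}}\big(e^{\mathi tx}-1\big)\,\mathd\Psi(f;x)\Big).
\]
A one-line computation, conditioning on $N$, identifies this as the characteristic function of $Z:=Y_1+\cdots+Y_N$, where $N$ is Poisson with parameter $1$ and $Y_1,Y_2,\ldots$ are i.i.d.\ with law $\Psi(f;\cdot)$, independent of $N$ (one just expands $\mathbbm{E}[e^{\mathi tZ}]=\sum_{n\geqslant0}e^{-1}n!^{-1}\hat{\Psi}(f;\mathi t)^n=e^{\hat{\Psi}(f;\mathi t)-1}$). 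Since $Z$ is a bona fide real random variable, $\phi$ is the Fourier transform of a distribution function. (Alternatively one could invoke Lévy--Khintchine with finite Lévy measure $\mathrm{d}\Psi$, but the compound Poisson description is more self-contained.)

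For the non-lattice assertion I would appeal to Lemma 4.11. Taking real parts,
\[
|\phi(t)|=\exp\!\Big(\int_{\mathbbm{R}}\big(\cos(tx)-1\big)\,\mathd\Psi(f;x)\Big)\leqslant 1,
\]
the integrand being $\leqslant 0$. If $|\phi(t_0)|=1$ for some $t_0\neq 0$, then a nonpositive integrand with vanishing integral must vanish $\Psi(f;\cdot)$-almost everywhere, so $\cos(t_0x)=1$, i.e.\ $t_0x\in2\pi\mathbbm{Z}$, for $\Psi(f;\cdot)$-almost every $x$; equivalently $\Psi(f;\cdot)$ is carried by $\tfrac{2\pi}{|t_0|}\mathbbm{Z}$, contradicting the hypothesis that $\Psi(f;\cdot)$ is not lattice distributed. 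Hence $|\phi(t)|<1$ for every $t\neq 0$, and Lemma 4.11 gives that the underlying distribution function is non-lattice.

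For the stated inequality, I would write, for $w\geqslant 0$ and $t\in\mathbbm{R}$,
\[
\hat{\Psi}(f;w+\mathi t)-\hat{\Psi}(f;w)=\int_{\mathbbm{R}}e^{wx}\big(e^{\mathi tx}-1\big)\,\mathd\Psi(f;x),
\]
so that $\tmop{Re}\big(\hat{\Psi}(f;w+\mathi t)-\hat{\Psi}(f;w)\big)=\int_{\mathbbm{R}}e^{wx}\big(\cos(tx)-1\big)\,\mathd\Psi(f;x)$. Because $\Psi(f;\cdot)$ is supported on $[0,\infty)$ we have $e^{wx}\geqslant 1$ on its support for $w\geqslant0$, while $\cos(tx)-1\leqslant0$; hence $e^{wx}\big(\cos(tx)-1\big)\leqslant\cos(tx)-1$ pointwise. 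Integrating against $\mathrm{d}\Psi(f;x)$ and exponentiating yields precisely
\[
\big|\exp\!\big(\hat{\Psi}(f;w+\mathi t)-\hat{\Psi}(f;w)\big)\big|=\exp\!\Big(\int_{\mathbbm{R}}e^{wx}\big(\cos(tx)-1\big)\,\mathd\Psi(f;x)\Big)\leqslant|\phi(t)|.
\]

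The argument is essentially soft. The only point demanding any care is the equality case in the non-lattice step: one must be precise that $\cos(t_0x)=1$ holding $\Psi$-a.e.\ forces $\Psi(f;\cdot)$ onto the lattice $\tfrac{2\pi}{|t_0|}\mathbbm{Z}$, which is exactly the negation of the non-lattice property in the definition. The remaining ingredients --- convergence of the exponential integrals $\int e^{(w+\mathi t)x}\mathrm{d}\Psi(f;x)$ for $w\geqslant0$ (guaranteed by $1-\Psi(f;t)\ll_A e^{-At}$, hence by Lemma 4.2) and the interchange of real part and integral --- are routine.
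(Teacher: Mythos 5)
Your proof is correct and follows essentially the same route as the paper: both identify $\phi$ as the characteristic function of a compound Poisson law with parameter $1$ and jump law $\Psi(f;\cdot)$ (the paper writes the distribution as $e^{-1}\sum_k \Psi^{\ast k}/k!$, you as $Y_1+\cdots+Y_N$ with $N\sim\mathrm{Poisson}(1)$), invoke Lemma 4.11 for the non-lattice conclusion, and derive the modulus inequality from $\mathrm{Re}\bigl(\hat{\Psi}(f;w+\mathi t)-\hat{\Psi}(f;w)\bigr)=\int_0^\infty e^{wu}(\cos(tu)-1)\,\mathd\Psi(f;u)\leqslant\int_0^\infty(\cos(tu)-1)\,\mathd\Psi(f;u)$ using that $\Psi$ is supported on $[0,\infty)$. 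The only cosmetic difference is that for the intermediate step "$|\phi(t)|<1$ for $t\neq0$" the paper applies Lemma 4.11 to $\Psi$ together with $\mathrm{Re}\,z\leqslant|z|$, while you re-derive this directly from the integral representation and an equality-case argument; both are sound.
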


\begin{proof}
  For a distribution function $F$ denote by $F^{\ast n}$ the $n$-fold
  convolution of $F$ with itself. Consider the distribution function
  \[ D (f ; t) = \frac{1}{e} \sum_{k \geqslant 0} \Psi^{\ast k} \left( f ; t
     \right) \cdot \frac{1}{k!} \]
  The Fourier transform of $D (f ; t)$ is given by
  \begin{eqnarray*}
    \int_{\mathbbm{R}} e^{\tmop{it} u} \mathd D (f ; u) & = & \frac{1}{e}
    \sum_{k \geqslant 0} \frac{1}{k!} \int_{\mathbbm{R}} e^{\tmop{it} u}
    \mathd \Psi^{\ast k} \left( f ; u \right)\\
    & = & \frac{1}{e} \sum_{k \geqslant 0} \frac{1}{k!} \cdot \hat{\Psi}
    \left( f ; \tmop{it} \right)^k \text{ } = \text{ } e^{\hat{\Psi} (f ;
    \tmop{it}) - 1}
  \end{eqnarray*}
  This proves existence. Furthermore, since $\Psi (f ; t)$ is not lattice
  distributed, by Lemma 4.11, we have $| \hat{\Psi} (f ; \tmop{it}) | < 1$ for
  all $t \neq 0$. Therefore $|e^{\hat{\Psi} (f ; \tmop{it}) - 1} | < 1$ for
  all $t \neq 0$. Hence by Lemma 4.11, $e^{\hat{\Psi} (f ; \tmop{it}) - 1}$ is
  the Fourier transform of a non-lattice distribution function. Finally, for
  the last statement of this lemma, let us note that
  \begin{eqnarray*}
    \tmop{Re} \left( \hat{\Psi} (f ; w + \tmop{it}) - \hat{\Psi} (f ; w)
    \right) & = & \int_0^{\infty} e^{w u} \cdot \left( \cos (t u) - 1 \right)
    \mathd \Psi (f ; u)\\
    & \leqslant & \int_0^{\infty} (\cos (t u) - 1) \mathd \Psi (f ; u) =
    \tmop{Re} \left( \hat{\Psi} (f ; \tmop{it}) - 1 \right)
  \end{eqnarray*}
  Note that $\Psi (f ; u) = 0$ for $u < 0$, this is why we are allowed to
  ``forget'' about integrating over $- \infty < u \leqslant 0$. 
\end{proof}

The next lemma is taken from Ess\'een's thesis {\cite{7}} (see Lemma 1 on page
49).

\begin{lemma}
  Let $F (t)$ be a distribution function and denote by $\phi (t)$ it's Fourier
  transform $\int_{\mathbbm{R}} e^{\tmop{it} u} \mathd F (u)$. If $F (t)$ is
  not lattice-distributed, then, for any $c > 0$ there is a $\lambda (x)
  \rightarrow \infty$ and a $\xi (x) \rightarrow \infty$ such that
  \begin{eqnarray*}
    \int_c^{\lambda (x)} | \phi (t) |^x \cdot \frac{\mathd t}{t} & \ll &
    \frac{1}{\xi (x) \cdot \sqrt[]{x}}
  \end{eqnarray*}
\end{lemma}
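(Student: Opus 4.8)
The plan is to reduce everything to a crude estimate for the supremum of $|\phi|$ on intervals bounded away from the origin, and then to choose $\lambda(x)$ growing slowly enough that exponential decay wins. Since $\phi(t)=\int_{\mathbb{R}}e^{itu}\,\mathd F(u)$ is the Fourier transform of a probability measure it is continuous, and by Lemma 4.11 the hypothesis that $F$ is non-lattice gives $|\phi(t)|<1$ for every $t\neq 0$. Hence for each $T\geqslant c$ the quantity $\rho(T):=\sup_{c\leqslant t\leqslant T}|\phi(t)|$ is the maximum of a continuous function on a compact set not containing $0$, so $\rho(T)<1$; moreover $\rho$ is non-decreasing in $T$. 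The only estimate I would use is the trivial bound
\[
  \int_c^{T}|\phi(t)|^x\,\frac{\mathd t}{t}\ \leqslant\ \rho(T)^x\,\log(T/c).
\]
Note that the behaviour of $\phi$ near $0$ (hence any assumption on moments of $F$) is irrelevant here, since we integrate over $t\geqslant c>0$.

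Next I would build $\lambda(x)$ by a lacunary/diagonal argument. Fix an integer $n_0\geqslant 2c$ large enough that $\log(n/c)\leqslant n$ for all $n\geqslant n_0$. For each integer $n\geqslant n_0$, since $\rho(n)<1$ the function $x\mapsto\rho(n)^x$ decays exponentially, so it eventually dominates the polynomial $n^2x^{3/2}$; choose $X_n$, which we may take strictly increasing with $X_n\to\infty$, so that $\rho(n)^x\,n^2x^{3/2}\leqslant 1$ for all $x\geqslant X_n$. Define $\lambda(x)=n$ for $X_n\leqslant x<X_{n+1}$ (and $\lambda(x)=n_0$ for $x<X_{n_0}$); then $\lambda(x)\to\infty$. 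For $x\in[X_n,X_{n+1})$ the displayed bound together with the choice of $X_n$ gives
\[
  \int_c^{\lambda(x)}|\phi(t)|^x\,\frac{\mathd t}{t}\ \leqslant\ \rho(n)^x\log(n/c)\ \leqslant\ \rho(n)^x\,n\ \leqslant\ \frac{1}{n\,x^{3/2}}\ =\ \frac{1}{(\lambda(x)\,x)\sqrt{x}},
\]
so setting $\xi(x):=\lambda(x)\,x$, which tends to infinity, yields $\int_c^{\lambda(x)}|\phi(t)|^x\,\mathd t/t\ll 1/(\xi(x)\sqrt x)$ as required; for $x<X_{n_0}$ the left side is at most the constant $\log(n_0/c)$, which is harmless.

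The main obstacle, and the reason the statement involves an unspecified $\lambda(x)\to\infty$ rather than a fixed power of $x$, is the tail $t\to\infty$: without absolute continuity of $F$ one has no Riemann--Lebesgue decay, and $\limsup_{t\to\infty}|\phi(t)|$ may equal $1$, so $\rho(T)\to 1$ is possible. Non-latticeness supplies only the pointwise information $|\phi(t)|<1$, so the cutoff $\lambda(x)$ must be taken to grow more slowly than the (uncontrolled) scale on which $|\phi|$ comes close to $1$, while still keeping $\rho(\lambda(x))^x$ small enough to absorb both the factor $\log(\lambda(x)/c)$ and an extra power of $x$; the diagonalization above does exactly this. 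In the easier case where one happens to know $\sup_{t\geqslant c}|\phi(t)|<1$, the argument collapses, since then $\rho_\infty^x\log(\lambda(x)/c)$ is tiny even for $\lambda(x)=e^{\sqrt x}$, but the general non-lattice case genuinely needs the slow choice of $\lambda(x)$.
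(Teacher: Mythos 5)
Your proof is correct. The paper does not actually prove this lemma; it cites Ess\'een's thesis \cite{7} (Lemma 1, p.\ 49), so your self-contained derivation fills a genuine expository gap. The argument is the natural one: non-latticeness combined with Lemma 4.11 gives $|\phi(t)|<1$ for all $t\neq 0$, continuity of the characteristic function and compactness upgrade this to $\rho(T)=\max_{[c,T]}|\phi|<1$ for every fixed $T$, and your diagonal choice of $\lambda(x)$ then makes the exponential decay of $\rho(n)^x$ (for each fixed $n$) absorb both the logarithmic factor from the crude bound
\[
\int_c^{\lambda(x)}|\phi(t)|^x\,\frac{\mathd t}{t}\ \leqslant\ \rho(\lambda(x))^x\log\!\bigl(\lambda(x)/c\bigr)
\]
and an extra power of $x$, yielding $\xi(x)=\lambda(x)\,x\to\infty$. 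Your closing observation --- that $\limsup_{t\to\infty}|\phi(t)|=1$ is possible for a non-lattice law (e.g.\ two atoms at irrational spacing), so a fixed power of $x$ cannot serve as $\lambda(x)$ --- correctly identifies why the lemma must be stated with an unspecified slowly-growing cutoff. Two minor points worth tidying: (i) to guarantee the $X_n$ are strictly increasing with $X_n\to\infty$ one should say explicitly that after the initial choice one replaces $X_n$ by $\max(X_n,X_{n-1}+1,n)$; (ii) the remark that the range $x<X_{n_0}$ is ``harmless'' is more precisely justified by noting that on that bounded range both sides are bounded and the right-hand side is bounded away from zero, so the implicit constant in $\ll$ absorbs the discrepancy.
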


From lemma 4.13 and lemma 4.12 we obtain the following useful estimate.

\begin{lemma}
  Let $f \in \mathcal{C}$. Suppose that $\Psi (f ; t)$ is not lattice
  distributed. Then, for any $c > 0$ there is a $\lambda (x) \rightarrow
  \infty$ and a $\xi (x) \rightarrow \infty$ such that uniformly in 
  $w \geqslant 0$,
  \begin{eqnarray*}
    \int_c^{\lambda (x)} \left| \left( \log x \right)^{\hat{\Psi} (f ; w +
    \tmop{it}) - \hat{\Psi} (f ; w)} \right| \cdot \frac{\mathd t}{t} & \ll &
    \frac{1}{\xi (x) \cdot \sqrt[]{\tmop{loglog} x}}
  \end{eqnarray*}
\end{lemma}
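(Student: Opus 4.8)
The plan is to reduce the claimed estimate on $\int_c^{\lambda(x)} |(\log x)^{\hat{\Psi}(f;w+\mathrm{it}) - \hat{\Psi}(f;w)}|\,\mathrm{d}t/t$ to the estimate of Lemma 4.13 via the substitution $x \mapsto \log\log x$ in the role of the "$x$" appearing there. First I would write $\log x = e^{\log\log x}$, so that
\[
\left|(\log x)^{\hat{\Psi}(f;w+\mathrm{it}) - \hat{\Psi}(f;w)}\right| = \exp\Big(\log\log x \cdot \mathrm{Re}\big(\hat{\Psi}(f;w+\mathrm{it}) - \hat{\Psi}(f;w)\big)\Big).
\]
By the last inequality of Lemma 4.12, the exponent on the right is bounded above (for every $w \geq 0$ and every $t$) by $\log\log x \cdot \mathrm{Re}(\hat{\Psi}(f;\mathrm{it}) - 1) = \log\log x \cdot \log|\phi(t)|$, where $\phi(t) = e^{\hat{\Psi}(f;\mathrm{it})-1}$ is the Fourier transform of the non-lattice distribution function $D(f;t)$ produced in Lemma 4.12. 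Hence, uniformly in $w \geq 0$,
\[
\left|(\log x)^{\hat{\Psi}(f;w+\mathrm{it}) - \hat{\Psi}(f;w)}\right| \leq |\phi(t)|^{\log\log x}.
\]

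Next I would simply apply Lemma 4.13 to the distribution function $F = D(f;\cdot)$, which is non-lattice, with the real parameter there set equal to $\log\log x$ (which tends to $\infty$ with $x$, as required). This gives functions $\lambda' \to \infty$ and $\xi' \to \infty$ with
\[
\int_c^{\lambda'(\log\log x)} |\phi(t)|^{\log\log x}\,\frac{\mathrm{d}t}{t} \ll \frac{1}{\xi'(\log\log x)\sqrt{\log\log x}}.
\]
Setting $\lambda(x) := \lambda'(\log\log x)$ and $\xi(x) := \xi'(\log\log x)$ — both of which tend to infinity as $x \to \infty$ since $\log\log x \to \infty$ — and combining with the pointwise bound of the previous paragraph yields exactly the claimed inequality, uniformly in $w \geq 0$. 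The uniformity in $w$ is automatic because the pointwise domination by $|\phi(t)|^{\log\log x}$ holds for all $w \geq 0$ and the subsequent bound no longer involves $w$ at all.

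The only genuinely delicate point is making sure that the hypotheses of Lemma 4.13 are legitimately in force — namely that $D(f;\cdot)$ is a bona fide non-lattice distribution function — but this is precisely the content of Lemma 4.12, so nothing new is needed. A secondary point worth a sentence is that Lemma 4.13 is stated with an integer-or-real parameter; here we feed it the real quantity $\log\log x$, which is fine since the lemma's proof (via the Riemann–Lebesgue-type decay of $|\phi(t)|$ away from $t=0$ together with $|\phi(t)| \leq 1$) works verbatim for a real parameter tending to infinity. I expect no real obstacle; the lemma is essentially a change-of-variables repackaging of Lemma 4.13 through the arithmetic substitution $\zeta(s) \leadsto (\log x)$, $x \leadsto \log\log x$ that recurs throughout the paper.
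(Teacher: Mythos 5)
Your proposal is correct and follows the same route as the paper's proof: substitute $\log\log x$ for the parameter in Lemma~4.13 applied to the non-lattice distribution function $D(f;\cdot)$ of Lemma~4.12, then use the pointwise bound $|\exp(\hat{\Psi}(f;w+\mathi t)-\hat{\Psi}(f;w))|\leq|\phi(t)|$ from Lemma~4.12 to get the uniformity in $w$. The only cosmetic difference is the order in which you invoke the two lemmas; the parenthetical worry about an integer parameter in Lemma~4.13 is unnecessary, since its statement already allows a real parameter.
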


\begin{proof}
  Since $\Psi (f ; t)$ is not lattice distributed, by lemma 4.12 the function
  $\phi (t) = e^{\hat{\Psi} (f ; \mathi t) - 1}$ is the Fourier transform of a
  non-lattice distribution function. Therefore by lemma 4.13, given any $c >
  0$ there is a $\lambda (x) \rightarrow \infty$ and a $\xi (x) \rightarrow
  \infty$ such that
  \begin{eqnarray}
    \int_c^{\lambda (x)} \left| e^{\hat{\Psi} (f ; \tmop{it}) - 1}
    \right|^{\tmop{loglog} x} \cdot \frac{\mathd t}{t} & \ll & \frac{1}{\xi
    (x) \cdot \sqrt[]{\tmop{loglog} x}} 
  \end{eqnarray}
  By lemma 4.12 we have for all $w \geqslant 0$,
  \[ \int_c^{\lambda (x)} \left| \left( \log x \right)^{\hat{\Psi} (f ; w +
     \tmop{it}) - \hat{\Psi} (f ; w)} \right| \cdot \frac{\mathd t}{t}
     \leqslant \int_c^{\lambda (x)} \left| e^{\hat{\Psi} (f ; \tmop{it}) - 1}
     \right|^{\tmop{loglog} x} \cdot \frac{\mathd t}{t} \]
  This together with $(4.25)$ gives the claim. 
\end{proof}

We need one more lemma from Ess\'een's thesis {\cite{7}} (see theorem 6 on page
27).

\begin{lemma}
  Let $F (t)$ be a distribution function. Suppose that $F (t)$ is not
  degenerate (that is $F (t)$ does not have a jump of mass 1). Denote by $\phi
  (t)$ the Fourier transform $\int_{\mathbbm{R}} e^{\mathi t u} \mathd F (u)$
  of the distribution function $F (\cdot)$. There is a $c_0$ and a $c_1$ such
  that for any interval $I$ of size less than $c_0$
  \[ \underset{u \in I}{\tmop{meas}} \left( | \phi (u) |^2 \geqslant 1 -
     \delta \right) \leqslant c_1 \cdot \sqrt[]{\delta} \]
  The constants $c_1$ and $c_0$ depend at most on the distribution function
  $F$. 
\end{lemma}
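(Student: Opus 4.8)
The plan is to pass to the real, nonnegative function $\psi(u) \assign |\phi(u)|^2$. If $X, X'$ are independent with law $F$ then $\psi(u) = \mathbbm{E}[e^{\mathi u(X - X')}]$, so $\psi$ is the characteristic function of the symmetric variable $V \assign X - X'$; thus $0 \leqslant \psi \leqslant 1$, $\psi(0) = 1$, and $1 - \psi(u) = \mathbbm{E}[1 - \cos(uV)] = 2\,\mathbbm{E}[\sin^2(uV/2)] \geqslant 0$. Because $F$ is not degenerate we have $\mathbbm{P}(V \neq 0) > 0$, so we may fix, depending on $F$ alone, constants $0 < a \leqslant b < \infty$ and $p > 0$ with $\mathbbm{P}(a \leqslant |V| \leqslant b) \geqslant p$. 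The statement to prove is $\tmop{meas}\{u \in I : 1 - \psi(u) \leqslant \delta\} \leqslant c_1\sqrt{\delta}$ for every interval $I$ of length $< c_0$.

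The core step is a uniform estimate saying that $\psi$ cannot stay near $1$ on both sides of a near-maximum: if $\psi(u_0) \geqslant 1 - \delta$ with $\delta$ below a threshold $\delta_0 = \delta_0(F)$, then $\psi(u_0 + h) < 1 - \delta$ for all $h$ with $\kappa(F)^{-1/2}\sqrt{2\delta} < |h| \leqslant \pi/b$, where $\kappa(F) > 0$. To obtain this I would set $\varepsilon \assign \tfrac12(1 - \psi(u_0)) = \mathbbm{E}[\sin^2(u_0 V/2)] \leqslant \delta/2$, so that by Markov's inequality the event $E \assign \{\sin^2(u_0 V/2) \leqslant \tfrac14\} \cap \{a \leqslant |V| \leqslant b\}$ has probability $\geqslant p - 4\varepsilon \geqslant p/2$. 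On $E$, writing $\sin((u_0 + h)V/2) = \sin(u_0 V/2)\cos(hV/2) + \cos(u_0 V/2)\sin(hV/2) =: x + y$, the elementary inequality $(x + y)^2 \geqslant \tfrac12 y^2 - x^2$ together with $x^2 \leqslant \sin^2(u_0 V/2)$ gives
\[ \mathbbm{E}\bigl[\sin^2\!\bigl((u_0 + h)V/2\bigr)\mathbbm{1}_E\bigr] \;\geqslant\; \tfrac12\,\mathbbm{E}\bigl[\cos^2(u_0 V/2)\sin^2(hV/2)\,\mathbbm{1}_E\bigr] - \varepsilon . \]
On $E$ one has $\cos^2(u_0 V/2) \geqslant \tfrac34$ and, for $|h| \leqslant \pi/b$, $\sin^2(hV/2) \geqslant \tfrac4{\pi^2}(hV/2)^2 \geqslant \tfrac{a^2}{\pi^2}h^2$, so that $1 - \psi(u_0 + h) = 2\,\mathbbm{E}[\sin^2((u_0+h)V/2)] \geqslant \kappa(F)\,h^2 - \delta$ with $\kappa(F) = \tfrac{3 a^2 p}{8\pi^2}$, which is the asserted dichotomy.

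To finish, put $c_0 \assign \pi/b$ and take any interval $I$ with $|I| < c_0$. If no point of $I$ has $\psi \geqslant 1 - \delta$ the set is empty; otherwise fix such a $u_0 \in I$. Any other point $u \in I$ with $\psi(u) \geqslant 1 - \delta$ satisfies $|u - u_0| < |I| < \pi/b$, so the core step forces $|u - u_0| \leqslant \kappa(F)^{-1/2}\sqrt{2\delta}$; hence the set in question lies in an interval of length $2\kappa(F)^{-1/2}\sqrt{2\delta}$ and has measure $\ll_F \sqrt{\delta}$. This is valid for $\delta \leqslant \delta_0(F)$; for $\delta > \delta_0(F)$ the measure is trivially $\leqslant |I| < c_0 \ll_F \sqrt{\delta}$. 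Absorbing both regimes yields a $c_1 = c_1(F)$, as required. (The special case $u_0 = 0$, where $\psi(u_0) = 1$ exactly, is included; it merely recovers the familiar quadratic behaviour of $1 - \psi$ near the origin.)

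The step I expect to be the real obstacle is making the core estimate \emph{uniform in the location of the near-maximum}. The naive route — locating the maxima of $\psi$ via the lattice span of $V$ and Taylor-expanding there — would force a separate treatment of the lattice and non-lattice cases, and, more seriously, would break down for non-lattice but singular $F$ whose characteristic function fails to decay, so that $\psi$ returns arbitrarily close to $1$ arbitrarily far out. The Markov-plus-addition-formula argument above sidesteps this precisely because it never refers to where the near-maximum sits, only that a fixed proportion of the mass of $V$ lies in a fixed annulus $\{a \leqslant |v| \leqslant b\}$; what remains is the purely mechanical bookkeeping of the trigonometric inequalities and the choice of $\delta_0(F)$ below which the quadratic estimate is in force.
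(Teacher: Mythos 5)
Your proof is correct. Each step checks out: the identity $1-\psi(u)=2\,\mathbbm{E}[\sin^2(uV/2)]$ for the symmetrized variable $V$, the extraction of an annulus $\{a\leqslant |V|\leqslant b\}$ of mass $p>0$ from non-degeneracy, the Markov bound $\mathbbm{P}(\sin^2(u_0V/2)>1/4)\leqslant 4\varepsilon$, the elementary inequality $(x+y)^2\geqslant \tfrac12 y^2-x^2$, and the concavity bound $\sin\theta\geqslant (2/\pi)\theta$ on $[0,\pi/2]$ (valid since $|hV/2|\leqslant\pi/2$ when $|h|\leqslant\pi/b$ and $|V|\leqslant b$). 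The resulting dichotomy $1-\psi(u_0+h)\geqslant \kappa h^2-\delta$ holds uniformly in $u_0$, which is the crux, and the final bookkeeping — including absorbing the range $\delta>\delta_0(F)$ trivially — is fine.

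One remark on the comparison: the paper does not prove this lemma, it cites Ess\'een's thesis for it, so there is no ``paper proof'' to match against. The route I would call classical (e.g.\ in Petrov) applies the subadditivity inequality $1-|\phi(s+t)|^2 \leqslant 2\left(1-|\phi(s)|^2\right)+2\left(1-|\phi(t)|^2\right)$, which follows from $1-\cos(\alpha+\beta)\leqslant 2(1-\cos\alpha)+2(1-\cos\beta)$, to show that the difference set $S-S$ of $S:=\{u\in I:\psi(u)\geqslant 1-\delta\}$ lies in $\{\psi\geqslant 1-4\delta\}$, and then needs only the quadratic behaviour of $1-\psi$ \emph{at the origin} to conclude $S-S\subseteq(-C\sqrt{\delta},C\sqrt{\delta})$, hence $\tmop{meas}(S)\ll\sqrt{\delta}$. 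You avoid the subadditivity step by proving the quadratic lower bound directly at an arbitrary near-maximum $u_0$ via Markov plus the angle-addition formula; this buys a single integrated argument at the cost of slightly more trigonometric bookkeeping. Both routes yield the same shape of constant and the same $\sqrt{\delta}$ dependence, and both correctly sidestep any lattice/non-lattice dichotomy, which — as you note — is exactly the pitfall to avoid here.
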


Finally, we need one last lemma that will allow us to smooth out
$\mathbbm{P}(\Omega (f ; x) \geqslant t)$ (the smoothing will be negligible
because $\Psi (f ; t)$ is not lattice distributed). The lemma is essentially
what appears in Tenenbaum {\cite{19}} (first formula in section 3 and first
formula in section 4 of his paper).

\begin{lemma}
  Let $Y (x)$ be a sequence of random variables. Suppose that each $Y (x)$ has
  an entire moment generating function and define
  \[ \Phi_Y (x ; t) (z) =\mathbbm{E} \left[ e^{zY (x)} \right] \cdot e^{- zt}
  \]
  Let $C > 0$ be given. Then for all $\kappa > 0$ and $M, T > 0$, we have
  \begin{eqnarray}
    \mathbbm{P}(Y (x) \geqslant t) & = & \frac{1}{2 \pi i} \int_{\kappa -
    iM}^{\kappa + iM} \Phi_Y (x ; t) (z) \cdot \frac{T \mathd z}{z (z + T)}
    \nonumber\\
    &  & + O \left( \tmop{Err} \right) + O \left( \frac{Te^{\kappa / T}}{M}
    \cdot \Phi_Y (x ; t) (\kappa) \right) 
  \end{eqnarray}
  and the error term Err is given by
  \[ \tmop{Err} = \frac{1}{2 \pi i} \int_{\kappa - iM}^{\kappa + iM} \Phi_Y (x
     ; t) (z) \cdot \frac{e^{z / T} \cdot T \mathd z}{(z + T) (z + 2 T)} \]
\end{lemma}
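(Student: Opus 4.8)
The plan is to establish this by an effective two–sided Laplace inversion, the smoothing kernel being the one whose transform is $T/\bigl(z(z+T)\bigr)$. First I would record the elementary discontinuous–integral identity: for every $\kappa>0$ and every real $u$,
\[
\frac{1}{2\pi i}\int_{\kappa-i\infty}^{\kappa+i\infty} e^{zu}\,\frac{T\,\mathd z}{z(z+T)}\;=\;w_T(u)\;\assign\;\begin{cases} 1-e^{-Tu}, & u>0,\\ 0, & u\le 0.\end{cases}
\]
This follows from the partial fraction $T/(z(z+T))=1/z-1/(z+T)$, the classical value of $\frac{1}{2\pi i}\int_{(\kappa)}e^{zu}\,\mathd z/z$, and, for the term $1/(z+T)$, shifting the contour across the pole at $z=-T$, which lies to the \emph{left} of $\tmop{Re}\,z=\kappa$ precisely because $\kappa>0$ (Jordan's lemma supplies the vanishing of the auxiliary semicircle for $u\neq 0$, and at $u=0$ the two ``half–residues'' from $1/z$ and $1/(z+T)$ cancel, so the identity holds pointwise). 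Since $T/(z(z+T))=O(|z|^{-2})$ the integral is absolutely convergent.

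Next, the moment generating function of $Y(x)$ being entire forces $\Phi_Y(x;t)(\kappa)=\mathbbm{E}\bigl[e^{\kappa Y(x)}\bigr]e^{-\kappa t}<\infty$; together with $|e^{z(Y(x)-t)}|=e^{\kappa(Y(x)-t)}$ on $\tmop{Re}\,z=\kappa$ and $\int_{\tmop{Re}\,z=\kappa}\bigl|T/(z(z+T))\bigr|\,|\mathd z|<\infty$, this lets Fubini's theorem yield
\[
\mathbbm{E}\bigl[w_T(Y(x)-t)\bigr]\;=\;\frac{1}{2\pi i}\int_{\kappa-i\infty}^{\kappa+i\infty}\Phi_Y(x;t)(z)\,\frac{T\,\mathd z}{z(z+T)}.
\]
Truncating the contour to $[\kappa-iM,\kappa+iM]$ changes the right–hand side by $O\bigl(\tfrac{T}{M}\Phi_Y(x;t)(\kappa)\bigr)$, using $|z(z+T)|\ge|\tmop{Im}\,z|^{2}$ and $|\Phi_Y(x;t)(z)|\le\Phi_Y(x;t)(\kappa)$ on the line. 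On the other hand the \emph{exact} pointwise identity $\mathbbm{1}_{u\ge 0}=w_T(u)+e^{-Tu}\mathbbm{1}_{u\ge 0}$ gives
\[
\mathbbm{P}\bigl(Y(x)\ge t\bigr)\;=\;\mathbbm{E}\bigl[w_T(Y(x)-t)\bigr]+\mathbbm{E}\bigl[e^{-T(Y(x)-t)}\mathbbm{1}_{Y(x)\ge t}\bigr],
\]
so the only thing left is to estimate the ``defect'' $\mathbbm{E}\bigl[e^{-T(Y(x)-t)}\mathbbm{1}_{Y(x)\ge t}\bigr]$, which cannot be read off directly since it would require the transform off the line $\tmop{Re}\,z=\kappa$.

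For this I would introduce the shifted kernel $r_T(u)\assign e^{-1}e^{-Tu}-e^{-2}e^{-2Tu}$ for $u\ge-1/T$ and $r_T(u)\assign 0$ for $u<-1/T$; a residue computation identical to the first one shows that $r_T$ is the pointwise inverse transform of $e^{z/T}\,T/\bigl((z+T)(z+2T)\bigr)$ (both poles $-T$ and $-2T$ lie left of $\tmop{Re}\,z=\kappa$, and $r_T$ is even continuous at $u=-1/T$). The point of $r_T$ is the elementary inequality $e^{-Tu}\mathbbm{1}_{u\ge 0}\le\tfrac{e^{2}}{e-1}\,r_T(u)$ valid for all real $u$ (trivial for $u<0$; for $u\ge0$ use $e^{-2Tu}\le e^{-Tu}$). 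Hence $\mathbbm{E}\bigl[e^{-T(Y(x)-t)}\mathbbm{1}_{Y(x)\ge t}\bigr]\ll\mathbbm{E}\bigl[r_T(Y(x)-t)\bigr]$, and one more application of Fubini identifies $\mathbbm{E}\bigl[r_T(Y(x)-t)\bigr]$ with the full–line integral of $\Phi_Y(x;t)(z)\,e^{z/T}T/\bigl((z+T)(z+2T)\bigr)$, which is $\tmop{Err}$ up to a tail bounded by $\ll\tfrac{Te^{\kappa/T}}{M}\Phi_Y(x;t)(\kappa)$ (here $|e^{z/T}|=e^{\kappa/T}$ and $|(z+T)(z+2T)|\ge|\tmop{Im}\,z|^{2}$). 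Collecting the three estimates and noting $T/M\le Te^{\kappa/T}/M$ gives exactly the asserted formula.

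I do not anticipate a genuine obstacle here: this is the standard effective Perron/Laplace inversion (compare \cite{19}), and the only points that need care are keeping every pole on the correct side of $\tmop{Re}\,z=\kappa$ (the sole role of the hypothesis $\kappa>0$), justifying the two Fubini interchanges from finiteness of the entire moment generating function, and verifying the pointwise domination $e^{-Tu}\mathbbm{1}_{u\ge0}\ll r_T(u)$, which is precisely what makes $\tmop{Err}$ the correctly–shaped error term rather than a cruder expression.
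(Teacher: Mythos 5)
Your proof is correct and follows essentially the same route as the paper: lower-approximate $\mathbbm{1}_{u\ge0}$ by the kernel $1-e^{-Tu^+}$ whose Laplace transform on $\mathrm{Re}\,z=\kappa$ is $T/(z(z+T))$, bound the defect by the shifted kernel with transform $e^{z/T}T/((z+T)(z+2T))$, apply Fubini, and truncate the contours. The only difference is that you derive the two inverse-transform identities and the pointwise domination $e^{-Tu}\mathbbm{1}_{u\ge0}\le\tfrac{e^2}{e-1}r_T(u)$ from scratch via partial fractions and residue calculus, whereas the paper simply imports these facts from Tenenbaum's paper \cite{19}; this makes your argument more self-contained but is not a different method.
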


\begin{proof}
  First we establish the above when $M = \infty$. This case follows from the
  inequalities appearing in Tenenbaum's paper. Let $y^+ = \max (y ; 0)$.
  Following Tenenbaum {\cite{19}} (see first equation in section 3)
  we have
  \begin{eqnarray*}
    1 - e^{- Ty^+} & = & \frac{1}{2 \pi i} \int_{\kappa - i \infty}^{\kappa +
    i \infty} e^{zy} \cdot \frac{T \mathd z}{z (z + T)}
  \end{eqnarray*}
  for all $y \in \mathbbm{R}$ and $\kappa, T \geqslant 0$. Let $\chi (\cdot)$
  denote the characteristic function of $[0 ; \infty)$. Again according to
  Tenenbaum's paper (see beginning of section 4), we have the inequality
  \begin{eqnarray*}
    0 \text{ } \leqslant \text{\, } \chi (y) - \left( 1 - e^{- Ty^+} \right) &
    \leqslant & \frac{e^2}{e - 1} \cdot \left( e^{- T \left( y + 1 / T)^+
    \right.} - e^{- 2 T (y + 1 / T)^+} \right)\\
    & = & \frac{e^2}{e - 1} \cdot \frac{1}{2 \pi i} \int_{\kappa - i
    \infty}^{\kappa + i \infty} e^{zy} \cdot \frac{e^{z / T} \cdot T \mathd
    z}{(z + T) (z + 2 T)}
  \end{eqnarray*}
  It follows that for $u, t \in \mathbbm{R}$,
  \begin{eqnarray*}
    0 & \leqslant & \chi (u - t) - \frac{1}{2 \pi i} \int_{\kappa - i
    \infty}^{\kappa + i \infty} e^{zu} \cdot e^{- zt} \cdot \frac{T \mathd
    z}{z (z + T)}\\
    & \leqslant & \frac{K}{2 \pi i} \int_{\kappa - i \infty}^{\kappa + i
    \infty} e^{zu} \cdot e^{- zt} \cdot \frac{e^{z / T} \cdot T \mathd z}{(z +
    T) (z + 2 T)}
  \end{eqnarray*}
  with $K = e^2 / (e-1)$. Integrating the above inequality over $u$, 
  with respect to the measure
  $\mathd \mathbbm{P}(Y (x) \leqslant u)$ and applying Fubini's theorem we
  obtain the claim, in the case $M = \infty$. To obtain the general case, note
  that |$\Phi_Y (x ; t) (z) | \leqslant \Phi_Y (x ; t) (\kappa)$ for 
  $\tmop{Re} z = \kappa$ and let
  $\mathcal{R}(\kappa, M) \assign \{\kappa + \mathi t : |t| \geqslant M\}$. By
  the previous inequality for $\Phi_Y$,
  \[ \left| \int_{\mathcal{R}(\kappa, M)} \Phi_Y (x ; t) (z) \cdot \frac{T
     \mathd z}{z (z + T)} \right| \leqslant \Phi_Y (x ; t) (\kappa) \cdot 2
     \int_M^{\infty} \frac{T \mathd t}{t^2} \leqslant \frac{2 T}{M} \cdot
     \Phi_Y (x ; t) (\kappa)_{} \]
  Therefore
  \[ \frac{1}{2 \pi i} \int_{\kappa - \i\infty}^{\kappa + \i\infty}
     \Phi_Y(x;t)(z) \frac{T \mathd z}{z(z+T)} = 
     \frac{1}{2 \pi i} \int_{\kappa - iM}^{\kappa + iM} \Phi_Y (x ; t) (z)
     \cdot \frac{T \mathd z}{z (z + T)} + O \left( \Phi_Y (x ; t) (\kappa)
     \cdot \frac{T}{M} \right) \]
  We truncate the integral appearing in the term $\tmop{Err}$ in a similar
  fashion. In this case the truncation contributes $O (Te^{\kappa / T} / M
  \cdot \Phi_Y (x ; t) (\kappa))$. Having truncated our integrals we obtained
  the ``general'' case of our lemma.
\end{proof}

We are now in position to prove proposition 4.10.


\begin{proof}[Proof of Proposition 4.10]
  Let's keep the notation $\Phi_{\Omega} (x ; t) =\mathbbm{E}[e^{z \Omega (f ;
  x)}] \cdot e^{- zt}$ introduced in lemma 4.16 and abbreviate $\mu \assign
  \mu (f ; x)$, $\sigma \assign \sigma (f ; x)$. Throughout we set $z \assign
  v + \mathi t = v_f (x ; \Delta) + \mathi t$ with $t \in \mathbbm{R}$ and we
  abbreviate $v \assign v_f (x ; \Delta)$. Note that by lemma 4.7 there is a
  $C = C (\delta) > 0$ such that $0 \leqslant v \leqslant C$ when $\Delta$ is
  in the range $1 \leqslant \Delta \leqslant \delta \sigma (f ; x)$. 
  (We allow our error term to depend on $C$). Also by lemma 4.8, 
  $e^{- v (\mu + \Delta \sigma)} \asymp (\log x)^{- v
  \hat{\Psi}' (f ; v)}$. Thus, by assumptions and this estimate
  \begin{eqnarray}
    &  & \Phi_{\Omega} (x ; \mu + \Delta \sigma) (z) =\mathcal{A}(z) (\log
    x)^{\hat{\Psi} (f ; z) - 1} e^{- z (\mu + \Delta \sigma)} + O ((\log
    x)^{\hat{\Psi} (f ; v) - 3 / 2} e^{- v (\mu + \Delta \sigma)}) \nonumber\\
    & = & \mathcal{A}(z) (\log x)^{\hat{\Psi} (f ; z) - 1} \cdot e^{- z (\mu
    + \Delta \sigma)} + O \left( (\log x)^{A (f ; v) - 1 / 2} \right) 
  \end{eqnarray}
  for $0 \leqslant v \assign \tmop{Re} z \leqslant C$ and $| \tmop{Im} z|
  \leqslant \tmop{loglog} x$ and where $A (f ; v) \assign \hat{\Psi} (f ; v) - 1 -
  v \hat{\Psi}' (f ; v)$. We insert $(4.27)$ into $(4.26)$ of the previous
  lemma. In there we set $\kappa \assign v_f (x ; \Delta)$, $Y (x) \assign
  \Omega (f ; x)$, $M \assign \tmop{loglog} x$ and $T \assign \sqrt[]{\lambda
  (x)} \longrightarrow \infty$. The function $\lambda (x)$ is $\ll
  \tmop{logloglog} x$ and tends to infinity as $x \rightarrow \infty$. It will
  be specified explicitly later on. We get
  \begin{eqnarray}
    &  & \mathbbm{P}(\Omega (f ; x) \geqslant \mu + \Delta \sigma) \assign
    \frac{1}{2 \pi i} \int_{v - iM}^{v + iM} \mathcal{A}(z) (\log
    x)^{\hat{\Psi} (f ; z) - 1} e^{- z (\mu + \Delta \sigma)} \cdot \frac{T
    \mathd z}{z (z + T)} \nonumber\\
    &  & + O \left( \frac{1}{2 \pi i} \int_{v - iM}^{v + iM} \mathcal{A}(z)
    (\log x)^{\hat{\Psi} (f ; z) - 1} e^{- z (\mu + \Delta \sigma)} \cdot
    \frac{e^{z / T} \cdot T \mathd z}{(z + T) (z + 2 T)} \right) \\
    &  & + O \left( \int_{v - iM}^{v + iM} (\log x)^{A (f ; v) - 1 / 2} \cdot
    \frac{e^{v / T} \cdot T |\mathd z|}{|z| \cdot |z + T|} \right) + O \left(
    \frac{T}{M} \cdot \Phi_{\Omega} (x ; \mu + \Delta \sigma) (v) \right)
    \nonumber
  \end{eqnarray}
  At the outset note that the very last error term is negligible. Indeed, by
  $(4.27)$ and the boundedness of $\mathcal{A}(v)$ in $0 \leqslant v \leqslant
  C$ (the function $\mathcal{A}(\cdot)$ is continuous!), we have
  $\Phi_{\Omega} (x ; \mu + \Delta \sigma) (v) \ll (\log x)^{A (f ; v)}$.
  Therefore $T / M \cdot \Phi_{\Omega} (x ; \mu + \Delta \sigma) (v) \ll
  (\tmop{logloglog} x / \tmop{loglog} x) \cdot (\log x)^{A (f ; v)}$ which is
  negligible compared to the expected size of the main term.
  
  The integral over $T \cdot | \mathd z| / |z| |z + T|$ contributes less than
  $v^{- 1} + T \ll v^{- 1} (1 + T)$. Thus the second error term in
  $(4.28)$ is $\ll (\log x)^{A (f ; v) - 1 / 2} v^{- 1} (1 + T)$. Since $T \ll
  \tmop{logloglog} x$ this error term is negligible compared to the expected
  size of the main term.
  
  Once we evaluate the main term in $(4.28)$ it will be clear how to bound
  the first error term in $(4.28)$. Therefore let's focus on estimating
  \begin{equation}
    \frac{1}{2 \pi i} \int_{v - iM}^{v + iM} \mathcal{A}(z) (\log
    x)^{\hat{\Psi} (f ; z) - 1} e^{- z (\mu + \Delta \sigma)} \cdot \frac{T
    \mathd z}{z \cdot (z + T)}
  \end{equation}
  This corresponds to the main term for $\mathbbm{P}(\Omega (f ; x) \geqslant
  \mu (f ; x) + \Delta \sigma (f ; x))$. We split $(4.29)$ into a part over
  $\mathcal{M} \assign \{v + \mathi t : |t| \leqslant \eta (x) \cdot
  (\tmop{loglog} x)^{- 1 / 2} \}$ where $\eta (x) = \tmop{logloglog} x$ and a
  part over $\mathcal{R}=\{v + \mathi t : |t| \leqslant M\}-\mathcal{M}$. The
  part over $\mathcal{M}$ will furnish the main term and the part over
  $\mathcal{R}$ will be negligible.
  
  \  
  
  {\tmem{1. Asymptotic for $(4.29)$ restricted to $z = v + \mathi t \in
  \mathcal{M}$.}}
  
  \
  
  By lemma 4.8 for $z \in \mathcal{M}= \left\{ v + \mathi t : |t| \leqslant
  \eta (x) \cdot (\tmop{loglog} x)^{- 1 / 2} \right\}$,
  \[ e^{- z (\mu + \Delta \sigma)} = (\log x)^{- z \hat{\Psi}' (f ; v)} \cdot
     e^{- zc (f)} \cdot (1 + O ((\tmop{loglog} x)^{- 1 / 2})) \]
  Therefore, for $z \in \mathcal{M}$,
  \begin{eqnarray*}
    &  & \mathcal{A}(z) (\log x)^{\hat{\Psi} (f ; z) - 1} \cdot e^{- z (\mu +
    \Delta \sigma)}\\
    & = & \mathcal{A}(z) e^{- zc (f)} \cdot (\log x)^{\hat{\Psi} (f ; z) - 1
    - z \hat{\Psi}' (f ; v)} \cdot \left( 1 + O \left( (\tmop{loglog} x)^{- 1
    / 2} \right) \right)\\
    & = & \mathcal{A}(z) e^{- zc (f)} \cdot (\log x)^{\hat{\Psi} (f ; z) - 1
    - z \hat{\Psi}' (f ; v)} + O \left( (\log x)^{A (f ; v)} \cdot
    (\tmop{loglog} x)^{- 1 / 2} \right)
  \end{eqnarray*}
  In the third line we use the fact that $| \hat{\Psi} (f ; z) | \leqslant
  \hat{\Psi} (f ; v)$ and that $\mathcal{A}(z)$ is analytic hence bounded in
  the (bounded) region $0 \leqslant v \assign \tmop{Re} z \leqslant C$, $|
  \tmop{Im} z| \leqslant 2$. Plugging the above estimate into $(4.29)$ 
  (restricted to $z \in \mathcal{M}$) yields
  \begin{eqnarray}
    &  & \frac{1}{2 \pi i} \int_{\mathcal{M}} \mathcal{A}(z) (\log
    x)^{\hat{\Psi} (f ; z) - 1} \cdot e^{- z (\mu + \Delta \sigma)} \cdot
    \frac{T \mathd z}{z (z + T)} \nonumber\\
    & = & \frac{1}{2 \pi i} \int_{\mathcal{M}} \mathcal{A}(z) e^{- zc (f)}
    \cdot (\log x)^{\hat{\Psi} (f ; z) - 1 - z \hat{\Psi}' (f ; v)} \cdot
    \frac{T \mathd z}{z (z + T)} \\
    &  & + O \left( \frac{(\log x)^{A (f ; v)}}{\sqrt[]{\tmop{loglog} x}}
    \int_{\mathcal{M}} \frac{T \cdot | \mathd z|}{|z| \cdot |z + T|} \right)
    \nonumber
  \end{eqnarray}
  and the error term is bounded by $O ((\log x)^{A (f ; v)} \cdot v^{- 1}
  \cdot \eta (x) (\tmop{loglog} x)^{- 1})$ (note that $\mathcal{M}$ is
  in length $\ll \eta(x)/(\log\log x)^{1/2}$) which is negligible when compared
  to the expected size of the main term. We parametrize the integral in
  $(4.30)$ and perform a series of Taylor expansions. Recall that $z \assign v
  + \mathi t$ by convention, that $0 \leqslant v = v_f (x ; \Delta) \leqslant
  C$ whenever $1 \leqslant \Delta \leqslant \delta \sigma (f ; x)$ and that
  when $z \in \mathcal{M}$ then $|t| \leqslant \eta (x) \cdot (\tmop{loglog}
  x)^{- 1 / 2}$. With this in mind, for $z = v + \mathi t \in \mathcal{M}$,
  \begin{eqnarray*}
    \mathcal{A}(z) e^{- zc (f)} & = & \mathcal{A}(v) e^{- vc (f)} + O_C \left(
    |t| \right) \text{ } = \text{ } \mathcal{A}(v) e^{- vc (f)} + O_C \left(
    \eta (x) \cdot (\tmop{loglog} x)^{- 1 / 2} \right)\\
    & = & \mathcal{A}(v) e^{- vc (f)} \cdot \left( 1 + O_C \left( \eta (x)
    (\tmop{loglog} x)^{- 1 / 2} \right) \right)
  \end{eqnarray*}
  where the last line is justified by 1) the non-vanishing of $\mathcal{A}(x)
  e^{- xc (f)}$ on the positive real axis 2) the fact that $v \asymp \Delta /
  \sigma (f ; x)$ is bounded throughout $1 \leqslant \Delta \leqslant c \sigma
  (f ; x)$ ($0 \leqslant v \leqslant C$). We will not mention any further, 
  the dependence on $C$ in implicit
  constants. Proceeding as in the previous equation, we find
  \begin{eqnarray*}
    \hat{\Psi} (f ; z) - \hat{\Psi} (f ; v) - \tmop{it} \hat{\Psi}' (f ; v) &
    = & - \left( t^2 / 2 \right) \hat{\Psi}'' (f ; v) + O \left( \eta (x)^3
    \cdot (\tmop{loglog} x)^{- 3 / 2} \right)
  \end{eqnarray*}
  for $z = v + \mathi t \in \mathcal{M}$. Upon multiplying by $\tmop{loglog}
  x$ and exponentiating, we obtain
  \[ \left( \log x \right)^{\hat{\Psi} (f ; z) - \hat{\Psi} (f ; v) -
     \tmop{it} \hat{\Psi}' (f ; v)} = e^{- (t^2 / 2) \hat{\Psi}'' (f ; v)
     \tmop{loglog} x} \cdot \left( 1 + O \left( \eta (x)^3 (\tmop{loglog}
     x)^{- 1 / 2} \right) \right) \]
  By lemma 4.7 for $z = v + \mathi t \in \mathcal{M}$ we have $|t / v| \ll
  \eta (x) (\tmop{loglog} x)^{- 1 / 2} \cdot v^{- 1} \asymp \eta (x) \cdot
  \Delta^{- 1}$. Since $\eta (x) = \tmop{logloglog} x = o (\Delta)$ it follows
  that $|t / v| = o (1)$ when $v + \mathi t \in \mathcal{M}$. Since in
  addition we will choose $T \rightarrow \infty$, we have for $z = v + \mathi
  t \in \mathcal{M}$,
  \begin{eqnarray*}
    \frac{T}{z \cdot (z + T)} & = & \frac{1}{v + \tmop{it}} \cdot \frac{1}{1 +
    (v + \tmop{it}) / T}\\
    & = & \frac{1}{v} \cdot \left( 1 + O \left( t / v \right)^{} \right)
    \cdot \left( 1 + O \left( 1 / T \right)^{} \right)\\
    & = & \frac{1}{v} \cdot \left( 1 + O \left( 1 / T + \eta (x)
    (\tmop{loglog} x)^{- 1 / 2} \cdot v^{- 1} \right) \right)
  \end{eqnarray*}
  Collecting together the previous estimates, we conclude that for $z = v +
  \mathi t \in \mathcal{M}$,
  \begin{eqnarray*}
    &  & \mathcal{A}(z) e^{- zc (f)} \cdot \left( \log x \right)^{\hat{\Psi}
    (f ; z) - \hat{\Psi} (f ; v) - \tmop{it} \hat{\Psi}' (f ; v)} \cdot
    \frac{T}{z (z + T)}\\
    & = & (\mathcal{A}(v) e^{- vc (f)} / v) \cdot e^{- (t^2 / 2) \hat{\Psi}''
    (f ; v) \tmop{loglog} x} \cdot \left( 1 + O \left( 1 / T + \eta (x)^3
    \cdot (\tmop{loglog} x)^{- 1 / 2} \cdot v^{- 1} \right) \right)
  \end{eqnarray*}
  Since $v \asymp \Delta \cdot (\tmop{loglog} x)^{- 1 / 2}$ the error term
  simplifies to $\mathcal{E} \assign 1 / T + \eta (x)^3 \cdot \Delta^{- 1}$.
  Let $\xi \assign \eta (x) \cdot (\tmop{loglog} x)^{- 1 / 2}$. Parametrizing
  the integral in $(4.30)$ and using the previous asymptotic we find that 
  the integral in $(4.30)$ equals (where as usual $z \assign v + \mathi t$)
  \begin{eqnarray*}
    &  & \frac{1}{2 \pi} \int_{- \xi}^{\xi} \mathcal{A}(z) e^{- zc (f)}
    \cdot (\log x)^{\hat{\Psi} (f ; z) - 1 - z \hat{\Psi}' (f ; v)} \cdot
    \frac{T \mathd t}{z (z + T)}\\
    & = & (\log x)^{A (f ; v)} \cdot \frac{1}{2 \pi} \int_{- \xi}^{\xi}
    \mathcal{A}(z) e^{- zc (f)} \cdot (\log x)^{\hat{\Psi} (f ; z) -
    \hat{\Psi} (f ; v) - \mathi t \hat{\Psi}' (f ; v)} \cdot \frac{T \mathd
    t}{z (z + T)}\\
    & = & (\log x)^{A (f ; v)} \cdot (1 / v)\mathcal{A}(v) e^{- vc (f)}
    \int_{- \xi}^{\xi} e^{- (t^2 / 2) \hat{\Psi}'' (f ; v) \cdot \tmop{loglog}
    x} \cdot \frac{\mathd t}{2 \pi} \cdot \left( 1 + O (\mathcal{E}) \right)\\
    & = & (\log x)^{A (f ; v)} \cdot \frac{\mathcal{A}(v) e^{- vc (f)}}{v (
    \hat{\Psi}'' (f ; v) \tmop{loglog} x)^{1 / 2}} \int_{- \eta (x)}^{\eta
    (x)} e^{- u^2 / 2} \cdot \frac{\mathd u}{2 \pi} \cdot \left( 1 + O \left(
    \mathcal{E} \right) \right)\\
    & = & (\log x)^{A (f ; v)} \cdot \frac{\mathcal{A}(v) e^{- vc (f)}}{v (2
    \pi \hat{\Psi}'' (f ; v) \tmop{loglog} x)^{1 / 2}} \cdot \left( 1 + O
    \left( e^{- \eta (x)^2 / 2} +\mathcal{E} \right) \right)
  \end{eqnarray*}
  Since $\mathcal{E} \assign 1 / T + \eta (x)^3 / \Delta \gg 1 /
  \sqrt[]{\tmop{loglog} x}$ and $\eta (x) \gg \tmop{logloglog} x$ the term
  $e^{- \eta^2 / 2}$ is absorbed into $O (\mathcal{E})$. The
  integral we just evaluated furnishes the main term in $(4.30)$. We conclude
  that
  \begin{eqnarray}
    &  & \frac{1}{2 \pi i} \int_{\mathcal{M}} \mathcal{A}(z) (\log
    x)^{\hat{\Psi} (f ; z) - 1} \cdot e^{- z (\mu + \Delta \sigma)} \cdot
    \frac{T \mathd z}{z (z + T)} \nonumber\\
    & = & \mathcal{A}(v) e^{- vc (f)} \cdot \frac{(\log x)^{\hat{\Psi} (f ;
    v) - 1 - v \hat{\Psi}' (f ; v)}}{v (2 \pi \hat{\Psi}'' (f ; v)
    \tmop{loglog} x)^{1 / 2}} \cdot \left( 1 + O \left( \frac{1}{T} +
    \frac{\eta (x)^3}{\Delta} \right) \right) 
  \end{eqnarray}
  
  \
  
  {\tmem{2. Bound for $(4.29)$ restricted to $z = v + \mathi t \in
  \mathcal{R}$.}}
  
  \  
  
  Recall our convention that $z \assign v + \mathi t$ and $v \assign v_f (x ;
  \Delta)$. By lemma 4.8, and the bound $\mathcal{A}(z) \ll 1 + |z|^{1 / 8}$ 
  we have
  \begin{eqnarray*}
    \mathcal{A}(z) (\log x)^{\hat{\Psi} (f ; z) - 1} \cdot e^{- z (\mu +
    \Delta \sigma)} & \ll & (1 + |z|^{1 / 8}) \cdot (\log x)^{\tmop{Re} (
    \hat{\Psi} (f ; z) - 1 - v \hat{\Psi}' (f ; v))}
  \end{eqnarray*}
  uniformly in $0 \leqslant v \assign \tmop{Re} z \leqslant C$ and $|
  \tmop{Im} z| \leqslant M \assign \tmop{loglog} x$. Therefore
  \begin{eqnarray}
    &  & \frac{1}{2 \pi i} \int_{\mathcal{R}} \mathcal{A}(z) (\log
    x)^{\hat{\Psi} (f ; z) - 1} \cdot e^{- z (\mu + \Delta \sigma)} \cdot
    \frac{T \mathd z}{z (z + T)} \\
    & \ll & (\log x)^{A (f ; v)} \cdot \int_{\mathcal{R}} (1 + |z|^{1 / 8})
    \cdot (\log x)^{\tmop{Re} ( \hat{\Psi} (f ; z) - \hat{\Psi} (f ; v))}
    \cdot \frac{T \cdot | \mathd z|}{|z| \cdot |z + T|} \nonumber
  \end{eqnarray}
  and it remains to bound the second integral, above. To do so we consider its
  behaviour in three ranges, $\mathcal{R}_1 =\{v + \mathi t : \eta (x) \cdot
  (\tmop{loglog} x)^{- 1 / 2} \leqslant |t| \leqslant c\}$ with $c > 0$ small
  enough, $\mathcal{R}_2 =\{v + \mathi t : c \leqslant |t| \leqslant \lambda
  (x)\}$ and $\mathcal{R}_3 =\{v + \mathi t : \lambda (x) \leqslant |t|
  \leqslant M\}$. We will fix $c$ and $\lambda(x)$ as we proceed
  through the proof. Recall also that $T = \lambda(x)^{1/2}$ and that 
  $\eta(x) = \log\log\log x$. 
  
  \
  
  {\tmem{2.1. The range $\mathcal{R}_1 =\{v + \mathi t : \eta (x) \cdot
  (\tmop{loglog} x)^{- 1 / 2} \leqslant |t| \leqslant c\}$}}
  
  \  

  Recall from lemma 4.12 that $\tmop{Re} ( \hat{\Psi} (f ; z) - \hat{\Psi} (f
  ; v)) \leqslant \tmop{Re} ( \hat{\Psi} (f ; \mathi t) - 1)$. Choose $c
  \leqslant 1$ small enough so as to ensure that for $z = v + \mathi t$,
  \[ \tmop{Re} ( \hat{\Psi} (f ; z) - \hat{\Psi} (f ; v)) \leqslant \tmop{Re}
     ( \hat{\Psi} (f ; \mathi t) - 1) \leqslant - \kappa t^2 / 2 \]
  for some $\kappa = \kappa (c) > 0$. The existence of such a $\kappa$, for
  $c$ small enough, is guaranteed by a Taylor expansion. Once $c \leqslant 1$
  is chosen sufficiently small, we can bound
  \begin{eqnarray*}
    &  & \int_{\mathcal{R}_1} (1 + |z|^{1 / 8}) \cdot (\log x)^{\tmop{Re} (
    \hat{\Psi} (f ; z) - \hat{\Psi} (f ; v))} \cdot \frac{T| \mathd z|}{|z|
    \cdot |z + T|}\\
    & \ll & \int_{\mathcal{R}_1} (\log x)^{- \kappa t^2 / 2} \cdot \frac{T
    \mathd t}{v (v + T)} \text{ } \ll \text{ } (1/v) 
    \exp (- \kappa \eta (x)^2 / 2)
  \end{eqnarray*}
  The last line comes from $|t| \geqslant \eta (x) \cdot (\tmop{loglog} x)^{-
  1 / 2}$. Since $\eta (x) = \tmop{logloglog} x$ it follows that $(4.32)$
  restricted to the range $\mathcal{R}_1$ is $\ll (\log x)^{A (f ; v)} \cdot
  (1/v) (\tmop{loglog} x)^{- 1}$ and this is as negligible as we want 
  it to be.
  
  \  
  
  {\tmem{2.2. The range $\mathcal{R}_2 =\{v + \mathi t : c \leqslant |t|
  \leqslant \lambda (x)\}$}}
  
  \
  
  Let $c > 0$ denote the constant that we fixed in the previous point. By
  lemma 4.14 there is a $\lambda_0 (x) \rightarrow \infty$ and a $\xi (x)
  \rightarrow \infty$ such that
  \[ \int_c^{\lambda_0 (x)} \left| e^{\hat{\Psi} (f ; z) - \hat{\Psi} (f ; v)}
     \right|^{\tmop{loglog} x} \cdot \frac{\mathd t}{t} \ll \frac{1}{\xi (x)
     \sqrt[]{\tmop{loglog} x}} \text{ , } z \assign v + \mathi t \]
  Let $\lambda (x) \assign \min (\lambda_0 (x), \xi (x), 1 + \tmop{logloglog}
  x)$. Note that $\lambda (x) \rightarrow \infty$. By the above equation and
  $\lambda (x) \leqslant \lambda_0 (x)$ we get
  \begin{equation}
    \int_c^{\lambda (x)} \left| e^{\hat{\Psi} (f ; z) - \hat{\Psi} (f ; v)}
    \right|^{\tmop{loglog} x} \cdot \frac{\mathd t}{t} \ll \frac{1}{\xi (x)
    \sqrt[]{\tmop{loglog} x}}
  \end{equation}
  We let $T \assign \sqrt[]{\lambda (x)}$. With this choice of $T$, we have $T
  \rightarrow \infty$ and $T \ll \tmop{logloglog} x$, and this is the only
  information about $T$ that we assumed {\tmem{a priori}}. By $(4.33)$ we have
  \begin{eqnarray*}
    &  & \int_{\mathcal{R}_2} (1 + |z|^{1 / 8}) \cdot (\log x)^{\tmop{Re} (
    \hat{\Psi} (f ; z) - \hat{\Psi} (f ; v))} \cdot \frac{T| \mathd z|}{|z|
    \cdot |z + T|}\\
    & \ll & \lambda (x)^{1 / 8} \cdot \int_{\mathcal{R}_2} \left|
    e^{\hat{\Psi} (f ; z) - \hat{\Psi} (f ; v)} \right|^{\tmop{loglog} x}
    \cdot \frac{\mathd t}{t} \text{ } \ll \text{ } \frac{\lambda (x)^{1 /
    8}}{\xi (x) \sqrt[]{\tmop{loglog} x}}
  \end{eqnarray*}
  Since $\lambda (x) \leqslant \xi (x)$ the above is bounded by $\xi (x)^{- 7
  / 8} \cdot (\tmop{loglog} x)^{- 1 / 2}$. It follows that $(4.32)$
  resitricted to $z \in \mathcal{R}_2$ is bounded by $(\log x)^{A (f ; v)}
  \cdot \xi (x)^{- 7 / 8} \cdot (\tmop{loglog} x)^{- 1 / 2}$ and again this is
  sufficiently negligible, for our purpose.
  
  \  
  
  {\tmem{2.3. The range $\mathcal{R}_3 \assign \{v + \mathi t : \lambda (x)
  \leqslant |t| \leqslant M\}$.}}
  
  \
  
  Since $0 \leqslant v \leqslant C$ and $z = v + \mathi t$ we have $|z| \ll
  |t|$. It follows that
  \begin{eqnarray}
    &  & \int_{\mathcal{R}_3} (1 + |z|^{1 / 8}) \cdot (\log x)^{\tmop{Re} (
    \hat{\Psi} (f ; z) - \hat{\Psi} (f ; v))} \cdot \frac{T \cdot | \mathd
    z|}{|z| \cdot |z + T|} \nonumber\\
    & \ll & \int_{\lambda (x)}^M t^{1 / 8} \cdot \left| e^{\hat{\Psi} (f ; z)
    - \hat{\Psi} (f ; v)} \right|^{\tmop{loglog} x} \cdot \frac{T \mathd t}{t
    \cdot (t + T)} \nonumber\\
    & \ll & \sum_{\ell \geqslant \lfloor\lambda (x)\rfloor} 
    \frac{T}{\ell^{7 / 8} \cdot
    (\ell + T)} \int_{\ell}^{\ell + 1} \left| e^{\hat{\Psi} (f ; z) -
    \hat{\Psi} (f ; v)} \right|^{\tmop{loglog} x} \mathd t 
  \end{eqnarray}
  We now show that the integral over $\ell \leqslant t \leqslant \ell + 1$ is
  $\ll (\tmop{loglog} x)^{- 1 / 2}$ uniformly in $\ell \geqslant 0$. Let
  $\phi_{\kappa} (t) \assign |e^{\hat{\Psi} (f ; \kappa + \mathi t) -
  \hat{\Psi} (f ; \kappa)} |$ and $\xi \assign \tmop{loglog} x$, so $| \phi_v
  (t) |^{\xi} = |e^{\hat{\Psi} (f ; z) - \hat{\Psi} (f ; v)} |^{\tmop{loglog}
  x}$ ($v \assign \tmop{Re} z = v_f(x;\Delta)$). 
  By lemma 4.12 we have $| \phi_{\kappa} (t) | \leqslant | \phi_0 (t) |$
  for all $\kappa > 0$ and $t \in \mathbbm{R}$. Furthermore by lemma 4.15
  there is a $c_0$ and a $c_1$ such that $\tmop{meas} (\{u \in I : | \phi_0
  (u) |^2 \geqslant 1 - \delta\}) \leqslant c_1 \cdot \sqrt[]{\delta}$ for all
  intervals $I$ of length $\leqslant c_0$. In particular $\tmop{meas} (\{u \in
  [\ell ; \ell + 1] : | \phi_0 (u) |^2 \geqslant 1 - \delta\}) \leqslant K
  \cdot \sqrt[]{\delta}$ where $K \assign (1 / c_0 + 1) \cdot c_1$. Using
  these two observations we conclude that
  \begin{eqnarray*}
    \int_{\ell}^{\ell + 1} | \phi_v (t) |^{\xi} \cdot \mathd t & \leqslant &
    \int_{\ell}^{\ell + 1} | \phi_0 (t) |^{\xi} \cdot \mathd t \text{ } =
    - \text{ } \int_0^1 t^{\xi / 2} \cdot \mathd \left( \underset{u \in [\ell ;
    \ell + 1]}{\tmop{meas}} \left( | \phi_0 (u) |^2 \geqslant t \right)
    \right)\\
    & = & (\xi / 2) \cdot \int_0^1 t^{\xi / 2 - 1} \cdot \underset{u \in
    [\ell ; \ell + 1]}{\tmop{meas}} \left( | \phi_0 (u) |^2 \geqslant t
    \right) \mathd t\\
    & \leqslant & (\xi / 2) \cdot K \int_0^1 t^{\xi / 2 - 1} \cdot \sqrt[]{1
    - t} \mathd t \text{ } \ll \text{ } \xi^{- 1 / 2}
  \end{eqnarray*}
  We evaluate the last integral by noticing that the integrand is essentially
  constant on intervals $[1 - (A + 1) / \xi ; 1 - A / \xi]$. The long chain of
  inequalities proves that
  \[ \int_{\ell}^{\ell + 1} \left| e^{\hat{\Psi} (f ; z) - \hat{\Psi} (f ; v)}
     \right|^{\tmop{loglog} x} \cdot \mathd t = \int_{\ell}^{\ell + 1} |
     \phi_v (t) |^{\xi} \mathd t \ll \xi^{- 1 / 2} = (\tmop{loglog} x)^{- 1 /
     2} \]
  as desired. Hence the sum in $(4.34)$ is bounded by
  \begin{eqnarray*}
    & \ll & \frac{1}{\sqrt[]{\tmop{loglog} x}} \sum_{\ell \geqslant 
    \lfloor \lambda
    (x) \rfloor} \frac{T}{\ell^{7 / 8} \cdot (\ell + T)} \text{ } \ll \text{ }
    \frac{1}{\lambda (x)^{3 / 8}} \cdot \frac{1}{\sqrt[]{\tmop{loglog} x}}
  \end{eqnarray*}
  (Recall that $T = \lambda (x)^{1 / 2}$). It follows that
  the integral in $(4.32)$ restricted to $z \in \mathcal{R}_3$ is bounded by
  $(\log x)^{A (f ; v)} \cdot (\tmop{loglog} x)^{- 1 / 2} \cdot \lambda (x)^{-
  3 / 8}$, which is sufficiently negligible for our purpose.
  
  \
  
  {\tmem{2.4. Final bound for $(4.32)$.}}
  
  \
  
  Collecting the previous bounds from 2.1, 2.2, and 2.3, we conclude that
  \begin{eqnarray*}
    &  & \int_{\mathcal{R}} \mathcal{A}(z) (\log x)^{\hat{\Psi} (f ; z) - 1}
    \cdot e^{- z (\mu + \Delta \sigma)} \cdot \frac{T \cdot \mathd z}{z (z +
    T)}\\
    & \ll & (\log x)^{A (f ; v)} \cdot \left( \frac{1}{v \tmop{loglog} x} +
    \frac{\xi (x)^{- 7 / 8} + \lambda (x)^{- 3 / 8}}{(\tmop{loglog} x)^{1 /
    2}} \right)
  \end{eqnarray*}
  which is negligible compared to the estimate we obtained in $(4.31)$, 
  because $\mathcal{A}(v)e^{-v c(f)} \asymp 1$ and
  $\hat{\Psi''}(f;v) \asymp 1$. The estimate 
  $\mathcal{A}(v)e^{-v c(f)} \asymp 1$
  follows from the continuity and non-vanishing of $\mathcal{A}(x)$ on the 
  positive real line, and the fact that the parameter $v$ is confined to 
  a bounded interval $0 \leqslant v \leqslant C$. 
  
  \
  
  {\tmem{3. Conclusion.}}
  
  \
  
  Comparing the bound we obtained in 2.4 with $(4.31)$ it follows that
  \begin{eqnarray}
    &  & \frac{1}{2 \pi i} \int_{v - iM}^{v + iM} \mathcal{A}(z) (\log
    x)^{\hat{\Psi} (f ; z) - 1} e^{- z (\mu + \Delta \sigma)} \cdot \frac{T
    \mathd z}{z (z + T)} \\
    & = & \mathcal{A}(v) e^{- vc (f)} \cdot \frac{(\log x)^{\hat{\Psi} (f ;
    v) - 1 - v \hat{\Psi}' (f ; v)}}{v (2 \pi \hat{\Psi}'' (f ; v)
    \tmop{loglog} x)^{1 / 2}} \cdot (1 + o (1)) \nonumber
  \end{eqnarray}
  uniformly throughout $1 \leqslant \Delta \leqslant \delta \sigma (f ; x)$.
  In the same way as we estimated the above integral we estimate the integral
  appearing in the first error term in $(4.28)$. Because of the additional $z
  + T$ in the denominator this integral will be negligible compared to
  $(4.35)$. Since the other error terms in $(4.28)$ are negligible compared
  to $(4.35)$ we finally conclude that
  \[ \mathbbm{P} \left( \Omega (f ; x) \geqslant \mu + \Delta \sigma \right)
     \sim \mathcal{A}(v) e^{- vc (f)} \cdot \frac{\left. (\log x
     \right)^{\hat{\Psi} (f ; v) - 1 - v \hat{\Psi}' (f ; v)}}{v (2 \pi
     \hat{\Psi}'' (f ; v) \tmop{loglog} x)^{1 / 2}} \]
  uniformly in $1 \leqslant \Delta \leqslant \delta \sigma (f ; x)$, as
  desired. 
\end{proof}


\subsection{Large deviations: $(\tmop{loglog} x)^{\varepsilon} \ll \Delta \ll
\sigma (f ; x)$ and $\Psi (f ; t)$ is lattice distributed}

We may assume by rescaling that $\Psi (f ; t)$ is lattice distributed on
$\mathbbm{Z}$. Throughout this section we write $f =\mathfrak{g}+\mathfrak{h}$
with $\mathfrak{g}, \mathfrak{h}$ two strongly additive functions defined by
\begin{eqnarray*}
  \mathfrak{g}(p) = \left\{ \begin{array}{l}
    f (p) \text{ if } f (p) \in \mathbbm{Z}\\
    0 \text{ \ \ \ \ otherwise}
  \end{array} \right. & \tmop{and} & \mathfrak{h}(p) = \left\{
  \begin{array}{l}
    f (p) \text{ if } f (p) \nin \mathbbm{Z}\\
    0 \text{ \ \ \ \ otherwise}
  \end{array} \right.
\end{eqnarray*}
The goal is to prove the following ``general'' proposition.

\begin{proposition}
  Let $f \in \mathcal{C}$. Suppose that $\Psi (f ; t)$ is lattice distributed
  on $\mathbbm{Z}$. Consider the random variable $\Omega (f ; x) \assign
  \sum_{p \leqslant x} f (p) Z_p$ where the $Z_p \in \{0 ; 1\}$ are random
  variables, not necessarily independent, over a common probability space
  $(\Omega_x, \mathcal{F}_x, \mathbb{P}_x)$ which we allow to depend on $x$. We denote by
  $\mathbbm{P}_{\mathcal{F}_x}$ and $\mathbbm{E}_{\mathcal{F}_x}$ the
  probability measure and the expectation in that probability space. Suppose
  that \
  \begin{enumeratenumeric}
    \item Uniformly in $0 \leqslant \kappa \assign \tmop{Re} s \leqslant C, |
    \tmop{Im} s| \leqslant \log\log x$ and uniformly in strongly additive 
    function
    $\mathfrak{H}$ such that $0 \leqslant \mathfrak{H}(p) \leqslant
    \left\lceil \mathfrak{h}(p) \right\rceil$,
    \[ \mathbbm{E}_{\mathcal{F}_x} \left[ e^{s \Omega (\mathfrak{g}; x) + s
       \Omega (\mathfrak{H}; x)} \right] =\mathbbm{E}_{\mathcal{F}_x} \left[
       e^{s \Omega (\mathfrak{g}; x)} \right] \cdot \prod_{p \leqslant x}
       \left( 1 + \frac{e^{s\mathfrak{H}(p)} - 1}{p} \right) + O_C \left(
       \mathcal{E}(x ; \kappa)^{^{}} \right) \]
    with an error term $\mathcal{E}(x ; \kappa) \assign (\log x)^{\hat{\Psi}
    (f ; \kappa) - 3 / 2}$.
    
    \item Given $C > 0$, we have, uniformly in $0 \leqslant \kappa \assign
    \tmop{Re} s \leqslant C$, $| \tmop{Im} s| \leqslant 2 \pi$,
    \[ \mathbbm{E}_{\mathcal{F}_x} \left[ e^{s \Omega (\mathfrak{g}; x)}
       \right] =\mathcal{A}(s) \cdot (\log x)^{\hat{\Psi} (f ; s) - 1} + O
       \left( (\log x)^{\hat{\Psi} (f ; \kappa) - 3 / 2} \right) \]
    where $\mathcal{A}(s)$ is an analytic function in $\tmop{Re} s \geqslant
    0$, which we assume to be non-zero on the positive real axis.
  \end{enumeratenumeric}
  Then, for any given $c > 0$, uniformly in $1 \leqslant \Delta \leqslant c
  \sigma (f ; x)$,
  \[ \mathbbm{P}_{\mathcal{F}_x} \left( \sum_{p \leqslant x} f (p) \left[ Z_p
     - \frac{1}{p} \right] \geqslant \Delta \sigma (f ; x) \right) \sim
     \mathcal{A}(v) e^{- vc (f)} \cdot \mathcal{P}_{\mathfrak{h}} \left( \xi_f
     (x ; \Delta) ; v \right) \cdot S_f (x ; \Delta) \]
  with $v \assign v_f (x ; \Delta)$ and the rest of the notation defined in
  the table of section 3.
\end{proposition}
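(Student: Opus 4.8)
The plan is to mimic the proof of Proposition 4.10 as closely as possible, the decisive new feature being that $\Omega (\mathfrak{g}; x) = \sum_{p \leqslant x} \mathfrak{g}(p) Z_p$ is $\mathbbm{Z}$-valued, so that both $z \mapsto \mathbbm{E}_{\mathcal{F}_x}[e^{z \Omega (\mathfrak{g}; x)}]$ and $z \mapsto \hat{\Psi}(f; z)$ are $2 \pi \mathi$-periodic (the latter because $\Psi (f; \cdot)$ is carried by $\mathbbm{Z}$). Consequently $|(\log x)^{\hat{\Psi}(f; z) - 1}|$ has a ``bump'' of height $(\log x)^{\hat{\Psi}(f; v) - 1}$ not only near $z = v \assign v_f (x ; \Delta)$ but near every $z = v + 2 \pi \mathi k$, $k \in \mathbbm{Z}$, and summing the contributions of all these bumps is exactly what produces the factor $\mathcal{P}_{\mathfrak{h}}$. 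One preliminary fact will be used throughout: since $\Psi (f; \cdot)$ is lattice on $\mathbbm{Z}$, the uniform rate in $(1.4)$ forces $\# \{ p \leqslant t : f (p) \nin \mathbbm{Z} \} \ll_k \pi (t) (\log t)^{- k}$ for every $k$, so $S(\mathfrak{h})$ is extremely thin and $\sum_{p \in S(\mathfrak{h})} p^{- 1} < \infty$; moreover the conclusion of Lemma 4.3 holds for $\mathfrak{g}$ as well (its proof uses only $(1.3)$, $(1.4)$ and positivity), with $\Psi (\mathfrak{g}; \cdot) = \Psi (f; \cdot)$. Since $f (p) = \mathfrak{g}(p)$ off $S(\mathfrak{h})$ while $f(p) = \mathfrak{h}(p)$, $\mathfrak{g}(p) = 0$ on $S(\mathfrak{h})$, subtracting Lemma 4.3 for $\mathfrak{g}$ from Lemma 4.3 for $f$ (their main terms cancel, as $\hat{\Psi}(\mathfrak{g}; \cdot) = \hat{\Psi}(f; \cdot)$) and integrating by parts gives $\sum_{p \in S(\mathfrak{h}), p \leqslant x} (e^{z \mathfrak{h}(p)} - 1)/p = O (1)$ uniformly for $\tmop{Re} z$ bounded, $| \tmop{Im} z | \leqslant 2 \tmop{loglog} x$, and all $x$. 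Hence $G(z; x) \assign \prod_{p \leqslant x} (1 + (e^{z \mathfrak{h}(p)} - 1)/p) = \mathbbm{E}[e^{z X (\mathfrak{h}_f)}]$ is bounded there, and (differentiating the same comparison once) $\partial_z \log G(z; x) = O(1)$, so $G$ is slowly varying. I expect this is the reason hypothesis $(1)$ is stated for a general $\mathfrak{H}$ with $0 \leqslant \mathfrak{H}(p) \leqslant \lceil \mathfrak{h}(p) \rceil$: it makes truncations of $\mathfrak{h}$ available when finer control is needed.

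For the main line, rewrite the event as $\Omega (f; x) \geqslant \xi$ with $\xi \assign \xi_f (x ; \Delta)$ and apply Lemma 4.16 with $Y = \Omega (f; x)$, $t = \xi$, $\kappa = v$, $M = \tmop{loglog} x$ and $T = T(x) \to \infty$, $T \ll \tmop{logloglog} x$; its side error terms are negligible exactly as in Proposition 4.10 (using $\Phi_{\Omega}(x; \xi)(v) \ll (\log x)^{A(f; v)}$). By hypothesis $(1)$ with $\mathfrak{H} = \mathfrak{h}$, then hypothesis $(2)$ together with the two periodicities, one has uniformly for $0 \leqslant \tmop{Re} z \leqslant C$, $| \tmop{Im} z | \leqslant M$,
\[ \mathbbm{E}_{\mathcal{F}_x}[e^{z \Omega (f; x)}] = \mathcal{A}(z^{\ast}) (\log x)^{\hat{\Psi}(f; z) - 1} G(z; x) + O ( (\log x)^{\hat{\Psi}(f; \tmop{Re} z) - 3/2} ), \]
where $z^{\ast}$ is $z$ reduced modulo $2 \pi \mathi$ to $| \tmop{Im} z^{\ast} | \leqslant \pi$. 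Inserting this (the error term, times $|e^{- z \xi}| \asymp (\log x)^{- v \hat{\Psi}'(f; v)}$ and integrated against the kernel, is $O((\log x)^{A(f; v) - 1/2} \log T)$, negligible) reduces matters to $\mathcal{I} \assign \frac{1}{2 \pi \mathi} \int_{v - \mathi M}^{v + \mathi M} \mathcal{A}(z^{\ast}) (\log x)^{\hat{\Psi}(f; z) - 1} G(z; x) e^{- z \xi}\, \frac{T \, \mathd z}{z (z + T)}$. Split the segment into windows $I_k = \{ v + 2 \pi \mathi k + \mathi w : |w| \leqslant \pi \}$, $|k| \leqslant K \assign \lfloor M/(2 \pi) \rfloor$; on $I_k$ we have $z^{\ast} = v + \mathi w$, $\hat{\Psi}(f; z) = \hat{\Psi}(f; v + \mathi w)$ and $e^{- z \xi} = e^{- (v + \mathi w) \xi} e^{- 2 \pi \mathi k \{ \xi \}}$. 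Outside the ``core'' $|w| \leqslant \eta(x) (\tmop{loglog} x)^{- 1/2}$ (with $\eta(x) = \tmop{logloglog} x$), the inequality $\tmop{Re}(\hat{\Psi}(f; v + \mathi w) - \hat{\Psi}(f; v)) \leqslant \tmop{Re}(\hat{\Psi}(f; \mathi w) - 1)$ from the proof of Lemma 4.12 gives $|(\log x)^{\hat{\Psi}(f; v + \mathi w) - \hat{\Psi}(f; v)}| \leqslant |\phi(w)|^{\tmop{loglog} x}$ with $\phi(w) = e^{\hat{\Psi}(f; \mathi w) - 1}$; since $\Psi (f; \cdot)$ is lattice on $\mathbbm{Z}$ one has $|\phi(w)| < 1$ for $0 < |w| \leqslant \pi$, with $|\phi(w)| \leqslant 1 - \kappa_0 w^2$ near $0$ and $|\phi(w)| \leqslant 1 - \delta_0$ for $\varepsilon_0 \leqslant |w| \leqslant \pi$ --- a much easier lattice substitute for Lemmas 4.13--4.15 --- so this part of $I_k$ is negligible after summing over $k$. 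On the core, Taylor expansions as in Proposition 4.10 (using $\mathcal{A}(v + \mathi w) = \mathcal{A}(v)(1 + o(1))$, the slow variation of $G$, Lemma 4.8 for $e^{-(v + \mathi w) \xi}$, the expansion of $(\log x)^{\hat{\Psi}(f; v + \mathi w) - \hat{\Psi}(f; v)}$, and $T/(z(z + T)) = (v + 2 \pi \mathi k)^{- 1}(1 + o(1))$) make the oscillatory exponentials cancel and leave the Gaussian integral $(2 \pi / (\hat{\Psi}''(f; v) \tmop{loglog} x))^{1/2}$; the core of $I_k$ thus contributes
\[ \mathcal{A}(v)\, e^{- v \xi}\, (\log x)^{\hat{\Psi}(f; v) - 1} \cdot \frac{1}{(2 \pi \hat{\Psi}''(f; v) \tmop{loglog} x)^{1/2}} \cdot \frac{e^{- 2 \pi \mathi k \{ \xi \}}\, G(v + 2 \pi \mathi k; x)}{v + 2 \pi \mathi k} \cdot (1 + o(1)) . \]

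It remains to sum over $k$. With $Y \assign X(\mathfrak{h}_f)$, $\theta \assign \{ \xi \}$, and $G(v + 2 \pi \mathi k; x) = \mathbbm{E}[e^{(v + 2 \pi \mathi k) Y}]$, the $k$-sum equals $\mathbbm{E}[ e^{v Y} \sum_{|k| \leqslant K} e^{2 \pi \mathi k (Y - \theta)}/(v + 2 \pi \mathi k) ]$; the inner sum is the symmetric partial sum of the Fourier series of the $1$-periodic function $t \mapsto e^{- v \{ t \}}/(1 - e^{- v})$, so it converges (Dirichlet) to $e^{- v \{ Y - \theta \}}/(1 - e^{- v})$, whence the $k$-sum tends to $\frac{1}{1 - e^{- v}} \mathbbm{E}[e^{v (\theta + \lfloor Y - \theta \rfloor)}]$, which by Abel summation equals $\sum_{\ell \in \mathbbm{Z}} e^{v (\ell + \theta)} \mathbbm{P}(Y \geqslant \ell + \theta) = \frac{1}{v} \mathcal{P}_{\mathfrak{h}}(\xi; v)$. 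Substituting $e^{- v \xi} = (\log x)^{- v \hat{\Psi}'(f; v)} e^{- v c(f)}(1 + o(1))$ (Lemma 4.8) and recalling the definition of $S_f (x ; \Delta)$ yields the asserted asymptotic for $(\tmop{loglog} x)^{\varepsilon} \ll \Delta \leqslant c \sigma (f; x)$; the remaining range $1 \leqslant \Delta \ll (\tmop{loglog} x)^{1/6}$ is handled by Proposition 4.9, whose hypothesis is the displayed moment generating function estimate with $\mathcal{A}(s)$ replaced by $\mathcal{A}(s) G(s; \infty)$, where $G(s; x) \to G(s; \infty) = \prod_p (1 + (e^{s \mathfrak{h}(p)} - 1)/p)$ is analytic and non-vanishing near $s = 0$ with $G(0; \infty) = 1$ (this also forces $\mathcal{A}(0) = 1$).

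The step I expect to be hardest, and which requires the most care, is the passage from the finite sum $\sum_{|k| \leqslant K}$ to the closed form $\frac{1}{v} \mathcal{P}_{\mathfrak{h}}(\xi; v)$: the per-window ``core'' error is $o(1)$, but the $k$-sum is only conditionally convergent (summand $\asymp |k|^{- 1}$), so $\sum_{|k| > K}$ must be bounded uniformly, and the naive estimate degrades when the support of $X(\mathfrak{h}_f)$ comes close, modulo $1$, to $\{ \xi_f (x ; \Delta) \}$. Making this rigorous means retaining the full smoothing kernel $T/(z(z + T))$ throughout and showing that the resulting smoothed version of $\mathcal{P}_{\mathfrak{h}}$ tends to $\mathcal{P}_{\mathfrak{h}}$ as $T \to \infty$ (equivalently, that the event $X(\mathfrak{h}_f) = \ell + \{ \xi \}$ contributes nothing), all of it uniform over the $\Delta$-range. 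This --- rather than any single ingredient --- is the technical heart of the argument.
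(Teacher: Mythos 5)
Your approach (split the contour into $2\pi$-periodic windows around $v + 2\pi i k$, exploit the $\mathbb{Z}$-valuedness of $\Omega(\mathfrak{g};x)$, and recognize the resulting $k$-sum as the Fourier series of the sawtooth function $e^{-v\{t\}}/(1-e^{-v})$) is genuinely different from the paper's, and it captures the right structural mechanism behind $\mathcal{P}_{\mathfrak{h}}$. The paper instead sandwiches $\Omega(f;x)$ between $\Omega(\mathfrak{g};x)+\Omega(\mathfrak{h};y)$ and $\Omega(\mathfrak{g};x)+\Omega(\mathfrak{H};x)$ (with $\mathfrak{H}(p)=\lceil\mathfrak{h}(p)\rceil$ for $p\ge y$), uses hypothesis (1) with this \emph{particular} $\mathfrak{H}$ to approximate the mgf of the upper proxy by a genuine convolution $F(x;\cdot)$ of $\Omega(\mathfrak{g};x)$ with $\sum_{p\le x}\mathfrak{H}(p)X_p$, and then inverts with the \emph{finite} kernel $\mathfrak{K}(s;\xi)=\sum_{\omega\ge\xi,\,\omega\in\mathbb{N}+\mathcal{D}_{\mathfrak{h}}(y)}e^{-s\omega}$: because $y$ is chosen so slowly growing that the fractional parts in $\mathcal{D}_{\mathfrak{h}}(y)$ are $\gg(\log\log x)^{-1/8}$-separated, a single average over $|\mathrm{Im}\,s|\le\log\log x$ picks out the tail probability with error $O((\log\log x)^{-7/8})$ per term, and Corollary 4.23 converts $\mathbb{P}(\sum_{p\le x}\mathfrak{H}(p)X_p\ge\cdot)$ into $\mathbb{P}(X(\mathfrak{h})\ge\cdot)$. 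This is why hypothesis (1) is stated for a \emph{family} of $\mathfrak{H}$: it is not ``finer control,'' it is the sandwich.

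The gap in your write-up is precisely the one you flag in your last paragraph, and I do not think it is closable by routine smoothing. Dirichlet summation gives you the value $\frac12\bigl(\mathbb{P}(Y>\ell+\theta)+\mathbb{P}(Y\ge\ell+\theta)\bigr)$ at any atom of $Y=X(\mathfrak{h}_f)$, and the Perron kernel $1-e^{-T(u-\xi)^+}$ from Lemma 4.16 tends pointwise to the left-continuous indicator, not the right-continuous one, with the whole jump shunted into the Err term. Since $\{\xi_f(x;\Delta)\}$ must range over all of $[0,1)$ as $\Delta$ varies and $Y$ is purely atomic, there is no escape via genericity of $\xi$, and the jump is not $o$ of the main term in general. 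You need either (i) a one-sided sandwich as in the paper, so that the quantity you invert is a bona fide convolution whose distribution function you can expand exactly (no Fourier convergence issues), or (ii) a two-sided Perron kernel tuned to the discreteness of $\mathbb{N}+\mathcal{D}_{\mathfrak{h}}(y)$, which is essentially the paper's $\mathfrak{K}$. Absent one of these, the ``show the smoothed $\mathcal{P}_{\mathfrak{h}}$ tends to $\mathcal{P}_{\mathfrak{h}}$'' step is not merely technical; as stated it is false. The rest of your outline (boundedness and slow variation of $G(z;x)$, the lattice substitute for Lemmas 4.13--4.15, the identification of the $k$-sum with $\frac{1}{v}\mathcal{P}_{\mathfrak{h}}(\xi;v)$, and the small-$\Delta$ regime via Proposition 4.9) is sound.
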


In the most important case, when $\Omega_x = [1 ; x]$ and the random variables
$Z_p (n)$ are the indicator functions of the event $p|n$, proposition 4.17 can
be proved by following the method of {\cite{1}}. The proof there is more
natural, but unfortunately doesn't adapt to a more general situation, in
particular to the case when the $Z_p$ are independent random variables.

We need a substantial amount of preparation before we can prove the lemma. We
subdivide this section in three subsections. In 4.5.1 we gather information
about the additive function $\mathfrak{h}$. In 4.5.2 we evaluate a certain
``saddle-point'' integral. In 4.5.3 we prove proposition 4.17.

\subsubsection{Preliminary lemma on $\mathfrak{h}$}

Denote by $S (\mathfrak{h})$ the set of primes for which $\mathfrak{h}(p)
\neq 0$. Recall that $\mathfrak{h}(p)$ is equal to $f(p)$ whenever
$f(p) \nin \mathbb{Z}$ and equal to $0$ otherwise. Since $f(p) > 0$ (by
definition of the class $\mathcal{C}$) it follows that $\mathfrak{h}(p)$
vanishes exactly when $f(p) \in \mathbb{Z}$. Hence the set $S(\mathfrak{h})
 = \{p: \mathfrak{h}(p) \neq 0\}$ is in fact equal to 
the set $\{p: f(p) \nin \mathbb{Z}\}$.

\begin{lemma}
  Let $f \in \mathcal{C}$. Suppose that $\Psi (f ; t)$ is lattice distributed
  on $\mathbbm{Z}$. We have
  \[ |S (\mathfrak{h}) \cap [1 ; x] | \ll_A x \cdot (\log x)^{- A} \]
\end{lemma}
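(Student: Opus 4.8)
The plan is to combine the quantitative hypothesis $(1.4)$ built into the definition of $\mathcal{C}$ with the observation that, since $\Psi(f;t)$ is lattice distributed on $\mathbbm{Z}$, its distribution function is constant on each interval $[n,n+1)$ with $n \in \mathbbm{Z}$. Recalling that $S(\mathfrak{h}) = \{ p : f(p) \nin \mathbbm{Z} \}$ and that $f$ is strictly positive, we may write the disjoint decomposition
\[
S(\mathfrak{h}) \cap [1 ; x] \;=\; \bigsqcup_{n \geqslant 0} \{ p \leqslant x : n < f(p) < n+1 \} .
\]

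First I would estimate each piece. Put $F(x;t) \assign \pi(x)^{-1} \# \{ p \leqslant x : f(p) \leqslant t \}$. By assumption $(1.4)$, for any fixed positive integer $k$ we have $F(x;t) = \Psi(f;t) + O_k((\log x)^{-k})$ uniformly in $t \in \mathbbm{R}$ and $x \geqslant 2$. Because $\Psi(f;\cdot)$ places all of its mass on $\mathbbm{Z}$ it is constant on $[n,n+1)$, so $\Psi(f;t) = \Psi(f;n)$ for $t \in [n,n+1)$; letting $t \uparrow n+1$ in the estimate for $F(x;t)$ and subtracting its value at $t = n$ gives
\[
\# \{ p \leqslant x : n < f(p) < n+1 \} \;\ll_k\; \pi(x) (\log x)^{-k} ,
\]
this being uniform in $n \geqslant 0$ because the estimate in $(1.4)$ is uniform in $t$.

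It remains to bound the number of $n$ for which the $n$-th set above is non-empty. By assumption $(1.3)$ (take $A = 1$) we have $\# \{ p \leqslant x : f(p) \geqslant n \} \ll \pi(x) e^{-n}$, which is $< 1$, hence $= 0$, once $n > \log \pi(x) + O(1)$; thus every prime $p \leqslant x$ satisfies $f(p) \ll \log x$, so at most $O(\log x)$ values of $n$ contribute. Summing the per-$n$ bound over these values and using $\pi(x) \ll x / \log x$,
\[
|S(\mathfrak{h}) \cap [1 ; x]| \;\ll_k\; (\log x) \cdot \pi(x) (\log x)^{-k} \;\ll\; x (\log x)^{-k} ,
\]
so choosing $k$ to be any integer $\geqslant A$ yields $|S(\mathfrak{h}) \cap [1;x]| \ll_A x(\log x)^{-A}$, as desired. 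There is no real obstacle here; the one point that needs a little care is the passage from the closed-interval information supplied by $(1.4)$ to the open interval $(n,n+1)$, which is precisely where the flatness of $\Psi(f;\cdot)$ on $[n,n+1)$ is used.
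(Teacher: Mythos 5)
Your proof is correct and follows essentially the same route as the paper's: use $(1.4)$ together with the flatness of $\Psi(f;\cdot)$ on each $[n,n+1)$ to show the per-interval count of primes with non-integral $f(p)$ is $\ll_k \pi(x)(\log x)^{-k}$ uniformly in $n$, then use $(1.3)$ to restrict to $O(\log x)$ relevant values of $n$, and sum. The one cosmetic difference is that the paper retains the $O(1)$ primes with $f(p)\geqslant\log x$ as a separate negligible term, whereas you argue they vanish outright for $x$ large; both are fine.
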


\begin{proof}
  By our remark above $S (\mathfrak{h}) = \{p : f(p) \nin \mathbbm{Z}\}$.
  By assumptions $(1.4)$ we have for arbitrary $a \in \mathbbm{Z}$
  \begin{eqnarray}
    \frac{1}{\pi (x)} \sum_{\tmscript{\begin{array}{c}
      p \leqslant x\\
      f (p) \leqslant a
    \end{array}}} 1 & = & \Psi (f ; a) + O_A \left( (\log x)^{- A - 1} \right)
 \end{eqnarray}
  Further since $\Psi (f ; t)$ is a distribution function it is right
  continuous, so
  \begin{eqnarray}
    \frac{1}{\pi (x)} \sum_{\tmscript{\begin{array}{c}
      p \leqslant x\\
      f (p) < a + 1
    \end{array}}} 1 & = & \lim_{t \uparrow a + 1} \Psi (f ; t) + O_A \left(
    (\log x)^{- A - 1} \right) 
  \end{eqnarray}
  Since $\Psi (f ; t)$ is lattice distributed on $\mathbbm{Z}$ it is constant
  on the interval $[a ; a + 1)$. Therefore the right hand side of $(4.36)$ and
  $(4.37)$ are equal. Hence subtracting $(4.36)$ from $(4.37)$ yields
  \begin{eqnarray}
    \frac{1}{\pi (x)} \sum_{\tmscript{\begin{array}{c}
      p \leqslant x\\
      a < f (p) < a + 1
    \end{array}}} 1 & = & O_A \left( \frac{1}{(\log x)^{A + 1}} \right) 
  \end{eqnarray}
  uniformly in $a \in \mathbbm{Z}$. By assumption $(1.3)$ there are only $O
  (1)$ primes $p \leqslant x$ such that $f (p) \geqslant \log x$. Therefore
  \begin{eqnarray*}
    \frac{1}{\pi (x)} \sum_{\tmscript{\begin{array}{c}
      p \leqslant x\\
      f (p) \nin \mathbbm{Z}
    \end{array}}} 1 & \leqslant & \sum_{0 \leqslant a \leqslant \log x}
    \frac{1}{\pi (x)} \sum_{\tmscript{\begin{array}{c}
      p \leqslant x\\
      a < f (p) < a + 1
    \end{array}}} 1 + \frac{1}{\pi (x)} \sum_{\tmscript{\begin{array}{c}
      p \leqslant x\\
      f (p) \geqslant \log x
    \end{array}}} 1
  \end{eqnarray*}
  by $(4.38)$ the above sum is $O_A (\log x \cdot (\log x)^{- A - 1})$ as
  desired. 
\end{proof}

As a consequence of the lemma $\prod_{p \in S (\mathfrak{h})} (1 + 1 / p)$
converges and $\Psi(f;t) = \Psi(\mathfrak{g};t)$. In fact we proved a little
bit more.

\ 

{\noindent}\tmtextbf{Corollary. }\tmtextit{Let $f \in \mathcal{C}$. Suppose
that $\Psi (f ; t)$ is lattice distributed on $\mathbbm{Z}$. Let $A > 0$ be
given. The sum
\[ \sum_{p|n \Rightarrow p \in S (\mathfrak{h})} \frac{(\log n)^A}{n} \]
converges.}{\hspace*{\fill}}{\medskip}

\begin{proof}
  For an integer $n$ with prime factorization $n = p^{\alpha_1}_1 \cdot \ldots
  \cdot p_k^{\alpha_k}$ we have the inequality $\log n \leqslant \prod_{\ell
  \leqslant k} (\alpha_{\ell} \cdot \log p_{\ell} + 1)$. Therefore
  \[ \sum_{p|n \Rightarrow p \in S (\mathfrak{h})} \frac{(\log n)^A}{n}
     \leqslant \prod_{p \in S (\mathfrak{h})} \left( 1 + \sum_{\alpha
     \geqslant 1} \frac{(\alpha \log p + 1)^A}{p^{\alpha}} \right) \text{}
     \ll \prod_{p \in S (\mathfrak{h})} \left( 1 + K \cdot \frac{(\log
     p)^A}{p} \right) \]
  for some constant $K > 0$. By lemma 4.18, $\sum_{p \in S (\mathfrak{h})}
  (\log p)^A \cdot p^{- 1} < + \infty$ therefore the product on the right is
  finite.
\end{proof}
 
We need more than mere convergence of the product 
$\prod_{p \in S(\mathfrak{h})} (1 - 1/p)$. 

\begin{lemma}
  Let $f \in \mathcal{C}$. Suppose that $\Psi (f ; t)$ is lattice distributed
  on $\mathbbm{Z}$. The function
  \[ G (\mathfrak{h}; s) \assign \prod_{p \in S (\mathfrak{h})} \left( 1 +
     \frac{e^{s\mathfrak{h}(p)}}{p - 1} \right) \cdot \left( 1 - \frac{1}{p}
     \right) \]
  is entire and the product converges for all $s \in \mathbbm{C}$.
  Furthermore, given $\delta > 0$, there is a $x_0 (\delta)$ such that
  uniformly in $\tmop{Re} s \leqslant \delta$ and $x \geqslant x_0 (\delta)$,
  \begin{eqnarray}
    \prod_{\tmscript{\begin{array}{c}
      p \leqslant x\\
      p \in S (\mathfrak{h})
    \end{array}}} \left( 1 + \frac{e^{s\mathfrak{h}(p)}}{p - 1} \right) \cdot
    \left( 1 - \frac{1}{p} \right) & = & G (\mathfrak{h}; s) \cdot \left( 1 +
    O_{\delta} \left( (\log x)^{- 1 / 2} \right) \right) 
  \end{eqnarray}
\end{lemma}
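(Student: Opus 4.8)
The plan is to take logarithms and reduce both assertions to a single tail estimate, namely that
\[
  \sum_{\substack{p > x\\ p \in S(\mathfrak{h})}} \log\!\left[\left(1 + \frac{e^{s\mathfrak{h}(p)}}{p - 1}\right)\left(1 - \frac{1}{p}\right)\right] = O_\delta\!\left((\log x)^{-1/2}\right)
\]
uniformly for $\tmop{Re} s \leqslant \delta$ and $x \geqslant x_0(\delta)$. Granting this, the series $\sum_{p\in S(\mathfrak{h}),\,p\geqslant p_0(\delta)}\log[(1+e^{s\mathfrak{h}(p)}/(p-1))(1-1/p)]$ (with $p_0(\delta)$ chosen so that no factor vanishes for $\tmop{Re} s\leqslant\delta$) converges uniformly on $\{\tmop{Re} s\leqslant\delta\}$, hence is holomorphic there; its exponential, multiplied by the finite product over $p\in S(\mathfrak{h})$, $p<p_0(\delta)$ (an entire function of $s$), shows that $G(\mathfrak{h};s)$ is analytic on $\tmop{Re} s<\delta$, and, $\delta$ being arbitrary, entire, with the product convergent for every $s$. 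It also yields $(4.39)$: for $x\geqslant x_0(\delta)$ the ratio of the partial product to $G(\mathfrak{h};s)$ equals $\exp$ of minus the tail above, hence $1+O_\delta((\log x)^{-1/2})$; and any factor that could vanish has $p<p_0(\delta)\leqslant x_0(\delta)\leqslant x$, so it sits in the partial product and $(4.39)$ holds trivially also when $G(\mathfrak{h};s)=0$.

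To estimate the tail I would first Taylor-expand each factor. As in the proof of Lemma 4.4, assumption $(1.3)$ gives $f(p)=o(\log p)$, so there is a $p_0(\delta)$ with $e^{2\delta f(p)}\leqslant p^{1/3}$ for $p\geqslant p_0(\delta)$; since $\mathfrak{h}(p)=f(p)$ on $S(\mathfrak{h})$, for such $p$ one has $|e^{s\mathfrak{h}(p)}|/(p-1)\leqslant p^{-1/2}$ whenever $\tmop{Re} s\leqslant\delta$, and expanding $\log(1+w)$ and $\log(1-1/p)$ to first order gives, for $p\geqslant p_0(\delta)$ in $S(\mathfrak{h})$,
\[
  \log\!\left[\left(1 + \frac{e^{s\mathfrak{h}(p)}}{p - 1}\right)\left(1 - \frac{1}{p}\right)\right] = \frac{e^{sf(p)} - 1}{p} + O\!\left(\frac{e^{2\delta f(p)} + 1}{p^2}\right).
\]
Summed over $p>x$ the error is $O_\delta(x^{-1/2})$ because $e^{2\delta f(p)}\leqslant p^{1/3}$, so it remains to bound $\sum_{p>x,\,f(p)\nin\mathbbm{Z}}(e^{sf(p)}-1)/p$ in modulus, and since $|e^{sf(p)}-1|\leqslant 1+e^{\delta f(p)}\leqslant 2e^{\delta f(p)}$ this reduces to bounding $\sum_{p>x,\,f(p)\nin\mathbbm{Z}}e^{\delta f(p)}/p$.

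The crux, and the step I expect to be the main obstacle, is the weighted refinement of Lemma 4.18: $M(t)\assign\sum_{p\leqslant t,\,f(p)\nin\mathbbm{Z}}e^{\delta f(p)}\ll_{\delta,A}t(\log t)^{-A}$ for every $A>0$. One cannot deduce this from Lemma 4.3, since $\sum_p e^{\delta f(p)}/p$ diverges, so the sparsity of $S(\mathfrak{h})$ must be used directly. I would split by $\lfloor f(p)\rfloor=j$, so $M(t)\leqslant\sum_{j\geqslant 0}e^{\delta(j+1)}\cdot\#\{p\leqslant t:j<f(p)<j+1\}$, and bound the inner count in two ways: for $j\leqslant\tfrac{1}{2\delta}\tmop{loglog} t$ by the bound $\ll_B t(\log t)^{-B}$, uniform in $j\in\mathbbm{Z}$, coming from $(1.4)$ and the lattice hypothesis exactly as in $(4.38)$; since $\sum_{j\leqslant\frac{1}{2\delta}\tmop{loglog} t}e^{\delta(j+1)}\ll_\delta(\log t)^{1/2}$, this range contributes $\ll_{\delta,B}t(\log t)^{1/2-B}$. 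For $j>\tfrac{1}{2\delta}\tmop{loglog} t$ I would instead use $(1.3)$ in the form $\#\{p\leqslant t:f(p)\geqslant j\}\ll_B t(\log t)^{-1}e^{-Bj}$, whose geometric sum contributes $\ll_{\delta,B}t(\log t)^{-1-(B-\delta)/(2\delta)}$. Taking $B=B(\delta,A)$ sufficiently large gives the claimed bound for $M(t)$, and then a partial summation yields $\sum_{p>x,\,f(p)\nin\mathbbm{Z}}e^{\delta f(p)}/p\ll_{\delta,A}(\log x)^{-A+1}$ --- far stronger than the $(\log x)^{-1/2}$ needed --- which closes the tail estimate and hence the lemma.
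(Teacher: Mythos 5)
Your proof is correct and rests on exactly the same two ingredients as the paper's proof of Lemma 4.19: the lattice hypothesis via assumption $(1.4)$ (giving that $\#\{p\leqslant t : j < f(p) < j+1\}\ll_B t(\log t)^{-B}$ uniformly in $j\in\mathbbm{Z}$, i.e.\ estimate $(4.38)$) and assumption $(1.3)$ (giving exponential decay in $f(p)$), combined by splitting the sum according to whether $f(p)$ is of size $O(\log\log(\cdot))$ or larger.

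The organization differs. The paper estimates the tail $\sum_{p>x}e^{\delta\mathfrak{h}(p)}/p$ directly via a dyadic decomposition in $p$ (intervals $e^k\leqslant p\leqslant e^{k+1}$), splitting each block according to $\mathfrak{h}(p)\lessgtr\log\log p$ and invoking Lemma 4.18 for the small block and $(1.3)$ for the large one. You instead isolate the unweighted partial sums, prove the clean pointwise bound $M(t)\assign\sum_{p\leqslant t,\ f(p)\nin\mathbbm{Z}}e^{\delta f(p)}\ll_{\delta,A}t(\log t)^{-A}$ by splitting on $\lfloor f(p)\rfloor=j$ at the threshold $\tfrac{1}{2\delta}\log\log t$, and then finish with a single partial summation. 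Your route extracts a tidy weighted analogue of Lemma 4.18 as an intermediate step, which makes it a bit more transparent where each hypothesis enters, and formally yields $\ll_{\delta,A}(\log x)^{-A+1}$ for every $A$ rather than the stated $(\log x)^{-1/2}$; the paper's route could also produce an arbitrary power by taking the working $\delta$ large, but it does not bother to. Your initial reduction (first-order Taylor expansion of $\log(1+(e^{s\mathfrak{h}(p)}-1)/p)$ with a separate $O(p^{-5/3})$ error) is also slightly more elaborate than the paper's one-line bound $\log(1+(e^{s\mathfrak{h}(p)}-1)/p)\ll e^{\delta\mathfrak{h}(p)}/p$, but both are sound. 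Neither version is meaningfully simpler or more general than the other.
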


{\noindent}\tmtextbf{Remark. }Note that $G (\mathfrak{h}; s)$ is the moment
generating function of the random variable $X (\mathfrak{h}) \assign \sum_p
\mathfrak{h}(p) X_p$. Indeed,
\[ \mathbbm{E} \left[ e^{sX (\mathfrak{h})} \right] = \prod_p \left( 1 -
   \frac{1}{p} + \frac{e^{s\mathfrak{h}(p)}}{p} \right) = \prod_{p \in S
   (\mathfrak{h})} \left( 1 + \frac{e^{s\mathfrak{h}(p)}}{p - 1} \right) \cdot
   \left( 1 - \frac{1}{p} \right) \]
In particular $\mathbbm{E} \left[ e^{\kappa X (\mathfrak{h})} \right]$ is
finite for any fixed $\kappa > 0$.{\hspace*{\fill}}{\medskip}

\begin{proof}
  We are going to show that $\sum_{p > x, \mathfrak{h}(p) \neq 0} \log \left(
  1 + (e^{s\mathfrak{h}(p)} - 1) / p \right) \ll (\log x)^{- 1 / 2}$ uniformly
  in $\tmop{Re} s \leqslant \delta$, for all $x$ large enough (we need to take
  $x$ large enough to prevent $1 + (e^{s\mathfrak{h}(p)} - 1) / p$ from
  vanishing when $\tmop{Re} s \leqslant \delta$ and $p > x$). This bound
  admits two consequences. First of all, it implies that the partial products
  \begin{equation}
    \prod_{p \leqslant x} \left( 1 + \frac{e^{s\mathfrak{h}(p)}}{p - 1}
    \right) \cdot \left( 1 - \frac{1}{p} \right) =
    \prod_{\tmscript{\begin{array}{c}
      p \leqslant x\\
      \mathfrak{h}(p) \neq 0
    \end{array}}} \left( 1 + \frac{e^{s\mathfrak{h}(p)} - 1}{p} \right)
  \end{equation}
  converge uniformly on compact subsets of $\mathbbm{C}$. Hence $G
  (\mathfrak{h}; s)$ is an entire function. Secondly, since $(4.40)$ converges
  to $G (\mathfrak{h}; s)$ and its tails are $1 + O ((\log x)^{- 1 / 2})$ we
  obtain $(4.39)$. Thus it remains to bound the sum of $\log (1 +
  (e^{s\mathfrak{h}(p)} - 1) / p)$ over $p > x$. Assume without loss of
  generality that $\delta \geqslant 2$. By assumption $(1.3)$, $f (p) = o
  (\log p)$. In particular $\mathfrak{h}(p) = o (\log p)$ and thus
  $|e^{s\mathfrak{h}(p)} | \leqslant e^{\delta \mathfrak{h}(p)} = e^{o (\log
  p)}$ uniformly in $\tmop{Re} s \leqslant \delta$. Hence
  $e^{s\mathfrak{h}(p)} / p = o (1)$ and so $\log (1 + (e^{s\mathfrak{h}(p)} -
  1) / p) \ll e^{\delta \mathfrak{h}(p)} / p$. Using this inequality and
  breaking up our sum into ``dyadic'' intervals, we obtain, uniformly in
  $\tmop{Re} s \leqslant \delta$,
  \begin{eqnarray}
    &  & \sum_{\tmscript{\begin{array}{c}
      p > x\\
      \mathfrak{h}(p) \neq 0
    \end{array}}} \log \left( 1 + \frac{e^{s\mathfrak{h}(p)} - 1}{p} \right)
    \ll \sum_{\tmscript{\begin{array}{c}
      p > x\\
      \mathfrak{h}(p) \neq 0
    \end{array}}} \frac{e^{\delta \mathfrak{h}(p)}}{p} \leqslant \sum_{k
    \geqslant \log x} e^{- k} \sum_{\tmscript{\begin{array}{c}
      e^k \leqslant p \leqslant e^{k + 1}\\
      \mathfrak{h}(p) \neq 0
    \end{array}}} e^{\delta \mathfrak{h}(p)} \nonumber\\
    & = & \sum_{k \geqslant \log x} e^{- k} \cdot \biggl[
    \sum_{\tmscript{\begin{array}{c}
      e^k \leqslant p \leqslant e^{k + 1}\\
      0 <\mathfrak{h}(p) \leqslant \tmop{loglog} p
    \end{array}}} e^{\delta \mathfrak{h}(p)} + \sum_{A \geqslant 1}
    \sum_{\tmscript{\begin{array}{c}
      e^k \leqslant p \leqslant e^{k + 1}\\
      A \leqslant \mathfrak{h}(p) / \tmop{loglog} p \leqslant A + 1
    \end{array}}} e^{\delta \mathfrak{h}(p)} \biggr] 
  \end{eqnarray}

  Bounding the sum over $0 <\mathfrak{h}(p) \leqslant \tmop{loglog} p$ boils
  down to using the previous lemma. Note that under the condition $p \leqslant
  e^{k + 1}$ and $\mathfrak{h}(p) \leqslant \tmop{loglog} p$ we have
  $e^{\delta \mathfrak{h}(p)} \leqslant (k + 1)^{\delta}$. Therefore
  \begin{eqnarray*}
    \sum_{\tmscript{\begin{array}{c}
      e^k \leqslant p \leqslant e^{k + 1}\\
      0 <\mathfrak{h}(p) \leqslant \tmop{loglog} p
    \end{array}}} e^{\delta \mathfrak{h}(p)} & \leqslant & (k + 1)^{\delta}
    \sum_{\tmscript{\begin{array}{c}
      p \leqslant e^{k + 1}\\
      \mathfrak{h}(p) \neq 0
    \end{array}}} 1 = O_C \left( k^{\delta} \cdot e^k \cdot k^{- C} \right)^{}
  \end{eqnarray*}
  where in the last inequality we used Lemma 4.18. We chose $C = 2 \delta$,
  and conclude that the sum over $0 <\mathfrak{h}(p) \leqslant \tmop{loglog}
  p$ in $(4.41)$ is bounded by $e^k \cdot k^{- \delta}$.
  
  To bound the double sum over $A \geqslant 1$ and $A \leqslant
  \mathfrak{h}(p) / \tmop{loglog} p \leqslant A + 1$ in $(4.41)$ we will use
  assumption $(1.3)$. Note that under the conditions $e^k \leqslant p
  \leqslant e^{k + 1}$ and $A \leqslant \mathfrak{h}(p) / \tmop{loglog} p
  \leqslant A + 1$ we have $e^{\delta \mathfrak{h}(p)} \leqslant (k +
  1)^{\delta (A + 1)}$. It follows that
  \begin{eqnarray}
    \sum_{A \geqslant 1} \sum_{\tmscript{\begin{array}{c}
      e^k \leqslant p \leqslant e^{k + 1}\\
      A \leqslant \mathfrak{h}(p) / \tmop{loglog} p \leqslant A + 1
    \end{array}}} e^{\delta \mathfrak{h}(p)} & \leqslant & \sum_{A \geqslant
    1} \left( k + 1 \right)^{\left. \delta (A + 1 \right)}
    \sum_{\tmscript{\begin{array}{c}
      e^k \leqslant p \leqslant e^{k + 1}\\
      A \tmop{loglog} p \leqslant \mathfrak{h}(p)
    \end{array}}} 1_{} 
  \end{eqnarray}
  Regarding the innermost sum we proceed as follows: since $e^k \leqslant p$
  we overestimate a little by replacing $A \tmop{loglog} p \leqslant
  \mathfrak{h}(p)$ with $A \log k \leqslant \mathfrak{h}(p)$. Furthermore
  since $A \log k \leqslant \mathfrak{h}(p)$ implies $A \log k \leqslant f
  (p)$ we overestimate even more by replacing $A \log k \leqslant
  \mathfrak{h}(p)$ with $A \log k \leqslant f (p)$. From there, it follows
  that the sum in $(4.42)$ is bounded by
  \begin{equation}
    \leqslant \sum_{A \geqslant 1} (k + 1)^{\delta (A + 1)}
    \sum_{\tmscript{\begin{array}{c}
      p \leqslant e^{k + 1}\\
      A \log k \leqslant f (p)
    \end{array}}} 1 \ll_B \sum_{A \geqslant 1} k^{\delta (A + 1)} \cdot e^k
    e^{- B (A \log k)}
  \end{equation}
  where in the last line we used the assumption $(1.3)$. In our upper bound we
  choose $B = 3 \delta$ and then $(4.43)$ becomes $\ll e^k \sum_{A \geqslant
  1} k^{\delta (A + 1) - 3 \delta A} \ll e^k \cdot k^{- \delta}$, $(A
  \geqslant 1)$. Collecting $(4.42)$ and $(4.43)$ it follows that the double
  sum over $A \geqslant 1$ and $A \leqslant \mathfrak{h}(p) / \tmop{loglog} p
  \leqslant A + 1$ in $(4.41)$, is bounded by $e^k \cdot k^{- \delta}$.
  
  Putting together our bounds, we conclude that the whole sum in $(4.41)$ is
  less than $\ll \sum_{k \geqslant \log x} e^{- k} \cdot \left[ e^k k^{-
  \delta} + e^k k^{- \delta} \right] \ll (\log x)^{- \delta + 1} \ll (\log
  x)^{- 1 / 2}$ since we assumed $\delta \geqslant 2$. It follows that
  \begin{eqnarray*}
    \sum_{p > x} \log \left( 1 + \frac{e^{s\mathfrak{h}(p)} - 1}{p} \right) &
    \ll & (\log x)^{- 1 / 2}
  \end{eqnarray*}
  uniformly in $\tmop{Re} s \leqslant \delta$ (where $\delta \geqslant 2$
  without loss of generality). By the remarks made at the beginning of the
  lemma, the claim follows. 
\end{proof}

An important consequence of lemma 4.19 and lemma 4.18 is that $L
(\mathfrak{g}; s)$ is entire.

\begin{lemma}
  Let $f \in \mathcal{C}$. Suppose that $\Psi (f ; t)$ is lattice distributed
  on $\mathbbm{Z}$. Then the function $L (\mathfrak{g}; z)$ is entire.
  Furthermore given $C > 0$, there is a $x_0 (C)$ such that uniformly in $|
  \tmop{Re} s| \leqslant C$, $| \tmop{Im} s| \leqslant 2 \pi$ and $x \geqslant
  x_0 (C)$,
  \[ \prod_{p \leqslant x} \left( 1 - \frac{1}{p} \right)^{\hat{\Psi} (f ; s)}
     \left( 1 + \frac{e^{s\mathfrak{g}(p)}}{p - 1} \right) = L (\mathfrak{g};
     s) \cdot \left( 1 + O_C \left( (\log x)^{- 1 / 2} \right) \right) \]
\end{lemma}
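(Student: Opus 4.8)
The plan is to imitate the proof of Lemma 4.4, replacing $f$ by the strongly additive function $\mathfrak{g}$. First note that, by the corollary to Lemma 4.18, $\Psi (\mathfrak{g}; t) = \Psi (f ; t)$, hence $\hat{\Psi} (\mathfrak{g}; z) = \hat{\Psi} (f ; z)$; so the product in the statement equals $\prod_{p \leqslant x} (1 - 1/p)^{\hat{\Psi} (\mathfrak{g}; z)} (1 + e^{z \mathfrak{g}(p)} / (p - 1))$, whose limit is by definition $L (\mathfrak{g}; z)$. Since $0 \leqslant \mathfrak{g}(p) \leqslant f (p) = o (\log p)$ (by assumption $(1.3)$, exactly as in Lemma 4.4), given $C > 0$ one fixes $C' = C' (C) > 2$ so large that $|e^{s \mathfrak{g}(p)} | \leqslant e^{C \mathfrak{g}(p)} \leqslant p^{1/3}$ for all $p > C'$ and $\tmop{Re} s \leqslant C$; in particular none of the factors $(1 - 1/p)^{\hat{\Psi} (f ; s)} (1 + e^{s \mathfrak{g}(p)} / (p - 1))$ vanishes for $p > C'$, $\tmop{Re} s \leqslant C$, so they may be safely logarithmed. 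The finite product over $p \leqslant C'$ is entire because $\hat{\Psi} (f ; s)$ is entire (Lemma 4.2), so everything reduces to controlling the tail $\prod_{p > C'}$.

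As in Lemma 4.4, a Taylor expansion of each tail factor gives, uniformly in $|\tmop{Re} s| \leqslant C$,
\[ \sum_{p > x} \left[ \hat{\Psi} (f ; s) \log \left( 1 - \frac{1}{p} \right) + \log \left( 1 + \frac{e^{s \mathfrak{g}(p)}}{p - 1} \right) \right] = \sum_{p > x} \frac{e^{s \mathfrak{g}(p)} - \hat{\Psi} (f ; s)}{p} + O_C \left( \frac{1}{x} \right) , \]
the error coming from Lemma 4.3 and an integration by parts, as for the error term in $(4.5)$. I would then split the main term according to whether $p \in S (\mathfrak{h})$ (where $\mathfrak{g}(p) = 0$) or $p \nin S (\mathfrak{h})$ (where $\mathfrak{g}(p) = f (p)$):
\[ \sum_{p > x} \frac{e^{s \mathfrak{g}(p)} - \hat{\Psi} (f ; s)}{p} = \sum_{p > x} \frac{e^{s f (p)} - \hat{\Psi} (f ; s)}{p} - \sum_{p > x,\, p \in S (\mathfrak{h})} \frac{e^{s f (p)} - 1}{p} . \]
The first sum is $\ll_A (\log x)^{- A + 1}$ for every $A > 0$, uniformly in $|\tmop{Re} s| \leqslant C$ and $|\tmop{Im} s| \leqslant \tmop{loglog} x$: this is precisely the estimate established around $(4.4)$--$(4.7)$ in the proof of Lemma 4.4, by partial summation from Lemma 4.3 using $\sum_{p \leqslant t} e^{s f (p)} - \hat{\Psi} (f ; s) \pi (t) \ll_A t (\log t)^{- A}$. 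The second sum is bounded, using $|e^{s f (p)} - 1| \ll e^{C f (p)}$ and $f (p) = \mathfrak{h}(p)$ on $S (\mathfrak{h})$, by $\sum_{p > x,\, p \in S (\mathfrak{h})} e^{C f (p)} / p$, which is $\ll_A (\log x)^{- A}$ for every $A$: split into $f (p) \leqslant \tmop{loglog} p$ (then $e^{C f (p)} \leqslant (\log p)^C$ and $\sum_{p > x,\, p \in S (\mathfrak{h})} (\log p)^C / p \ll_A (\log x)^{- A}$ by Lemma 4.18 and partial summation) and $f (p) > \tmop{loglog} p$ (negligible by assumption $(1.3)$). Taking $A = 3/2$ shows the tail sum above is $\ll_C (\log x)^{- 1/2}$, uniformly in $|\tmop{Re} s| \leqslant C$, $|\tmop{Im} s| \leqslant 2 \pi$, once $x$ is large.

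From here both claims follow as in Lemma 4.4. Uniform convergence of $\sum_{p > C'} \log [(1 - 1/p)^{\hat{\Psi} (f ; s)} (1 + e^{s \mathfrak{g}(p)} / (p - 1))]$ on $|\tmop{Re} s| \leqslant C$, $|\tmop{Im} s| \leqslant T$ (any fixed $C, T$ and $x$ large, since Lemma 4.3 is valid for $|\tmop{Im} s| \leqslant (\log x)^A$) shows $\prod_{p > C'}$ is a non-vanishing analytic function there; multiplying by the entire finite product over $p \leqslant C'$ and letting $C, T \rightarrow \infty$ gives that $L (\mathfrak{g}; z)$ is entire. For the quantitative statement, writing $\prod_{p \leqslant x} [\cdots] = \left( \prod_{p \leqslant C'} [\cdots] \right) \left( \prod_{C' < p < \infty} [\cdots] \right) \exp \left( - \sum_{p > x} \log [\cdots] \right)$, recognising $L (\mathfrak{g}; s) = \left( \prod_{p \leqslant C'} [\cdots] \right) \left( \prod_{C' < p < \infty} [\cdots] \right)$, and invoking the bound $\sum_{p > x} \log [\cdots] \ll_C (\log x)^{- 1/2}$ yields
\[ \prod_{p \leqslant x} \left( 1 - \frac{1}{p} \right)^{\hat{\Psi} (f ; s)} \left( 1 + \frac{e^{s \mathfrak{g}(p)}}{p - 1} \right) = L (\mathfrak{g}; s) \cdot \left( 1 + O_C \left( (\log x)^{- 1/2} \right) \right) \]
uniformly in $|\tmop{Re} s| \leqslant C$, $|\tmop{Im} s| \leqslant 2 \pi$, $x \geqslant x_0 (C)$. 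The only mildly delicate point is the estimate for $\sum_{p > x,\, p \in S (\mathfrak{h})} e^{C f (p)} / p$, where one must combine the sparseness of $S (\mathfrak{h})$ (Lemma 4.18) with the tail bound $(1.3)$ controlling the primes where $f$ is large; both inputs are already at hand, so this presents no essential obstacle.
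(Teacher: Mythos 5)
Your proof is correct, but it takes a genuinely different route from the paper's. The paper's proof exploits the algebraic factorization
\[ \prod_{p\leqslant x}\left(1-\tfrac{1}{p}\right)^{\hat{\Psi}(f;s)}\left(1+\tfrac{e^{sf(p)}}{p-1}\right)=\prod_{p\leqslant x}\left(1-\tfrac{1}{p}\right)^{\hat{\Psi}(f;s)}\left(1+\tfrac{e^{s\mathfrak{g}(p)}}{p-1}\right)\prod_{p\leqslant x}\left(1-\tfrac{1}{p}\right)\left(1+\tfrac{e^{s\mathfrak{h}(p)}}{p-1}\right), \]
applies Lemma 4.4 to the left-hand side and Lemma 4.19 to the $\mathfrak{h}$-product, and then solves for the $\mathfrak{g}$-product by dividing by $G(\mathfrak{h};s)$. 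Because of that division it must check that the zeros of $G(\mathfrak{h};\cdot)$ are cancelled by zeros of $L(f;\cdot)$ of at least the same order, and it restricts itself to the strip $|\tmop{Im}\,s|\leqslant 2\pi$, invoking the $2\pi$-periodicity of $L(\mathfrak{g};v+\mathi t)$ in $t$ (a consequence of $\mathfrak{g}(p)\in\mathbbm{Z}$) to promote analyticity in that strip to entirety. You instead run the Lemma 4.4 argument directly on $\mathfrak{g}$: after the Taylor expansion, the tail reduces to $\sum_{p>x}(e^{sf(p)}-\hat{\Psi}(f;s))/p$ (already controlled in Lemma 4.4) minus $\sum_{p>x,\,p\in S(\mathfrak{h})}(e^{sf(p)}-1)/p$, which you bound by combining the sparseness of $S(\mathfrak{h})$ (Lemma 4.18) with assumption $(1.3)$. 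This avoids the division and the periodicity trick entirely -- uniform tail control in any strip gives entirety outright -- at the cost of essentially re-deriving the estimate that the paper had already packaged as Lemma 4.19. Both routes are valid; yours is arguably more self-contained on the entirety question, while the paper's reuses its own lemmas more aggressively. One small slip: you attribute $\Psi(\mathfrak{g};t)=\Psi(f;t)$ to ``the corollary to Lemma 4.18,'' but that fact is stated as an unlabelled remark immediately after Lemma 4.18; the labelled corollary there concerns $\sum_{p|n\Rightarrow p\in S(\mathfrak{h})}(\log n)^A/n$.
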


\begin{proof}
  Because $\Psi(f;t)$ is lattice distributed on $\mathbb{Z}$ it has jumps
  on the integers. Therefore $\hat{\Psi}(f;z) = \sum_{k \geqslant 0} 
  \lambda_k e^{z k}$ with $\lambda_k \geqslant 0$. In particular 
  $\hat{\Psi}(f;v+\mathi t)$ is $2 \pi$-periodic in the $t$ variable. Hence
  \[ L(\mathfrak{g};v + \mathi t) = 
     \prod_{p} \left( 1 - \frac{1}{p} \right)^{\hat{\Psi} (f ; v + \mathi t)}
     \left( 1 + \frac{e^{(v + \mathi t)\mathfrak{g}(p)}}{p - 1} \right) \]
  is $2 \pi$ periodic in the $t$ variable (if the above equation is not clear
  recall that $\Psi(f;t) = \Psi(\mathfrak{g};t)$ by lemma 4.18, hence
  $\hat{\Psi}(f;z) = \hat{\Psi}(\mathfrak{g};z)$).
  Therefore, to prove that $L (\mathfrak{g}; s)$ is entire, it's
  enough to prove that $L (\mathfrak{g}; s)$ is analytic in $| \tmop{Im} s|
  \leqslant 2 \pi$. Given $C > 0$, consider $s$ in the region $\left.
  \mathcal{D}(C) \assign \{s : | \tmop{Re} s| \leqslant C \text{ and } |
  \tmop{Im} s| \leqslant 2 \pi \right\}$. Note that
  \begin{eqnarray*}
    &  & \prod_{p \leqslant x} \left( 1 - \frac{1}{p} \right)^{\hat{\Psi} (f
    ; s)} \cdot \left( 1 + \frac{e^{sf (p)}}{p - 1} \right)\\
    & = & \prod_{p \leqslant x} \left( 1 - \frac{1}{p} \right)^{\hat{\Psi} (f
    ; s)} \cdot \left( 1 + \frac{e^{s\mathfrak{g}(p)}}{p - 1} \right) 
    \prod_{\tmscript{\begin{array}{c}
      p \leqslant x
    \end{array}}} \left( 1 - \frac{1}{p} \right) \cdot \left( 1 +
    \frac{e^{s\mathfrak{h}(p)}}{p - 1} \right)
  \end{eqnarray*}
  By lemma 4.4 the first product equals $L (f ; s) \cdot (1 + O_C ((\log x)^{-
  1 / 2})$. By lemma 4.19 the last product equals to $G (\mathfrak{h}; s)
  \cdot (1 + O_C ((\log x)^{- 1 / 2})$ (keeping the notation of lemma 4.19).
  Both approximations hold uniformly in $s \in \mathcal{D}(C)$ and $x
  \geqslant x_0 (C)$ with $x_0 (C)$ large enough. Dividing by $G
  (\mathfrak{h}; s)$ on both sides we conclude that uniformly in $s$ such that
  $| \tmop{Re} s| \leqslant C, | \tmop{Im} s| \leqslant 2 \pi$ and $G
  (\mathfrak{h}; s) \neq 0$,
  \begin{eqnarray}
    \prod_{p \leqslant x} \left( 1 - \frac{1}{p} \right)^{\hat{\Psi} (f ; s)}
    \cdot \left( 1 + \frac{e^{s\mathfrak{g}(p)}}{p - 1} \right) & = & \frac{L
    (f ; s)}{G (\mathfrak{h}; s)} \cdot \left( 1 + O_C ((\log x)^{- 1 / 2}
    \right) 
  \end{eqnarray}
  Note that $L (f ; s) / G (\mathfrak{h}; s)$ is in fact analytic in
  $\mathcal{D}(C)$, because if $G (\mathfrak{h}; s)$ vanishes then $L (f ; s)$
  vanishes to the same order. Thus, by continuity $(4.44)$ extends to all of
  $\mathcal{D}(C)$. Since that region is bounded, the function $L (f ; s) / G
  (\mathfrak{h}; s)$, being analytic, is bounded there. Hence $(4.44)$
  guarantees that $\prod_{p \leqslant x} (1 \um 1 / p)^{\hat{\Psi} (f ; s)} (1
  \upl e^{s\mathfrak{g}(p)} (p - 1)^{- 1})$ converges uniformly in $|
  \tmop{Re} s| \leqslant C$, $| \tmop{Im} s| \leqslant 2 \pi$. Thus $L
  (\mathfrak{g}; s)$ is analytic in $| \tmop{Im} s| \leqslant 2 \pi$. Since $L
  (\mathfrak{g}; v + \mathi t)$ is $2 \pi$ periodic in the $t$ variable, it
  follows that $L (\mathfrak{g}; s)$ is entire. In addition, by $(4.44)$ we
  must have $L (\mathfrak{g}; s) = L (f ; s) / G (\mathfrak{h}; s)$ and the
  second assertion of the lemma follows from $(4.44)$.
\end{proof}

A further consequence of lemma 4.19 is that $X (\mathfrak{h}) \assign \sum_p
\mathfrak{h}(p) X_p$ has an entire moment generating function
\begin{eqnarray*}
  \mathbbm{E} \left[ e^{sX (\mathfrak{h})} \right] & = & \prod_p \left( 1 +
  \frac{e^{s\mathfrak{h}(p)}}{p - 1} \right) \cdot \left( 1 - \frac{1}{p}
  \right)
\end{eqnarray*}
Thus all moments of $X (\mathfrak{h})$ are finite, and in particular the
variance of $X (\mathfrak{h})$ is finite. Hence by Kolmogorov's three series
theorem $X (\mathfrak{h}) = \sum_p \mathfrak{h}(p) X_p$ converges almost
surely. In the next lemma we give an explicit expression for $\mathbbm{P}(X
(\mathfrak{h}) \geqslant t)$.

\begin{lemma}
  Let $f \in \mathcal{C}$. Suppose that $\Psi (f ; t)$ is lattice distributed
  on $\mathbbm{Z}$. We have
  \[ \left. \mathbbm{P} \left( X (\mathfrak{h} \right) \leqslant t \right) =
     \prod_{p \in S (\mathfrak{h})} \left( 1 - \frac{1}{p} \right) 
     \sum_{\tmscript{\begin{array}{c}
       n \geqslant 1\\
       p|n \Rightarrow p \in S (\mathfrak{h})\\
       \mathfrak{h}(n) \leqslant t
     \end{array}}} \frac{1}{n} \]
\end{lemma}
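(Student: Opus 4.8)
The plan is to identify each realization of the family $\{X_p\}$ with a squarefree integer, namely the product of the primes on which $X_p = 1$, and then to pass from squarefree integers to all integers by exploiting the strong additivity of $\mathfrak{h}$.

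First I would reduce to the primes in $S(\mathfrak{h})$: since $\mathfrak{h}(p) = 0$ for $p \nin S(\mathfrak{h})$, we have $X(\mathfrak{h}) = \sum_{p \in S(\mathfrak{h})} \mathfrak{h}(p) X_p$, so only the independent variables $\{X_p : p \in S(\mathfrak{h})\}$ are relevant. By Lemma 4.18, $\sum_{p \in S(\mathfrak{h})} 1/p < \infty$; hence $\prod_{p \in S(\mathfrak{h})}(1 - 1/p) > 0$, and by the first Borel--Cantelli lemma almost surely only finitely many $X_p$ with $p \in S(\mathfrak{h})$ equal $1$. For a finite set $T \subseteq S(\mathfrak{h})$ let $E_T$ be the event that $X_p = 1$ for all $p \in T$ and $X_p = 0$ for all $p \in S(\mathfrak{h}) \setminus T$. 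By independence and continuity of the measure (along $x \to \infty$ on the nested events truncated at $p \leqslant x$),
\[
  \mathbbm{P}(E_T) = \prod_{p \in T} \frac{1}{p} \prod_{p \in S(\mathfrak{h}) \setminus T} \left( 1 - \frac{1}{p} \right) = \prod_{p \in S(\mathfrak{h})} \left( 1 - \frac{1}{p} \right) \cdot \prod_{p \in T} \frac{1}{p - 1}.
\]
The events $E_T$, over finite $T \subseteq S(\mathfrak{h})$, are pairwise disjoint, and $\mathbbm{P}\bigl( \bigcup_T E_T \bigr) = 1$ by the Borel--Cantelli remark above. On $E_T$ the random variable $X(\mathfrak{h})$ takes the value $\sum_{p \in T} \mathfrak{h}(p) = \mathfrak{h}(m_T)$, where $m_T \assign \prod_{p \in T} p$ is squarefree with all prime factors in $S(\mathfrak{h})$, and $T \mapsto m_T$ is a bijection onto such squarefree integers.

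Summing $\mathbbm{P}(E_T)$ over those $T$ with $\mathfrak{h}(m_T) \leqslant t$ (all terms nonnegative, so rearrangement is unproblematic) gives
\[
  \mathbbm{P}(X(\mathfrak{h}) \leqslant t) = \prod_{p \in S(\mathfrak{h})} \left( 1 - \frac{1}{p} \right) \sum_{\tmscript{\begin{array}{c} m \text{ squarefree}\\ p | m \Rightarrow p \in S(\mathfrak{h})\\ \mathfrak{h}(m) \leqslant t \end{array}}} \prod_{p | m} \frac{1}{p - 1}.
\]
It then remains to recognize the inner sum as $\sum 1/n$ over all (not necessarily squarefree) $n$ with $p | n \Rightarrow p \in S(\mathfrak{h})$ and $\mathfrak{h}(n) \leqslant t$. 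Since $\mathfrak{h}$ is strongly additive, $\mathfrak{h}(n) = \mathfrak{h}(\tmop{rad}(n))$, so I group the $n$'s by their radical $m$: the integers with radical $m$ contribute $\prod_{p | m}(p^{-1} + p^{-2} + \cdots) = \prod_{p | m}(p - 1)^{-1}$, exactly the weight appearing above. Substituting this back yields the claimed identity.

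The computations are routine; the only point requiring care is upgrading the already-recorded a.s. convergence of $\sum_p \mathfrak{h}(p) X_p$ to the statement that almost surely only finitely many $X_p$ with $p \in S(\mathfrak{h})$ are nonzero. This is precisely where Lemma 4.18 (the sparseness $\sum_{p \in S(\mathfrak{h})} 1/p < \infty$) is genuinely used, via Borel--Cantelli; without it the event $\bigcup_T E_T$ would be null and the whole argument would break down, which is also why the hypothesis that $\Psi(f;t)$ is lattice distributed is essential here.
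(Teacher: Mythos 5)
Your proof is correct, and it takes a genuinely different route from the paper's. The paper computes the two-sided Laplace transform of the candidate distribution function $F(t) = \prod_{p \in S(\mathfrak{h})}(1 - 1/p)\sum_{n:\, p\mid n \Rightarrow p \in S(\mathfrak{h}),\ \mathfrak{h}(n) \leqslant t} 1/n$, observes that after expanding the Euler product it equals $\prod_{p \in S(\mathfrak{h})}(1 - 1/p)(1 + e^{s\mathfrak{h}(p)}/(p-1)) = \mathbbm{E}[e^{sX(\mathfrak{h})}]$, and concludes by uniqueness of Laplace transforms. You instead work directly with the measure: you use Lemma 4.18 and Borel--Cantelli to see that almost surely only finitely many $X_p$ with $p \in S(\mathfrak{h})$ fire, partition the sample space (up to a null set) into the atoms $E_T$ indexed by finite $T \subseteq S(\mathfrak{h})$, identify $E_T$ with the squarefree integer $m_T = \prod_{p \in T} p$, and then pass from squarefree to general $n$ by grouping on radicals using $\sum_{\tmop{rad}(n) = m} 1/n = \prod_{p \mid m} 1/(p-1)$ together with strong additivity $\mathfrak{h}(n) = \mathfrak{h}(\tmop{rad}(n))$. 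The paper's argument is shorter to write and sidesteps the need to establish that the distribution is purely atomic; yours is more explicit and explains \emph{why} the formula holds --- the distribution of $X(\mathfrak{h})$ is literally the pushforward of the Bernoulli product measure along $T \mapsto \mathfrak{h}(m_T)$, and the replacement of squarefree integers by all integers is just the local Euler-factor identity. Both approaches use Lemma 4.18 in an essential way (the paper for convergence of the product/sum defining $F$ and for finiteness of the mgf; you for Borel--Cantelli), and your closing remark correctly locates this as the point where the lattice hypothesis enters.
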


\begin{proof}
  Since $X_p$ is a Bernoulli random variable with $\mathbbm{P} \left( X_p = 1)
  = 1 / p \right.$,
  \[ \mathbbm{E} \left[ e^{sX_p} \right] = 1 - \frac{1}{p} + \frac{e^s}{p} =
     \left( 1 - \frac{1}{p} \right) \cdot \left( 1 + \frac{e^s}{p - 1} \right)
  \]
  Since in addition the $X_p$'s are independent, and $X (\mathfrak{h}) =
  \sum_p \mathfrak{h}(p) X_p$,
  \begin{eqnarray*}
    \mathbbm{E} \left[ e^{sX (\mathfrak{h})} \right] & = & \prod_p \mathbbm{E}
    \left[ e^{s\mathfrak{h}(p) X_p} \right] \text{ } = \text{ } \prod_p \left(
    1 - \frac{1}{p} \right) \cdot \left( 1 + \frac{e^{s\mathfrak{h}(p)}}{p -
    1} \right)
  \end{eqnarray*}
  Note if $\mathfrak{h}(p) = 0$ for a prime $p$ then the corresponding term in
  the above product is 1. Thus we can restrict the product to those primes $p$
  for which $\mathfrak{h}_{} (p) \neq 0$ or equivalently to the prime $p \in S
  (\mathfrak{h})$. Now let
  \[ F (t) = \prod_{p \in S (\mathfrak{h})} \left( 1 - \frac{1}{p} \right)
     \sum_{\tmscript{\begin{array}{c}
       n \geqslant 1\\
       p|n \Rightarrow p \in S (\mathfrak{h})\\
       \mathfrak{h}(n) \leqslant t
     \end{array}}} \frac{1}{n} \]
  We compute the Laplace transform $\int_{\mathbbm{R}} e^{st} \mathd F (t)$ of
  $F (\cdot)$,
  \begin{eqnarray*}
    \int_{\mathbbm{R}} e^{st} \cdot \mathd F (t) & = & \prod_{p \in S
    (\mathfrak{h})} \left( 1 - \frac{1}{p} \right) \int_{\mathbbm{R}} e^{st}
    \cdot \mathd \sum_{\tmscript{\begin{array}{c}
      n \geqslant 1\\
      p|n \Rightarrow p \in S (\mathfrak{h})\\
      \mathfrak{h}(n) \leqslant t
    \end{array}}} \frac{1}{n}\\
    & = & \prod_{p \in S (\mathfrak{h})} \left( 1 - \frac{1}{p} \right)
    \sum_{\tmscript{\begin{array}{c}
      n \geqslant 1\\
      p|n \Rightarrow p \in S (\mathfrak{h})
    \end{array}}} \frac{e^{s\mathfrak{h}(n)}}{n}\\
    & = & \prod_{\left. p \in S (\mathfrak{h} \right)} \left( 1 - \frac{1}{p}
    \right) \cdot \left( 1 + \frac{e^{s\mathfrak{h}(p)}}{p - 1} \right)\\
    & = & \mathbbm{E} \left[ e^{sX (\mathfrak{h})} \right] =
    \int_{\mathbbm{R}} e^{st} \mathd \mathbbm{P} \left( X (\mathfrak{h})
    \leqslant t \right)
  \end{eqnarray*}
  By uniqueness of Laplace transforms $F (t) =\mathbbm{P} \left( X
  (\mathfrak{h}) \leqslant t \right)$ as desired.
\end{proof}

By the discussion preceding the above lemma, we know that
\[ \sum_{p \leqslant x} \mathfrak{h}(p) X_p \text{ } \longrightarrow \text{ }
   \sum_p \mathfrak{h}(p) X_p \]
almost surely. Thus the convergence also holds in distribution. In the next
lemma we investigate the speed of convergence in more detail.

\begin{lemma}
  Let $f \in \mathcal{C}$. Suppose that $\Psi (f ; t)$ is lattice distributed
  on {\tmname{$\mathbbm{Z}$}}. Let
  \begin{eqnarray*}
    V_{\mathfrak{h}} (x ; t) & = & \mathbbm{P} \left( \sum_{p \leqslant x}
    \mathfrak{h}(p) X_p \geqslant t \right)
  \end{eqnarray*}
  Then $V_{\mathfrak{h}} (x ; t) = V_{\mathfrak{h}} (\infty ; t) + O
  (V_{\mathfrak{h}} (\infty ; t)^{1 / 4} \cdot (\log x)^{- 1 / 2})$ uniformly
  in $t \in \mathbbm{R}$.
\end{lemma}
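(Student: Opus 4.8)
The plan is to exploit the almost sure convergence of $X(\mathfrak{h}) = \sum_p \mathfrak{h}(p) X_p$ (established just above via Kolmogorov's three series theorem) together with the positivity of $\mathfrak{h}$ and the sparsity of $S(\mathfrak{h})$ supplied by Lemma 4.18. Write $S_x \assign \sum_{p \leqslant x} \mathfrak{h}(p) X_p$ and $T_x \assign \sum_{p > x} \mathfrak{h}(p) X_p$, so that $X(\mathfrak{h}) = S_x + T_x$ with $S_x$ and $T_x$ independent (they are built from disjoint families of the independent $X_p$) and $S_x, T_x \geqslant 0$, since $\mathfrak{h}(p) = f(p) > 0$ whenever $\mathfrak{h}(p) \neq 0$. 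In this notation $V_{\mathfrak{h}}(x;t) = \mathbbm{P}(S_x \geqslant t)$ and $V_{\mathfrak{h}}(\infty;t) = \mathbbm{P}(X(\mathfrak{h}) \geqslant t)$.

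First I would record the trivial bound $V_{\mathfrak{h}}(x;t) \leqslant V_{\mathfrak{h}}(\infty;t)$, valid because $T_x \geqslant 0$ forces $\{S_x \geqslant t\} \subseteq \{S_x + T_x \geqslant t\}$. For the reverse estimate I would write
\[ V_{\mathfrak{h}}(\infty;t) - V_{\mathfrak{h}}(x;t) = \mathbbm{P}(S_x + T_x \geqslant t) - \mathbbm{P}(S_x \geqslant t) = \mathbbm{P}(S_x < t \leqslant S_x + T_x) , \]
and then observe that on the event $\{S_x < t \leqslant S_x + T_x\}$ one has $T_x \geqslant t - S_x > 0$ and $X(\mathfrak{h}) = S_x + T_x \geqslant t$, so this event is contained in $\{T_x > 0\} \cap \{X(\mathfrak{h}) \geqslant t\}$. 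By the Cauchy--Schwarz inequality,
\[ \mathbbm{P}\bigl(\{T_x > 0\} \cap \{X(\mathfrak{h}) \geqslant t\}\bigr) \leqslant \mathbbm{P}(T_x > 0)^{1/2} \cdot \mathbbm{P}(X(\mathfrak{h}) \geqslant t)^{1/2} = \mathbbm{P}(T_x > 0)^{1/2} \cdot V_{\mathfrak{h}}(\infty;t)^{1/2} . \]

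It then remains to bound $\mathbbm{P}(T_x > 0)$. Since the $X_p$ are independent with $\mathbbm{P}(X_p = 1) = 1/p$, we have $\mathbbm{P}(T_x > 0) \leqslant \sum_{p > x,\, p \in S(\mathfrak{h})} 1/p$, and by Lemma 4.18 applied with exponent $A = 3$ together with Abel summation this tail is $\ll (\log x)^{-2}$. Substituting back gives
\[ 0 \leqslant V_{\mathfrak{h}}(\infty;t) - V_{\mathfrak{h}}(x;t) \ll (\log x)^{-1} \cdot V_{\mathfrak{h}}(\infty;t)^{1/2} \leqslant (\log x)^{-1/2} \cdot V_{\mathfrak{h}}(\infty;t)^{1/4} , \]
the last inequality using $V_{\mathfrak{h}}(\infty;t) \leqslant 1$; every estimate is uniform in $t \in \mathbbm{R}$, so combining with $V_{\mathfrak{h}}(x;t) \leqslant V_{\mathfrak{h}}(\infty;t)$ yields the claim. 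There is no serious obstacle here: the only points needing a little care are the set-theoretic identification of the "error event" with $\{T_x > 0\} \cap \{X(\mathfrak{h}) \geqslant t\}$, and the passage from Lemma 4.18 to the tail bound for $\sum_{p > x,\, p \in S(\mathfrak{h})} 1/p$ via partial summation. (In fact this argument yields the sharper error $O_A\bigl((\log x)^{-A} V_{\mathfrak{h}}(\infty;t)^{1/2}\bigr)$ for every fixed $A$, but the stated form suffices for the sequel.)
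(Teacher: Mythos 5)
Your argument is correct, and it takes a genuinely different route from the paper. The paper deduces the estimate from the explicit arithmetic formula of Lemma 4.21, expressing $V_{\mathfrak{h}}(x;t)$ as $\prod_{p\in S(\mathfrak{h}),\,p\leqslant x}(1-1/p)\sum 1/n$ (the sum over $n$ with prime factors in $S(\mathfrak{h})\cap[1,x]$ and $\mathfrak{h}(n)\geqslant t$), completing the Euler product with a $1+O(1/\log x)$ factor, and then bounding the difference $V_{\mathfrak{h}}(\infty;t)-V_{\mathfrak{h}}(x;t)$ via two successive Cauchy--Schwarz inequalities on the resulting number-theoretic sum, which is where the exponents $1/4$ and $-1/2$ come from. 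Your proof is purely probabilistic: you never pass through the explicit formula, instead decomposing $X(\mathfrak{h})=S_x+T_x$ into independent non-negative pieces, identifying the difference of tail probabilities with $\mathbbm{P}(S_x<t\leqslant S_x+T_x)$, containing this in $\{T_x>0\}\cap\{X(\mathfrak{h})\geqslant t\}$, and applying one Cauchy--Schwarz on events together with a union bound and Lemma 4.18 for $\mathbbm{P}(T_x>0)$. This is shorter, uses fewer tools (Lemma 4.21 is not needed), and, as you note at the end, actually produces the strictly sharper bound $O_A\bigl((\log x)^{-A}\,V_{\mathfrak{h}}(\infty;t)^{1/2}\bigr)$ for any fixed $A$, of which the stated $V^{1/4}(\log x)^{-1/2}$ is a weakening. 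Every step checks: the disjointness underlying $\mathbbm{P}(S_x+T_x\geqslant t)-\mathbbm{P}(S_x\geqslant t)=\mathbbm{P}(S_x<t\leqslant S_x+T_x)$ uses $T_x\geqslant 0$; the inclusion into $\{T_x>0\}\cap\{X(\mathfrak{h})\geqslant t\}$ is immediate; and the partial-summation bound $\sum_{p>x,\,p\in S(\mathfrak{h})}1/p\ll_A(\log x)^{-A+1}$ follows from Lemma 4.18 exactly as you say.
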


\begin{proof}
  Let $S (\mathfrak{h}) = \left\{ p : \mathfrak{h}(p) \neq 0 \right\}$.
  Proceeding as in Lemma 4.21 we find that
  \begin{eqnarray}
    V_{\mathfrak{h}} (x ; t) & = & \prod_{\tmscript{\begin{array}{c}
      p \in S (\mathfrak{h})\\
      p \leqslant x
    \end{array}}} \left( 1 - \frac{1}{p} \right) 
    \sum_{\tmscript{\begin{array}{c}
      n \geqslant 1\\
      p|n \Rightarrow p \in S (\mathfrak{h}), p \leqslant x\\
      \mathfrak{h}(n) \geqslant t
    \end{array}}} \frac{1}{n} 
  \end{eqnarray}
  We first complete the product over $p \leqslant x$ to a product over all
  primes in $S (\mathfrak{h})$. First of all $\prod_{p > x, p \in S} (1 - 1 /
  p)^{- 1} \geqslant 1$. Upon expanding the Euler product we find
  \[ 1 \leqslant \prod_{\tmscript{\begin{array}{c}
       p \in S (\mathfrak{h})\\
       p > x
     \end{array}}} \left( 1 - \frac{1}{p} \right)^{- 1} \leqslant 1 +
     \sum_{\tmscript{\begin{array}{c}
       n \geqslant x\\
       p|n \Rightarrow p \in S (\mathfrak{h})
     \end{array}}} \frac{1}{n} \leqslant 1 + \frac{1}{\log x}
     \sum_{\tmscript{\begin{array}{c}
       n \geqslant 1\\
       p|n \Rightarrow p \in S (\mathfrak{h})
     \end{array}}} \frac{\log n}{n} \]
  The rightmost sum converges by the corollary to lemma 4.18. 
  Thus $\prod_{p > x, p \in S
  (\mathfrak{h})} (1 - 1 / p)$ equals to $1 + O (1 / \log x)$. Hence $(4.45)$
  becomes
  \[ V_{\mathfrak{h}} (x ; t) = \prod_{p \in S} \left( 1 - \frac{1}{p} \right)
     \sum_{\tmscript{\begin{array}{c}
       n \geqslant 1\\
       p|n \Rightarrow p \in S, p \leqslant x\\
       \mathfrak{h}(n) \geqslant t
     \end{array}}} \frac{1}{n} + O \left( \frac{V_f (x ; t)}{\log x} \right)
  \]
  Forgetting about $p \leqslant x$ in the above formula, we obtain, 
  $V_{\mathfrak{h}} (x ; t) \leqslant V_{\mathfrak{h}}
  (\infty ; t) + O (V_{\mathfrak{h}} (x ; t) / \log x)$. Iterating this
  inequality gives $V_{\mathfrak{h}} (x ; t) \leqslant V_{\mathfrak{h}}
  (\infty ; t) + O (V_{\mathfrak{h}} (\infty ; t) / \log x)$. To obtain a
  lower bound for $V_{\mathfrak{h}} (x ; t)$ we bound $V_{\mathfrak{h}}
  (\infty ; t) - V_{\mathfrak{h}} (x ; t)$ from above. By the previous
  equation
  \begin{eqnarray}
    V_{\mathfrak{h}} (\infty ; t) - V_{\mathfrak{h}} (x ; t) & = & \prod_{p
    \in S} \left( 1 - \frac{1}{p} \right) \sum_{\tmscript{\begin{array}{c}
      p|n \Rightarrow p \in S (\mathfrak{h})\\
      \exists p|n : p > x\\
      \mathfrak{h}(n) \geqslant t
    \end{array}}} \frac{1}{n} + O \left( \frac{V_f (x ; t)}{\log x} \right) 
  \end{eqnarray}
  We overestimate the above sum by replacing the condition $\exists p|n : p >
  x$ with $n > x$. Then we apply Cauchy-Schwarz, singling out $n > x$ in one
  term and the remaining condition in the second term. We select weights so as
  to make the sum over $n > x$ convergent. In more detail, we bound $(4.46)$
  by
  \begin{equation}
    \biggl( \sum_{n > x} \frac{1}{n \left( \log n \right)^2} \biggr)^{1/2}
    \cdot \biggl( \sum_{\tmscript{\begin{array}{c}
      m \geqslant 1\\
      p|m \Rightarrow p \in S (\mathfrak{h})\\
      \mathfrak{h}(m) \geqslant t
    \end{array}}} \frac{(\log m)^2}{m} \biggr)^{1 / 2}
  \end{equation}
  The sum on the left is $\ll 1 / (\log x)$. To bound the sum on the right we
  apply once again Cauchy-Schwarz, obtaining the following bound
  \begin{equation}
    \biggl( \sum_{\tmscript{\begin{array}{c}
      m \geqslant 1\\
      p|m \Rightarrow p \in S (\mathfrak{h})\\
      \mathfrak{h}(m) \geqslant t
    \end{array}}} \frac{1}{m} \biggr)^{1 / 2} \cdot \biggl(
    \sum_{\tmscript{\begin{array}{c}
      m \geqslant 1\\
      p|m \Rightarrow p \in S (\mathfrak{h})
    \end{array}}} \frac{(\log m)^4}{m} \biggr)^{1/2}
  \end{equation}
  By the corollary to lemma 4.18 the sum over $m \geqslant 1$ is $O(1)$. 
  By lemma 4.21 the sum on the left is $C \mathbb{P}(X(\mathfrak{h}) 
  \geqslant t)$ for some constant $C > 0$. Thus the above is bounded by
  $\ll \mathbb{P}(X(\mathfrak{h}) \geqslant t)^{1/2}$.  
  By $(4.46)$, $(4.47)$ and $(4.48)$, $V_{\mathfrak{h}} (\infty ; t)
  - V_{\mathfrak{h}} (x ; t) \leqslant O ((\log x)^{- 1 / 2} V_{\mathfrak{h}}
  (\infty ; t)^{1 / 4})$. On the other hand $0 \leqslant V_{\mathfrak{h}} (x ;
  t) \leqslant V_{\mathfrak{h}} (\infty ; t) + O (V_{\mathfrak{h}} (\infty ;
  t) / \log x)$. The lemma follows.
\end{proof}

The next result is a rather technical corollary to the above lemma. It shows
that we can modify the random variable $\sum_{p \leqslant x} \mathfrak{h}(p)
X_p$ on the primes $p > \xi (x)$ $(\xi (x) \rightarrow \infty)$ without
destroying uniform convergence (in distribution) to $\sum_p \mathfrak{h}(p)
X_p$.

\begin{corollary}
  Let $f \in \mathcal{C}$. Suppose that $\Psi (f ; t)$ is lattice distributed
  on $\mathbbm{Z}$. Let $y \rightarrow \infty$ as $x \rightarrow \infty$ but 
  with $y \leqslant x$. Let $\mathfrak{H}$ be a strongly additive function 
  defined by
  \[ \mathfrak{H}(p) = \left\{ \begin{array}{l}
       \left\lceil \mathfrak{h}(p) \right\rceil  \text{ if } p \geqslant y\\
       \mathfrak{h}(p) \text{ \ \ \ otherwise}
     \end{array} \right. \]
  We have, uniformly in $t \in \mathbbm{R}$,
  \[ \mathbbm{P} \left( \sum_{p \leqslant x} \mathfrak{H}(p) X_p \geqslant t
     \right) =\mathbbm{P} \left( \sum_p \mathfrak{h}(p) X_p \geqslant t
     \right) + O_A \left( \frac{e^{- At}}{(\log y)^{1 / 4}} \right) \]
  for any given $A > 0$. 
\end{corollary}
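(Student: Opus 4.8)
The plan is to split each of the two sums into a common ``small prime'' piece, on which $\mathfrak{H}$ and $\mathfrak{h}$ agree, plus a ``large prime'' tail that almost surely lives on the rare event that some $X_p$ with $p \in S (\mathfrak{h})$ and $p \geqslant y$ equals $1$. Write $T \assign \sum_p \mathfrak{h}(p) X_p$ (a well-defined random variable, since the series converges almost surely as recorded after Lemma 4.21) and $T' \assign \sum_{p \leqslant x} \mathfrak{H}(p) X_p$; both are non-negative since $\mathfrak{h}, \mathfrak{H} \geqslant 0$. Put $W \assign \sum_{p < y} \mathfrak{h}(p) X_p$; then $T = W + V$ with $V \assign \sum_{p \geqslant y} \mathfrak{h}(p) X_p$, and $T' = W + V'$ with $V' \assign \sum_{y \leqslant p \leqslant x} \mathfrak{H}(p) X_p$, using $\mathfrak{H}(p) = \mathfrak{h}(p)$ for $p < y$. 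Since $\mathfrak{h}(p) = 0$ exactly when $p \nin S (\mathfrak{h})$, and then $\lceil \mathfrak{h}(p) \rceil = 0$ too, the random variables $V$ and $V'$ are non-negative and involve only the independent Bernoullis $X_p$ with $p \in S (\mathfrak{h})$; in particular both are independent of $W$. By the triangle inequality it will therefore be enough to show that, uniformly in $t \in \mathbbm{R}$ and with $U$ denoting either $V$ or $V'$,
\[ \left| \mathbbm{P}(W + U \geqslant t) - \mathbbm{P}(W \geqslant t) \right| \; \ll_A \; \frac{e^{- A t}}{(\log y)^{1 / 4}} . \]

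To prove this I would first observe that, since $U \geqslant 0$, $\mathbbm{P}(W + U \geqslant t) - \mathbbm{P}(W \geqslant t) = \mathbbm{P}(W < t \leqslant W + U)$, which vanishes for $t \leqslant 0$ because $W \geqslant 0$. For $t > 0$ the event $\{ W < t \leqslant W + U \}$ forces $U > 0$, hence forces $X_p = 1$ for some $p \in S (\mathfrak{h})$ with $p \geqslant y$, so
\[ \mathbbm{P}(W < t \leqslant W + U) \; \leqslant \; \sum_{p \geqslant y, \; p \in S (\mathfrak{h})} \frac{1}{p} \; \ll_B \; \frac{1}{(\log y)^B} \quad \text{for every } B > 0 , \]
by Lemma 4.18 and partial summation. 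On the other hand, by independence of $W$ and $U$,
\[ \mathbbm{P}(W < t \leqslant W + U) \; \leqslant \; \mathbbm{P}(W + U \geqslant t) \; \leqslant \; e^{- 2 A t} \cdot \mathbbm{E} \left[ e^{2 A W} \right] \cdot \mathbbm{E} \left[ e^{2 A U} \right] \; \ll_A \; e^{- 2 A t} , \]
since $\mathbbm{E}[e^{2 A W}] = \prod_{p < y, \, p \in S (\mathfrak{h})} (1 + (e^{2 A \mathfrak{h}(p)} - 1) / p) \leqslant \mathbbm{E}[e^{2 A X (\mathfrak{h})}] < \infty$ by Lemma 4.19 (all factors being $\geqslant 1$), and $\mathbbm{E}[e^{2 A U}] \ll_A 1$ as well — for $U = V$ because it is a tail of the convergent product of Lemma 4.19, and for $U = V'$ by applying Lemma 4.19 to the strongly additive function $p \mapsto \lceil \mathfrak{h}(p) \rceil$, whose support is $S (\mathfrak{h})$ and which is still $o (\log p)$, so that the proof of that lemma goes through unchanged. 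Taking the geometric mean of the two displayed bounds with the choice $B = 1 / 2$ yields $\mathbbm{P}(W < t \leqslant W + U) \ll_A (\log y)^{- 1 / 4} e^{- A t}$, completing the case $t > 0$ and hence the displayed inequality, and with it the corollary.

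The main (and, as it turns out, modest) obstacle is to check that rounding $\mathfrak{h}(p)$ up to $\lceil \mathfrak{h}(p) \rceil$ on the primes $p \geqslant y$ damages neither the convergence of the Euler product in Lemma 4.19 nor the finiteness of the corresponding moment generating function. This follows at once from $\lceil \mathfrak{h}(p) \rceil \leqslant \mathfrak{h}(p) + 1$, from the equivalence $\lceil \mathfrak{h}(p) \rceil \neq 0 \Leftrightarrow \mathfrak{h}(p) \neq 0$ (so the relevant support is unchanged), and from the observation that the hypotheses $(1.3)$ and $(1.4)$ underlying Lemmas 4.18 and 4.19 are insensitive to rounding $\mathfrak{h}(p)$ upward. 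Everything else — the partial summation for $\sum_{p \geqslant y, \, p \in S (\mathfrak{h})} p^{-1}$ using Lemma 4.18, and the elementary positivity of $W$, $V$ and $V'$ — is routine.
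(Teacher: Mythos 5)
Your proof is correct, and it takes a genuinely different route from the paper's. The paper applies Lemma 4.21 (the explicit product representation of the tail probability) and Lemma 4.22 (the quantitative convergence estimate $V_{\mathfrak{h}}(y;t) = V_{\mathfrak{h}}(\infty;t) + O(V_{\mathfrak{h}}(\infty;t)^{1/4}(\log y)^{-1/2})$) once to $\mathfrak{h}$ and once to $\mathfrak{H}$, uses the obvious identity $V_{\mathfrak{H}}(y;t) = V_{\mathfrak{h}}(y;t)$ to compare, and then replaces $V_{\mathfrak{h}}(\infty;t)^{1/4}$ by $e^{-At}$ via Chernoff. You instead decompose both sums as $W$ plus a nonnegative tail $U$ (either $V$ or $V'$), observe that the two tail probabilities differ exactly by $\mathbbm{P}(W < t \leqslant W+U)$, which lives on the event $\{U > 0\}$, and bound that event two ways: a union bound over large primes in $S(\mathfrak{h})$ (giving $(\log y)^{-B}$ for any $B$, via Lemma 4.18 and partial summation) and a Chernoff bound (giving $e^{-2At}$). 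The geometric mean of the two, with $B=1/2$, produces the claimed $e^{-At}(\log y)^{-1/4}$. Your route is shorter and entirely bypasses Lemma 4.22, at the price of being specific to this corollary; the paper's route goes through the more structural Lemma 4.22 as a reusable intermediate fact. The one point both arguments share and that you rightly flag is the need for $\mathbbm{E}[e^{2A X(\mathfrak{H})}] < \infty$, which follows because the proof of Lemma 4.19 is insensitive to replacing $\mathfrak{h}(p)$ by $\lceil \mathfrak{h}(p)\rceil$ — the same fact the paper invokes implicitly when asserting that Lemmas 4.21 and 4.22 apply to $\mathfrak{H}$.
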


\begin{proof}
  We retain the notation $V_{\mathfrak{h}} (x ; t)$ from the previous lemma.
  Since $\mathfrak{H} \geqslant \mathfrak{h}$ and $x \geqslant y$,
  \begin{eqnarray}
    \mathbbm{P} \left( \sum_{p \leqslant x} \mathfrak{H}(p) X_p \geqslant t
    \right) & \geqslant & \mathbbm{P} \left( \sum_{p \leqslant y}
    \mathfrak{h}(p) X_p \geqslant t \right) = V_{\mathfrak{h}} (\infty ; t)
    \upl O \left( \frac{V_{\mathfrak{h}} (\infty ; t)^{1 / 2}}{(\log y)^{1 /
    4}} \right) 
  \end{eqnarray}
  where in the second equality we used the previous lemma. Further by lemma
  4.19 $X (\mathfrak{h}) \assign \sum_p \mathfrak{h}(p) X_p$ has an entire
  moment generating function. Hence, by Chernoff's bound $V_{\mathfrak{h}} (
  \infty; t) \leqslant \mathbbm{E}[e^{AX (\mathfrak{h})}] e^{- At} = 
  O_A (e^{- At})$ for any given $A > 0$. We conclude that the error term 
  in $(4.49)$ is bounded by $O_A (e^{- At} \cdot (\log y)^{- 1 / 4})$. To 
  derive the upper bound let us note that lemma 4.21 and lemma 4.22 also 
  holds for the additive function $\mathfrak{H}$ (the important observation 
  here is that $\mathfrak{H}(p)$ vanishes exactly when $\mathfrak{h}(p)$ so 
  $\mathfrak{H}$ is a ``small'' additive function). Therefore
  \begin{eqnarray}
    \mathbbm{P} \left( \sum_{p \leqslant x} \mathfrak{H}(p) X_p \geqslant t
    \right) & \leqslant & V_{\mathfrak{H}} (\infty ; t) = V_{\mathfrak{H}} (y
    ; t) + O \left( \frac{V_{\mathfrak{H}} (\infty ; t)^{1 / 2}}{(\log y)^{1 /
    4}} \right) 
  \end{eqnarray}
  Now $X (\mathfrak{H}) \assign \sum_{p} \mathfrak{H}(p) X_p$ also
  has an entire moment generating function. Hence by Chernoff's bound
  $V_{\mathfrak{H}} (\infty ; t) = O_A (e^{- At})$ for any given $A > 0$. It
  follows that the error term in $(4.50)$ is $O_A (e^{- At} \cdot (\log y)^{-
  1 / 4})$. By definition of $\mathfrak{H}$ we have the equality
  $V_{\mathfrak{H}} (y ; t) = V_{\mathfrak{h}} (y ; t)$. By lemma 4.22 and the
  bound $V_{\mathfrak{h}} (\infty ; t) \ll_A e^{- At}$,
  \[ V_{\mathfrak{h}} (y ; t) = V_{\mathfrak{h}} (\infty ; t) + O_A \left(
     e^{- At} \cdot (\log y)^{- 1 / 4}) \right. \]
  It follows that $V_{\mathfrak{H}} (y ; t) = V_{\mathfrak{h}} (\infty ; t) +
  O_A (e^{- At} \cdot (\log y)^{- 1 / 4})$. On combining this equality with
  $(4.50)$ we obtain the desired upper bound
  \[ \mathbbm{P} \left( \sum_{p \leqslant x} \mathfrak{H}(p) X_p \geqslant t
     \right) \leqslant V_{\mathfrak{h}} (\infty ; t) + O_A \left( \frac{e^{-
     At}}{(\log y)^{1 / 4}} \right) \]
  We also established a lower bound of the same quality, hence the lemma
  follows. 
\end{proof}

\subsubsection{Computing a ``saddle-point integral''}

The goal of this section is to prove the following lemma.

\begin{lemma}
  Let $f \in \mathcal{C}$. Let $\mathcal{A}(s)$ be analytic in $\tmop{Re} s
  \geqslant 0$, and suppose that $\mathcal{A}(x)$ does not vanish for $x
  \geqslant 0$. Suppose that $\Psi (f ; t)$ is lattice distributed on
  $\mathbbm{Z}$. Let $s \assign v + \mathi t$ with both $v, t$ real and $v
  \assign v_f (x ; \Delta)$. Given $\delta, \varepsilon > 0$, we have
  \begin{eqnarray}
    &  & \frac{1}{2 \pi} \int_{- \pi}^{\pi} \mathcal{A}(s) \cdot (1 /
    s)\mathcal{P}_{\mathfrak{h}} (\xi_f (x ; \Delta) ; s) \cdot \left( \log x
    \right)^{\hat{\Psi} (f ; s) - 1} \cdot e^{- s \xi_f (x ; \Delta)} \cdot
    \mathd t \\
    & = & \mathcal{A}(v) (1 / v)\mathcal{P}_{\mathfrak{h}} (\xi_f (x ;
    \Delta) ; v) \cdot \frac{\left( \log x \right)^{\hat{\Psi} (f ; v) - 1 - v
    \hat{\Psi}' (f ; v)}}{(2 \pi \hat{\Psi}'' (f ; v) \tmop{loglog} x)^{1 /
    2}} \cdot e^{- vc (f)} \cdot (1 + o (1)) \nonumber
  \end{eqnarray}
  uniformly for $\Delta$ in the range $(\tmop{loglog} x)^{\varepsilon} \ll
  \Delta \leqslant \delta \sigma_{\Psi} (f ; x)$.
\end{lemma}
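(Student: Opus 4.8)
The plan is to treat the integral in the lemma as a one-dimensional Laplace (saddle-point) estimate localized at $t = 0$, the one real novelty being that a rapidly oscillating factor hidden inside $\mathcal{P}_{\mathfrak{h}}$ must be made to cancel against the oscillation of $(\log x)^{\hat{\Psi}(f;s)}$. Throughout write $a \assign \xi_f(x;\Delta)$, $v \assign v_f(x;\Delta)$, $s = v + \mathi t$; by Lemma 4.7 there is $C = C(\delta)$ with $0 \leqslant v \leqslant C$, and $v \asymp \Delta/\sigma_{\Psi}(f;x) \gg (\tmop{loglog} x)^{\varepsilon - 1/2}$ throughout the stated range. The first step is the algebraic identity
\[ \frac{1}{s}\,\mathcal{P}_{\mathfrak{h}}(a;s)\,e^{-sa} \;=\; \sum_{\ell \in \mathbbm{Z}} e^{s\ell}\,\mathbbm{P}\bigl(X(\mathfrak{h}) \geqslant \ell + a\bigr) \;=\; e^{-s\lfloor a \rfloor}\,R(s), \qquad R(s) \assign \sum_{\ell \in \mathbbm{Z}} e^{s\ell}\,\mathbbm{P}\bigl(X(\mathfrak{h}) \geqslant \ell + \{a\}\bigr), \]
which follows from the definition of $\mathcal{P}_{\mathfrak{h}}$ by the change of index $k \mapsto k - \lfloor a \rfloor$ together with $a = \lfloor a \rfloor + \{a\}$. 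Since $X(\mathfrak{h})$ has an entire moment generating function (Lemma 4.19), one has $\mathbbm{P}(X(\mathfrak{h}) \geqslant t) \ll_A e^{-At}$ for every $A > 0$; with $\mathbbm{P}(X(\mathfrak{h}) \geqslant t) \leqslant 1$ this shows $R(s)$ converges and is analytic and $2\pi$-periodic in $\tmop{Im} s$ for $0 < \tmop{Re} s < A$, with $R(v) \gg (e^v - 1)^{-1}$ and $|\partial_t R(v + \mathi \tau)| \ll_A (1 - e^{-v})^{-2} + 1$; a short computation using $v \asymp \Delta/\sigma_{\Psi}$ then gives $R(v + \mathi t) = R(v)(1 + o(1))$ for $|t| \leqslant \eta(x)(\tmop{loglog} x)^{-1/2}$ with $\eta(x) \assign \tmop{logloglog} x$. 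Writing $I(t) \assign \mathcal{A}(s)\,(\log x)^{\hat{\Psi}(f;s) - 1}\,e^{-s\lfloor a \rfloor}\,R(s)$, the integral in question is $(2\pi)^{-1}\int_{-\pi}^{\pi} I(t)\,\mathd t$.

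Next I would extract the main term by splitting $[-\pi,\pi]$ at $|t| = \eta(x)(\tmop{loglog} x)^{-1/2}$, calling the inner interval $\mathcal{M}$ and the rest $\mathcal{R}$. On $\mathcal{M}$ I Taylor expand $\hat{\Psi}(f;v+\mathi t) = \hat{\Psi}(f;v) + \mathi t\,\hat{\Psi}'(f;v) - \tfrac{t^2}{2}\hat{\Psi}''(f;v) + O(t^3)$; since $t^3 \tmop{loglog} x = o(1)$ on $\mathcal{M}$,
\[ (\log x)^{\hat{\Psi}(f;v+\mathi t) - 1}\,e^{-(v+\mathi t)\lfloor a \rfloor} = (\log x)^{\hat{\Psi}(f;v)-1}\,e^{-v\lfloor a \rfloor}\, e^{\mathi t(\hat{\Psi}'(f;v)\tmop{loglog} x - \lfloor a \rfloor)}\, e^{-\frac{t^2}{2}\hat{\Psi}''(f;v)\tmop{loglog} x}\,(1 + o(1)). \]
The crucial point is that by Lemma 4.8, $a = \hat{\Psi}'(f;v)\tmop{loglog} x + c(f) + O((\tmop{loglog} x)^{-1/2})$, so the residual phase $\hat{\Psi}'(f;v)\tmop{loglog} x - \lfloor a \rfloor = \{a\} - c(f) + O((\tmop{loglog} x)^{-1/2})$ is $O(1)$, hence $e^{\mathi t(\cdots)} = 1 + O(|t|) = 1 + o(1)$ on $\mathcal{M}$. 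Similarly $\mathcal{A}(v+\mathi t) = \mathcal{A}(v)(1 + o(1))$, since $\mathcal{A}$ is analytic and (by non-vanishing) bounded below on the compact segment $[0,C]$, and $R(v+\mathi t) = R(v)(1+o(1))$ as noted above. Thus on $\mathcal{M}$ the integrand equals $\mathcal{A}(v)\,e^{-v\lfloor a \rfloor}R(v)\,(\log x)^{\hat{\Psi}(f;v)-1}\,e^{-\frac{t^2}{2}\hat{\Psi}''(f;v)\tmop{loglog} x}(1+o(1))$, and since $\hat{\Psi}''(f;v) \asymp 1$ the Gaussian integral gives $(2\pi)^{-1}\int_{\mathcal{M}} e^{-\frac{t^2}{2}\hat{\Psi}''(f;v)\tmop{loglog} x}\,\mathd t = (2\pi\hat{\Psi}''(f;v)\tmop{loglog} x)^{-1/2}(1 + O(e^{-c\eta(x)^2}))$. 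Finally $e^{-v\lfloor a \rfloor}R(v) = \tfrac1v \mathcal{P}_{\mathfrak{h}}(a;v)e^{-va}$ by the identity above, and $e^{-va} = (\log x)^{-v\hat{\Psi}'(f;v)}e^{-vc(f)}(1+o(1))$ by Lemma 4.8 (as $v$ is bounded); substituting recovers exactly the right-hand side of the assertion, up to $1 + o(1)$.

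It then remains to show $\int_{\mathcal{R}}|I(t)|\,\mathd t$ is negligible. Because $\Psi(f;t)$ is lattice on $\mathbbm{Z}$ with non-negative jumps $\lambda_k$ at $k \in \mathbbm{Z}_{\geqslant 0}$, one has $\hat{\Psi}(f;s) = \sum_{k\geqslant 0}\lambda_k e^{sk}$, so $h_v(t) \assign \hat{\Psi}(f;v) - \tmop{Re}\,\hat{\Psi}(f;v+\mathi t) = \sum_{k\geqslant 1}\lambda_k e^{vk}(1 - \cos(tk)) \geqslant 0$; the lattice hypothesis forces $\gcd\{k : \lambda_k > 0\} = 1$, so $h_v$ vanishes on $[-\pi,\pi]$ only at $0$, a Taylor expansion gives $h_v(t) \gg t^2$ for small $|t|$ uniformly in $v \in [0,C]$, and compactness gives $h_v(t) \gg_c 1$ on $c \leqslant |t| \leqslant \pi$; hence $h_v(t) \gg \min(t^2,1)$ on $\mathcal{R}$. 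Using $|\mathcal{A}(v+\mathi t)| \ll 1$ (analytic on a compact region), $|e^{-(v+\mathi t)\lfloor a \rfloor}R(v+\mathi t)| \leqslant e^{-v\lfloor a \rfloor}R(v)$ (triangle inequality and positivity of the summands), and $(\log x)^{\tmop{Re}\,\hat{\Psi}(f;v+\mathi t)-1} = (\log x)^{\hat{\Psi}(f;v)-1}e^{-h_v(t)\tmop{loglog} x}$, one gets $\int_{\mathcal{R}}|I(t)|\,\mathd t \ll e^{-v\lfloor a \rfloor}R(v)(\log x)^{\hat{\Psi}(f;v)-1}\bigl((\tmop{loglog} x)^{-1/2}e^{-c'\eta(x)^2} + (\log x)^{-c''\tmop{loglog} x}\bigr)$, which is $o\bigl(e^{-v\lfloor a \rfloor}R(v)(\log x)^{\hat{\Psi}(f;v)-1}(\tmop{loglog} x)^{-1/2}\bigr)$, i.e. $o(1)$ times the main term (recall $\mathcal{A}(v) \asymp 1$). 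The one genuinely delicate point is this alignment of the oscillating phases, which hinges precisely on the factorization $\mathcal{P}_{\mathfrak{h}}(a;s)e^{-sa}/s = e^{-s\lfloor a \rfloor}R(s)$ together with Lemma 4.8; the uniformity in the possibly small parameter $v$ needs some care but is handled by the bounds on $R$ and $\partial_t R$. Everything else is routine.
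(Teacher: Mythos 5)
Your proof is correct and follows essentially the same saddle-point strategy as the paper: split $[-\pi,\pi]$ at $|t| \asymp \eta(x)(\tmop{loglog} x)^{-1/2}$, extract the Gaussian main term by Taylor expansion and Lemma 4.8, and kill the outer range using the nonnegativity of $\hat{\Psi}(f;v) - \tmop{Re}\,\hat{\Psi}(f;v+\mathi t)$ together with the lattice condition $\gcd\{k:\lambda_k>0\}=1$ (this is exactly the content of the paper's Lemma 4.26). The only cosmetic difference is that you first factor $(1/s)\mathcal{P}_{\mathfrak{h}}(a;s)e^{-sa} = e^{-s\lfloor a\rfloor}R(s)$ and then note $\hat{\Psi}'(f;v)\tmop{loglog} x - \lfloor a\rfloor = O(1)$, whereas the paper keeps $\mathcal{P}_{\mathfrak{h}}(\xi;\cdot)$ intact (controlling it via its Lemma 4.25) and instead cancels the phase by writing $e^{-\mathi t\xi} = (\log x)^{-\mathi t\hat{\Psi}'(f;v)}e^{-\mathi t c(f)}(1+O(\cdot))$; the two bookkeepings are interchangeable. (Minor slip: the tail bound you wrote as $(\log x)^{-c''\tmop{loglog} x}$ should read $(\log x)^{-c''}$, coming from $e^{-h_v(t)\tmop{loglog} x}$ with $h_v \gg 1$ on $c\leqslant|t|\leqslant\pi$; either way it is negligible.)
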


First we need to show that $\mathcal{P}_{\mathfrak{h}} (a ; s)$ behaves ``as
an analytic function''.

\begin{lemma}
  Let $f \in \mathcal{C}$. Suppose that $\Psi (f ; t)$ is lattice distributed
  on $\mathbbm{Z}$. Given $C > 0$, uniformly in $| \delta | \leqslant \pi$, $0
  \leqslant v \leqslant C$ and $0 \leqslant a \leqslant 1$,
  \[ \mathcal{P}_{\mathfrak{h}} \left( a ; v + \delta \right)
     =\mathcal{P}_{\mathfrak{h}} \left( a ; v \right) + O_C \left( \delta
     \right) \]
  Here $\delta$ is allowed to be a complex number. Furthermore $v / (e^v - 1)
  \leqslant \mathcal{P}_{\mathfrak{h}} (a ; v) = O_C (1)$ uniformly in $0
  \leqslant a \leqslant 1$ and $0 \leqslant v \leqslant C$. 
\end{lemma}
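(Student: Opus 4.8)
The plan is to recognise $\mathcal{P}_{\mathfrak{h}}(a;s)$ as an explicit elementary function plus an entire, uniformly bounded correction term, and then read off both assertions. Write $u_{\ell} := \ell + \{a\}$, set $X := X(\mathfrak{h}) = \sum_{p} \mathfrak{h}(p) X_{p}$, and $V(u) := V_{\mathfrak{h}}(\infty; u) = \mathbbm{P}(X \geqslant u)$. Since $\mathfrak{h}(p) \geqslant 0$ and $X_{p} \in \{0, 1\}$ we have $X \geqslant 0$ almost surely, so $V(u) = 1$ for all $u \leqslant 0$; in particular $V(u_{\ell}) = 1$ whenever $\ell \leqslant -1$ (then $u_{\ell} < 0$). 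On the other hand, by Lemma 4.19 the variable $X$ has an entire moment generating function, so Chernoff's inequality gives $V(u) \ll_{A} e^{-Au}$ for every $A > 0$ and $u \geqslant 0$. I would then split the defining series at $\ell = 0$:
\[ \mathcal{P}_{\mathfrak{h}}(a; s) = s\, G_{1}(a; s) + s\, G_{2}(a; s), \qquad G_{1}(a; s) = \sum_{\ell \leqslant -1} e^{s u_{\ell}}, \quad G_{2}(a; s) = \sum_{\ell \geqslant 0} e^{s u_{\ell}} V(u_{\ell}). \]
The first piece is a geometric series summing, for $\tmop{Re}\, s > 0$, to $s\, G_{1}(a; s) = s\, e^{s\{a\}}/(e^{s} - 1)$; as a function of $s$ this extends to $\mathbbm{C} \setminus 2\pi\mathi(\mathbbm{Z} \setminus \{0\})$ with a removable singularity at $0$, and it is bounded on every compact set avoiding those poles, uniformly in $\{a\} \in [0, 1)$. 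The second piece is, by the tail bound, absolutely and locally uniformly convergent on all of $\mathbbm{C}$, hence entire, and $s\, G_{2}(a; s) = O_{M}(1)$ uniformly in $\{a\} \in [0, 1)$ on each half-plane $\tmop{Re}\, s \leqslant M$ (take $A = M + 2$ and use $V(u_{\ell}) \leqslant V(\ell)$). Thus $\mathcal{P}_{\mathfrak{h}}(a; \cdot)$ is holomorphic in a neighbourhood of the strip $|\tmop{Im}\, s| \leqslant \pi$, with all bounds uniform in $a$; on the positive real axis it agrees with the original series, and I would define $\mathcal{P}_{\mathfrak{h}}(a; 0)$ as the (finite) limit as $v \to 0^{+}$.

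For the increment estimate I would assume $v > 0$, the case $v = 0$ following by continuity. The segment $J$ joining $v$ and $v + \delta$ lies in $\{|\tmop{Re}\, s| \leqslant C + \pi,\ |\tmop{Im}\, s| \leqslant \pi\}$, and its $\tfrac{\pi}{2}$-neighbourhood still avoids the poles $2\pi\mathi k$, $k \neq 0$, of $s\, G_{1}$; on that neighbourhood both $s\, G_{1}$ and $s\, G_{2}$ are holomorphic and $O_{C}(1)$, uniformly in $\{a\}$. Cauchy's estimate on circles of radius $\tfrac{\pi}{2}$ centred on $J$ then gives $|\partial_{s} \mathcal{P}_{\mathfrak{h}}(a; s)| \ll_{C} 1$ for $s \in J$, uniformly in $a$, and integrating along $J$ yields
\[ \mathcal{P}_{\mathfrak{h}}(a; v + \delta) - \mathcal{P}_{\mathfrak{h}}(a; v) = \delta \int_{0}^{1} \partial_{s} \mathcal{P}_{\mathfrak{h}}(a; v + r\delta)\, \mathd r = O_{C}(\delta), \]
uniformly in $0 \leqslant v \leqslant C$, $|\delta| \leqslant \pi$, $0 \leqslant a \leqslant 1$ (when $\tmop{Re}(v + \delta) < 0$ the symbol $\mathcal{P}_{\mathfrak{h}}(a; v + \delta)$ is read via this analytic continuation).

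For the two-sided bound on $\mathcal{P}_{\mathfrak{h}}(a; v)$ (real $v \geqslant 0$) the upper bound is immediate from the decomposition: at $s = v$ the first term equals $v\, e^{v\{a\}}/(e^{v} - 1) \leqslant e^{v\{a\}} \leqslant e^{C}$ because $e^{v} - 1 \geqslant v$, while $v\, G_{2}(a; v) \leqslant C \cdot O_{C}(1)$, so $\mathcal{P}_{\mathfrak{h}}(a; v) = O_{C}(1)$. For the lower bound I would use only that every summand in the defining series of $\mathcal{P}_{\mathfrak{h}}(a; v)$ is nonnegative ($v \geqslant 0$, $V \geqslant 0$): retaining just the terms with $\ell \leqslant -1$, where $V(u_{\ell}) = 1$, and using $e^{v\{a\}} \geqslant 1$,
\[ \mathcal{P}_{\mathfrak{h}}(a; v) \geqslant v \sum_{\ell \leqslant -1} e^{v u_{\ell}} = \frac{v\, e^{v\{a\}}}{e^{v} - 1} \geqslant \frac{v}{e^{v} - 1}. \]
The only real work — and the place where care is needed — is the first paragraph: getting the convergence and holomorphy of the two pieces with bounds that are genuinely uniform in the parameter $a$, and, in the second paragraph, arranging the contours so as to stay clear of the poles $2\pi\mathi(\mathbbm{Z}\setminus\{0\})$ of the geometric part. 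There is no deeper obstacle; everything else is a routine Cauchy-estimate argument.
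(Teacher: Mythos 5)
Your proof is correct and takes a genuinely different, cleaner route than the paper's. The paper proves the increment estimate by a hands-on computation: it too separates the $\ell < 0$ geometric part from the $\ell \geqslant 0$ part, but for the latter it Taylor-expands each term $(v+\delta)e^{(v+\delta)(\ell+a)}$ about $ve^{v(\ell+a)}$ and splits the range at $\ell \lessgtr |\delta|^{-1}$, controlling the resulting error term by term using the super-exponential decay of $\mathbbm{P}(X(\mathfrak{h}) \geqslant \ell)$. You instead package the whole object as an analytic function of $s$: an elementary piece $s e^{s\{a\}}/(e^s-1)$ whose poles sit at $2\pi\mathi(\mathbbm{Z}\setminus\{0\})$ and an entire piece $sG_2(a;s)$ that is $O_C(1)$ on half-planes $\tmop{Re}\,s \leqslant M$ uniformly in $a$ (by Chernoff via Lemma 4.19), then observe that the $\tfrac{\pi}{2}$-neighbourhood of the segment $[v,v+\delta]$ stays clear of the poles and apply Cauchy's estimate to bound $\partial_s\mathcal{P}_{\mathfrak{h}}(a;\cdot)$ there, uniformly in $a$. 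This replaces the explicit Taylor bookkeeping by a single uniform-boundedness claim and a one-line Cauchy estimate, and it also makes transparent that $\mathcal{P}_{\mathfrak{h}}(a;s)$ continues holomorphically to $\tmop{Re}\,s<0$ (a point the paper leaves implicit when it sums the geometric series). The lower and upper bounds on $\mathcal{P}_{\mathfrak{h}}(a;v)$ you obtain exactly as the paper does — by positivity of the summands for the lower bound, and by the two-piece decomposition at $s=v$ for the upper bound — so no new issues arise there. The only genuine care required in your argument, as you flag yourself, is that both pieces converge locally uniformly in $s$ with bounds independent of $a$; you have verified that correctly.
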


{\noindent}\tmtextbf{Remark. }The restriction $0 \leqslant a \leqslant 1$ 
is unnecessary because $\mathcal{P}_{\mathfrak{h}}(a;v)$ is $1$-periodic in
the $a$ variable. {\hspace*{\fill}}{\medskip}

\begin{proof}
  As usual write $X (\mathfrak{h}) \assign \sum_p \mathfrak{h}(p) X_p$. By
  definition we have
  \begin{eqnarray*}
    \mathcal{P}_{\mathfrak{h}} \left( a ; v + \delta \right) & = & (v +
    \delta) \sum_{\ell \in \mathbbm{Z}} e^{(v + \delta) \cdot \left( \ell + a
    \right)} \cdot \mathbbm{P} \left( X \left( \mathfrak{h} \right) \geqslant
    \ell + a \right)
  \end{eqnarray*}
  We split the sum at $\ell < 0$ and $\ell \geqslant 0$ and handle separately
  the two ranges. When $\ell < 0$ we have $\mathbbm{P} \left( X (\mathfrak{h})
  \geqslant \ell + a \right) = 1$. Note also that $z / (e^z - 1)$ is analytic
  in the region $\{v + \mathi t : v \in \mathbbm{R}, |t| \leqslant \pi\}$.
  Therefore $(v + \delta) / (e^{(v + \delta)} - 1) = v / (e^v - 1) + O
  (\delta)$ (the implicit constant depends on $C$, we won't bother making that
  dependence explicit). With those two remarks in mind, the sum over $\ell <
  0$ contributes
  \begin{eqnarray}
    \left( v + \delta \right)  \sum_{\ell > 0} e^{(- \ell + a) \cdot \left( v
    + \delta \right)} & = & \frac{v + \delta}{e^{v + \delta} - 1} \cdot e^{a
    \cdot \left( v + \delta \right)} \nonumber\\
    & = & \left( \frac{v}{e^v - 1} + O \left( \delta \right) \right) \cdot
    e^{av} \cdot \left( 1 + O \left( \delta \right) \right) \nonumber\\
    & = & \frac{v \cdot e^{av}}{e^v - 1} + O \left( \delta \right) \text{\, }
    = \text{ } v \sum_{\ell > 0} e^{(- \ell + a) \cdot v} + O \left( \delta
    \right) 
  \end{eqnarray}
  We split the sum over $\ell \geqslant 0$,
  \begin{equation}
    (v + \delta) \sum_{\ell \geqslant 0} e^{\left( v + \delta) \cdot \left(
    \ell + a \right) \right.} \cdot \mathbbm{P} \left( X \left( \mathfrak{h}
    \right) \geqslant \ell + a \right)
  \end{equation}
  into $0 \leqslant \ell < | \delta |^{- 1}$ and $\ell > | \delta |^{- 1}$.
  When $0 \leqslant \ell \leqslant | \delta |^{- 1}$ we have
  \begin{eqnarray}
    (v + \delta) e^{(v + \delta) (\ell + a)} & = & ve^{(v + \delta) (\ell +
    a)} + O \left( \delta e^{(v + \pi) (\ell + 1)} \right) \nonumber\\
    & = & ve^{v (\ell + a)} \cdot (1 + O (\delta \ell)) + O \left( \delta
    e^{(v + \pi) (\ell + 1)} \right) \nonumber\\
    & = & ve^{v (\ell + a)} + O \left( \delta \ell e^{v (\ell + 1)} + \delta
    e^{(v \upl \pi) (\ell + 1)} \right) \nonumber\\
    & = & ve^{v (\ell + a)} + O \left( \delta \ell e^{(v + \pi) (\ell + 1)}
    \right) 
  \end{eqnarray}
  Splitting the sum $(4.53)$ into $0 \leqslant \ell < | \delta |^{- 1}$ and
  $\ell > | \delta |^{- 1}$, and using $(4.54)$, we obtain that $(4.53)$
  equals to
  \begin{eqnarray}
    &  & (v + \delta) \sum_{0 \leqslant \ell \leqslant | \delta |^{- 1}}
    e^{(v + \delta) (\ell + a)} \cdot \mathbbm{P} \left( X (\mathfrak{h})
    \geqslant \ell + a \right) + O \left( \sum_{\ell \geqslant | \delta |^{-
    1}} e^{(v + \pi) (\ell + 1)} \mathbbm{P} \left( X (\mathfrak{h}) \geqslant
    \ell \right) \right) \nonumber\\
    & = & v \sum_{0 \leqslant \ell \leqslant | \delta |^{- 1}} e^{v (\ell +
    a)} \cdot \mathbbm{P} \left( X (\mathfrak{h}) \geqslant \ell + a \right) +
    O \left( \sum_{\ell \geqslant 0} \delta \ell e^{(v + \pi) (\ell + 1)}
    \mathbbm{P} \left( X (\mathfrak{h}) \geqslant \ell \right) \right)
    \nonumber\\
    & = & v \sum_{\ell \geqslant 0} e^{v (\ell + a)} \cdot \mathbbm{P}(X
    (\mathfrak{h}) \geqslant \ell + a) + O \left( \sum_{\ell \geqslant 0}
    \delta \ell e^{(v + \pi) (\ell + 1)} \mathbbm{P} \left( X (\mathfrak{h})
    \geqslant \ell \right) \right) 
  \end{eqnarray}
  By lemma 4.19 $X (\mathfrak{h})$ has an entire moment generating function.
  Therefore for each fixed $A > 0$, we have $\mathbbm{P}(X (\mathfrak{h})
  \geqslant t) \leqslant \mathbbm{E}[e^{AX (\mathfrak{h})}] e^{- At} = O_A
  (e^{- At})$. In particular we have $\mathbbm{P}(X (\mathfrak{h}) \geqslant
  \ell) = O_C (e^{- (C + 2 + \pi) (\ell + 1)}) = O_C (e^{- (v + 2 + \pi) (\ell
  + 1)})^{}$. Thus the error term in $(4.55)$ is $O_C (\delta)$. Adding up the
  estimate $(4.55)$ and $(4.52)$, the first assertion of the lemma follows.
  
  The lower bound in the second assertion follows from
  \begin{eqnarray*}
    \mathcal{P}_{\mathfrak{h}} \left( a ; v) \right. & \geqslant & v
    \sum_{\ell \leqslant 0} e^{v (\ell + a)} \text{ } = \text{ }
    \frac{ve^{av}}{e^v - 1} \text{ } \geqslant \text{ } \frac{v}{e^v - 1}
  \end{eqnarray*}
  For the upper bound, recall that $\mathbbm{P}(X (\mathfrak{h}) \geqslant
  \ell) = O_C (e^{- (C + 1) (\ell + 1)})$. Therefore
  \begin{eqnarray*}
    \mathcal{P}_{\mathfrak{h}} (a ; v) & \leqslant & v \sum_{\ell < 0} e^{v
    (\ell + a)} + v \sum_{\ell \geqslant 0} e^{C \cdot (\ell + 1)} \cdot
    \mathbbm{P}(X (\mathfrak{h}) \geqslant \ell) \ll_C \frac{v}{e^v - 1} + v =
    O_C (1)
  \end{eqnarray*}
  The lemma is now proven.
\end{proof}

\begin{lemma}
  Let $f \in \mathcal{C}$. Suppose that $\Psi (f ; t)$ is lattice distributed
  on $\mathbbm{Z}$. Given $\varepsilon > 0$ there is a $\delta > 0$ such that
  \[ | \exp ( \hat{\Psi} (f ; v + \mathi t) - \hat{\Psi} (f ; v)) | \leqslant
     1 - \delta \]
  for all $\pi \geqslant |t| \geqslant \varepsilon$ and $v \geqslant 0$. 
\end{lemma}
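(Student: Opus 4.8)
The plan is to reduce the claimed bound to an elementary fact about the function $g(t):=\sum_{k\geqslant 1}\lambda_k\bigl(1-\cos(kt)\bigr)$, where $\hat{\Psi}(f;z)=\sum_{k\geqslant 0}\lambda_k e^{kz}$ is the power series obtained from the facts that $\Psi(f;t)$ is lattice on $\mathbbm{Z}$ and supported on $[0,\infty)$ (so $\lambda_k\geqslant 0$ and $\sum_{k\geqslant 0}\lambda_k=1$). First I would observe that for $v\geqslant 0$ and $t\in\mathbbm{R}$,
\[
\tmop{Re}\bigl(\hat{\Psi}(f;v+\mathi t)-\hat{\Psi}(f;v)\bigr)=-\sum_{k\geqslant 1}\lambda_k e^{kv}\bigl(1-\cos(kt)\bigr)\leqslant -\sum_{k\geqslant 1}\lambda_k\bigl(1-\cos(kt)\bigr)=-g(t),
\]
using $e^{kv}\geqslant 1$ and $1-\cos(kt)\geqslant 0$, so that $g\geqslant 0$ and $|\exp(\hat{\Psi}(f;v+\mathi t)-\hat{\Psi}(f;v))|\leqslant e^{-g(t)}$ uniformly in $v\geqslant 0$ (this is just the inequality of Lemma 4.12, whose proof did not use non-latticeness). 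It therefore suffices to produce $\delta'>0$ with $g(t)\geqslant\delta'$ for all $\varepsilon\leqslant|t|\leqslant\pi$, and then take $\delta:=1-e^{-\delta'}>0$.

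Since $g$ is continuous, even and $2\pi$-periodic and $\{t:\varepsilon\leqslant|t|\leqslant\pi\}$ is compact, the infimum is attained, so I only need $g(t_0)>0$ for each $t_0\in[\varepsilon,\pi]$. Put $S:=\{k\geqslant 1:\lambda_k>0\}$. The hypothesis that $\Psi(f;\cdot)$ has non-zero second moment forces $\sum_{k\geqslant 1}\lambda_k k^2>0$, hence $S\neq\emptyset$. The key point is that being lattice on $\mathbbm{Z}$ gives $\gcd(S)=1$: if $d:=\gcd(S)\geqslant 2$, then every atom of $\Psi(f;\cdot)$ (including a possible atom at $0$) lies in $d\mathbbm{Z}$, so $\mathbbm{P}(X\in d\mathbbm{Z})=1$ for a random variable $X$ with law $\Psi(f;\cdot)$, contradicting that $\Psi(f;\cdot)$ is lattice on $\mathbbm{Z}$. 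From $\gcd(S)=1$ one extracts finitely many $k_1,\dots,k_r\in S$ with $\gcd(k_1,\dots,k_r)=1$ (fix any $k_1\in S$; for each prime $p\mid k_1$, $\gcd(S)=1$ supplies some element of $S$ not divisible by $p$; adjoin these).

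To finish I would argue by contradiction. If $g(t_0)=0$ with $t_0\in[\varepsilon,\pi]$, then every summand of $g(t_0)$ vanishes, so $\cos(k_i t_0)=1$, i.e. $k_i t_0=2\pi n_i$ for some $n_i\in\mathbbm{Z}$, for $i=1,\dots,r$; choosing $a_i\in\mathbbm{Z}$ with $\sum_i a_i k_i=1$ gives $t_0=\sum_i a_i(k_i t_0)=2\pi\sum_i a_i n_i\in 2\pi\mathbbm{Z}$, which is impossible since $0<\varepsilon\leqslant t_0\leqslant\pi<2\pi$. Hence $g>0$ on $[\varepsilon,\pi]$ and the lemma follows. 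The only non-routine step is the deduction $\gcd(S)=1$ from the lattice hypothesis; this is precisely where ``lattice on $\mathbbm{Z}$'' is needed rather than mere non-degeneracy of $\Psi(f;\cdot)$ --- for instance if $\Psi(f;\cdot)$ were supported on $2\mathbbm{Z}$ one would have $g(\pi)=0$ and the statement would fail.
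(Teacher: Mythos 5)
Your proof is correct and follows the same overall route as the paper's: reduce via the inequality $\tmop{Re}(\hat{\Psi}(f;v+\mathi t)-\hat{\Psi}(f;v))\leqslant \tmop{Re}(\hat{\Psi}(f;\mathi t)-1)$ (Lemma 4.12) to showing that $\tmop{Re}(\hat{\Psi}(f;\mathi t)-1)$ is strictly negative on $[\varepsilon,\pi]$, then invoke continuity and compactness to get a uniform $\delta$. Both proofs use the lattice representation $\hat{\Psi}(f;z)=\sum_{k\geqslant 0}\lambda_k e^{zk}$ with $\lambda_k\geqslant 0$ and both exploit that being lattice on $\mathbbm{Z}$ (rather than $d\mathbbm{Z}$, $d\geqslant 2$) forces $\gcd(S)=1$ for $S:=\{k\geqslant 1:\lambda_k>0\}$. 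The one substantive difference is the arithmetic finish. The paper selects two elements $k,\ell\in S$ with $(k,\ell)=1$, justifying this by asserting that otherwise every element of $S$ shares a common prime factor; that inference is a little too quick, since a set such as $\{6,10,15\}$ has $\gcd$ equal to $1$ yet contains no two coprime members. Your Bezout argument --- pick finitely many $k_1,\dots,k_r\in S$ with $\gcd(k_1,\dots,k_r)=1$, then from $\cos(k_i t_0)=1$ for all $i$ deduce $t_0=\sum_i a_i(k_i t_0)\in 2\pi\mathbbm{Z}$ --- sidesteps this entirely and handles the general case cleanly. It also makes explicit exactly where ``lattice on $\mathbbm{Z}$'' (as opposed to mere non-degeneracy of $\Psi$) enters, which the paper leaves implicit.
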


\begin{proof}
  For any $v, t \in \mathbbm{R}$ we have
  \begin{eqnarray}
    \tmop{Re} \left( \hat{\Psi} \left( f ; v + \mathi t \right) \um \hat{\Psi}
    \left( f ; v \right) \right) & = & \tmop{Re} \left( \int_{\mathbbm{R}}
    e^{vu} \cdot \left( e^{\mathi tu} \um 1 \right) \mathd \Psi (f ; u)
    \right) \nonumber\\
    & = & \int_{\mathbbm{R}} e^{vu} \cdot \left( \cos (tu) \um 1 \right)
    \mathd \Psi (f ; u) \nonumber\\
    & \leqslant & \int_{\mathbbm{R}} \left( \cos (tu) \um 1 \right) \mathd
    \Psi (f ; u) \text{ } = \text{ } \tmop{Re} \left( \hat{\Psi} \left( f ;
    \mathi t \right) \um 1 \right) 
  \end{eqnarray}
  Therefore $| \exp ( \hat{\Psi} (f ; v + \mathi t) - \hat{\Psi} (f ; \mathi
  t)) | \leqslant | \exp ( \hat{\Psi} (f ; \mathi t) - 1) |$ and it's enough
  to show that given $\varepsilon > 0$ there is a $\delta > 0$ such that $|
  \exp ( \hat{\Psi} (f ; \mathi t) - 1) | \leqslant 1 - \delta$ for all $\pi
  \geqslant |t| \geqslant \varepsilon$. Since $\Psi (f ; t)$ is lattice
  distributed it has jumps at the integers $0, 1, 2, \ldots$ (there are no
  jumps at the negative integers because $f \geqslant 0$). Denote the size of
  each jump by $\lambda_0, \lambda_1, \ldots$ . Thus
  \[ \hat{\Psi} (f ; z) = \int_{\mathbbm{R}} e^{zt} \mathd \Psi (f ; t) =
     \sum_{k \geqslant 0} \lambda_k \cdot e^{zk} \]
  If $\Psi (f ; t)$ has all its mass concentrated at one integer $k$, then $k
  = 1$ and $\hat{\Psi} (f; z) = e^z$. In this case the bound $|
  \exp ( \hat{\Psi} (f ; \mathi t) - 1) | = \exp (\cos (t) - 1) \leqslant 1 -
  \delta$ for $\pi \geqslant |t| \geqslant \varepsilon$ is trivial. In the
  remaining case there are at least two $k, \ell$ for which $\lambda_k > 0$ and
  $\lambda_{\ell} > 0$. Without loss of generality we can assume that $(k,
  \ell) = 1$. Otherwise all the $k$ for which $\lambda_k > 0$ would be
  divisible by a common prime $p$; thus $\Psi (f ; t)$ would not be lattice
  distributed on $\mathbbm{Z}$ (but on $p\mathbbm{Z}$). Thus for two such $k,
  \ell$, we have
  \[ \tmop{Re} ( \hat{\Psi} (f ; \mathi t) - 1) = \sum_{r \geqslant 0}
     \lambda_r \cdot (\cos (rt) - 1) \leqslant \lambda_k \cdot (\cos (kt) - 1)
     + \lambda_{\ell} \cdot (\cos (\ell t) - 1) \leqslant 0 \]
  We claim that for $|t| \leqslant \pi$ the upper bound is attained only at $t
  = 0$. Indeed suppose that $\lambda_k (\cos (kt) - 1) + \lambda_{\ell} (\cos
  (\ell t) - 1) = 0$. Then simultaneously $\cos (kt) = 1$ and $\cos (\ell t) =
  1$. Hence $t = 2 \pi \alpha / \ell$ and $t = 2 \pi \beta / k$ for some
  integer $| \alpha | \leqslant \ell / 2$ and some integer $| \beta |
  \leqslant k / 2$, because $|t| \leqslant \pi$. In particular $2 \pi \alpha /
  \ell = 2 \pi \beta / k$. If $t \neq 0$ then $\alpha \neq 0$ and $\beta \neq
  0$, hence, $k \alpha / \ell = \beta \in \mathbbm{Z}$ which is impossible
  because $(k, \ell) = 1$ and $| \alpha | < \ell$. Thus $\tmop{Re} (
  \hat{\Psi} (f ; \mathi t) - 1) < 0$ for all $0 < |t| \leqslant \pi$. Hence
  we have $| \exp ( \hat{\Psi} (f ; \mathi t) - 1) | < 1$ for all $0 < |t|
  \leqslant \pi$. By continuity, given $\varepsilon > 0$, there is a $\delta >
  0$ such that $| \exp ( \hat{\Psi} (f ; \mathi t) - 1) | \leqslant 1 -
  \delta$ for all $\varepsilon \leqslant |t| \leqslant \pi$. Since
  \[ | \exp ( \hat{\Psi} (f ; v + \mathi t) - \hat{\Psi} (f ; v) | \leqslant |
     \exp ( \hat{\Psi} (f ; \mathi t) - 1) | \]
  for all $v \in \mathbbm{R}$, the lemma follows. 
\end{proof}

We are ready to prove the ``general lemma''.

\begin{proof}[Proof of Lemma 4.24]
  The function $\omega (f ; t)$ is continuous, hence the parameter $v \assign
  v_f (x ; \Delta) = \omega (f ; \Delta / \sigma_{\Psi} (f ; x))$ is bounded
  throughout $1 \leqslant \Delta \leqslant \delta \sigma_{\Psi} (f ; x)$, the
  bound depending only on $\delta$. To evaluate $(4.51)$ we proceed with the
  saddle-point method. As is usually done we split the integral into two
  ranges. The ``tiny'' range $|t| \leqslant \lambda (x) (\tmop{loglog} x)^{- 1
  / 2}$ denoted by $\mathcal{M}$, because this range will contribute the main
  term, and the remaining range $|t| \geqslant \lambda (x) (\tmop{loglog}
  x)^{- 1 / 2}$ denoted $\mathcal{R}$. Here we choose $\lambda (x)$ to be any
  function such that $(\tmop{logloglog} x)^4 \ll \lambda (x) \ll
  (\tmop{logloglog} x)^5$. Let us confine attention to how the integrand in
  $(4.51)$ behaves when $t \in \mathcal{M}$. First of all, when $t \in
  \mathcal{M}$, upon expanding $\hat{\Psi} (f ; v + \tmop{it})$ into a Taylor
  series we obtain
  \begin{eqnarray*}
    \hat{\Psi} (f ; v + \tmop{it}) & = & \hat{\Psi} (f ; v) + \tmop{it}
    \hat{\Psi}' (f ; v) - \frac{t^2}{2} \cdot \hat{\Psi}'' (f ; v) +
    O_{\delta} \left( \lambda (x)^3 (\log_2 x)^{- 3 / 2} \right)
  \end{eqnarray*}
  The $\delta$ in the error term comes from the bound $0 \leqslant v =
  O_{\delta} (1)$ on $v$. We will not indicate the dependence on $\delta$ in
  our error terms. Using the above expansion and Lemma 4.8 we conclude that
  \begin{eqnarray}
    &  & \left( \log x \right)^{\hat{\Psi} (f ; v + \mathi t)} \cdot e^{-
    \tmop{it} \xi_f (x ; \Delta)} \nonumber\\
    & = & \left( \log x \right)^{\hat{\Psi} (f ; v) + \mathi t \hat{\Psi}' (f
    ; v) - t^2 / 2 \cdot \hat{\Psi}'' (f ; v)} e^{- \mathi t \xi_f (x ;
    \Delta)}  \left( 1 + O (\lambda (x)^3 (\log_2 x)^{- 1 / 2}) \right)
    \nonumber\\
    & = & \left( \log x \right)^{\hat{\Psi} (f ; v) + \mathi t \hat{\Psi}' (f
    ; v) - t^2 / 2 \cdot \hat{\Psi}'' (f ; v)} \left( \log x \right)^{- \mathi
    t \hat{\Psi}' (f ; v)} e^{- \mathi tc (f)}  \left( 1 \upl O (\lambda (x)^3
    (\log_2 x)^{- 1 / 2} \right) \nonumber\\
    & = & \left( \log x \right)^{\hat{\Psi} (f ; v) - (t^2 / 2) \cdot
    \hat{\Psi}'' (f ; v)} e^{- \mathi tc (f)}  \left( 1 + O \left( \lambda
    (x)^3 (\log_2 x)^{- 1 / 2} \right) \right) \nonumber\\
    & = & \left( \log x \right)^{\hat{\Psi} (f ; v) - (t^2 / 2) \cdot
    \hat{\Psi}'' (f ; v)} \cdot \left( 1 + O \left( \lambda (x)^3 (\log_2
    x)^{- 1 / 2} \right) \right) 
  \end{eqnarray}
  where in the last line we used the expansion $e^{\mathi tc (f)} = 1 + O (t
  \cdot c (f))$ together with the bound $|t| \leqslant \lambda (x) (\log_2
  x)^{- 1 / 2}$. Note that $|t / v| \leqslant | \lambda (x) \cdot
  (\tmop{loglog} x)^{- 1 / 2} \cdot v^{- 1} | = o (1)$ because $v \asymp
  \Delta \cdot (\tmop{loglog} x)^{- 1 / 2}$ and $\lambda (x) = o (\Delta)$. By
  lemma 4.25, when $t \in \mathcal{M}$,
  \begin{eqnarray}
    &  & (1 / (v + \mathi t)) \cdot \mathcal{P}_{\mathfrak{h}} \left( \xi_f
    (x ; \Delta) ; v + \mathi t) \right. \nonumber\\
    & = & v^{- 1} \cdot (1 + O (t \cdot v^{- 1})) \cdot
    (\mathcal{P}_{\mathfrak{h}} (\xi_f (x ; \Delta) ; v) + O (t)) \nonumber\\
    & = & v^{- 1} \cdot \mathcal{P}_{\mathfrak{h}} (\xi_f (x ; \Delta) ; v)
    \cdot (1 + O_{\delta} (t)) \cdot (1 + O (t \cdot v^{- 1})) \nonumber\\
    & = & v^{- 1} \cdot \mathcal{P}_{\mathfrak{h}} (\xi_f (x ; \Delta) ; v)
    \cdot (1 + O (\lambda (x) \cdot (\tmop{loglog} x)^{- 1 / 2} \cdot v^{- 1})
  \end{eqnarray}
  The third line is justified by the bound $\mathcal{P}_{\mathfrak{h}} (a ; v)
  \gg_{\delta} 1$ which follows from the inequality
  $\mathcal{P}_{\mathfrak{h}} (a ; v) \geqslant v / (e^v - 1)$ of lemma 4.25
  and $v = O_{\delta} (1)$. Finally by analyticity of $\mathcal{A}(z)$, for $t
  \in \mathcal{M}$, we have
  \begin{eqnarray}
    \mathcal{A}(v + \tmop{it}) & = & \mathcal{A}(v) + O (t) \nonumber\\
    & = & \mathcal{A}(v) + O \left( \lambda (x) \cdot (\tmop{loglog} x)^{- 1
    / 2} \right) \nonumber\\
    & = & \left. \mathcal{A}(v) \cdot (1 + O \left( \lambda (x) \cdot
    (\tmop{loglog} x)^{- 1 / 2} \right) \right) 
  \end{eqnarray}
  where the last line is justified by the non-vanishing of $\mathcal{A}(v)$
  (because $\mathcal{A}(x) \neq 0$ for $x \geqslant 0$ and $0 \leqslant v
  \leqslant O_{\delta} (1)$ we have $\mathcal{A}(v) \gg_{\delta} 1$ by
  continuity of $\mathcal{A}(\cdot)$). From the equations $(4.57)$, $(4.58)$,
  $(4.59)$ and lemma 4.8, we conclude that
  \begin{eqnarray}
    &  & \mathcal{A}(v + \mathi t) \cdot (1 / (v + \mathi
    t))\mathcal{P}_{\mathfrak{h}} (\xi_f (x ; \Delta) ; v + \mathi t) \cdot
    \left( \log x \right)^{\hat{\Psi} (f ; v + \mathi t) - 1} \cdot e^{- (v +
    \mathi t) \xi_f (x ; \Delta)} \nonumber\\
    & = & \mathcal{A}(v) \cdot (1 / v)\mathcal{P}_{\mathfrak{h}} (\xi_f (x ;
    \Delta) ; v) \cdot (\log x)^{\hat{\Psi} (f ; v) - 1 - (t^2 / 2)
    \hat{\Psi}'' (f ; v)} \cdot e^{- v \xi_f (x ; \Delta)} \cdot (1 \upl O
    (\mathcal{E})) \nonumber\\
    & = & \mathcal{A}(v) \cdot (1 / v)\mathcal{P}_{\mathfrak{h}} (\xi_f (x ;
    \Delta) ; v) \cdot \left( \log x \right)^{A (f ; v) \um (t^2 / 2)
    \hat{\Psi}'' (f ; v)} \cdot e^{- vc (f)} \cdot \left( 1 \upl O \left(
    \mathcal{E} \right) \right) 
  \end{eqnarray}
  where $A (f ; v) = \hat{\Psi} (f ; v) - 1 - v \hat{\Psi}' (f ; v)$ and
  $\mathcal{E} \assign \mathcal{E} \left( x ; v \right) \assign \lambda (x)^3
  (\tmop{loglog} x)^{- 1 / 2} \cdot v^{- 1}$. In view of the above relation to
  estimate $(4.51)$ over $t \in \mathcal{M}$, it remains to note that
  \begin{eqnarray}
    &  & \int_{\mathcal{M}} \left( \log x \right)^{\left. - (t^2 / 2 \right)
    \cdot \hat{\Psi}'' (f ; v)} \cdot \frac{\mathd t}{2 \pi} \nonumber\\
    & = & \int_{\mathcal{M}} \exp \left( - \frac{t^2}{2} \cdot \hat{\Psi}''
    (f ; v) \tmop{loglog} x \right) \cdot \frac{\mathd t}{2 \pi} \nonumber\\
    & = & \frac{1}{( \hat{\Psi}'' (f ; v) \tmop{loglog} x)^{1 / 2}} \int_{-
    \lambda (x)}^{\lambda (x)} e^{- t^2 / 2} \cdot \frac{\mathd t}{2 \pi}
    \nonumber\\
    & = & \frac{1}{( \hat{\Psi}'' (f ; v) \tmop{loglog} x)^{1 / 2}} \cdot
    \left( \int_{\mathbbm{R}} e^{- t^2 / 2} \cdot \frac{\mathd t}{2 \pi} + o
    (1) \right) \nonumber\\
    & = & \frac{1}{(2 \pi \hat{\Psi}'' (f ; v) \tmop{loglog} x)^{1 / 2}}
    \cdot \left( 1 + o (1) \right) 
  \end{eqnarray}
  Together from $(4.60)$ and $(4.61)$ we conclude that the integral $(4.51)$
  restricted to $t \in \mathcal{M}$ is equal to
  \[ \mathcal{A}(v) (1 / v)\mathcal{P}_{\mathfrak{h}} (\xi_f (x ; \Delta) ; v)
     \cdot \frac{(\log x)^{\hat{\Psi} (f ; v) - 1 - v \hat{\Psi}' (f ; v)}}{(2
     \pi \hat{\Psi}'' (f ; v) \tmop{loglog} x)^{1 / 2}} \cdot e^{- vc (f)}
     \cdot \left( 1 + o (1) + O \left( \mathcal{E}(x ; v \right)^{^{}} \right)
  \]
  where $\mathcal{E}(x ; v) \assign \lambda (x)^3 (\tmop{loglog} x)^{- 1 / 2}
  v^{- 1}$. As we noticed $\lambda (x)^3 (\tmop{loglog} x)^{- 1 / 2} v^{- 1} =
  o (1)$ because $\lambda (x)^3 = o (\Delta)$ and $v \asymp \Delta /
  (\tmop{loglog} x)^{1 / 2}$. Therefore the above formula furnishes the
  desired main term. It remains to bound the integral $(4.51)$ restricted to
  $t \in \mathcal{R}$. By lemma $4.25$ we have $\mathcal{P}_{\mathfrak{h}} (a
  ; v + \mathi t) = \mathcal{P}_{\mathfrak{h}}(a;v) + O(|t|) \ll 1$ uniformly in $1 \leqslant \Delta \leqslant c \sigma
  (f ; x)$ because $v \asymp \Delta / \sigma (f ; x)$ belongs to a bounded
  range. Further since $\mathcal{A}(\cdot)$ is analytic we have $\mathcal{A}(v
  + \mathi t) \ll 1$ because $v + \mathi t$ lies in a bounded domain. Thus 
  (writing $s = v + \mathi t$)
  \begin{eqnarray}
    &  & \int_{\mathcal{R}} \mathcal{A}(s) (1 / s)\mathcal{P}_{\mathfrak{h}}
    (\xi_f (x ; \Delta) ; s) \cdot (\log x)^{\hat{\Psi} (f ; s) - 1} \cdot
    e^{- s \xi_f (x ; \Delta)} \cdot \mathd t \nonumber\\
    & \ll & (1 / v) \cdot (\log x)^{\Psi (f ; v) - 1} \cdot e^{- v \xi_f (x ;
    \Delta)} \cdot \int_{\mathcal{R}} (\log x)^{\tmop{Re} ( \hat{\Psi} (f ; v
    + \mathi t) - \hat{\Psi} (f ; v))} \cdot \mathd t 
  \end{eqnarray}
  Given $\varepsilon > 0$, by lemma 4.26 there is a $\delta > 0$ such that $|
  \exp ( \hat{\Psi} (f ; v + \mathi t) - \hat{\Psi} (f ; v)) | \leqslant 1 -
  \delta$ for all $\pi \geqslant |t| \geqslant \varepsilon$. Exponentiating we
  get $(\log x)^{\tmop{Re} ( \hat{\Psi} (f ; v + \mathi t) - \hat{\Psi} (f ;
  v)} \leqslant (\log x)^{\log (1 - \delta)}$ which is sufficient to bound the
  part $| \pi | \geqslant |t| \geqslant \varepsilon$ of the integral in
  $(4.62)$. We are left with bounding the remaining range $\lambda (x) \cdot
  (\tmop{loglog} x)^{- 1 / 2} \leqslant |t| \leqslant \varepsilon$. \ As in
  the proof of Lemma 4.26, we observe that since $\Psi (f ; t)$ is lattice
  distributed, we have
  \begin{eqnarray*}
    \hat{\Psi} \left( f ; z) \right. & = & \sum_{k \geqslant 0} \lambda_k
    \cdot e^{zk}
  \end{eqnarray*}
  Hence $\tmop{Re} ( \hat{\Psi} (f ; v + \mathi t) - \hat{\Psi} (f ; v)) =
  \sum_{k \geqslant 0} \lambda_k \cdot e^{vk} \cdot (\cos (kt) - 1) \leqslant
  \lambda_{\ell} (\cos (\ell t) - 1)$ where $\ell > 0$ is the first integer
  for which $\lambda_{\ell} > 0$. In the range $|t| \leqslant \varepsilon$ we
  have $\cos (\ell t) - 1 \leqslant - ct^2$ provided that $\varepsilon$ is
  sufficiently small (of course $c$ depends on $\varepsilon$ and $\ell$). We
  reduce $\varepsilon$ if necessary and fix it, once it's small enough (note
  that our bound over $\pi \geqslant |t| \geqslant \varepsilon$ is negligible
  as long as $\varepsilon$ is fixed). Thus $\tmop{Re} (
  \hat{\Psi} (f ; v + \mathi t) - \hat{\Psi} (f ; v)) \leqslant - ct^2$ for
  $|t| \leqslant \varepsilon$. Hence
  \begin{eqnarray*}
    &  & \int_{\mathcal{R} \cap \left\{ |t| \leqslant \varepsilon \right\}}
    \left( \log x \right)^{\tmop{Re} \left( \hat{\Psi} (f ; v + \tmop{it}) -
    \hat{\Psi} (f ; v) \right)} \tmop{dt}\\
    \leqslant &  & 2 \int_{\lambda (x) \cdot (\tmop{loglog} x)^{- 1 /
    2}}^{\varepsilon} \exp \left( - ct^2 \cdot \tmop{loglog} x \right)
    \tmop{dt} \leqslant 2 \cdot e^{- c \lambda (x)^2}
  \end{eqnarray*}
  Thus by $(4.62)$ and our earlier remarks, the integral in $(4.51)$
  restricted to $t \in \mathcal{R}$, turns out to be bounded by
  \[ (1 / v) \cdot \left( \log x \right)^{\hat{\Psi} \left( f ; v \right) - 1}
     \cdot e^{- v \xi_f (x ; \Delta)} \cdot \left[ \exp (- c \cdot \lambda
     (x)^2) + \left( \log x \right)^{\log (1 - \delta)} \right] \]
  which is negligible because $\lambda (x) \gg \tmop{logloglog} x$ and 
  $\xi_f(x;\Delta) = \hat{\Psi}'(f;v) \log\log x + O(1)$ by lemma 4.8. 
  Hence the
  integral $(4.51)$ restricted to $\mathcal{R}$ is negligible. This, together
  with the asymptotic for ``$(4.51)$ restricted to $\mathcal{M}$" finishes the
  proof of the lemma. 
\end{proof}

\subsubsection{Proof of Proposition 4.17}

We are now ready to prove Proposition 4.17.

\begin{proof}[Proof of Proposition 4.17]
Let $y \assign y(x) \leqslant (1/8)\log\log\log x$ be a parameter 
growing to infinity so slow so as to have
$1 - (\log\log x)^{-1/8} \geqslant 
|\{\mathfrak{h}(p)\} - \{\mathfrak{h}(q)\}| \geqslant 
(\log\log x)^{-1/8}$ for any two prime $p,q \leqslant y$ with 
$\{\mathfrak{h}(p)\}
\neq \{\mathfrak{h}(q)\}$. (Here $\{\mathfrak{h}(p)\}$ denotes the
fractional part of $\mathfrak{h}(p)$). Let $\mathfrak{H}$ be a strongly
  additive function defined by
  \[ \mathfrak{H}(p) = \left\{ \begin{array}{l}
       \left\lceil \mathfrak{h}(p) \right\rceil \text{ if } p \geqslant y\\
       \mathfrak{h}(p) \text{ \ \ \ otherwise}
     \end{array} \right. \]
  as in corollary 4.23. For an additive function $g$ we let $\Omega (g ; x) =
  \sum_{p \leqslant x} g (p) Z_p$. Since $\Omega (f ; x) = \Omega
  (\mathfrak{g}; x) + \Omega (\mathfrak{h}; x)$ and 
$\mathfrak{H} \geqslant \mathfrak{h} \geqslant 0,
  Z_p \geqslant 0$ we have
  \[ \Omega (\mathfrak{g}; x) + \Omega (\mathfrak{h}; y) \leqslant \Omega (f ;
     x) \leqslant \Omega (\mathfrak{g}; x) + \Omega (\mathfrak{H}; x) \]
  Therefore, for all $t \in \mathbbm{R}$,
  \[ \mathbbm{P}_{\mathcal{F}_x} \left( \Omega (\mathfrak{g}; x) + \Omega
     (\mathfrak{h}; y) \geqslant t) \leqslant \mathbbm{P}_{\mathcal{F}_x}
     \left( \Omega (f ; x) \geqslant t) \leqslant \mathbbm{P}_{\mathcal{F}_x}
     \left( \Omega (\mathfrak{g}; x) + \Omega (\mathfrak{H}; x) \geqslant t)
     \right. \right. \right. \]
  Now set $t \assign \xi_f (x ; \Delta) = \mu (f ; x) + \Delta \sigma (f ;
  x)$. Our goal is to show that both the upper and lower bound are
  asymptotic to the (asymptotic) expression given in proposition 4.17. We will
  carry out the proof only for the upper bound, because the proof for the 
  lower bound is almost identical. Let $v = v_f (x ; \Delta)$. By lemma
  4.7 the parameter $v$ is bounded when $1 \leqslant \Delta \leqslant c \sigma
  (f ; x)$. Denote by $\delta > 0$ a real such that $0 \leqslant v \leqslant
  \delta$ uniformly in $1 \leqslant \Delta \leqslant c \sigma (f ; x)$.
  Finally set $s \assign v + \mathi t = v_f (x ; \Delta) + \mathi t$. By
  assumptions
  \begin{equation}
    \mathbbm{E}_{\mathcal{F}_x} \left[ e^{s \Omega (\mathfrak{g}; x) + s
    \Omega (\mathfrak{H}; x)} \right] =\mathbbm{E}_{\mathcal{F}_x} \left[ e^{s
    \Omega (\mathfrak{g}; x)} \right] \cdot \prod_{p \leqslant x} \left( 1 -
    \frac{1}{p} + \frac{e^{s\mathfrak{H}(p)}}{p} \right) + O \left(
    \mathcal{E}(x ; v) \right)
  \end{equation}
  where $\mathcal{E}(x;v) = (\log x)^{\hat{\Psi}(f;v)-3/2}$. 
  Note that because $Z_p \in \{0 ; 1\}$ and $\Omega (\mathfrak{g}; x) \in
  \mathbbm{Z}$ all the values taken by $\Omega (\mathfrak{g}; x) + \Omega
  (\mathfrak{H}; x)$ lie in the set $\mathbbm{N}+\mathcal{D}_{\mathfrak{h}}
  (y)$ where $\mathcal{D}_{\mathfrak{h}} (y)$ is the set of fractional parts
  $\{ \sum_{p \leqslant y} \mathfrak{H}(p) \varepsilon_p \} = 
   \{ \sum_{p \leqslant y} \mathfrak{h}(p) \varepsilon_p \}$ , 
  $\varepsilon_i \in \{0 ; 1\}$ 
  (if it seems strange that in the fractional part we consider
  only the primes $p \leqslant y$ then recall that by definition
  $\mathfrak{H}(p) \in \mathbbm{Z}$ for $p > y$). In particular
  $|\mathcal{D}_{\mathfrak{h}} (y) | \leqslant 2^{\pi (y)} \leqslant 2^y
  \ll (\log\log x)^{1/8}$ (because $y \leqslant (1/8)\log\log\log x$). 
  Therefore we can write
  \begin{equation}
    \mathbbm{E}_{\mathcal{F}_x} \left[ e^{s \Omega (\mathfrak{g}; x) + s
    \Omega (\mathfrak{H}; x)} \right] = \sum_{\omega \in
    \mathbbm{N}+\mathcal{D}_{\mathfrak{h}} (y)} \mathbbm{P}_{\mathcal{F}_x}
    \left( \Omega (\mathfrak{g}; x) + \Omega (\mathfrak{H}; x) = \omega
    \right) \cdot e^{s \omega}
  \end{equation}
  And this sum converges because it's finite. In the same vein the main term
  in $(4.63)$ is the Laplace transform of the distribution function
  \begin{equation}
    F (x ; t) = \sum_{k \in \mathbbm{Z}} \mathbbm{P}_{\mathcal{F}_x} \left(
    \Omega (\mathfrak{g}; x) = k \right) \cdot \mathbbm{P}_{\mathcal{I}}
    \left( \sum_{p \leqslant x} \mathfrak{H}(p) X_p \leqslant t - k \right)
  \end{equation}
  Here the $X_p$ are independent Bernoulli random variable over some
  probability space $(\Theta ; \mathcal{I})$. Their distribution is given by
  $\mathbbm{P}(X_p = 1) = 1 / p$ and $\mathbbm{P}(X_p = 0) = 1 - 1 / p$. All
  the ``jumps'' of $F (x ; t)$ are contained in the set
  $\mathbbm{N}+\mathcal{D}_{\mathfrak{h}} (y)$. Thus the main term in $(4.63)$
  admits an expansion $\sum_{\omega \in \mathbb{N} + \mathcal{D}_\mathfrak{h}}
  \delta_\omega \cdot e^{s \omega}$ similar to the one in $(4.64)$. 
  Consider the ``kernel''
  \[ \mathfrak{K}(s ; t) = \sum_{\tmscript{\begin{array}{c}
       \omega \in \mathbbm{N}+\mathcal{D}_{\mathfrak{h}} (y)\\
       \omega \geqslant t
     \end{array}}} e^{- s \omega} \text{ } = \text{ }
     \sum_{\tmscript{\begin{array}{c}
       k \in \mathbbm{Z}\\
       k \geqslant t - 2
     \end{array}}} e^{- sk} \sum_{\tmscript{\begin{array}{c}
       d \in \mathcal{D}_{\mathfrak{h}} (y)\\
       k + d \geqslant t
     \end{array}}} e^{- sd} \]
  which is in modulus bounded by $\ll (1/v) e^{-vt} \cdot
  |\mathcal{D}_{\mathfrak{h}}
  (y) | \ll (1 / v) e^{- vt} \cdot (\log\log x)^{1/8}$. Let
  $\xi \assign \xi_f (x ; \Delta)$. We will keep this abbreviation in use.
  Multiplying the left hand side of $(4.63)$ by $\mathfrak{K}(s;\xi)$
  and integrating with 
  $(2 \log\log x)^{-1} \int_{- \log\log x}^{\log\log x} \dots
  \mathd t$ we obtain
  \begin{equation}
  \sum_{\omega \in \mathbb{N} + \mathcal{D}_{\mathfrak{h}}(y)} 
  \mathbb{P}_{\mathcal{F}_x}
  (\Omega(\mathfrak{g};x)+\Omega(\mathfrak{H};x) = \omega)
  \sum_{\omega' \in \mathbb{N} + \mathcal{D}_{\mathfrak{h}}(y), 
  \omega' \geqslant \xi} \frac{e^{v (\omega - \omega')}}{2 \log\log x}
  \int_{-\log\log x}^{\log\log x} e^{\mathi t (\omega - \omega')} \mathd t
  \nonumber
  \end{equation}
  If $\omega \neq \omega'$ (with $\omega,\omega' \in \mathbb{N} +
  \mathcal{D}_{\mathfrak{h}}(y)$) then by our choice of $y$ we have $
  1 - (\log\log x)^{-1/8} \geqslant |\{\omega\}
  - \{\omega'\}| \geqslant (\log\log x)^{-1/8}$. It follows that 
  $(2 \log\log x)^{-1} \int_{-\log\log x}^{\log\log x} e^{\mathi t
  (\omega - \omega')} \mathd t = \mathbb{I}_{\omega = \omega'} +
  O((\log\log x)^{-7/8})$. (Here $\mathbb{I}_{\omega = \omega'}$ 
  is the indicator function of $\omega = \omega'$).
  Hence the previous equation simplifies to
  \begin{equation}
  \mathbb{P}_{\mathcal{F}_x}
  (\Omega(\mathfrak{g};x) + \Omega(\mathfrak{H};x) \geqslant \xi)
  + O(\mathbb{E}_{\mathcal{F}_x} \left[ e^{v (\Omega(\mathfrak{g};x) + 
  \Omega(\mathfrak{H};x))} \right] (1/v) e^{-v \xi} \cdot 
  |\mathcal{D}_{\mathfrak{h}}(y)| \cdot (\log\log x)^{-7/8})
  \nonumber
  \end{equation}
  and by our assumptions and lemma 4.8 the error term is $\ll (1/v)
  (\log x)^{A(f;v)} \cdot (\log\log x)^{-3/4}$, where as usual
  $A(f;v) = \hat{\Psi}(f;v) - v\hat{\Psi}'(f;v) - 1$. 
  Similarly, multiplying the right hand side of $(4.63)$ by
  $\mathfrak{K}(s;\xi)$ and
  then integrating over $(2 \log\log x) \int_{-\log\log x}^{\log\log x}
  \dots \mathd t$ gives
  \begin{equation}
  1 - F(x; \xi) + O((1/v)(\log x)^{A(f;v)} \cdot (\log\log x)^{-3/4})
  \nonumber
  \end{equation}
  (Where $F(x;t)$ is defined by $(4.65)$).
  Therefore multiplying both sides of $(4.63)$ by $\mathfrak{K}(s;\xi)$
  and integrating as we've done before, we obtain the equality
  \begin{equation}
  \mathbbm{P}_{\mathcal{F}_x} \left( \Omega (\mathfrak{g}; x) + \Omega
    (\mathfrak{H}; x) \geqslant \xi \right) = 1 - F(x; \xi)
    + O((1/v) (\log x)^{A(f;v)} \cdot (\log\log x)^{-3/4})
  \end{equation}   
  Note that the error term is negligible compared to the (expected) size of 
  the main. Thus in view of $(4.66)$ and our earlier remark it remains 
  to estimate $1 - F(x; \xi)$. 
  To ease notation let $\Omega_X (\mathfrak{H}; x)
  = \sum_{p \leqslant x} \mathfrak{H}(p) X_p$ where the $X_p$ are independent
  Bernoulli random variables over the probability space $(\Theta ;
  \mathcal{I})$. Rewriting $(4.65)$ and using Cauchy's formula, we obtain
  \begin{eqnarray}
    1 - F (x ; \xi) & = & \sum_{k \in \mathbbm{Z}} \mathbbm{P}_{\mathcal{F}_x}
    \left( \Omega (\mathfrak{g}; x) = \lfloor \xi \rfloor - k \right) \cdot
    \mathbbm{P}_{\mathcal{I}} \left( \Omega_X (\mathfrak{H}; x) \geqslant
    \{\xi\}+ k \right) \nonumber\\
    & = & \sum_{k \in \mathbbm{Z}} \left[ \int_{- \pi}^{\pi}
    \mathbbm{E}_{\mathcal{F}_x} \left[ e^{s \Omega (\mathfrak{g}; x)} \right]
    e^{- s \lfloor \xi \rfloor + sk} \cdot \frac{\mathd t}{2 \pi} \right]
    \cdot \mathbbm{P}_{\mathcal{I}} \left( \Omega_X (\mathfrak{H}; x)
    \geqslant \{\xi\}+ k \right) \nonumber\\
    & = & \int_{- \pi}^{\pi} \mathbbm{E}_{\mathcal{F}_x} \left[ e^{s \Omega
    (\mathfrak{g}; x)} \right] e^{- s \lfloor \xi \rfloor} \cdot \sum_{k \in
    \mathbbm{Z}} e^{sk} \mathbbm{P}_{\mathcal{I}} \left( \Omega_X
    (\mathfrak{H}; x) \geqslant \{\xi\}+ k \right) \cdot \frac{\mathd t}{2
    \pi} 
  \end{eqnarray}
  We massage the above expression. Let $X (\mathfrak{h}) \assign \sum_p
  \mathfrak{h}(p) X_p$. By Corollary 4.23,
  \begin{eqnarray}
    &  & \sum_{k \geqslant 0} e^{sk} \cdot \mathbbm{P}_{\mathcal{I}} \left(
    \Omega_X \left( \mathfrak{H}; x \right) \geqslant \{\xi\}+ k \right)
    \nonumber\\
    & = & \sum_{k \geqslant 0} e^{sk} \cdot \left[ \mathbbm{P}_{\mathcal{I}}
    \left( X (\mathfrak{h}) \geqslant \{\xi\}+ k \right) + O_{\delta} \left(
    \frac{e^{- (2 \delta + 1) k}}{(\log y)^{1 / 4}} \right) \right] 
  \end{eqnarray}
  Since $\tmop{Re} s = v \leqslant \delta$ the error term simplifies to $O
  ((\log y)^{- 1 / 4})$. Also, note that for $k < 0$ we trivially have
  $\mathbbm{P}_{\mathcal{I}} (\Omega_X (\mathfrak{H}; x) \geqslant \{\xi\}+ k)
  = 1 =\mathbbm{P}_{\mathcal{I}} (X (\mathfrak{h}) \geqslant \{\xi\}+ k)$.
  Thus the identity $\sum_{k < 0} e^{sk} \mathbbm{P}_{\mathcal{I}} (\Omega_X
  (\mathfrak{H}; x) \geqslant \{\xi\}+ k) = \sum_{k < 0} e^{sk}
  \mathbbm{P}_{\mathcal{I}} (X (\mathfrak{h}) \geqslant \{\xi\}+ k)$ holds.
  Adding this identity to $(4.68)$, we obtain
  \begin{eqnarray*}
    \sum_{k \in \mathbbm{Z}} e^{sk} \mathbbm{P}_{\mathcal{I}} \left( \Omega_X
    (\mathfrak{H}; x) \geqslant \{\xi\}+ k \right) & = & \sum_{k \in
    \mathbbm{Z}} e^{sk} \mathbbm{P}_{\mathcal{I}} \left( X (\mathfrak{h})
    \geqslant \{\xi\}+ k \right) + O_{\delta} \left( (\log y)^{- 1 / 4}
    \right)\\
    & = & e^{- s\{\xi\}} \cdot (1 / s)\mathcal{P}_{\mathfrak{h}} (\xi ; s) +
    O_{\delta} \left( (\log y)^{- 1 / 4} \right)
  \end{eqnarray*}
  Inserting the above into $(4.67)$ yields
  \begin{equation}
    1 - F (x ; \xi) = \int_{- \pi}^{\pi} \mathbbm{E}_{\mathcal{F}_x} \left[
    e^{s \Omega (\mathfrak{g}; x)} \right] \frac{\mathcal{P}_{\mathfrak{h}}
    (\xi ; s) \mathd t}{e^{s \xi} \cdot 2 \pi s} \upl O \left( \frac{\int_{-
    \pi}^{\pi} |\mathbbm{E}_{\mathcal{F}_x} [e^{s \Omega (\mathfrak{g}; x)}] |
    \mathd t}{e^{v \xi} \cdot (\log y)^{1 / 4}} \right)
  \end{equation}
  Since $0 \leqslant v \leqslant \delta$ we have $\mathbbm{E}_{\mathcal{F}_x}
  \left[ e^{s \Omega (\mathfrak{g}; x)} \right] \ll_{\delta} (\log
  x)^{\tmop{Re} ( \hat{\Psi} (f ; s)) - 1} + (\log x)^{\hat{\Psi} (f ; v) - 3
  / 2}$, by assumptions, because $\mathcal{A}(s)$ is analytic hence bounded in
  the (bounded) region $0 \leqslant \tmop{Re} s \leqslant \delta$ and $|
  \tmop{Im} s| \leqslant 2 \pi$. Therefore the error term in $(4.69)$ is
  bounded by
  \begin{equation}
    \frac{(\log x)^{\hat{\Psi} (f ; v) - 1}}{(\log y)^{1 / 4}} \cdot e^{- v
    \xi} \int_{- \pi}^{\pi} (\log x)^{\tmop{Re} ( \hat{\Psi} (f ; s) -
    \hat{\Psi} (f ; v))} \cdot \mathd t + (\log x)^{\hat{\Psi} (f ; v) - 3 /
    2} \cdot e^{- v \xi}
  \end{equation}
  Since $\Psi (f ; t)$ is lattice distributed we have $\hat{\Psi} (f ; s) =
  \sum_{k \geqslant 0} \lambda_k e^{zk}$ with $\lambda_k \geqslant 0$ not all
  zero. Thus $\tmop{Re} ( \hat{\Psi} (f ; v + \mathi t) - \hat{\Psi} (f ; v))
  \leqslant \lambda_k (\cos (kt) - 1)$ for some $k$ with $\lambda_k > 0$.
  Hence the integral in $(4.70)$ is $\leqslant \int (\log x)^{\lambda_k (\cos
  (kt) - 1)} \mathd t \ll (\tmop{loglog} x)^{- 1 / 2}$. Thus, $(4.70)$ is
  bounded by $(\log x)^{\hat{\Psi} (f ; v) - 1} e^{- v \xi} (\tmop{loglog}
  x)^{- 1 / 2} \cdot (\log y)^{- 1 / 4}$ which is $\ll 
  (\log x)^{A(f;v)} (\log\log x)^{-1/2} (\log y)^{-1/4}$ by lemma 4.8. 
  Furthermore we have
  \begin{eqnarray}
    &  & \int_{- \pi}^{\pi} \mathbbm{E}_{\mathcal{F}_x} \left[ e^{s \Omega
    (\mathfrak{g}; x)} \right] e^{- s \xi} \cdot
    \frac{\mathcal{P}_{\mathfrak{h}} (\xi ; s)}{s} \cdot \frac{\mathd t}{2
    \pi} \\
    & = & \int_{- \pi}^{\pi} \mathcal{A}(s) \left( \log x \right)^{\hat{\Psi}
    (f ; s) - 1} e^{- s \xi} \cdot \frac{\mathcal{P}_{\mathfrak{h}} (\xi ;
    s)}{s} \cdot \frac{\mathd t}{2 \pi} + O_{\delta} \left( (1 / v) (\log
    x)^{\hat{\Psi} (f ; v) - 3 / 2} e^{- v \xi} \right) \nonumber
  \end{eqnarray}
  because $\mathbbm{E}_{\mathcal{F}_x} \left[ e^{s \Omega (\mathfrak{g}; x)}
  \right] =\mathcal{A}(s) (\log x)^{\hat{\Psi} (f ; s) - 1} + O ((\log
  x)^{\hat{\Psi} (f ; v) - 3 / 2})$ by assumptions,
  $|\mathcal{P}_{\mathfrak{h}} (a ; s) | = O_{\delta} (1)$ by lemma 4.25 and
  $\mathcal{A}(s) = O_{\delta} (1)$ because $s = v + \mathi t$ lies in a
  bounded domain and $\mathcal{A}(s)$ is an analytic function. By lemma 4.8
  the error term in $(4.71)$ is $\ll (1 / v) (\log x)^{A(f;v) - 1/2}$. 
  Collecting $(4.66)$, $(4.69)$ and $(4.71)$ gives
  \begin{equation}
    \mathbbm{P}_{\mathcal{F}_x} \left( \Omega (\mathfrak{g}; x) + \Omega
    (\mathfrak{H}; x) \geqslant \xi \right) = \int_{- \pi}^{\pi}
    \mathcal{A}(s) \left( \log x \right)^{\hat{\Psi} (f ; s) - 1} e^{- s \xi} 
    \frac{\mathcal{P}_{\mathfrak{h}} (\xi ; s) \mathd t}{2 \pi s} + O \left(
    \tmop{Err} \right)
  \end{equation}
  where $\tmop{Err} \assign (1 / v) (\log x)^{A(f;v)} 
  \cdot (\tmop{loglog} x)^{- 1 / 2} \cdot
  (\tmop{log} y)^{- 1 / 4}$ (and $A(f;v) = \hat{\Psi}(f;v) - v
  \hat{\Psi}'(f;v) - 1$). By lemma 4.24 the integral in $(4.72)$ is
  asymptotic to
  \begin{equation}
    \mathcal{A}(v) (1 / v)\mathcal{P}_{\mathfrak{h}} (\xi_f (x ; \Delta) ; v)
    \cdot \frac{\left( \log x \right)^{\hat{\Psi} (f ; v) - 1 - v \hat{\Psi}'
    (f ; v)}}{(2 \pi \hat{\Psi}'' (f ; v) \tmop{loglog} x)^{1 / 2}} \cdot e^{-
    vc (f)} \cdot (1 + o (1))
  \end{equation}
  uniformly in $(\tmop{loglog} x)^{\varepsilon} \ll \Delta \ll \sigma (f ;
  x)$. Since $0 \leqslant v \leqslant \delta$ is bounded we have
  $\mathcal{A}(v) \gg_{\delta} 1$ because $\mathcal{A}(\cdot)$ is continuous
  and non-zero on the positive real axis, and $\mathcal{P}_{\mathfrak{h}} (\xi
  ; v) \gg_{\delta} 1$ by lemma 4.25. Thus
  $\mathcal{A}(v)\mathcal{P}_{\mathfrak{h}} (\xi ; v) \gg_{\delta} 1$. Since
  in addition
  $\log y \longrightarrow \infty$ the error term $\tmop{Err}$ in $(4.72)$
  is negligible compared to $(4.73)$. 
  By $(4.72)$ and $(4.73)$, it follows that $\mathbbm{P}_{\mathcal{F}_x}
  (\Omega (\mathfrak{g}; x) + \Omega (\mathfrak{H}; x) \geqslant \xi)$ is
  asymptotic to $(4.73)$. Because
  \[ \mathbbm{P}_{\mathcal{F}_x} \left( \Omega (f ; x) \geqslant \xi)
     \leqslant \mathbbm{P}_{\mathcal{F}_x} \left( \Omega (\mathfrak{g}; x) +
     \Omega (\mathfrak{H}; x) \geqslant \xi) \right. \right. \]
  this gives an upper bound for $\mathbbm{P}_{\mathcal{F}_x} (\Omega (f ; x)
  \geqslant \xi)$ that is ``asymptotically'' correct. In the same way as above
  we establish that $\mathbbm{P}_{\mathcal{F}_x} (\Omega (\mathfrak{g}; x) +
  \Omega (\mathfrak{h}; y) \geqslant \xi)$ is asymptotic to $(4.73)$. Since
  $\mathbbm{P}_{\mathcal{F}_x} (\Omega (\mathfrak{g}; x) + \Omega
  (\mathfrak{h}; y) \geqslant \xi) \leqslant \mathbbm{P}_{\mathcal{F}_x}
  (\Omega (f ; x) \geqslant \xi)$ this gives a lower bound for
  $\mathbbm{P}_{\mathcal{F}_x} (\Omega (f ; x) \geqslant \xi)$ that is
  ``asymptotically'' correct. The proposition follows. 
\end{proof}

\section{An asymptotic for $\mathcal{D}_f (x ; \Delta)$}

The object of this section is to prove Theorem 2.8. This theorem turns out to
be consequence of the three general propositions from the previous section. We
break down the proof into three parts, corresponding to the case when
$(\tmop{loglog} x)^{\varepsilon} \ll \Delta \ll \sigma (f ; x)$ and $\Psi (f ;
t)$ is not lattice distributed, the case when $(\tmop{loglog} x)^{\varepsilon}
\ll \Delta \ll \sigma (f ; x)$ and $\Psi (f ; t)$ is lattice distributed on
$\alpha \mathbbm{Z}$, and the remaining case when $\Delta$ is in the range $1
\leqslant \Delta = o (\sigma (f ; x))^{}$. \ \

\subsection{$\mathcal{D}_f (x ; \Delta)$ when $\Psi (f ; t)$ is not lattice
distributed}

\begin{lemma}
  Let $f \in \mathcal{C}$. For any given $\kappa > 0$ the function $1 / \Gamma
  ( \hat{\Psi} (f ; z))$ is uniformly bounded in $\tmop{Re} z \leqslant
  \kappa$.
\end{lemma}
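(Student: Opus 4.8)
The plan is to show that $z \mapsto \hat{\Psi}(f;z)$ maps the half-plane $\{\operatorname{Re} z \leqslant \kappa\}$ into a \emph{bounded} region of $\mathbb{C}$, and then invoke the fact that $1/\Gamma$ is entire (hence bounded on compact sets) to conclude.

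First I would record that, since $f \in \mathcal{C}$ is strictly positive, $\Psi(f;t) = 0$ for $t < 0$, so $\hat{\Psi}(f;z) = \int_0^{\infty} e^{zt}\, \mathd\Psi(f;t)$. Writing $z = x + \mathi y$ with $x \leqslant \kappa$, the triangle inequality for integrals gives
\[
  |\hat{\Psi}(f;z)| \;\leqslant\; \int_0^{\infty} |e^{zt}|\, \mathd\Psi(f;t) \;=\; \int_0^{\infty} e^{xt}\, \mathd\Psi(f;t) \;=\; \hat{\Psi}(f;x).
\]
Next, since $e^{x_1 t} \leqslant e^{x_2 t}$ for all $t \geqslant 0$ whenever $x_1 \leqslant x_2$, the function $x \mapsto \hat{\Psi}(f;x)$ is nondecreasing on $\mathbb{R}$, so $\hat{\Psi}(f;x) \leqslant \hat{\Psi}(f;\kappa)$ for every $x \leqslant \kappa$. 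By Lemma~4.2 the function $\hat{\Psi}(f;\cdot)$ is entire, so $M := \hat{\Psi}(f;\kappa)$ is a finite constant depending only on $\kappa$. Combining these, $|\hat{\Psi}(f;z)| \leqslant M$ throughout $\operatorname{Re} z \leqslant \kappa$; that is, $\hat{\Psi}(f;\cdot)$ carries this half-plane into the closed disk $\overline{D(0,M)}$.

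Finally, the reciprocal Gamma function $1/\Gamma(w)$ is entire in $w$, hence continuous, hence bounded on the compact disk $\overline{D(0,M)}$ by some constant $C(\kappa)$. Therefore $|1/\Gamma(\hat{\Psi}(f;z))| \leqslant C(\kappa)$ uniformly in $\operatorname{Re} z \leqslant \kappa$, which is the claim. The only point requiring any care — and the sole "obstacle" — is establishing that the image $\hat{\Psi}(f;\{\operatorname{Re} z \leqslant \kappa\})$ is bounded; this is exactly where the hypotheses that $\Psi(f;\cdot)$ is a probability distribution function supported on $[0,\infty)$ (so that $\hat{\Psi}$ is monotone on the reals and dominated there) are used. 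Everything else is routine.
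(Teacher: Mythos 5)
Your proof is correct and follows essentially the same approach as the paper: bound $|\hat{\Psi}(f;z)| \leqslant \hat{\Psi}(f;\kappa)$ on the half-plane, then use that $1/\Gamma$ is entire and hence bounded on the resulting compact disk. You simply spell out the two elementary steps (triangle inequality for the integral and monotonicity of $\hat{\Psi}(f;\cdot)$ on the reals, both relying on $\Psi(f;\cdot)$ being supported on $[0,\infty)$) that the paper states without comment.
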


\begin{proof}
  We have $| \hat{\Psi} (f ; z) | \leqslant \hat{\Psi} (f ; \kappa)$ for
  $\tmop{Re} z \leqslant \kappa$. The function $1 / \Gamma (z)$ is entire,
  hence bounded for $|z| \leqslant \hat{\Psi} (f ; \kappa)$. It follows that
  $1 / \Gamma ( \hat{\Psi} (f ; z))$ is bounded for $\tmop{Re} z \leqslant
  \kappa$. 
\end{proof}

\begin{proof}[Proof of Part (3) of Theorem 2.8]
  Consider a random variable $\Omega (f ; x)$ with distribution function
  \begin{eqnarray}
    \mathbbm{P} \left( \Omega (f ; x) \leqslant t \right) & = & (1 / \lfloor x
    \rfloor) \sum_{\tmscript{\begin{array}{c}
      n \leqslant x\\
      f (n) \leqslant t
    \end{array}}} 1_{} 
  \end{eqnarray}
  Since $f \in \mathcal{C}$, by the mean-value theorem of Proposition 4.1
  \begin{eqnarray}
    \mathbbm{E} \left[ e^{s \Omega (f ; x)} \right] & = & \frac{1}{\lfloor x
    \rfloor} \sum_{n \leqslant x} e^{sf (n)} \nonumber\\
    & = & \frac{L (f ; s)}{\Gamma ( \hat{\Psi} (f ; s))} \cdot (\log
    x)^{\hat{\Psi} (f ; s) - 1} + O \left( (\log x)^{\hat{\Psi} (f ; \kappa) -
    3 / 2} \right) 
  \end{eqnarray}
  uniformly in $0 \leqslant \kappa \assign \tmop{Re} s \leqslant C$, $|
  \tmop{Im} s| \leqslant \tmop{loglog} x$, for any given $C > 0$. By lemma 4.4
  the function $L (f ; s)$ is entire, and bounded by $L (f ; s) = O_{C,
  \varepsilon} (1 + | \tmop{Im} s|^{\varepsilon})$ uniformly in $0 \leqslant
  \tmop{Re} s \leqslant C$. Furthermore, from the product representation for
  $L (f ; z)$, it is clear that $L (f ; x)$ does not vanish for any $x
  \geqslant 0$. The same properties hold true for $1 / \Gamma ( \hat{\Psi} (f
  ; s))$. Indeed, by lemma 4.2 the function $\hat{\Psi} (f ; s)$ is entire,
  hence $1 / \Gamma ( \hat{\Psi} (f ; s))$ is. All the zeroes of $1 / \Gamma
  (s)$ are located in $\tmop{Re} s \leqslant 0$. Hence $1 / \Gamma (
  \hat{\Psi} (f ; x))$ does not vanish, because $\hat{\Psi} (f ; x) \geqslant
  \hat{\Psi} (f ; 0) > 0$ for $x \geqslant 0$. Finally by Lemma 5.1 the
  function $1 / \Gamma ( \hat{\Psi} (f ; s))$ is uniformly bounded in
  $\tmop{Re} s \leqslant C$, for any given $C > 0$. It follows that the
  product
  \[ \mathcal{A}(s) \assign \frac{L (f ; s)}{\Gamma ( \hat{\Psi} (f ; s))} \]
  is entire, non-vanishing on the positive real line, and $\mathcal{A}(s) =
  O_{C, \varepsilon} (1 + | \tmop{Im} s|^{\varepsilon})$ uniformly in $0
  \leqslant \tmop{Re} s \leqslant C$, for any given $C > 0$. In addition
  $(5.2)$ holds. Hence our second ``general result'' -- proposition 4.10 --
  applies, and we obtain that uniformly in $(\tmop{loglog} x)^{\varepsilon}
  \ll \Delta \leqslant c \sigma (f ; x)$,
  \[ \mathbbm{P} \left( \frac{\Omega (f ; x) \um \mu (f ; x)}{\sigma (f ; x)}
     \geqslant \Delta \right) \sim \frac{L (f ; v)}{\Gamma ( \hat{\Psi} (f ;
     v))} \cdot \frac{\left( \log x \right)^{\hat{\Psi} (f ; v) - 1 - v
     \hat{\Psi}' (f ; v)} e^{- vc (f)}}{v (2 \pi \hat{\Psi}'' (f ; v)
     \tmop{loglog} x)^{1 / 2}} \text{ }, \text{} v = v_f (x ; \Delta) \]
  By $(5.1)$ the term on the left hand side equals to $\mathcal{D}_f (x ;
  \Delta)$. The result follows. \ 
\end{proof}

\subsection{$\mathcal{D}_f (x ; \Delta)$ when $\Psi (f ; t)$ is lattice
distributed on $\mathbbm{Z}$}

As usual when $\Psi (f ; t)$ is lattice distributed we introduce the strongly
additive functions $\mathfrak{g}$ and $\mathfrak{h}$ defined by
\begin{eqnarray*}
  \mathfrak{g}(p) = \left\{ \begin{array}{l}
    f (p) \text{ if } f (p) \in \mathbbm{Z}\\
    0 \text{ \ \ \ \ otherwise}
  \end{array} \right. & \tmop{and} & \mathfrak{h}(p) = \left\{
  \begin{array}{l}
    f (p) \text{ if } f (p) \nin \mathbbm{Z}\\
    0 \text{ \ \ \ \ otherwise}
  \end{array} \right.
\end{eqnarray*}
Of course $f =\mathfrak{g}+\mathfrak{h}$. The next lemma is proved by a
rather standard convolution argument (note that by lemma 4.4, we already
have an asymptotic for $\sum_{n \leq x} e^{s f(n)}$).

\begin{lemma}
  Let $f \in \mathcal{C}$. Suppose that $\Psi (f ; t)$ is lattice distributed
  on $\mathbbm{Z}$. Given $C > 0$, uniformly in $0 \leqslant \kappa \assign
  \tmop{Re} s \leqslant C, | \tmop{Im} s| \leqslant \log\log x$ and strongly
  additive functions $\mathfrak{H}(\cdot)$ such that $0 \leqslant
  \mathfrak{H}(p) \leqslant \left\lceil \mathfrak{h}(p) \right\rceil$,
  \[ \frac{1}{x} \sum_{n \leqslant x} e^{s\mathfrak{g}(n) + s\mathfrak{H}(n)}
     = \frac{1}{x} \sum_{n \leqslant x} e^{s\mathfrak{g}(n)} \cdot \prod_{p
     \leqslant x} \left( 1 + \frac{e^{s\mathfrak{H}(p)} - 1}{p} \right) + O_C
     \left( (\log x)^{\hat{\Psi} (f ; \kappa) - 3 / 2} \right) \]
  Furthermore, for any given $C > 0$, uniformly in $0 \leqslant \kappa \assign
  \tmop{Re} s \leqslant C$, $| \tmop{Im} s| \leqslant 2 \pi$,
  \[ \frac{1}{x} \sum_{n \leqslant x} e^{s\mathfrak{g}(n)} = \frac{L
     (\mathfrak{g}; s)}{\Gamma ( \hat{\Psi} (f ; s))} \cdot \left( \log x
     \right)^{\hat{\Psi} (f ; s) - 1} + O_C \left( (\log x)^{\hat{\Psi} (f ;
     \kappa) - 3 / 2} \right) \]
  
\end{lemma}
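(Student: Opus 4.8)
The function $\mathfrak{g}$ is not strictly positive ($\mathfrak{g}(p)=0$ for $p\in S(\mathfrak{h})$), so Proposition 4.1 does not apply to it directly. The plan is to pass from $e^{sf(\cdot)}$, whose summatory function is controlled by Proposition 4.1, to $e^{s\mathfrak{g}(\cdot)}$ and to $e^{s(\mathfrak{g}+\mathfrak{H})(\cdot)}$ by a Dirichlet convolution supported on $S(\mathfrak{h})$-smooth integers (integers all of whose prime factors lie in $S(\mathfrak{h})$), then to sum Proposition 4.1 against that convolution. I treat the second assertion first and deduce the first from it.

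For the second assertion, comparing Euler products gives $\sum_n e^{s\mathfrak{g}(n)}n^{-w}=\big(\sum_n e^{sf(n)}n^{-w}\big)\sum_n c(n;s)n^{-w}$, where $c(\cdot;s)$ is multiplicative, $c(p^k;s)=0$ for $k\ge1$ when $p\nin S(\mathfrak{h})$, and $c(p^k;s)=(1-e^{s\mathfrak{h}(p)})^k$ for $p\in S(\mathfrak{h})$; thus $c$ is supported on $S(\mathfrak{h})$-smooth integers and $\sum_d c(d;s)d^{-1}=\prod_{p\in S(\mathfrak{h})}\tfrac{p}{p-1+e^{s\mathfrak{h}(p)}}$. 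Hence $\sum_{n\le x}e^{s\mathfrak{g}(n)}=\sum_{d\le x,\ p\mid d\Rightarrow p\in S(\mathfrak{h})}c(d;s)\sum_{m\le x/d}e^{sf(m)}$. Since $\mathfrak{h}(p)\le f(p)=o(\log p)$ by $(1.3)$, one has $|c(d;s)|\le\prod_{p^k\|d}(1+e^{\kappa f(p)})^k$ for $\kappa\assign\tmop{Re}\,s\le C$. The analytic heart of the matter is the bound
\[
\sum_{d\,:\,p\mid d\Rightarrow p\in S(\mathfrak{h})}\frac{|c(d;s)|\,(\log 2d)^A}{d}\ \ll_{A,C}\ 1\qquad(\tmop{Re}\,s\le C),
\]
with the tail over $d>x$ being $O((\log x)^{-1/2})$; this follows by the argument in the proof of Lemma 4.19, i.e. by bounding the Euler product $\prod_{p\in S(\mathfrak{h})}(1+O((\log p)^A e^{\kappa f(p)}/p))$, splitting $S(\mathfrak{h})$ according to the size of $f(p)$ relative to $\tmop{loglog}p$, and invoking $(1.3)$ and Lemma 4.18. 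Granting this, split the $d$-sum at $\sqrt x$: for $d\le\sqrt x$ apply the second estimate of Proposition 4.1 at $x/d$ (legitimate since $\log(x/d)\ge\tfrac12\log x\ge|\tmop{Im}\,s|$), expand $(\log(x/d))^{\hat{\Psi}(f;s)-1}=(\log x)^{\hat{\Psi}(f;s)-1}(1+O(\log d/\log x))$, and sum; the convergence of $\sum_d|c(d;s)|(\log 2d)/d$ absorbs the $\log d/\log x$ terms, while $d>\sqrt x$ contributes $O(x(\log x)^{\hat{\Psi}(f;\kappa)-3/2})$ via Corollary 4.6 and the tail bound. The main term is $\tfrac{L(f;s)}{\Gamma(\hat{\Psi}(f;s))}x(\log x)^{\hat{\Psi}(f;s)-1}\sum_d c(d;s)d^{-1}$, and $L(f;s)\sum_d c(d;s)d^{-1}=L(f;s)\prod_{p\in S(\mathfrak{h})}\tfrac{p}{p-1+e^{s\mathfrak{h}(p)}}=L(\mathfrak{g};s)$ by Lemma 4.20 (equivalently $G(\mathfrak{h};s)=\prod_{p\in S(\mathfrak{h})}\tfrac{p-1+e^{s\mathfrak{h}(p)}}{p}$ from Lemma 4.19). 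Dividing by $\lfloor x\rfloor$ in place of $x$ is harmless, and the second assertion follows.

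For the first assertion, $0\le\mathfrak{H}(p)\le\lceil\mathfrak{h}(p)\rceil$ forces $\mathfrak{H}(p)=0$ whenever $\mathfrak{h}(p)=0$, so $\mathfrak{H}$ is supported on $S(\mathfrak{h})$ and $\mathfrak{g}+\mathfrak{H}$ agrees with $f$ off $S(\mathfrak{h})$. The same device produces a multiplicative $\tilde c(\cdot;s)$, supported on $S(\mathfrak{h})$-smooth integers, with $\tilde c(p^k;s)=(1-e^{s\mathfrak{h}(p)})^{k-1}(e^{s\mathfrak{H}(p)}-e^{s\mathfrak{h}(p)})$ for $p\in S(\mathfrak{h})$, $k\ge1$, and $\sum_d\tilde c(d;s)d^{-1}=\prod_{p\in S(\mathfrak{h})}\tfrac{p-1+e^{s\mathfrak{H}(p)}}{p-1+e^{s\mathfrak{h}(p)}}$; since $\mathfrak{H}(p)\le\mathfrak{h}(p)+O(1)=o(\log p)$, the bound $|\tilde c(d;s)|\le\prod_{p^k\|d}(1+e^{\kappa f(p)})^{k+1}$ holds \emph{uniformly in the choice of $\mathfrak{H}$}, so the convergence estimate above applies verbatim with the same constants (this is exactly what yields the uniformity in $\mathfrak{H}$). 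Repeating the summation — now in the wider range $|\tmop{Im}\,s|\le\tmop{loglog}x$, still inside the range $|\tmop{Im}\,s|\le\log(x/d)$ of Proposition 4.1 for $d\le\sqrt x$ — gives $\tfrac1x\sum_{n\le x}e^{s\mathfrak{g}(n)+s\mathfrak{H}(n)}=\tfrac{L(f;s)}{\Gamma(\hat{\Psi}(f;s))}(\log x)^{\hat{\Psi}(f;s)-1}\prod_{p\in S(\mathfrak{h})}\tfrac{p-1+e^{s\mathfrak{H}(p)}}{p-1+e^{s\mathfrak{h}(p)}}+O((\log x)^{\hat{\Psi}(f;\kappa)-3/2})$. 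On the other hand, by the second assertion together with $L(\mathfrak{g};s)=L(f;s)/G(\mathfrak{h};s)$, $G(\mathfrak{h};s)=\prod_{p\in S(\mathfrak{h})}\tfrac{p-1+e^{s\mathfrak{h}(p)}}{p}$, and $\prod_{p\le x}(1+\tfrac{e^{s\mathfrak{H}(p)}-1}{p})=\prod_{p\le x,\ p\in S(\mathfrak{h})}(1+\tfrac{e^{s\mathfrak{H}(p)}-1}{p})=\prod_{p\in S(\mathfrak{h})}\tfrac{p-1+e^{s\mathfrak{H}(p)}}{p}\cdot(1+O((\log x)^{-1/2}))$ (convergence of the product, again by the Lemma 4.19 method and uniform in $\mathfrak{H}$), the first term on the right of the claimed identity equals the same expression up to an $O((\log x)^{\hat{\Psi}(f;\kappa)-3/2})$ error. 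Comparing the two proves the first assertion.

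The main obstacle is precisely the uniform convergence bound for $\sum_d|c(d;s)|(\log 2d)^A/d$ and its $\tilde c$-analogue, with the quantitative tail $O((\log x)^{-1/2})$ and — crucially for the first assertion — uniformity over all admissible $\mathfrak{H}$; all of this is already carried out inside the proof of Lemma 4.19 (the decomposition of $S(\mathfrak{h})$ by the size of $f(p)$ relative to $\tmop{loglog}p$, $(1.3)$, Lemma 4.18), so the remaining work — the split at $\sqrt x$, the $\log(x/d)$-expansion, and the identification of the Euler products with $L(\mathfrak{g};s)$ — is routine.
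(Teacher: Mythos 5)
Your proof is correct and uses essentially the same approach as the paper's: Dirichlet-convolve against an $S(\mathfrak{h})$-smooth multiplicative kernel to pass from $e^{sf}$ to $e^{s\mathfrak{g}}$ (resp.\ $e^{s\mathfrak{g}+s\mathfrak{H}}$), apply Proposition~4.1 to the inner sum, and control the $d$-sums via the Euler-product/Cauchy--Schwarz bounds built into Lemmas 4.18--4.19. Your kernel $\tilde c$ is exactly the paper's multiplicative function $g(z;\cdot)$ (after a sign rearrangement), and the identity $L(f;s)\sum_d c(d;s)/d=L(\mathfrak{g};s)$ is the same identification the paper makes through $L(\mathfrak{g};s)=L(f;s)/G(\mathfrak{h};s)$.

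The only structural difference is that the paper carries a single parametric convolution (with $\mathfrak{H}$ running over all admissible functions, including $\mathfrak{H}=0$) and reads off both claims at the end, whereas you carry out two separate convolutions ($c$ for the second assertion, $\tilde c$ for the first) and then compare; the paper also truncates its main-term $d$-sum at $\exp((\log x)^{1/4})$ rather than $\sqrt x$, but both choices work since the relevant Dirichlet-series coefficients decay rapidly enough that $\sum_{d>y}|c(d;s)|/d\ll(\log y)^{-A}$ for every fixed $A$. One small presentational wrinkle: when you invoke ``the second assertion'' to handle the range $|\tmop{Im}\,s|\leqslant\log\log x$ in the first assertion, you are strictly reusing a statement proved only for $|\tmop{Im}\,s|\leqslant 2\pi$; this is harmless because $\mathfrak{g}(n)\in\mathbbm{Z}$ makes $\sum_{n\leqslant x}e^{s\mathfrak{g}(n)}$ and $L(\mathfrak{g};s)(\log x)^{\hat\Psi(f;s)-1}$ both $2\pi\mathi$-periodic in $s$, so the estimate extends automatically — but it would be cleaner to note this or, as the paper does, derive the $\mathfrak{H}=0$ case in the wider range directly.
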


\begin{proof}
  Let $S (\mathfrak{h}) = \left\{ p : \mathfrak{h}(p) \neq 0\} \right.$. Using
  the definition of $\mathfrak{g}$ and $\mathfrak{h}$ we find
  \begin{eqnarray*}
    \sum_{n \geqslant 1} \frac{e^{zf (n)}}{n^s} & = & \prod_{p \nin S
    (\mathfrak{h})} \left( 1 + \frac{e^{z\mathfrak{g}(p)}}{p^s - 1} \right)
    \cdot \prod_{p \in S (\mathfrak{h})} \left( 1 +
    \frac{e^{z\mathfrak{h}(p)}}{p^s - 1} \right)\\
    & = & \sum_{n \geqslant 1} \frac{e^{z\mathfrak{g}(n)}}{n^s} \cdot
    \prod_{p \in S (\mathfrak{h})} \left( 1 + \frac{e^{z\mathfrak{h}(p)} -
    1}{p^s} \right)
  \end{eqnarray*}
  Note that $\mathfrak{H}(p)$ vanishes when $\mathfrak{h}(p)$ does. Hence
  $\mathfrak{H}(p) = 0$ when $p \nin S (\mathfrak{h})$. Therefore we can write
  \begin{eqnarray}
    \sum_{n \geqslant 1} \frac{e^{z\mathfrak{g}(n) + z\mathfrak{H}(n)}}{n^s} &
    = & \prod_{p \nin S (\mathfrak{h})} \left( 1 +
    \frac{e^{z\mathfrak{g}(p)}}{p^s - 1} \right) \prod_{p \in S
    (\mathfrak{h})} \left( 1 + \frac{e^{z\mathfrak{H}(p)}}{p^s - 1} \right)
    \nonumber\\
    & = & \sum_{n \geqslant 1} \frac{e^{z\mathfrak{g}(n)}}{n^s} \prod_{p \in
    S (\mathfrak{h})} \left( 1 + \frac{e^{z\mathfrak{H}(p)} - 1}{p^s} \right)
    \nonumber\\
    & = & \sum_{n \geqslant 1} \frac{e^{zf (n)}}{n^s} \prod_{p \in S
    (\mathfrak{h})} \frac{1 + (e^{z\mathfrak{H}(p)} - 1) \cdot p^{- s}}{1 +
    (e^{z\mathfrak{h}(p)} - 1) \cdot p^{- s}} \nonumber\\
    & = & \sum_{n \geqslant 1} \frac{e^{zf (n)}}{n^s} \cdot \sum_{n \geqslant
    1} \frac{g (z ; n)}{n^s} 
  \end{eqnarray}
  Here the function $g (z ; n)$ is multiplicative, and given explicitly by $g
  (z ; p^{\alpha}) = (- 1)^{\alpha} \cdot (e^{z\mathfrak{h}(p)} - 1)^{\alpha -
  1} \cdot (e^{z\mathfrak{h}(p)} - e^{z\mathfrak{H}(p)})$. To proceed we need
  to make a few simple remarks about $g (z ; n)$. Since $0 \leqslant
  \mathfrak{H}(p) \leqslant \mathfrak{h}(p) + 1$ we have for $0 \leqslant
  \kappa \assign \tmop{Re} z$,
  \[ |g (z ; p^{\alpha}) | \leqslant \left( 2 e^{\kappa \mathfrak{h}(p)}
     \right)^{\alpha - 1} \cdot 2 e^{\kappa (\mathfrak{h}(p) + 1)} \leqslant
     (2 e^{\kappa})^{\alpha} \cdot e^{\kappa \mathfrak{h}(p) \alpha} \]
  Hence $|g (z ; n) | \leqslant (2 e^{\kappa})^{\Omega (n)} \cdot e^{\kappa h
  (n)}$ where $h (n)$ is an additive function defined by $h (p^{\alpha})
  =\mathfrak{h}(p) \alpha$. In particular $|g (z ; n) | \leqslant (2
  e^C)^{\Omega (n)} \cdot e^{Ch (n)}$ in the half-plane $\tmop{Re} z \leqslant
  C$. Note also that $g (z ; p^{\alpha}) = 0$ whenever $p \nin S
  (\mathfrak{h})$. Therefore $g (z ; n) = 0$ unless all the prime factors of
  $n$ belong to $S (\mathfrak{h})$. We are now ready to start the proof of the
  lemma. Because of $(5.3)$,
  \[ \sum_{n \leqslant x} e^{z\mathfrak{g}(n) + z\mathfrak{H}(n)} = \sum_{d
     \leqslant x} g (z ; d) \cdot \sum_{n \leqslant x / d} e^{zf (n)} \]
  To evaluate the above sum we use proposition 4.1. By proposition 4.1, for 
  any fixed $C > 0$, the above sum equals to
  \begin{equation}
    \frac{L (f ; z)}{\Gamma ( \hat{\Psi} (f ; z))} \sum_{d \leqslant x}
    \frac{g (z ; d)}{d} \left( \log \frac{x}{d} \right)^{\hat{\Psi} (f ; z) -
    1} + O \left( \sum_{d \leqslant x} \frac{|g (z ; d) |}{d} \cdot \left(
    \log x \right)^{\hat{\Psi} (f ; \kappa) - 3 / 2} + 
    \sum_{d \geqslant \sqrt{x}} \frac{|g(z;d)|}{d} \right).
  \end{equation}
  uniformly in $0 \leqslant \kappa \assign \tmop{Re} z \leqslant C$ and $|
  \tmop{Im} z| \leqslant \log\log x$. We'll see in a second (see discussion
  after equation $(5.6)$) that 
  $\sum_{d \geqslant \sqrt{x}} |g(z;d)| \cdot d^{-1} \ll (\log x)^{-1}$.
  As for the remaining sum in the error term, we bound 
  $\sum_{d \leqslant x} |g(z;d)|
  \cdot d^{- 1}$ by an Euler product, and inside the Euler product we bound
  $|g (z ; p^{\alpha}) |$ by $(2 e^C)^{\alpha} \cdot (e^{C\mathfrak{h}(p)
  \alpha})$. Note that the Euler product will be taken over the primes $p \in
  S (\mathfrak{h})$ because $g (z ; d) = 0$ unless all the primes factors of
  $d$ are in $S (\mathfrak{h})$. Thus
  \[ \sum_{d \leqslant x} \frac{|g (z ; d) |}{d} \leqslant \prod_{p \in S
     (\mathfrak{h})} \left( 1 + \sum_{\alpha \geqslant 1} \frac{(2
     e^C)^{\alpha} \cdot e^{C\mathfrak{h}(p) \alpha}}{p^{\alpha}} \right) \]
  Since $\mathfrak{h}(p) = o (\log p)$ (to see this: by $(1.3)$ $f (p) = o
  (\log p)$ hence $\mathfrak{h}(p) = o (\log p)$) there is an constant $K
  \assign K (C) > 0$ such that the above product is bounded by $\prod_{p \in S
  (\mathfrak{h})} (1 + K \cdot e^{C\mathfrak{h}(p)} \cdot p^{- 1})$. This last
  product is finite by lemma 4.19. Hence the
  error term in $(5.4)$ is $\ll (\log x)^{-1} + 
  (\log x)^{\hat{\Psi}(f;\kappa)-3/2} \ll (\log x)^{\hat{\Psi}(f;\kappa)-3/2}$.
  It remains to estimate the main term in $(5.4)$. First we rewrite the main
  term as
  \begin{equation}
    \frac{L (f ; z)}{\Gamma ( \hat{\Psi} (f ; z))} \cdot \left( \log x
    \right)^{\hat{\Psi} (f ; z) - 1} \sum_{d \leqslant x} \frac{g (z ; d)}{d}
    \cdot \left( 1 - \frac{\log d}{\log x} \right)^{\hat{\Psi} (f ; z) - 1}
  \end{equation}
  We split the sum over $d \leqslant x$ into two ranges. The range $1
  \leqslant d \leqslant y \assign \exp \left( ({\log x})^{1/4} \right)$ and the
  remaining range $d \geqslant y$ on which we simply bound by $\sum_{d > y} |g
  (z ; d) | \cdot d^{- 1}$. In the range $d \leqslant y$ we use $(1 - \log d /
  \log x)^{\hat{\Psi} (f ; z) - 1} = 1 + O ((\log x)^{-3/4})$, which is
  valid because $| \hat{\Psi} (f ; z) | \leqslant \hat{\Psi} (f ; C)$ and
  $\log d \ll (\log x)^{1/4}$. Thus
  \begin{eqnarray}
    &  & \sum_{d \leqslant x} \frac{g (z ; d)}{d} \cdot \left( 1 - \frac{\log
    d}{\log x} \right)^{\hat{\Psi} (f ; z) - 1} \\
    & = & \sum_{d \leqslant y} \frac{g (z ; d)}{d} \cdot \left( 1 + O \left(
    \frac{1}{(\log x)^{3/4}} \right) \right) + O \left( \sum_{d \geqslant y}
    \frac{|g (z ; d) |}{d} \right) \nonumber\\
    & = & \prod_p \left( 1 + \sum_{\alpha \geqslant 1} \frac{g (z ;
    p^{\alpha})}{p^{\alpha}} \right) + O \left( \frac{1}{(\log x)^{3/4}}
    \sum_{d \leqslant y} \frac{|g (z ; d) |}{d} \right) + O \left( \sum_{d
    \geqslant y} \frac{|g (z ; d) |}{d} \right) \nonumber
  \end{eqnarray}
  We bound the second error term in the exactly the same way as before,
  getting a bound of $O ((\log x)^{-3/4})$. The third error term requires
  a different approach. Recall that $g (z ; n)$ vanishes if not all the prime
  factors of $n$ are in $S (\mathfrak{h})$. Therefore to the sum $\sum_{d
  \geqslant y} |g (z ; d) | \cdot d^{- 1}$ we can add the condition $p|d
  \Rightarrow p \in S (\mathfrak{h})$ without altering its value. Furthermore
  using the inequality $|g (z ; n) | \leqslant (2 e^C)^{\Omega (n)} \cdot
  e^{Ch (n)}$ and then applying Cauchy-Schwarz's inequality we obtain
  \begin{eqnarray*}
    \sum_{\tmscript{\begin{array}{c}
      d \geqslant y\\
      p|d \Rightarrow p \in S (\mathfrak{h})
    \end{array}}} \frac{|g (z ; d) |}{d} & \leqslant & \biggl( \sum_{p|n
    \Rightarrow p \in S (\mathfrak{h})} \frac{\left( 4 e^{2 C} \right)^{\Omega
    (n)}}{n} \biggr)^{1 / 2} \cdot \biggl( \sum_{\tmscript{\begin{array}{c}
      n \geqslant y\\
      p|n \Rightarrow p \in S (\mathfrak{h})
    \end{array}}} \frac{e^{2 Ch (n)}}{n} \biggr)^{1 / 2}
  \end{eqnarray*}
  The first sum warps into an Euler product which is finite by lemma 4.18 (and
  some elementary bounding). To the second sum we apply once again a
  Cauchy-Schwarz inequality, thus obtaining the upper bound
  \[ \leqslant \biggl( \sum_{p|n \Rightarrow p \in S (\mathfrak{h})} \frac{e^{4
     Ch (n)}}{n} \biggr)^{1 / 2} \cdot \biggl( \sum_{\tmscript{\begin{array}{c}
       p|n \Rightarrow p \in S (\mathfrak{h})\\
       n \geqslant y
     \end{array}}} \frac{1}{n} \biggr)^{1 / 2} \]
  Again the first sum can be rewritten as a (finite) Euler product. The
  second sum is bounded by 
  $\ll (\log y)^{- A} \cdot \sum_{p|n \Rightarrow p \in S
  (\mathfrak{h})} (\log n)^A \cdot n^{- 1} \ll (\log y)^{-A}$ where the 
  second bound comes from the corollary to lemma 4.18. It now follows
  that $\sum_{d \geqslant y} |g (z ; d) | \cdot d^{- 1} \ll (\log
  y)^{- A}$. Inserting this estimate in $(5.6)$ yields
  \begin{eqnarray*}
    \sum_{d \leqslant x} \frac{g (z ; d)}{d} \cdot \left( 1 - \frac{\log
    d}{\log x} \right)^{\hat{\Psi} (f ; z) - 1} & = & \prod_p \left( 1 +
    \sum_{\alpha \geqslant 1} \frac{g (z ; p^{\alpha})}{p^{\alpha}} \right) +
    O \left( \frac{1}{(\log x)^{3/4}} \right)\\
    & = & \prod_p \frac{1 + (e^{z\mathfrak{H}(p)} - 1) \cdot p^{- 1}}{1 +
    (e^{z\mathfrak{h}(p)} - 1) \cdot p^{- 1}} + O \left( \frac{1}{(\log
    x)^{3/4}} \right)
  \end{eqnarray*}
  In the second line we simply use the definition of $g (z ; p^{\alpha})$
  (see $(5.3)$). By
  lemma 4.4 and lemma 5.1 we have $L (f ; z) / \Gamma ( \hat{\Psi} (f ;z))
  \ll_{C} 1 + |\tmop{Im } z| \ll_C \log\log x$ 
  uniformly in $0 \leqslant \tmop{Re } z \leqslant C$, 
  $|\tmop{Im } z| \leqslant \log\log x$.   
  Multiplying both sides of the above equation by 
  $L (f ; z) / \Gamma ( \hat{\Psi} (f ; z)) (\log x)^{\hat{\Psi}(f;z)-1}$
  gives an asymptotic for $(5.5)$. In turn an asymptotic for $(5.5)$ allows us
  to evaluate $(5.4)$ (because $(5.5)$ is the main term for $(5.4)$). Since
  $(5.4)$ is equal to $(1 / x) \sum_{n \leqslant x} e^{z\mathfrak{g}(n) +
  z\mathfrak{H}(n)}$ we conclude that
  \begin{eqnarray*}
    \frac{1}{x} \sum_{n \leqslant x} e^{z\mathfrak{g}(n) + z\mathfrak{H}(n)} &
    = & \frac{L (f ; z)}{\Gamma ( \hat{\Psi} (f ; z))} \prod_p \frac{1 +
    (e^{z\mathfrak{H}(p)} - 1) p^{- 1}}{1 + (e^{z\mathfrak{h}(p)} - 1) p^{-
    1}} \cdot \left( \log x \right)^{\hat{\Psi} (f ; z) - 1} + O \left(
    \mathcal{E}(x ; \kappa) \right)\\
    & = & \frac{L (\mathfrak{g}; z)}{\Gamma ( \hat{\Psi} (f ; z))} \prod_p
    \left( 1 - \frac{1}{p} + \frac{e^{z\mathfrak{H}(p)}}{p} \right) \cdot
    \left( \log x \right)^{\hat{\Psi} (f ; z) - 1} + O \left( \mathcal{E}(x ;
    \kappa) \right)
  \end{eqnarray*}
  where $\mathcal{E}(x ; \kappa) \assign \left( \log x \right)^{\hat{\Psi} (f
  ; \kappa) - 3 / 2}$. The second line follows from the definition of $L(f;z)$
  and the fact that $\hat{\Psi}(f;z) = \hat{\Psi}(\mathfrak{g};z)$ when
  $\Psi(f;t)$ is lattice distributed on $\mathbb{Z}$.
  \ The above formula holds uniformly in strongly additive functions
  $\mathfrak{H}$ such that $0 \leqslant \mathfrak{H}(p) \leqslant \left\lceil
  \mathfrak{h}(p) \right\rceil$. Choosing $\mathfrak{H}= 0$ yields the second
  claim of the lemma. Choosing $\mathfrak{H}$ arbitrary (with the restriction
  $\mathfrak{H}(p) = 0$ for $p > x$) and comparing the resulting asymptotic
  with an asymptotic for $\sum_{n \leqslant x} e^{z\mathfrak{g}(n)}$ we obtain
  the first claim of the lemma. 
\end{proof}

\begin{proof}[Proof of Part (4) of Theorem 2.8]
  Let $\Omega_x \assign [1, x] \cap \mathbbm{N}$ and $\mathcal{F}_x
  =\mathcal{P}(\Omega_x)$, where $\mathcal{P}(\Omega_x)$ is the power-set of
  $\Omega_x$. Then $(\Omega_x, \mathcal{F}_x)$ equipped with the measure
  $\mathbbm{P}_{\mathcal{F}_x} (A) = (1 / \lfloor x \rfloor) \cdot \tmop{Card}
  (A)$ forms a probability space. Define the random variables
  \[ Z_p (n) \assign \left\{ \begin{array}{l}
       1 \text{ if } p|n\\
       0 \text{ otherwise}
     \end{array} \right. \]
  so that
  \begin{equation}
    \mathbbm{P}_{\mathcal{F}_x} \left( \sum_{p \leqslant x} f (p) Z_p (n)
    \geqslant t \right) = \frac{1}{\lfloor x \rfloor} \cdot \# \left\{ n
    \leqslant x : f (n) \geqslant t^{^{^{}}} \right\}
  \end{equation}
  By lemma 5.2, for any given $C > 0$, we have uniformly in $0 \leqslant
  \kappa \assign \tmop{Re} s \leqslant C$, $|\tmop{Im} s| \leqslant \log\log x$
  and uniformly in strongly additive $\mathfrak{H}(\cdot)$ such that $0
  \leqslant \mathfrak{H}(p) \leqslant \left\lceil \mathfrak{h}(p)
  \right\rceil$,
  \begin{eqnarray}
    &  & \mathbbm{E}_{\mathcal{F}_x} \left[ e^{s \Omega (\mathfrak{g}; x) + s
    \Omega (\mathfrak{H}; x)} \right] \text{ } = \text{ } \frac{1}{\lfloor x
    \rfloor} \sum_{n \leqslant x} e^{s\mathfrak{g}(n) + s\mathfrak{H}(n)}
    \nonumber\\
    & = & \left( \frac{1}{\lfloor x \rfloor} \sum_{n \leqslant x}
    e^{s\mathfrak{g}(n)} \right) \prod_{p \leqslant x} \left( 1 - \frac{1}{p}
    + \frac{e^{s\mathfrak{H}(p)}}{p} \right) + O \left( \mathcal{E}(x ;
    \kappa)^{^{}} \right) \nonumber\\
    & = & \mathbbm{E}_{\mathcal{F}_x} \left[ e^{s \Omega (\mathfrak{g}; x)}
    \right] \cdot \prod_{p \leqslant x} \left( 1 - \frac{1}{p} +
    \frac{e^{s\mathfrak{H}(p)}}{p} \right) + O \left( \mathcal{E}(x ;
    \kappa)^{^{}} \right) 
  \end{eqnarray}
  with $\mathcal{E}(x ; \kappa) \assign (\log x)^{\hat{\Psi} (f ; \kappa) - 3
  / 2}$. By the same lemma
  \begin{equation}
    \mathbbm{E}_{\mathcal{F}_x} \left[ e^{s \Omega (\mathfrak{g}; x)} \right]
    = \frac{L (\mathfrak{g}; s)}{\Gamma ( \hat{\Psi} (f ; s))} \cdot \left(
    \log x \right)^{\hat{\Psi} (f ; s) - 1} + O \left( \mathcal{E}(x ; \kappa)
    \right)
  \end{equation}
  uniformly in $0 \leqslant \kappa \assign \tmop{Re} s \leqslant C$ and $|
  \tmop{Im} s| \leqslant 2 \pi$. The function $G (s) \assign L (\mathfrak{g};
  s) / \Gamma ( \hat{\Psi} (f ; s))$ is entire by lemma 4.20 and lemma 4.2. In
  addition $G (x) \neq 0$ for $x \geqslant 0$ -- on the one hand it is clear
  that $L(\mathfrak{g};x) \neq 0$ for $x \geqslant 0$, 
  just by looking at its product
  representation; on the other hand $\hat{\Psi} (f ; x) \geqslant \hat{\Psi}
  (f ; 0) = 1$ for $x \geqslant 0$, hence $1 / \Gamma ( \hat{\Psi} (f ; x))
  \neq 0$ for $x \geqslant 0$, because $1 / \Gamma (z)$ vanishes only in the
  $\tmop{Re} z \leqslant 0$ half-plane. Thus by $(5.8)$, $(5.9)$ and the two
  properties of $G (s)$ we just mentioned, the assumptions of proposition 4.17
  are satisfied. Applying proposition 4.17 we obtain the desired asymptotic
  for $(5.7)$ when $t \assign \xi_f (x ; \Delta) = \mu (f ; x) + \Delta \sigma
  (f ; x)$. \ 
\end{proof}

\subsection{$\mathcal{D}_f (x ; \Delta)$ when $1 \leqslant \Delta \leqslant o
(\sigma (f ; x))$}

The desired asymptotic for $\mathcal{D}_f (x ; \Delta)$ (the one indicated in
part 2 of theorem 2.8) follows in the range $(\tmop{loglog} x)^{\varepsilon}
\ll \Delta \leqslant o (\sigma (f ; x))$ from part 3 and part 4 of theorem
2.8. There is some care needed in adapting those asymptotics to the desired
form. Also the case when $\Psi (f ; t)$ is lattice distributed on $\alpha
\mathbbm{Z}$ ($\alpha \neq 1$) requires a little bit of additional work. The
two lemmata below are a preparation to handle this case.

\begin{lemma}
  Let $f \in \mathcal{C}$. For real $\alpha > 0$ define $v_{\alpha} \assign
  v_{f / \alpha} (x ; \Delta)$ and $v \assign v_f(x;\Delta)$. 
  We have $\hat{\Psi}^{(k)}
  (f / \alpha ; z) = (1 / \alpha)^k \cdot \hat{\Psi}^{(k)} (f ; z / \alpha)$ 
  and $v_{\alpha} / \alpha = v$. In particular $\hat{\Psi} (f / \alpha ;
  v_{\alpha}) = \hat{\Psi} (f ; v)$.
\end{lemma}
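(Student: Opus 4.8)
The plan is to reduce everything to the elementary scaling identity $\Psi(f/\alpha;t)=\Psi(f;\alpha t)$ and then propagate it mechanically through the definitions of $\hat{\Psi}$, $\sigma_{\Psi}$, $\omega$ and $v_f$. For the identity itself: by the definition $(1.4)$, the distribution function $\Psi(g;\cdot)$ records the limiting proportion of primes $p\le x$ with $g(p)\le t$; since $(f/\alpha)(p)\le t$ is literally the condition $f(p)\le\alpha t$, we get $\Psi(f/\alpha;t)=\Psi(f;\alpha t)$ (this is already used in the Remark after Theorem 2.8). Pushing the Stieltjes measure $\mathd\Psi(f/\alpha;t)$ forward under $t\mapsto\alpha t$ and substituting $u=\alpha t$ in the definition $(2.6)$ of the two-sided Laplace transform, $\hat{\Psi}(f/\alpha;z)=\int_{\mathbbm{R}}e^{zt}\mathd\Psi(f/\alpha;t)=\int_{\mathbbm{R}}e^{zu/\alpha}\mathd\Psi(f;u)=\hat{\Psi}(f;z/\alpha)$. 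Differentiating $k$ times in $z$ gives the first assertion $\hat{\Psi}^{(k)}(f/\alpha;z)=\alpha^{-k}\hat{\Psi}^{(k)}(f;z/\alpha)$. (One should note in passing that $f/\alpha$ again lies in $\mathcal{C}$, so that all the objects $\Psi(f/\alpha;\cdot),\hat{\Psi}(f/\alpha;\cdot),\omega(f/\alpha;\cdot),v_{f/\alpha}$ are defined: strong additivity and positivity survive scaling, condition $(1.3)$ for $f/\alpha$ with parameter $A$ follows from $(1.3)$ for $f$ with parameter $\alpha A$, and $(1.4)$ transfers via the identity above, with the second moment of $\Psi(f/\alpha;\cdot)$ equal to $\alpha^{-2}$ times that of $\Psi(f;\cdot)$, hence nonzero.)

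Next I would feed this into the table of Section 3. From $\sigma_{\Psi}^2(g;x)=\hat{\Psi}''(g;0)\tmop{loglog}x$ and the case $k=2$, $z=0$ of the first assertion, $\sigma_{\Psi}(f/\alpha;x)=\alpha^{-1}\sigma_{\Psi}(f;x)$, whence $\Delta/\sigma_{\Psi}(f/\alpha;x)=\alpha\,\Delta/\sigma_{\Psi}(f;x)$. To handle $\omega(f/\alpha;\cdot)$, write out its defining relation $\hat{\Psi}'(f/\alpha;\omega(f/\alpha;w))=\hat{\Psi}'(f/\alpha;0)+w\,\hat{\Psi}''(f/\alpha;0)$ and substitute the $k=1$ and $k=2$ cases of the first assertion; after multiplying through by $\alpha$ this becomes $\hat{\Psi}'(f;\omega(f/\alpha;w)/\alpha)=\hat{\Psi}'(f;0)+(w/\alpha)\hat{\Psi}''(f;0)$, which is precisely the equation defining $\omega(f;w/\alpha)$. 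Since $\hat{\Psi}'(f;\cdot)$ is strictly increasing on $\mathbbm{R}$ — its derivative $\hat{\Psi}''(f;t)=\int_{\mathbbm{R}}u^2 e^{tu}\mathd\Psi(f;u)$ is positive because $\Psi(f;\cdot)$ has nonzero second moment — the (positive) solution is unique, so $\omega(f/\alpha;w)/\alpha=\omega(f;w/\alpha)$, i.e. $\omega(f/\alpha;w)=\alpha\,\omega(f;w/\alpha)$.

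Combining the two, $v_{\alpha}=\omega(f/\alpha;\Delta/\sigma_{\Psi}(f/\alpha;x))=\alpha\,\omega(f;\Delta/\sigma_{\Psi}(f;x))=\alpha\,v$, which is the second assertion $v_{\alpha}/\alpha=v$; and applying the first assertion with $k=0$ at $z=v_{\alpha}$ gives $\hat{\Psi}(f/\alpha;v_{\alpha})=\hat{\Psi}(f;v_{\alpha}/\alpha)=\hat{\Psi}(f;v)$, the last assertion. I do not expect any real obstacle here; the only points needing a little care are the change of variables in the Stieltjes integral and the appeal to strict monotonicity of $\hat{\Psi}'(f;\cdot)$ to pin down $\omega$ uniquely — both routine.
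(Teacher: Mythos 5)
Your proof is correct and follows essentially the same route as the paper: establish $\Psi(f/\alpha;t)=\Psi(f;\alpha t)$, deduce $\hat{\Psi}(f/\alpha;z)=\hat{\Psi}(f;z/\alpha)$ and its derivatives, note $\sigma_\Psi(f/\alpha;x)=\alpha^{-1}\sigma_\Psi(f;x)$, and conclude $v_\alpha/\alpha=v$ by strict monotonicity of $\hat{\Psi}'(f;\cdot)$. The only cosmetic difference is that you first isolate the scaling law $\omega(f/\alpha;w)=\alpha\,\omega(f;w/\alpha)$ and then specialize, whereas the paper substitutes directly into the defining equation for $v_\alpha$; your added check that $f/\alpha\in\mathcal{C}$ is a welcome bit of rigor the paper leaves implicit.
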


\begin{proof}
  Note that $\Psi (f / \alpha ; t) = \Psi (f ; \alpha t)$. Therefore
  $\hat{\Psi} (f / \alpha ; z) = \hat{\Psi} (f ; z / \alpha)$. Differentiating
  we obtain $\hat{\Psi}^{(k)} (f / \alpha ; z) = (1 / \alpha)^k \cdot
  \hat{\Psi}^{(k)} (f ; z / \alpha)$. 
  It remains to prove that $v_{\alpha} / \alpha = v$. By definition
  \[ \hat{\Psi}' (f / \alpha ; v_{\alpha}) = \hat{\Psi}' (f / \alpha ; 0) +
     \frac{\Delta}{\sigma_{\Psi} (f / \alpha ; x)} \cdot \hat{\Psi}'' (f /
     \alpha ; 0) \]
  Note that $\sigma_{\Psi} (f / \alpha ; x) = (1 / \alpha) \sigma_{\Psi} (f ;
  x)$. Thus the above formula transforms into
  \[ (1 / \alpha) \hat{\Psi}' (f ; v_{\alpha} / \alpha) = (1 / \alpha)
     \hat{\Psi}' (f ; 0) + (1 / \alpha) \cdot \frac{\Delta}{\sigma_{\Psi} (f ;
     x)} \cdot \hat{\Psi}'' (f ; 0) \]
  By definition of $v$, the right hand side equals to $(1 / \alpha)
  \hat{\Psi}' (f ; v)$. Thus we obtain $\Psi' (f ; v_{\alpha} / \alpha) =
  \hat{\Psi}' (f ; v)$. The function $\hat{\Psi}' (f ; x)$ is strictly
  increasing for $x > 0$. It follows that $v_{\alpha} / \alpha = v$ as
  desired.
\end{proof}

\begin{lemma}
  Let $\alpha > 0$ be given and $f \in \mathcal{C}$. For all $x, \Delta
  \geqslant 1$,
  \[ S_f (x ; \Delta) = S_{f / \alpha} (x ; \Delta) \]
\end{lemma}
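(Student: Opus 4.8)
The plan is to unwind the definition of $S_f(x;\Delta)$ from the table in Section 3 and substitute the scaling relations from Lemma 5.4. Recall that
\[ S_f(x;\Delta) = \frac{(\log x)^{\hat{\Psi}(f;v) - v\hat{\Psi}'(f;v) - 1}}{v\,(2\pi\hat{\Psi}''(f;v)\log\log x)^{1/2}}, \qquad v = v_f(x;\Delta), \]
so that, writing $v_{\alpha} := v_{f/\alpha}(x;\Delta)$, the quantity $S_{f/\alpha}(x;\Delta)$ is the very same expression with $f$ replaced by $f/\alpha$ and $v$ replaced by $v_{\alpha}$. Lemma 5.4 supplies exactly what is needed: $v_{\alpha} = \alpha v$ and $\hat{\Psi}^{(k)}(f/\alpha;z) = \alpha^{-k}\,\hat{\Psi}^{(k)}(f;z/\alpha)$ for every $k \geq 0$.

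First I would treat the exponent of $\log x$. The case $k=0$ gives $\hat{\Psi}(f/\alpha;v_{\alpha}) = \hat{\Psi}(f;v_{\alpha}/\alpha) = \hat{\Psi}(f;v)$, while $k=1$ gives $v_{\alpha}\hat{\Psi}'(f/\alpha;v_{\alpha}) = (\alpha v)\cdot\alpha^{-1}\hat{\Psi}'(f;v) = v\hat{\Psi}'(f;v)$. Hence the exponent $\hat{\Psi}(f/\alpha;v_{\alpha}) - v_{\alpha}\hat{\Psi}'(f/\alpha;v_{\alpha}) - 1$ equals $\hat{\Psi}(f;v) - v\hat{\Psi}'(f;v) - 1$, so the numerators of $S_f$ and $S_{f/\alpha}$ coincide.

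Next I would handle the denominator. Taking $k=2$ in Lemma 5.4 gives $\hat{\Psi}''(f/\alpha;v_{\alpha}) = \alpha^{-2}\hat{\Psi}''(f;v)$, whence
\[ v_{\alpha}\bigl(2\pi\hat{\Psi}''(f/\alpha;v_{\alpha})\log\log x\bigr)^{1/2} = (\alpha v)\cdot\alpha^{-1}\bigl(2\pi\hat{\Psi}''(f;v)\log\log x\bigr)^{1/2} = v\bigl(2\pi\hat{\Psi}''(f;v)\log\log x\bigr)^{1/2}, \]
the two powers of $\alpha$ cancelling exactly. Combining the two computations yields $S_{f/\alpha}(x;\Delta) = S_f(x;\Delta)$, as claimed.

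There is no real obstacle in this argument; it is a direct bookkeeping of the homogeneity of the various transforms. The only point worth keeping in mind is that one should know $f/\alpha \in \mathcal{C}$, so that $v_{f/\alpha}(x;\Delta)$ and the transforms $\hat{\Psi}(f/\alpha;\cdot)$ appearing above are genuinely defined: strong additivity and strict positivity of $f/\alpha$ are immediate, condition $(1.3)$ follows by rescaling the decay constant $A$, and $(1.4)$ together with the non-vanishing of the second moment is inherited from $\Psi(f/\alpha;t) = \Psi(f;\alpha t)$. All of this is already implicit in the statement and proof of Lemma 5.4, so I would merely invoke it.
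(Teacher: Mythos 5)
Your proof is correct and takes essentially the same route as the paper: both invoke the scaling identities $\hat{\Psi}^{(k)}(f/\alpha;z)=\alpha^{-k}\hat{\Psi}^{(k)}(f;z/\alpha)$ and $v_{\alpha}=\alpha v$ from the preceding lemma, then verify term by term that the exponent of $\log x$ and the denominator of $S$ are each invariant under replacing $f$ by $f/\alpha$. (The scaling lemma you call ``Lemma 5.4'' is the preceding one, Lemma 5.3, in the paper's numbering; the closing remark about $f/\alpha\in\mathcal{C}$ is a reasonable sanity check but is already taken for granted in the paper.)
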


\begin{proof}
  Let $v_{\alpha} \assign v_{f / \alpha} (x ; \Delta)$ and $v \assign v_1$. By
  the previous lemma
  \[ \hat{\Psi} (f / \alpha ; v_{\alpha}) - v_{\alpha} \cdot \hat{\Psi}' (f /
     \alpha ; v_{\alpha}) = \hat{\Psi} (f ; v) - v \hat{\Psi}' (f ; v) \]
  Furthermore $v_{\alpha} \cdot \hat{\Psi}'' (f / \alpha ; v_{\alpha})^{1 / 2}
  = v \cdot \hat{\Psi}'' (f ; v)^{1 / 2}$. Therefore
  \[ \frac{(\log x)^{\hat{\Psi} (f / \alpha ; v_{\alpha}) - v_{\alpha} 
     \hat{\Psi}' (f / \alpha ; v_{\alpha}) - 1}}{v_{\alpha} \cdot (2 \pi
     \hat{\Psi}'' (f / \alpha ; v_{\alpha}) \tmop{loglog} x)^{1 / 2}} =
     \frac{(\log x)^{\hat{\Psi} (f ; v) - v \hat{\Psi}' (f ; v) - 1}}{v \cdot
     (2 \pi \hat{\Psi}'' (f ; v) \tmop{loglog} x)^{1 / 2}} \]
  The right hand side equals to $S_f (x ; \Delta)$, while the left hand side
  to $S_{f / \alpha} (x ; \Delta)$. It follows that $S_f (x ; \Delta) = S_{f /
  \alpha} (x ; \Delta)$ as desired.
\end{proof}

\begin{proof}[Proof of Part (1) and Part (2) of Theorem 2.8]
  When $\Delta \leqslant o (\sigma (f ; x))$ then by lemma 4.7 $v = v_f (x ;
  \Delta) \asymp \Delta / \sigma_{\Psi} (f ; x) = o (1)$. We claim that
  \begin{eqnarray}
    L (f ; v) e^{- vc (f)} / \Gamma ( \hat{\Psi} (f ; v)) & = & 1 + o (1) \\
    L (\mathfrak{g}; v) e^{- vc (f)} / \Gamma ( \hat{\Psi} (f ; v)) & = & 1 +
    o (1) \\
    \mathcal{P}_{\mathfrak{h}} (a ; v) & = & 1 + o (1) \text{ uniformly in $0
    \leqslant a \leqslant 1$} 
  \end{eqnarray}
  Let $G (z) \assign L (f ; z) e^{- zc (f)} / \Gamma ( \hat{\Psi} (f ; z))$.
  The function $G (z)$ is entire by lemma 4.4 and lemma 4.2. Therefore $G (v)
  = G (0) + O (v) = 1 + o (1)$. The same proof goes for $(5.11)$. Recall that
  \[ \mathcal{P}_{\mathfrak{h}} (a ; v) = \frac{v}{e^v - 1} + v \sum_{k
     \geqslant 0} e^{v (k + a)}_{} \cdot \mathbbm{P} \left( X (\mathfrak{h})
     \geqslant k + a \right) \]
  We have $v / (e^v - 1) = 1 + O (v)$. Also the sum on the right is $O (1)$
  throughout $0 \leqslant v \leqslant 1 / 2$ (because by lemma 4.19,
  $\mathbbm{E}[e^{X (\mathfrak{h})}] < \infty$ hence $\mathbbm{P}(X
  (\mathfrak{h}) \geqslant k) \leqslant e^{- k} \cdot \mathbbm{E}[e^{X
  (\mathfrak{h})}]$). Thus $\mathcal{P}_{\mathfrak{h}} (a ; v) = 1 + O (v)$
  uniformly throughout $0 \leqslant v \leqslant 1 / 2$. Now, if $\Psi (f ; t)$
  is not lattice distributed and $(\tmop{loglog} x)^{\varepsilon} \ll \Delta
  \leqslant o (\sigma (f ; x))$ then by $(5.10)$ and part 3 of theorem 2.8
  \[ \mathcal{D}_f (x ; \Delta) \sim \frac{L (f ; v) \cdot e^{- vc
     (f)}}{\Gamma ( \hat{\Psi} (f ; v))} \cdot S_f (x ; \Delta) \sim S_f (x ;
     \Delta) \]
  If $\Psi (f ; t)$ is lattice distributed on $\mathbbm{Z}$ and
  $(\tmop{loglog} x)^{\varepsilon} \ll \Delta \leqslant o (\sigma (f ; x))$
  then by $(5.11)$, $(5.12)$ and part 4 of theorem 2.8,
  \[ \mathcal{D}_f (x ; \Delta) \sim \frac{L (\mathfrak{g}; v) e^{- vc
     (f)}}{\Gamma ( \hat{\Psi} (f ; v))} \cdot \mathcal{P}_{\mathfrak{h}}
     (\xi_f (x ; \Delta) ; v) \cdot S_f (x ; \Delta) \sim S_f (x ; \Delta) \]
  Now consider the case when $\Psi (f ; t)$ is lattice distributed on $\alpha
  \mathbbm{Z}$ ($\alpha \neq 1$) and $\Delta$ is in the range $(\tmop{loglog}
  x)^{\varepsilon} \ll \Delta \leqslant o (\sigma (f ; x))$. Let $v_{\alpha}
  \assign v_{f / \alpha} (x ; \Delta)$. We reduce this case to the previous
  one. Note that $\mathcal{D}_f (x ; \Delta) =\mathcal{D}_{f / \alpha} (x ;
  \Delta)$ and that $\Psi (f / \alpha ; t)$ is lattice distributed on
  $\mathbbm{Z}$. Therefore, using part 4 of theorem 2.8,
  \[ \mathcal{D}_f (x ; \Delta) =\mathcal{D}_{f / \alpha} (x ; \Delta) \sim
     \frac{L (\mathfrak{g}_{f / \alpha} ; v_{\alpha}) e^{- v_{\alpha} c (f /
     \alpha)}}{\Gamma ( \hat{\Psi} (f / \alpha ; v_{\alpha}))} \cdot
     \mathcal{P}_{\mathfrak{h}_{f / \alpha}} (\xi_{f / \alpha} (x ; \Delta) ;
     v_{\alpha}) \cdot S_{f / \alpha} (x ; \Delta) \]
  By lemma 5.3, $v_{\alpha} \assign v_{f / \alpha} (x ; \Delta) = \alpha v = o
  (1)$. Thus the terms on the left to $S_{f / \alpha} (x ; \Delta)$ are $1 + o
  (1)$. It follows that the right hand side in the above equation is
  asymptotic to $S_{f / \alpha} (x ; \Delta)$. But by lemma 5.4, $S_{f /
  \alpha} (x ; \Delta) = S_f (x ; \Delta)$. Hence $\mathcal{D}_f (x ; \Delta)
  \sim S_f (x ; \Delta)$ as desired. It remains to show that $\mathcal{D}_f (x
  ; \Delta) \sim (1 / \sqrt[]{2 \pi}) \int_{\Delta}^{\infty} e^{- u^2 / 2}
  \cdot \mathd u$ when $\Delta$ is in the range $\Delta \leqslant o (\sigma (f
  ; x)^{1 / 3}) = o ((\tmop{loglog} x)^{1 / 6})$. This is a consequence of
  proposition 4.9. Indeed, let the random variable $\Omega (f ; x)$ be defined
  by $\mathbbm{P}(\Omega (f ; x) \leqslant t) = (1 / \lfloor x \rfloor)\#\{n
  \leqslant x : f (n) \leqslant t\}$. Then, by proposition 4.1,
  \[ \mathbbm{E} \left[ e^{s \Omega (f ; x)} \right] = \frac{1}{\lfloor x
     \rfloor} \sum_{n \leqslant x} e^{sf (n)} = \frac{L (f ; s)}{\Gamma (
     \hat{\Psi} (f ; s))} \cdot \left( \log x \right)^{\hat{\Psi} (f ; s) - 1}
     + O \left( (\log x)^{\hat{\Psi} (f ; \kappa) - 3 / 2} \right) \]
  uniformly in $|s| \leqslant \varepsilon$ for any given $\varepsilon > 0$.
  Since $L (f ; s) / \Gamma ( \hat{\Psi} (f ; s))$ is entire (by lemma 4.4 and
  lemma 4.2) and non-zero at $s = 0$ proposition 4.9 is applicable. It follows
  that
  \begin{eqnarray*}
    \mathbbm{P} \left( \frac{\Omega (f ; x) - \mu (f ; x)}{\sigma (f ; x)}
    \geqslant \Delta \right) & \sim & \frac{1}{\sqrt[]{2 \pi}}
    \int_{\Delta}^{\infty} e^{- u^2 / 2} \cdot \mathd u
  \end{eqnarray*}
  uniformly in $1 \leqslant \Delta \leqslant o (\sigma (f ; x)^{1 / 3})$.
  Since the left hand is equal to $\mathcal{D}_f (x ; \Delta)$ we are done. 
\end{proof}

\section{The ``structure theorem''}

We break down the proof of Theorem 1.1 into three parts corresponding to the
range $1 \leqslant \Delta \leqslant o (\sigma^{\alpha})$, $1 \leqslant \Delta
\leqslant o (\sigma)$ and $1 \leqslant \Delta \ll \sigma$. Throughout (just as
in the statement of theorem 1.1) $\sigma \assign \sigma (x)$ stands for a
function such that $\sigma (f ; x) \sim \sigma (x) \sim \sigma (g ; x)$.

\subsection{The $1 \leqslant \Delta \leqslant o (\sigma (x)^{\alpha})$ range}

We now prove Part (1) and Part (2) of Theorem 1.1. 
The rough idea of the proof is this: 
We show that $\mathcal{D}_f (x ; \Delta) \sim \mathcal{D}_g (x ;
\Delta)$ holds in the range $1 \leqslant \Delta \leqslant o (\sigma^{\alpha})$
if and only if the first $\varrho (\alpha) \assign \left\lceil (1 + \alpha) /
(1 - \alpha) \right\rceil$ coefficients of some power series agree. Then we
relate the equality of those coefficients to the equality of moments $\int t^k
\mathd \Psi (f ; t) = \int t^k \mathd \Psi (g ; t)$ for $k = 3, 4, \ldots,
\varrho (\alpha)$. \\Let us also note at the outset that the function we will be dealing with,
namely $\omega(f;z)$ and $A(f;z)$ are respectively analytic in a 
neighborhood of $\mathbb{R}^{+} \cup \{0\}$ (lemma 4.7) and entire (lemma 4.2).
 
\begin{lemma}
  Let $f \in \mathcal{C}$. Given $\varepsilon > 0$, uniformly in
  $(\tmop{loglog} x)^{\varepsilon} \ll \Delta \leqslant o (\sigma (f ; x))$,
  \[ \mathcal{D}_f (x ; \Delta) \sim (1 / \sqrt[]{2 \pi} \Delta) \cdot \left(
     \log x \right)^{\mathcal{E}(f ; \Delta / \sigma_{\Psi} (f ; x))} \]
  where $\mathcal{E}(f ; z) \assign A (f ; \omega (f ; z))$. The functions $A
  (f ; z)$ and $\omega (f ; z)$ are defined in section 3. \ 
\end{lemma}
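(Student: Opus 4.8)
The plan is to start from the asymptotics already established in parts (2), (3), (4) of Theorem 2.8, which give $\mathcal{D}_f(x;\Delta)\sim S_f(x;\Delta)$ in the range $(\log_2 x)^\varepsilon\ll\Delta\leqslant o(\sigma(f;x))$ (this is exactly what the proof of parts (1)--(2) of Theorem 2.8 established in the range $(\log_2 x)^\varepsilon\ll\Delta\leqslant o(\sigma)$, combining part (3) in the non-lattice case and part (4) in the lattice case, together with the observations $(5.10)$--$(5.12)$ that the correction factors $L(f;v)e^{-vc(f)}/\Gamma(\hat\Psi(f;v))$, $L(\mathfrak g;v)e^{-vc(f)}/\Gamma(\hat\Psi(f;v))$ and $\mathcal P_{\mathfrak h}(a;v)$ are all $1+o(1)$ when $v=v_f(x;\Delta)=o(1)$). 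So the task reduces to rewriting $S_f(x;\Delta)$ in the claimed closed form. Recall $S_f(x;\Delta)=(\log x)^{\hat\Psi(f;v)-v\hat\Psi'(f;v)-1}\big/\big(v(2\pi\hat\Psi''(f;v)\log_2 x)^{1/2}\big)$ with $v=v_f(x;\Delta)=\omega(f;\Delta/\sigma_\Psi(f;x))$, so the exponent of $\log x$ is exactly $A(f;v)=A(f;\omega(f;\Delta/\sigma_\Psi(f;x)))=\mathcal E(f;\Delta/\sigma_\Psi(f;x))$, matching the statement. It therefore only remains to show that the prefactor $1/\big(v(2\pi\hat\Psi''(f;v)\log_2 x)^{1/2}\big)$ equals $(1+o(1))/(\sqrt{2\pi}\,\Delta)$.

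For this I would use Lemma 4.7: in the range $1\leqslant\Delta\leqslant o(\sigma_\Psi(f;x))$ one has $v=v_f(x;\Delta)\sim\Delta/\sigma_\Psi(f;x)$, and since $\sigma_\Psi^2(f;x)=\hat\Psi''(f;0)\log_2 x$ by the definition in section 3, this gives $v\sim\Delta/(\hat\Psi''(f;0)\log_2 x)^{1/2}$. Because $v=o(1)$ and $\hat\Psi''(f;z)$ is analytic (Lemma 4.2) with $\hat\Psi''(f;0)\neq 0$, we have $\hat\Psi''(f;v)=\hat\Psi''(f;0)(1+o(1))$; hence
\[
  v\,(\hat\Psi''(f;v)\log_2 x)^{1/2}=v\,(\hat\Psi''(f;0)\log_2 x)^{1/2}(1+o(1))=v\,\sigma_\Psi(f;x)(1+o(1))=\Delta(1+o(1)).
\]
Dividing by $(2\pi)^{1/2}$ gives the prefactor $(1+o(1))/(\sqrt{2\pi}\,\Delta)$, and multiplying by $(\log x)^{\mathcal E(f;\Delta/\sigma_\Psi(f;x))}$ yields the claim. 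The uniformity in $(\log_2 x)^\varepsilon\ll\Delta\leqslant o(\sigma(f;x))$ is inherited from the uniformity in Theorem 2.8 and in Lemma 4.7; note also that by the definition of $\sigma_\Psi$ and the estimate $\sigma^2(f;x)=\hat\Psi''(f;0)\log_2 x+O(1)$ from Lemma 4.8, $\sigma(f;x)\sim\sigma_\Psi(f;x)$, so the range $\Delta\leqslant o(\sigma(f;x))$ and $\Delta\leqslant o(\sigma_\Psi(f;x))$ coincide.

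The only genuinely substantive point is making sure the error terms are truly $o(1)$ \emph{uniformly} across the whole range rather than just pointwise: the passage $\hat\Psi''(f;v)=\hat\Psi''(f;0)(1+o(1))$ needs $v\to 0$ uniformly, which holds because $v\asymp\Delta/\sigma_\Psi(f;x)$ and $\Delta=o(\sigma_\Psi(f;x))$; and the replacement $v\sigma_\Psi(f;x)=\Delta(1+o(1))$ uses the refined asymptotic $v\sim\Delta/\sigma_\Psi(f;x)$ of Lemma 4.7, valid precisely in this range. So no new ideas are needed beyond carefully invoking Lemma 4.7, Lemma 4.8, and the already-proved parts of Theorem 2.8; the ``hard part'' is purely bookkeeping of the $1+o(1)$ factors. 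I would write the proof in three short paragraphs: first cite Theorem 2.8 to reduce to $S_f(x;\Delta)$, then identify the exponent with $\mathcal E(f;\Delta/\sigma_\Psi(f;x))$, and finally do the prefactor computation via Lemma 4.7.
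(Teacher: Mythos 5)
Your proposal is correct and follows exactly the paper's argument: invoke part (2) of Theorem 2.8 to reduce to $S_f(x;\Delta)$, identify the exponent $\hat\Psi(f;v)-v\hat\Psi'(f;v)-1$ with $\mathcal{E}(f;\Delta/\sigma_\Psi(f;x))$ via the definition $v=\omega(f;\Delta/\sigma_\Psi)$, and then use Lemma 4.7 together with $\sigma_\Psi^2=\hat\Psi''(f;0)\log\log x$ and $\hat\Psi''(f;v)=\hat\Psi''(f;0)+o(1)$ to turn the prefactor into $(1+o(1))/(\sqrt{2\pi}\,\Delta)$. The extra sentence unpacking where part (2) of Theorem 2.8 comes from (parts (3)/(4) plus $(5.10)$--$(5.12)$) is just re-exposition of what Section 5.3 already proved, not a different route.
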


\begin{proof}
  Let $v \assign v_f (x ; \Delta)$. By lemma 4.7, $v \sim \Delta /
  \sigma_{\Psi} (f ; x)$ when $\Delta \leqslant o (\sigma (f ; x))$, and in
  particular $v = o (1)$. Thus $\hat{\Psi}'' (f ; v) = \hat{\Psi}'' (f ; 0) +
  o (1)$ and
  \begin{equation}
    v (2 \pi \hat{\Psi}'' (f ; v) \tmop{loglog} x)^{1 / 2} \sim (\Delta /
    \sigma_{\Psi} (f ; x)) \cdot (2 \pi \cdot \hat{\Psi}'' (f ; 0)
    \tmop{loglog} x)^{1 / 2} = \sqrt[]{2 \pi} \Delta
  \end{equation}
  the last equality comes from $\sigma_{\Psi} (f ; x)^2 = \hat{\Psi}'' (f ; 0)
  \tmop{loglog} x$. By definition of $v$ and $\omega (f ; \cdot)$ we have $v =
  \omega (f ; \Delta / \sigma_{\Psi} (f ; x))$, and so
  \begin{equation}
    \hat{\Psi} (f ; v) - 1 - v \hat{\Psi}' (f ; v) = A (f ; v) =\mathcal{E}(f
    ; \Delta / \sigma_{\Psi} (f ; x))
  \end{equation}
  By part 2 of theorem 2.8, uniformly in $(\tmop{loglog} x)^{\varepsilon} \ll
  \Delta \leqslant o (\sigma (f ; x))$,
  \begin{eqnarray*}
    \mathcal{D}_f (x ; \Delta) & \sim & \frac{\left( \log x
    \right)^{\hat{\Psi} (f ; v) - 1 - v \hat{\Psi}' (f ; v)}}{v (2 \pi
    \hat{\Psi}'' (f ; v) \tmop{loglog} x)^{1 / 2}} \text{ , } v \assign v_f (x
    ; \Delta)
  \end{eqnarray*}
  By $(6.1)$, $(6.2)$ the right hand side is asymptotic to $( \sqrt[]{2 \pi}
  \Delta)^{- 1} (\log x)^{\mathcal{E}(f ; \Delta / \sigma_{\Psi} (f ; x))}$
\end{proof}

We now relate the asymptotic behaviour of $\mathcal{D}_f (x ; \Delta)$ to 
the coefficients of $\mathcal{E}(f ; z) = \sum_{k \geqslant 0} a_k z^k$.

\begin{lemma}
  Let $f, g \in \mathcal{C}$. Let $\varepsilon > 0$ be given. Suppose that
  $\sigma_{\Psi} (f ; x) = \sigma_{\Psi} (g ; x)$ and denote by $\sigma_{\Psi}
  = \sigma_{\Psi} (x)$ a function such that $\sigma_{\Psi} (f ; x) =
  \sigma_{\Psi} (x) = \sigma_{\Psi} (g ; x)$. The asymptotic relation
  \begin{eqnarray*}
    \mathcal{D}_f (x ; \Delta) & \sim & \mathcal{D}_g (x ; \Delta)
  \end{eqnarray*}
  holds uniformly in the range $(\tmop{loglog} x)^{\varepsilon} \ll \Delta
  \leqslant o (\sigma^{\alpha}_{\Psi})$ if and only if the first $\varrho
  (\alpha) \assign \left\lceil (1 + \alpha) / (1 - \alpha) \right\rceil$
  coefficients of $\mathcal{E}(f ; z) \assign A (f ; \omega (f ; z))$ and
  $\mathcal{E}(g ; z) \assign A (g ; \omega (g ; z))$ agree. 
\end{lemma}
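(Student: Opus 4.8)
The plan is to run everything through Lemma 6.1. Since by hypothesis $\sigma_{\Psi}(f;x) = \sigma_{\Psi}(x) = \sigma_{\Psi}(g;x)$, the argument $\Delta/\sigma_{\Psi}(f;x)$ equals $\Delta/\sigma_{\Psi}(g;x)$, so dividing the two asymptotics furnished by Lemma 6.1 cancels the common factor $(\sqrt{2\pi}\,\Delta)^{-1}$ and yields, uniformly in $(\tmop{loglog} x)^{\varepsilon} \ll \Delta \leqslant o(\sigma_{\Psi}^{\alpha})$ — a range contained in the range of validity $1\leqslant\Delta\leqslant o(\sigma(f;x))$ of Lemma 6.1, as $\alpha<1$ and $\sigma_{\Psi}\sim\sigma$ —
\[ \frac{\mathcal{D}_f(x;\Delta)}{\mathcal{D}_g(x;\Delta)} = (\log x)^{D(\Delta/\sigma_{\Psi})}\,(1+o(1)), \qquad D(z) \assign \mathcal{E}(f;z)-\mathcal{E}(g;z) . \]
Hence $\mathcal{D}_f(x;\Delta)\sim\mathcal{D}_g(x;\Delta)$ uniformly in the stated range if and only if $|D(\Delta/\sigma_{\Psi})|\cdot\tmop{loglog} x\to 0$ uniformly there. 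The function $D$ is analytic near $0$: by Lemma 4.7 the maps $\omega(f;\cdot),\omega(g;\cdot)$ are analytic near $\mathbbm{R}^{+}\cup\{0\}$, and by Lemma 4.2 the functions $A(f;\cdot)$, $A(g;\cdot)$ are entire. Moreover $\mathcal{E}(f;0)=A(f;0)=\hat{\Psi}(f;0)-1=0$ (using $\omega(f;0)=0$), and likewise for $g$; so writing $\mathcal{E}(f;z)=\sum_{k\geqslant1}a_kz^k$, $\mathcal{E}(g;z)=\sum_{k\geqslant1}b_kz^k$ we have $D(z)=\sum_{k\geqslant1}(a_k-b_k)z^k$.

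If $D\equiv0$ both sides of the asserted equivalence hold, so assume $D\not\equiv0$ and let $m\geqslant1$ be its order of vanishing at $0$, $D(z)=c_mz^m+O(z^{m+1})$ with $c_m\neq0$. Substituting $z=\Delta/\sigma_{\Psi}$, using $\sigma_{\Psi}^2=\hat{\Psi}''(f;0)\,\tmop{loglog} x$ from the table in Section 3 — where $\hat{\Psi}''(f;0)=\hat{\Psi}''(g;0)$ is forced by $\sigma_{\Psi}(f;x)=\sigma_{\Psi}(g;x)$; a short computation with the defining relations of $A$ and $\omega$ then gives $a_1=b_1$ and $a_2=b_2$, so that $m\geqslant3$, which is why the moment conditions in Theorem 1.1 begin at $k=3$ — and noting $\Delta/\sigma_{\Psi}=o(\sigma_{\Psi}^{\alpha-1})=o(1)$ throughout the range, one obtains
\[ D(\Delta/\sigma_{\Psi})\cdot\tmop{loglog} x = \frac{c_m}{\hat{\Psi}''(f;0)}\,\Delta^m\,\sigma_{\Psi}^{2-m}\,(1+o(1)) . \]
For fixed $x$ the quantity $\Delta^m\sigma_{\Psi}^{2-m}$ is increasing in $\Delta$, so over the range $\Delta\leqslant o(\sigma_{\Psi}^{\alpha})$ its supremum is $o(\sigma_{\Psi}^{m\alpha+2-m})$. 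Thus if $m(1-\alpha)\geqslant2$, then $m\alpha+2-m\leqslant0$ and $|D(\Delta/\sigma_{\Psi})|\tmop{loglog} x\to0$ uniformly, giving $\mathcal{D}_f\sim\mathcal{D}_g$. Conversely, if $m(1-\alpha)<2$, choose $\Delta(x)=\sigma_{\Psi}^{\alpha}/\psi(x)$ with $\psi(x)\to\infty$ slowly enough that still $\Delta(x)\gg(\tmop{loglog} x)^{\varepsilon}$ (possible since $\sigma_{\Psi}^{\alpha}\asymp(\tmop{loglog} x)^{\alpha/2}$ and we may assume $\varepsilon<\alpha/2$, the range being otherwise empty); then $\Delta(x)$ lies in the admissible range while $|D(\Delta/\sigma_{\Psi})|\,\tmop{loglog} x\asymp\sigma_{\Psi}^{2-m(1-\alpha)}/\psi^m\to\infty$, so $(\log x)^{D(\Delta/\sigma_{\Psi})}$ does not tend to $1$ and uniform asymptotic equivalence fails. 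Therefore $\mathcal{D}_f(x;\Delta)\sim\mathcal{D}_g(x;\Delta)$ uniformly in the range precisely when $m\geqslant2/(1-\alpha)$.

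It remains to convert $m\geqslant2/(1-\alpha)$ into the coefficient condition. As $m$ is a positive integer, $m\geqslant2/(1-\alpha)$ holds iff $m\geqslant\lceil2/(1-\alpha)\rceil$, and since $2/(1-\alpha)=(1+\alpha)/(1-\alpha)+1$ we have $\lceil2/(1-\alpha)\rceil=\varrho(\alpha)+1$. So the condition is that $D$ vanish at $0$ to order at least $\varrho(\alpha)+1$, i.e. $a_k=b_k$ for $1\leqslant k\leqslant\varrho(\alpha)$ — exactly that the first $\varrho(\alpha)$ coefficients of $\mathcal{E}(f;z)$ and $\mathcal{E}(g;z)$ agree. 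The points demanding care are: (i) checking that the $o(1)$'s inherited from Lemma 6.1 are uniform enough to pass to the supremum over $\Delta$; and (ii) the converse direction, where one must slot an admissible $\Delta(x)$ between the lower constraint $(\tmop{loglog} x)^{\varepsilon}$ and the upper constraint $o(\sigma_{\Psi}^{\alpha})$ along which the ratio fails to converge to $1$, handling also the boundary case where $2/(1-\alpha)$ is an integer. Neither is deep, but (ii) is where a small diagonalization has to be carried out attentively.
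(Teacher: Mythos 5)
Your proof is correct and follows essentially the same route as the paper's: reduce via Lemma 6.1 to the condition $\tmop{loglog} x \cdot (\mathcal{E}(f;\Delta/\sigma_{\Psi}) - \mathcal{E}(g;\Delta/\sigma_{\Psi})) = o(1)$ uniformly, let $m$ be the order of vanishing of the difference, and compare $m$ with $2/(1-\alpha)$, which converts to the $\varrho(\alpha)$ coefficient condition exactly as you do. Two small points of comparison: in the forward direction the paper plugs in the fixed choice $\Delta = \sigma_{\Psi}^{1 - 2/m}$ and reads off $a_m = o(1)$, whereas you push $\Delta$ to the top of the range via $\Delta = \sigma_{\Psi}^{\alpha}/\psi(x)$; your version is slightly more robust, since $\sigma_{\Psi}^{1-2/m} \gg (\tmop{loglog} x)^{\varepsilon}$ forces $\varepsilon \leqslant 1/2 - 1/m$, a constraint the paper handles implicitly through its application with $\varepsilon < 1/6$ (together with the automatic $m \geqslant 3$). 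Your explicit derivation of $a_1 = b_1$ and $a_2 = b_2$ from $\hat{\Psi}''(f;0) = \hat{\Psi}''(g;0)$ and $\omega'(f;0) = 1$ is not needed for Lemma 6.2 as stated (the coefficient condition already covers $k \leqslant \varrho(\alpha)$), but it is the fact that closes the gap just noted and matches why Lemma 6.3 and Theorem 1.1 begin their moment conditions at $k = 3$.
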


\begin{proof}
  By lemma $6.1$ $\mathcal{D}_f (x ; \Delta) \sim \mathcal{D}_g (x ; \Delta)$
  holds uniformly in $(\tmop{loglog} x)^{\varepsilon} \ll \Delta \leqslant o
  (\sigma_{\Psi}^{\alpha})$ if and only if
  \begin{equation}
    \tmop{loglog} x \cdot \left( \mathcal{E}(f ; \Delta / \sigma_{\Psi})
    -\mathcal{E}(g ; \Delta / \sigma_{\Psi})) = o (1) \right.
  \end{equation}
  throughout $(\tmop{loglog} x)^{\varepsilon} \ll \Delta \leqslant o
  (\sigma_{\Psi}^{\alpha})$. Let $e (z) \assign \mathcal{E}(f ; z)
  -\mathcal{E}(g ; z)$ and denote by $a_n$ the $n$-th coefficient in the
  Taylor expansion of $e (z)$ about $z = 0$.
  
  Suppose to the contrary that $(6.3)$ holds in $(\tmop{loglog}
  x)^{\varepsilon} \leqslant \Delta \leqslant o (\sigma_{\Psi}^{\alpha})$ but
  $a_m \neq 0$ for some integer $m \leqslant \varrho (\alpha)$. Let $m$ be the
  first such integer. Then
  \begin{equation}
    e (\Delta / \sigma_{\Psi}) = a_m \cdot \left( \Delta / \sigma_{\Psi}
    \right)^m \cdot \left( 1 + O \left( \Delta / \sigma_{\Psi} \right) \right)
  \end{equation}
  In $(6.4)$ choose $\Delta = \sigma_{\Psi}^{1 - 2 / m}$. This choice of
  $\Delta$ is allowed (i.e we have $\Delta = o (\sigma^{\alpha}_{\Psi})$)
  because $\varrho (1 - 2 / m) = m - 1 < \varrho (\alpha)$, hence $1 - 2 / m <
  \alpha$ and thus $\Delta = \sigma_{\Psi}^{1 - 2 / m} = o
  (\sigma_{\Psi}^{\alpha})$. With this choice of $\Delta$ by $(6.4)$, equation
  $(6.3)$ becomes \ $a_m \cdot (\tmop{loglog} x / \sigma_{\Psi}^2) = o (1)$.
  Hence $a_m = o (1)$ because $\sigma_{\Psi}^2 \asymp \tmop{loglog} x$.
  Letting $x \rightarrow \infty$ we obtain $a_m = 0$, a contradiction with our
  initial assumption $a_m \neq 0$.
  
  Conversely, suppose that the first $\ell \assign \varrho (\alpha)$
  coefficients of $\mathcal{E}(f ; z)$ and $\mathcal{E}(g ; z)$ are equal.
  Thus
  \begin{equation}
    e (\Delta / \sigma_{\Psi}) =\mathcal{E}(f ; \Delta / \sigma_{\Psi})
    -\mathcal{E}(g ; \Delta / \sigma_{\Psi}) = O ((\Delta /
    \sigma_{\Psi})^{\ell + 1})
  \end{equation}
  uniformly in $1 \leqslant \Delta \leqslant o (\sigma_{\Psi})$. Using $(6.5)$
  and $\sigma_{\Psi}^2 \asymp \tmop{loglog} x$ we obtain for $\Delta \leqslant
  o (\sigma_{\Psi}^{\alpha})$,
  \begin{eqnarray*}
    \tmop{loglog} x \cdot \left( \mathcal{E}(f ; \Delta / \sigma_{\Psi})
    -\mathcal{E}(g ; \Delta / \sigma_{\Psi})) \right. & \ll & \tmop{loglog} x
    \cdot (\Delta / \sigma_{\Psi})^{\ell + 1}\\
    & \leqslant & \sigma^2_{\Psi} \cdot o (\sigma_{\Psi}^{(\alpha - 1) (\ell
    + 1)}) = o (\sigma_{\Psi}^{2 + (\alpha - 1) (\ell + 1)})
  \end{eqnarray*}
  The right hand side is in fact $o (1)$ because $2 + (\alpha - 1) (\ell + 1)
  \leqslant 0$. By the remark right above equation $(6.3)$ this shows 
  that $\mathcal{D}_f(x ; \Delta) \sim \mathcal{D}_g (x ; \Delta)$ 
  in $(\tmop{loglog}
  x)^{\varepsilon} \ll \Delta \leqslant o (\sigma_{\Psi}^{\alpha})$. A quick
  way to check $2 + (\alpha - 1) (\ell + 1) \leqslant 0$ is the following.
  Note that
  \[ \varrho (\alpha) \assign \left\lceil \frac{1 + \alpha}{1 - \alpha}
     \right\rceil \geqslant \frac{1 + \alpha}{1 - \alpha} = \frac{2}{1 -
     \alpha} + \frac{\alpha - 1}{1 - \alpha} = \frac{2}{1 - \alpha} - 1 \]
  Upon rewriting the above we find $2 + (\alpha - 1) (\varrho (\alpha) + 1)
  \leqslant 0$ as desired. 
\end{proof}

The next lemma is crucial.

\begin{lemma}
  Let $f, g \in \mathcal{C}$. Suppose that $\hat{\Psi}'' (f ; 0) =
  \hat{\Psi}'' (g ; 0)$. Let $\alpha \in (1 / 3, 1)$ be given. The first
  $\varrho (\alpha)$ coefficients of $A (f ; \omega (f ; z))$ and $A (g ;
  \omega (g ; z))$ are equal if and only if the $k \um \tmop{th}$ moments $(3
  \leqslant k \leqslant \varrho (\alpha))$ of $\Psi (f ; t)$ and $\Psi (g ;
  t)$ are equal, that is
  \[ \int_{\mathbbm{R}} t^k \mathd \Psi (f ; t) = \int_{\mathbbm{R}} t^k
     \mathd \Psi (g ; t) \text{ for $3 \leqslant k \leqslant \varrho
     (\alpha)$} \]
\end{lemma}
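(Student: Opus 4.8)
The plan is to make the dependence of $\mathcal{E}(f;z)\assign A(f;\omega(f;z))$ on the moments of $\Psi(f;\cdot)$ completely explicit and then run a short induction. Throughout write $m_k(f)\assign\int_{\mathbbm{R}}t^k\,\mathd\Psi(f;t)$, so that, by $(2.6)$, $\hat{\Psi}(f;z)=\sum_{k\geqslant0}m_k(f)z^k/k!$ with $m_0(f)=1$ and $m_2(f)=\hat{\Psi}''(f;0)\neq0$, the second moment being non-zero by the definition of the class $\mathcal{C}$. Since $\hat{\Psi}(f;\cdot)$ is entire (Lemma 4.2) and $\omega(f;\cdot)$ is analytic near $0$ (Lemma 4.7), all the manipulations below are legitimate power-series identities. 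The first step is a closed form for $\mathcal{E}(f;z)$: from $\tfrac{\mathd}{\mathd w}A(f;w)=\hat{\Psi}'(f;w)-\hat{\Psi}'(f;w)-w\hat{\Psi}''(f;w)=-w\hat{\Psi}''(f;w)$, and from $\hat{\Psi}''(f;\omega(f;z))\cdot\omega'(f;z)=\hat{\Psi}''(f;0)$ (obtained by differentiating the defining relation $\hat{\Psi}'(f;\omega(f;z))=\hat{\Psi}'(f;0)+z\hat{\Psi}''(f;0)$), the chain rule gives
\[ \frac{\mathd}{\mathd z}\,\mathcal{E}(f;z)=-\omega(f;z)\,\hat{\Psi}''(f;\omega(f;z))\,\omega'(f;z)=-\hat{\Psi}''(f;0)\,\omega(f;z). \]
Moreover $\omega(f;0)=0$ (as $\hat{\Psi}'(f;\cdot)$ is strictly increasing on $\mathbbm{R}^{+}\cup\{0\}$, since $\hat{\Psi}''(f;\cdot)$ is strictly positive there) and $\mathcal{E}(f;0)=A(f;0)=\hat{\Psi}(f;0)-1=0$, so
\[ \mathcal{E}(f;z)=-m_2(f)\int_0^z\omega(f;u)\,\mathd u. \]
Thus the Taylor coefficients of $\mathcal{E}(f;\cdot)$ are governed by $m_2(f)$ and the Taylor coefficients of $\omega(f;\cdot)$.

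Next I would pin down the coefficients of $\omega(f;z)$ by Lagrange inversion. From $\hat{\Psi}'(f;w)-\hat{\Psi}'(f;0)=\sum_{j\geqslant1}\frac{m_{j+1}(f)}{j!}w^j$ and the definition of $\omega$ we get $z=\sum_{j\geqslant1}\frac{r_j(f)}{j!}w^j$, where $r_j(f)\assign m_{j+1}(f)/m_2(f)$ and $r_1(f)=1$; in particular $\omega(f;\cdot)$, as a power series, depends only on the ratios $r_2(f),r_3(f),\dots$ (the mean $m_1(f)$ drops out entirely). Writing $z=w\phi(w)$ with $\phi(w)=1+\sum_{j\geqslant2}\frac{r_j(f)}{j!}w^{j-1}$, Lagrange inversion gives $\omega(f;z)=\sum_{n\geqslant1}c_n(f)z^n$ with $c_n(f)=\tfrac1n[w^{n-1}]\phi(w)^{-n}$. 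Reading off this coefficient: $c_1(f)=1$, and for $n\geqslant2$ the only way $r_n(f)$ enters $[w^{n-1}]\phi(w)^{-n}$ is linearly, through the single term of $\phi(w)^{-n}$ coming from $-n$ times the linear part of $\phi(w)-1$, so
\[ c_n(f)=-\frac{r_n(f)}{n!}+P_n(r_2(f),\dots,r_{n-1}(f)) \]
for a fixed universal polynomial $P_n$; i.e. $c_n(f)$ is affine in the ``top'' ratio $r_n(f)$ with non-vanishing leading coefficient $-1/n!$. Combining this with the integral formula, for $n\geqslant1$ the coefficient of $z^{n+1}$ in $\mathcal{E}(f;z)$ equals $-m_2(f)\,c_n(f)/(n+1)$, while the coefficients of $z^0$ and $z^1$ vanish identically.

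Finally, assume $\hat{\Psi}''(f;0)=\hat{\Psi}''(g;0)$, i.e. $m_2(f)=m_2(g)$, and abbreviate $\varrho\assign\varrho(\alpha)$. For the ``if'' direction, $m_k(f)=m_k(g)$ for $3\leqslant k\leqslant\varrho$ gives $r_j(f)=r_j(g)$ for $2\leqslant j\leqslant\varrho-1$, hence $c_n(f)=c_n(g)$ for $1\leqslant n\leqslant\varrho-1$, hence $\mathcal{E}(f;z)$ and $\mathcal{E}(g;z)$ agree in every coefficient up to degree $\varrho$ (the degree-$2$ coefficient matches because $m_2$ does, the degree-$0$ and degree-$1$ ones vanish). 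For the ``only if'' direction I would induct on $k$ from $3$ to $\varrho$: assuming $m_j(f)=m_j(g)$ for all $j<k$ — equivalently $r_j(f)=r_j(g)$ for $2\leqslant j\leqslant k-2$, so $P_{k-1}$ contributes equally to $c_{k-1}(f)$ and $c_{k-1}(g)$ — the hypothesised equality of the degree-$k$ coefficient of $\mathcal{E}$ together with $m_2(f)=m_2(g)$ forces $c_{k-1}(f)=c_{k-1}(g)$, whence $r_{k-1}(f)=r_{k-1}(g)$ by the affine-with-non-vanishing-coefficient structure, i.e. $m_k(f)=m_k(g)$. I expect the one genuinely delicate point to be the Lagrange-inversion bookkeeping of the second paragraph — specifically the clean verification that each $c_n$ is affine in $r_n$ with non-zero leading coefficient; once that normal form is in place, the closed form for $\mathcal{E}$ and the two implications above are routine.
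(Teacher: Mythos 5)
Your argument is correct, and it is a genuinely different (and in my view cleaner) route than the paper's. Both proofs rest on the same initial observation, obtained by the chain rule together with the differentiated defining relation $\hat{\Psi}''(f;\omega(f;z))\,\omega'(f;z)=\hat{\Psi}''(f;0)$, namely that $\tfrac{\mathd}{\mathd z}\mathcal{E}(f;z)=-\hat{\Psi}''(f;0)\,\omega(f;z)$. From there the two proofs diverge. The paper, in the forward direction, reads off $\omega(f;z)=\omega(g;z)+O(z^{\ell})$, then eliminates $\omega$ by substituting $z\mapsto\omega^{-1}(f;z)$ (after a Taylor expansion of $A(g;\cdot)$ around the two nearby base points $\omega(f;z)$ and $\omega(g;z)$), and finally differentiates $A(f;z)=A(g;z)+O(z^{\ell+1})$ twice to reach the moment identity; in the converse direction it expands $\omega(h;z)$ via derivatives of $(\hat{\Psi}')^{-1}$ and invokes, without making it fully explicit, the fact that the $k$-th derivative of the inverse at $\hat{\Psi}'(h;0)$ involves only $\hat{\Psi}''(h;0),\dots,\hat{\Psi}^{(k+1)}(h;0)$. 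You instead integrate the closed form to get $\mathcal{E}(f;z)=-\hat{\Psi}''(f;0)\int_0^z\omega(f;u)\,\mathd u$ and make the ``triangular'' structure of the coefficients of $\omega$ completely explicit by Lagrange inversion: $c_n(f)=-r_n(f)/n!+P_n(r_2(f),\dots,r_{n-1}(f))$ with $r_j=m_{j+1}/m_2$, so that $c_n$ is affine in the top ratio $r_n$ with leading coefficient $-1/n!\neq 0$. This single normal-form lemma yields both directions by the same short induction, with no need for the paper's substitution-by-$\omega^{-1}$ or the Taylor-expansion-around-two-base-points step. The Lagrange-inversion bookkeeping you flag as the delicate point does check out: in $\phi(w)^{-n}$ with $\phi(w)=1+\sum_{i\geqslant 1}\tfrac{r_{i+1}}{(i+1)!}w^{i}$, the only appearance of $r_n$ in $[w^{n-1}]\phi(w)^{-n}$ is through the $m=1$ term of the binomial expansion, contributing $-n\,r_n/n!$, while every $m\geqslant 2$ contribution uses exponents $i_k\leqslant n-2$ and hence only $r_2,\dots,r_{n-1}$.
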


\begin{proof}
  Since $\alpha > 1 / 3$ we have $\varrho (\alpha) \geqslant 3$. We work
  formally with power series and write $O (z^{\ell})$ to indicate terms of
  order $\geqslant \ell$. Denote by $a_k$ and $b_k$ the coefficients in the
  expansion around $0$ of the power series $A (f ; \omega (f ; z))$ and $A (g
  ; \omega (g ; z))$, respectively. Suppose that $a_k = b_k$ for $k \leqslant
  \ell \assign \varrho (\alpha)$. Then
  \begin{equation}
    A (f ; \omega (f ; z)) = A (g ; \omega (g ; z)) + O (z^{\ell + 1})
  \end{equation}
  Differentiating on both sides we obtain $- \hat{\Psi}'' (f ; 0) \omega (f ;
  z) = - \hat{\Psi}'' (g ; 0) \omega (g ; z) + O (z^{\ell})$. Dividing by
  $\hat{\Psi}'' (f ; 0) = \hat{\Psi}'' (g ; 0)$ on both sides, we get
  \[ \omega (f ; z) = \omega (g ; z) + O (z^{\ell}) \]
  Expanding $A (g ; \omega (g ; z))$ into a Taylor series about $\omega (f ;
  z)$, we find that
  \begin{eqnarray*}
    A (g ; \omega (g ; z)) & = & A (g ; \omega (f ; z) + (\omega (g ; z) -
    \omega (f ; z)))\\
    & = & A (g ; \omega (f ; z)) + \sum_{k \geqslant 1} \frac{1}{k!} \cdot
    \left( \omega (g ; z) - \omega (f ; z) \right)^k \cdot A^{(k)} (g ; \omega
    (f ; z))
  \end{eqnarray*}
  Since $\omega (g ; z) - \omega (f ; z) = O (z^{\ell})$ the term $k \geqslant
  2$ contribute $O (z^{2 \ell})$. The term $k = 1$ equals to $- \omega (f ; z)
  \hat{\Psi}'' (g ; \omega (f ; z)) \cdot (\omega (g ; z) - \omega (f ; z))$
  and thus contributes $O (z^{\ell + 1})$ because $\omega (f ; z) = O (z)$. We
  conclude that
  \begin{equation}
    A (g ; \omega (g ; z)) = A (g ; \omega (f ; z)) + O (z^{\ell + 1})
  \end{equation}
  Inserting $(6.7)$ into $(6.6)$ we obtain
  \[ A (f ; \omega (f ; z)) = A (g ; \omega (f ; z)) + O \left( z^{\ell + 1}
     \right) \]
  In this relation we substitute $z \longmapsto \omega^{- 1} (f ; z)$. Since
  $\omega^{- 1} (f ; z)$ is zero at $z = 0$ we have $\omega^{- 1} (f ; z) = O
  (z)$. Therefore, after substitution $A (f ; z) = A (g ; z) + O (z^{\ell +
  1})$. Differentiating on both sides we obtain $z \hat{\Psi}'' (f ; z) = z
  \hat{\Psi}'' (g ; z) + O (z^{\ell})$. Upon division by $z$ we get
  $\hat{\Psi}'' (f ; z) = \hat{\Psi}'' (g ; z) + O \left( z^{\ell - 1}
  \right)$. Since
  \[ \hat{\Psi} (f ; z) = \sum_{k \geqslant 0} \int_{\mathbbm{R}} t^k 
  \mathd \Psi
     (f ; t) \cdot \frac{z^k}{k!} \]
  and $\hat{\Psi}'' (f ; z) = \hat{\Psi}'' (g ; z) + O (z^{\ell - 1})$ with
  $\ell = \varrho (\alpha)$ we conclude that
  \begin{equation}
    \int_{\mathbbm{R}} t^k \mathd \Psi (f ; t) = 
    \int_{\mathbbm{R}} t^k \mathd \Psi (g ; t) \text{ for } k = 2, 3, 
    \ldots, \varrho (\alpha)
  \end{equation}
  Conversely, let us suppose that $\int_{\mathbb{R}} t^k \mathd \Psi(f;t)
  = \int_{\mathbb{R}} t^k \mathd \Psi(g;t)$ holds for all $k = 3, \dots, 
  \varrho(\alpha)$. Since in addition (by assumptions) $\hat{\Psi}''(f;0)
  = \hat{\Psi}''(g;0)$ we obtain
  \[ \hat{\Psi}'' (f ; z) = \hat{\Psi}'' (g ; z) + O \left( z^{\ell - 1}
     \right) \]
  with $\ell \assign \varrho (\alpha)$. 
  Multiplying both sides by $z$ and integrating gives $A (f ; z) = A
  (g ; z) + O (z^{\ell + 1})$. Since $\omega (f ; z) = O (z)$, upon
  substituting $z \longmapsto \omega (f ; z)$ in the last relation, we obtain
  \begin{equation}
    A (f ; \omega (f ; z)) = A (g ; \omega (f ; z)) + O \left( z^{\ell + 1}
    \right)
  \end{equation}
  With this in mind, we evaluate the difference $\omega (f ; z) - \omega (g ;
  z)$. Given any $h \in \mathcal{C}$, by definition $\omega (h ; z)$ equals to
  \begin{equation}
    ( \hat{\Psi}')^{- 1} ( \hat{\Psi}' (h ; 0) + z \cdot \hat{\Psi}'' (h ; 0))
    = \sum_{k \geqslant 0} \frac{z^k \cdot \hat{\Psi}'' (h ; 0)^k}{k!} \left[
    ( \hat{\Psi}')^{- 1} \right]^{(k)} \left( \hat{\Psi}' (h ; 0) \right)
  \end{equation}
  where $( \hat{\Psi}')^{- 1}$ denote the inverse function (under composition)
  to $\hat{\Psi}' (h ; z)$ and $f^{(k)}$ stands for the $k$-th derivative of
  $f$. The term $k = 0$ contributes $0$. The term $k = 1$ contributes $z$,
  since
  \[ [( \hat{\Psi}')^{- 1}]^{(1)} (z) = \frac{1}{\hat{\Psi}'' (h ; (
     \hat{\Psi}')^{- 1} (h ; z))} \]
  so that at $z = \hat{\Psi}' (h ; 0)$ that simplifies to $1 / \hat{\Psi}'' (h
  ; 0)$. However, the important point here, is that the higher derivatives $[(
  \hat{\Psi}')^{- 1}]^{(k)} ( \hat{\Psi}' (h ; 0))$ will involve only the
  terms $\hat{\Psi}^{(k + 1)} (h ; 0), \ldots, \hat{\Psi}'' (h ; 0)$. By
  assumption we have $\hat{\Psi}^{(k)} (f ; 0) = \hat{\Psi}^{(k)} (g ; 0)$ for
  $2 \leqslant k \leqslant \ell \assign \varrho (\alpha)$ therefore the power
  series $(6.10)$ taken respectively for $h = f$ and $h = g$ will agree up to
  the $(\ell - 1)$-th term. This gives
  \begin{equation}
    \omega (f ; z) = \omega (g ; z) + O \left( z^{\ell} \right)
  \end{equation}
  Expanding $A (g ; \omega (f ; z))$ into a Taylor series about $\omega (g ;
  z)$, we find that
  \begin{eqnarray*}
    &  & A (g ; \omega (f ; z))\\
    & = & A (g ; \omega (g ; z) + (\omega (f ; z) - \omega (g ; z))\\
    & = & A (g ; \omega (g ; z)) + A' (g ; \omega (g ; z)) \cdot (\omega (f ;
    z) - \omega (g ; z)) + O \left( (\omega (f ; z) - \omega (g ; z))^2
    \right)
  \end{eqnarray*}
  By $(6.11)$ the third term is bounded by $O (z^{2 \ell})$, while the
  second term is bounded by $O (z^{\ell + 1})$ because $A' (g ; \omega (g ;
  z)) = O (z)$ since $A' (g ; \omega (g ; 0)) = A' (g ; 0) = 0$. It follows
  that $A (g ; \omega (f ; z)) = A (g ; \omega (g ; z)) + O (z^{\ell + 1})$.
  On combining this with $(6.9)$ we conclude that $A (f ; \omega (f ; z)) = A
  (g ; \omega (g ; z)) + O (z^{\ell + 1})$ as desired. 
\end{proof}

\begin{proof}[Proof of Part (1) and Part (2) of Theorem 1.1]
  By part (1) of theorem 2.8 $\mathcal{D}_f (x ; \Delta) \sim (1 / \sqrt[]{2
  \pi}) \int_{\Delta}^{\infty} e^{- u^2 / 2} \mathd u$ for $\Delta$ in the
  range $1 \leqslant \Delta \leqslant o (\sigma (f ; x)^{1 / 3})$. Therefore
  we will always have $\mathcal{D}_f (x ;
  \Delta) \sim \mathcal{D}_g (x ; \Delta)$ uniformly in $1 \leqslant \Delta
  \leqslant o (\sigma^{1 / 3})$. This proves part (1) of theorem 1.1.
  
  By assumptions $\sigma (f ; x) \sim \sigma (g ; x)$. Note that
  \[ \sigma^2 (f ; x) = \hat{\Psi}'' (f ; 0) \cdot \tmop{loglog} x + O (1) \]
  Therefore $\sigma (f ; x) \sim \sigma (g ; x)$ gives $\hat{\Psi}'' (f ; 0) =
  \hat{\Psi}'' (g ; 0)$ and also $\sigma_{\Psi} (f ; x) = \sigma_{\Psi} (g ;
  x)$ because $\sigma_{\Psi} (f ; x)^2 = \hat{\Psi}'' (f ; 0) \tmop{loglog}
  x$. Thus the assumptions of lemma 6.2 and lemma 6.3 are satisfied. Since
  $\mathcal{D}_f (x ; \Delta) \sim (1 / \sqrt[]{2 \pi}) \int_{\Delta}^{\infty}
  e^{- u^2 / 2} \cdot \mathd u$ when $1 \leqslant \Delta \leqslant o
  (\sigma^{1 / 3})$, the relation $\mathcal{D}_f (x ; \Delta) \sim
  \mathcal{D}_g (x ; \Delta)$ holds in the range $1 \leqslant \Delta \leqslant
  o (\sigma^{\alpha})$ if and only if it holds in the range $(\tmop{loglog}
  x)^{\varepsilon} \ll \Delta \leqslant o (\sigma^{\alpha})$ $(0 \leqslant
  \varepsilon < 1 / 6)$. By lemma 6.2, $\mathcal{D}_f (x ; \Delta) \sim
  \mathcal{D}_g (x ; \Delta)$ holds in that range if and only if the first
  $\varrho (\alpha)$ coefficients of the power-series 
  $A (f ; \omega (f ; z))$ and $A (g ;\omega (g ; z))$ coincide. 
  By lemma 6.3 they do coincide if and only if
  \[ \int_{\mathbbm{R}} t^k \mathd \Psi (f ; t) = \int_{\mathbbm{R}} t^k
     \mathd \Psi (g ; t) \]
  for all $k = 3, 4, \ldots, \varrho (\alpha)$. This chain of if and only if's
  proves Part (2) of Theorem 1.1.
\end{proof}

\subsection{The $1 \leqslant \Delta \leqslant o (\sigma$) range}

\begin{proof}[Proof of Part (3) of Theorem 1.1]
  One direction is clear: By Theorem 2.8, when $1 \leqslant \Delta \leqslant o
  (\sigma (f ; x))$ the asymptotic for $\mathcal{D}_f (x ; \Delta)$ depends
  only on $\Psi (f ; t)$. Hence if $\Psi (f ; t) = \Psi (g ; t)$ then
  $\mathcal{D}_f (x ; \Delta) \sim \mathcal{D}_g (x ; \Delta)$ throughout $1
  \leqslant \Delta \leqslant o (\sigma)$.
  
  Now we focus on the converse direction. If $\mathcal{D}_f (x ; \Delta) \sim
  \mathcal{D}_g (x ; \Delta)$ holds throughout $1 \leqslant \Delta \leqslant o
  (\sigma)$ then it also holds in the smaller range $1 \leqslant \Delta
  \leqslant o (\sigma^{\alpha})$ for any $0 < \alpha < 1$. Hence by part 2 of
  Theorem 1.1,
  \begin{eqnarray}
    \int_{\mathbbm{R}} t^k \mathd \Psi (f ; t) & = & \int_{\mathbbm{R}} t^k
    \mathd \Psi (g ; t) 
  \end{eqnarray}
  for all $k = 3, 4, \ldots, \varrho (\alpha) = \left\lceil (1 + \alpha) / (1
  - \alpha) \right\rceil$. Letting $\alpha \rightarrow 1$ it follows that
  $(6.12)$ holds for all $k \geqslant 3$. Recall that
  \begin{eqnarray*}
    \hat{\Psi} (f ; z) & = & 1 + \sum_{k \geqslant 1} \int_{\mathbbm{R}} t^k
    \mathd \Psi (f ; t) \cdot \frac{z^k}{k!}
  \end{eqnarray*}
  Therefore $\hat{\Psi} (f ; z) - \hat{\Psi} (g ; z) = az^2 + bz$ for some $a,
  b \in \mathbbm{R}$. In particular
  \[ a^2 t^4 + b^2 t^2 = | \hat{\Psi} (f ; \mathi t) - \hat{\Psi} (g ; \mathi
     t) |^2 \]
  The right hand side is bounded by $4$ because $| \hat{\Psi} (f ; \mathi t) |
  \leqslant 1$ and $| \hat{\Psi} (g ; \mathi t) | \leqslant 1$. Letting $t
  \rightarrow \infty$ in the above equation it follows that $a = 0 = b$. Hence
  $\hat{\Psi} (f ; \mathi t) = \hat{\Psi} (g ; \mathi t)$. By Fourier
  inversion (or using probabilistic terminology, by ``uniqueness of
  characteristic functions'') $\Psi (f ; t) = \Psi (g ; t)$. \ 
\end{proof}

\subsection{The $1 \leqslant \Delta \leqslant c \sigma$ range}

We prove part 4 of theorem 1.1. We break down the proof into two cases,
depending on whether $\Psi (f ; t)$ is or is not lattice distributed.

\subsubsection{$\Psi (f ; t)$ is not lattice distributed}

\begin{lemma}
  Let $f, g \in \mathcal{C}$. Suppose that $\sigma (f ; x) \sim \sigma (g ;
  x)$. As usual denote by $\sigma = \sigma (x)$ a function such that $\sigma
  (f ; x) \sim \sigma (x) \sim \sigma (g ; x)$. If there is a $\delta > 0$
  such that $\mathcal{D}_f (x ; \Delta) \sim \mathcal{D}_g (x ; \Delta)$
  uniformly in $1 \leqslant \Delta \leqslant \delta \sigma$, then $\mathcal{Z}
  \left( L (f ; z)) =\mathcal{Z}(L (g ; z)) \right.$ where $\mathcal{Z}(h)$
  denote the zero set of $h (\cdot)$ (the zeroes are counted without
  multiplicity). 
\end{lemma}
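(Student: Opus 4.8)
The plan is to peel off the probabilistic part of the large‑deviation asymptotics for $\mathcal{D}_f$ and $\mathcal{D}_g$, so that the hypothesis $\mathcal{D}_f (x ; \Delta) \sim \mathcal{D}_g (x ; \Delta)$ forces the ``arithmetic'' prefactors $L (f ; \cdot)\, e^{- \cdot\, c (f)}$ and $L (g ; \cdot)\, e^{- \cdot\, c (g)}$ to agree asymptotically along a whole interval of saddle‑point values, and then to upgrade this to an exact identity of entire functions via the identity theorem.

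First, since the hypothesis holds in particular for $1 \leqslant \Delta \leqslant o (\sigma)$, Part (3) of Theorem 1.1 gives $\Psi (f ; t) = \Psi (g ; t)$, whence $\hat{\Psi} (f ; z) = \hat{\Psi} (g ; z)$ for all $z$. Consequently $\omega (f ; z) = \omega (g ; z)$, $\sigma_{\Psi} (f ; x) = \sigma_{\Psi} (g ; x)$, $v_f (x ; \Delta) = v_g (x ; \Delta) =: v$, and $S_f (x ; \Delta) = S_g (x ; \Delta)$ for all $x, \Delta$, since each of these is built only from $\hat{\Psi}$. Moreover $\Psi (g ; t)$ is not lattice distributed, so Part (3) of Theorem 2.8 applies both to $f$ and to $g$: fixing $\varepsilon \in (0, 1 / 2)$, uniformly in $(\tmop{loglog} x)^{\varepsilon} \ll \Delta \leqslant \delta \sigma$,
\[ \mathcal{D}_f (x ; \Delta) \sim \frac{L (f ; v)\, e^{- vc (f)}}{\Gamma ( \hat{\Psi} (f ; v))}\, S_f (x ; \Delta) , \qquad \mathcal{D}_g (x ; \Delta) \sim \frac{L (g ; v)\, e^{- vc (g)}}{\Gamma ( \hat{\Psi} (f ; v))}\, S_g (x ; \Delta) . \]
Since the $\Gamma$‑ and $S$‑factors coincide, and since $v \geqslant 0$ makes $L (f ; v), L (g ; v) > 0$ (every factor of the Euler product defining $L$ is positive for real non‑negative argument), dividing these two relations and invoking $\mathcal{D}_f \sim \mathcal{D}_g$ gives
\[ \frac{L (f ; v)\, e^{- vc (f)}}{L (g ; v)\, e^{- vc (g)}} \longrightarrow 1 \qquad (x \rightarrow \infty) \]
uniformly over $(\tmop{loglog} x)^{\varepsilon} \ll \Delta \leqslant \delta \sigma$.

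Next I would specialize to well‑chosen sequences $\Delta = \Delta (x)$ to turn this into pointwise information. Fix any $v_0 \in (0, \omega (f ; \delta))$ and let $z_0 \in (0, \delta)$ be the unique value with $\omega (f ; z_0) = v_0$ ($\omega (f ; \cdot)$ is positive, strictly increasing and continuous by Lemma 4.7, with $\omega (f ; 0) = 0$). Take $\Delta (x)$ with $\Delta (x) \sim z_0\, \sigma_{\Psi} (f ; x)$; then $\Delta (x) \asymp \sqrt{\tmop{loglog} x}$, so (using $\sigma \sim \sigma_{\Psi} (f ; x)$ and $\varepsilon < 1 / 2$) $\Delta (x)$ lies in the admissible range $(\tmop{loglog} x)^{\varepsilon} \ll \Delta \leqslant \delta \sigma$ for all large $x$, and $v = v_f (x ; \Delta (x)) = \omega (f ; \Delta (x) / \sigma_{\Psi} (f ; x)) \rightarrow \omega (f ; z_0) = v_0$. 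By continuity of $L (f ; \cdot), L (g ; \cdot)$ (entire, by Lemma 4.4) and of the exponentials, the displayed limit forces $L (f ; v_0)\, e^{- v_0 c (f)} = L (g ; v_0)\, e^{- v_0 c (g)}$. As $v_0$ was arbitrary in the nondegenerate interval $(0, \omega (f ; \delta))$, this equality holds throughout that interval.

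Finally, both $z \mapsto L (f ; z)\, e^{- zc (f)}$ and $z \mapsto L (g ; z)\, e^{- zc (g)}$ are entire (Lemma 4.4; here $c (f), c (g)$ are constants by the table of Section 3), and they coincide on a set with a limit point, so by the identity theorem $L (f ; z) = e^{z (c (f) - c (g))} L (g ; z)$ for all $z \in \mathbbm{C}$. Since $e^{z (c (f) - c (g))}$ never vanishes, $L (f ; \cdot)$ and $L (g ; \cdot)$ have the same zeros (indeed with the same multiplicities), so $\mathcal{Z}(L (f ; z)) = \mathcal{Z}(L (g ; z))$, as desired. The one point that genuinely needs care is showing that the saddle‑point parameter $v = v_f (x ; \Delta)$ can be steered, as $\Delta$ ranges over $(\tmop{loglog} x)^{\varepsilon} \ll \Delta \leqslant \delta \sigma$, to an arbitrary target in a whole interval — this is exactly what makes the identity‑theorem step legitimate — and it is supplied by Lemma 4.7 together with $\sigma \sim \sigma_{\Psi} (f ; x)$ and the monotonicity/continuity of $\omega (f ; \cdot)$.
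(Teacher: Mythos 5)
Your proof is correct and follows essentially the same route as the paper: invoke Part (3) of Theorem 1.1 to get $\Psi(f;\cdot)=\Psi(g;\cdot)$, cancel the common $\Gamma$- and $S$-factors from Part (3) of Theorem 2.8 to isolate $L(f;v)e^{-vc(f)}\sim L(g;v)e^{-vc(g)}$, steer $v$ to an arbitrary target in an interval by choosing $\Delta\asymp\sigma_\Psi(f;x)$, and conclude by analytic continuation. The only cosmetic difference is that the paper fixes $\Delta=\kappa\sigma_\Psi(f;x)$ exactly (making $v=\omega(f;\kappa)$ identically) whereas you take $\Delta(x)\sim z_0\sigma_\Psi(f;x)$ and appeal to continuity as $v\to v_0$; both are fine, and you correctly flag the standing non-lattice assumption of Section 6.3.1 that licenses the use of Part (3) of Theorem 2.8.
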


\begin{proof}
  By assumptions $\mathcal{D}_f (x ; \Delta) \sim \mathcal{D}_g (x ; \Delta)$
  uniformly in $1 \leqslant \Delta \leqslant \delta \sigma$. Hence by part 3 of
  theorem 1.1, $\Psi (f ; t) = \Psi (g ; t)$. Therefore $S_f (x ; \Delta) =
  S_g (x ; \Delta)$ for $x, \Delta \geqslant 0$ and $v_f (x ; \Delta) = v =
  v_g (x ; \Delta)$ since both depend only on $\Psi (f ; t)$ and $\Psi (g ;
  t)$. Thus by part 3 of theorem 2.8 the assumption $\mathcal{D}_f (x ;
  \Delta) \sim \mathcal{D}_g (x ; \Delta)$ simplifies to
  \begin{eqnarray}
    L (f ; v) \cdot e^{- vc (f)} & \sim & L (g ; v) \cdot e^{- vc (g)} \text{
    uniformly in $1 \leqslant \Delta \leqslant \delta \sigma$} 
  \end{eqnarray}
  Pick a $0 < \kappa \leqslant \delta / 2$ and fix $\Delta = \kappa
  \sigma_{\Psi} (f ; x)$ in $(6.13)$ (since $\sigma_{\Psi} (f ; x) = \sigma (f
  ; x) + o (1)$ and $\kappa < \delta$ this is allowed). We have $v = v_f (x ;
  \Delta) = \omega (f ; \Delta / \sigma_{\Psi} (f ; x)) = \omega (f ;
  \kappa)$. Letting $x \rightarrow \infty$ in $(6.13)$ we obtain $L (f ;
  \omega (f ; \kappa)) e^{- \omega (f ; \kappa) c (f)} = L (g ; \omega (f ;
  \kappa)) e^{- \omega (f ; \kappa) c (g)}$. Since $0 < \kappa \leqslant
  \delta / 2$ was arbitrary and $\omega (f ; x)$ is increasing (with $\omega
  (f ; 0) = 0$), the functions $L (f ; z) e^{- zc (f)}$ and $L (g ; z) e^{- zc
  (g)}$ coincide on the interval $[0 ; \omega (f ; \delta / 2)]$. Both
  functions are entire by lemma 4.4. Hence by analytic continuation $L (f ; z)
  e^{- zc (f)} = L (g ; z) e^{- zc (g)}$ for all $z \in \mathbbm{C}$. Since
  exponentials never vanish we obtain $\mathcal{Z}(L (f ; z)) =\mathcal{Z}(L
  (g ; z))$. 
\end{proof}

\begin{lemma}
  Let $f, g \in \mathcal{C}$. If $\mathcal{Z}(L (f ; z)) =\mathcal{Z}(L (g ;
  z))$ then $f = g$, where $\mathcal{Z}(h)$ denotes the zero set of $h
  (\cdot)$ (the zeroes are counted without multiplicity).
\end{lemma}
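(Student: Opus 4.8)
The plan is to read $f$ off the zero set $\mathcal{Z}(L(f;z))$ prime by prime, using the explicit shape of $L(f;z)$. First I would make $\mathcal{Z}(L(f;z))$ explicit. By Lemma 4.4 the product $L(f;z)=\prod_p(1-1/p)^{\hat\Psi(f;z)}(1+e^{zf(p)}/(p-1))$ converges locally uniformly to an entire function; the factors $(1-1/p)^{\hat\Psi(f;z)}$ never vanish, and (as in the proof of Lemma 4.4) for $\operatorname{Re}z$ bounded the factors with $p$ large are $1+o(1)$, so by the standard theory of convergent infinite products $L(f;z_0)=0$ exactly when some factor $1+e^{z_0f(p)}/(p-1)$ vanishes. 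Since $f(p)>0$, solving $e^{zf(p)}=-(p-1)$ gives
\[
  \mathcal{Z}(L(f;z))=\bigcup_p\Lambda_p^f,\qquad
  \Lambda_p^f:=\Big\{\tfrac{\log(p-1)}{f(p)}+i\tfrac{(2k+1)\pi}{f(p)}:k\in\mathbb{Z}\Big\}.
\]
So $\Lambda_p^f$ sits on the vertical line $\operatorname{Re}z=a_p^f:=\log(p-1)/f(p)$ and is, in the imaginary coordinate, the set of odd integer multiples of $\pi/f(p)$ (and $a_2^f=0$). Note also that $a_p^f\to\infty$, since axiom $(1.3)$ of $\mathcal{C}$ forces $f(p)=o(\log p)$.

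The heart of the argument is the claim that \emph{every $f$-lattice is contained in some $g$-lattice}. Fix a prime $p$. The single point $z_0:=a_p^f+i\pi/f(p)$ lies in $\mathcal{Z}(L(f))=\mathcal{Z}(L(g))=\bigcup_q\Lambda_q^g$, hence in some $\Lambda_q^g$. Comparing real parts gives $a_q^g=a_p^f$; comparing imaginary parts gives $\pi/f(p)=(2\ell+1)\pi/g(q)$ for an integer $2\ell+1$, which is a \emph{positive} odd integer because $f(p),g(q)>0$. But then every odd multiple of $\pi/f(p)$ equals $(2k+1)(2\ell+1)$ times $\pi/g(q)$ with $(2k+1)(2\ell+1)$ odd, so $\Lambda_p^f\subseteq\Lambda_q^g$. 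By the symmetric statement (the hypothesis is symmetric in $f,g$), $\Lambda_q^g\subseteq\Lambda_{p'}^f$ for some prime $p'$, and therefore $\Lambda_p^f\subseteq\Lambda_{p'}^f$.

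Next I would rule out proper containments between two $f$-lattices. If $\Lambda_p^f\subseteq\Lambda_{p'}^f$ then, being on the same line, $a_p^f=a_{p'}^f$, and the same "odd multiple" criterion forces $f(p')/f(p)=:M$ to be a positive odd integer; combining these, $\log(p'-1)=M\log(p-1)$, i.e. $p'-1=(p-1)^M$. For odd $M\ge 3$ this is impossible for primes: if $p\ge 3$ then the factorization $x^M+1=(x+1)(x^{M-1}-x^{M-2}+\cdots+1)$ with $x=p-1$ writes $p'$ as $p$ times an integer $>1$, so $p'$ is composite; and $p=2$ forces $p'=2$, whence $f(2)=Mf(2)$ and $M=1$. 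Hence $M=1$, so $p'=p$ and $f(p')=f(p)$, and the sandwich $\Lambda_p^f\subseteq\Lambda_q^g\subseteq\Lambda_{p'}^f=\Lambda_p^f$ collapses to $\Lambda_q^g=\Lambda_p^f$. Equality of these lattices gives equal real parts and equal spacings, i.e. $\log(q-1)=\log(p-1)$ and $g(q)=f(p)$; so $q=p$ and $g(p)=f(p)$. As $p$ was arbitrary and $f,g$ are strongly additive, $f=g$.

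The only genuinely non-formal ingredient is the elementary factorization of $x^M+1$ for odd $M$ (plus the trivial $p=2$ bookkeeping), and the whole argument treats the zero sets purely as point sets, so the "without multiplicity" convention costs nothing. I expect the main obstacle to be organizational rather than conceptual: making precise the step "one covered point forces the entire arithmetic progression $\Lambda_p^f$ into a single $\Lambda_q^g$" — this is exactly why it is convenient to phrase everything in terms of the shifted lattices $(2\mathbb{Z}+1)\pi/f(p)$, so that one avoids any genuine covering-system analysis, and why one must keep track of the sign condition $f(p),g(q)>0$ to ensure the relevant integer ratios are positive odd integers.
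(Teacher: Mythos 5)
Your proof is correct, and while it arrives at the same explicit description of $\mathcal{Z}(L(f;z))$ as the paper, the way you exploit it is genuinely different. The paper equates a \emph{single} zero of $L(g;z)$ on the line $\operatorname{Re}z=\log(p-1)/g(p)$ with a single zero of $L(f;z)$, extracts the relation $q-1=(p-1)^{(2\ell+1)/(2k+1)}$, and then proves the exponent is $1$ by writing $p-1=m^r$ with $r$ maximal, deducing $r=2^a$, observing that the full exponent $r(2\ell+1)/(2k+1)$ must be an integer (by maximality of $r$) and indeed a power of $2$ (else $q$ factors), whence the odd-over-odd ratio is $1$. Your route instead matches \emph{whole lattices}: from one covered point you get $\Lambda_p^f\subseteq\Lambda_q^g$ (the sign observation that $2\ell+1>0$ is exactly what makes the odd-times-odd multiplication go through), then symmetry gives the sandwich $\Lambda_p^f\subseteq\Lambda_q^g\subseteq\Lambda_{p'}^f$, which reduces everything to ruling out proper containment of $f$-lattices in $f$-lattices, which in turn reduces directly to the composite-ness of $(p-1)^M+1$ for odd $M\geq 3$. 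What your approach buys: it sidesteps the ``$p-1=m^r$ with $r$ maximal'' device entirely, and the lattice-containment step cleanly establishes at the outset that the relevant scaling factor $M$ is a \emph{positive odd integer}, so only the single cyclotomic-type factorization $x^M+1=(x+1)(x^{M-1}-\cdots+1)$ is needed. What the paper's approach buys: it is shorter because it never needs the sandwich, comparing one zero of $L(g;\cdot)$ directly to one of $L(f;\cdot)$ and forcing the exponent to $1$ in one pass. Two minor notes: the observation $a_p^f\to\infty$ is not used anywhere in your argument and can be dropped; and you are right that the ``without multiplicity'' convention is harmless, since everything here treats the zero sets as mere subsets of $\mathbb{C}$.
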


\begin{proof}
  From the definition of $L (f ; z)$ we find explicitly
  \begin{eqnarray*}
    \mathcal{Z}(L (f ; z)) & = & \left\{ \frac{\left( 2 k + 1) \pi i
    \right.}{f (p)} + \frac{\log (p - 1)}{f (p)} \text{ } : \text{ } k \in
    \mathbbm{Z} \text{ and } p \text{ prime} \right\}
  \end{eqnarray*}
  (note that $f (p) > 0$ because $f \in \mathcal{C}$). Therefore if
  $\mathcal{Z}(L (f ; z)) =\mathcal{Z}(L (g ; z))$ then
  
  \begin{eqnarray}
    \left\{ \frac{\left( 2 k + 1) \pi i \right.}{g (p)} + \frac{\log (p -
    1)}{g (p)} \right\} & = & \left\{ \frac{\left( 2 \ell + 1) \pi i
    \right.}{f (q)} + \frac{\log (q - 1)}{f (q)} \right\} 
  \end{eqnarray}
  
  for $k, \ell \in \mathbbm{Z}$ and $p, q$ going through the set of primes.
  Looking at the common zero of real part 0 and smallest imaginary part we
  conclude that $f (2) = g (2)$. Now, fix $p$ an odd prime. Because of (6.14)
  there is a prime $q$ such that
  \begin{eqnarray*}
    \frac{(2 k + 1) \pi i}{g (p)} + \frac{\log \left( p - 1) \right.}{g (p)} &
    = & \frac{\left( 2 \ell + 1) \pi i \right.}{f (q)} + \frac{\log (q - 1)}{f
    (q)}
  \end{eqnarray*}
  hence
  \begin{equation}
    \frac{f (q)}{g (p)} \text{ } = \text{ } \frac{2 \ell + 1}{2 k + 1} \text{
    } = \text{ } \frac{\log (q - 1)}{\log (p - 1)}
  \end{equation}
  Write $p - 1 = m^r$ with $r \geqslant 1$ maximal and $m$ a positive
  integer. Necessarily $r = 2^a$ with $a \geqslant 0$, otherwise $p$ would
  factorize non-trivially. Further exponentiating $(6.15)$ we get
  \begin{eqnarray*}
    q - 1 & = & \left( p - 1 \right)^{\frac{2 \ell + 1}{2 k + 1}} \text{ } =
    \text{ } m^{r \cdot \frac{2 \ell + 1}{2 k + 1}}
  \end{eqnarray*}
  Note that $r \cdot \frac{2 \ell + 1}{2 k + 1} \in \mathbbm{N}$ since 
  $m^{r (2\ell + 1) / (2k + 1)}$ is an integer and $r \geqslant 1$ was chosen
  maximal. Therefore $r \cdot \frac{2 \ell + 1}{2 k +
  1} = 2^a \cdot \frac{2 \ell + 1}{2 k + 1}$ must be a power of two, otherwise
  $q$ would factorize non-trivially. Therefore the ratio $(2 \ell + 1) / (2 k
  + 1)$ is a power of two, but then $\ell = k$ necessarily. By $(6.15)$ it
  follows that $p = q$ and $g (p) = f (p)$. Therefore $f (p) = g (p)$ for all
  prime $p$. Hence $f = g$ since $f, g$ are strongly additive.
\end{proof}

\begin{proof}[Proof of Part (4) of Theorem 1.1 when $\Psi(f;t)$ is
not lattice distributed]
  One direction is clear if $f = g$ then $\mathcal{D}_f (x ; \Delta)
  =\mathcal{D}_g (x ; \Delta)$. Conversely, suppose that $\mathcal{D}_f (x ;
  \Delta) \sim \mathcal{D}_g (x ; \Delta)$ throughout $1 \leqslant \Delta
  \leqslant \delta \sigma$, then by lemma 6.4 the zero set of $L (f ; z)$ and
  $L (g ; z)$ coincide. Hence by lemma 6.5, $f = g$, as desired. \ 
\end{proof}

\subsubsection{$\Psi (f ; t)$ is lattice distributed}

Suppose that $\Psi (f ; t)$ is lattice distributed on $\alpha \mathbbm{Z}$ for
some $\alpha > 0$. Then $\Psi (f / \alpha ; t)$ is lattice distributed on
$\mathbbm{Z}$. Since $\mathcal{D}_f (x ; \Delta) =\mathcal{D}_{f / \alpha} (x
; \Delta)$ we can assume without loss of generality (for the purpose of
proving Part (4) of Theorem 1.1) that $\Psi (f ; t)$ is lattice distributed on
$\mathbbm{Z}$. To such a $f$ we associate two strongly additive function
$\mathfrak{f}$ and $\mathfrak{h}_f$ defined by
\begin{eqnarray*}
  \mathfrak{f}(p) = \left\{ \begin{array}{l}
    f (p) \text{ if } f (p) \in \mathbbm{Z}\\
    0 \text{ \ \ \ \ otherwise}
  \end{array} \right. & \tmop{and} & \mathfrak{h}_f (p) = \left\{
  \begin{array}{l}
    f (p) \text{ if } f (p) \nin \mathbbm{Z}\\
    0 \text{ \ \ \ \ otherwise}
  \end{array} \right.
\end{eqnarray*}
In particular $f (n) =\mathfrak{f}(n) +\mathfrak{h}_f (n)$. Similarly to an
additive function $g$ we associate $\mathfrak{g}$ and $\mathfrak{h}_g$ with
$\mathfrak{g}$ and $\mathfrak{h}_g$ defined in the same way as $\mathfrak{f}$
and $\mathfrak{h}_f$. \ \ \ \

\begin{lemma}
  Let $f, g \in \mathcal{C}$. Suppose that $\Psi (f ; t) = \Psi (g ; t)$ and
  that $\Psi (f ; t)$ is lattice distributed on $\mathbbm{Z}$. If
  $\mathcal{D}_f (x ; \Delta) \sim \mathcal{D}_g (x ; \Delta)$ throughout $1
  \leqslant \Delta \leqslant \delta \sigma$ for some $\delta > 0$, then,
  \begin{equation}
    L \left( \mathfrak{f}; v \right) e^{- vc (f)} \cdot
    \mathcal{P}_{\mathfrak{h}_f} \left( \xi_f (x ; \Delta) ; v \right) = L
    \left( \mathfrak{g}; v \right) e^{- vc (g)} \cdot
    \mathcal{P}_{\mathfrak{h}_g} \left( \xi_g \left( x ; \Delta \right) ; v
    \right) + o (1)
  \end{equation}
  uniformly throughout $1 \leqslant \Delta \leqslant \delta \sigma (x)$, with
  $v = v_f (x ; \Delta) = v_g (x ; \Delta)$.
\end{lemma}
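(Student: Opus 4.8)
\emph{Plan.} The idea is that, since $\Psi(f;t)=\Psi(g;t)$, the "skeleton" in part (4) of Theorem 2.8 is literally the same for $f$ and $g$, so the hypothesis $\mathcal{D}_f\sim\mathcal{D}_g$ collapses to an asymptotic equality of the remaining factors, which one then converts into the stated $o(1)$-equality using that those factors are bounded above and below. First I would record the consequences of $\Psi(f;t)=\Psi(g;t)$: the Laplace transforms agree, $\hat{\Psi}(f;z)=\hat{\Psi}(g;z)$, hence so do the implicitly-defined functions $\omega(f;z)=\omega(g;z)$ and the quantities $\sigma_{\Psi}(f;x)=\sigma_{\Psi}(g;x)$, and therefore $v_f(x;\Delta)=v_g(x;\Delta)=:v$ and $S_f(x;\Delta)=S_g(x;\Delta)$ for every $x,\Delta$; only $c(f),c(g)$ and the functions $L(\mathfrak f;\cdot),L(\mathfrak g;\cdot),\mathfrak h_f,\mathfrak h_g$ may genuinely differ. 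By Lemma 4.7 the parameter $v$ stays in a fixed interval $[0,C]$ with $C=C(\delta)$ throughout $1\leqslant\Delta\leqslant\delta\sigma$. Fix a small $\varepsilon\in(0,1/6)$ and split this range at $(\tmop{loglog} x)^{\varepsilon}$.

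In the low range $1\leqslant\Delta\leqslant(\tmop{loglog} x)^{\varepsilon}$ one has $v=o(1)$ by Lemma 4.7, and the claimed identity holds even without the hypothesis. Indeed, by $(5.11)$ and $(5.12)$ from the proof of parts (1)--(2) of Theorem 2.8, $L(\mathfrak f;v)e^{-vc(f)}/\Gamma(\hat{\Psi}(f;v))=1+o(1)$ and $\mathcal{P}_{\mathfrak h_f}(a;v)=1+o(1)$ uniformly in $a$; since $\hat{\Psi}(f;0)=1$ we also have $\Gamma(\hat{\Psi}(f;v))=\Gamma(1+o(1))=1+o(1)$, so multiplying gives $L(\mathfrak f;v)e^{-vc(f)}\,\mathcal{P}_{\mathfrak h_f}(\xi_f(x;\Delta);v)=1+o(1)$, and the same for $g$; both sides of the lemma are then $1+o(1)$.

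In the main range $(\tmop{loglog} x)^{\varepsilon}\ll\Delta\leqslant\delta\sigma$, part (4) of Theorem 2.8 applies to both $f$ and $g$ (after, if needed, replacing $\delta$ by a slightly larger constant so that $\delta\sigma(x)\leqslant\delta'\sigma(f;x)$), giving
\[
\mathcal{D}_f(x;\Delta)\sim\frac{L(\mathfrak f;v)e^{-vc(f)}}{\Gamma(\hat{\Psi}(f;v))}\,\mathcal{P}_{\mathfrak h_f}(\xi_f(x;\Delta);v)\,S_f(x;\Delta)
\]
together with the analogous formula for $\mathcal{D}_g$, and with the \emph{same} $v$, $S_f=S_g$, and $\hat{\Psi}(f;v)=\hat{\Psi}(g;v)$. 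Since $S_f(x;\Delta)\neq0$ and $\Gamma(\hat{\Psi}(f;v))$ is finite and nonzero (because $\hat{\Psi}(f;v)\geqslant\hat{\Psi}(f;0)=1$), the hypothesis $\mathcal{D}_f(x;\Delta)\sim\mathcal{D}_g(x;\Delta)$ reduces after cancellation to
\[
L(\mathfrak f;v)e^{-vc(f)}\,\mathcal{P}_{\mathfrak h_f}(\xi_f(x;\Delta);v)\sim L(\mathfrak g;v)e^{-vc(g)}\,\mathcal{P}_{\mathfrak h_g}(\xi_g(x;\Delta);v),
\]
uniformly over this range.

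To finish I would upgrade this multiplicative asymptotic to the additive statement. For $0\leqslant v\leqslant C$ we have $L(\mathfrak f;v)\asymp_{\delta}1$ (by Lemma 4.20 the function $L(\mathfrak f;\cdot)$ is entire, and its product representation shows it is positive and continuous, hence bounded above and below on $[0,C]$), $e^{-vc(f)}\asymp_{\delta}1$, and $\mathcal{P}_{\mathfrak h_f}(a;v)\asymp_{\delta}1$ by Lemma 4.25; the same bounds hold for $g$, so both products above are $\asymp_{\delta}1$ uniformly in the range. Now if $A\sim B$ with $B\asymp 1$ uniformly, then $A-B=B(A/B-1)=o(1)$ uniformly, which is exactly the asserted equality, uniformly throughout $1\leqslant\Delta\leqslant\delta\sigma$. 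The only points requiring care are handling the low range $\Delta\leqslant(\tmop{loglog} x)^{\varepsilon}$ that part (4) of Theorem 2.8 does not reach, and keeping every $\sim$ and every $o(1)$ uniform in $\Delta$ — which it is, since each ingredient (part (4) of Theorem 2.8, Lemma 4.25, and the hypothesis) is uniform; there is no genuine analytic obstacle here, the work being essentially bookkeeping.
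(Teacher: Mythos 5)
Your proof is correct and takes essentially the same route as the paper: you plug part~(4) of Theorem~2.8 into $\mathcal{D}_f\sim\mathcal{D}_g$, cancel the common factors $S_f=S_g$ and $\Gamma(\hat{\Psi}(f;v))=\Gamma(\hat{\Psi}(g;v))$, and then upgrade the resulting multiplicative asymptotic to the stated $o(1)$-equality by observing that the remaining factor is bounded in the range $0\leqslant v\leqslant C(\delta)$. You are in fact a little more careful than the paper, which leaves the low range $1\leqslant\Delta\leqslant(\tmop{loglog} x)^{\varepsilon}$ (where part~(4) of Theorem~2.8 does not apply) implicit and remarks only that an $O_\delta(1)$ upper bound on the right-hand side suffices, whereas you establish the two-sided bound $\asymp_\delta 1$.
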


\begin{proof}
  Since $\Psi(f;t) = \Psi(g;t)$ we have $S_f = S_g$ and $v_f = v = v_g$. 
  Plugging the
  asymptotic of part 4 of theorem 2.8 into $\mathcal{D}_f (x ; \Delta) \sim
  \mathcal{D}_g (x ; \Delta)$ and cancelling $S_f (x ; \Delta) = S_g (x ;
  \Delta)$ on both sides, we obtain $(6.16)$ but with the right hand side
  multiplied by an $1 + o (1)$, instead of an error term of $o (1)$. To obtain
  the $o (1)$ it suffices to prove that $L (\mathfrak{g}; v) e^{- vc (f)}
  \mathcal{P}_{\mathfrak{h}_g} \left( \xi_g \left( x ; \Delta \right) ; v
  \right) = O \left( 1 \right)$. The function $L (\mathfrak{g}; v) e^{- vc
  (f)}$ is continuous and the parameter $v \asymp \Delta / \sigma (f ; x) =
  O_{\delta} (1)$ (because $\Delta \leqslant \delta \sigma (x)$), by 
  lemma 4.7. Therefore $L
  (\mathfrak{g}; v) e^{- vc (f)} = O_{\delta} (1)$. By lemma 4.25,
  $\mathcal{P}_{\mathfrak{h}_g} (\xi_f (x ; \Delta) ; v) = O_{\delta} (1)$.
  The claim $L (\mathfrak{g}; v) e^{- vc (f)} \mathcal{P}_{\mathfrak{h}_g}
  (\xi_f (x ; \Delta) ; v) = O_{\delta} (1)$ follows.
\end{proof}

\begin{lemma}
  Let $f \in \mathcal{C}$. Suppose that $\Psi (f ; t)$ is lattice distributed
  on $\mathbbm{Z}$. Given $C > 0$, uniformly in $0 \leqslant v \leqslant C$ we
  have
  \[ \int_0^1 \mathcal{P}_{\mathfrak{h}_f} \left( a ; v \right) \mathd a =
     \prod_{p : \mathfrak{h}_f \left( p \right) \neq 0} \left( 1 +
     \frac{e^{v\mathfrak{h}_f (p)}}{p - 1} \right) \cdot \left( 1 -
     \frac{1}{p} \right) \]
\end{lemma}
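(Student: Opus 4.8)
The plan is to recognise the right-hand side as the moment generating function of the random variable $X(\mathfrak{h}_f) \assign \sum_p \mathfrak{h}_f(p) X_p$ (the sum over all primes, as in the remark following Lemma 4.19), and then to show that $\int_0^1 \mathcal{P}_{\mathfrak{h}_f}(a ; v)\, \mathd a$ equals this same quantity by a short rearrangement. By that remark, applied with $\mathfrak{h} = \mathfrak{h}_f$,
\[
  \prod_{p \, : \, \mathfrak{h}_f(p) \neq 0} \left( 1 + \frac{e^{v \mathfrak{h}_f(p)}}{p - 1} \right) \left( 1 - \frac{1}{p} \right) = \mathbbm{E} \left[ e^{v X(\mathfrak{h}_f)} \right],
\]
and Lemma 4.19 guarantees this expectation is finite for every $v$ (indeed the moment generating function of $X(\mathfrak{h}_f)$ is entire). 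Hence the right-hand side of the lemma is well defined and finite throughout $0 \leqslant v \leqslant C$, and it suffices to prove $\int_0^1 \mathcal{P}_{\mathfrak{h}_f}(a ; v)\, \mathd a = \mathbbm{E}[e^{v X(\mathfrak{h}_f)}]$.

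I would first dispose of the case $v > 0$. Since $\{a\} = a$ for $a \in [0 ; 1)$, the definition of $\mathcal{P}_{\mathfrak{h}_f}$ gives $\int_0^1 \mathcal{P}_{\mathfrak{h}_f}(a ; v)\, \mathd a = v \int_0^1 \sum_{\ell \in \mathbbm{Z}} e^{v(\ell + a)} \mathbbm{P}(X(\mathfrak{h}_f) \geqslant \ell + a)\, \mathd a$. All summands being nonnegative, Tonelli's theorem lets me interchange $\sum_\ell$ with $\int_0^1$; the substitution $t = \ell + a$ in the $\ell$-th term then collapses $\sum_{\ell \in \mathbbm{Z}} \int_\ell^{\ell + 1}$ into a single integral $\int_{\mathbbm{R}}$, so that
\[
  \int_0^1 \mathcal{P}_{\mathfrak{h}_f}(a ; v)\, \mathd a = v \int_{\mathbbm{R}} e^{v t}\, \mathbbm{P} \left( X(\mathfrak{h}_f) \geqslant t \right) \mathd t .
\]
A second application of Tonelli's theorem, to the nonnegative integrand $e^{v t} \mathbbm{1}_{\{X(\mathfrak{h}_f) \geqslant t\}}$, together with $X(\mathfrak{h}_f) \geqslant 0$ almost surely and $\int_{- \infty}^u e^{v t}\, \mathd t = e^{v u} / v$ (this is where $v > 0$ enters), yields $\int_{\mathbbm{R}} e^{v t}\, \mathbbm{P}(X(\mathfrak{h}_f) \geqslant t)\, \mathd t = \mathbbm{E}\big[ \int_{- \infty}^{X(\mathfrak{h}_f)} e^{v t}\, \mathd t \big] = v^{-1}\, \mathbbm{E}[e^{v X(\mathfrak{h}_f)}]$, finite by Lemma 4.19. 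Combining the two displays gives the claimed identity for $v > 0$; the value $v = 0$ then follows by letting $v \to 0^+$, since both sides are continuous in $v \geqslant 0$ and equal $1$ there (the left-hand side because $\mathbbm{E}[e^{v X(\mathfrak{h}_f)}] \to 1$ by dominated convergence).

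There is no genuine obstacle here. The only points needing care are the two invocations of Tonelli's theorem — both immediate, since every integrand and summand is nonnegative — and the convergence of $\int_{-\infty}^u e^{v t}\, \mathd t$, which is why $v = 0$ is treated separately and why the finiteness of the moment generating function of $X(\mathfrak{h}_f)$ supplied by Lemma 4.19 is needed. Uniformity in $0 \leqslant v \leqslant C$ is automatic because the identity holds exactly for each individual $v$.
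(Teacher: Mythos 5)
Your proof is correct and takes essentially the same route as the paper's: both interchange sum and integral (by Tonelli) to obtain $v\int_{\mathbbm{R}} e^{vt}\,\mathbbm{P}(X(\mathfrak{h}_f)\geqslant t)\,\mathd t$, then convert this into $\mathbbm{E}[e^{vX(\mathfrak{h}_f)}]$ and invoke independence of the $X_p$. The only cosmetic difference is that the paper performs the final conversion by Riemann--Stieltjes integration by parts, killing the boundary term via the Chernoff bound supplied by Lemma 4.19, whereas you use a second application of Tonelli -- these are the same calculation in different dress.
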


\begin{proof}
  To ease notation let $X (\mathfrak{h}_f) : = \sum_p \mathfrak{h}_f \left( p
  \right) X_p$. By definition of $\mathcal{P}_{\mathfrak{h}_f} \left( a ; v
  \right)$,
  \begin{eqnarray*}
    \int_0^1 \mathcal{P}_{\mathfrak{h}_f} \left( a ; v \right) \mathd a & = &
    v \sum_{k \in \mathbbm{Z}} \int_0^1 e^{v (k + a)} \cdot \mathbbm{P} \left(
    X \left( \mathfrak{h}_f \right) \geqslant k + a \right) \mathd a\\
    & = & v \sum_{k \in \mathbbm{Z}} \int_k^{k + 1} e^{va} \cdot \mathbbm{P}
    \left( X \left( \mathfrak{h}_f \right) \geqslant a \right) \mathd a\\
    & = & v \int_{\mathbbm{R}} e^{va} \cdot \mathbbm{P} \left( X \left(
    \mathfrak{h}_f \right) \geqslant a \right) \mathd a
  \end{eqnarray*}
  where we are allowed to interchange summation and integral because all the
  terms involved are positive. 
  In the above integral write $e^{va} \mathd a = (1 / v) \mathd (e^{va})$ and
  integrate by parts
  \begin{eqnarray*}
    v \int_{\mathbbm{R}} e^{va} \cdot \mathbbm{P} \left( X \left(
    \mathfrak{h}_f \right) \geqslant a \right) \mathd a & = & \left[ e^{va}
    \mathbbm{P} \left( X \left( \mathfrak{h}_f \right) \geqslant a \right)
    \right]^{\infty}_{- \infty} - \int_{\mathbbm{R}} e^{va} \mathd \mathbbm{P}
    \left( X \left( \mathfrak{h}_f \right) \geqslant a \right)
  \end{eqnarray*}
  By lemma 4.19 we have $\mathbbm{E}[e^{AX (\mathfrak{h})}] < + \infty$ for
  any fixed $A > 0$. Therefore by Chernoff's bound $\mathbbm{P}(X
  (\mathfrak{h}) \geqslant t) \leqslant \mathbbm{E}[e^{AX (\mathfrak{h})}]
  e^{- At}$ decays faster than any power of $e^{- t}$. Hence $\left[ e^{va}
  \mathbbm{P}(X (\mathfrak{h}_f) \geqslant a]^{\infty}_{- \infty} \right.$
  vanishes (because $0 \leqslant v \leqslant C$). 
  It remains to note that $- \mathd \mathbbm{P} \left( X
  (\mathfrak{h}_f) \geqslant a) = \mathd \left( 1 -\mathbbm{P} \left( X \left(
  \mathfrak{h}_f \right) < a \right) \right) = \mathd \mathbbm{P} \left( X
  \left( \mathfrak{h}_f \right) < a \right) \right.$. Thus, the second term in
  the above equation equals to
  \[ \int_{\mathbbm{R}} e^{va} \mathd \mathbbm{P} \left( X \left(
     \mathfrak{h}_f \right) < a \right) =\mathbbm{E} \left[ e^{vX \left(
     \mathfrak{h}_f \right)} \right] \]
  the Laplace transform of $X \left( \mathfrak{h}_f \right) = \sum_p
  \mathfrak{h}_f \left( p \right) X_p$ ! By independence of the $X_p$,
  \[ \mathbbm{E} \left[ e^{\left. vX (\mathfrak{h}_f \right)} \right] =
     \prod_p \mathbbm{E} \left[ e^{v\mathfrak{h}_f (p) X_p} \right] = \prod_p
     \left( 1 + \frac{e^{v\mathfrak{h}_f (p)}}{p - 1} \right) \cdot \left( 1 -
     \frac{1}{p} \right) \]
  When $\mathfrak{h}_f (p) = 0$ the relevant term in the product simply equals
  to 1, therefore we can add the condition $\mathfrak{h}_f \left( p \right)
  \neq 0$, in the product, without altering its value.
\end{proof}

\begin{lemma}
  Let $f, g \in \mathcal{C}$. Suppose that $\Psi (f ; t) = \Psi (g ; t)$ and
  that $\Psi (f ; t)$ is lattice distributed on $\mathbbm{Z}$. If $(6.16)$
  holds uniformly throughout $1 \leqslant \Delta \leqslant \delta
  \sigma_{\Psi} (f ; x)$ for some $\delta > 0$, then
  \[ L \left( f ; \kappa \right) e^{- \kappa c (f)} = L \left( g ; \kappa
     \right) e^{- \kappa c (g)} \]
  for all $\kappa > 0$ sufficiently small. \ 
\end{lemma}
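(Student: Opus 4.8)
The plan is to fix a small $\kappa>0$ and recover the number $L(f;\kappa)e^{-\kappa c(f)}$ by \emph{averaging} the hypothesised identity $(6.16)$ over a short window of $\Delta$'s, exploiting that $\mathcal{P}_{\mathfrak{h}_f}(\,\cdot\,;v)$ is $1$-periodic in its first argument. Put $v_\kappa\assign\omega(f;\kappa)=\omega(g;\kappa)$; these agree because $\Psi(f;t)=\Psi(g;t)$ forces $\hat{\Psi}(f;\cdot)=\hat{\Psi}(g;\cdot)$ and $\sigma_{\Psi}(f;x)=\sigma_{\Psi}(g;x)$. Since $\omega(f;\cdot)$ is continuous and strictly increasing near $0$ with $\omega(f;0)=0$, the map $\kappa\mapsto v_\kappa$ carries $(0,\delta/2)$ onto $(0,\omega(f;\delta/2))$, so it suffices to prove $L(f;v_\kappa)e^{-v_\kappa c(f)}=L(g;v_\kappa)e^{-v_\kappa c(g)}$ for all small $\kappa>0$. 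First I would fix a constant $H>0$ and set $I_x\assign[\kappa\sigma_{\Psi}(f;x)-H,\kappa\sigma_{\Psi}(f;x)+H]$; for $0<\kappa<\delta/2$ and $x$ large this interval lies in $[1,\delta\sigma_{\Psi}(f;x)]$, so $(6.16)$ holds uniformly for $\Delta\in I_x$. Because $v_f(x;\Delta)=v_g(x;\Delta)=\omega(f;\Delta/\sigma_{\Psi}(f;x))$ and $\Delta/\sigma_{\Psi}(f;x)\to\kappa$ uniformly on $I_x$, continuity of $\omega(f;\cdot)$ gives $v_f(x;\Delta)=v_\kappa+o(1)$ uniformly on $I_x$, and by lemma 4.7 this $v$ stays in a fixed bounded interval.

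Next I would average $(6.16)$ over $\Delta\in I_x$ and divide by $2H$. Writing $v=v_f(x;\Delta)$ and using continuity of $z\mapsto L(\mathfrak{f};z)e^{-zc(f)}$ together with the estimates $\mathcal{P}_{\mathfrak{h}_f}(a;v)=\mathcal{P}_{\mathfrak{h}_f}(a;v_\kappa)+O(|v-v_\kappa|)$ and $\mathcal{P}_{\mathfrak{h}_f}(a;v)=O(1)$ of lemma 4.25 (uniform in $a$, by periodicity), the left integrand equals $L(\mathfrak{f};v_\kappa)e^{-v_\kappa c(f)}\cdot\mathcal{P}_{\mathfrak{h}_f}(\xi_f(x;\Delta);v_\kappa)+o(1)$ uniformly on $I_x$. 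Then I would change variables $u=\xi_f(x;\Delta)=\mu(f;x)+\Delta\sigma(f;x)$, $\mathd u=\sigma(f;x)\,\mathd\Delta$: as $\Delta$ runs over $I_x$, $u$ runs over an interval of length $2H\sigma(f;x)\to\infty$, and since $\mathcal{P}_{\mathfrak{h}_f}(\,\cdot\,;v_\kappa)$ is $1$-periodic and bounded,
\[ \frac{1}{2H}\int_{I_x}\mathcal{P}_{\mathfrak{h}_f}(\xi_f(x;\Delta);v_\kappa)\,\mathd\Delta = \int_0^1\mathcal{P}_{\mathfrak{h}_f}(a;v_\kappa)\,\mathd a + O\!\left(\frac{1}{\sigma(f;x)}\right) . \]
By the previous lemma this mean value equals $G(\mathfrak{h}_f;v_\kappa)\assign\prod_{p:\,\mathfrak{h}_f(p)\neq0}(1+e^{v_\kappa\mathfrak{h}_f(p)}/(p-1))(1-1/p)$, and comparing Euler products exactly as in the proof of lemma 4.20 gives $L(\mathfrak{f};v_\kappa)\,G(\mathfrak{h}_f;v_\kappa)=L(f;v_\kappa)$; hence the averaged left side of $(6.16)$ tends to $L(f;v_\kappa)e^{-v_\kappa c(f)}$. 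Running the identical computation on the right side (with $\xi_g$, $\sigma(g;x)$, $\mathfrak{g}$, $\mathfrak{h}_g$) produces the limit $L(g;v_\kappa)e^{-v_\kappa c(g)}$, while the $o(1)$ error in $(6.16)$, being uniform on $I_x$, contributes $o(1)$ after averaging. Letting $x\to\infty$ yields $L(f;v_\kappa)e^{-v_\kappa c(f)}=L(g;v_\kappa)e^{-v_\kappa c(g)}$, and since $v_\kappa=\omega(f;\kappa)$ ranges over a neighbourhood of $0^+$ as $\kappa$ does, this is the assertion of the lemma.

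The hard part will be the periodic-averaging step: one must simultaneously keep the saddle parameter $v_f(x;\Delta)$ essentially frozen at $v_\kappa$ over the window $I_x$ (a fixed $H$ already does this, since $\sigma_{\Psi}(f;x)\to\infty$) while ensuring that $\xi_f(x;\Delta)$ sweeps through $\asymp\sigma(f;x)\to\infty$ full periods of $\mathcal{P}_{\mathfrak{h}_f}(\,\cdot\,;v_\kappa)$, so that the Ces\`aro-type average over $\Delta$ collapses to the mean value $\int_0^1\mathcal{P}_{\mathfrak{h}_f}(a;v_\kappa)\,\mathd a$ supplied by the previous lemma. Everything else — the replacement of $v$ by $v_\kappa$ inside $L(\mathfrak{f};v)$, $e^{-vc(f)}$ and $\mathcal{P}_{\mathfrak{h}_f}(\,\cdot\,;v)$, the boundedness and Lipschitz bounds, and the factorisation $L(f;z)=L(\mathfrak{f};z)G(\mathfrak{h}_f;z)$ — is routine bookkeeping drawing on lemmas 4.7, 4.25, 4.20 and the preceding lemma.
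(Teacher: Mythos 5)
Your proposal is correct and follows essentially the same strategy as the paper: freeze $v$ near $v_\kappa=\omega(f;\kappa)$ on a short $\Delta$-window, average $(6.16)$ over that window, and use the $1$-periodicity of $\mathcal{P}_{\mathfrak{h}_f}$ together with Lemma 6.7 (which gives $\int_0^1\mathcal{P}_{\mathfrak{h}_f}(a;v)\,\mathd a=G(\mathfrak{h}_f;v)$) and the factorization $L(f;z)=L(\mathfrak{f};z)G(\mathfrak{h}_f;z)$ to collapse the averaged identity to $L(f;\kappa)e^{-\kappa c(f)}=L(g;\kappa)e^{-\kappa c(g)}$. The only difference is bookkeeping: you take a window of fixed length $2H$ in $\Delta$ (so $\xi_f$ sweeps through $\asymp H\sigma(f;x)$ periods and the Ces\`aro average tends to the mean value with a harmless $O(1/\sigma(f;x))$ defect), whereas the paper takes a window of length $1/\sigma_\Psi$ (essentially one period, then corrects for the $O(1/\sigma_\Psi^2)$ discrepancy between $1/\sigma(f;x)$ and $1/\sigma_\Psi$). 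Both variants are sound and produce the same conclusion.
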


\begin{proof}
  Let us start by remarking that since $\Psi (f ; t) = \Psi (g ; t)$ we have
  $\sigma_{\Psi} (f ; x) = \sigma_{\Psi} (g ; x)$. It will be convenient to
  denote the common value by $\sigma_{\Psi} (x)$. As usual denote $v_f (x ;
  \Delta)$ by $v$. Since $\omega (f ; x)$ is increasing and $\omega (f ; 0) =
  0$, for any sufficiently small $\kappa > 0$ we can find a $\lambda$ such
  that $\omega (f ; \lambda) = \kappa$ and $0 < \lambda < \delta$. We restrict
  $\Delta$ to the range $\lambda \sigma_{\Psi} \leqslant \Delta \leqslant
  \lambda \sigma_{\Psi} + 1 / \sigma_{\Psi}$. In this range
  \[ v = \omega (f ; \Delta / \sigma_{\Psi}) = \omega (f ; \lambda) + O (1 /
     \sigma^2_{\Psi}) = \kappa + O (1 / \sigma^2_{\Psi}) \]
  because by lemma 4.7 the function $\omega(f;z)$ is analytic in a neighborhood
  of $\mathbb{R}^{+} \cup \{ 0 \}$. 
  Hence by ``analyticity'' of $L (\mathfrak{f}; v) e^{- vc (f)}$ 
  (see lemma 4.20) and lemma 4.25,
  \begin{eqnarray*}
    L (\mathfrak{f}; v) e^{- vc (f)} & = & L (\mathfrak{f}; \kappa) e^{-
    \kappa c (f)} + O (1 / \sigma^2_{\Psi})\\
    \mathcal{P}_{\mathfrak{h}_f} \left( \xi_f (x ; \Delta) ; v) \right. & = &
    \mathcal{P}_{\mathfrak{h}_f} \left( \xi_f (x ; \Delta) ; \kappa \right) +
    O \left( 1 / \sigma^2_{\Psi} \right)
  \end{eqnarray*}
  Of course the same relations are valid with $f$ replaced by $g$. Multiplying
  the two relations above, we see that when $\Delta$ is confined to $\lambda
  \sigma_{\Psi} \leqslant \Delta \leqslant \lambda \sigma_{\Psi} + 1 /
  \sigma_{\Psi}$ we can rewrite $(6.16)$ in the following equivalent form
  \begin{equation}
    L (\mathfrak{f}; \kappa) e^{- \kappa c (f)} \mathcal{P}_{\mathfrak{h}_f}
    \left( \xi_f (x ; \Delta) ; \kappa \right) = L (\mathfrak{g}; \kappa) e^{-
    \kappa c (g)} \mathcal{P}_{\mathfrak{h}_g} \left( \xi_g \left( x ; \Delta
    \right) ; \kappa \right) + o (1)
  \end{equation}
  If the above relation was true uniformly for a common $0 \leqslant a
  \leqslant 1$ in place of the conceivably distinct 
  $\xi_f (x ; \Delta)$ and $\xi_g (x ;
  \Delta)$ it would be enough to integrate the above over $0 \leqslant a
  \leqslant 1$, use lemma 6.7 and conclude. Unfortunately, such a simplifying
  device is not present, so we have to be slightly more careful. Let $b \in
  \mathbbm{R}$ be arbitrary. Recall that $\{\xi_f (x ; \Delta)\}=\{\mu (f ; x)
  + \Delta \sigma (f ; x)\}$ is $1 / \sigma (f ; x)$ periodic in $\Delta$.
  Hence, by a change of variable and the preceding lemma
  \begin{eqnarray*}
    \int_b^{b + 1 / \sigma (f ; x)} \mathcal{P}_{\mathfrak{h}_f} (\xi_f (x ;
    \Delta) ; \kappa) \mathd \Delta & = & \frac{1}{\sigma (f ; x)} \int_0^1
    \mathcal{P}_{\mathfrak{h}_f} \left( a ; \kappa \right) \mathd a\\
    & = & \frac{1}{\sigma (f ; x)} \prod_{p : \mathfrak{h}_f (p) \neq 0}
    \left( 1 + \frac{e^{\kappa \mathfrak{h}_f(p)}}{p - 1} \right) \cdot 
    \left( 1 - \frac{1}{p} \right)
  \end{eqnarray*}
  By lemma 4.25, $\mathcal{P}_{\mathfrak{h}_f} \left( \xi_f \left( x ; \Delta
  \right) ; \kappa \right) = O_{\delta} \left( 1 \right)$. Therefore
  \begin{eqnarray*}
    \int_b^{b + 1 / \sigma_{\Psi}} \mathcal{P}_{\mathfrak{h}_f} \left( \xi_f
    \left( x ; \Delta \right) ; \kappa \right) \mathd \Delta & = & \left(
    \int_b^{b + 1 / \sigma (f ; x)} + \int_{b + 1 / \sigma (f ; x)}^{b + 1 /
    \sigma_{\Psi}} \right) \mathcal{P}_{\mathfrak{h}_f} \left( \xi_f \left( x
    ; \Delta \right) ; \kappa \right) \mathd \Delta
  \end{eqnarray*}
  We just computed the first integral. The second integral is bounded by $O
  (1)$ times the length of the interval $[b + 1 / \sigma (f ; x) ; b + 1 /
  \sigma_{\Psi}]$. That length being $\ll 1 / \sigma^2_{\Psi}$ the second
  integral is bounded by $O \left( 1 / \sigma^2_{\Psi} \right)$. Now take $b =
  \lambda \sigma_{\Psi}$ and integrate the left hand side of $(6.17)$ over
  $\lambda \sigma_{\Psi} \leqslant \Delta \leqslant \lambda \sigma_{\Psi} + 1
  / \sigma_{\Psi}$. We obtain
  \[ \frac{L (\mathfrak{f}; \kappa) e^{- \kappa c (f)}}{\sigma (f ; x)}
     \prod_{p : \mathfrak{h}_f (p) \neq 0} \left( 1 + \frac{e^{\kappa 
     \mathfrak{h}_f(p)}}{p - 1} \right) \left( 1 - \frac{1}{p} \right) 
     + O \left(\sigma_{\Psi}^{- 2} \right) = \frac{L (f ; \kappa) 
     e^{- \kappa c(f)}}{\sigma (f ; x)} + O \left( \sigma_{\Psi}^{- 2} 
     \right) \]
  The same result is true with $f$ replaced by $g$. Therefore integrating
  $(6.17)$ over $\lambda \sigma_{\Psi} \leqslant \Delta \leqslant \lambda
  \sigma_{\Psi} + 1 / \sigma_{\Psi}$ yields
  \[ \frac{1}{\sigma (f ; x)} \cdot L (f ; \kappa) e^{- \kappa c (f)} \text{}
     = \frac{1}{\sigma (g ; x)} \cdot L (g ; \kappa) e^{- \kappa c (g)} + o
     \left( \frac{1}{\sigma_{\Psi}} \right) \]
  Since $\sigma (f ; x) \sim \sigma (g ; x)$ and $\sigma(f;x) \sim 
  \sigma_{\Psi}(f;x)$ letting $x \rightarrow \infty$ we conclude $L (f ;
  \kappa) e^{- \kappa c (f)} = L (g ; \kappa) e^{- \kappa c (g)}$. Since
  $\kappa > 0$ was an arbitrary, sufficiently small real number, it follows
  that $L (f ; \kappa) e^{- \kappa c (f)} = L (g ; \kappa) e^{- \kappa c (g)}$
  holds for all $\kappa > 0$ sufficiently small.
\end{proof}

\begin{proof}[Proof of Part (4) of Theorem 1.1 when $\Psi(f;t)$ is lattice
distributed]
  If $f = g$ then $\mathcal{D}_f (x ; \Delta) =\mathcal{D}_g (x ; \Delta)$ for
  all $x, \Delta \geqslant 1$. Conversely, suppose that $\Psi (f ; t)$ is
  lattice distributed on $\alpha \mathbbm{Z}$ and that $\mathcal{D}_f (x ;
  \Delta) \sim \mathcal{D}_g (x ; \Delta)$ for $1 \leqslant \Delta \leqslant c
  \sigma (x)$. Since $\mathcal{D}_f (x ; \Delta) \sim \mathcal{D}_g (x ;
  \Delta)$ also holds in $1 \leqslant \Delta \leqslant o (\sigma)$, by part 3
  of theorem 1.1, we have $\Psi (f ; t) = \Psi (g ; t)$. Note that $\Psi (f /
  \alpha ; t) = \Psi (f ; t \alpha) = \Psi (g ; t \alpha) = \Psi (g / \alpha ;
  t)$ is lattice distributed on $\mathbbm{Z}$. In addition $\mathcal{D}_{f /
  \alpha} (x ; \Delta) =\mathcal{D}_f (x ; \Delta) \sim \mathcal{D}_g (x ;
  \Delta) =\mathcal{D}_{g / \alpha} (x ; \Delta)$ holds throughout $1
  \leqslant \Delta \leqslant c \alpha \sigma_{\alpha} (x)$ where
  $\sigma_{\alpha} (x) \assign (1 / \alpha) \sigma (x) \sim \sigma (f / \alpha
  ; x) \sim \sigma (g / \alpha ; x)$. Thus we can assume without loss of
  generality that $\Psi (f ; t) = \Psi (g ; t)$ is lattice distributed on
  $\mathbbm{Z}$. Hence by lemma 6.6 relation $(6.16)$ holds and thus lemma 6.8
  is applicable. By lemma 6.8, $L (f ; \kappa) e^{- \kappa c (f)} = L (g ;
  \kappa) e^{- \kappa c (g)}$ for all $\kappa > 0$ sufficiently small. By
  analytic continuation $L (f ; z) e^{- zc (f)} = L (g ; z) e^{- zc (g)}$ for
  all $z \in \mathbbm{C}$, because $L (f ; z)$ and $L (g ; z)$ are entire by
  lemma 4.4. It follows that the zero set of $L (f ; z)$ and $L (g ; z)$
  coincides. Thus $f = g$ by lemma 6.5. 
\end{proof}

\section{Kubilius model -- Theorems 2.1 and 2.2}

Let $\mathcal{A}(f ; z)$ denote the function defined in theorem 2.2. Since
$\mathcal{A}(f ; z)$ is analytic in $\mathbbm{R}^{+} \cup \{0\}$ and 
$\mathcal{A}(f ; 0) =
1$ we have $\mathcal{A}(f ; \Delta / \sigma) = 1 + o (1)$ for $1 \leqslant
\Delta \leqslant o (\sigma)$. Thus Theorem 2.1 is a consequence of Theorem
2.2. The overall strategy in our proof of theorem 2.2 is to establish an
asymptotic for
\begin{equation}
  \mathbbm{P} \left( \sum_{p \leqslant x} f (p) \left[ X_p - \frac{1}{p}
  \right] \geqslant \Delta \sigma (f ; x) \right)
\end{equation}
and compare it with the asymptotic for $\mathcal{D}_f (x ; \Delta)$ from
theorem 2.8. We will deduce an asymptotic for $(7.1)$ from the three general
propositions established in section 4. Throughout this section the $X_p$'s
will denote independent Bernoulli random variables, distributed according to
\begin{eqnarray*}
  \mathbbm{P}(X_p = 1) = 1 / p & \tmop{and} & \mathbbm{P}(X_p = 0) = 1 - 1 / p
\end{eqnarray*}
We break down the proof of an asymptotic for $(7.1)$ into three cases. The
proof of theorem 2.2 is in section 7.4.

\subsection{$(\tmop{loglog} x)^{\varepsilon} \ll \Delta \ll \sigma (f ; x)$
and $\Psi (f ; t)$ is lattice distributed on $\mathbbm{Z}$}

Throughout write $f =\mathfrak{g}+\mathfrak{h}$ with $\mathfrak{g}$ and
$\mathfrak{h}$ two strongly additive functions defined by
\begin{eqnarray*}
  \mathfrak{g}(p) = \left\{ \begin{array}{l}
    f (p) \text{ if } f (p) \in \mathbbm{Z}\\
    0 \text{ \ \ \ \ otherwise}
  \end{array} \right. & \tmop{and} & \mathfrak{h}(p) = \left\{
  \begin{array}{l}
    f (p) \text{ if } f (p) \nin \mathbbm{Z}\\
    0 \text{ \ \ \ \ otherwise}
  \end{array} \right.
\end{eqnarray*}
\begin{lemma}
  Let $f \in \mathcal{C}$. Suppose that $\Psi (f ; t)$ is lattice distributed
  on $\mathbbm{Z}$. Define the random variable $\Omega (\mathfrak{g}; x) =
  \sum_{p \leqslant x} \mathfrak{g}(p) X_p$. Given $C > 0$, we have uniformly
  in the region $- C \leqslant \kappa \assign \tmop{Re} s \leqslant C$, $|
  \tmop{Im} s| \leqslant 2 \pi$,
  \begin{eqnarray*}
    \mathbbm{E} \left[ e^{s \Omega (\mathfrak{g}; x)} \right] & = & L
    (\mathfrak{g}; s) \cdot e^{\gamma ( \hat{\Psi} (f ; s) - 1)} \cdot \left(
    \log x \right)^{\hat{\Psi} (f ; s) - 1} + O_C \left( (\log x)^{\hat{\Psi}
    (f ; \kappa) - 3 / 2} \right)
  \end{eqnarray*}
  as $x \rightarrow \infty$.
\end{lemma}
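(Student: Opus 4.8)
The plan is to compute the moment generating function $\mathbb{E}[e^{s\Omega(\mathfrak g;x)}]$ directly from the independence of the $X_p$'s and then match it to the claimed asymptotic via the same type of Mertens-style estimate that appears in lemmata 4.3 and 4.4. Since $\Omega(\mathfrak g;x)=\sum_{p\le x}\mathfrak g(p)X_p$ with the $X_p$ independent Bernoulli, we have exactly
\[
\mathbb{E}\bigl[e^{s\Omega(\mathfrak g;x)}\bigr]=\prod_{p\le x}\Bigl(1-\frac1p+\frac{e^{s\mathfrak g(p)}}{p}\Bigr)=\prod_{p\le x}\Bigl(1-\frac1p\Bigr)\Bigl(1+\frac{e^{s\mathfrak g(p)}}{p-1}\Bigr).
\]
First I would insert the artificial factor $(1-1/p)^{-\hat\Psi(f;s)}(1-1/p)^{\hat\Psi(f;s)}$ into each term, grouping the product as
\[
\Bigl[\prod_{p\le x}\Bigl(1-\frac1p\Bigr)^{\hat\Psi(f;s)}\Bigl(1+\frac{e^{s\mathfrak g(p)}}{p-1}\Bigr)\Bigr]\cdot\prod_{p\le x}\Bigl(1-\frac1p\Bigr)^{1-\hat\Psi(f;s)}.
\]
By lemma 4.20 the bracketed product is $L(\mathfrak g;s)\cdot(1+O_C((\log x)^{-1/2}))$ uniformly in $|\operatorname{Re}s|\le C$, $|\operatorname{Im}s|\le 2\pi$ (this is precisely the range in that lemma's statement, which is why the hypothesis $|\operatorname{Im}s|\le 2\pi$ appears here). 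So the work reduces to evaluating $\prod_{p\le x}(1-1/p)^{1-\hat\Psi(f;s)}$.

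For the second product I would take logarithms: $\log\prod_{p\le x}(1-1/p)^{1-\hat\Psi(f;s)}=(1-\hat\Psi(f;s))\sum_{p\le x}\log(1-1/p)$. By Mertens's theorem, $\sum_{p\le x}\log(1-1/p)=-\log\log x-\gamma+o(1)$, so
\[
\prod_{p\le x}\Bigl(1-\frac1p\Bigr)^{1-\hat\Psi(f;s)}=\exp\bigl((1-\hat\Psi(f;s))(-\log\log x-\gamma)+o(|1-\hat\Psi(f;s)|)\bigr)=(\log x)^{\hat\Psi(f;s)-1}e^{\gamma(\hat\Psi(f;s)-1)}\cdot(1+o(1)).
\]
One must be a little careful about uniformity: $\hat\Psi(f;s)$ is bounded in modulus by $\hat\Psi(f;C)$ on the strip (it is entire, by lemma 4.2), so the $o(1)$ is uniform, and with the Mertens error term of size $O(1/\log x)$ one gets a multiplicative error of $1+O((\log x)^{-1}\cdot\log\log x)$, comfortably absorbed into the factor $(\log x)^{\hat\Psi(f;\kappa)-3/2}$ relative to the main term of size $(\log x)^{\hat\Psi(f;\kappa)-1}$. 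Here I'm using $\bigl|(\log x)^{\hat\Psi(f;s)-1}\bigr|=(\log x)^{\operatorname{Re}\hat\Psi(f;s)-1}\le(\log x)^{\hat\Psi(f;\kappa)-1}$ since $\operatorname{Re}\hat\Psi(f;s)\le\hat\Psi(f;\kappa)$ (the latter inequality being the same convexity fact used repeatedly, e.g. in lemma 4.12).

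Multiplying the two pieces gives
\[
\mathbb{E}\bigl[e^{s\Omega(\mathfrak g;x)}\bigr]=L(\mathfrak g;s)\,e^{\gamma(\hat\Psi(f;s)-1)}\,(\log x)^{\hat\Psi(f;s)-1}\bigl(1+O_C((\log x)^{-1/2})\bigr),
\]
and since $L(\mathfrak g;s)e^{\gamma(\hat\Psi(f;s)-1)}$ is bounded on the compact region $|\operatorname{Re}s|\le C$, $|\operatorname{Im}s|\le 2\pi$ (entire by lemmata 4.20 and 4.2), the relative error $O_C((\log x)^{-1/2})$ translates into an absolute error $O_C((\log x)^{\hat\Psi(f;\kappa)-1}\cdot(\log x)^{-1/2})=O_C((\log x)^{\hat\Psi(f;\kappa)-3/2})$, which is exactly the claimed bound. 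I do not expect any serious obstacle: the only mild subtlety is tracking that all error terms and implied constants are uniform over the stated strip, which follows from the entireness and hence boundedness of $\hat\Psi(f;\cdot)$, $L(\mathfrak g;\cdot)$ on compacta together with the explicit uniform estimate of lemma 4.20. I would write the proof as: (i) expand the product over independent $X_p$; (ii) insert and split off $\prod(1-1/p)^{\hat\Psi(f;s)}(1+e^{s\mathfrak g(p)}/(p-1))$ and apply lemma 4.20; (iii) evaluate the leftover $\prod(1-1/p)^{1-\hat\Psi(f;s)}$ by Mertens; (iv) collect the error terms using $\operatorname{Re}\hat\Psi(f;s)\le\hat\Psi(f;\kappa)$.
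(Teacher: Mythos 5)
Your proposal is correct and is essentially the same argument the paper gives: you expand the product over the independent $X_p$'s, insert the factor $(1-1/p)^{\hat\Psi(f;s)}$ to split off a product handled by Lemma 4.20, evaluate the leftover Mertens-type product, and then absorb the relative error into the claimed absolute bound using the boundedness of $L(\mathfrak g;\cdot)$, $e^{\gamma(\hat\Psi(f;\cdot)-1)}$ on the compact strip and the inequality $|\hat\Psi(f;s)|\le\hat\Psi(f;\kappa)$. The only trivial difference is cosmetic accounting (your Mertens error term is stated a bit more pessimistically than necessary, but is still easily absorbed).
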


\begin{proof}
  Since the $X_p$ are independent Bernoulli random variables, we have
  \begin{eqnarray}
    \mathbbm{E} \left[ e^{s \Omega (\mathfrak{g}; x)} \right] & = & \prod_{p
    \leqslant x} \mathbbm{E} \left[ e^{s\mathfrak{g}(p) X_p} \right] \text{ }
    = \text{ } \prod_{p \leqslant x} \left( 1 - \frac{1}{p} \right) \cdot
    \left( 1 + \frac{e^{s\mathfrak{g}(p)}}{p - 1} \right) \nonumber\\
    & = & \left[ \prod_{p \leqslant x} \left( 1 - \frac{1}{p}
    \right)^{\hat{\Psi} (f ; s)} \left( 1 + \frac{e^{s\mathfrak{g}(p)}}{p - 1}
    \right) \right] \cdot \prod_{p \leqslant x} \left( 1 - \frac{1}{p}
    \right)^{- ( \hat{\Psi} (f ; s) - 1)} 
  \end{eqnarray}
  The product on the right equals $e^{\gamma ( \hat{\Psi} (f ; s) - 1)} \cdot
  (\log x)^{\hat{\Psi} (f ; s) - 1} \cdot \left( 1 + O ((\log x)^{- 1})
  \right)$ by Mertens's formula (we use that $|\hat{\Psi}(f;s)| \leqslant
  \hat{\Psi}(f;C)$). On the other hand since $x \rightarrow
  \infty$, lemma 4.20 is applicable and so the product on the left hand side
  equals $L (\mathfrak{g}; s) \cdot (1 + O ((\log x)^{- 1 / 2})$. Thus
  \[ \mathbbm{E} \left[ e^{s \Omega (\mathfrak{g}; x)} \right] = L
     (\mathfrak{g}; s) e^{\gamma ( \hat{\Psi} (f ; s) - 1)} \cdot \left( \log
     x \right)^{\hat{\Psi} (f ; s) - 1} \cdot \left( 1 + O \left( 1 /
     \sqrt[]{\log x} \right) \right) \]
  By lemma 4.20 the function $L (\mathfrak{g}; s)$ is entire. Therefore $L
  (\mathfrak{g}; s)$ is bounded in the region $| \tmop{Re} s| \leqslant C$, $|
  \tmop{Im} s| \leqslant 2 \pi$ because this region is bounded. The function
  $e^{\gamma ( \hat{\Psi} (f ; s) - 1)}$ is bounded in $\tmop{Re} s \leqslant
  C$ because of the inequality $| \hat{\Psi} (f ; s) | \leqslant \hat{\Psi} (f
  ; \tmop{Re} s)$. It follows that the previous equation simplifies to
  \[ \mathbbm{E} \left[ e^{s \Omega (\mathfrak{g}; x)} \right] = L
     (\mathfrak{g}; s) e^{\gamma ( \hat{\Psi} (f ; s) - 1)} \cdot \left( \log
     x \right)^{\hat{\Psi} (f ; s) - 1} + O \left( (\log x)^{\hat{\Psi} (f ;
     \kappa) - 3 / 2} \right) \] 
   which proves the lemma.
\end{proof}

\begin{lemma}
  Let $f \in \mathcal{C}$. Suppose that $\Psi (f ; t)$ is lattice distributed
  on $\mathbbm{Z}$. Given a $\delta, \varepsilon > 0$ we have, uniformly in
  $(\tmop{loglog} x)^{\varepsilon} \ll \Delta \leqslant \delta \sigma (f ;
  x)$,
  \begin{equation}
    \mathcal{D}_f (x ; \Delta) \sim \frac{e^{- \gamma ( \hat{\Psi} (f ; v) -
    1)}}{\Gamma ( \hat{\Psi} (f ; v))} \cdot \mathbbm{P} \left( \sum_{p
    \leqslant x} f (p) \left[ X_p - \frac{1}{p} \right] \geqslant \Delta
    \sigma (f ; x) \right)
  \end{equation}
  where $v \assign v_f (x ; \Delta)$. 
\end{lemma}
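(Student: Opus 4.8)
The plan is to apply Proposition 4.17 to the random variable $\Omega(f;x)\assign\sum_{p\leqslant x}f(p)X_{p}$, where the $X_{p}$ are the independent Bernoulli variables of this section, and then to compare the resulting asymptotic with the one for $\mathcal{D}_{f}(x;\Delta)$ supplied by Part $(4)$ of Theorem 2.8. First I would note that since $\mu(f;x)=\sum_{p\leqslant x}f(p)/p$, one has $\sum_{p\leqslant x}f(p)[X_{p}-1/p]=\Omega(f;x)-\mu(f;x)$, so the event $\{\sum_{p\leqslant x}f(p)[X_{p}-1/p]\geqslant\Delta\sigma(f;x)\}$ is exactly $\{\Omega(f;x)\geqslant\xi_{f}(x;\Delta)\}$; this is precisely the event treated in Proposition 4.17, with $Z_{p}\assign X_{p}$ and $(\Omega_{x},\mathcal{F}_{x},\mathbbm{P}_{x})$ the probability space carrying the $X_{p}$.

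Next I would check the two hypotheses of Proposition 4.17. For hypothesis $(1)$: for every prime $p$ at least one of $\mathfrak{g}(p)$ and $\mathfrak{H}(p)$ vanishes (indeed $\mathfrak{H}(p)=0$ exactly when $\mathfrak{h}(p)=0$, i.e.\ exactly when $f(p)\in\mathbbm{Z}$, i.e.\ exactly when $\mathfrak{g}(p)=f(p)\neq 0$), so $\Omega(\mathfrak{g};x)$ and $\Omega(\mathfrak{H};x)$ are built from disjoint blocks of $X_{p}$'s, which by independence gives the \emph{exact} factorization
\[ \mathbbm{E}\!\left[e^{s\Omega(\mathfrak{g};x)+s\Omega(\mathfrak{H};x)}\right]=\mathbbm{E}\!\left[e^{s\Omega(\mathfrak{g};x)}\right]\cdot\prod_{p\leqslant x}\left(1+\frac{e^{s\mathfrak{H}(p)}-1}{p}\right); \]
thus hypothesis $(1)$ holds with zero error, uniformly in $0\leqslant\tmop{Re}s\leqslant C$, $|\tmop{Im}s|\leqslant\tmop{loglog}x$ and in all strongly additive $\mathfrak{H}$ with $0\leqslant\mathfrak{H}(p)\leqslant\lceil\mathfrak{h}(p)\rceil$. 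For hypothesis $(2)$: the previous lemma gives $\mathbbm{E}[e^{s\Omega(\mathfrak{g};x)}]=\mathcal{A}(s)(\log x)^{\hat{\Psi}(f;s)-1}+O((\log x)^{\hat{\Psi}(f;\kappa)-3/2})$ uniformly in $0\leqslant\kappa\assign\tmop{Re}s\leqslant C$, $|\tmop{Im}s|\leqslant 2\pi$, with $\mathcal{A}(s)\assign L(\mathfrak{g};s)\,e^{\gamma(\hat{\Psi}(f;s)-1)}$. This $\mathcal{A}(s)$ is entire (by Lemma 4.20 and Lemma 4.2) and does not vanish for $s\geqslant 0$, because $L(\mathfrak{g};x)>0$ for $x\geqslant 0$ by its product representation and the exponential never vanishes. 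Hence Proposition 4.17 applies and yields, uniformly in $1\leqslant\Delta\leqslant c\sigma(f;x)$,
\[ \mathbbm{P}\!\left(\sum_{p\leqslant x}f(p)\left[X_{p}-\frac{1}{p}\right]\geqslant\Delta\sigma(f;x)\right)\sim L(\mathfrak{g};v)\,e^{\gamma(\hat{\Psi}(f;v)-1)}\,e^{-vc(f)}\,\mathcal{P}_{\mathfrak{h}}(\xi_{f}(x;\Delta);v)\,S_{f}(x;\Delta), \]
with $v\assign v_{f}(x;\Delta)$.

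Finally I would divide this by the asymptotic of Part $(4)$ of Theorem 2.8, namely $\mathcal{D}_{f}(x;\Delta)\sim\frac{L(\mathfrak{g};v)e^{-vc(f)}}{\Gamma(\hat{\Psi}(f;v))}\mathcal{P}_{\mathfrak{h}}(\xi_{f}(x;\Delta);v)S_{f}(x;\Delta)$, valid uniformly in $(\tmop{loglog}x)^{\varepsilon}\ll\Delta\leqslant\delta\sigma(f;x)$. Working in the intersection of the two ranges, which (taking $c=\delta$) is exactly $(\tmop{loglog}x)^{\varepsilon}\ll\Delta\leqslant\delta\sigma(f;x)$, the common factors $L(\mathfrak{g};v)e^{-vc(f)}$, $\mathcal{P}_{\mathfrak{h}}(\xi_{f}(x;\Delta);v)$ and $S_{f}(x;\Delta)$ cancel, leaving $\mathcal{D}_{f}(x;\Delta)\sim\frac{e^{-\gamma(\hat{\Psi}(f;v)-1)}}{\Gamma(\hat{\Psi}(f;v))}\cdot\mathbbm{P}(\sum_{p\leqslant x}f(p)[X_{p}-1/p]\geqslant\Delta\sigma(f;x))$, which is $(7.4)$. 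There is no genuinely hard step here; the only points worth a word of care are that the identity in hypothesis $(1)$ holds \emph{exactly} (so no error term needs to be controlled), and that the factor $e^{-\gamma(\hat{\Psi}(f;v)-1)}/\Gamma(\hat{\Psi}(f;v))$ is bounded above and away from $0$ — since $v$ remains in a bounded interval by Lemma 4.7 and hence $\hat{\Psi}(f;v)\in[1,\hat{\Psi}(f;C)]$ — so that the two $(1+o(1))$ factors combine without difficulty.
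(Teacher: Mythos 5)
Your proof is correct and follows essentially the same route as the paper: you verify the two hypotheses of Proposition 4.17 for $\Omega(f;x)=\sum_{p\leqslant x}f(p)X_p$ (using independence of the $X_p$ for hypothesis $(1)$, so that it holds with zero error, and Lemma 7.1 for hypothesis $(2)$ with $\mathcal{A}(s)=L(\mathfrak{g};s)e^{\gamma(\hat{\Psi}(f;s)-1)}$), apply the proposition, and then divide by the asymptotic from Part $(4)$ of Theorem 2.8, cancelling the common factor $L(\mathfrak{g};v)e^{-vc(f)}\mathcal{P}_{\mathfrak{h}}(\xi_f(x;\Delta);v)S_f(x;\Delta)$. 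Your closing remark that the constant $e^{-\gamma(\hat{\Psi}(f;v)-1)}/\Gamma(\hat{\Psi}(f;v))$ stays bounded above and away from zero (so that the $(1+o(1))$'s may be merged) is a correct and careful observation that the paper leaves implicit.
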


\begin{proof}
  Let $X_p$ be independent Bernoulli random variables, distributed according
  to
  \[ \left. \mathbbm{P}(X_p = 1 \right) = \frac{1}{p} \text{ and } \mathbbm{P}
     \left( X_p = 0 \right) = 1 - \frac{1}{p} \]
  Denote by $(\Omega, \mathcal{F}, \mathbb{P})$ the underlying probability 
  space. Given a
  strongly additive function $g$, define $\Omega (g ; x) = \sum_{p \leqslant
  x} g (p) X_p$. Let $\mathfrak{H}$ be a strongly additive function such that
  $0 \leqslant \mathfrak{H}(p) \leqslant \left\lceil \mathfrak{h}(p)
  \right\rceil$. Note that $\mathfrak{H}(p)$ vanishes when $\mathfrak{h}(p)$
  does. Thus $\mathfrak{g}$ and $\mathfrak{H}$ are ``supported'' on two
  disjoint sets of primes and as a consequence the random variables $\Omega
  (\mathfrak{g}; x)$ and $\Omega (\mathfrak{H}; x)$ are independent. Therefore
  \begin{eqnarray*}
    \mathbbm{E} \left[ e^{s \Omega (\mathfrak{g}; x) + s \Omega (\mathfrak{H};
    x)} \right] & = & \mathbbm{E} \left[ e^{s \Omega (\mathfrak{g}; x)}
    \right] \cdot \mathbbm{E} \left[ e^{s \Omega (\mathfrak{H}; x)} \right]\\
    & = & \mathbbm{E} \left[ e^{s \Omega (\mathfrak{g}; x)} \right] \cdot
    \prod_{p \leqslant x} \left( 1 - \frac{1}{p} +
    \frac{e^{s\mathfrak{H}(p)}}{p} \right)
  \end{eqnarray*}
  Furthermore by the previous lemma for $0 \leqslant \kappa \assign \tmop{Re}
  s \leqslant C$ and $| \tmop{Im} s| \leqslant 2 \pi$,
  \[ \mathbbm{E} \left[ e^{s \Omega (\mathfrak{g}; x)} \right] = L
     (\mathfrak{g}; s) e^{\gamma ( \hat{\Psi} (f ; s) - 1)} \cdot (\log
     x)^{\hat{\Psi} (f ; s) - 1} + O \left( \left( \log x \right)^{\hat{\Psi}
     (f ; \kappa) - 3 / 2} \right) \]
  By lemma 4.20 and 4.2 the functions $L (\mathfrak{g}; s)$ and $e^{\gamma (
  \hat{\Psi} (f; s) - 1)}$ are entire. From the product
  representation it is clear that $L (\mathfrak{g}; x) \neq 0$ for $x
  \geqslant 0$. Therefore the function $L (\mathfrak{g}; s) e^{\gamma (
  \hat{\Psi} (f ; s) - 1)}$ is in addition non-vanishing on the positive real
  axis. Therefore the assumption of proposition 4.17 are satisfied. It follows
  that the expression
  \[ \mathbbm{P} \left( \sum_{p \leqslant x} f (p) \cdot \left[ X_p -
     \frac{1}{p} \right] \geqslant \Delta \sigma (f ; x) \right) \]
  is asymptotic to
  \begin{eqnarray}
    &  & L (\mathfrak{g}; v) e^{\gamma ( \hat{\Psi} (f ; v) - 1)} \cdot (1 /
    v)\mathcal{P}_{\mathfrak{h}} (\xi_f (x ; \Delta) ; v) \cdot \frac{\left(
    \log x \right)^{\hat{\Psi} (f ; v) - 1 - v \hat{\Psi}' (f ; v)}}{(2 \pi
    \hat{\Psi}'' (f ; v) \tmop{loglog} x)^{1 / 2}} \cdot e^{- vc (f)} 
  \end{eqnarray}
  uniformly in $(\tmop{loglog} x)^{\varepsilon} \ll \Delta \leqslant c \sigma
  (f ; x)$ where $v \assign v_f (x ; \Delta)$. Furthermore, by part 4 of
  theorem 2.8, $\mathcal{D}_f (x ; \Delta)$ is asymptotic to
  \begin{equation}
    \frac{L (\mathfrak{g}; v)}{\Gamma ( \hat{\Psi} (f ; v))} \cdot (1 /
    v)\mathcal{P}_{\mathfrak{h}} (\xi_f (x ; \Delta) ; v) \cdot \frac{(\log
    x)^{\hat{\Psi} (f ; v) - 1 - v \hat{\Psi}' (f ; v)}}{(2 \pi \hat{\Psi}''
    (f ; v) \tmop{loglog} x)^{1 / 2}} \cdot e^{- vc (f)}
  \end{equation}
  throughout $(\tmop{loglog} x)^{\varepsilon} \ll \Delta \leqslant c \sigma (f
  ; x)$. Comparing $(7.4)$ and $(7.5)$ proves the lemma.
\end{proof}

\subsection{$(\tmop{loglog} x)^{\varepsilon} \ll \Delta \ll \sigma (f ; x)$
and $\Psi (f ; t)$ is not lattice distributed}

\begin{lemma}
  Let $f \in \mathcal{C}$. Let $\Omega (f ; x) \assign \sum_{p \leqslant x} f
  (p) X_p$. Given $C > 0$, uniformly in $- C \leqslant \kappa \assign
  \tmop{Re} s \leqslant C$ and $| \tmop{Im} s| \leqslant \tmop{loglog} x$,
  \begin{eqnarray*}
    \mathbbm{E} \left[ e^{s \Omega (f ; x)} \right] & = & L (f ; s) e^{\gamma
    ( \hat{\Psi} (f ; s) - 1)} \cdot \left( \log x \right)^{\hat{\Psi} (f ; s)
    - 1} + O \left( (\log x)^{\hat{\Psi} (f ; \kappa) - 3 / 2} \right)
  \end{eqnarray*}
  as $x \rightarrow \infty$.
\end{lemma}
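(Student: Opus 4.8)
The plan is to follow the proof of Lemma 7.1, using independence of the $X_p$ together with the two analytic inputs already available: the Euler-product asymptotic for $L(f;z)$ from Lemma 4.4, and Mertens's third theorem. Since the $X_p$ are independent Bernoulli variables with $\mathbbm{P}(X_p=1)=1/p$, I would first write
\[
\mathbbm{E}\bigl[e^{s\Omega(f;x)}\bigr]=\prod_{p\leqslant x}\mathbbm{E}\bigl[e^{sf(p)X_p}\bigr]=\prod_{p\leqslant x}\Bigl(1-\frac1p\Bigr)\Bigl(1+\frac{e^{sf(p)}}{p-1}\Bigr),
\]
and then peel off a factor $(1-1/p)^{\hat{\Psi}(f;s)}$ from each term, rewriting the product as
\[
\Bigl[\prod_{p\leqslant x}\Bigl(1-\frac1p\Bigr)^{\hat{\Psi}(f;s)}\Bigl(1+\frac{e^{sf(p)}}{p-1}\Bigr)\Bigr]\cdot\prod_{p\leqslant x}\Bigl(1-\frac1p\Bigr)^{1-\hat{\Psi}(f;s)}.
\]

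Next I would evaluate the two factors separately. By Lemma 4.4, for $x$ larger than some $x_0(C)$ the bracketed product equals $L(f;s)\bigl(1+O((\log x)^{-1})\bigr)$ uniformly in $|\tmop{Re}s|\leqslant C$ and $|\tmop{Im}s|\leqslant\log\log x$. For the remaining factor, Mertens gives $\prod_{p\leqslant x}(1-1/p)=e^{-\gamma}(\log x)^{-1}\bigl(1+O((\log x)^{-1})\bigr)$, and raising to the complex power $1-\hat{\Psi}(f;s)$ — which is bounded in modulus by $1+\hat{\Psi}(f;C)=O_C(1)$ since $|\hat{\Psi}(f;s)|\leqslant\hat{\Psi}(f;\tmop{Re}s)$ — yields $e^{\gamma(\hat{\Psi}(f;s)-1)}(\log x)^{\hat{\Psi}(f;s)-1}\bigl(1+O((\log x)^{-1})\bigr)$. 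Multiplying the two expressions gives
\[
\mathbbm{E}\bigl[e^{s\Omega(f;x)}\bigr]=L(f;s)\,e^{\gamma(\hat{\Psi}(f;s)-1)}(\log x)^{\hat{\Psi}(f;s)-1}\bigl(1+O((\log x)^{-1})\bigr).
\]

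Finally I would convert the multiplicative error into the additive one claimed in the lemma. Using $|\hat{\Psi}(f;s)|\leqslant\hat{\Psi}(f;\kappa)\leqslant\hat{\Psi}(f;C)=O_C(1)$ (valid because $\hat{\Psi}(f;s)=\int e^{st}\,d\Psi(f;t)$ and $\Psi(f;\cdot)$ is supported on $[0,\infty)$) one has $|e^{\gamma(\hat{\Psi}(f;s)-1)}|=O_C(1)$ and $(\log x)^{\tmop{Re}\hat{\Psi}(f;s)-1}\leqslant(\log x)^{\hat{\Psi}(f;\kappa)-1}$; and by the last assertion of Lemma 4.4, $L(f;s)=O_{C,\varepsilon}(1+|\tmop{Im}s|^{\varepsilon})=O_{\varepsilon}((\log\log x)^{\varepsilon})$ throughout the range considered. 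Hence the error term above is $\ll(\log\log x)^{\varepsilon}(\log x)^{\hat{\Psi}(f;\kappa)-2}$, and since $(\log\log x)^{\varepsilon}\ll(\log x)^{1/2}$ for $x$ large this is $\ll(\log x)^{\hat{\Psi}(f;\kappa)-3/2}$, which is the bound asserted in the lemma. There is no genuine obstacle here; the only point requiring a little care is to check that the growth of $L(f;s)$ in $\tmop{Im}s$ (a fixed power of $\log\log x$) cannot spoil the error term, which it does not, precisely because the multiplicative errors coming from Lemma 4.4 and from Mertens each save a full power of $\log x$.
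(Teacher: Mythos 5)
Your proof is correct and follows essentially the same route as the paper's: factor out $(1-1/p)^{\hat\Psi(f;s)}$, invoke Lemma 4.4 for the resulting $L(f;s)$-product, apply Mertens to the remaining power, and then convert the multiplicative $1+O((\log x)^{-1})$ error to the additive form using $|\hat\Psi(f;s)|\leqslant\hat\Psi(f;\kappa)$ and the polynomial-in-$\log\log x$ bound on $L(f;s)$. The only cosmetic difference is that you use the sharper bound $L(f;s)\ll_{C,\varepsilon}(\log\log x)^{\varepsilon}$ where the paper settles for $L(f;s)\ll_C 1+\log\log x$; either suffices since a factor of $\sqrt{\log x}$ is available to absorb it.
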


\begin{proof}
  This is the same proof as in lemma 7.1. There is a minor twist because $L (f
  ; s)$ is no more bounded and we use lemma 4.4 instead of lemma 4.20. We give
  the proof anyway. Since the $X_p$ are independent Bernoulli random variable
  \begin{eqnarray*}
    \mathbbm{E} \left[ e^{s \Omega (f ; x)} \right] & = & \prod_{p \leqslant
    x} \mathbbm{E} \left[ e^{sf (p) X_p} \right] \text{ } = \text{ } \prod_{p
    \leqslant x} \left( 1 - \frac{1}{p} \right) \left( 1 + \frac{e^{sf (p)}}{p
    - 1} \right)\\
    & = & \left[ \prod_{p \leqslant x} \left( 1 - \frac{1}{p}
    \right)^{\hat{\Psi} (f ; s)} \left( 1 + \frac{e^{sf (p)}}{p - 1} \right)
    \right] \cdot \prod_{p \leqslant x} \left( 1 - \frac{1}{p} \right)^{- (
    \hat{\Psi} (f ; s) - 1)}
  \end{eqnarray*}
  The product on the right equals $e^{\gamma ( \hat{\Psi} (f ; s) - 1)} \cdot
  (\log x)^{\hat{\Psi} (f ; s) - 1} \cdot (1 + O ((\log x)^{- 1}))$ by
  Mertens's formula. On the other hand since $x \rightarrow \infty$ by lemma
  4.4, the product on the left equals to $L (f ; s) \cdot (1 + O ((\log x)^{-
  1}))$. Thus
  \[ \mathbbm{E} \left[ e^{s \Omega (f ; x)} \right] = L (f ; s) e^{\gamma (
     \hat{\Psi} (f ; s) - 1)} \cdot (\log x)^{\hat{\Psi} (f ; s) - 1} \cdot (1
     + O ((\log x)^{- 1})) \]
  By lemma 4.4 we have $L (f ; s) \ll_C 1 + \tmop{loglog} x$ uniformly in $|
  \tmop{Im} s| \leqslant \tmop{loglog} x$ and $| \tmop{Re} s| \leqslant C$.
  The function $e^{\gamma ( \hat{\Psi} (f ; s) - 1)}$ is bounded in $\tmop{Re}
  s \leqslant C$ because of the inequality $| \hat{\Psi} (f ; s) | \leqslant
  \hat{\Psi} (f ; \kappa)$. Thus the previous equation simplifies to 
  \[ \mathbbm{E} \left[ e^{s \Omega (f ; x)} \right] = L (f ; s) e^{\gamma (
     \hat{\Psi} (f ; s) - 1)} \cdot (\log x)^{\hat{\Psi} (f ; s) - 1} + O
     \left( (\log x)^{\hat{\Psi} (f ; \kappa) - 3 / 2} \right) \]
  (where $\kappa \assign \tmop{Re} s \leqslant C$) which is the claim.
\end{proof}

\begin{lemma}
  Let $f \in \mathcal{C}$. Suppose that $\Psi (f ; t)$ is not lattice
  distributed. Let $\delta, \varepsilon > 0$ be given. We have, uniformly in
  $(\tmop{loglog} x)^{\varepsilon} \ll \Delta \leqslant \delta \sigma (f ;
  x)$,
  \[ \mathcal{D}_f (x ; \Delta) \sim \frac{e^{- \gamma ( \hat{\Psi} (f ; v) -
     1)}}{\Gamma ( \hat{\Psi} (f ; v))} \cdot \mathbbm{P} \left( \sum_{p
     \leqslant x} f (p) \left[ X_p - \frac{1}{p} \right] \geqslant \Delta
     \sigma (f ; x) \right) \]
  Here, as usual $v \assign v_f (x ; \Delta)$.
\end{lemma}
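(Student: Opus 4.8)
The plan is to run exactly the argument of Lemma 7.2, but with Proposition 4.10 in place of Proposition 4.17 and Part~(3) of Theorem 2.8 in place of Part~(4). Set $\Omega (f ; x) \assign \sum_{p \leqslant x} f (p) X_p$; since $\mu (f ; x) = \sum_{p \leqslant x} f (p) / p$, the probability appearing in the statement of the lemma is precisely $\mathbbm{P}\bigl( \Omega (f ; x) \geqslant \mu (f ; x) + \Delta \sigma (f ; x) \bigr)$. By Lemma 7.4, uniformly in $0 \leqslant \kappa \assign \tmop{Re} s \leqslant C$ and $| \tmop{Im} s | \leqslant \tmop{loglog} x$,
\[ \mathbbm{E}\bigl[ e^{s \Omega (f ; x)} \bigr] = \mathcal{A}(s) \cdot (\log x)^{\hat{\Psi} (f ; s) - 1} + O_C\bigl( (\log x)^{\hat{\Psi} (f ; \kappa) - 3 / 2} \bigr), \qquad \mathcal{A}(s) \assign L (f ; s)\, e^{\gamma ( \hat{\Psi} (f ; s) - 1)} . \]

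Next I would verify that $\mathcal{A}(s)$ satisfies the hypotheses of Proposition 4.10. The function $\hat{\Psi} (f ; s)$ is entire by Lemma 4.2, so $e^{\gamma ( \hat{\Psi} (f ; s) - 1)}$ is entire and, since $| \hat{\Psi} (f ; s) | \leqslant \hat{\Psi} (f ; \tmop{Re} s) \leqslant \hat{\Psi} (f ; C)$, it is bounded on $0 \leqslant \tmop{Re} s \leqslant C$; it is also non-vanishing on $\mathbbm{R}^{+} \cup \{ 0 \}$ because $\hat{\Psi} (f ; x) \geqslant \hat{\Psi} (f ; 0) = 1 > 0$ there. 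The function $L (f ; s)$ is entire by Lemma 4.4, does not vanish for $x \geqslant 0$ (inspect its Euler product), and satisfies $L (f ; s) = O_{C, \varepsilon} (1 + | \tmop{Im} s |^{\varepsilon})$ uniformly in $0 \leqslant \tmop{Re} s \leqslant C$; taking $\varepsilon = 1 / 8$ gives $\mathcal{A}(s) = O_C (1 + | \tmop{Im} s |^{1 / 8})$. Thus $\mathcal{A}(s)$ is analytic in $\tmop{Re} s \geqslant 0$, non-vanishing on $\mathbbm{R}^{+} \cup \{ 0 \}$, and of admissible growth. Since $\Psi (f ; t)$ is not lattice distributed (the hypothesis of the lemma), Proposition 4.10 applies and, with $v \assign v_f (x ; \Delta)$, gives uniformly in $(\tmop{loglog} x)^{\varepsilon} \ll \Delta \leqslant \delta \sigma (f ; x)$,
\[ \mathbbm{P}\bigl( \Omega (f ; x) \geqslant \mu (f ; x) + \Delta \sigma (f ; x) \bigr) \sim L (f ; v)\, e^{\gamma ( \hat{\Psi} (f ; v) - 1)}\, e^{- vc (f)} \cdot S_f (x ; \Delta), \]
where $S_f (x ; \Delta) = (\log x)^{\hat{\Psi} (f ; v) - 1 - v \hat{\Psi}' (f ; v)} \bigl( v (2 \pi \hat{\Psi}'' (f ; v) \tmop{loglog} x)^{1 / 2} \bigr)^{- 1}$.

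To conclude I would divide this by the asymptotic supplied by Part~(3) of Theorem 2.8, namely $\mathcal{D}_f (x ; \Delta) \sim L (f ; v) e^{- vc (f)} \Gamma ( \hat{\Psi} (f ; v))^{- 1} S_f (x ; \Delta)$, valid in the same range. The common factors $S_f (x ; \Delta)$, $L (f ; v)$ and $e^{- vc (f)}$ cancel, leaving $\mathcal{D}_f (x ; \Delta) \sim e^{- \gamma ( \hat{\Psi} (f ; v) - 1)} \Gamma ( \hat{\Psi} (f ; v))^{- 1} \cdot \mathbbm{P}\bigl( \sum_{p \leqslant x} f (p) [ X_p - 1 / p ] \geqslant \Delta \sigma (f ; x) \bigr)$, which is the claim; the proportionality factor is a genuine finite nonzero quantity because $0 \leqslant v \leqslant C$ confines $\hat{\Psi} (f ; v)$ to $[ 1, \hat{\Psi} (f ; C) ]$, so $\Gamma ( \hat{\Psi} (f ; v)) \asymp 1$, and $L (f ; v) \neq 0$. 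The proof presents no genuine obstacle: the only steps needing care are the verification that $\mathcal{A}(s) = L (f ; s) e^{\gamma ( \hat{\Psi} (f ; s) - 1)}$ meets the three requirements of Proposition 4.10 (all immediate from Lemmas 4.2 and 4.4), and the bookkeeping that the lower threshold $(\tmop{loglog} x)^{\varepsilon} \ll \Delta$ agrees between Proposition 4.10 and Part~(3) of Theorem 2.8.
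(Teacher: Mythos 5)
Your argument is correct and is essentially identical to the paper's proof: cite the preceding expectation estimate (which the paper labels Lemma~7.3, not Lemma~7.4 --- the statement you are proving \emph{is} Lemma~7.4, so that citation is a slip), verify via Lemmas 4.2 and 4.4 that $\mathcal{A}(s)=L(f;s)e^{\gamma(\hat\Psi(f;s)-1)}$ is entire, non-vanishing on $\mathbbm{R}^{+}\cup\{0\}$, and of growth $O_C(1+|\mathrm{Im}\,s|^{1/8})$; apply Proposition~4.10; and divide by the asymptotic from Part~(3) of Theorem~2.8. Aside from that harmless misreference, this is precisely what the paper does.
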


\begin{proof}
  Let $\Omega (f ; x) \assign \sum_{p \leqslant x} f (p) X_p$. By the previous
  lemma for any given $C > 0$, we have uniformly in $0 \leqslant \kappa
  \assign \tmop{Re} s \leqslant C$ and $| \tmop{Im} s| \leqslant \tmop{loglog}
  x$,
  \[ \mathbbm{E} \left[ e^{s \Omega (f ; x)} \right] = L (f ; s) e^{\gamma (
     \hat{\Psi} (f ; s) - 1)} \cdot \left( \log x \right)^{\hat{\Psi} (f ; s)
     - 1} + O \left( (\log x)^{\hat{\Psi} (f ; \kappa) - 3 / 2} \right) \]
  By lemma 4.4 the function $L (f ; s)$ is entire and $L (f ; s) = O_{C,
  \varepsilon} (1 + | \tmop{Im} s|^{\varepsilon})$ throughout $0 \leqslant
  \tmop{Re} s \leqslant C$. From the product representation for $L (f ; x)$ is
  it clear that $L (f ; s)$ doesn't vanish on $\mathbbm{R}^+$. The function
  $\exp (\gamma ( \hat{\Psi} (f ; s) - 1)$ is entire by lemma 4.2, never zero,
  and bounded in $\tmop{Re} s \leqslant C$, because $| \hat{\Psi} (f ; s) |
  \leqslant \hat{\Psi} (f ; \tmop{Re} s)$. It follows that proposition 4.10 is
  applicable. Therefore
  \[ \mathbbm{P} \left( \frac{\Omega (f ; x) - \mu (f ; x)}{\sigma (f ; x)}
     \geqslant \Delta \right) \sim L (f ; v) e^{\gamma ( \hat{\Psi} (f ; v) -
     1)} \cdot \frac{\left( \log x \right)^{\hat{\Psi} (f ; v) - 1 - v
     \hat{\Psi}' (f ; v)}}{v (2 \pi \hat{\Psi}'' (f ; v) \tmop{loglog} x)^{1 /
     2}} \cdot e^{- vc (f)} \]
  with $v \assign v_f (x ; \Delta)$ uniformly in $(\tmop{loglog}
  x)^{\varepsilon} \ll \Delta \leqslant c \sigma (f ; x)$. On the other hand,
  by part 3 of theorem 2.8,
  \[ \mathcal{D}_f (x ; \Delta) \sim \frac{L (f ; v)}{\Gamma ( \hat{\Psi} (f ;
     v))} \cdot \frac{(\log x)^{\hat{\Psi} (f ; v) - 1 - v \hat{\Psi}' (f ;
     v)}}{v (2 \pi \hat{\Psi}'' (f ; v) \tmop{loglog} x)^{1 / 2}} \cdot e^{-
     vc (f)} \]
  with $v \assign v_f (x ; \Delta)$. On comparing the two asymptotics, the
  lemma follows. 
\end{proof}

\subsection{The range $1 \leqslant \Delta \ll (\tmop{loglog} x)^{1 / 12}$}

\begin{lemma}
  Let $f \in \mathcal{C}$. Uniformly in $1 \leqslant \Delta \ll (\tmop{loglog}
  x)^{1 / 12}$,
  \[ \mathbbm{P} \left( \sum_{p \leqslant x} f (p) \left[ X_p - \frac{1}{p}
     \right] \geqslant \Delta \sigma (f ; x) \right) \sim \mathcal{D}_f (x ;
     \Delta) \]
\end{lemma}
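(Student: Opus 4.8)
The plan is to show that, throughout the range $1 \leqslant \Delta \ll (\tmop{loglog} x)^{1/12}$, both sides of the claimed equivalence are asymptotic to the Gaussian tail
\[ \Phi(\Delta) \assign \frac{1}{\sqrt{2\pi}} \int_{\Delta}^{\infty} e^{-u^2/2}\,\mathd u . \]
For $f \in \mathcal{C}$ one has $\sigma^2(f;x) = \hat{\Psi}''(f;0)\tmop{loglog} x + O(1)$ with $\hat{\Psi}''(f;0) > 0$, so $\sigma(f;x)^{1/3} \asymp (\tmop{loglog} x)^{1/6}$ and hence the hypothesis $\Delta \ll (\tmop{loglog} x)^{1/12}$ forces $1 \leqslant \Delta = o(\sigma(f;x)^{1/3})$. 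Consequently part (1) of Theorem 2.8 applies verbatim and gives $\mathcal{D}_f(x;\Delta) \sim \Phi(\Delta)$ uniformly in this range. It remains to obtain the matching asymptotic for the probabilistic tail.

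To that end I would set $\Omega(f;x) \assign \sum_{p \leqslant x} f(p) X_p$ and verify the hypothesis of Proposition 4.9 for this sequence of random variables. By Lemma 7.3, uniformly in $|\kappa| = |\tmop{Re}\,s| \leqslant C$ and $|\tmop{Im}\,s| \leqslant \tmop{loglog} x$,
\[ \mathbbm{E}\bigl[e^{s\Omega(f;x)}\bigr] = \mathcal{A}(s)\,(\log x)^{\hat{\Psi}(f;s)-1} + O\bigl((\log x)^{\hat{\Psi}(f;\kappa)-3/2}\bigr), \qquad \mathcal{A}(s) \assign L(f;s)\,e^{\gamma(\hat{\Psi}(f;s)-1)} . \]
The function $\mathcal{A}$ is entire (lemma 4.4 together with lemma 4.2), it is non-vanishing on $\mathbbm{R}^{+} \cup \{0\}$ since $L(f;x)$ never vanishes there and exponentials never vanish, and $\mathcal{A}(0) = L(f;0) = \prod_p (1-1/p)\cdot\tfrac{p}{p-1} = 1$. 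Hence, for $|s| \leqslant \varepsilon$ with $\varepsilon > 0$ small enough, $\mathcal{A}(s)$ is bounded and bounded away from $0$, while by continuity $|\hat{\Psi}(f;s)-1|$ and $|\hat{\Psi}(f;\kappa)-1|$ are both $< 1/4$, so that $\hat{\Psi}(f;\kappa) - 3/2 < \tmop{Re}\,\hat{\Psi}(f;s) - 1$ and the additive error above is $o\bigl(\mathcal{A}(s)(\log x)^{\hat{\Psi}(f;s)-1}\bigr)$. Therefore $\mathbbm{E}\bigl[e^{s\Omega(f;x)}\bigr] = \mathcal{A}(s)(\log x)^{\hat{\Psi}(f;s)-1}(1+o(1))$ uniformly in $|s| \leqslant \varepsilon$, which is precisely the input required by Proposition 4.9.

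Proposition 4.9 then yields $\mathbbm{P}\bigl((\Omega(f;x)-\mu(f;x))/\sigma(f;x) \geqslant \Delta\bigr) \sim \Phi(\Delta)$ uniformly in $1 \leqslant \Delta = o(\sigma(f;x)^{1/3})$. Since $\mu(f;x) = \sum_{p\leqslant x} f(p)/p$ is the mean and $\sigma^2(f;x) = \sum_{p\leqslant x} f(p)^2(1-1/p)$ the variance of $\Omega(f;x)$, the event $(\Omega(f;x)-\mu(f;x))/\sigma(f;x) \geqslant \Delta$ is identical to the event $\sum_{p\leqslant x} f(p)[X_p - 1/p] \geqslant \Delta\sigma(f;x)$, so the probabilistic tail is also $\sim \Phi(\Delta)$ in the stated range. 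As $\Phi(\Delta) > 0$ for every $\Delta$, dividing the two asymptotics gives $\mathbbm{P}\bigl(\sum_{p\leqslant x} f(p)[X_p - 1/p] \geqslant \Delta\sigma(f;x)\bigr) \sim \mathcal{D}_f(x;\Delta)$, as claimed. The only step demanding any care is the conversion of the additive error term in Lemma 7.3 into the multiplicative $1+o(1)$ form needed for Proposition 4.9; everything else is a direct citation of earlier results.
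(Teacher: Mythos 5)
Your proof is correct and takes essentially the same route as the paper: apply Lemma 7.3 to get the moment generating function of $\Omega(f;x) = \sum_{p\leqslant x} f(p)X_p$, verify the hypotheses of Proposition 4.9 to obtain the Gaussian tail asymptotic for the probabilistic side, and match against part (1) of Theorem 2.8 for $\mathcal{D}_f(x;\Delta)$. The one piece of extra care you supply — converting the additive error in Lemma 7.3 into the multiplicative $1+o(1)$ form that Proposition 4.9 actually demands, by noting that near $s=0$ one has $\hat{\Psi}(f;\kappa) - 3/2 < \tmop{Re}\,\hat{\Psi}(f;s) - 1$ while $\mathcal{A}(s)$ is bounded away from $0$ — is a detail the paper's proof glosses over, and it is correct; this is a welcome clarification rather than a different argument. (Minor typo: your variance formula should read $\sigma^2(f;x) = \sum_{p\leqslant x} \frac{f(p)^2}{p}\left(1 - \frac{1}{p}\right)$, with the factor $1/p$ present.)
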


\begin{proof}
  Let $\Omega (f ; x) \assign \sum_{p \leqslant x} f (p) X_p$. By lemma 7.3,
  we have
  \begin{eqnarray*}
    \mathbbm{E} \left[ e^{s \Omega (f ; x)} \right] & = & L (f ; s) e^{\gamma
    ( \hat{\Psi} (f ; s) - 1)} \cdot (\log x)^{\hat{\Psi} (f ; s) - 1} + O
    \left( (\log x)^{\hat{\Psi} (f ; \kappa) - 3 / 2} \right)
  \end{eqnarray*}
  uniformly in $|s| \leqslant \varepsilon$, for any given $\varepsilon > 0$.
  Since $L (f ; 0) e^{\gamma ( \hat{\Psi} (f ; 0) - 1)} = 1 \neq 0$, and $L (f
  ; z) e^{\gamma ( \hat{\Psi} (f ; z) - 1)}$ is entire (by lemma 4.4 and 4.2),
  proposition 4.9 is applicable. Therefore
  \begin{eqnarray*}
    \mathbbm{P} \left( \frac{\Omega (f ; x) - \mu (f ; x)}{\sigma (f ; x)}
    \geqslant \Delta \right) & \sim & \frac{1}{\sqrt[]{2 \pi}}
    \int_{\Delta}^{\infty} e^{- u^2 / 2} \cdot \mathd u
  \end{eqnarray*}
  uniformly in $1 \leqslant \Delta \leqslant o ((\tmop{loglog} x)^{1 / 6})$.
  On the other hand, by part 1 of theorem 2.8 we know that $\mathcal{D}_f (x ;
  \Delta) \sim (1 / \sqrt[]{2 \pi}) \int_{\Delta}^{\infty} e^{- u^2 / 2} \cdot
  \mathd u$ for $1 \leqslant \Delta \leqslant o ((\tmop{loglog} x)^{1 / 6})$.
  The lemma follows.
\end{proof}

\subsection{Proof of Theorem 2.2}

\begin{proof}[Proof of Theorem 2.2]
  By lemma 7.2, lemma 7.4 and lemma 7.5, theorem 2.2 holds in all cases except
  when $\Psi (f ; t)$ is lattice distributed on $\alpha \mathbbm{Z}$ with
  $\alpha \neq 1$. So, suppose that $\Psi (f ; t)$ is lattice distributed on
  $\alpha \mathbbm{Z}$ $(\alpha \neq 1)$. Then $\Psi (f / \alpha ; t)$ is
  lattice distributed on $\mathbbm{Z}$. Hence, by our earlier work, uniformly
  in $1 \leqslant \Delta \leqslant c \sigma (f / \alpha ; x) = (c / \alpha)
  \sigma (f ; x)$,
  \begin{eqnarray}
    \mathcal{D}_{f / \alpha} (x ; \Delta) & \sim & \frac{e^{- \gamma (
    \hat{\Psi} (f / \alpha ; v_{\alpha}) - 1)}}{\Gamma ( \hat{\Psi} (f /
    \alpha ; v_{\alpha}))} \cdot \mathbbm{P} \left( \sum_{p \leqslant x}
    \frac{f (p)}{\alpha} \left[ X_p - \frac{1}{p} \right] \geqslant \Delta
    \sigma (f / \alpha ; x) \right) 
  \end{eqnarray}
  where $v_{\alpha} \assign v_{f / \alpha} (x ; \Delta)$. Note that
  $\mathcal{D}_{f / \alpha} (x ; \Delta) =\mathcal{D}_f (x ; \Delta)$ and that
  similarly the probability term in $(7.6)$ is invariant under multiplication
  by $\alpha$. It remains to show that $\hat{\Psi} (f / \alpha ; v_{\alpha}) =
  \hat{\Psi} (f ; v)$ where $v \assign v_f (x ; \Delta)$ but this follows from
  lemma 5.3. 
\end{proof}

\section{Proof of proposition 2.4}

As usual, we denote by $f (n ; y)$ a truncated additive function
\[ f (n ; y) = \sum_{\tmscript{\begin{array}{c}
     p|n\\
     p \leqslant y
   \end{array}}} f (p) \]

The following lemma is due to Barban and Vinogradov (see {\cite{21}}, lemma 3.2, p. 122). It improves the error term obtained by Kubilius in his theorem.

\begin{lemma}
  Let $f$ be a strongly additive function. Let $u = \log x / \log y$. Then, uniformly in $t \in \mathbbm{R}$,
  \begin{equation}
    \frac{1}{x} \cdot \# \left\{ n \leqslant x : f (n ; y) \geqslant
    t^{^{^{}}} \right\} =\mathbbm{P} \left( \sum_{p \leqslant y} f (p) X_p
    \geqslant t \right) + O \left( u^{- u / 8} \right)
  \end{equation}
\end{lemma}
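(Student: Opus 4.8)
The plan is to exploit strong additivity. Since $f(p^k)=f(p)$, the value $f(n;y)=\sum_{p\mid n,\,p\le y}f(p)$ depends only on the largest divisor of $n$ all of whose prime factors are $\le y$. Writing $P(y)=\prod_{p\le y}p$, each $n$ factors uniquely as $n=ab$ with $P^{+}(a)\le y$ and $(b,P(y))=1$, and then $f(n;y)=f(\operatorname{rad}(a))$. Grouping the count over $n\le x$ by $a$ gives
\[
\frac1x\#\{n\le x:f(n;y)\ge t\}=\frac1x\sum_{\substack{a\le x\\ P^{+}(a)\le y}}\mathbbm 1\{f(\operatorname{rad}(a))\ge t\}\,\Phi\!\Big(\tfrac xa,y\Big),\qquad \Phi(X,y):=\#\{b\le X:(b,P(y))=1\}.
\]
Set $V(y)=\prod_{p\le y}(1-1/p)$. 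The identity that makes the model $\sum_{p\le y}f(p)X_{p}$ appear is this: since $\sum_{\operatorname{rad}(a)=d}a^{-1}=\prod_{p\mid d}(p-1)^{-1}$, one has $V(y)\sum_{\operatorname{rad}(a)=d}a^{-1}=\prod_{p\mid d}p^{-1}\prod_{p\le y,\ p\nmid d}(1-1/p)$, which is exactly $\mathbbm P(X_{p}=1\ \forall p\mid d,\ X_{p}=0\ \forall p\le y,\ p\nmid d)$; summing over squarefree $y$-smooth $d$ with $f(d)\ge t$ yields $V(y)\sum_{P^{+}(a)\le y}\mathbbm 1\{f(\operatorname{rad}(a))\ge t\}a^{-1}=\mathbbm P\big(\sum_{p\le y}f(p)X_{p}\ge t\big)$ \emph{exactly}.

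Next I would insert the fundamental lemma of sieve theory in its sharp (Rosser--Iwaniec) form, $\Phi(X,y)=XV(y)+R(X,y)$ with $|R(X,y)|\ll XV(y)\,v^{-v/2}$ for $v=\log X/\log y\ge 2$, and $\Phi(X,y)\le X$ always, and split the $a$-sum at $a=\sqrt x$. For $a\le\sqrt x$ one has $v_{a}:=\log(x/a)/\log y\ge u/2$; substituting $\Phi(x/a,y)=(x/a)V(y)+R(x/a,y)$ and using the identity above, the main part becomes $\mathbbm P\big(\sum_{p\le y}f(p)X_{p}\ge t\big)$ minus $V(y)\sum_{a>\sqrt x,\ P^{+}(a)\le y}\mathbbm 1\{\cdots\}a^{-1}$, with remainder $\frac1x\sum_{a\le\sqrt x}\mathbbm 1\{\cdots\}R(x/a,y)$, while the range $\sqrt x<a\le x$ is bounded trivially via $\Phi(x/a,y)\le x/a$ by $\sum_{a>\sqrt x,\ P^{+}(a)\le y}a^{-1}$. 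Thus the total error is $\ll V(y)\sum_{a>\sqrt x,\ P^{+}(a)\le y}a^{-1}+\frac1x\sum_{a\le\sqrt x,\ P^{+}(a)\le y}|R(x/a,y)|+\sum_{a>\sqrt x,\ P^{+}(a)\le y}a^{-1}$. The sieve-error term is $\ll V(y)(u/2)^{-u/4}\sum_{a\le\sqrt x,\ P^{+}(a)\le y}a^{-1}\ll(u/2)^{-u/4}$, using monotonicity of $v\mapsto v^{-v/2}$ and $V(y)\sum_{a\le x,\ P^{+}(a)\le y}a^{-1}\le1$. The remaining two terms are $\ll\sum_{a>\sqrt x,\ P^{+}(a)\le y}a^{-1}$; since such $a$ have $\Omega(a)>u/2$, I would bound this by Rankin's trick (optimising the Dirichlet parameter), or more robustly by partial summation against de Bruijn's bound $\Psi(t,y)\ll t\rho(\log t/\log y)$, getting $\ll\log y\cdot\rho(u/2)\ll\rho(u/2)$ after $V(y)\log y\ll1$. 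Both $(u/2)^{-u/4}$ and $\rho(u/2)=(u/2)^{-(u/2)(1+o(1))}$ are $\ll u^{-u/8}$ once $u$ is large; and when $u=O(1)$ the statement is trivial, both sides lying in $[0,1]$ while $u^{-u/8}\gg1$. Hence the error is $O(u^{-u/8})$ uniformly in $t$, as claimed.

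The main obstacle is getting the genuine $u^{-u/8}$ strength rather than a crude $e^{-cu}$: this is exactly what forces the \emph{sharp} fundamental lemma, whose error is of size $v^{-v(1+o(1))}$ and not merely $e^{-v}$, together with the de Bruijn/Rankin smooth-number estimates; with only the elementary $e^{-v}$-type sieve bound and Rankin taken at $\sigma=1-c/\log y$ one loses and is left with $e^{-cu}\gg u^{-u/8}$. A secondary point needing care is that $\Phi(X,y)$ is badly approximated by $XV(y)$ once $X$ is close to or below $y$ (i.e. $a$ close to $x$); the split at $a=\sqrt x$ is designed precisely to quarantine that range, where the trivial bound $\Phi(x/a,y)\le x/a$ is used instead. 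The argument above is essentially that of Barban and Vinogradov; in all the uses of this lemma in the present paper one has $u\asymp\log\log x$ and $y$ a large power of $\log x$, so the smooth-number inputs apply comfortably.
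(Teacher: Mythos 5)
You should note first that the paper offers no proof of this lemma: it is quoted directly from Elliott's book as the Barban--Vinogradov sharpening of Kubilius's fundamental lemma, so there is no internal argument to compare against. Your route --- the exact product identity $V(y)\sum_{P^{+}(a)\le y}\mathbbm{1}\{f(\mathrm{rad}(a))\ge t\}\,a^{-1}=\mathbbm{P}\bigl(\sum_{p\le y}f(p)X_p\ge t\bigr)$, the splitting of $n$ into smooth and rough parts, and the strong ($v^{-v(1+o(1))}$) fundamental lemma applied to $\Phi(x/a,y)$ --- is the standard proof behind the citation, and the identity, the split at $a=\sqrt x$, and the main-term computation are all correct. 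You are also right that the $e^{-cu}$ form of the sieve would not suffice.

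Two steps in the error analysis are wrong as written, and both bite precisely where the printed statement is delicate. First, the fundamental lemma is not of the form $\Phi(X,y)=XV(y)\{1+O(v^{-v/2})\}$: there is always an extra remainder of size $O(D)$ from the level of the sieve (with exact inclusion--exclusion, $O(2^{\pi(y)})$). Already $\Phi(X,2)=X/2+O(1)$, and $O(1)$ is not $O(XV(2)v^{-v/2})$; as a consequence the displayed statement is itself false for bounded $y$ (take $f(2)=1$, $0<t\le 1$, $x$ odd: the two sides differ by $1/(2x)$, which dwarfs $u^{-u/8}$ once $u$ is large). Carrying the level-$D=(x/a)^{1/2}$ remainder through your sum over $a\le\sqrt x$ contributes an extra $O(x^{-1/4})$; this is exactly the power-saving term (of the shape $O(x^{-1/15})$ in Elliott's Lemma 3.2) that the paper's quotation silently drops. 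So what your method proves is the statement with such a term added, or the statement as printed under a mild lower bound on $y$ --- amply satisfied in this paper, where $u\asymp\log\log x$. Second, on the range $\sqrt x<a\le x$ the trivial bound $\Phi(x/a,y)\le x/a$ costs a factor $V(y)^{-1}\asymp\log y$: that term is $\asymp\log y\cdot\rho(u/2)$, and the cancellation ``$V(y)\log y\ll1$'' you invoke is only available in your term carrying the $V(y)$ (the tail of the exact identity), not here. Since $u$ may tend to infinity arbitrarily slowly while $\log y$ is nearly $\log x$, the bound $\log y\cdot\rho(u/2)\ll u^{-u/8}$ fails when $u\log u\lesssim\log\log x$. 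The repair is to use an upper-bound sieve estimate $\Phi(X,y)\ll XV(y)+1$ on that range, which restores the factor $V(y)$ and gives $O(\rho(u/2))$ as intended. With these two corrections your argument is a complete proof of the lemma in the form actually used in this paper.
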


We will use theorem 2.2 and theorem 2.8 to show that when $f \in \mathcal{C}$
and $u \asymp \log\log x$, $t \asymp \mu(f;x)$ the main term on the right 
hand side of $(8.1)$ is dominating.

\begin{proof}[Proof of Proposition 2.4]
  In lemma $8.1$ take $f \in \mathcal{C}$ and in $(8.1)$ choose $t \assign
  \xi_f (y ; \Delta)$ and $u \asymp \tmop{loglog} x$. In the range $1
  \leqslant \Delta \leqslant c \sigma (f ; y)$ we have,
  \begin{equation}
    \mathbbm{P} \left( \sum_{p \leqslant y} f (p) \left[ X_p - \frac{1}{p}
    \right] \geqslant \Delta \sigma (f ; y) \right) \geqslant \mathbbm{P}
    \left( \sum_{p \leqslant y} f (p) \left[ X_p - \frac{1}{p} \right]
    \geqslant c \sigma^2 (f ; y) \right)
  \end{equation}
  Let $w \assign v_f (x ; c \sigma (f ; y))$. First of all note that $w =
  \omega (f ; c) + o (1)$ by definition of $\omega (f ; z)$ and its
  analyticity. Therefore $\mathcal{P}_{\mathfrak{h}} (a ; w) \gg 1$ uniformly
  in $a \geqslant 0$ by lemma 4.25 and the remark right after
  the statement.  Also $L (f ; w) e^{- wc (f)}
  \neq 0$ because $L (f ; x) e^{- xc (f)}$ is never zero on the positive real
  line. It follows by theorem 2.2 and theorem 2.8 that the right hand side of
  $(8.2)$ is
  \[ \gg (\log x)^{\hat{\Psi} (f ; w) - 1 - w \hat{\Psi}' (f ; w)} \cdot
     (\tmop{loglog} x)^{- 1/2} \]
  Hence $(8.2)$ is dominating over the error term in $(8.1)$ when $t \assign
  \xi_f (y ; \Delta), u \asymp \tmop{loglog} x$ and $\Delta$ is allowed to
  vary throughout $1 \leqslant \Delta \leqslant c \sigma (f ; y)$. It follows
  that,
  \[ \frac{1}{x} \cdot \# \left\{ n \leqslant x : \frac{f (n ; y) - \mu (f ;
     y)}{\sigma (f ; y)} \geqslant \Delta \right\} \sim \mathbbm{P} \left(
     \sum_{p \leqslant y} f (p) \left[ X_p - \frac{1}{p} \right] \geqslant
     \Delta \sigma (f ; y) \right) \]
  uniformly in $1 \leqslant \Delta \leqslant c \sigma (f ; y)$ as desired. 
\end{proof}

\section{Integers to primes}

The goal of this section is to prove theorem 2.6. Throughout we will work with
\[ B^2 (f ; x) = \sum_{p \leqslant x} \frac{f (p)^2}{p} \]
rather than with $\sigma^2 (f ; x)$. Of course $B^2 (f ; x) = \sigma^2 (f ; x)
+ O (1)$ so there is little difference between the two. Let us also define
$\mathcal{D}^{\times}_f (x ; \Delta)$ by
\[ \mathcal{D}_f^{\times} (x ; \Delta) \assign \frac{1}{x} \cdot \# \left\{ n
   \leqslant x : \frac{f (n) - \mu (f ; x)}{B (f ; x)} \geqslant \Delta
   \right\} \]
This is simply $\mathcal{D}_f (x ; \Delta)$ with a different normalization.

\subsection{Large deviations for $\mathcal{D}^{\times}_f (x ; \Delta)$ and
$\mathbbm{P}(\mathcal{Z}_{\Psi} (x) \geqslant t)$}

We will usually need to ``adjust'' some of the results taken from the
literature. Our main tool will be Lagrange inversion.

\begin{lemma}
  Let $C > 0$ be given. Let $f (z)$ be analytic in $|z| \leqslant C$. Suppose
  that $f' (z) \neq 0$ for all $|z| \leqslant C$ and that in $|z| \leqslant C$
  the function $f (z)$ vanishes only at the point $z = 0$. Then, the function
  $g$ defined implicitly by $f (g (z)) = z$ is analytic in a neighborhood of
  0 and its n-th coefficient $a_n$ in the Taylor expansion
  about 0 is given by
  \[ a_n = \frac{1}{2 \pi i} \oint_{\gamma} \frac{\zeta f' (\zeta)}{f
     (\zeta)^{n + 1}} \mathd \zeta \]
  where $\gamma$ is a circle about $0$, contained in $| \zeta | \leqslant C$.
  The function $g (z)$ is given by
  \begin{eqnarray*}
    g (z) & = & \frac{1}{2 \pi i} \oint_{\gamma} \frac{\zeta f' (\zeta)}{f
    (\zeta) - z} \cdot \mathd \zeta
  \end{eqnarray*}
  and again $\gamma$ is a circle about $0$, contained in $| \zeta | \leqslant
  C$. 
\end{lemma}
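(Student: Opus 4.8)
The statement is the classical Lagrange inversion formula, packaged with contour-integral expressions for the coefficients and for the inverse function itself. The plan is to first establish analyticity of $g$ near $0$, then derive the two integral representations directly from Cauchy's theorem, using the hypotheses on $f$ to justify that all the contours stay inside the region of analyticity.

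\medskip

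\noindent\textbf{Step 1: Analyticity of the inverse.} Since $f'(0)\neq 0$ and $f(0)=0$, the holomorphic inverse function theorem gives an analytic $g$ in a neighborhood of $0$ with $g(0)=0$ and $f(g(z))=z$; this is where one uses that $f$ is one-to-one near $0$, which is guaranteed by $f'\neq 0$ together with the stated fact that $f$ vanishes only at $0$ in $|z|\le C$ (so $f$ has no other preimages of $0$ to worry about, and in fact by the argument principle $f$ is a bijection from a suitable neighborhood of $0$ onto a neighborhood of $0$).

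\medskip

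\noindent\textbf{Step 2: The integral for $g(z)$.} Fix a circle $\gamma$ about $0$ inside $|\zeta|\le C$. For $z$ small enough that $f(\zeta)-z$ has exactly one zero inside $\gamma$ (namely $\zeta=g(z)$, a simple zero because $f'(g(z))\neq 0$), apply the residue theorem to $\dfrac{\zeta f'(\zeta)}{f(\zeta)-z}$. The only pole inside $\gamma$ is at $\zeta=g(z)$, with residue $\dfrac{g(z) f'(g(z))}{f'(g(z))} = g(z)$. Hence
\[
\frac{1}{2\pi i}\oint_\gamma \frac{\zeta f'(\zeta)}{f(\zeta)-z}\,\mathd\zeta = g(z).
\]
To see $z$ may be taken in a full neighborhood of $0$: on the compact set $\gamma$ the continuous function $|f(\zeta)|$ attains a positive minimum $m>0$ (positive since $f$ does not vanish on $\gamma$), so for $|z|<m$ the integrand has no pole on $\gamma$, the integral is analytic in $z$, and Rouch\'e guarantees exactly one zero of $f(\zeta)-z$ inside.

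\medskip

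\noindent\textbf{Step 3: The coefficient formula.} Expand $\dfrac{1}{f(\zeta)-z} = \sum_{n\ge 0} \dfrac{z^n}{f(\zeta)^{n+1}}$, valid uniformly for $\zeta\in\gamma$ and $|z|<m$ as above. Interchanging sum and integral (justified by uniform convergence on $\gamma$),
\[
g(z) = \sum_{n\ge 0} \left(\frac{1}{2\pi i}\oint_\gamma \frac{\zeta f'(\zeta)}{f(\zeta)^{n+1}}\,\mathd\zeta\right) z^n,
\]
so $a_n$ equals the displayed contour integral. One should also note the $n=0$ term vanishes (it is $\frac{1}{2\pi i}\oint \zeta f'(\zeta)/f(\zeta)\,\mathd\zeta$, whose only singularity inside $\gamma$ is the simple pole at $0$ with residue $0$), consistent with $g(0)=0$.

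\medskip

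\noindent There is no real obstacle here; the one point requiring a little care is the uniform choice of the radius of validity in $z$ and the justification that $\gamma$ can be any circle about $0$ inside $|\zeta|\le C$ (the integrals are independent of $\gamma$ by Cauchy's theorem, since $\zeta f'(\zeta)/f(\zeta)^{n+1}$ is holomorphic in the punctured disk and the residue computation is the same for every such $\gamma$). I would present Steps 2 and 3 in that order since Step 3 is just Step 2 with the geometric-series expansion inserted.
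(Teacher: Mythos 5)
Your proof is correct and complete. It is the standard residue-theoretic derivation of the Lagrange inversion formula: establish the local inverse $g$ via the holomorphic inverse function theorem, compute $g(z)$ as the unique residue of $\zeta f'(\zeta)/(f(\zeta)-z)$ inside $\gamma$ (using Rouch\'e to guarantee exactly one simple zero of $f(\zeta)-z$ there for $|z|$ below the minimum of $|f|$ on $\gamma$), and then expand $1/(f(\zeta)-z)$ as a geometric series in $z/f(\zeta)$ to extract the coefficient integrals. The paper itself gives no proof of this lemma — it is stated as a known fact ("Our main tool will be Lagrange inversion") — so there is no argument in the paper to compare against; your write-up simply supplies the classical proof the paper takes for granted, and it does so correctly, including the observation that the $n=0$ coefficient vanishes (the integrand $\zeta f'(\zeta)/f(\zeta)$ in fact has a removable singularity at $\zeta=0$, not a simple pole, since $f$ has a simple zero there; either way the integral is $0$).
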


The desired asymptotic for $\mathcal{D}_f^{\times} (x ; \Delta)$ is contained
in Maciulis's paper ({\cite{14}}, lemma 1A).

\begin{lemma}
  Let $f$ be an additive function. Suppose that $0 \leqslant f (p) \leqslant O
  (1)$ and that $B (f ; x) \longrightarrow \infty$ (or equivalently $\sigma (f
  ; x) \rightarrow \infty$). Let $B^2 = B^2 (f ; x)$. Uniformly in the range
  $1 \leqslant \Delta \leqslant o (\sigma (f ; x))$ we have
  \[ \mathcal{D}_f^{\times} (x ; \Delta) \sim \exp \left( - \frac{\Delta^3}{B}
     \sum_{k = 0}^{\infty} \frac{\lambda_f (x ; k + 2)}{k + 3} \cdot \left(
     \Delta / B \right)^k \right) \int_{\Delta}^{\infty} e^{- u^2 / 2} \cdot
     \frac{\mathd u}{\sqrt[]{2 \pi}} \]
  where the coefficients $\lambda_f (x ; k)$ are defined recursively by
  $\lambda_f (x ; 0) = 0, \lambda_f (x ; 1) = 1$ and
  \[ \lambda_f (x ; j) = - \sum_{i = 2}^j \frac{1}{i!} \cdot \left(
     \frac{1}{B^2 (f ; x)} \sum_{p \leqslant x} \frac{f (p)^{i + 1}}{p}
     \right) \sum_{k_1 + \ldots + k_i = j} \lambda_f (x ; k_1) \cdot \ldots
     \cdot \lambda_f (x ; k_i) \]
  Further there is a constant $C = C (f)$ such that $| \lambda_f (x ; k) |
  \leqslant C^k$ for all $k, x \geqslant 1$.
\end{lemma}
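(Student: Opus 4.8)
The asymptotic expansion for $\mathcal{D}_f^{\times}(x;\Delta)$ is, after a change of normalisation, exactly Lemma 1A of \cite{14}. The passage from $\sigma(f;x)$ to $B(f;x)$ costs nothing: $B^2(f;x)=\sigma^2(f;x)+O(1)$ (the difference is $\sum_{p\leqslant x}f(p)^2/p^2=O(1)$ since $f$ is bounded on the primes), so $B=\sigma\cdot(1+o(1))$, the two ranges $1\leqslant\Delta\leqslant o(\sigma)$ coincide, and $e^{-u^2/2}$ tails of order $e^{-\Delta^2/2}/\Delta$ are insensitive to the substitution. The substantive point is to identify the Cram\'er-type series appearing in \cite{14} with $(\Delta^3/B)\sum_{k\geqslant 0}\lambda_f(x;k+2)(\Delta/B)^k/(k+3)$; this is where the preceding Lagrange-inversion lemma enters. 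Write $\beta_i=\beta_i(x):=B(f;x)^{-2}\sum_{p\leqslant x}f(p)^{i+1}/p$, so that $\beta_i\geqslant 0$ and $\beta_1=1$ by the very definition of $B^2$, and introduce
\[
F(w)\;=\;\sum_{i\geqslant 1}\frac{\beta_i}{i!}\,w^i\;=\;\frac{1}{B^2(f;x)}\sum_{p\leqslant x}\frac{f(p)}{p}\bigl(e^{wf(p)}-1\bigr),
\]
which, up to an $O(1/B^2)$ term, is $B^{-2}$ times the increment $K'(w)-K'(0)$ of the cumulant generating function $K(w)=\sum_{p\leqslant x}\log(1-1/p+e^{wf(p)}/p)$ of the Kubilius-model sum $\sum_{p\leqslant x}f(p)X_p$. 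I would first note that the generating function $\Lambda_x(z):=\sum_{k\geqslant 1}\lambda_f(x;k)z^k$ is the compositional inverse of $F$: the stated recursion (together with $\lambda_f(x;0)=0$, $\lambda_f(x;1)=1$) is precisely the coefficient-by-coefficient form of the identity $F(\Lambda_x(z))=z$, i.e.\ $\Lambda_x(z)=z-\sum_{i\geqslant 2}\beta_i\Lambda_x(z)^i/i!$. Matching this against the classical saddle-point/Esseen computation that underlies \cite{14}, Lemma 1A — in which the saddle point, as a function of $\Delta/B$, is exactly $F^{-1}(\Delta/B)$, and the rate function is obtained by integrating it — yields the claimed form of the Cram\'er series.

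For the quantitative bound I would use the explicit contour formula of the preceding (Lagrange-inversion) lemma applied to $F$. Since $0\leqslant f(p)\leqslant M$ for a constant $M=M(f)\geqslant 1$, one has $f(p)^{i+1}\leqslant M^{i-1}f(p)^2$, hence $0\leqslant\beta_i\leqslant M^{i-1}$ for \emph{all} $x$. Therefore $F$ is analytic in $|w|<1/M$, with $F(0)=0$, $F'(0)=1$; and if $\rho=\rho(M)>0$ is chosen small enough that $e^{M\rho}-1\leqslant\tfrac12$ (and $\rho\leqslant\tfrac13$), then on $|w|\leqslant\rho$ we have $|F'(w)-1|\leqslant\tfrac12$ and $|F(w)/w-1|\leqslant\tfrac12$, so $F'$ never vanishes there and $F(w)=0$ only at $w=0$. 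Thus the hypotheses of the preceding lemma hold with its ``$C$'' equal to $\rho$, and
\[
\lambda_f(x;k)\;=\;\frac{1}{2\pi i}\oint_{|\zeta|=\rho}\frac{\zeta\,F'(\zeta)}{F(\zeta)^{k+1}}\,\mathd\zeta,
\]
so that $|\lambda_f(x;k)|\leqslant\rho\cdot\tfrac32\rho\big/(\rho/2)^{k+1}=3\rho\,(2/\rho)^{k}\leqslant C(f)^k$ with $C(f):=\max(2/\rho,1)$. Since $\rho$ and $M$ depend only on $f$, this bound is uniform in $x$, as required.

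The main obstacle in this argument is not the coefficient bound (routine once the generating-function picture $\Lambda_x=F^{-1}$ is in place) but the bookkeeping needed to reconcile the precise statement in \cite{14} with the present formulation — in particular, checking that the Esseen expansion there really produces the integrated inverse series $\sum_k\lambda_f(x;k+2)(\Delta/B)^k/(k+3)$ rather than some cosmetically different rearrangement, and that Maciulis's uniformity range matches $1\leqslant\Delta\leqslant o(\sigma(f;x))$. I expect this to be purely a matter of careful normalisation; the remaining content is entirely contained in the preceding lemma and the elementary estimate $\beta_i\leqslant M^{i-1}$.
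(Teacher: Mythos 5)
Your argument follows essentially the same route as the paper: cite Maciulis's Lemma 1A for the asymptotic, observe that the generating function $\sum_{k\geqslant 1}\lambda_f(x;k)z^k$ is the compositional inverse of $\mathcal{F}_f(x;z)=B^{-2}\sum_{p\leqslant x}f(p)(e^{zf(p)}-1)/p$, and then use the preceding Lagrange-inversion lemma together with the uniform bound $0\leqslant f(p)\leqslant M$ to control the coefficient contour integral on a fixed small circle. The only (cosmetic) difference is that you verify the non-vanishing hypotheses for Lemma 9.1 a bit more explicitly and give concrete numerical constants, whereas the paper is content with $\gg_M$/$\ll_M$ estimates.
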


\begin{proof}
  Except the bound $| \lambda_f (x ; k) | \leqslant C^k$, the totality of the
  lemma is contained in Maciulis's paper ({\cite{14}}, lemma 1A). Let us prove
  that $| \lambda_f
  (x ; k) | \leqslant C^k$ for a suitable positive constant $C > 0$. To do so,
  we consider the power series
  \begin{eqnarray*}
    \mathcal{G}_f \left( x ; z \right) & = & \sum_{j \geqslant 2} \lambda_f
    \left( x ; j \right) \cdot z^j + z
  \end{eqnarray*}
  Let us look in more detail at the sum over $j \geqslant 2$. By making use of
  the recurrence relation for $\lambda_f (x ; j)$ we see that the sum in
  question equals to
  \begin{eqnarray*}
    & = & - \sum_{j \geqslant 2} \sum_{i = 2}^j \frac{1}{i!} \cdot \left(
    \frac{1}{B^2 \left( f ; x \right)} \sum_{p \leqslant x} \frac{f (p)^{i +
    1}}{p} \right) \sum_{k_1 + \ldots + k_i = j} \lambda_f \left( x ; k_1
    \right) \cdot \ldots \cdot \lambda_f \left( x ; k_i \right) \cdot z^j\\
    & = & - \sum_{i \geqslant 2} \frac{1}{i!} \cdot \left( \frac{1}{B^2
    \left( f ; x \right)} \sum_{p \leqslant x} \frac{f (p)^{i + 1}}{p} \right)
    \cdot \left( \sum_{k \geqslant 0} \lambda_f \left( x ; k \right) z^k
    \right)^i\\
    & = & - \frac{1}{B^2 (f ; x)} \sum_{p \leqslant x} \frac{f (p)}{p}
    \sum_{i \geqslant 2} \frac{1}{i!} \cdot f (p)^i \cdot \mathcal{G}_f \left(
    x ; z \right)^i\\
    & = & - \frac{1}{B^2 \left( f ; x \right)} \sum_{p \leqslant x} \frac{f
    (p)}{p} \cdot \left( e^{f (p)\mathcal{G}_f \left( x ; z) \right.} - f
    (p)\mathcal{G}_f (x ; z) - 1 \right)_{}
  \end{eqnarray*}
  The above calculation reveals that $\mathcal{F}_f (x ; \mathcal{G}_f (x ;
  z)) = z$ where $\mathcal{F}_f (x ; z)$ is defined by
  \begin{eqnarray*}
    \mathcal{F}_f (x ; z) & = & z + \frac{1}{B^2 (f ; x)} \sum_{p \leqslant x}
    \frac{f (p)}{p} \cdot \left( e^{f (p) z} - f (p) z - 1 \right)\\
    & = & \frac{1}{B^2 (f ; x)} \sum_{p \leqslant x} \frac{f (p)}{p} \cdot
    \left( e^{f (p) z} - 1 \right)
  \end{eqnarray*}
  Since all the $f (p)$ are bounded by some $M \geqslant 0$, we have
  $\mathcal{F}_f (x ; z) = z + O \left( Mz^2 \right)$ when $z$ is in a
  neighborhood of $0$, furthermore the implicit constant in the big $O$,
  depends only on $M$. Therefore $\mathcal{F}_f (x ; z) \gg 1$ for $z$ in the
  annulus $B / 2 \leqslant |z| \leqslant B$, where $B$ is a sufficiently small
  constant, depending only on $M$. Let us also note the derivative
  \[ \frac{\mathd}{\mathd z} \cdot \mathcal{F}_f (x ; z) = \frac{1}{B^2 (f ;
     x)} \sum_{p \leqslant x} \frac{f (p)^2}{p} \cdot e^{f (p) z} \]
  doesn't vanish and is bounded uniformly in $|z| \leqslant B$ for $B$
  sufficiently small, depending only on $M$. Hence by Lagrange inversion the
  function $\mathcal{G}_f (x ; z)$ is for each $x \geqslant 1$ analytic in the
  neighborhood $|z| \leqslant B$ of $0$, and in addition, its coefficients
  $\lambda_f (x ; k)$ are given by
  \begin{eqnarray*}
    \lambda_f (x ; k) & = & \frac{1}{2 \pi i} \oint_{| \zeta | = B / 2}
    \frac{\zeta \cdot (\mathd / \mathd \zeta)\mathcal{F}_f (x ;
    \zeta)}{\mathcal{F}_f (x ; \zeta)^{k + 1}} \mathd \zeta
  \end{eqnarray*}
  However we know that $\mathcal{F}_f (x ; \zeta) \gg 1$ and that $(d / d
  \zeta)\mathcal{G}_f (x ; \zeta) \ll 1$ on the boundary $| \zeta | = B / 2$
  with the implicit constant depending only on $M$. Therefore, the integral is
  bounded by $C^k$, for some $C > 0$ depending only on $M$. Hence $| \lambda_f
  (x ; k) | \leqslant C^k$.
\end{proof}

From Hwang's paper {\cite{11}} -- itself heavily based on the same methods as
used by Maciulis {\cite{14}} -- we obtain the next lemma. Since Hwang's lemma
is not exactly what is stated below, we include the deduction.

\begin{lemma}
  Let $\Psi$ be a distribution function. Suppose that there is an $\alpha > 0$
  such that $\Psi (\alpha) - \Psi (0) = 1$. Let $B^2 = B^2 (x) \rightarrow
  \infty$ be some function tending to infinity. Uniformly in the range $1
  \leqslant \Delta \leqslant o (B^{})$ we have
  \[ \mathbbm{P} \left( \mathcal{Z}_{\Psi} \left( B^2 \right) \geqslant \Delta
     B \right) \sim \exp \left( - \frac{\Delta^3}{B} \sum_{k = 0}^{\infty}
     \frac{\Lambda (\Psi ; k + 2)}{k + 3} \cdot \left( \Delta / B \right)^k
     \right) \int_{\Delta}^{\infty} e^{- u^2 / 2} \cdot \frac{\mathd
     u}{\sqrt[]{2 \pi}} \]
  the coefficients $\Lambda (\Psi ; k)$ satisfy $\Lambda (\Psi ; 0) = 0,
  \Lambda (\Psi ; 1) = 1$ and the recurrence relation
  \[ \Lambda (\Psi ; j) = - \sum_{2 \leqslant \ell \leqslant j} \frac{1}{\ell
     !} \int_{\mathbbm{R}} t^{\ell - 1} \mathd \Psi (f ; t) \sum_{k_1 + \ldots
     + k_{\ell} = j} \Lambda (\Psi ; k_1) \cdot \ldots \cdot \Lambda (\Psi ;
     k_{\ell}) \]
  Furthermore there is a constant $C = C (\Psi) > 0$ such that $| \Lambda
  (\Psi ; k) | \leqslant C^k$ for $k \geqslant 1$.
\end{lemma}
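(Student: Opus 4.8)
The plan is to deduce this from Hwang's large deviation result in exactly the same way Lemma~9.3 was deduced from Maciulis's paper, with the L\'evy process $\mathcal{Z}_{\Psi}$ playing the role of the additive function $f$. First I would recall that by definition $(2.5)$, the L\'evy process satisfies
\[
  \mathbbm{E}\left[e^{s\mathcal{Z}_{\Psi}(u)}\right]
  = \exp\left(u\int_{\mathbbm{R}}\frac{e^{sx}-sx-1}{x^2}\,d\Psi(x)\right),
\]
which is finite for all real $s$ because $\Psi$ has compact support on $\mathbb{R}_{\geqslant 0}$ (here the hypothesis $\Psi(\alpha)-\Psi(0)=1$ is used: it forces $\Psi$ to be supported in $[0,\alpha]$). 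Setting $u=B^2$, the cumulant generating function is $B^2\cdot\Lambda_{\Psi}(s)$ where $\Lambda_{\Psi}(s)\assign\int_{\mathbbm{R}}(e^{sx}-sx-1)x^{-2}\,d\Psi(x)$; note $\Lambda_{\Psi}(0)=\Lambda_{\Psi}'(0)=0$ and $\Lambda_{\Psi}''(0)=\int_{\mathbbm{R}}d\Psi=1$, so $\mathcal{Z}_{\Psi}(B^2)$ has mean $0$ and variance $B^2$. This is precisely the setup to which Hwang's Cram\'er-type large deviation theorem applies, yielding
\[
  \mathbbm{P}\left(\mathcal{Z}_{\Psi}(B^2)\geqslant\Delta B\right)
  \sim \exp\left(-\frac{\Delta^3}{B}\sum_{k=0}^{\infty}\frac{\Lambda(\Psi;k+2)}{k+3}\cdot\left(\Delta/B\right)^k\right)\int_{\Delta}^{\infty}e^{-u^2/2}\cdot\frac{\mathd u}{\sqrt{2\pi}}
\]
uniformly in $1\leqslant\Delta\leqslant o(B)$, where the $\Lambda(\Psi;k)$ are the Taylor coefficients of the inverse function of the derivative of the cumulant generating function, i.e. of $\mathcal{F}_{\Psi}(z)\assign\Lambda_{\Psi}'(z)=\int_{\mathbbm{R}}(e^{zt}-1)t^{-1}\,d\Psi(t)$.

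Next I would verify that this abstract $\Lambda(\Psi;k)$ obeys the claimed recurrence. This is a direct analogue of the computation in the proof of Lemma~9.3: one forms the generating series $\mathcal{G}_{\Psi}(z)=z+\sum_{j\geqslant 2}\Lambda(\Psi;j)z^j$, writes down $\mathcal{F}_{\Psi}(\mathcal{G}_{\Psi}(z))=z$, and expands $e^{t\mathcal{G}_{\Psi}(z)}-1$ in the definition of $\mathcal{F}_{\Psi}$, matching coefficients of $z^j$. The only arithmetic difference with the proof of Lemma~9.3 is that the averages $B^{-2}(f;x)\sum_{p\leqslant x}f(p)^{i+1}/p$ get replaced by the moments $\int_{\mathbbm{R}}t^{\ell-1}\,d\Psi(t)$; the combinatorics of the partition sums $k_1+\cdots+k_{\ell}=j$ is identical. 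This gives $\Lambda(\Psi;0)=0$, $\Lambda(\Psi;1)=1$, and the stated recurrence.

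Finally I would prove the bound $|\Lambda(\Psi;k)|\leqslant C^k$. Since $\Psi$ is supported in $[0,\alpha]$, all the $t$ appearing in $\mathcal{F}_{\Psi}$ are bounded by $\alpha$, so $\mathcal{F}_{\Psi}(z)=z+O(\alpha z^2)$ near $0$ with the implicit constant depending only on $\alpha$; hence $\mathcal{F}_{\Psi}(z)\gg 1$ on an annulus $B_0/2\leqslant|z|\leqslant B_0$ for a small constant $B_0=B_0(\alpha)$, and $\mathcal{F}_{\Psi}'(z)=\int_{\mathbbm{R}}e^{zt}\,d\Psi(t)$ is nonvanishing and bounded for $|z|\leqslant B_0$. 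Lagrange inversion (Lemma~9.1) then gives $\Lambda(\Psi;k)=\frac{1}{2\pi i}\oint_{|\zeta|=B_0/2}\zeta\,\mathcal{F}_{\Psi}'(\zeta)\,\mathcal{F}_{\Psi}(\zeta)^{-k-1}\,\mathnormal{d}\zeta$, and estimating trivially on the circle $|\zeta|=B_0/2$ yields $|\Lambda(\Psi;k)|\leqslant C^k$ for some $C=C(\Psi)>0$.

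The main obstacle I anticipate is a bookkeeping one rather than a conceptual one: making sure that Hwang's theorem (whose hypotheses are stated in {\cite{11}} in terms of a sequence of random variables with a suitable moment generating function expansion) applies cleanly to the L\'evy process $\mathcal{Z}_{\Psi}(B^2)$ with $B^2\to\infty$ playing the role of the ``$n$'' parameter, and that the coefficients produced by Hwang's formalism genuinely coincide with the $\Lambda(\Psi;k)$ defined by the above recurrence. This is the same matching that occurs implicitly in Lemma~9.3, so I would simply mirror that argument; the compact support of $\Psi$ is exactly what guarantees the entire moment generating function needed to invoke Hwang's result.
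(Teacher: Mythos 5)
Your plan matches the paper's actual proof in all essential respects: invoke Hwang's theorem for the asymptotic with the Lévy cumulant generating function $u(z)=\int(e^{zt}-zt-1)t^{-2}\,d\Psi(t)$, identify the coefficients as those of $(u')^{-1}$ via Lagrange inversion (this is the paper's equation $(9.1)$), obtain the recurrence by composing with $u'=\mathcal{F}_\Psi$ and matching powers, and bound $|\Lambda(\Psi;k)|$ by a contour estimate on a small circle where $u'(z)\asymp z$ does not vanish, with the compact support of $\Psi$ guaranteeing analyticity of $u$. The one step you leave compressed — passing from Hwang's Cauchy-integral form of the coefficients to the Taylor coefficients of $(u')^{-1}$ — is exactly the summation/differentiation/Lagrange-inversion manipulation the paper carries out explicitly, and you correctly flag it as the bookkeeping to mirror, so the proposal is sound.
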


\begin{proof}
  Let $u (z) = u (\Psi ; z) = \int_{\mathbbm{R}} (e^{zt} - zt - 1) \cdot t^{-
  2} \mathd \Psi (t)$. Note that $u (z)$ is entire because $\Psi (t)$ is
  supported on a compact interval. By Hwang's theorem 1 (see {\cite{11}})
  \[ \mathbbm{P} \left( \mathcal{Z}_{\Psi} \left( B^2 \right) \geqslant \Delta
     B \right) \sim \exp \left( - B^2 \sum_{k \geqslant 0} \frac{\Lambda (\Psi
     ; k + 2)}{k + 3} \cdot (\Delta / B)^k \right) \int_{\Delta}^{\infty} e^{-
     u^2 / 2} \cdot \frac{\mathd u}{\sqrt[]{2 \pi}} \]
  uniformly in $1 \leqslant \Delta \leqslant o (B (x))$ with the coefficients
  $\Lambda (\Psi ; k)$ given by $\Lambda (\Psi ; 0) = 0$, $\Lambda (\Psi ; 1)
  = 1$ and for $k \geqslant 0$,
  \begin{eqnarray*}
    \frac{\Lambda (\Psi ; k + 2)}{k + 3} & = & - \frac{1}{k + 3} \cdot
    \frac{1}{2 \pi i} \oint_{\gamma} u'' (z) \cdot \left( \frac{u' (z)}{z}
    \right)^{- k - 3} \cdot \frac{\mathd z}{z^{k + 2}}\\
    & = & - \frac{1}{k + 3} \oint_{\gamma} \frac{zu'' (z)}{u' (z)^{k + 3}}
    \cdot \frac{\mathd z}{2 \pi i}
  \end{eqnarray*}
  

  (we set $m = k+3$, $k \geqslant 0$, $q_m = \Lambda(\Psi;k+2)/(k+3)$ in
  equation $(7)$ of {\cite{11}} and rewrite equation $(8)$ in {\cite{11}}
  in terms of Cauchy's formula).  
  Here $\gamma$ is a small circle around the origin. First let us show that
  the coefficients $\Lambda (\Psi ; k + 2)$ are bounded by $C^k$ for a
  sufficiently large (but fixed) $C > 0$. Around $z = 0$ we have $u' (z) =
  \int_{\mathbbm{R}} (e^{zt} - 1) \cdot t^{- 1} \mathd \Psi (t) = z + O
  (z^2)$. Therefore if we choose the circle $\gamma$ to have sufficiently
  small radius then $u' (z) \gg 1$ for $z$ on $\gamma$. Hence looking at the
  previous equation, the Cauchy integral defining $\Lambda (\Psi ; k + 2) /
  k + 3$ is bounded in modulus by $\ll C^{k + 3}$ for some constant $C > 0$.
  The bound $| \Lambda (\Psi ; k) | \ll C^k$ ensues (perhaps with a larger $C$
  than earlier). Our goal now is to show that $\Lambda (\Psi ; k)$ satisfies
  the recurrence relation given in the statement of the lemma. Multiplying by
  $\xi^{k + 3}$ and summing over $k \geqslant 0$ we obtain
  \begin{eqnarray*}
    &  & \sum_{k \geqslant 0} \frac{\Lambda (\Psi ; k + 2)}{k + 3} \cdot
    \xi^{k + 3} \text{ } = \text{ } - \sum_{k \geqslant 0} \frac{\xi^{k +
    3}}{k + 3} \oint_{\gamma} \frac{zu'' (z)}{u' (z)^{k + 3}} \cdot
    \frac{\mathd z}{2 \pi i}\\
    & = & - \oint_{\gamma} zu'' (z) \sum_{k \geqslant 0} \frac{1}{k + 3}
    \cdot \left( \frac{\xi}{u' (z)} \right)^{k + 3} \cdot \frac{\mathd z}{2
    \pi i}\\
    & = & \oint_{\gamma} zu'' (z) \cdot \left( - \frac{\xi}{u' (z)} -
    \frac{1}{2} \cdot \frac{\xi^2}{u' (z)^2} - \log \left( 1 - \frac{\xi}{u'
    (z)} \right) \right) \cdot \frac{\mathd z}{2 \pi i}
  \end{eqnarray*}
  Differentiating with respect to $\xi$ on both sides yields
  \begin{eqnarray*}
    \mathcal{G}_{\Psi} (\xi) & = & \sum_{k \geqslant 0} \Lambda (\Psi ; k + 2)
    \xi^{k + 2} \text{ } = \text{ } \oint \left( \frac{zu'' (z)}{u' (z) - \xi}
    - \frac{zu'' (z)}{u' (z)} - \frac{\xi zu'' (z)}{u' (z)^2} \right) 
    \frac{\mathd z}{2 \pi i}
  \end{eqnarray*}
  By Lagrange inversion this last integral is equal to $(u')^{- 1} (\xi) - 0 -
  \xi$ where $(u')^{- 1}$ denotes the inverse function to $u' (z)$. Hence
  \begin{eqnarray}
    \sum_{k \geqslant 0} \Lambda (\Psi ; k) \xi^k & = & (u')^{- 1} (\xi) 
  \end{eqnarray}
  Let us compose this with $u' (\cdot)$ on both sides and compute the
  resulting left hand side. First of all we expand $u' (z)$ in a power series.
  This gives
  \[ u' (z) = \int_{\mathbbm{R}} \frac{e^{zt} - 1}{t} \mathd \Psi (t) =
     \sum_{\ell \geqslant 1} \frac{1}{\ell !} \int_{\mathbbm{R}} t^{\ell - 1}
     \mathd \Psi (t) \cdot z^{\ell} \]
  Therefore, composing $(9.1)$ with $u' (\cdot)$ yields
  \begin{eqnarray*}
    \xi & = & u' \left( \sum_{k \geqslant 0} \Lambda (\Psi ; k) \xi^k \right)
    \text{ } = \text{ } \sum_{\ell \geqslant 1} \frac{1}{\ell !}
    \int_{\mathbbm{R}} t^{\ell - 1} \mathd \Psi (t) \cdot \left( \sum_{k
    \geqslant 0} \Lambda (\Psi ; k) \xi^k \right)^{\ell}\\
    & = & \sum_{\ell \geqslant 1} \frac{1}{\ell !} \int_{\mathbbm{R}} t^{\ell
    - 1} \mathd \Psi (t) \cdot \left( \sum_{k_1, \ldots, k_{\ell} \geqslant 1}
    \Lambda (\Psi ; k_1) \cdot \ldots \cdot \Lambda (\Psi ; k_{\ell}) \cdot
    \xi^{k_1 + \ldots + k_{\ell}} \right)\\
    & = & \sum_{m \geqslant 1} \left( \sum_{1 \leqslant \ell \leqslant m}
    \frac{1}{\ell !} \int_{\mathbbm{R}} t^{\ell - 1} \mathd \Psi (t) \sum_{k_1
    + \ldots + k_{\ell} = m} \Lambda (\Psi ; k_1) \cdot \ldots \cdot \Lambda
    (\Psi ; k_{\ell}) \right) \cdot \xi^m
  \end{eqnarray*}
  Thus the first coefficient $\Lambda (\Psi ; 1)$ is equal to 1, as desired,
  while for the terms $m \geqslant 2$ we have
  \[ \sum_{1 \leqslant \ell \leqslant m} \frac{1}{\ell !} \int_{\mathbbm{R}}
     t^{\ell - 1} \mathd \Psi (t) \sum_{k_1 + \ldots + k_{\ell} = m} \Lambda
     (\Psi ; k_1) \cdot \ldots \cdot \Lambda (\Psi ; k_{\ell}) = 0 \]
  The first term $\ell = 1$ is equal to $\Lambda (\Psi ; m)$. It suffice to
  move it on the right hand side of the equation, to obtain the desired
  recurrence relation. 
\end{proof}

Finally we will need one last result ``from the literature''. Namely a weak
form of the method of moments. For a proof we refer the reader to Gut's book
{\cite{6}}, p. 237. (Note that the next lemma follows from the result in
{\cite{6}} because in our case the random variables are positive, and bounded, 
in particular their distribution is determined uniquely by their moments).

\begin{lemma}
  Let $\Psi$ be a distribution function. Suppose that there is an $a > 0$ such
  that $\Psi (a) - \Psi (0) = 1$. Let $F (x ; t)$ be a sequence of
  distribution functions, one for each $x > 0$. If for each $k \geqslant 0$,
  \[ \int_{\mathbbm{R}} t^k \tmop{dF} (x ; t) \text{ } \longrightarrow
     \text{\, } \int_{\mathbbm{R}} t^k d \Psi (t) \]
  Then $F (x ; t) \longrightarrow \Psi (t)$ at all continuity points $t$ of
  $\Psi (t)$. 
\end{lemma}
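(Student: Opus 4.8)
The plan is to prove Lemma 9.5 by the classical moment method, exploiting that $\Psi$ is moment-determinate because of its compact support. First I would observe that the hypothesis $\Psi(a) - \Psi(0) = 1$, together with the fact that $\Psi$ is a distribution function, forces $\Psi(t) = 0$ for $t < 0$ and $\Psi(t) = 1$ for $t \geqslant a$; thus $\Psi$ is the distribution function of a probability measure supported on the compact interval $[0, a]$. A compactly supported probability measure on $\mathbbm{R}$ is uniquely determined by its sequence of moments $\left( \int_{\mathbbm{R}} t^k \, d\Psi(t) \right)_{k \geqslant 0}$ (the Hamburger moment problem is determinate here --- Carleman's condition holds trivially, or one invokes Stone--Weierstrass on $[0,a]$). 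In particular all moments of $\Psi$ are finite.

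Next I would establish tightness of the family $\{F(x; \cdot)\}_{x > 0}$. By hypothesis with $k = 2$, the second moments $\int_{\mathbbm{R}} t^2 \, dF(x;t)$ converge to the finite number $\int_{\mathbbm{R}} t^2 \, d\Psi(t)$, hence are bounded by some $M < \infty$ uniformly in all sufficiently large $x$. Chebyshev's inequality then gives $\big(1 - F(x; R)\big) + F(x; -R) \leqslant M / R^2$ for every $R > 0$, uniformly in $x$, which is precisely tightness. By Helly's selection theorem every sequence $x_n \to \infty$ admits a subsequence $x_{n_j}$ along which $F(x_{n_j}; \cdot)$ converges weakly to some probability distribution function $G$.

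Then I would identify $G$ with $\Psi$. Fix $k \geqslant 0$. Since the $(k+1)$-th absolute moments $\int_{\mathbbm{R}} |t|^{k+1} \, dF(x_{n_j}; t)$ are bounded (they converge, by hypothesis applied with even exponent $\geqslant k+1$ to dominate, using that $\Psi$ has all moments finite), the functions $t \mapsto t^k$ are uniformly integrable with respect to the measures $F(x_{n_j}; \cdot)$; combined with the weak convergence $F(x_{n_j}; \cdot) \Rightarrow G$ this yields $\int_{\mathbbm{R}} t^k \, dG(t) = \lim_j \int_{\mathbbm{R}} t^k \, dF(x_{n_j}; t) = \int_{\mathbbm{R}} t^k \, d\Psi(t)$. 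Thus $G$ has the same moment sequence as $\Psi$, so $G = \Psi$ by the determinacy noted above. Since every sequence $x_n \to \infty$ has a subsequence along which $F(x_{n}; \cdot)$ converges weakly to the common limit $\Psi$, the whole family converges weakly to $\Psi$, i.e. $F(x; t) \to \Psi(t)$ at every continuity point $t$ of $\Psi$, which is the assertion of the lemma.

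I expect the only genuinely delicate point to be the uniform-integrability step that transfers the moments to the weak limit $G$; the reduction to compact support, the tightness via Chebyshev, and the determinacy of the compactly supported moment problem are all routine. One could alternatively bypass tightness and uniform integrability by citing the ``method of moments'' theorem directly (as in Gut's book {\cite{6}}), but the argument above is self-contained modulo the classical moment problem.
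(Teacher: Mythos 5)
Your proof is correct. The paper does not give its own argument for this lemma---it simply cites the method-of-moments theorem from Gut's book \cite{6} and observes, exactly as you do, that compact support guarantees moment-determinacy---so your write-up is a clean, self-contained version of the same standard argument (determinacy from compact support, tightness via Markov on the second moment, Helly, uniform integrability of $t^{k}$ from bounded $(k+1)$-th absolute moments, and the subsequence principle).
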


\subsection{A transfer lemma}

The following lemma will allow us to transfer any results established with the
$B (f ; x)$ normalization to corresponding results with a $\sigma (f ; x)$
normalization.

\begin{lemma}
  Let $f$ be a strongly additive function such that $0 \leqslant f(p) 
  \leqslant O(1)$ and $B(f;x) \rightarrow \infty$. Let $\Psi$ be a distribution
  function. Suppose that
  there is an $a > 0$ such that $\Psi (a) - \Psi (0) = 1$. We have, uniformly
  in $1 \leqslant \Delta \leqslant o (\sigma (f ; x))$,
  \begin{eqnarray*}
    \mathcal{D}_f^{\times} \left( x ; \Delta \right) & \sim & \mathcal{D}_f
    \left( x ; \Delta \right)\\
    \mathbbm{P} \left( \mathcal{Z}_{\Psi} \left( \sigma^2 (f ; x) \right)
    \geqslant \Delta \sigma (f ; x) \right) & \sim & \mathbbm{P} \left(
    \mathcal{Z}_{\Psi} \left( B^2 (f ; x) \right) \geqslant \Delta B (f ; x)
    \right)
  \end{eqnarray*}
\end{lemma}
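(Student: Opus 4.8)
The plan is to reduce everything to the two asymptotic expansions already recorded in this section (Maciulis's expansion for $\mathcal{D}_f^{\times}(x;\Delta)$ and the Hwang-type expansion for $\mathbbm{P}(\mathcal{Z}_{\Psi}(B^2)\geqslant\Delta B)$), using the single elementary fact that $B^2(f;x)-\sigma^2(f;x)=\sum_{p\leqslant x}f(p)^2/p^2=O(1)$ (because $f(p)=O(1)$), so that $\sigma(f;x)\leqslant B(f;x)=\sigma(f;x)+O(1/\sigma(f;x))$ and hence $1/\sigma(f;x)-1/B(f;x)=O(1/\sigma(f;x)^3)$. Throughout write $\sigma=\sigma(f;x)$, $B=B(f;x)$, both tending to infinity.

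For the first assertion I would use the identity $\mathcal{D}_f(x;\Delta)=\mathcal{D}_f^{\times}(x;\widetilde{\Delta})$ with $\widetilde{\Delta}\assign\Delta\sigma/B$, which is immediate from the definitions and satisfies $\widetilde{\Delta}\leqslant\Delta$, $\widetilde{\Delta}=\Delta+O(\Delta/\sigma^2)$. Since $\widetilde{\Delta}\sim\Delta$, both $\Delta$ and $\widetilde{\Delta}$ fall in the range $1\leqslant\cdot\leqslant o(\sigma)$ covered by Maciulis's expansion, apart from the bounded strip $1\leqslant\Delta\leqslant2$ where $\widetilde{\Delta}$ might slip just below $1$; on that strip $\mathcal{D}_f(x;\Delta)$ and $\mathcal{D}_f^{\times}(x;\Delta)$ both tend uniformly to $\int_{\Delta}^{\infty}e^{-u^2/2}\,\mathd u/\sqrt{2\pi}$ by the uniform form of the Erd\H{o}s--Kac theorem, so there is nothing to prove there. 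Applying Maciulis's expansion to $\mathcal{D}_f^{\times}(x;\Delta)$ and to $\mathcal{D}_f^{\times}(x;\widetilde{\Delta})$, the claim reduces to (i) the exponential prefactor $\exp\bigl(-\tfrac{\Delta^3}{B}\sum_{k\geqslant0}\tfrac{\lambda_f(x;k+2)}{k+3}(\Delta/B)^k\bigr)$ and its analogue with $\widetilde{\Delta}$ in place of $\Delta$ differ by $1+o(1)$, and (ii) $\int_{\widetilde{\Delta}}^{\infty}e^{-u^2/2}\,\mathd u\sim\int_{\Delta}^{\infty}e^{-u^2/2}\,\mathd u$. For (i), the uniform bound $|\lambda_f(x;k)|\leqslant C^k$ makes $g_x(t)\assign\sum_{k\geqslant0}\tfrac{\lambda_f(x;k+2)}{k+3}t^k$ and $g_x'$ bounded on a fixed disc about $0$ uniformly in $x$; as $\Delta/B,\widetilde{\Delta}/B=o(1)$ and $\widetilde{\Delta}-\Delta=O(\Delta/\sigma^2)$, a one-line Taylor estimate gives $\tfrac{\Delta^3}{B}g_x(\Delta/B)-\tfrac{\widetilde{\Delta}^3}{B}g_x(\widetilde{\Delta}/B)=O(\Delta^3/\sigma^3+\Delta^4/\sigma^4)=o(1)$. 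For (ii) I would use $\int_{\Delta}^{\infty}e^{-u^2/2}\,\mathd u=\tfrac{e^{-\Delta^2/2}}{\Delta}(1+O(\Delta^{-2}))$ for $\Delta\geqslant1$: since $\widetilde{\Delta}\leqslant\Delta$ and $\Delta^2-\widetilde{\Delta}^2=O(\Delta^2/\sigma^2)=o(1)$, one has $\int_{\widetilde{\Delta}}^{\Delta}e^{-u^2/2}\,\mathd u\leqslant(\Delta-\widetilde{\Delta})e^{-\widetilde{\Delta}^2/2}=O(\Delta/\sigma^2)\,e^{-\Delta^2/2}$, which is $o(1)$ times $\int_{\Delta}^{\infty}e^{-u^2/2}\,\mathd u$. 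Multiplying (i) and (ii) yields $\mathcal{D}_f^{\times}(x;\Delta)\sim\mathcal{D}_f(x;\Delta)$, uniformly in $1\leqslant\Delta\leqslant o(\sigma)$.

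For the second assertion the argument is shorter. I would apply the Hwang-type expansion twice: once with the diverging parameter taken to be $B^2=B^2(f;x)$, which gives $\mathbbm{P}(\mathcal{Z}_{\Psi}(B^2)\geqslant\Delta B)\sim\exp\bigl(-\tfrac{\Delta^3}{B}h(\Delta/B)\bigr)\int_{\Delta}^{\infty}e^{-u^2/2}\,\mathd u/\sqrt{2\pi}$, and once with the diverging parameter taken to be $\sigma^2=\sigma^2(f;x)$, which gives the same formula with every $B$ replaced by $\sigma$; here $h(t)\assign\sum_{k\geqslant0}\tfrac{\Lambda(\Psi;k+2)}{k+3}t^k$ depends only on $\Psi$, and both applications are legitimate because $B\sim\sigma\to\infty$ and the admissible ranges $1\leqslant\Delta\leqslant o(B)$ and $1\leqslant\Delta\leqslant o(\sigma)$ coincide. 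The Gaussian tail factor $\int_{\Delta}^{\infty}e^{-u^2/2}\,\mathd u/\sqrt{2\pi}$ is literally identical in the two formulas, so the only thing left is to see that $\exp\bigl(-\tfrac{\Delta^3}{\sigma}h(\Delta/\sigma)\bigr)$ and $\exp\bigl(-\tfrac{\Delta^3}{B}h(\Delta/B)\bigr)$ differ by $1+o(1)$; this is precisely estimate (i) once more, using $|\Lambda(\Psi;k)|\leqslant C^k$ together with $1/\sigma-1/B=O(1/\sigma^3)$ and $\Delta=o(\sigma)$, and it produces an exponent of size $O(\Delta^3/\sigma^3+\Delta^4/\sigma^4)=o(1)$.

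There is no serious obstacle here: once one observes $B-\sigma=O(1/\sigma)$, the whole proof is bookkeeping of error terms against the two explicit expansions already in hand. The one mildly delicate point is step (ii) of the first assertion, where a bare difference of Gaussian tails must be controlled against the tail itself, which is why the estimate $\int_{\Delta}^{\infty}e^{-u^2/2}\,\mathd u\asymp e^{-\Delta^2/2}/\Delta$ is invoked; and the boundary strip $1\leqslant\Delta\leqslant2$, where $\widetilde{\Delta}$ may dip below $1$, costs only a one-sentence appeal to Erd\H{o}s--Kac and contributes nothing of substance.
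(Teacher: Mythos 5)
Your proof is correct and follows the paper's own route: express $\mathcal{D}_f(x;\Delta)=\mathcal{D}_f^{\times}(x;\Delta\sigma/B)$ and apply Lemma 9.2, apply Lemma 9.3 with both $B^2(f;x)$ and $\sigma^2(f;x)$ as the parameter, and then compare the exponential prefactors using the geometric bounds $|\lambda_f(x;k)|,|\Lambda(\Psi;k)|\leqslant C^k$ together with $B(f;x)-\sigma(f;x)=O(1/\sigma(f;x))$. In fact you are slightly more careful than the paper's own account, which in its displayed asymptotic for $\mathcal{D}_f(x;\Delta)$ silently replaces the Gaussian tail $\int_{\Delta\sigma/B}^{\infty}e^{-u^2/2}\,\mathd u$ by $\int_{\Delta}^{\infty}e^{-u^2/2}\,\mathd u$ and does not mention the strip where $\Delta\sigma/B$ may dip below $1$; your step (ii), controlling $\int_{\widetilde\Delta}^{\Delta}e^{-u^2/2}\,\mathd u$ against the tail of size $\asymp e^{-\Delta^2/2}/\Delta$, and your one-line appeal to the uniform Erd\H{o}s--Kac theorem on $1\leqslant\Delta\leqslant 2$ close exactly those two small gaps.
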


\begin{proof}
  Both results are consequences of lemma 9.3 and lemma 9.2 respectively. Let
  us first prove that $\mathcal{D}_f^{\times} \left( x ; \Delta \right) \sim
  \mathcal{D}_f (x ; \Delta)$ holds. Note that
  \begin{eqnarray*}
    \mathcal{D}_f^{\times} \left( x ; \Delta \cdot \sigma (f ; x) / B (f ; x)
    \right) & = & \mathcal{D}_f (x ; \Delta)
  \end{eqnarray*}
  Therefore using the asymptotic of Lemma 9.2 we conclude that
  \begin{equation}
    \mathcal{D}_f (x ; \Delta) \sim \exp \left( - \frac{\sigma^3}{B^3} \cdot
    \frac{\Delta^3}{B^{}} \sum_{k \geqslant 0} \frac{\lambda_f (x ; k + 2)}{k
    + 3} \cdot \left( \Delta \sigma / B^2 \right)^k \right)
    \int_{\Delta}^{\infty} e^{- u^2 / 2} \cdot \frac{\mathd u}{\sqrt[]{2 \pi}}
  \end{equation}
  where we used the abbreviation $B \assign B (f ; x)$ and $\sigma \assign
  \sigma (f ; x)$. Further the coefficients $\lambda_f (x ; k + 2)$ are
  defined in Lemma 9.2, and satisfy $| \lambda_f (x ; k) | \ll C^k$ for some
  fixed $C > 0$ depending only on $f$. Because of that bound on $\lambda_f (x
  ; k)$, the function
  \begin{eqnarray*}
    \mathcal{G}_f (x ; z) & = & \sum_{k \geqslant 0} \frac{\lambda_f (x ; k +
    2)}{k + 3} \cdot z^k
  \end{eqnarray*}
  is analytic in $|z| < 1 / C$ for all $x > 0$. Therefore
  \begin{eqnarray}
    \mathcal{G}_f (x ; \Delta / B^{} \cdot \sigma / B) & = & \mathcal{G}_f
    \left( x ; \Delta / B) + \left( \Delta / B \cdot (\sigma / B - 1)) \cdot
    \mathcal{G}_f' (x ; \xi \right)  \right.
  \end{eqnarray}
  for some $\Delta / B \cdot \sigma / B \leqslant \xi \leqslant \Delta / B$.
  Derivatives are always taken with respect to the second argument -- that is
  $\mathcal{G}_f' (x ; \xi) \assign (\mathd / \mathd \xi)\mathcal{G}_f (x ;
  \xi)$. Upon using the inequality $\lambda_f (x ; k) \ll C^k$ we find the
  bound $\mathcal{G}'_f (x ; \xi) \ll (1 - C \Delta / B)^{- 1}$ which is 
  $O(1)$ because $\Delta \leqslant o (B (f ; x))$. Further $\Delta / B (1 -
  \sigma / B) \ll \Delta / B^3$ because $\sigma^2 = B^2 + O (1)$ hence $\sigma
  / B = 1 + O (B^{- 2})$. It now follows from $(9.3)$ that $\mathcal{G}_f (x ;
  \Delta / B \cdot \sigma / B) =\mathcal{G}_f (\Delta / B) + O \left( \Delta /
  B^3 \right)$. Note also that $\mathcal{G}_f(x;\Delta/B) \ll 1$ 
  uniformly in $1 \leqslant \Delta \leqslant o(B)$. Using these two
  estimates, we find that
  \begin{eqnarray*}
    &  & - (\sigma / B)^3 \cdot (\Delta^3 / B) \cdot \mathcal{G}_f (x ;
    \Delta / B \cdot \sigma / B)\\
    & = & \left. - (\sigma / B)^3 \cdot (\Delta^3 / B) \cdot (\mathcal{G}_f
    (x ; \Delta / B) + O \left( \Delta / B^3 \right) \right)\\
    & = & - \left( 1 + O \left( 1 / B^2 \right) \right) \cdot \Delta^3 / B
    \cdot \mathcal{G}_f (x ; \Delta / B) + O \left( (\Delta / B)^4 \right)\\
    & = & - \Delta^3 / B \cdot \mathcal{G}_f (x ; \Delta / B) + O \left(
    (\Delta / B)^3 + (\Delta / B)^4 \right)
  \end{eqnarray*}
  Since $\Delta \leqslant o (\sigma (f ; x))$ the error term is $o (1)$. The
  previous equation, together with $(9.2)$ leads to
  \begin{eqnarray*}
    \mathcal{D}_f (x ; \Delta) & \sim & \exp \left( - (\sigma / B)^3 \cdot
    (\Delta^3 / B) \cdot \mathcal{G}_f (x ; \Delta / B \cdot \sigma / B)
    \right) \cdot (1 - \Phi (\Delta))\\
    & \sim & \exp \left( - (\Delta^3 / B) \cdot \mathcal{G}_f (x ; \Delta /
    B)) \cdot (1 - \Phi (\Delta)) \text{ } \sim \text{ }
    \mathcal{D}_f^{\times} \left( x ; \Delta \right) \right.
  \end{eqnarray*}
  uniformly in $1 \leqslant \Delta \leqslant o (\sigma (f ; x))$ and where $1
  - \Phi (\Delta) \assign \int_{\Delta}^{\infty} e^{- u^2 / 2} \cdot \mathd
  u$. The above equation establishes the first part of the lemma. The proof of
  the second part is along similar lines, but easier. Denote by
  \[ \mathcal{G}_{\Psi} \left( z \right) \assign \sum_{k \geqslant 0}
     \frac{\Lambda (\Psi ; k + 2)}{k + 3} \cdot z^k \]
  with the coefficients $\Lambda (\Psi ; k)$ defined as in Lemma 9.3. The
  coefficients $\Lambda (\Psi ; k)$ are bounded by $C^k$, for some suitable $C
  > 0$, therefore $\mathcal{G}_{\Psi} (z)$ is analytic in $|z| < 1 / C$. Hence
  $\mathcal{G}_{\Psi} \left( \Delta / B \right) =\mathcal{G}_{\Psi} \left(
  \Delta / \sigma \right) + O \left( \Delta / B - \Delta / \sigma \right)
  =\mathcal{G}_{\Psi} \left( \Delta / \sigma \right) + O \left( \Delta /
  \sigma^3 \right)$, where in the error term we used the estimate $B = \sigma
  + O (1 / \sigma)$. Therefore
  \begin{eqnarray*}
    - (\Delta^3 / B) \cdot \mathcal{G}_{\Psi} \left( \Delta / B \right) & = &
    - (\Delta^3 / B) \cdot \left( \mathcal{G}_{\Psi} \left( \Delta / \sigma
    \right) + O \left( \Delta / \sigma^3 \right) \right)\\
    & = & - (\sigma / B) \cdot (\Delta^3 / \sigma) \cdot \mathcal{G}_{\Psi}
    \left( \Delta / \sigma \right) + O \left( (\Delta / \sigma)^4 \right)\\
    & = & - \left( 1 + O (1 / \sigma^2) \right) \cdot (\Delta^3 / \sigma)
    \cdot \mathcal{G}_{\Psi} \left( \Delta / \sigma \right) + O \left( (\Delta
    / \sigma)^4 \right)\\
    & = & - (\Delta^3 / \sigma) \cdot \mathcal{G}_{\Psi} (\Delta / \sigma) +
    O \left( (\Delta / \sigma)^4 + (\Delta / \sigma)^3 \right)
  \end{eqnarray*}
  and the error term is $o (1)$ because $\Delta \leqslant o (\sigma (f ; x))$.
  Therefore, using lemma 9.3 we conclude that
  \begin{eqnarray*}
    \mathbbm{P} \left( \mathcal{Z}_{\Psi} \left( \sigma) \geqslant \Delta
    \sigma) \right. \right. & \sim & \exp \left( - (\Delta^3 / \sigma) \cdot
    \mathcal{G}_{\Psi} (\Delta / \sigma) \right) \cdot (1 - \Phi (\Delta))\\
    & \sim & \exp \left( - (\Delta^3 / B) \cdot \mathcal{G}_{\Psi} \left(
    \Delta / B \right) \right) \cdot (1 - \Phi (\Delta)) \text{ } \sim \text{
    } \mathbbm{P} \left( \mathcal{Z}_{\Psi} \left( B \right) \geqslant \Delta
    B \right)
  \end{eqnarray*}
  uniformly in $1 \leqslant \Delta \leqslant o (\sigma (f ; x))$ and where $1
  - \Phi (\Delta) = \int_{\Delta}^{\infty} e^{- u^2 / 2} \cdot \mathd u /
  \sqrt[]{2 \pi}$. The above equation establishes the second part of the
  lemma. 
\end{proof}

\subsection{Proof of the ``integers to primes'' theorem}

\begin{proof}[Proof of Theorem 2.6]
  By assumptions $\mathcal{D}_f (x ; \Delta) \sim
  \mathbbm{P}(\mathcal{Z}_{\Psi} (\sigma^2 (f ; x)) \geqslant \Delta \sigma (f
  ; x))$ holds throughout $1 \leqslant \Delta \leqslant o (\sigma (f ; x))$.
  Thus, by lemma 9.5, $\mathcal{D}_f^{\times} (x ; \Delta) \sim
  \mathbbm{P}(\mathcal{Z}_{\Psi} (B^2 (f ; x)) \geqslant \Delta B (f ; x))$
  uniformly in $1 \leqslant \Delta \leqslant o (B (f ; x))$. We are going to
  work with this last condition.
  
  The proof is in three steps. Retaining the notation of Lemma 9.3 and Lemma
  9.2 we first show that $\lambda_f (k ; x) \longrightarrow \Lambda (\Psi ;
  k)$ for all $k \geqslant 2$ (for $k = 1$ this is trivial). Then, we deduce
  from there that
  \begin{equation}
    \frac{1}{B^2 (f ; x)} \sum_{\tmscript{\begin{array}{c}
      p \leqslant x\\
      f (p) \leqslant t
    \end{array}}} \frac{f (p)^{k + 2}}{p} \text{ } \longrightarrow \text{ }
    \int_{\mathbbm{R}} t^k \mathd \Psi (t)
  \end{equation}
  Finally by the method of moments (and an elementary manipulation)
  \begin{eqnarray}
    \frac{1}{\sigma^2 (f ; x)} \sum_{\tmscript{\begin{array}{c}
      p \leqslant x\\
      f (p) \leqslant t
    \end{array}}} \frac{f (p)^2}{p} \cdot \left( 1 - \frac{1}{p} \right) &
    \longrightarrow & \Psi (t) 
  \end{eqnarray}
  The last step being the easy one. To prove our first step we will proceed by
  induction on $k \geqslant 0$. We will prove the stronger claim that
  \[ \lambda_f (x ; k + 2) = \Lambda (\Psi ; k + 2) + O_k \left( B^{- 2^{- (k
     + 1)}} \right) \]
  where we write $B = B (f ; x)$ to simplify notation. By Lemma 9.2 and 9.3,
  our assumption $\mathcal{D}_f^{\times} (x ; \Delta) \sim
  \mathbbm{P}(\mathcal{Z}_{\Psi} (B^2 (f ; x)) \geqslant \Delta B (f ; x))$
  (for $1 \leqslant \Delta \leqslant o (B (f ; x)))$ reduces to
  \begin{equation}
    - \frac{\Delta^3}{B} \sum_{m \geqslant 0} \frac{\lambda_f \left( x ; m +
    2) \right.}{m + 3} \cdot \left( \Delta / B \right)^m = -
    \frac{\Delta^3}{B} \sum_{m \geqslant 0} \frac{\Lambda (\Psi ; m + 2)}{m +
    3} \cdot \left( \Delta / B \right)^m + o (1)
  \end{equation}
  valid throughout the range $1 \leqslant \Delta \leqslant o (\sigma (f ;
  x))$. Let us first establish the base case $\lambda_f (x ; 2) = \Lambda
  (\Psi ; 2) + O (B^{- 1 / 2})$. In $(9.6)$ we choose $\Delta = B^{1 / 2}$.
  Because of the bounds $| \lambda_f (x ; m) | \leqslant C^m$ and $| \Lambda
  (\Psi ; m) | \leqslant C^m$ (see lemma 9.2 and 9.3) the terms 
  $m \geqslant 1$ contribute $O (1)$.
  The $m = 0$ term is $\asymp B^{1 / 2}$. It follows that $\lambda_f (x ;
  2) = \Lambda (\Psi ; 2) + O (B^{- 1 / 2})$ and so the base case follows. Let
  us now suppose that for all $\ell < k$, $(k \geqslant 1)$
  \[ \lambda_f \left( x ; \ell + 2 \right) = \Lambda \left( \Psi ; \ell + 2
     \right) + O_{\ell} \left( B^{- 2^{- (\ell + 1)}} \right) \]
  Note that we can assume (in the above equation) that the implicit constant
  depends on $k$, by taking the max of the implicit constants in $O_{\ell}(
  B^{-2^{-(\ell+1)}})$ for $\ell < k$. 
  In equation $(9.6)$ let's choose $\Delta = B^{1 - 2^{-
  (k + 1)}}$. With this choice of $\Delta$ the terms that are $\geqslant k +
  1$ in $(9.6)$ contribute at most
  \[ \left( \Delta^3 / B \right) \cdot \left( C \cdot \Delta / B \right)^{k +
     1} \ll_k B^2 \cdot B^{- 3 \cdot 2^{- (k + 1)}} \cdot B^{- (k + 1) \cdot
     2^{- (k + 1)}} \]
  on both sides of $(9.6)$. On the other hand, we see (by using the induction
  hypothesis) that the terms $m \leqslant k - 1$ on the left and the right
  hand side of $(9.6)$ differ by no more that
  \begin{eqnarray*}
    &  & - \frac{\Delta^3}{B} \cdot \left( \sum_{\ell < k} \frac{(\Delta /
    B)^{\ell}}{\ell + 3} \cdot \left( \lambda_f \left( x ; \ell + 2 \right) -
    \Lambda \left( \Psi ; \ell + 2 \right) \right) \right)\\
    & = & O_k \left( B^2 \cdot B^{- 3 \cdot 2^{- (k + 1)}} \cdot \left(
    \sum_{\ell < k} \frac{B^{- \ell \cdot 2^{- (k + 1)}}}{\ell + 3} \cdot B^{-
    2^{- (\ell + 1)}} \right) \right)\\
    & = & O_k \left( B^2 \cdot B^{- 3 \cdot 2^{- (k + 1)}} \cdot \sum_{\ell <
    k} \frac{1}{\ell + 3} \cdot B^{- 2^{- (k + 1)} \cdot \left( \ell + 2^{k -
    \ell} \right)} \right)
  \end{eqnarray*}
  Note that for each integer $\ell < k$ we have $\ell + 2^{k - \ell} \geqslant
  k + 1$. Therefore the above error term is bounded by $O_k (B^2 \cdot B^{- 3
  \cdot 2^{- k}} \cdot B^{- (k + 1) 2^{- (k + 1)}})$. With these two
  observations at hand, relation $(9.6)$ reduces to
  \[ - \frac{\Delta^3}{B} \cdot \frac{\left( \Delta / B \right)^k}{k + 3}
     \left[ \lambda_f \left( x ; k \upl 2 \right) \um \Lambda \left( \Psi ; k
     \upl 2 \right)^{^{}} \right] = O_k \left( B^{2 - 3 \cdot 2^{- \left( k
     \upl 1) \right.}} \cdot B^{- \left( k \upl 1 \right) \cdot 2^{- \left( k
     \upl 1) \right.}} \right) \upl o (1) \]
  where $\Delta = B^{1 - 2^{- (k + 1)}}$. Dividing by $\Delta^3 / B \cdot
  \left( \Delta / B \right)^k \asymp B^{2 - 3 \cdot 2^{- (k + 1)}} \cdot B^{-
  k \cdot 2^{- (k + 1)}}$ on both sides, we conclude that $\lambda_f (x ; k +
  2) - \Lambda (\Psi ; k + 2) = O_k (B^{- 2^{- (k + 1)}})$ as desired, thus
  finishing the inductive step. Now, we will prove that $\lambda_f (x ; k)
  \longrightarrow \Lambda (\Psi ; k)$ implies
  \begin{equation}
    \mathcal{M}_f (x ; \ell) \text{ } \assign \text{ } \frac{1}{B^2 (f ; x)}
    \sum_{p \leqslant x} \frac{f (p)^{\ell + 2}}{p} \longrightarrow
    \int_{\mathbbm{R}} t^{\ell} \mathd \Psi (t)
  \end{equation}
  for each fixed $\ell \geqslant 0$. This follows almost immediately from the
  recurrence relation for $\lambda_f (x ; k)$ and $\Lambda (\Psi ; k)$. Indeed
  let us prove $(9.7)$ by induction on $k \geqslant 0$. The base case $k = 0$
  is obvious, for the left hand side and right hand side of $(9.7)$ are both
  equal to $1$. Let us now suppose that $(9.7)$ holds for all $\ell < k$. We
  will prove that convergence also holds for $\ell = k$. \ By definition of
  $\lambda_f (x ; k + 1)$ we have
  \begin{equation}
    \lambda_f (x ; k \upl 1) = \um \sum_{j = 2}^k \frac{\mathcal{M}_f (x ; j
    \um 1)}{j!} \sum_{\ell_1 + \ldots + \ell_j = k + 1} \lambda_f (x ; \ell_1)
    \ldots \lambda_f (x ; \ell_j) \um \frac{\mathcal{M}_f (x ; k)}{(k + 1) !}
  \end{equation}
  (we single out $j = k + 1$ on the right hand side). By induction hypothesis
  $\mathcal{M}_f (x ; j \um 1) \longrightarrow \int t^{j - 1} \mathd 
  \Psi (t)$
  as $x \rightarrow \infty$, for $j \leqslant k$. Further as we've shown
  earlier $\lambda_f (x ; i) \longrightarrow \Lambda (\Psi ; i)$ for all $i
  \geqslant 0$. Therefore the whole double sum on the right hand side of
  $(9.8)$ tends to
  \[ - \sum_{j = 2}^k \frac{1}{j!} \int_{\mathbbm{R}} t^{j - 1} \mathd \Psi (t)
  \sum_{\ell_1 + \ldots + \ell_j = k + 1} \Lambda (\Psi ; \ell_1)
     \cdot \ldots \cdot \Lambda (\Psi ; \ell_j) \]
  which, by definition of $\Lambda (\Psi ; k)$ is equal to $\Lambda (\Psi ; k
  + 1) + 1 / (k + 1) ! \int_{\mathbbm{R}} t^k \mathd \Psi (t)$. But also
  $\lambda_f (x ; k + 1) \longrightarrow \Lambda (\Psi ; k + 1)$ because
  $\lambda_f (x ; i) \longrightarrow \Lambda (\Psi ; i)$ for all $i \geqslant
  0$. Thus the left hand side of $(9.8)$ tends to $\Lambda (\Psi ; k + 1)$
  while the double sum on the right hand side of $(9.8)$ tends to $\Lambda
  (\Psi ; k + 1) + 1 / (k + 1) ! \int_{\mathbbm{R}} t^k \mathd \Psi (t)$.
  Therefore equation $(9.8)$ transforms into
  \[ \Lambda (\Psi ; k + 1) = \Lambda (\Psi ; k + 1) + \frac{1}{(k + 1) !}
     \int_{\mathbbm{R}} t^k \mathd \Psi (t) - \frac{\mathcal{M}_f (x ;
     k)}{(k + 1) !} + o_{x \rightarrow \infty} (1) \]
  and $\mathcal{M}_f (x ; k) \rightarrow \int_{\mathbbm{R}} t^k \mathd \Psi(t)$
  $(x \rightarrow \infty)$ follows. This establishes the induction step
  and thus $(9.7)$ for all fixed $\ell \geqslant 0$. Now we use the method of
  moments to prove that
  \[ F (x ; t) \assign \text{ } \frac{1}{B^2 (f ; x)}
     \sum_{\tmscript{\begin{array}{c}
       p \leqslant x\\
       f (p) \leqslant t
     \end{array}}} \frac{f (p)^2}{p} \text{ } \underset{x \rightarrow
     \infty}{\longrightarrow} \text{ } \Psi (t) \]
  holds at all continuity points $t$ of $\Psi (t)$. Let us note that, the
  $k$-th moment of the distribution function $F (x ; t)$, is given by
  $\mathcal{M}_f (x ; k)$, and as we've just shown this converges to the the
  $k$-th moment of $\Psi (t)$. That is
  \[ \int_{\mathbbm{R}} t^k \mathd F (x ; t) = \frac{1}{B^2 (f ; x)}  \sum_{p
     \leqslant x} \frac{f (p)^{k + 2}}{p} \text{ } \longrightarrow \text{ }
     \int_{\mathbbm{R}} t^k \mathd \Psi (t) \]
  Since $\Psi (a) - \Psi (0) = 1$ for some $a > 0$, the distribution function
  $\Psi$ satisfies the assumption of Lemma 9.4, hence, by Lemma 9.4 (the
  method of moments) we have $F (x ; t) \longrightarrow \Psi (t)$ at all
  continuity points $t$ of $\Psi$. Finally, we deduce that
  \begin{equation}
    \frac{1}{\sigma^2 (f ; x)} \sum_{\tmscript{\begin{array}{c}
      p \leqslant x\\
      f (p) \leqslant t
    \end{array}}} \frac{f (p)^2}{p} \cdot \left( 1 - \frac{1}{p} \right)
    \text{ } \longrightarrow \text{ } \Psi (t)
  \end{equation}
  at all continuity points of $\Psi$. This is almost trivial, because $B^2 (f
  ; x)$ and $\sigma^2 (f ; x)$ differ only by an $O (1)$, and the sum $\sum f
  (p)^2 / p^2 = O (1)$ because the $f (p)$ are $O (1)$. Hence, by a simple
  computation
  \begin{eqnarray*}
    \frac{1}{\sigma^2 (f ; x)} \sum_{\tmscript{\begin{array}{c}
      p \leqslant x\\
      f (p) \leqslant t
    \end{array}}} \frac{f (p)^2}{p} \cdot \left( 1 - \frac{1}{p} \right) & = &
    F (x ; t) \text{ } + \text{ } O \left( \frac{1}{B^2 (f ; x)} \right)
  \end{eqnarray*}
  And since $F (x ; t) \longrightarrow \Psi (t)$ at all continuity points of
  $\Psi$, it follows that $(9.9)$ must be true.
\end{proof}

\section{Primes to integers}

We keep the same notation as in the previous section. Namely we let
\begin{eqnarray*}
  B^2 (f ; x) \assign \sum_{p \leqslant x} \frac{f (p)^2}{p} & \tmop{and} &
  \mathcal{D}_f^{\times} \left( x ; \Delta \right) \assign \frac{1}{x} \cdot
  \# \left\{ n \leqslant x : \frac{f (n) - \mu (f ; x)}{B (f ; x)} \geqslant
  \Delta \right\}
\end{eqnarray*}
We first need to modify a little some of the known large deviations results
for $\mathcal{D}_f^{\times} (x ; \Delta)$ and $\mathbbm{P}(\mathcal{Z}_{\Psi}
(B^2 (f ; x)) \geqslant \Delta B (f ; x))$.

\subsection{Large deviations for $\mathcal{D}^{\times}_f (x ; \Delta)$ and
$\mathbbm{P}(\mathcal{Z}_{\Psi} (x) \geqslant t)$ revisited}

First we require the result of Maciulis ({\cite{14}}, theorem) in a 
``saddle-point'' version.

\begin{lemma}
  Let $g$ be a strongly additive function such that $0 \leqslant g (p)
  \leqslant O (1)$ and $B(g;x) \rightarrow \infty$. 
  We have uniformly in $1 \leqslant \Delta \leqslant o (B (g
  ; x))$,
  \[ \mathcal{D}_g^{\times} (x ; \Delta) \sim \exp \left( \sum_{p \leqslant x}
     \frac{e^{\eta g (p)} \um \eta g (p) \um 1}{p} - \eta \sum_{p \leqslant x}
     \frac{g (p) (e^{\eta g (p)} \um 1)}{p} \right) \frac{e^{\Delta^2 /
     2}}{\sqrt[]{2 \pi}} \int_{\Delta}^{\infty} e^{- t^2 / 2} \mathd t \]
  where $\eta = \eta_g (x ; \Delta)$ is defined as the unique positive
  solution of the equation
  \[ \sum_{p \leqslant x} \frac{g (p) e^{\eta g (p)}}{p} = \mu (g ; x) +
     \Delta B (g ; x) \]
  Furthermore $\eta_g (x ; \Delta) = \Delta / B (g ; x) + O (\Delta^2 / B (g ;
  x)^2)$.
\end{lemma}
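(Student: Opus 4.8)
The plan is to read the asserted asymptotic off Maciulis's result in the form already recorded as Lemma 9.2, by checking that the ``saddle--point'' expression above is merely a repackaging of the power series $-\tfrac{\Delta^{3}}{B}\sum_{k\geqslant 0}\tfrac{\lambda_{g}(x;k+2)}{k+3}(\Delta/B)^{k}$ appearing there. Write $M$ for a bound on the values $g(p)$ and $B\assign B(g;x)$, and recall from the proof of Lemma 9.2 the analytic function
\[ \mathcal{F}_{g}(x;z)\assign\frac{1}{B^{2}}\sum_{p\leqslant x}\frac{g(p)}{p}\bigl(e^{g(p)z}-1\bigr)=\sum_{j\geqslant 1}\frac{m_{j}}{j!}\,z^{j},\qquad m_{j}\assign\frac{1}{B^{2}}\sum_{p\leqslant x}\frac{g(p)^{j+1}}{p}, \]
for which $m_{1}=1$, $|m_{j}|\leqslant M^{j-1}$, and which satisfies the hypotheses of Lemma 9.1 on a disc about $0$ of radius depending only on $M$. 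Then its compositional inverse $\mathcal{G}_{g}(x;w)=\sum_{k\geqslant 1}\lambda_{g}(x;k)w^{k}$ is analytic on such a disc, with $\lambda_{g}(x;1)=1$ and $|\lambda_{g}(x;k)|\leqslant C^{k}$ for a constant $C=C(M)$, and these $\lambda_{g}(x;k)$ are exactly the coefficients of Lemma 9.2.

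First I would pin down $\eta$. Subtracting $\mu(g;x)=\sum_{p}g(p)/p$ from the defining equation turns it into $\sum_{p\leqslant x}g(p)(e^{\eta g(p)}-1)/p=\Delta B$, that is $\mathcal{F}_{g}(x;\eta)=\Delta/B$; since $z\mapsto\sum_{p}g(p)(e^{g(p)z}-1)/p$ increases strictly from $0$ to $+\infty$ on $[0,\infty)$, the positive solution exists and is unique, and equals $\eta=\mathcal{G}_{g}(x;\Delta/B)$. For $1\leqslant\Delta\leqslant o(B)$ one has $\Delta/B\to 0$, so $\eta=\Delta/B+O(\Delta^{2}/B^{2})$ uniformly, which is the ``furthermore'' clause.

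Next I would rewrite the exponent. Using $\sum_{p}g(p)(e^{\eta g(p)}-1)/p=\Delta B$ and $e^{\eta g(p)}-\eta g(p)-1=\sum_{j\geqslant 2}(\eta g(p))^{j}/j!$, the exponent becomes
\[ E\assign\sum_{p\leqslant x}\frac{e^{\eta g(p)}-\eta g(p)-1}{p}-\eta\,\Delta B=B^{2}\Bigl(\sum_{j\geqslant 2}\frac{m_{j-1}}{j!}\,\eta^{j}-\eta\,\mathcal{F}_{g}(x;\eta)\Bigr). \]
Differentiating the bracket with respect to its variable and using $\frac{\mathd}{\mathd\eta}\sum_{j\geqslant 2}\frac{m_{j-1}}{j!}\eta^{j}=\mathcal{F}_{g}(x;\eta)$ collapses it to $-\eta\,\mathcal{F}_{g}'(x;\eta)$, so, with $w\assign\Delta/B=\mathcal{F}_{g}(x;\eta)$, one gets $\frac{\mathd}{\mathd w}(E/B^{2})=-\eta=-\mathcal{G}_{g}(x;w)$ and hence $E=-B^{2}\int_{0}^{\Delta/B}\mathcal{G}_{g}(x;u)\,\mathd u$. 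On the other hand, writing $R_{g}(x;w)\assign\sum_{k\geqslant 0}\frac{\lambda_{g}(x;k+2)}{k+3}w^{k}$, a termwise computation gives $\frac{w^{2}}{2}+w^{3}R_{g}(x;w)=\sum_{m\geqslant 2}\frac{\lambda_{g}(x;m-1)}{m}w^{m}$, whose $w$--derivative is $\sum_{k\geqslant 1}\lambda_{g}(x;k)w^{k}=\mathcal{G}_{g}(x;w)$; therefore $\frac{\Delta^{2}}{2}+\frac{\Delta^{3}}{B}R_{g}(x;\Delta/B)=B^{2}\int_{0}^{\Delta/B}\mathcal{G}_{g}(x;u)\,\mathd u=-E$. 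Thus $\exp(E)\cdot e^{\Delta^{2}/2}=\exp\bigl(-\tfrac{\Delta^{3}}{B}R_{g}(x;\Delta/B)\bigr)$, an identity valid once $\Delta/B$ is below the relevant constant radius, hence for all large $x$; inserting it into Lemma 9.2 gives the claimed asymptotic for $\mathcal{D}_{g}^{\times}(x;\Delta)$.

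The only genuine computation here is the exponent identity, and the step I expect to be the crux is recognising that both the saddle--point exponent $E$ and Maciulis's series equal $\mp B^{2}\int_{0}^{\Delta/B}\mathcal{G}_{g}(x;u)\,\mathd u$, i.e.\ both are antiderivatives of (minus) the inverse function of $\mathcal{F}_{g}(x;\cdot)$; once this is seen, matching them requires nothing beyond the Lagrange--inversion setup already assembled for Lemma 9.2 and the change of variables $w=\mathcal{F}_{g}(x;\eta)$. All constants occurring depend only on $M$, so every estimate is uniform in $x$; equivalently, one may cite Maciulis's theorem (\cite{14}) directly in place of Lemma 9.2.
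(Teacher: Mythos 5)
Your proof is correct, and it takes a genuinely different route from the paper. The paper treats the main asymptotic as an import: it cites Maciulis's theorem (\cite{14}) directly for the saddle-point form and only supplies a five-line Taylor-expansion argument for the clause $\eta_g(x;\Delta)=\Delta/B+O(\Delta^2/B^2)$, which (the author remarks) is the one piece Maciulis does not state explicitly. You instead derive Lemma 10.1 from the already-recorded Lemma 9.2 (Maciulis's result in power-series form) by an explicit exponent identity: both $E=\sum_{p\leqslant x}\bigl(e^{\eta g(p)}-\eta g(p)-1\bigr)/p-\eta\Delta B$ and $-\tfrac{\Delta^2}{2}-\tfrac{\Delta^3}{B}\sum_k\tfrac{\lambda_g(x;k+2)}{k+3}(\Delta/B)^k$ equal $-B^2\int_0^{\Delta/B}\mathcal{G}_g(x;u)\,\mathd u$, where $\mathcal{G}_g$ is the compositional inverse of $\mathcal{F}_g$ from the proof of Lemma 9.2. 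I checked the two antiderivative computations and they are both right; the ``furthermore'' clause then drops out of $\eta=\mathcal{G}_g(x;\Delta/B)$ and the uniform bound $|\lambda_g(x;k)|\leqslant C^k$, which is essentially the same estimate the paper obtains directly. What your route buys is self-containment and, more importantly, the fact that the two shapes of Maciulis's asymptotic agree as an exact identity in the exponent (not merely as asymptotically equivalent expressions); this is precisely the kind of matching that is exploited again in the proof of Theorem 2.5 via Lemma 10.3, so making it explicit here is arguably cleaner than citing twice. The paper's route is shorter, at the cost of leaning on a second, separate quotation of \cite{14}.
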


\begin{proof}
  Only the last assertion needs to be proved, because it is not stated
  explicitly in Maciulius's paper. Fortunately enough, it's a triviality.
  Indeed, writing $\eta = \eta_g (x ; \Delta)$, we find that
  \[ 0 \leqslant \eta_g (x ; \Delta) \sum_{p \leqslant x} \frac{g (p)^2}{p}
     \leqslant \sum_{p \leqslant x} \frac{g (p) (e^{\eta g (p)} - 1)}{p} =
     \Delta B (g ; x) \]
  Dividing by $B^2 (g ; x)$ on both sides $0 \leqslant \eta_g (x ; \Delta)
  \leqslant \Delta / B (g ; x)$ follows. Now expanding $e^{\eta g (p)} - 1 =
  \eta g (p) + O (\eta^2 g (p)^2)$ and noting that $O (\eta^2 g (p)^2) = O
  (\eta^2 g (p))$ because $g (p) = O (1)$, we find that
  \[ \Delta B (g ; x) = \sum_{p \leqslant x} \frac{g (p) (e^{\eta g (p)} -
     1)}{p} = \eta \sum_{p \leqslant x} \frac{g (p)^2}{p} + O \left( \eta^2
     \sum_{p \leqslant x} \frac{g (p)^2}{p} \right) \]
  Again dividing by $B^2 (g ; x)$ on both sides, and using the bound $\eta = O
  (\Delta / B (g ; x))$ the claim follows.
\end{proof}

Adapting Hwang's {\cite{11}} result we prove the following.

\begin{lemma}
  Let $\Psi$ be a distribution function. Suppose that there is an $\alpha > 0$
  such that $\Psi (\alpha) - \Psi (0) = 1$. Let $B^2 = B^2 (x) \rightarrow
  \infty$ be some function tending to infinity. Then, uniformly in $1
  \leqslant \Delta \leqslant o (B (x))$ the quantity $\mathbbm{P} \left(
  \mathcal{Z}_{\Psi} (B^2) \geqslant \Delta B \right)$ is asymptotic to
  \[ \exp \left( B^2 \int_{\mathbbm{R}} \frac{e^{\rho u} - \rho u - 1}{u^2}
     \mathd \Psi (u) - B^2 \cdot \rho \int_{\mathbbm{R}} \frac{e^{\rho u} -
     1}{u} \mathd \Psi (u) \right) \cdot \frac{e^{\Delta^2 / 2}}{\sqrt[]{2
     \pi}} \int_{\Delta}^{\infty} e^{- t^2 / 2} \tmop{dt} \]
  where $\rho = \rho_{\Psi} (B (x) ; \Delta)$ is defined implicitly, as the
  unique positive solution to
  \[ B^2 (x) \int_{\mathbbm{R}} \frac{e^{\rho u} - 1}{u} \mathd \Psi (u) =
     \Delta \cdot B (x) \]
\end{lemma}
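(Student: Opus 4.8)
The plan is to reduce Lemma~10.2 to Lemma~9.3 by an \emph{exact} algebraic rewriting of the exponent, rather than by reworking Hwang's argument from scratch: the saddle-point exponent appearing here is nothing but the summed generating function of the coefficients $\Lambda(\Psi;k)$ already produced in the proof of Lemma~9.3. Keeping the notation $u(z)=\int_{\mathbbm{R}}(e^{zt}-zt-1)\,t^{-2}\,\mathd\Psi(t)$ from that proof, I first record that $u$ is entire with $u(0)=0$, $u'(z)=\int_{\mathbbm{R}}(e^{zt}-1)\,t^{-1}\,\mathd\Psi(t)$ and $u''(z)=\int_{\mathbbm{R}}e^{zt}\,\mathd\Psi(t)>0$; since $\mathcal{Z}_\Psi(1)$ has variance $u''(0)=1$ the measure $\mathd\Psi$ is not concentrated at $0$, so $u'$ is a continuous strictly increasing bijection of $[0,\infty)$ onto itself with $u'(z)=z+O(z^2)$ near $0$. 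Hence for every $\Delta\geqslant1$ there is a unique positive $\rho=\rho_\Psi(B;\Delta)$ with $u'(\rho)=\Delta/B$ --- this is exactly the $\rho$ in the statement, so $B^2u'(\rho)=\Delta B$, and the quantity to be proved equals $\exp\!\bigl(B^2u(\rho)-\rho\,B^2u'(\rho)\bigr)\cdot e^{\Delta^2/2}(2\pi)^{-1/2}\int_\Delta^\infty e^{-t^2/2}\,\mathd t$ with $\rho\,B^2u'(\rho)=\rho\Delta B$.

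Next I would invoke Lagrange inversion exactly as in the proof of Lemma~9.3 (Lemma~9.1 together with equation $(9.1)$): $(u')^{-1}$ is analytic near $0$ with $(u')^{-1}(\xi)=\sum_{k\geqslant0}\Lambda(\Psi;k)\xi^k$, where $\Lambda(\Psi;0)=0$, $\Lambda(\Psi;1)=1$ and $|\Lambda(\Psi;k)|\leqslant C^k$. Since $\Delta\leqslant o(B)$ the point $\xi:=\Delta/B$ tends to $0$, so for $x$ large it lies in the disc of convergence and $\rho=(u')^{-1}(\xi)$. I then set $H(\xi):=u\bigl((u')^{-1}(\xi)\bigr)-\xi\,(u')^{-1}(\xi)+\tfrac12\xi^2$, which is analytic near $0$ with $H(0)=0$; differentiating and using $\frac{\mathd}{\mathd\xi}u\bigl((u')^{-1}(\xi)\bigr)=u'\bigl((u')^{-1}(\xi)\bigr)\frac{\mathd}{\mathd\xi}(u')^{-1}(\xi)=\xi\frac{\mathd}{\mathd\xi}(u')^{-1}(\xi)$ collapses the derivative to $H'(\xi)=\xi-(u')^{-1}(\xi)=-\sum_{k\geqslant2}\Lambda(\Psi;k)\xi^k$. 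Integrating from $0$ gives the power-series identity
\[
  H(\xi)=-\sum_{k\geqslant2}\frac{\Lambda(\Psi;k)}{k+1}\,\xi^{k+1}=-\,\xi^3\sum_{j\geqslant0}\frac{\Lambda(\Psi;j+2)}{j+3}\,\xi^j .
\]

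Substituting $\xi=\Delta/B$ then yields the exact identity $B^2H(\Delta/B)=-\tfrac{\Delta^3}{B}\sum_{j\geqslant0}\tfrac{\Lambda(\Psi;j+2)}{j+3}(\Delta/B)^j$, while directly from the definition of $H$ and $u'(\rho)=\Delta/B$ one reads off $B^2u(\rho)-\rho\Delta B=B^2\bigl(H(\Delta/B)-\tfrac{\Delta^2}{2B^2}\bigr)=B^2H(\Delta/B)-\tfrac{\Delta^2}{2}$. Consequently the expression in the statement equals $\exp\!\bigl(-\tfrac{\Delta^3}{B}\sum_{j\geqslant0}\tfrac{\Lambda(\Psi;j+2)}{j+3}(\Delta/B)^j\bigr)\cdot(2\pi)^{-1/2}\int_\Delta^\infty e^{-u^2/2}\,\mathd u$, which is precisely the asymptotic for $\mathbbm{P}(\mathcal{Z}_\Psi(B^2)\geqslant\Delta B)$ furnished by Lemma~9.3, uniformly in $1\leqslant\Delta\leqslant o(B)$; since the rewriting is an identity valid for all large $x$, the uniformity carries over with nothing lost. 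I do not expect a genuine obstacle: the one thing needing care is the bookkeeping in the series identity for $H$ --- confirming that the Taylor coefficients of $(u')^{-1}$ are exactly the $\Lambda(\Psi;k)$ of Lemma~9.3 and performing the formal-series algebra inside a fixed disc that eventually contains $\Delta/B$ --- together with the elementary verification that the saddle point $\rho$ exists and is unique; all the analytic content is already packaged in Lemmas~9.1 and~9.3.
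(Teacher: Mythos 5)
Your proof is correct and takes essentially the same route as the paper: both start from the identity $(u')^{-1}(\xi)=\sum_k\Lambda(\Psi;k)\xi^k$ established in the proof of Lemma 9.3, integrate it (you via the auxiliary function $H$ and its derivative, the paper by integration by parts), substitute $\xi=\Delta/B$ and $\rho=(u')^{-1}(\xi)$, and then read off the asymptotic from Lemma 9.3. Your added paragraph on strict monotonicity of $u'$ (existence and uniqueness of $\rho$) is a small, correct elaboration that the paper leaves implicit.
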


\begin{proof}
  We keep the same notation as in lemma 9.3. While proving lemma 9.3 we
  established the following useful relationship (see $(9.1)$)
  \[ \sum_{k \geqslant 0} \Lambda (\Psi ; k + 2) \xi^{k + 2} = (u')^{- 1}
     (\xi) - \xi \text{ where } u (z) = \int_{\mathbbm{R}} \frac{e^{zt} - zt -
     1}{t^2} \mathd \Psi (t) \]
  Here $(u')^{- 1}$ denotes the inverse function of $u'$. \ Integrating the
  above gives
  \[ \sum_{k \geqslant 0} \frac{\Lambda (\Psi ; k + 2)}{k + 3} \cdot \xi^{k +
     3} = - \frac{\xi^2}{2} + \xi \cdot (u')^{- 1} (\xi) - u ((u')^{- 1}
     (\xi)) \]
  Now choose $\xi = \Delta / B$, then by definition $\rho = \rho_{\Psi} (B (x)
  ; \Delta) = (u')^{- 1} (\xi)$. Thus the above formula becomes
  \[ \sum_{k \geqslant 0} \frac{\Lambda (\Psi ; k + 2)}{k + 3} \cdot (\Delta /
     B)^{k + 3} = - \frac{(\Delta / B)^2}{2} + \frac{\Delta}{B} \cdot \rho -
     \int_{\mathbbm{R}} \frac{e^{\rho t} - \rho t - 1}{t^2} \mathd \Psi (t) \]
  Also, note that by definition $\Delta / B = \int_{\mathbbm{R}} (e^{\rho t} -
  1) / t \cdot \mathd \Psi (t)$. Using the above formula (in which we replace
  $(\Delta / B) \cdot \rho$ by $\rho \int_{\mathbbm{R}} (e^{\rho t} - 1) / t
  \cdot \mathd \Psi (t)$) and lemma 9.3
  \begin{eqnarray*}
    &  & \mathbbm{P}(\mathcal{Z}_{\Psi} (B^2) \geqslant \Delta B) \sim \exp
    \left( - B^2 \sum_{k \geqslant 0} \frac{\Lambda (\Psi ; k + 2)}{k + 3}
    \cdot \left( \Delta / B \right)^{k + 3} \right) \int_{\Delta}^{\infty}
    e^{- u^2 / 2} \cdot \frac{\mathd u}{\sqrt[]{2 \pi}}\\
    & = & \exp \left( B^2 \int_{\mathbbm{R}} \frac{e^{\rho t} - \rho t -
    1}{t^2} \mathd \Psi (t) - B^2 \cdot \rho \int_{\mathbbm{R}} \frac{e^{\rho
    t} - 1}{t} \mathd \Psi (t) \right) \cdot \frac{e^{\Delta^2 / 2}}{\sqrt[]{2
    \pi}} \int_{\Delta}^{\infty} e^{- u^2 / 2} \mathd u
  \end{eqnarray*}
  This is the claim.
\end{proof}

Before we prove Theorem 2.5, we need to show that the parameters $\eta_g (x ;
\Delta)$ and $\rho_{\Psi} (B (f ; x) ; \Delta)$ (as defined respectively in
Lemma 10.1 and Lemma 10.2) are ``close'' when the distribution of the $g (p)$'s
resembles $\Psi (t)$. The ``closeness'' assertion is made precise in the next
lemma.

\begin{lemma}
  Let $\Psi (\cdot)$ be a distribution function. Let $f$ be a positive
  strongly additive function. Suppose that $0 \leqslant f (p) \leqslant O (1)$
  for all primes $p$. Let
  \[ K_f (x ; t) \assign \frac{1}{B^2 (f ; x)}
     \sum_{\tmscript{\begin{array}{c}
       p \leqslant x\\
       f (p) \leqslant t
     \end{array}}} \frac{f (p)^2}{p} \]
  If $K_f (x ; t) - \Psi (t) \ll 1 / B^2 (f ; x)$ uniformly in $t \in
  \mathbbm{R}$, then
  \[ \rho_{\Psi} (B (f ; x) ; \Delta) - \eta_f (x ; \Delta) = o (1 / B^2 (f ;
     x)) \]
  uniformly in $1 \leqslant \Delta \leqslant o (B (f ; x))$. The symbols
  $\rho_{\Psi} (B (x) ; \Delta)$ and $\eta_f (x ; \Delta)$ are defined in
  lemma 10.2 and lemma 10.1 respectively. 
\end{lemma}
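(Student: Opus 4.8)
The plan is to put both defining equations into a common analytic shape and then compare them with the mean value theorem. Set $B \assign B(f;x)$ and introduce the functions
\[
  \Phi_x(s) \assign \int_{\mathbbm{R}} \frac{e^{st}-1}{t}\,\mathd K_f(x;t), \qquad \Phi(s) \assign \int_{\mathbbm{R}} \frac{e^{su}-1}{u}\,\mathd \Psi(u),
\]
where $(e^{st}-1)/t$ is extended by continuity to the value $s$ at $t=0$. Subtracting $\mu(f;x)=\sum_{p\leqslant x} f(p)/p$ from the equation defining $\eta \assign \eta_f(x;\Delta)$ in lemma 10.1, writing $f(p)(e^{\eta f(p)}-1)/p = (f(p)^2/p)\cdot(e^{\eta f(p)}-1)/f(p)$, and dividing by $B^2$, one checks that $\Phi_x(\eta) = \Delta/B$; directly from lemma 10.2 one has $\Phi(\rho) = \Delta/B$ with $\rho \assign \rho_\Psi(B;\Delta)$. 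So $\Phi(\rho) = \Phi_x(\eta)$, and the whole task is to show $\rho - \eta = o(1/B^2)$.

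First I would record the elementary facts. Since $0\leqslant f(p)\leqslant M$ for some fixed $M$, the hypothesis $K_f(x;t)-\Psi(t)\ll 1/B^2$ together with $B\to\infty$ forces $\Psi$ to be supported on $[0,M]$; hence $\Phi$ is entire, $\Phi(0)=0$, $\Phi'(s)=\int e^{su}\,\mathd\Psi(u)$, so $\Phi'(s)\geqslant\int \mathd\Psi = 1$ for $s\geqslant 0$ and $\Phi'(s)=1+O(s)$ near $0$. By lemma 10.1, $\eta = \Delta/B + O(\Delta^2/B^2) = o(1)$, and from $\Phi(\rho)=\Delta/B$ with $\Phi(\rho)\geqslant\rho$ one gets $\rho\leqslant\Delta/B = o(1)$ as well.

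The core estimate is for $\Phi_x(s)-\Phi(s)$ with $s\geqslant 0$ small. I would integrate by parts against the bounded function $K_f(x;t)-\Psi(t)=O(1/B^2)$ over $[0,M]$: the lower endpoint contributes nothing because $K_f(x;t)$ and $\Psi(t)$ both vanish for $t<0$, the upper endpoint contributes $\frac{e^{sM}-1}{M}\bigl(K_f(x;M)-\Psi(M)\bigr)=O(s)\cdot O(1/B^2)$, and the remaining integral is $\int_0^M O(1/B^2)\cdot\frac{\mathd}{\mathd t}\!\left(\frac{e^{st}-1}{t}\right)\mathd t$. From the expansion $\frac{e^{st}-1}{t}=s+\tfrac{s^2 t}{2}+\cdots$ both $\frac{e^{st}-1}{t}$ and its $t$-derivative are $O(s)$, respectively $O(s^2)$, uniformly for $t\in[0,M]$ and $|s|$ bounded, so $\Phi_x(s)-\Phi(s)=O(s/B^2)$ uniformly for small $s\geqslant 0$. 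Taking $s=\eta=O(\Delta/B)$ gives $\Phi_x(\eta)-\Phi(\eta)=O(\Delta/B^3)$, which is $o(1/B^2)$ precisely because $\Delta=o(B)$. Finally $\Phi(\rho)-\Phi(\eta)=\Phi_x(\eta)-\Phi(\eta)=o(1/B^2)$ while $\Phi(\rho)-\Phi(\eta)=\Phi'(\xi)(\rho-\eta)$ for some $\xi$ between $\eta$ and $\rho$; since $\xi=o(1)$ we have $\Phi'(\xi)=1+o(1)$, and dividing yields $\rho-\eta=o(1/B^2)$, uniformly in $1\leqslant\Delta\leqslant o(B)$.

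I expect the only delicate point to be that integration-by-parts step: the naive total-variation bound on $\mathd(K_f-\Psi)$ gives nothing, and merely bounding $\Phi_x(s)-\Phi(s)$ by $O(1/B^2)$ is not enough (after dividing by $\Phi'(\xi)\sim 1$ it would only give $\rho-\eta=O(1/B^2)$, not $o$); one genuinely has to extract the extra factor $s$ coming from $(e^{st}-1)/t$ being $O(s)$ on the compact support, which is why the endpoint bookkeeping must be handled with a little care. Everything else is bounded analysis on a compact interval.
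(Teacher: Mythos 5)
Your proof is correct and follows essentially the same route as the paper's: define (implicitly or explicitly) the Laplace-type transforms of $K_f(x;\cdot)$ and $\Psi$, observe that the defining equations for $\eta$ and $\rho$ equate them at the value $\Delta/B$, integrate by parts against $K_f-\Psi\ll 1/B^2$ over the compact support $[0,M]$ to compare the two, and finish using that the derivative of the transform is $\sim 1$ near $0$. The only (harmless) difference is that the paper notes $K_f(x;M)=\Psi(M)=1$ exactly so the boundary term vanishes, giving the sharper bound $O(\eta^2/B^2)$ in place of your $O(\eta/B^2)$; both are $o(1/B^2)$ in the stated range $\Delta=o(B)$.
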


\begin{proof}
  Let $\eta = \eta_f (x ; \Delta)$. Recall that by lemma 10.1, 
  $\eta = o (1)$ in the range $1 \leqslant \Delta \leqslant o(B(f;x))$. 
  This will
  justify the numerous Taylor expansions involving the parameter $\eta$. With
  $K_f (x ; t)$ defined as in the statement of the lemma, we have
  \begin{eqnarray}
    \sum_{p \leqslant x} \frac{f (p) e^{\eta f (p)}}{p} - \mu (f ; x) & = &
    \sum_{p \leqslant x} \frac{f (p) (e^{\eta f (p)} - 1)}{p}_{} \nonumber\\
    & = & B^2 (f ; x) \int_{\mathbbm{R}} \frac{e^{\eta t} - 1}{t} \mathd K_f
    (x ; t) 
  \end{eqnarray}
  Let $M > 0$ be a real number such that $0 \leqslant f (p) \leqslant M$ for
  all $p$. Since the $f (p)$ are bounded, for each $x > 0$ the distribution
  function $K_f (x ; t)$ is supported on $[0 ; M]$. Furthermore since $K_f (x
  ; t) \rightarrow \Psi (t)$ the distribution function $\Psi (t)$ is supported
  on exactly the same interval. From these considerations, it follows that
  \begin{eqnarray}
    &  & \int_{\mathbbm{R}} \frac{e^{\eta t} - 1}{t} \mathd K_f (x ; t)
    \text{ } = \text{ } \int_0^M \frac{e^{\eta t} - 1}{t} \mathd K_f (x ; t)
    \nonumber\\
    & = & \int_0^M \frac{e^{\eta t} - 1}{t} \mathd \Psi (t) + \int_0^M \left(
    K_f (x ; t) - \Psi (t)^{^{^{}}} \right) \cdot \left[ \frac{e^{\eta t} -
    \eta t \cdot e^{\eta t} - 1}{t^2} \right] \mathd t 
  \end{eqnarray}
  By a simple Taylor expansion $e^{\eta t} - \eta t \cdot e^{\eta t} - 1 = O
  (\eta^2 t^2)$. Therefore the integral on the right hand side is bounded by
  $O (\eta^2 / B^2 (f ; x))$. We conclude from $(10.1)$ and $(10.2)$ that
  \begin{equation}
    \sum_{p \leqslant x} \frac{f (p) e^{\eta f (p)}}{p} - \mu (f ; x) = B^2 (f
    ; x) \int_{\mathbbm{R}} \frac{e^{\eta t} - 1}{t} \mathd \Psi (t) + O
    \left( \eta^2 \right)
  \end{equation}
  By definition of $\rho_{\Psi}$ and $\eta_f$,
  \begin{equation}
    B^2 (f ; x) \int_{\mathbbm{R}} \frac{e^{\rho_{\Psi} (B ; \Delta) t} -
    1}{t} \mathd \Psi (t) = \Delta B (f ; x) = \sum_{p \leqslant x} \frac{f
    (p) e^{\eta_f (x ; \Delta) f (p)}}{p} - \mu (f ; x)
  \end{equation}
  From $(10.3)$ and $(10.4)$ it follows that
  \begin{equation}
    B^2 (f ; x) \int_{\mathbbm{R}} \frac{e^{\rho_{\Psi} (B ; \Delta) t} -
    e^{\eta_f (x ; \Delta) t}}{t} \mathd \Psi (t) = O \left( \eta^2_f (x ;
    \Delta) \right)
  \end{equation}
  Since $\Psi (t)$ is supported on $[0 ; M]$ we can restrict the above
  integral to $[0 ; M]$. By lemma 10.1, we have $\eta_f (x ; \Delta) \sim
  \Delta / B (f ; x) = o (1)$ in the range $\Delta \leqslant o (B (f ; x))$.
  Also $0 \leqslant \rho_{\Psi} (x ; \Delta) \leqslant \int_{[0 ; M]}
  (e^{\rho_{\Psi} (x ; \Delta) t} - 1) / t \cdot \mathd \Psi (t) = \Delta / B
  (f ; x) = o (1)$ for $\Delta$ in the same range. Write $\rho \assign
  \rho_{\Psi} (B ; \Delta)$ and $\eta \assign \eta_f (x ; \Delta)$. For $0
  \leqslant t \leqslant M$, we have
  \begin{eqnarray*}
    (1 / t) \left( e^{\rho t} - e^{\eta t} \right) & = & (1 / t) e^{\rho t}
    \cdot (1 - e^{(\eta - \rho) t})\\
    & = & e^{\rho t} \cdot (\eta - \rho) + O ((\eta - \rho)^2) \text{ }
    \asymp \text{ } \eta - \rho
  \end{eqnarray*}
  because $\rho = o (1)$, $\eta = o (1)$. Inserting this estimate into
  $(10.5)$ we get $\eta - \rho = O (\eta^2 / B^2 (f ; x)) = o (1 / B^2 (f ;
  x))$ since $\eta^2 = o (1)$. The lemma is proved. 
\end{proof}

\subsection{Proof of the ``primes to integers'' theorem}

\begin{proof}[Proof of Theorem 2.5]
  Note that
  \[ \frac{1}{\sigma^2 (f ; x)} \sum_{\tmscript{\begin{array}{c}
       p \leqslant x\\
       f (p) \leqslant t
     \end{array}}} \frac{f (p)^2}{p} \cdot \left( 1 - \frac{1}{p} \right) =
     \frac{1}{B^2 (f ; x)} \sum_{\tmscript{\begin{array}{c}
       p \leqslant x\\
       f (p) \leqslant t
     \end{array}}} \frac{f (p)^2}{p} + O \left( \frac{1}{B^2 (f ; x)} \right)
  \]
  and denote the main term on the right hand side by $K_f (x ; t)$. By
  assumption, the left hand side in the above equation, differs from $\Psi
  (t)$ by$\ll 1 / \sigma^2 (f ; x) \asymp 1 / B^2 (f ; x)$. Hence $K_f (x ; t)
  = \Psi (t) + O (1 / B^2 (f ; x))$. Let $\eta = \eta_f (x ; \Delta)$ be the
  parameter from lemma 10.1. Proceeding as in the proof of the previous lemma,
  we get
  \begin{eqnarray}
    \sum_{p \leqslant x} \frac{e^{\eta f (p)} - \eta f (p) - 1}{p} & = & B^2
    (f ; x) \int_{\mathbbm{R}} \frac{e^{\eta u} - \eta u - 1}{u^2} \mathd \Psi
    (u) + o (1) \\
    \eta \sum_{p \leqslant x} \frac{f (p) \cdot (e^{\eta f (p)} - 1)}{p} & = &
    B^2 (f ; x) \eta \int_{\mathbbm{R}} \frac{e^{\eta u} - 1}{u} \mathd \Psi
    (u) + o (1) 
  \end{eqnarray}
  throughout the range $1 \leqslant \Delta \leqslant o (B (f ; x))$. Let $\rho
  \assign \rho_{\Psi} (B (f ; x) ; \Delta)$ denote the parameter from Lemma
  10.2. The functions on the right of $(10.6)$ and $(10.7)$ are analytic.
  Therefore, by lemma 10.3,
  \begin{eqnarray}
    B^2 (f ; x) \int_{\mathbbm{R}} \frac{e^{\eta u} - \eta u - 1}{u^2} \mathd
    \Psi (u) & = & B^2 (f ; x) \int_{\mathbbm{R}} \frac{e^{\rho u} - \rho u -
    1}{u^2} \mathd \Psi (u) \upl o (1) \\
    B^2 (f ; x) \eta \int_{\mathbbm{R}} \frac{e^{\eta u} - 1}{u} \mathd \Psi
    (u) & = & B^2 (f ; x) \rho \int_{\mathbbm{R}} \frac{e^{\rho u} - 1}{u} 
    \mathd \Psi (u) + o (1) 
  \end{eqnarray}
  uniformly in $1 \leqslant \Delta \leqslant o (B (f ; x))$. On combining
  $(10.6)$ with $(10.8)$ and $(10.7)$ with $(10.9)$ we obtain
  \begin{eqnarray*}
    &  & \sum_{p \leqslant x} \frac{e^{\eta f (p)} - \eta f (p) - 1}{p} -
    \eta \sum_{p \leqslant x} \frac{f (p) (e^{\eta f (p)} - 1)}{p}\\
    & = & B^2 (f ; x) \int_{\mathbbm{R}} \frac{e^{\rho u} - \rho u - 1}{u^2}
    \mathd \Psi (u) - B^2 (f ; x) \rho \int_{\mathbbm{R}} \frac{e^{\rho u} -
    1}{u} \mathd \Psi (u) + o(1)
  \end{eqnarray*}
  By lemma 10.1, lemma 10.2 and the above equation, we get
  \[ \mathcal{D}_f^{\times} (x ; \Delta) \sim \mathbbm{P} \left(
     \mathcal{Z}_{\Psi} \left( B^2 (f ; x) \right) \geqslant \Delta B (f ; x)
     \right) \]
  uniformly in $1 \leqslant \Delta \leqslant o (B (f ; x))$. By lemma 9.5, it
  follows that
  \[ \mathcal{D}_f (x ; \Delta) \sim \mathbbm{P} \left( \mathcal{Z}_{\Psi}
     (\sigma^2 (f ; x)) \geqslant \Delta \sigma (f ; x) \right) \]
  uniformly in $1 \leqslant \Delta \leqslant o (\sigma (f ; x))$. This proves
  the theorem. 
\end{proof}

{\noindent}\tmtextbf{Acknowledgements.} The author would like to thank first and foremost Andrew Granville. There is too much to thank for, so it is simpler to note that this project would not surface without his constant support. The author would also like to thank Kevin Ford for suggesting Conjecture 2.3, thus throwing light on the (previously obscure) function $\mathcal{A}(f;z)$ and to Philippe Sosoe for proof-reading a substantial part of this paper. 
{\hspace*{\fill}}{\medskip}

\end{document}